\def\M{\mathcal{M}}
\def\Mai{\mathcal{M}_{\rm ai}}
\def\Mbw{\mathcal{M}_{\rm bw}}
\def\Mle{\mathcal{M}_{\rm le}}
\def\L{\mathcal{L}}
\def\D{\mathrm{D}}
\def\gbw{g_{\rm bw}}
\def\gai{g_{\rm ai}}
\def\bzero{{\mathbf 0}}
\def\bone{{\mathbf 1}}
\def\bSigma{{\mathbf \Sigma}}
\def\ba{{\mathbf a}}
\def\bx{{\mathbf x}}
\def\by{{\mathbf y}}
\def\bA{\mathbf A}
\def\bB{\mathbf B}
\def\bC{\mathbf C}
\def\bH{\mathbf H}
\def\bI{{\mathbf I}}
\def\bL{\mathbf L}
\def\bM{\mathbf M}
\def\bO{\mathbf O}
\def\bP{{\mathbf P}}
\def\bS{\mathbf S}
\def\bT{\mathbf T}
\def\bU{{\mathbf U}}
\def\bV{{\mathbf V}}
\def\bX{{\mathbf X}}
\def\bY{{\mathbf Y}}
\def\bZ{{\mathbf Z}}
\def\sR{{\mathbb R}}
\def\sS{{\mathbb S}}
\def\gN{{\mathcal{N}}}
\def\gW{{\mathcal{W}}}
\def\gH{{\mathcal{H}}}
\def\vec{{\mathrm{vec}}}
\def\id{{\mathrm{id}}}
\def\grad{{\mathrm{grad}}}
\def\hess{{\mathrm{Hess}}}
\newcommand{\trace}{\mathrm{tr}}
\newtheorem{theorem}{Theorem}
\newtheorem{lemma}{Lemma}
\newtheorem{proposition}{Proposition}
\theoremstyle{definition}
\newtheorem{definition}{Definition}
\newtheorem{assumption}{Assumption}
\newtheorem{remark}{Remark}
\newtheorem{case}{Case}
\newcommand{\thickhline}{%
    \noalign {\ifnum 0=`}\fi \hrule height 1pt
    \futurelet \reserved@a \@xhline
}
\newcolumntype{"}{@{\hskip\tabcolsep\vrule width 1pt\hskip\tabcolsep}}
\title{\textbf{On Riemannian Optimization over Positive Definite} \\ \textbf{Matrices with the Bures-Wasserstein Geometry}}
\author{Andi Han\footnote{Discipline of Business Analytics, University of Sydney. Email: \{andi.han, junbin.gao\}@sydney.edu.au. \protect\label{usyd}} \quad Bamdev Mishra\footnote{Microsoft India. Email: \{bamdevm, pratik.jawanpuria\}@microsoft.com. \protect\label{microsoft}} \quad Pratik Jawanpuria\footref{microsoft} \quad Junbin Gao\footref{usyd}}
\date{}
\begin{document}

\maketitle

\begin{abstract}
In this paper, we comparatively analyze the Bures-Wasserstein (BW) geometry with the popular Affine-Invariant (AI) geometry for Riemannian optimization on the symmetric positive definite (SPD) matrix manifold. Our study begins with an observation that the BW metric has a linear dependence on SPD matrices in contrast to the quadratic dependence of the AI metric. We build on this to show that the BW metric is a more suitable and robust choice for several Riemannian optimization problems over ill-conditioned SPD matrices. We show that the BW geometry has a non-negative curvature, which further improves convergence rates of algorithms over the non-positively curved AI geometry. Finally, we verify that several popular cost functions, which are known to be geodesic convex under the AI geometry, are also geodesic convex under the BW geometry. Extensive experiments on various applications support our findings. 

\end{abstract}




\section{Introduction}
Learning on symmetric positive definite (SPD) matrices is a fundamental problem in various machine learning applications, including metric and kernel learning~\cite{tsuda05a,guillaumin2009you,suarez2021tutorial}, medical imaging~\cite{pennec2006riemannian,pennec2020manifold}, computer vision~\cite{harandi2014manifold,huang2017geometry}, domain adaptation~\cite{mahadevan19a}, modeling time-varying data~\cite{brooks2019exploring}, and object detection~\cite{tuzel08a}, among others. Recent studies have also explored SPD matrix learning as a building block in deep neural networks~\cite{huang17a,gao20a}.

The set of SPD matrices of size $n \times n$, defined as $\sS_{++}^n := \{\bX: \bX \in \mathbb{R}^{n\times n}, \bX^\top = \bX, \text{ and } \bX \succ \bzero\}$,  has a smooth {manifold} structure with a richer geometry than the Euclidean space. When endowed with a  {metric} (inner product structure), the set of SPD matrices becomes a Riemannian manifold \cite{bhatia2009positive}. 
Hence, numerous existing works~\cite{pennec2006riemannian,arsigny2007geometric,jayasumana2013kernel,huang2015log,huang17a,lin2019riemannian,gao20a,pennec2020manifold} have studied and employed the {Riemannian} optimization framework for learning over the space of SPD matrices~\cite{absil2009optimization,boumal2020introduction}. 




Several Riemannian metrics on $\sS_{++}^n$ have been proposed such as the Affine-Invariant~\cite{pennec2006riemannian,bhatia2009positive}, the Log-Euclidean \cite{arsigny2007geometric,quang2014log}, the Log-Det~\cite{sra2012new,chebbi2012means}, the Log-Cholesky~\cite{lin2019riemannian}, to name a few. {One can additionally obtain different families of Riemannian metrics on $\sS_{++}^n$ by appropriate parameterizations based on the principles of invariance and symmetry \cite{dryden09a,chebbi2012means,thanwerdas2019affine,pennec2020manifold}.}
However, to the best of our knowledge, a systematic study comparing the different metrics for optimizing generic cost functions defined on $\sS_{++}^n$ is missing. Practically, the Affine-Invariant (AI) metric seems to be the most widely used metric in Riemannian first and second order algorithms (e.g., steepest descent, conjugate gradients, trust regions) as it is the only Riemannian SPD metric available in several manifold optimization toolboxes, such as Manopt~\cite{boumal2014manopt}, Manopt.jl~\cite{bergmann19a}, Pymanopt~\cite{townsend2016pymanopt}, ROPTLIB~\cite{huang16a}, and  McTorch~\cite{Meghwanshi_arXiv_2018}. 
Moreover, many interesting problems in machine learning are found to be {geodesic convex} (generalization of Euclidean convexity) under the AI metric, which allows fast convergence of optimization algorithms \cite{zhang2016riemannian,hosseini2020alternative}.



Recent works have studied the Bures-Wasserstein (BW) distance on SPD matrices~\cite{malago2018wasserstein,bhatia2019bures,van2020bures}. It is a well-known result that the Wasserstein distance between two multivariate Gaussian densities is a function of the BW distance between their covariance matrices. Indeed, the BW metric is a Riemannian metric. Under this metric, the necessary tools for Riemannian optimization, including the Riemannian gradient and Hessian expressions, can be efficiently computed \cite{malago2018wasserstein}. Hence, it is a promising candidate for Riemannian optimization on $\sS_{++}^n$. 
In this work, we theoretically and empirically analyze the quality of optimization with the BW geometry and show that it is a viable alternative to the default choice of AI geometry. Our analysis discusses the classes of cost functions (e.g., polynomial) for which the BW metric has better convergence rates than the AI metric. We also discuss cases (e.g., log-det) where the reverse is true. In particular, our contributions are as follows.


\begin{itemize}
    \item 
    We observe that the BW metric has a {linear} dependence on SPD matrices while the AI metric has a {quadratic} dependence. 
    We show this impacts the condition number of the Riemannian Hessian and makes the BW metric more suited to learning ill-conditioned SPD matrices than the AI metric. 
    \item In contrast to the non-positively curved AI geometry, the BW geometry is shown to be {non-negatively} curved, which leads to a tighter trigonometry distance bound and faster convergence rates for  optimization algorithms.
    \item For both metrics, we analyze the convergence rates of Riemannian steepest descent and trust region methods and highlight the impacts arising from the differences in the curvature and condition number of the Riemannian Hessian. 
    \item We verify that common problems that are geodesic convex under the AI metric are also geodesic convex under the BW metric. 
    \item We support our analysis with extensive experiments on applications such as weighted least squares, trace regression, metric learning, and Gaussian mixture model.
\end{itemize}




\section{Preliminaries}
\label{preliminary_sect}

The fundamental ingredients for Riemannian optimization are Riemannian metric, exponential map, Riemannian gradient, and Riemannian Hessian. We refer readers to \cite{absil2009optimization,boumal2020introduction} for a general treatment on Riemannian optimization.

A Riemannian metric is a smooth, bilinear, and symmetric positive definite function on the tangent space $T_x\M$ for any $x\in \M$. That is, $g: T_x\M \times T_x\M \xrightarrow{} \sR$, which is often written as an inner product $\langle \cdot, \cdot \rangle_x$.  The induced norm of a tangent vector $u \in T_x\M$ is given by $\| u\|_x =\sqrt{ \langle u,u\rangle}_x$. A geodesic on a manifold $\gamma: [0,1] \xrightarrow{} \M$ is defined as a locally shortest curve with zero acceleration. For any $x \in \M, u \in T_x\M$, the exponential map, ${\rm Exp}_x: T_x\M \xrightarrow{} \M$ is defined such that there exists a geodesic curve $\gamma$ with $\gamma(0) = x$, $\gamma(1) = {\rm Exp}_x(u)$ and $\gamma'(0) = u$.

\paragraph{First-order geometry and Riemannian steepest descent.} Riemannian gradient of a differentiable function $f: \M \xrightarrow{} \sR$ at $x$, denoted as $\grad f(x)$, is a tangent vector that satisfies for any $u \in T_x\M$, $\langle \grad f(x), u \rangle_x = \D_u f(x)$, where $\D_u f(x)$ is the directional derivative of $f(x)$ along $u$. The Riemannian steepest descent method \cite{udriste2013convex} generalizes the standard gradient descent in the Euclidean space to Riemannian manifolds by ensuring that the updates are along the geodesic and stay on the manifolds. That is, $x_{t+1} = {\rm Exp}_{x_t}(- \eta_t\, \grad f(x_t))$ for some step size $\eta_t$.

\paragraph{Second-order geometry and Riemannian trust region.} Second-order methods such as trust region and cubic regularized Newton methods are generalized to Riemannian manifolds \cite{absil2007trust,agarwal2020adaptive}. They make use of the Riemannian Hessian, $\hess f(x): T_x\M \xrightarrow{} T_x\M$, which is a linear operator that is defined as the covariant derivative of the Riemannian gradient. Both the trust region and cubic regularized Newton methods are Hessian-free in the sense that only evaluation of the Hessian acting on a tangent vector, i.e., $\hess f(x)[u]$ is required. Similar to the Euclidean counterpart, the Riemannian trust region method approximates the Newton step by solving a subproblem, i.e.,
\begin{equation}
    \min_{u \in T_{x_t}\M: \| u\|_{x_t} \leq \Delta} m_{x_t}(u) = f(x_t) + \langle \grad f(x_t), u \rangle_{x_t} + \frac{1}{2} \langle \mathcal{H}_{x_t}[u], u \rangle_{x_t}, \nonumber
\end{equation}
where $\mathcal{H}_{x_t}: T_{x_t}\M \xrightarrow{} T_{x_t}\M$ is a symmetric and linear operator that approximates the Riemannian Hessian. $\Delta$ is the radius of trust region, which may be increased or decreased depending on how model value $m_{x_t}(u)$ changes. The subproblem is solved iteratively using a truncated conjugate gradient algorithm. The next iterate is given by $x_{t+1} = {\rm Exp}_{x_t}(u)$ with the optimized $u$. 

Next, the eigenvalues and the condition number of the Riemannian Hessian are defined as follows, which we use for analysis in Section \ref{comparison_sect}.
\begin{definition}
\label{eigenvalue_cn_def}
The minimum and maximum eigenvalues of $\hess f(x)$ are defined as $\lambda_{\min} = \allowbreak \min_{\| u \|_x^2=1} \allowbreak  \langle {\rm Hess}f(x)[u], u \rangle_x$ and $\lambda_{\max} = \max_{\| u \|_x^2=1} \langle {\rm Hess}f(x)[u], u \rangle_x$. The condition number of $\hess f(x)$ is defined as $\kappa (\hess f(x)) := \lambda_{\max}/\lambda_{\min}$.
\end{definition}

\paragraph{Function classes on Riemannian manifolds.} 
For analyzing algorithm convergence, we require the definitions for several important function classes on Riemannian manifolds, including geodesic convexity and smoothness. Similarly, we require the definition for geodesic convex sets that generalize (Euclidean) convex sets to manifolds \cite{sra2015conic,vishnoi2018geodesic}. 



\begin{definition}[Geodesic convex set \cite{sra2015conic,vishnoi2018geodesic}]
A set $\mathcal{X} \subseteq \M$ is geodesic convex if for any $x,y \in \mathcal{X}$, the distance minimizing geodesic $\gamma$ joining the two points lies entirely in $\mathcal{X}$.
\end{definition}
Indeed, this notion is well-defined for any manifold because a sufficiently small geodesic ball is always geodesic convex. 


\begin{definition}[Geodesic convexity \cite{sra2015conic,vishnoi2018geodesic}]
\label{geodesic_convex_def}
Consider a geodesic convex set $\mathcal{X} \subseteq \M$. A function $f:\mathcal{X} \xrightarrow{} \sR$ is called geodesic convex if for any $x, y \in \mathcal{X}$, the distance minimizing geodesic $\gamma$ joining $x$ and $y$ satisfies $\forall \, t \in [0,1], f(\gamma(t)) \leq (1 - t)f(x) + t f(y)$. Function $f$ is strictly geodesic convex if the equality holds only when $t = 0,1$.
\end{definition}

\begin{definition}[Geodesic strong convexity and smoothness \cite{sra2015conic,huang2015broyden}]
Under the same settings in Definition \ref{geodesic_convex_def}. A twice-continuously differentiable function $f: \mathcal{X} \xrightarrow{} \sR$ is called geodesic $\mu$-strongly convex if for any distance minimizing geodesic $\gamma$ in $\mathcal{X}$ with $ \|\gamma'(0) \| = 1$, it satisfies $\frac{d^2 f(\gamma(t))}{d t^2} \geq \mu$, for some $\mu > 0$. Function $f$ is called geodesic $L$-smooth if $\frac{d^2 f(\gamma(t))}{d t^2} \leq L$, for some $L > 0$. 
\end{definition}


\begin{table}[t]
\begin{center}
{\small 
\caption{Riemannian optimization ingredients for AI and BW geometries.}
\label{table_optimization_ingredient}
\begin{tabular}{@{}p{0.1\textwidth}p{0.38\textwidth}p{0.42\textwidth}@{}}
\toprule
 & Affine-Invariant & Bures-Wasserstein \\
\midrule
R.Metric &   $\gai(\bU, \bV) =  \trace(\bX^{-1} \bU \bX^{-1} \bV)$ & $\gbw(\bU, \bV) =  \frac{1}{2}\trace( \L_\bX[\bU]\bV)$\\
\addlinespace[4pt]
R.Exp & ${\rm Exp}_{{\rm ai}, \bX} (\bU) = \bX^{1/2} \exp(\bX^{-1} \bU) \bX^{1/2}$  &  ${\rm Exp}_{{\rm bw}, \bX} (\bU) = \bX + \bU + \L_\bX[\bU]\bX\L_\bX[\bU]$\\
\addlinespace[4pt]
R.Gradient & $\grad_{\rm ai} f(\bX) = \bX \nabla f(\bX) \bX$ & $\grad_{\rm bw} f(\bX) =  4 \{ \nabla f(\bX) \bX  \}_{\rm S}$\\
\addlinespace[4pt]
R.Hessian & $\hess_{\rm ai} f(\bX)[\bU] = \bX \nabla^2 f(\bX) [\bU] \bX + \{\bU \nabla f(\bX) \bX\}_{\rm S}$ & $\hess_{\rm bw} f(\bX)[\bU] = 4 \{ \nabla^2 f(\bX)[\bU] \bX \}_{\rm S} + 2 \{ \nabla f(\bX) \bU \}_{\rm S} + 4\{\bX \{  \L_{\bX}[\bU] \nabla f(\bX)\}_{\rm S} \}_{\rm S} - \{ \L_{\bX}[\bU] \grad_{\rm bw} f(\bX) \}_{\rm S} $\\
\bottomrule
\end{tabular}
}
\end{center}
\end{table}

\section{Comparing BW with AI for Riemannian optimization}
\label{comparison_sect}
This section starts with an observation of a linear-versus-quadratic dependency between the two metrics. From this observation, we analyze the condition number of the Riemannian Hessian. Then, we further compare the sectional curvature of the two geometries. Together with the differences in the condition number, this allows us to compare the convergence rates of optimization algorithms on the two geometries. We conclude this section by showing geodesic convexity of several generic cost functions under the BW geometry.



\paragraph{AI and BW geometries on SPD matrices.} When endowed with a Riemannian metric $g$, the set of SPD matrices of size $n$ becomes a Riemannian manifold $\M = (\sS_{++}^n, g)$. The tangent space at $\bX$ is $T_\bX \M \coloneqq \{\bU: \bU \in \mathbb{R}^{n\times n} \text{ and } \bU^\top = \bU\}$. Under the AI and BW metrics, the Riemannian exponential map, Riemannian gradient, and Hessian are compared in Table \ref{table_optimization_ingredient}, where we denote $\{ \bA \}_{\rm S} := ( \bA + \bA^\top )/2$ and $\exp(\bA)$ as the matrix exponential of $\bA$. $\L_\bX[\bU]$ is the solution to the matrix linear system $\L_\bX[\bU] \bX + \bX \L_\bX[\bU] = \bU$ and is known as the Lyapunov operator. We use $\nabla f(\bX)$ and $\nabla^2f(\bX)$ to represent the first-order and second-order derivatives, i.e., the Euclidean gradient and Hessian, respectively. The derivations in Table \ref{table_optimization_ingredient} can be found in \cite{pennec2020manifold,bhatia2019bures}. In the rest of the paper, we use $\Mai$ and $\Mbw$ to denote the SPD manifolds under the two metrics.
%
From Table \ref{table_optimization_ingredient}, the computational costs for evaluating the AI and BW ingredients are dominated by the matrix exponential/inversion operations and the Lyapunov operator $\mathcal{L}$ computation, respectively. Both at most cost $\mathcal{O}(n^3)$, which implies a comparable per-iteration cost of optimization algorithms between the two metric choices. This claim is validated in Section~\ref{experiment_sect}.


\paragraph{A key observation.}
From Table \ref{table_optimization_ingredient}, the Affine-Invariant metric on the SPD manifold can be rewritten as for any $\bU, \bV \in T_\bX \M$,
\begin{align}
\langle \bU, \bV \rangle_{\rm ai}  =   \trace(\bX^{-1} \bU \bX^{-1} \bV) &=  \vec(\bU)^\top (\bX \otimes \bX)^{-1} \vec(\bV), 
\label{metric_ai}
\end{align}
where $\vec(\bU)$ and $\vec(\bV)$ are the vectorizations of $\bU$ and $\bV$, respectively. Note that we omit the subscript $\bX$ for inner product $\langle \cdot, \cdot \rangle$ to simplify the notation. The specific tangent space where the inner product is computed should be clear from contexts.

The Bures-Wasserstein metric is rewritten as, for any $\bU, \bV \in T_\bX \M$, 
\begin{align}
\langle \bU, \bV \rangle_{\rm bw} =  \frac{1}{2}\trace( \L_\bX[\bU]\bV) &= \frac{1}{2}\vec(\bU)^\top (\bX \oplus \bX)^{-1} \vec(\bV), \label{metric_bw}
\end{align} 
where $\bX \oplus \bX = \bX \otimes \bI + \bI \otimes \bX$ is the Kronecker sum. 

\begin{remark}\label{remark_riemannian_geometry_subsect}
Comparing Eq. \eqref{metric_ai} and \eqref{metric_bw} reveals that the BW metric has a linear dependence on $\bX$ while the AI metric has a quadratic dependence. This suggests that optimization algorithms under the AI metric should be more sensitive to the condition number of $\bX$ compared to the BW metric.
\end{remark}
The above observation serves as a key motivation for the further analysis. 

\subsection{Condition number of Riemannian Hessian at optimality}
\label{cn_analysis_subsect}
Throughout the rest of the paper, we make the following assumptions.
\begin{assumption}
\label{basic_assump} (a). $f$ is at least twice continuously differentiable with a non-degenerate local minimizer $\bX^*$. (b). The subset $\mathcal{X} \subseteq \M$ (usually as a neighbourhood of a center point) we consider throughout this paper is totally normal, i.e., the exponential map is a diffeomorphism. 
\end{assumption}
Assumption \ref{basic_assump} is easy to satisfy. Particularly, Assumption \ref{basic_assump}(b) is guaranteed for the SPD manifold under the AI metric because its geodesic is unique. Under the BW metric, for a center point $\bX$, we can choose the neighbourhood such that $\mathcal{X} = \{ {\rm Exp}_{\bX}(\bU) : \bI + \L_{\bX}[\bU] \in \sS_{++}^n \}$ as in \cite{malago2018wasserstein}. In other words, $\mathcal{X}$ is assumed to be unique-geodesic under both the metrics.

We now formalize the impact of the linear-versus-quadratic dependency, highlighted in Remark \ref{remark_riemannian_geometry_subsect}. At a local minimizer $\bX^*$ where the Riemannian gradient vanishes, we first simplify the expression for the Riemannian Hessian in Table \ref{table_optimization_ingredient}.

On $\Mai$, $\langle {\rm Hess}_{\rm ai}f(\bX^*)[\bU], \bU \rangle_{\rm ai} = \trace(\nabla^2f(\bX^*)[\bU] \bU) = \vec(\bU)^\top \bH(\bX^*) \vec(\bU)$, with $\bH(\bX) \in \sR^{n^2 \times n^2}$ is the matrix representation of the Euclidean Hessian $\nabla^2f(\bX)$ and $\bU \in T_{\bX^*} \Mai$. The maximum eigenvalue of $\hess_{\rm ai} f(\bX^*)$ is then given by $\lambda_{\max}^* = \max_{\| \bU\|^2_{\rm ai}=1} \vec(\bU)^\top \bH(\bX^*) \vec(\bU)$, where $\| \bU\|^2_{\rm ai} = \vec(\bU)^\top (\bX^* \otimes \bX^*)^{-1} \vec(\bU)$. This is a generalized eigenvalue problem with the solution to be the maximum eigenvalue of $(\bX^* \otimes \bX^*) \bH(\bX^*)$. Similarly, $\lambda_{\min}^*$ corresponds to the minimum eigenvalue of $(\bX^* \otimes \bX^*) \bH(\bX^*)$.

On $\Mbw$, 
$\langle {\rm Hess}_{\rm bw}f(\bX^*)[\bU], \bU \rangle_{\rm bw} = \vec(\bU)^\top \bH(\bX^*) \vec(\bU)$ and the norm is $\| \bU \|^2_{\rm bw} = \vec(\bU)^\top (\bX^* \oplus \bX^*)^{-1} \vec(\bU)$. Hence, the minimum/maximum eigenvalue of $\hess_{\rm bw} f(\bX^*)$ equals the minimum/maximum eigenvalue of $(\bX^* \oplus \bX^*)\bH(\bX^*)$. 

Let $\kappa_{\rm ai}^* := \kappa(\hess_{\rm ai} f(\bX^*)) = \kappa((\bX^* \otimes \bX^*) \bH(\bX^*))$ and $\kappa_{\rm bw}^* := \kappa(\hess_{\rm bw} f(\bX^*)) = \kappa((\bX^* \oplus \bX^*) \bH(\bX^*))$. The following lemma bounds these two condition numbers.
\begin{lemma}
\label{condition_number_bound_lemma}
For a local minimizer $\bX^*$ of $f(\bX)$, the condition number of $\hess f(\bX^*)$ satisfies
\begin{align}
    \kappa(\bX^*)^2/\kappa(\bH(\bX^*)) \leq &\kappa_{\rm ai}^* \leq \kappa(\bX^*)^2 \kappa(\bH(\bX^*)) \nonumber\\
    \kappa(\bX^*)/\kappa(\bH(\bX^*)) \leq  &\kappa_{\rm bw}^* \leq \kappa(\bX^*) \kappa(\bH(\bX^*)). \nonumber
\end{align}
\end{lemma}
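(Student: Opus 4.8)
The quantities to control are $\kappa_{\rm ai}^* = \kappa((\bX^*\otimes\bX^*)\bH(\bX^*))$ and $\kappa_{\rm bw}^* = \kappa((\bX^*\oplus\bX^*)\bH(\bX^*))$, where the condition number of an operator means the ratio of its largest to smallest eigenvalue under the relevant inner product. The key structural fact I would exploit is that each of these is a product of a positive definite operator ($\bX^*\otimes\bX^*$ or $\bX^*\oplus\bX^*$) with the symmetric operator $\bH(\bX^*)$, and the eigenvalues of such a product, while not multiplicative, obey two-sided bounds via the extremal eigenvalues of the factors. Concretely, for a symmetric positive definite $\bA$ and symmetric $\bB$ (with $\bB$ invertible, guaranteed here by the non-degeneracy of $\bX^*$ in Assumption~\ref{basic_assump}(a)), one has via the variational characterization
\begin{align*}
\lambda_{\max}(\bA\bB) = \max_{\bv\neq\bzero}\frac{\bv^\top\bB\bv}{\bv^\top\bA^{-1}\bv} \le \lambda_{\max}(\bB)\,\lambda_{\max}(\bA), \qquad \lambda_{\min}(\bA\bB) \ge \lambda_{\min}(\bB)\,\lambda_{\min}(\bA),
\end{align*}
and analogous inequalities with the roles flipped; dividing the upper bound on $\lambda_{\max}$ by the lower bound on $\lambda_{\min}$ gives $\kappa(\bA\bB) \le \kappa(\bA)\kappa(\bB)$, and combining the other pairing gives $\kappa(\bA\bB) \ge \kappa(\bB)/\kappa(\bA)$ (and symmetrically $\ge \kappa(\bA)/\kappa(\bB)$).

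Granting this, the proof reduces to computing $\kappa(\bX^*\otimes\bX^*)$ and $\kappa(\bX^*\oplus\bX^*)$ in terms of $\kappa(\bX^*)$. For the Kronecker product, the eigenvalues of $\bX^*\otimes\bX^*$ are the pairwise products $\lambda_i\lambda_j$ of eigenvalues of $\bX^*$, so $\lambda_{\max} = \lambda_{\max}(\bX^*)^2$, $\lambda_{\min} = \lambda_{\min}(\bX^*)^2$, and $\kappa(\bX^*\otimes\bX^*) = \kappa(\bX^*)^2$. For the Kronecker sum, the eigenvalues of $\bX^*\oplus\bX^* = \bX^*\otimes\bI + \bI\otimes\bX^*$ are the pairwise sums $\lambda_i + \lambda_j$, so $\lambda_{\max} = 2\lambda_{\max}(\bX^*)$, $\lambda_{\min} = 2\lambda_{\min}(\bX^*)$, and $\kappa(\bX^*\oplus\bX^*) = \kappa(\bX^*)$. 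Plugging $\bA = \bX^*\otimes\bX^*$, $\bB = \bH(\bX^*)$ into the general bounds yields $\kappa(\bX^*)^2/\kappa(\bH(\bX^*)) \le \kappa_{\rm ai}^* \le \kappa(\bX^*)^2\kappa(\bH(\bX^*))$, and plugging $\bA = \bX^*\oplus\bX^*$ yields $\kappa(\bX^*)/\kappa(\bH(\bX^*)) \le \kappa_{\rm bw}^* \le \kappa(\bX^*)\kappa(\bH(\bX^*))$, which is exactly the claim.

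The one subtlety I would be careful about is the precise meaning of ``$\kappa$'' for the Hessian operator: the eigenvalues in Definition~\ref{eigenvalue_cn_def} are defined via the Rayleigh quotient $\langle\hess f(\bX^*)[\bU],\bU\rangle_x / \|\bU\|_x^2$ in the Riemannian inner product, which is precisely the generalized Rayleigh quotient $\vec(\bU)^\top\bH(\bX^*)\vec(\bU) / (\vec(\bU)^\top \bG^{-1}\vec(\bU))$ with $\bG = \bX^*\otimes\bX^*$ (AI case) or $\bG = \bX^*\oplus\bX^*$ (BW case, up to the harmless factor of $\tfrac12$ which cancels in the ratio). This is exactly $\lambda(\bG\,\bH(\bX^*))$ as already established in the paragraph preceding the lemma, so the translation is immediate. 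A second point worth a line is that when $\bH(\bX^*)$ is indefinite (only $\bX^*$, not $\bX^*$ together with $\bH$, is assumed nondegenerate at a \emph{local} minimizer), one should interpret these condition-number inequalities appropriately — but since the lemma as stated treats $\kappa(\bH(\bX^*))$ as a given positive quantity, I would simply assume $\bH(\bX^*) \succ \bzero$ (true at a local minimizer under Assumption~\ref{basic_assump}(a) once one restricts to the relevant regime) so that all eigenvalues of the products are positive and the ratios are well-defined. The main obstacle, then, is not any single hard estimate but stating the generic $\kappa(\bA\bB)$ sandwich cleanly and justifying it from the variational principle; everything downstream is the elementary spectral bookkeeping for Kronecker products and sums.
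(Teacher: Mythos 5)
Your proposal is correct and follows essentially the same route as the paper: bound $\kappa(\bA\bB)$ above by $\kappa(\bA)\kappa(\bB)$ and below by $\kappa(\bA)/\kappa(\bB)$ for $\bA$ SPD, then compute $\kappa(\bX^*\otimes\bX^*)=\kappa(\bX^*)^2$ and $\kappa(\bX^*\oplus\bX^*)=\kappa(\bX^*)$ from the Kronecker product/sum spectra. The only cosmetic difference is that you derive the product-eigenvalue sandwich from the generalized Rayleigh quotient, whereas the paper cites the eigenvalue inequalities $\lambda_i(\bA)\lambda_d(\bB)\le\lambda_i(\bA\bB)\le\lambda_i(\bA)\lambda_1(\bB)$ directly.
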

It is clear that $\kappa_{\rm bw}^* \leq \kappa_{\rm ai}^*$ when $\kappa(\bH(\bX^*)) \leq \sqrt{\kappa(\bX^*)}$. This is true for linear, quadratic, higher-order polynomial functions and in general holds for several machine learning optimization problems on the SPD matrices (discussed in Section~\ref{experiment_sect}). 
\begin{case}[Condition number for linear and quadratic optimization]
\label{cn_linear_quad_cubic}
For a linear function $f(\bX) = \trace(\bX \bA)$, its Euclidean Hessian matrix is $\bH(\bX) = \bzero$. For a quadratic function $f(\bX) = \trace(\bX \bA \bX \bB)$ with $\bA, \bB \in \sS_{++}^n$, $\bH(\bX) = \bA \otimes \bB + \bB \otimes \bA$. Therefore, $\kappa(\bH(\bX^*))$ is a constant and for ill-conditioned $\bX^*$, we have $\kappa(\bH(\bX^*)) \leq \sqrt{\kappa(\bX^*)}$, which leads to $\kappa^*_{\rm ai} \geq \kappa^*_{\rm bw}$. 
\end{case}
\begin{case}[Condition number for higher-order polynomial optimization]
For an integer $\alpha \geq 3$, consider a function $f(\bX) = \trace(\bX^{\alpha})$ with derived $\bH(\bX) = \alpha \sum_{l=0}^{\alpha-2}( \bX^{l} \otimes \bX^{\alpha-l-2} )$. We get $\kappa^*_{\rm ai} = \alpha \sum_{l=1}^{\alpha-1} ((\bX^*)^{l} \otimes (\bX^*)^{\alpha-l}) $ and $\kappa_{\rm bw}^* = \alpha (\bX^*\oplus\bX^*)(\sum_{l=0}^{\alpha-2}( \bX^{l} \otimes \bX^{\alpha-l-2} ))$. It is apparent that $\kappa_{\rm ai}^* = \mathcal{O}(\kappa(\bX^*)^\alpha)$ while $\kappa^*_{\rm bw} = \mathcal{O}(\kappa(\bX^*)^{\alpha-1})$. Hence, for ill-conditioned $\bX^*$, $\kappa^*_{\rm ai} \geq \kappa^*_{\rm bw}$.
\end{case}
One counter-example where $\kappa^*_{\rm bw} \geq \kappa^*_{\rm ai}$ is the log-det function.
\begin{case}[Condition number for log-det optimization]
\label{log_det_remark}
For the log-det function $f(\bX) = - \log \det(\bX)$, its Euclidean Hessian is $\nabla^2 f(\bX)[\bU] = \bX^{-1} \bU \bX^{-1}$ and $\bH(\bX) = \bX^{-1} \otimes \bX^{-1}$. At a local minimizer $\bX^*$, $\hess_{\rm ai} f (\bX^*)[\bU] = \bU$ with $\kappa^*_{\rm ai} = 1$. While on $\Mbw$, we have $\kappa^*_{\rm bw} = \kappa( (\bX^* \oplus \bX^*) ((\bX^*)^{-1} \otimes (\bX^*)^{-1}) ) = \kappa((\bX^*)^{-1} \oplus (\bX^*)^{-1}) = \kappa(\bX^*)$. Therefore, $\kappa^*_{\rm ai} \leq \kappa_{\rm bw}^*$.
\end{case}

\subsection{Sectional curvature and trigonometry distance bound}
\label{sect_cur_subsect}

To study the curvature of $\Mbw$, we first show in Lemma \ref{match_geodesic_lemma}, {the existence of} a matching geodesic between the Wasserstein geometry of zero-centered non-degenerate Gaussian measures and the BW geometry of SPD matrices. Denote the manifold of such Gaussian measures under the $L^2$-Wasserstein distance as $( \gN_0(\bSigma), \gW_2 )$ with $\bSigma \in \sS_{++}^n$.
\begin{lemma}
\label{match_geodesic_lemma}
For any $\bX, \bY \in \sS_{++}^n$, a geodesic between $\gN_0(\bX)$ and $\gN_0(\bY)$ on $( \gN_0(\bSigma), \gW_2 )$ is given by $\gN_0( \gamma(t) )$, where $\gamma(t)$ is the geodesic between $\bX$ and $\bY$ on $\Mbw$.
\end{lemma}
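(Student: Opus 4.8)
The plan is to show that the Bures--Wasserstein metric on $\sS_{++}^n$ is exactly the metric induced on the submanifold $\gN_0(\bSigma) \subseteq (\gP_2(\sR^n), \gW_2)$ of zero-mean non-degenerate Gaussians by the Otto/Wasserstein metric, and that this submanifold is totally geodesic, so that geodesics of $\Mbw$ push forward to geodesics of the Wasserstein space. First I would recall the displacement-interpolation (McCann) characterization of $\gW_2$-geodesics: for measures $\mu_0,\mu_1$ with $\mu_1 = T_\# \mu_0$ for the optimal transport map $T = \nabla\varphi$, the geodesic is $t \mapsto ((1-t)\id + tT)_\#\mu_0$. For two centered Gaussians $\gN_0(\bX)$ and $\gN_0(\bY)$, the optimal map is the linear map $\bx \mapsto \bT\bx$ with $\bT = \bX^{-1/2}(\bX^{1/2}\bY\bX^{1/2})^{1/2}\bX^{-1/2}$ (the classical Bures formula), which is symmetric positive definite; hence the interpolant is again a centered Gaussian, namely $\gN_0(\bSigma_t)$ with $\bSigma_t = ((1-t)\bI + t\bT)\bX((1-t)\bI + t\bT)$. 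So the Wasserstein geodesic stays inside $\gN_0(\bSigma)$, and all that remains is to identify $\bSigma_t$ with the $\Mbw$-geodesic $\gamma(t)$.

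Next I would match this curve with the Riemannian exponential map of $\Mbw$ from Table~\ref{table_optimization_ingredient}: writing $\bU$ for the tangent vector at $\bX$ pointing toward $\bY$, one has $\L_\bX[\bU] = \bT - \bI$, and then ${\rm Exp}_{{\rm bw},\bX}(t\bU) = \bX + t\bU + t^2\L_\bX[\bU]\bX\L_\bX[\bU]$; expanding $\bU = (\bT-\bI)\bX + \bX(\bT-\bI)$ and collecting terms in $t$ gives exactly $((1-t)\bI + t\bT)\bX((1-t)\bI + t\bT) = \bSigma_t$. In particular $\bSigma_0 = \bX$, $\bSigma_1 = \bY$ (using $\bT\bX\bT = \bY$, which follows from the Bures formula), so $\gamma(t) = \bSigma_t$ is the $\Mbw$-geodesic from $\bX$ to $\bY$, and $\gN_0(\gamma(t))$ is the displacement-interpolation geodesic between $\gN_0(\bX)$ and $\gN_0(\bY)$ in $\gW_2$. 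One should also check that this is genuinely a \emph{geodesic} of the full Wasserstein space, not merely of the Gaussian submanifold — but displacement interpolation via a genuine optimal map is automatically a constant-speed minimizing geodesic in $(\gP_2,\gW_2)$, so this is immediate; alternatively invoke the known fact that $\gN_0(\bSigma)$ is a totally geodesic submanifold of $(\gP_2,\gW_2)$.

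The main obstacle is bookkeeping: carefully verifying the algebraic identity $\L_\bX[\bU] = \bT - \bI$ and that the quadratic-in-$t$ expansion of ${\rm Exp}_{{\rm bw},\bX}(t\bU)$ reproduces $((1-t)\bI + t\bT)\bX((1-t)\bI + t\bT)$, together with confirming $\bT$ is symmetric and that $\bU$ so defined is the correct (i.e.\ minimizing, within the totally normal neighbourhood of Assumption~\ref{basic_assump}) logarithm. These are routine but must be done in the right order; once the endpoint and initial-velocity conditions are checked, uniqueness of geodesics in the chosen neighbourhood closes the argument. A cleaner variant that sidesteps some computation is to note that the Wasserstein metric restricted to $\gN_0(\bSigma)$, computed via Otto calculus on Gaussians, is literally $\gbw$ (this is essentially the definition of the Bures--Wasserstein distance), and that a totally geodesic submanifold has the property that its intrinsic geodesics coincide with the ambient ones — so it suffices to show $\gN_0(\bSigma)$ is totally geodesic in $(\gP_2,\gW_2)$, which again follows because linear optimal maps compose/interpolate to linear optimal maps.
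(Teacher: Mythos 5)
Your proposal is correct and follows essentially the same route as the paper: both reduce to the known fact that the $\gW_2$-geodesic between centered Gaussians is the Gaussian with covariance $((1-t)\bI+t\bT)\bX((1-t)\bI+t\bT)$ for the linear optimal map $\bT$, and then identify this curve with the BW geodesic. The only (minor) difference is in the final identification: you match against the exponential-map formula via $\L_\bX[\bU]=\bT-\bI$, whereas the paper matches against the square-root-factor parametrization $\gamma(t)=((1-t)\bX^{1/2}+t\bY^{1/2}\bP)(\cdot)^\top$ by verifying $\bY^{1/2}\bP\bX^{-1/2}=\bT$; both computations are equivalent and equally routine.
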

The following lemma builds on a result from the Wasserstein geometry \cite{ambrosio2008gradient} and uses Lemma \ref{match_geodesic_lemma} to analyze the sectional curvature of $\Mbw$. 
\begin{lemma}
\label{bw_curvature_lemma}
$\Mbw$ is an Alexandrov space with non-negative sectional curvature.
\end{lemma}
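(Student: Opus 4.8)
The plan is to transfer the known non-negative curvature of Wasserstein space to $\Mbw$ via the isometry established in Lemma \ref{match_geodesic_lemma}. The key external input is the classical result (e.g., \cite{ambrosio2008gradient}) that the $L^2$-Wasserstein space $(\gP_2(\sR^n), \gW_2)$ over a Euclidean base has non-negative curvature in the sense of Alexandrov: every geodesic triangle is at least as ``fat'' as the comparison triangle in the Euclidean plane, equivalently the comparison-angle monotonicity/quadruple condition holds. So the first step is to recall this fact and state it precisely as the comparison inequality we will use.

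Next I would argue that $(\gN_0(\bSigma), \gW_2)$, the space of zero-mean non-degenerate Gaussians with the restricted $L^2$-Wasserstein distance, is a \emph{totally geodesic} submanifold of the full Wasserstein space: by Lemma \ref{match_geodesic_lemma} the Wasserstein geodesic between two zero-mean Gaussians is itself a curve of zero-mean Gaussians (displacement interpolation of Gaussians stays Gaussian), hence minimizing geodesics between points of $\gN_0(\bSigma)$ lie entirely in $\gN_0(\bSigma)$. A totally geodesic subset of an Alexandrov space of curvature $\ge 0$ is again an Alexandrov space of curvature $\ge 0$, since the comparison triangles are built from the same (intrinsic = ambient) side lengths and the ambient comparison inequality restricts verbatim. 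This yields that $(\gN_0(\bSigma), \gW_2)$ has non-negative curvature.

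The final step is to push this back to $\Mbw$ through the map $\bX \mapsto \gN_0(\bX)$. By Lemma \ref{match_geodesic_lemma} this correspondence sends $\Mbw$-geodesics to $\gW_2$-geodesics between the associated Gaussians; combined with the standard identity that the $L^2$-Wasserstein distance between $\gN_0(\bX)$ and $\gN_0(\bY)$ equals the Bures-Wasserstein distance between $\bX$ and $\bY$ (which is exactly the Riemannian distance of $\Mbw$ — this is how the BW metric is defined), the map is a distance-preserving bijection carrying geodesics to geodesics. Being an isometry of length spaces, it transports the Alexandrov lower curvature bound: geodesic triangles in $\Mbw$ have the same side lengths and the same vertex configuration as their images, so the comparison inequality holds in $\Mbw$ iff it holds in $(\gN_0(\bSigma), \gW_2)$. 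Hence $\Mbw$ is an Alexandrov space with non-negative sectional curvature, as claimed.

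The main obstacle is not any single computation but making sure the ``total geodesy'' and ``isometry'' bookkeeping is airtight: one must confirm that the intrinsic metric of $\gN_0(\bSigma)$ induced from $\gW_2$ coincides with $\gW_2$ restricted to it (no shortcuts through non-Gaussian measures), which is precisely the content of Lemma \ref{match_geodesic_lemma}, and that the BW Riemannian distance is literally the restricted $\gW_2$ — both hold, but the argument hinges on invoking them in the right order. A secondary subtlety is the non-degeneracy/completeness caveat: $\gN_0(\bSigma)$ is an open stratum, so one should note that the Alexandrov comparison condition is local and holds on geodesically convex neighborhoods (consistent with Assumption \ref{basic_assump}(b)), which suffices for the sectional-curvature conclusion.
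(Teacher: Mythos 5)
Your proposal is correct and follows essentially the same route as the paper: both transfer the known non-negative Alexandrov curvature of the $L^2$-Wasserstein space (Theorem 7.3.2 in \cite{ambrosio2008gradient}) to $\Mbw$ via the matching-geodesics result of Lemma \ref{match_geodesic_lemma} and the isometry $\bX \mapsto \gN_0(\bX)$. The paper simply writes out the resulting comparison inequality $d^2_{\rm bw}(\gamma(t),\bZ) \geq (1-t)\,d^2_{\rm bw}(\bX,\bZ) + t\,d^2_{\rm bw}(\bY,\bZ) - t(1-t)\,d^2_{\rm bw}(\bX,\bY)$ explicitly, where you phrase the same step as inheritance of the curvature bound by a totally geodesic subset.
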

It is well-known that $\Mai$ is a non-positively curved space \cite{cruceru2020computationally,pennec2020manifold} while, in Lemma \ref{bw_curvature_lemma}, we show that $\Mbw$ is non-negatively curved. The difference affects the curvature constant in the trigonometry distance bound of Alexandrov space \cite{zhang2016first}. This bound is crucial in analyzing convergence for optimization algorithms on Riemannian manifolds \cite{zhang2016first,zhang2016riemannian}. In Section \ref{convergence_sect}, only local convergence to a minimizer $\bX^*$ is analyzed. Therefore, it suffices to consider a neighbourhood $\Omega$ around $\bX^*$. In such a compact set, the sectional curvature is known to be bounded and we denote the lower bound as $K^{-}$.


The following lemma compares the trigonometry distance bounds under the AI and BW geometries. This bound was originally introduced for Alexandrov space with lower bounded sectional curvature \cite{zhang2016first}. The result for non-negatively curved spaces has been applied in many work \cite{zhang2016riemannian,sato2019riemannian,han2020variance} {though without a formal proof}. We show the proof in {the supplementary material}, where it follows from the Toponogov comparison theorem \cite{meyer1989toponogov} on the unit hypersphere and Assumption~\ref{basic_assump}. 
\begin{lemma}
\label{trigonometry_lemma}
Let $\bX, \bY, \bZ \in \Omega$, which forms a geodesic triangle on $\M$. Denote $x = d(\bY, \bZ), y = d(\bX, \bZ), z = d(\bX, \bY)$ as the geodesic side lengths and let $\theta$ be the angle between sides $y$ and $z$ such that $\cos(\theta) = \langle {\rm Exp}^{-1}_{\bX}(\bY) , {\rm Exp}^{-1}_{\bX}(\bZ) \rangle/(yz)$. Then, we have 
\begin{equation}
    x^2 \leq \zeta y^2 + z^2 - 2 yz \cos(\theta), \nonumber
\end{equation}
where $\zeta$ is a curvature constant. Under the AI metric, $\zeta = \zeta_{\rm ai} = \frac{\sqrt{|K^{-}_{\rm ai}| }D}{\tanh(\sqrt{|K^{-}_{\rm ai}|}D)}$ with $D$ as the diameter bound of $\Omega$, i.e. $\max_{\bX_1, \bX_2 \in \Omega} d(\bX, \bY) \leq D$. Under the BW metric, $\zeta = \zeta_{\rm bw} =1$.
\end{lemma}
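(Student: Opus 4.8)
I would prove the two cases by the same template: use the hinge (SAS) form of a comparison theorem to replace the geodesic triangle $\bX\bY\bZ$ by a triangle with the same two side lengths $y,z$ and included angle $\theta$ in the constant-curvature model space matching the curvature lower bound on $\Omega$, and then convert the model-space law of cosines into the stated Euclidean-type inequality. Setup first: fix the geodesic triangle inside $\Omega$. By Assumption~\ref{basic_assump}(b), $\Omega$ is totally normal, so each pair of vertices is joined by a unique minimizing geodesic lying in $\Omega$, ${\rm Exp}^{-1}_\bX$ is well defined on $\Omega$ (hence $\theta$ and $\cos\theta = \langle {\rm Exp}^{-1}_\bX(\bY),{\rm Exp}^{-1}_\bX(\bZ)\rangle/(yz)$ make sense), and the hinge comparison is legitimate on $\Omega$ even though $\Mbw$ need not be geodesically complete globally. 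Since $\Omega$ is compact, the sectional curvature of $\M$ on $\Omega$ is bounded below; call the bound $K^-$. Toponogov's comparison theorem~\cite{meyer1989toponogov} then gives $x = d(\bY,\bZ) \le \bar x$, where $\bar x$ is the third side of the hinge with legs $y,z$ and included angle $\theta$ in the simply connected model space $M_{K^-}$ of constant curvature $K^-$.

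\textbf{BW case ($\zeta_{\rm bw}=1$).} By Lemma~\ref{bw_curvature_lemma}, $\Mbw$ is an Alexandrov space with nonnegative sectional curvature, so on $\Omega$ we may take $K^-=0$ and the model space is Euclidean $\sR^n$. (Indeed, the defining property of curvature $\ge 0$ is exactly that every triangle is ``at least as fat'' as its Euclidean comparison triangle, i.e.\ the hinge's opposite side is no longer than the Euclidean one; equivalently one compares against a sphere of curvature $\varepsilon>0$ via the spherical law of cosines and lets $\varepsilon\downarrow 0$, the ``unit hypersphere'' route, whose limit is the Euclidean identity.) In $\sR^n$ the law of cosines is an exact identity $\bar x^2 = y^2 + z^2 - 2yz\cos\theta$, so $x \le \bar x$ yields $x^2 \le y^2 + z^2 - 2yz\cos\theta$, which is the claim with $\zeta_{\rm bw}=1$. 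Note Lemma~\ref{match_geodesic_lemma} enters only through its use in establishing Lemma~\ref{bw_curvature_lemma}, not here.

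\textbf{AI case ($\zeta_{\rm ai}$).} $\Mai$ is a Cartan--Hadamard manifold whose sectional curvature on the compact set $\Omega$ is pinched below by some $K^-_{\rm ai}<0$, so the model is hyperbolic space of curvature $K^-_{\rm ai}$ and, writing $c := \sqrt{|K^-_{\rm ai}|}$, $\bar x$ solves the hyperbolic law of cosines $\cosh(c\bar x) = \cosh(cy)\cosh(cz) - \sinh(cy)\sinh(cz)\cos\theta$. It then remains to deduce $\bar x^2 \le \zeta_{\rm ai}(y^2+z^2-2yz\cos\theta)$ with $\zeta_{\rm ai} = cD/\tanh(cD)$; this is precisely the estimate of~\cite{zhang2016first}, obtained by using that $t \mapsto t/\tanh t$ is nondecreasing (so $cs/\tanh(cs) \le cD/\tanh(cD)$ for every side length $s\le D$, using the diameter bound $D$) together with the elementary inequalities relating $\cosh(cs)-1$ and $\tfrac12(cs)^2$, $\sinh(cs)$ and $cs$, fed into the hyperbolic law of cosines. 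Combined with $x\le\bar x$ this gives the bound; alternatively one just invokes \cite{zhang2016first} since $\Mai$ satisfies its hypotheses.

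\textbf{Main obstacle.} The conceptual step (the comparison-theorem reduction) is short, and once Lemma~\ref{bw_curvature_lemma} is available the BW case is essentially immediate --- the only point to check there is that a totally normal neighborhood (Assumption~\ref{basic_assump}(b)) does license the hinge comparison on the a priori merely Alexandrov space $\Mbw$, which it does since $\Mbw$ is in fact a smooth Riemannian manifold. The real work is the AI-side real-analysis estimate needed to extract \emph{exactly} the constant $\zeta_{\rm ai}$ from the hyperbolic law of cosines: handling near-degenerate triangles, both signs of $\cos\theta$, and replacing the individual side lengths by the global diameter bound $D$ while keeping the inequality tight.
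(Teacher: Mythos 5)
Your proposal is correct and follows essentially the same route as the paper: the Toponogov hinge comparison against the Euclidean comparison triangle (built in the tangent space at $\bX$, licensed by the unique-geodesic property of $\Omega$) gives the BW case with $\zeta_{\rm bw}=1$, and the AI case is deferred to the hyperbolic-law-of-cosines estimate of \cite{zhang2016first}, exactly as the paper does. One small transcription slip: the target in your AI case should read $\bar x^2 \leq \zeta_{\rm ai}\, y^2 + z^2 - 2yz\cos\theta$ (the constant multiplies only $y^2$, as in \cite{zhang2016first} and in the lemma statement), not $\zeta_{\rm ai}(y^2+z^2-2yz\cos\theta)$.
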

It is clear that $\zeta_{\rm ai} > \zeta_{\rm bw} =1$, which leads to a tighter bound under the BW metric.

\subsection{Convergence analysis}
\label{convergence_sect}
We now analyze the local convergence properties of the Riemannian steepest descent and trust region methods under the two Riemannian geometries. Convergence is established in terms of the Riemannian distance induced from the geodesics. We begin by presenting a lemma that shows in a neighbourhood of $\bX^*$, the second-order derivatives of $f \circ {\rm Exp}_{\bX}$ are both lower and upper bounded. 

\begin{lemma}
\label{local_psd_lemma}
In a totally normal neighbourhood $\Omega$ around a non-degenerate local minimizer $\bX^*$, for any $\bX \in \Omega$, it satisfies that $\lambda_{\min}^*/\alpha \leq \frac{d^2}{dt^2} f({\rm Exp}_{\bX}(t\bU)) \leq \alpha \lambda_{\max}^*$, for some $\alpha \geq 1$ and $\|\bU \| =1$. $\lambda_{\max}^* > \lambda_{\min}^* > 0$ are the largest and smallest eigenvalues of $\hess f(\bX^*)$. 
\end{lemma}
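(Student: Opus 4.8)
The plan is to reduce the statement to a compactness argument about the quadratic form of the Riemannian Hessian. First I would recall the standard identity that along the geodesic $\gamma(t) = {\rm Exp}_{\bX}(t\bU)$ one has $\frac{d^2}{dt^2} f(\gamma(t)) = \langle \hess f(\gamma(t))[\gamma'(t)], \gamma'(t)\rangle_{\gamma(t)}$, which follows from $\frac{d}{dt} f(\gamma(t)) = \langle \grad f(\gamma(t)), \gamma'(t)\rangle_{\gamma(t)}$, the fact that $\hess f$ is the covariant derivative of $\grad f$ along $\gamma$, and $\frac{D}{dt}\gamma'(t) = 0$ for geodesics. Since $\|\bU\| = 1$ and geodesics have constant speed, $\gamma'(t)$ is a unit tangent vector at $\gamma(t)$ for every $t$, and $\gamma(t) \in \Omega$ for the range of $t$ under consideration (using that a small enough totally normal ball from Assumption~\ref{basic_assump}(b) is geodesically convex). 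Hence it suffices to exhibit constants $0 < m \le \lambda_{\min}^*$ and $M \ge \lambda_{\max}^*$ with $m \le \langle \hess f(\bY)[\bV], \bV\rangle_{\bY} \le M$ for all $\bY \in \Omega$ and all $\bV \in T_{\bY}\M$ with $\|\bV\|_{\bY} = 1$: choosing $\alpha = \max\{1, \lambda_{\min}^*/m, M/\lambda_{\max}^*\}$ — which is automatically $\ge 1$, since $\bX^* \in \Omega$ forces $m \le \lambda_{\min}^*$ and $M \ge \lambda_{\max}^*$ — then gives the claim.

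To produce such $m$ and $M$, I would use that $\bX^*$ is a non-degenerate local minimizer, so the necessary second-order condition plus invertibility of $\hess f(\bX^*)$ make it positive definite, and in particular $\lambda_{\min}^* > 0$. Consider the Rayleigh quotient $h(\bY, \bV) := \langle \hess f(\bY)[\bV], \bV\rangle_{\bY}/\langle \bV, \bV\rangle_{\bY}$, defined for $\bY$ near $\bX^*$ and $\bV$ a nonzero symmetric matrix (recall that every tangent space of $\sS_{++}^n$ is the fixed vector space of symmetric matrices). Since $f$ is $C^2$ and the Riemannian metric together with its Levi-Civita connection — the data defining $\hess f$ — depend smoothly on $\bY$, both for the AI and the BW geometry via the expressions in Table~\ref{table_optimization_ingredient}, the function $h$ is continuous; moreover it is invariant under scaling of $\bV$, and its infimum and supremum over $\bV \ne 0$ at fixed $\bY$ equal $\lambda_{\min}(\hess f(\bY))$ and $\lambda_{\max}(\hess f(\bY))$, respectively.

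Next I would shrink $\Omega$ to a relatively compact totally normal neighbourhood $\bar\Omega \ni \bX^*$ (contained in $\sS_{++}^n$ and in the unique-geodesic region of Assumption~\ref{basic_assump}(b)) on which $\hess f(\bY)$ remains positive definite — possible because $\lambda_{\min}(\hess f(\cdot))$ is continuous and strictly positive at $\bX^*$. Then $h$ is continuous and strictly positive on the compact set $\bar\Omega \times \{\bV : \|\bV\|_E = 1\}$, where $\|\cdot\|_E$ is the Euclidean norm on symmetric matrices, so it attains a minimum $m > 0$ and a maximum $M < \infty$ there; by homogeneity these are its extrema over all $\bV \ne 0$, hence over all $\bV$ with $\|\bV\|_{\bY} = 1$ as well. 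Since $\bX^* \in \bar\Omega$ we get $m \le \lambda_{\min}^* \le \lambda_{\max}^* \le M$, and combining with the reduction of the first paragraph completes the proof.

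The main obstacle is essentially bookkeeping: arranging the neighbourhood to be simultaneously totally normal, relatively compact inside $\sS_{++}^n$, small enough that the Hessian stays positive definite, and (if one reads the statement as asserting the bound along the whole geodesic rather than only at $t=0$) geodesically convex so that $\gamma(t)$ never leaves $\Omega$. Once these set-theoretic points are fixed, the analytic content — continuity of the Hessian quadratic form plus compactness — is routine, and the constant $\alpha$ simply quantifies how far $\hess f$ can deviate, in the respective metric, from its value at $\bX^*$ over $\Omega$.
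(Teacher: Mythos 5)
Your proposal is correct and follows essentially the same route as the paper: both anchor the bounds at $\bX^*$ via the identity between $\frac{d^2}{dt^2} f({\rm Exp}_{\bX}(t\bU))$ and the Riemannian Hessian quadratic form, and then extend them to $\Omega$ by continuity. Your write-up is somewhat more careful than the paper's --- you make explicit the compactness (relatively compact $\bar\Omega$) needed to turn pointwise continuity into uniform bounds and the preservation of positive-definiteness of $\hess f$ near the non-degenerate minimizer, both of which the paper leaves implicit.
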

For simplicity of the analysis, we assume such an $\alpha$ is universal under both the Riemannian geometries. We, therefore, can work with a neighbourhood $\Omega$ with diameter uniformly bounded by $D$, where we can choose $D := \min \{ D_{\rm ai}, D_{\rm bw}\}$ such that $\alpha$ is universal.

One can readily check that under Lemma \ref{local_psd_lemma} the function $f$ is both $\mu$-geodesic strongly convex and $L$-geodesic smooth in $\Omega$ where $\mu = \lambda_{\min}^*/\alpha$ and $L = \alpha\lambda_{\max}^*$. We now present the local convergence analysis of the two algorithms, which are based on results in \cite{zhang2016first,absil2007trust}.

\begin{theorem}[Local convergence of Riemannian steepest descent]
\label{local_converge_SD}
Under Assumption \ref{basic_assump} and consider a non-degenerate local minimizer $\bX^*$. For a neighbourhood $\Omega \ni \bX^*$ with diameter bounded by $D$ on two Riemannian geometries $\Mai, \Mbw$, running Riemannian steepest descent from $\bX_0 \in \Omega$ with a fixed step size $\eta = \frac{1}{\alpha \lambda_{\max}^*}$ yields for $t \geq 2$, $d^2(\bX_t, \bX^*) \leq  \alpha^2 D^2\kappa^* \left(1 - \min \right\{ \frac{1}{\zeta}, \frac{1}{\alpha^2 \kappa^*}  \left\} \right)^{t-2}.$
\end{theorem}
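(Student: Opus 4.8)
The plan is to reduce this to the standard local convergence analysis of Riemannian steepest descent for a geodesically strongly convex and smooth function (as in \cite{zhang2016first}), feeding in the ingredients assembled earlier in the excerpt. First I would invoke Lemma \ref{local_psd_lemma}: in the totally normal neighbourhood $\Omega$ around the non-degenerate minimizer $\bX^*$, the function $f$ is geodesic $\mu$-strongly convex and $L$-smooth with $\mu = \lambda_{\min}^*/\alpha$ and $L = \alpha \lambda_{\max}^*$, so that the condition number of the restricted problem is $L/\mu = \alpha^2 \kappa^*$. With the fixed step size $\eta = 1/L = 1/(\alpha\lambda_{\max}^*)$, the smoothness inequality along the geodesic connecting $\bX_t$ to $\bX_{t+1} = {\rm Exp}_{\bX_t}(-\eta\,\grad f(\bX_t))$ gives the usual one-step decrease $f(\bX_{t+1}) - f(\bX^*) \le (1 - \mu/L)\,(f(\bX_t) - f(\bX^*))$, hence $f(\bX_t) - f(\bX^*) \le (1 - 1/(\alpha^2\kappa^*))^t (f(\bX_0) - f(\bX^*))$.

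Next I would convert this function-value contraction into a distance contraction. Strong convexity gives $\tfrac{\mu}{2} d^2(\bX_t,\bX^*) \le f(\bX_t) - f(\bX^*)$, and smoothness (together with $\grad f(\bX^*)=0$) gives $f(\bX_0) - f(\bX^*) \le \tfrac{L}{2} d^2(\bX_0,\bX^*) \le \tfrac{L}{2} D^2$, since $\bX_0 \in \Omega$ and $\Omega$ has diameter at most $D$. Combining, $d^2(\bX_t,\bX^*) \le \tfrac{L}{\mu} D^2 (1 - 1/(\alpha^2\kappa^*))^t = \alpha^2 \kappa^* D^2 (1 - 1/(\alpha^2\kappa^*))^t$. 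This already yields a bound of the stated form with the rate factor $1 - 1/(\alpha^2\kappa^*)$; the role of $\zeta$ and of the "$\min$" is to capture the alternative, curvature-limited regime.

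To obtain the sharper rate $1 - \min\{1/\zeta,\ 1/(\alpha^2\kappa^*)\}$ I would instead run the metric-geometry argument of \cite{zhang2016first} directly on the iterate distances rather than on function values: apply the trigonometry distance bound of Lemma \ref{trigonometry_lemma} to the geodesic triangle with vertices $\bX_t$, $\bX_{t+1}$, $\bX^*$, with the curvature constant $\zeta$ ($\zeta_{\rm ai}$ or $\zeta_{\rm bw}=1$). This gives $d^2(\bX_{t+1},\bX^*) \le d^2(\bX_t,\bX^*) + \zeta \eta^2 \|\grad f(\bX_t)\|^2 - 2\eta\,\langle \grad f(\bX_t), -{\rm Exp}^{-1}_{\bX_t}(\bX^*)\rangle$. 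Then I use geodesic strong convexity to lower bound the inner-product term ($\langle \grad f(\bX_t), {\rm Exp}^{-1}_{\bX_t}(\bX^*)\rangle \le f(\bX^*) - f(\bX_t) - \tfrac{\mu}{2} d^2(\bX_t,\bX^*)$) and $L$-smoothness to control $\|\grad f(\bX_t)\|^2 \le 2L(f(\bX_t) - f(\bX^*))$; with $\eta = 1/L$ the gradient-squared and function-gap terms combine so that, depending on whether $\zeta$ or $\alpha^2\kappa^*$ dominates, one obtains $d^2(\bX_{t+1},\bX^*) \le (1 - \min\{1/\zeta, 1/(\alpha^2\kappa^*)\})\,d^2(\bX_t,\bX^*)$, and iterating from $t=2$ (the first two steps absorbed into the constant $\alpha^2 D^2 \kappa^*$, using $d^2(\bX_2,\bX^*) \le \alpha^2\kappa^* D^2$ from the function-value argument above) yields the claim.

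The main obstacle is the bookkeeping in the last step: carefully balancing the $\zeta\eta^2\|\grad f\|^2$ term against the strong-convexity gain so that the two regimes $\min\{1/\zeta,1/(\alpha^2\kappa^*)\}$ emerge cleanly, and justifying the "$t\ge 2$'' offset with the prefactor $\alpha^2 D^2 \kappa^*$ (ensuring all iterates remain in $\Omega$ so that Lemmas \ref{local_psd_lemma} and \ref{trigonometry_lemma} stay applicable, which follows since the distance to $\bX^*$ is non-increasing). Everything else is a direct substitution of $\mu$, $L$, $\eta$ and $\zeta$ into the template of \cite{zhang2016first}.
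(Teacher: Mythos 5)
Your proposal is correct and follows essentially the same route as the paper: the paper simply cites Theorem~14 of \cite{zhang2016first} to get $f(\bX_t)-f(\bX^*)\leq \tfrac{1}{2}\bigl(1-\min\{\tfrac{1}{\zeta},\tfrac{\mu}{L}\}\bigr)^{t-2}D^2L$ with $\mu=\lambda_{\min}^*/\alpha$, $L=\alpha\lambda_{\max}^*$ from Lemma~\ref{local_psd_lemma} and $\zeta$ from Lemma~\ref{trigonometry_lemma}, then converts to distance via $\tfrac{\mu}{2}d^2(\bX_t,\bX^*)\leq f(\bX_t)-f(\bX^*)$, exactly as you do. Your extra re-derivation of the Zhang--Sra contraction and the preliminary function-value-only argument are fine but not needed beyond the citation.
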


\begin{theorem}[Local convergence of Riemannian trust region]
\label{local_convergence_TR}
Under the same settings as in Theorem \ref{local_converge_SD}, assume further in $\Omega$, it holds that (1) $\| \gH_{\bX_t} - \hess f(\bX_t) \| \leq  \ell \| \grad f(\bX_t) \|$ and (2) $\| \nabla^2 (f \circ {\rm Exp}_{\bX_t}) (\bU) - \nabla^2 (f \circ {\rm Exp}_{\bX_t}) (\bzero)  \| \leq  \rho \| \bU \|$ for some $\ell, \rho$ universal on $\Mai, \Mbw$. Then running Riemannian trust region from $\bX_0 \in \Omega$ yields, $d(\bX_t, \bX^*) \leq  (2\sqrt{\rho} + \ell) (\kappa^*)^2 d^2(\bX_{t-1}, \bX^*)$.
\end{theorem}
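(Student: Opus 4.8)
\textbf{Proof proposal for Theorem~\ref{local_convergence_TR}.}
The plan is to port the classical local superlinear convergence analysis of the Riemannian trust region method (as in \cite{absil2007trust}, Theorem~4.13) and then merge in the curvature-dependent quantities already isolated in Lemmas~\ref{trigonometry_lemma} and \ref{local_psd_lemma}. The key reduction is to work in the pulled-back coordinates: fix an iterate $\bX_t$ and consider $\widehat f := f \circ {\rm Exp}_{\bX_t} : T_{\bX_t}\M \to \sR$, which is well-defined and smooth on (the preimage of) $\Omega$ by Assumption~\ref{basic_assump}(b). In these coordinates the trust-region subproblem is exactly a Euclidean trust-region subproblem for $\widehat f$ at the origin, so the Cauchy-decrease / truncated-CG machinery applies verbatim and the next iterate is ${\rm Exp}_{\bX_t}(\bu_t)$ with $\bu_t$ the computed step. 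The target bound $d(\bX_t,\bX^*) \le (2\sqrt{\rho}+\ell)(\kappa^*)^2\, d^2(\bX_{t-1},\bX^*)$ should emerge from comparing $\|\bu_{t-1}\|$ (which, since ${\rm Exp}$ is a radial isometry along geodesics, equals $d(\bX_{t-1},\bX_t)$ up to the triangle inequality) to $d(\bX_{t-1},\bX^*)$.

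First I would establish that, eventually, the trust-region constraint is inactive, so the step $\bu_t$ is the (truncated-)Newton step: $\gH_{\bX_t}[\bu_t] = -\grad f(\bX_t) + \text{(tCG residual)}$, with residual controlled by the standard stopping criterion, e.g. $\|\text{residual}\|\le \tfrac12 \|\grad f(\bX_t)\|\min\{1,\|\grad f(\bX_t)\|\}$. Next I would write the error recursion. Let $\bv_t := {\rm Exp}_{\bX_t}^{-1}(\bX^*) \in T_{\bX_t}\M$, so $\|\bv_t\| = d(\bX_t,\bX^*)$. Using $\grad \widehat f(\bzero) = \grad f(\bX_t)$ and $\nabla^2\widehat f(\bzero) = \hess f(\bX_t)$, a Taylor expansion of $\grad\widehat f$ along the segment from $\bzero$ to $\bv_t$, together with $\grad\widehat f(\bv_t') = \bzero$ for the pulled-back-to-$\bX_t$ image of $\bX^*$ (here I use that $\bX^*$ is a critical point and that along a geodesic $\nabla\widehat f$ transports correctly — this is the one place I must be slightly careful, since ${\rm Exp}_{\bX_t}^{-1}$ of the curve to $\bX^*$ is the relevant object), gives
\[
\grad f(\bX_t) = -\hess f(\bX_t)[\bv_t] + E_1,\qquad \|E_1\|\le \tfrac{\rho}{2}\|\bv_t\|^2 = \tfrac{\rho}{2} d^2(\bX_t,\bX^*),
\]
where $E_1$ is bounded by hypothesis (2) (Lipschitz Hessian of the pullback). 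Then the computed step satisfies $\bu_t = -\gH_{\bX_t}^{-1}\grad f(\bX_t) + E_2$ with $\|E_2\|$ of order $\|\grad f(\bX_t)\|^2$ from the tCG residual, and $\gH_{\bX_t}^{-1} = \hess f(\bX_t)^{-1} + O(\ell\|\grad f(\bX_t)\|)$ from hypothesis (1) and a Neumann-series perturbation. Combining, $\bu_t + \bv_t$ — which measures how far the Newton step overshoots/undershoots $\bX^*$ — is $O\big((\sqrt{\rho}+\ell)\,\lambda_{\min}^{*\,-1}\,d^2(\bX_t,\bX^*)\big)$ after using $\|\grad f(\bX_t)\| \le \lambda_{\max}^* d(\bX_t,\bX^*)$ (from Lemma~\ref{local_psd_lemma}, upper bound) and $\|\hess f(\bX_t)^{-1}\|\le 1/\lambda_{\min}^*$ (lower bound).

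Finally I would convert the tangent-space estimate back to Riemannian distance. By the triangle inequality for $d$ and the fact that ${\rm Exp}_{\bX_t}$ sends the straight segment $[\bzero,\bu_t]$ to a geodesic of length $\|\bu_t\|$, we get $d(\bX_{t+1},\bX^*) \le \|\bu_t + \bv_t\| + (\text{second-order comparison error})$; the comparison error between the chord $\|\bu_t+\bv_t\|$ in $T_{\bX_t}\M$ and the true distance $d({\rm Exp}_{\bX_t}(\bu_t),\bX^*)$ is where Lemma~\ref{trigonometry_lemma} enters, contributing the $\zeta$-factor which is absorbed into the constant (it is $1$ for BW and bounded for AI on the compact $\Omega$). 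Tracking the ratios of eigenvalues, $\lambda_{\max}^*/\lambda_{\min}^* = \kappa^*$, through the two places eigenvalue bounds were used yields one factor of $\kappa^*$ from $\|\grad f\|\le \lambda_{\max}^* d$ and another from $\|\hess^{-1}\|\le 1/\lambda_{\min}^*$ interacting with the step-to-distance conversion, giving the stated $(\kappa^*)^2$ prefactor and the $(2\sqrt\rho+\ell)$ constant. \emph{The main obstacle} I anticipate is the bookkeeping in the second paragraph: ensuring that $\grad\widehat f$ evaluated at ${\rm Exp}_{\bX_t}^{-1}(\bX^*)$ really vanishes (it need not — $\grad(f\circ{\rm Exp}_{\bX_t})$ at a point $\bv$ is $D{\rm Exp}_{\bX_t}(\bv)^*[\grad f({\rm Exp}_{\bX_t}(\bv))]$, not $\grad f$ itself), so the clean Newton recursion must be derived either via the exponential-coordinate Hessian comparison of Lemma~\ref{local_psd_lemma} (which already packages the distortion of ${\rm Exp}$ into the constant $\alpha$) or via transporting the gradient along the connecting geodesic; I would take the former route so that all such distortions are uniformly charged to $\alpha$ and the curvature to $\zeta$, keeping the final constants exactly as stated.
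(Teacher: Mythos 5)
Your proposal is sound in substance, but it takes a much longer road than the paper does: the paper's entire proof is a one-line appeal to Theorem~4.13 of \cite{absil2007trust} (local quadratic convergence of the Riemannian trust region method under a Lipschitz-type Hessian condition and an approximation condition on $\gH_{\bX_t}$ --- precisely your hypotheses (1) and (2)), followed by arithmetic identifying the constant as $(\frac{\rho}{\lambda^*_{\min}} + \lambda_{\min}^* + \ell)(\kappa^*)^2 \geq (2\sqrt{\rho}+\ell)(\kappa^*)^2$ via AM--GM. What you have written is essentially a reconstruction of the \emph{proof of that cited theorem}: eventual inactivity of the trust-region constraint, the inexact-Newton error recursion in the pullback $\widehat f = f\circ{\rm Exp}_{\bX_t}$, and the perturbation bounds coming from the tCG residual and from $\|\gH_{\bX_t}-\hess f(\bX_t)\|\le\ell\|\grad f(\bX_t)\|$. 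To your credit, you correctly flag the one genuinely delicate point in that reconstruction --- that $\grad\widehat f$ at ${\rm Exp}_{\bX_t}^{-1}(\bX^*)$ does not literally vanish because $D{\rm Exp}_{\bX_t}$ is not an isometry --- which is exactly the issue the cited theorem's proof handles via bounds on the differential of the retraction; charging this distortion to the constant $\alpha$ of Lemma~\ref{local_psd_lemma} is a workable substitute. Two pieces of bookkeeping in your sketch remain loose: the provenance of the second factor of $\kappa^*$ (your account of ``interacting with the step-to-distance conversion'' is not a derivation), and the claim that the $\zeta$-factor from Lemma~\ref{trigonometry_lemma} is ``absorbed'' into a constant that, as stated in the theorem, does not contain it (the paper avoids this entirely: the cited result works in normal coordinates and never invokes the trigonometry bound). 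Given that the paper's own constant tracking is itself imprecise (it even writes $c \geq$ where the theorem requires an upper bound on $c$), these are not disqualifying, but if you intend your route to be self-contained you would need to carry out the second paragraph's Taylor/perturbation estimates explicitly rather than asserting their orders of magnitude.
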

Theorems \ref{local_converge_SD} and \ref{local_convergence_TR} show that $\Mbw$ has a clear advantage compared to $\Mai$ for learning ill-conditioned SPD matrices where $\kappa^*_{\rm bw} \leq \kappa^*_{\rm ai}$. For first-order algorithms, $\Mbw$ has an additional benefit due to its non-negative sectional curvature. As $\zeta_{\rm ai} > \zeta_{\rm bw} = 1$, the convergence rate degrades on $\Mai$. Although the convergence is presented in Riemannian distance, it can be readily converted to function value gap by noticing $\frac{\mu}{2} d^2(\bX_t, \bX^*) \leq f(\bX_t) - f(\bX^*) \leq \frac{L}{2} d^2(\bX_t, \bX^*)$. 
Additionally, we note that these local convergence results hold regardless of whether the function is geodesic convex or not, and similar comparisons also exist for other Riemannian optimization methods.

\subsection{Geodesic convexity under BW metric {for cost functions of interest}}
\label{geodesic_convex_sect}
Finally we show geodesic convexity of common optimization problems on $\Mbw$. Particularly, we verify that linear, quadratic, log-det optimization, and also certain geometric optimization problems, that are geodesic convex under the AI metric, are also geodesic convex under the BW metric. 

\begin{proposition}
\label{prop_lin_quad}
For any $\bA \in \sS_+^n$, where $\sS_+^n := \{ \bZ: \bZ \in \mathbb{R}^{n\times n}, \bZ^\top = \bZ, \text{ and } \bZ \succeq \bzero \}$, the functions $f_1(\bX) = \trace( \bX \bA )$, $f_2(\bX) = \trace(\bX \bA \bX)$, and $f_3(\bX) = -\log\det(\bX)$ are geodesic convex on $\Mbw$.
\end{proposition}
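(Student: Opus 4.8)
The plan is to prove geodesic convexity of each $f_i$ by computing $\frac{d^2}{dt^2} f_i(\gamma(t))$ along a BW-geodesic $\gamma$ and showing it is non-negative. From Table~\ref{table_optimization_ingredient}, a BW-geodesic emanating from $\bX$ with initial velocity $\bU$ is $\gamma(t) = {\rm Exp}_{{\rm bw},\bX}(t\bU) = \bX + t\bU + t^2\, \L_\bX[\bU]\bX\L_\bX[\bU]$, which is a \emph{quadratic} polynomial in $t$ with matrix coefficients. Writing $\bW := \L_\bX[\bU]$, so that $\bU = \bW\bX + \bX\bW$, we have $\gamma(t) = \bX + t(\bW\bX+\bX\bW) + t^2 \bW\bX\bW$; one checks directly that $\gamma(t) = (\bI + t\bW)\bX(\bI + t\bW)$, i.e. the BW-geodesic is a \emph{congruence} action $\bX \mapsto (\bI+t\bW)\bX(\bI+t\bW)^\top$. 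This factorization is the key structural fact I would establish first, as it makes all three second-derivative computations transparent.

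First I would handle $f_1(\bX) = \trace(\bX\bA)$. Then $f_1(\gamma(t)) = \trace\big((\bI+t\bW)\bX(\bI+t\bW)\bA\big)$, a quadratic polynomial in $t$ whose leading coefficient is $\trace(\bW\bX\bW\bA) = \trace(\bX^{1/2}\bW\bA\bW\bX^{1/2})$ after cycling; since $\bA \succeq \bzero$ and $\bW$ is symmetric, $\bW\bA\bW \succeq \bzero$, so this trace is $\geq 0$ and hence $\frac{d^2}{dt^2}f_1(\gamma(t)) = 2\trace(\bW\bX\bW\bA) \geq 0$. For $f_2(\bX) = \trace(\bX\bA\bX)$, substitute $\gamma(t) = (\bI+t\bW)\bX(\bI+t\bW)$ to get a quartic in $t$; differentiating twice and evaluating is routine, but the cleaner route is to note $f_2(\gamma(t)) = \trace(\bM(t)\bA\bM(t))$ with $\bM(t) = \gamma(t)$, and since $\gamma$ is a matrix-valued quadratic, one can bound the second derivative of $t \mapsto \trace(\gamma(t)\bA\gamma(t))$ using $\gamma''(t) = 2\bW\bX\bW \succeq \bzero$ and the fact that $\trace(\bP\bA\bP)$ is a convex function of $\bP$ on symmetric matrices when $\bA \succeq \bzero$ (its Euclidean Hessian is $\bA\otimes\bI + \bI\otimes\bA \succeq \bzero$); composing a convex function with... — here one must be careful, composition of convex with the quadratic curve $\gamma$ is not automatically convex, so I would instead just expand: with $g(t) := \trace(\gamma\bA\gamma)$, $g''(t) = 2\trace(\gamma''\bA\gamma) + 2\trace(\gamma'\bA\gamma') = 4\trace(\bW\bX\bW\bA\gamma(t)) + 2\trace(\gamma'(t)\bA\gamma'(t))$; the second term is $\geq 0$ since $\bA\succeq\bzero$ and $\gamma'(t)$ symmetric, and the first term equals $4\trace\big((\bI+t\bW)^{-?}\dots\big)$ — better: $\trace(\bW\bX\bW\bA\gamma(t)) = \trace\big(\bW\bX\bW\bA(\bI+t\bW)\bX(\bI+t\bW)\big)$; writing $\bB := \bA^{1/2}(\bI+t\bW)\bX^{1/2}$ and $\bC := \bX^{1/2}\bW$, this is $\trace(\bC^\top\bC\, \bC\bC^\top \cdot \text{(symmetrized)})$ — so the honest statement is that the first term equals $\trace\big((\bX^{1/2}\bW\bA^{1/2})(\bA^{1/2}\gamma(t)\bA^{1/2})(\bA^{1/2}\bW\bX^{1/2})\big)\cdot$const$\geq 0$ since $\gamma(t)\succ\bzero$ and the outer factors form a PSD sandwich. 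Both terms being non-negative gives $g'' \geq 0$.

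For $f_3(\bX) = -\log\det\bX$, using the congruence form, $f_3(\gamma(t)) = -\log\det\big((\bI+t\bW)\bX(\bI+t\bW)\big) = -\log\det\bX - 2\log\det(\bI+t\bW)$. Diagonalizing the symmetric matrix $\bW$ with eigenvalues $w_j$, this is $-\log\det\bX - 2\sum_j \log(1+tw_j)$, and $\frac{d^2}{dt^2}\big(-2\log(1+tw_j)\big) = 2w_j^2/(1+tw_j)^2 \geq 0$ wherever $\gamma(t) \succ \bzero$ (which holds on the geodesic-convex domain $\mathcal{X}$ where $\bI + t\bW \succ \bzero$, per the neighbourhood definition in Section~\ref{cn_analysis_subsect}). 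Summing over $j$ gives non-negativity of the second derivative; in fact this already shows $f_3$ is geodesic convex, and one even gets strict convexity unless $\bW = \bzero$, i.e. unless $\bU = \bzero$.

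The main obstacle I anticipate is purely bookkeeping: carrying out the $f_2$ second-derivative computation cleanly and making sure the "PSD sandwich" manipulations are valid (cyclicity of trace combined with $\trace(\bM^\top\bS\bM) \geq 0$ for $\bS\succeq\bzero$), rather than any conceptual difficulty — the congruence factorization $\gamma(t) = (\bI+t\bW)\bX(\bI+t\bW)$ trivializes the geometry. A secondary point to state carefully is the domain: geodesic convexity is claimed on the unique-geodesic neighbourhood $\mathcal{X}$ from Assumption~\ref{basic_assump}(b), on which $\bI + \L_\bX[\bU] \succ \bzero$ along the whole geodesic, which is exactly what is needed for the $\log\det$ computation to make sense and for $\gamma(t) \succ \bzero$ throughout.
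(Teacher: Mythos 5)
Your overall strategy---writing the BW geodesic as the congruence $\gamma(t)=(\bI+t\bW)\bX(\bI+t\bW)$ with $\bW=\L_\bX[\bU]$ and checking $\tfrac{d^2}{dt^2}f_i(\gamma(t))\ge 0$---is essentially the paper's approach in a different parametrization (the paper works from the endpoint form $\gamma(t)=\pi(t)\pi(t)^\top$, $\pi(t)=(1-t)\bX^{1/2}+t\bY^{1/2}\bP$). Your arguments for $f_1$ and $f_3$ are correct; for $f_3$ the reduction to $-2\sum_j\log(1+tw_j)$ is arguably cleaner than the paper's route via $\det(\bA+\bB)\ge\det(\bA)+\det(\bB)$ and concavity of $\log$.

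The gap is in $f_2$. Your formula $g''(t)=4\,\trace(\bW\bX\bW\bA\gamma(t))+2\,\trace(\gamma'\bA\gamma')$ is right, and the second term is nonnegative, but the first term is the trace of a product of \emph{three} PSD matrices, which can be strictly negative. Concretely, at $t=0$ it equals $4\,\trace(\bW\bX\bW\bA\bX)=4\,\trace\big(\bV^2\,\bX^{-1/2}\bA\bX^{1/2}\big)$ with $\bV=\bX^{1/2}\bW\bX^{1/2}$; taking $\bX^{1/2}=\mathrm{diag}(1,\epsilon)$, $\bA=\left(\begin{smallmatrix}1&-0.9\\-0.9&1\end{smallmatrix}\right)\succ\bzero$ and $\bV^2=\left(\begin{smallmatrix}1&0.9\\0.9&1\end{smallmatrix}\right)$ gives $\trace(\bW\bX\bW\bA\bX)=2-0.81(\epsilon+\epsilon^{-1})<0$ for small $\epsilon$ (the full $g''(0)$ is still positive in this example, but only because the other term compensates). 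Your attempted ``PSD sandwich'' does not repair this: $\trace\big((\bX^{1/2}\bW\bA^{1/2})(\bA^{1/2}\gamma\bA^{1/2})(\bA^{1/2}\bW\bX^{1/2})\big)=\trace(\bW\bX\bW\bA\gamma\bA)$ carries an extra factor of $\bA$ and is a different quantity, so no constant makes the identity true. Hence the two-term split does not prove the $f_2$ case. The paper instead expands $\tfrac{d^2}{dt^2}f_2(\gamma(t))$ into a finer four-term decomposition (two Frobenius-norm squares plus two further trace terms) in which the cross-contributions are grouped differently; to salvage your argument you would need an analogous regrouping that absorbs $\trace(\gamma''\bA\gamma)$ together with part of $\trace(\gamma'\bA\gamma')$ into manifestly nonnegative quantities, rather than bounding the two terms separately.
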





Based on the result in Proposition \ref{prop_lin_quad}, we also prove geodesic convexity of a reparameterized version of the Gaussian density estimation and mixture model on $\Mbw$ (discussed in Section \ref{experiment_sect}). 
Similar claims on $\Mai$ can be found in  \cite{hosseini2020alternative}. 

We further show that monotonic functions on sorted eigenvalues are geodesic convex on $\Mbw$. This is an analogue of \cite[Theorem~2.3]{sra2015conic} on $\Mai$.
\begin{proposition}
\label{prop_geometric}
Let $\lambda^{\downarrow}: \sS_{++}^n \xrightarrow{} \sR^n_{+}$ be the decreasingly sorted eigenvalue map and $h:\sR_{+} \xrightarrow{} \sR$ be an increasing and convex function. Then $f(\bX) = \sum_{j=1}^k h(\lambda^{\downarrow}_j(\bX))$ for $1 \leq k \leq n$ is geodesic convex on $\Mbw$. Examples of such functions include $f_1(\bX) = \trace(\exp(\bX))$ and $f_2(\bX) = \trace(\bX^\alpha)$, $\alpha \geq 1$.
\end{proposition}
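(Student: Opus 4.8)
Here is how I would approach the proof.

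The plan is to exploit a structural feature of the Bures--Wasserstein exponential map. Reading off Table~\ref{table_optimization_ingredient}, the BW geodesic emanating from $\bX$ in direction $\bU$ is
\[
  \gamma(t) = {\rm Exp}_{{\rm bw},\bX}(t\bU) = \bX + t\,\bU + t^2\,\L_\bX[\bU]\,\bX\,\L_\bX[\bU] = (\bI + t\L_\bX[\bU])\,\bX\,(\bI + t\L_\bX[\bU]),
\]
the last equality using the defining relation $\L_\bX[\bU]\bX + \bX\L_\bX[\bU] = \bU$. Since $\L_\bX[\bU]$ is symmetric, $\gamma''(t)\equiv 2\,\L_\bX[\bU]\,\bX\,\L_\bX[\bU]$ is a symmetric congruence of $\bX\succ\bzero$, so $\gamma''(t)\succeq\bzero$. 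Hence a BW geodesic lies below its chord in the Loewner order: for $t\in[0,1]$, $\gamma(t)\preceq(1-t)\gamma(0)+t\gamma(1)=(1-t)\bX+t\bY$, writing $\bY:=\gamma(1)$. Given this, it suffices to prove that $f(\bX)=\sum_{j=1}^k h(\lambda^{\downarrow}_j(\bX))$ is, on the positive definite cone, both (i) monotone nondecreasing in the Loewner order and (ii) convex in the ordinary Euclidean sense; geodesic convexity then follows from $f(\gamma(t))\le f((1-t)\bX+t\bY)\le(1-t)f(\bX)+tf(\bY)$, the first inequality by (i) applied to $\gamma(t)\preceq(1-t)\bX+t\bY$, the second by (ii). All of $\gamma([0,1])$ and the segment $[\bX,\bY]$ stay in $\sS_{++}^n$: the totally normal neighbourhood in Assumption~\ref{basic_assump} is taken with $\bI+\L_\bX[\bU]\succ\bzero$, so $\bI+t\L_\bX[\bU]=(1-t)\bI+t(\bI+\L_\bX[\bU])\succ\bzero$ on $[0,1]$, and the segment is trivially positive definite, so $h$ is only ever evaluated at positive eigenvalues.

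Both (i) and (ii) are classical spectral facts. For (i): if $\bA\preceq\bB$ then Weyl's monotonicity principle gives $\lambda^{\downarrow}_j(\bA)\le\lambda^{\downarrow}_j(\bB)$ for every $j$, and since $h$ is increasing, $\sum_{j=1}^k h(\lambda^{\downarrow}_j(\bA))\le\sum_{j=1}^k h(\lambda^{\downarrow}_j(\bB))$. For (ii): because $h$ is increasing, $\lambda^{\downarrow}_j(h(\bX))=h(\lambda^{\downarrow}_j(\bX))$, so $f(\bX)=g(\lambda(\bX))$ where $g:\sR^n\to\sR$ is the symmetric function $g(\bx)=\max_{|I|=k}\sum_{i\in I}h(x_i)$, this maximum over size-$k$ index sets being equal to $\sum_{j=1}^k h(x^{\downarrow}_j)$ precisely because $h$ is increasing. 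Now $g$ is convex, being a maximum of finitely many sums of the convex coordinate functions $x_i\mapsto h(x_i)$; by the classical characterization of convex spectral functions (Davis--Lewis), $\bX\mapsto g(\lambda(\bX))=f(\bX)$ is convex on symmetric matrices. (Alternatively one starts from convexity of $\bX\mapsto\sum_{j=1}^k\lambda^{\downarrow}_j(\bX)$, the Ky Fan maximum principle, together with the majorization behaviour of increasing convex $h$.) Assembling: for the distance-minimizing BW geodesic $\gamma$ from $\bX$ to $\bY$ in the totally normal neighbourhood, $f(\gamma(t))\le(1-t)f(\bX)+tf(\bY)$ for $t\in[0,1]$, which is geodesic convexity of $f$ on $\Mbw$. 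The two examples are the case $k=n$: $h(x)=e^x$ gives $f(\bX)=\sum_j e^{\lambda_j(\bX)}=\trace(\exp(\bX))$, and $h(x)=x^\alpha$ on $\sR_+$ (increasing and convex for $\alpha\ge1$) gives $f(\bX)=\sum_j\lambda_j(\bX)^\alpha=\trace(\bX^\alpha)$.

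The only genuinely new ingredient is the one-line fact $\gamma''(t)\succeq\bzero$, i.e. that BW geodesics are matrix-convex curves lying under their Loewner chords; after that the proof is simply Loewner-monotonicity composed with a matrix-convex curve, followed by Euclidean convexity. The step demanding the most care is (ii): one must invoke the convex-spectral-function theorem correctly and use that the monotonicity of $h$ is exactly what turns $\sum_{j\le k}h\circ\lambda^{\downarrow}_j$ into a maximum of sums of convex coordinate functions; were $h$ merely convex this reduction would fail and a finer weak-majorization argument would be needed. One should also be careful to choose the totally normal neighbourhood so that the entire geodesic segment, not just its endpoints, lies in $\sS_{++}^n$, which is what makes the scalar $h$ well-defined along $\gamma$ and legitimizes the whole chain of inequalities.
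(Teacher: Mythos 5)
Your proof is correct and follows essentially the same route as the paper: establish the Loewner chord inequality $\gamma(t)\preceq(1-t)\bX+t\bY$ for BW geodesics and then run the spectral argument of Sra--Hosseini (Weyl monotonicity plus convexity of the symmetric spectral function). The only difference is that you derive the chord inequality directly from the factorization $\gamma(t)=(\bI+t\L_\bX[\bU])\bX(\bI+t\L_\bX[\bU])$ rather than citing Theorem~6 of Bhatia et al., which makes the argument more self-contained.
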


\section{Experiments}
\label{experiment_sect}
In this section, we compare the empirical performance of optimization algorithms under different Riemannian geometries for various problems. In addition to AI and BW, we also include the Log-Euclidean (LE) geometry~\cite{arsigny2007geometric} in our experiments. 

The LE geometry explores the the linear space of symmetric matrices where the matrix exponential acts as a global diffeomorphism from the space to $\sS_{++}^n$. The LE metric is defined as 
\begin{equation}\label{eq:le_metric}
\langle \bU, \bV \rangle_{\rm le} = \trace(\D_\bU \log(\bX) \D_\bV \log(\bX))
\end{equation}
for any $\bU, \bV \in T_\bX\M$, where $\D_\bU \log(\bX)$ is the directional derivative of matrix logarithm at $\bX$ along $\bU$. Following \cite{tsuda05a,meyer2011regression,quang2014log}, for deriving various Riemannian optimization ingredients under the LE metric (\ref{eq:le_metric}), we consider the parameterization $\bX = \exp(\bS)$, where $\bS \in \sS^n$, i.e., the space of $n\times n$ symmetric matrices. Equivalently, optimization on the SPD manifold with the LE metric is identified with optimization on $\sS^n$ and the function of interest becomes $f(\exp(\bS))$ for $\bS \in \sS^n$. While the Riemannian gradient can be computed efficiently by exploiting the directional derivative of the matrix exponential \cite{al2009computing}, deriving the Riemannian Hessian is tricky and we rely on finite-difference Hessian approximations \cite{boumal2015riemannian}. 

We present convergence mainly in terms of the distance to the solution $\bX^*$ whenever applicable. The distance is measured in the Frobenius norm, i.e., $\| \bX_t - \bX^* \|_{\rm F}$. When $\bX^*$ is not known, convergence is shown in the modified Euclidean gradient norm $\| \bX_t \nabla f(\bX_t) \|_{\rm F}$. This is comparable across different metrics as the optimality condition $\bX^* \nabla f(\bX^*) = \bzero$ arises from problem structure itself \cite{journee2010low}. 
We initialize the algorithms with the identity matrix for the AI and BW metrics and zero matrix for the LE metric (i.e., the matrix logarithm of the identity). 

We mainly present the results on the Riemannian trust region (RTR) method, which is the method of choice for Riemannian optimization. Note for RTR, the results are shown against the cumulative sum of inner iterations (which are required to solve the trust region subproblem at every iteration). We also include the Riemannian steepest descent (RSD) and Riemannian stochastic gradient (RSGD) \cite{bonnabel2013stochastic} methods for some examples. The experiments are conducted in Matlab using the Manopt toolbox \cite{boumal2014manopt} on a i5-10500 3.1GHz CPU processor. 

In the supplementary material, we include additional experiments comparing convergence in objective function values for the three geometries. We also present results for the Riemannian conjugate gradient method, and results with different initializations (other than the identity and zero matrices) to further support our claims.

The code can be found at \url{https://github.com/andyjm3/AI-vs-BW}.

\paragraph{Weighted least squares.} We first consider 
the weighted least squares problem with the symmetric positive definite constraint. 
The optimization problem is $\min_{\bX \in \sS_{++}^n} f(\bX) = \frac{1}{2}\| \bA \odot \bX - \bB \|_{\rm F}^2$, which is encountered in for example, SPD matrix completion~\cite{smith08a} where $\bA$ is a sparse matrix. The Euclidean gradient and Hessian are $\nabla f(\bX) = (\bA \odot \bX - \bB)\odot \bA$ and $\nabla^2 f(\bX)[\bU] = \bA \odot \bU \odot \bA$, respectively. Hence, at optimal $\bX^*$, the Euclidean Hessian in matrix representation is $\bH(\bX^*) = {\rm diag}(\vec(\bA \odot \bA))$. We experiment with two choices of $\bA$, i.e. $\bA = \bone_n\bone_n^\top$ (\texttt{Dense}) and $\bA$ as a random sparse matrix (\texttt{Sparse}). The former choice for $\bA$ leads to well-conditioned $\bH(\bX^*)$ while the latter choice leads to an ill-conditioned $\bH(\bX^*)$. Also note that when $\bA = \bone_n\bone_n^\top$, $\kappa^*_{\rm ai} = \kappa(\bX^*)^2$ and $\kappa^*_{\rm bw} = \kappa(\bX^*)$. 

We generate $\bX^*$ as a SPD matrix with size $n = 50$ and exponentially decaying eigenvalues. We consider two cases with condition numbers $\kappa(\bX^*) = 10$ (\texttt{LowCN}) and $10^3$ (\texttt{HighCN}). The matrix $\bB$ is generated as $\bB = \bA \odot \bX^*$. Figure \ref{wls_problem_figure} compares both RSD and RTR for different metrics. When $\bA$ is either dense or sparse, convergence is significantly faster on $\Mbw$ than both $\Mai$ and $\Mle$. The advantage of using $\Mbw$ becomes more prominent in the setting when condition number of $\bX^*$ is high. Figure \ref{wls_problem_figure}(e) shows that $\Mbw$ is also superior in terms of runtime. 

\begin{figure*}[t]
\captionsetup{justification=centering}
    \centering
    \subfloat[\texttt{Dense}, \texttt{LowCN} (\texttt{RTR:iter})]{\includegraphics[width = 0.2\textwidth, height = 0.16\textwidth]{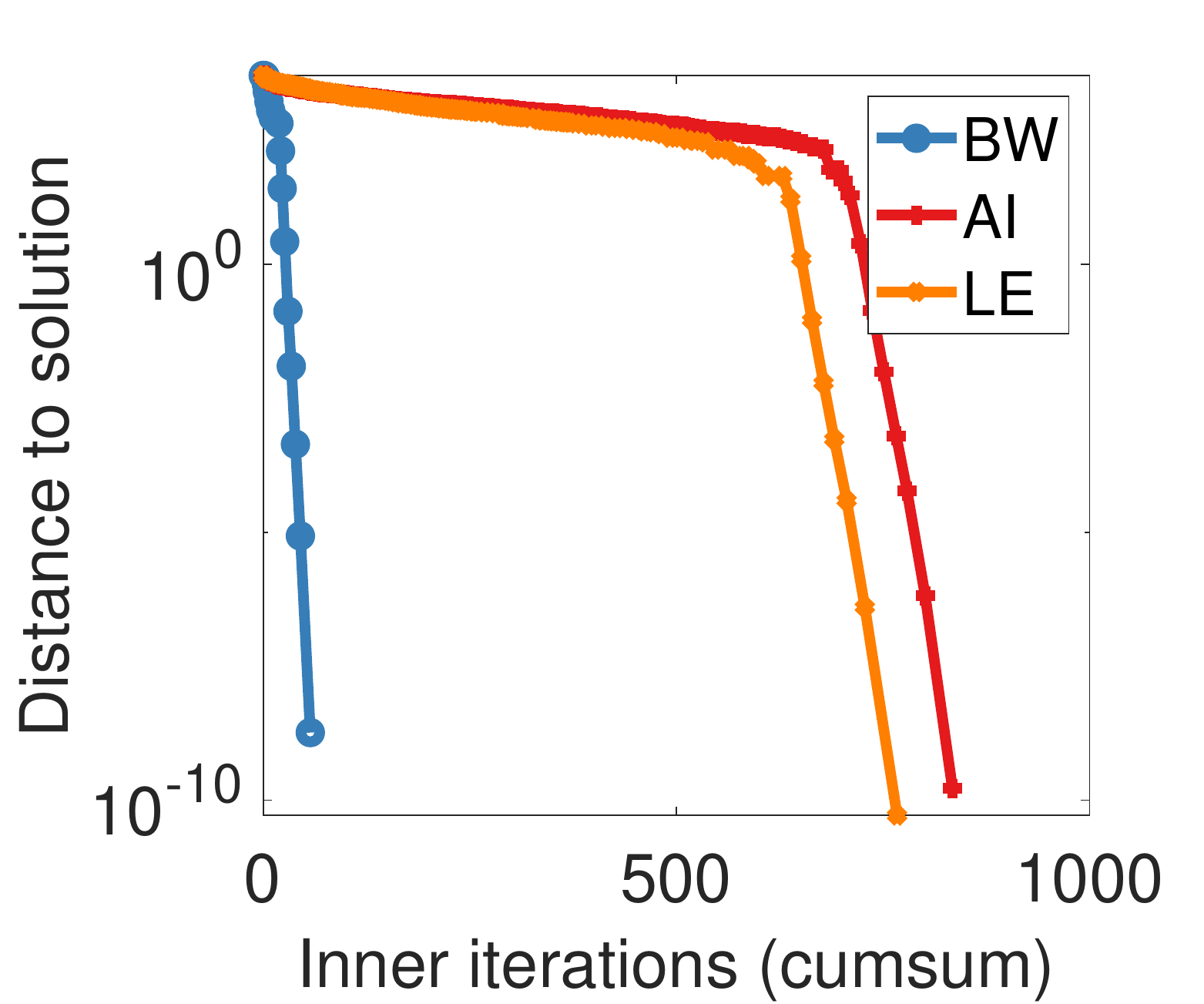}} 
    \subfloat[\texttt{Dense}, \texttt{HighCN} (\texttt{RTR:iter}) ]{\includegraphics[width = 0.2\textwidth, height = 0.16\textwidth]{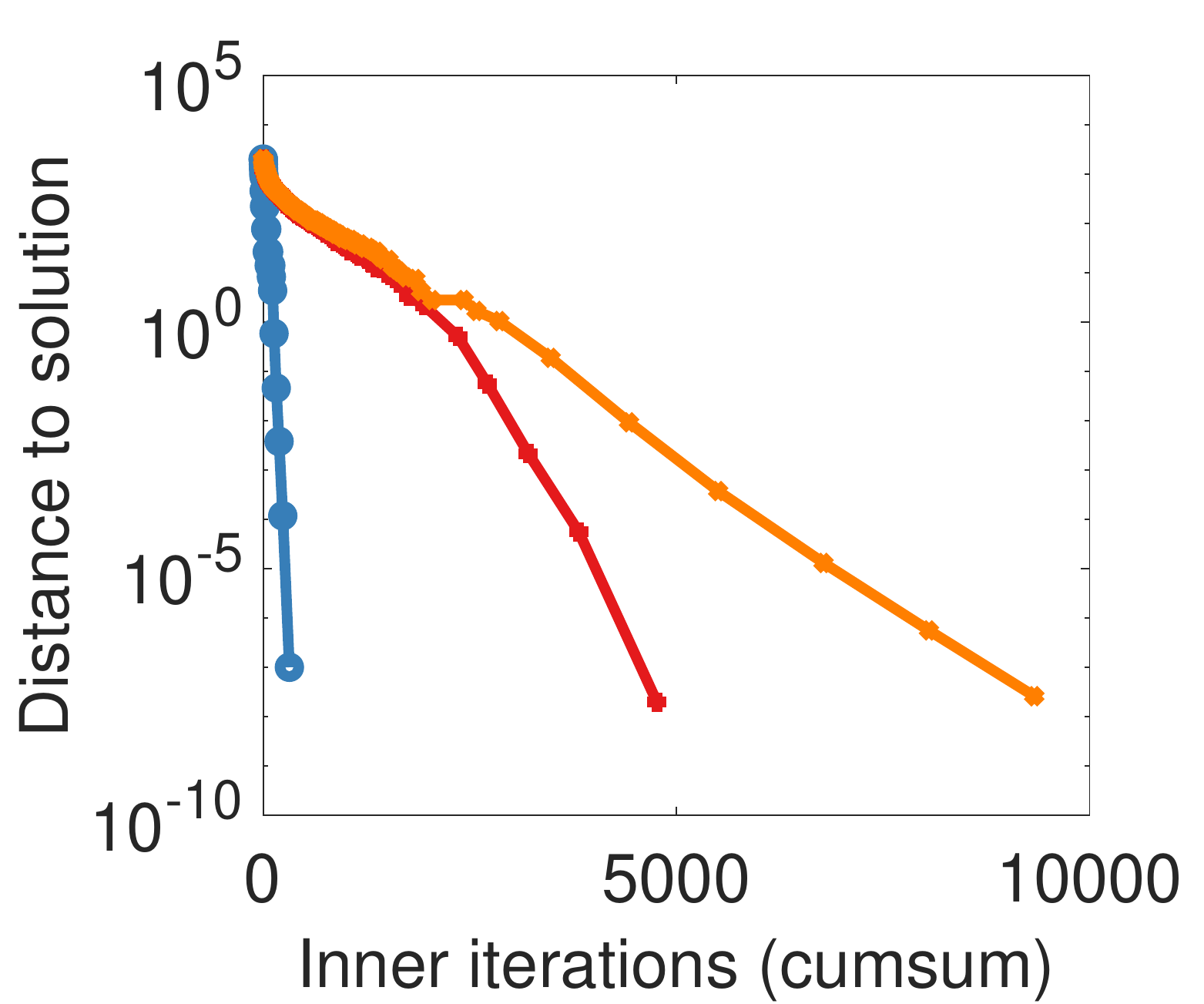}}
    \subfloat[\texttt{Sparse}, \texttt{HighCN} (\texttt{RTR:iter}) ]{\includegraphics[width = 0.2\textwidth, height = 0.16\textwidth]{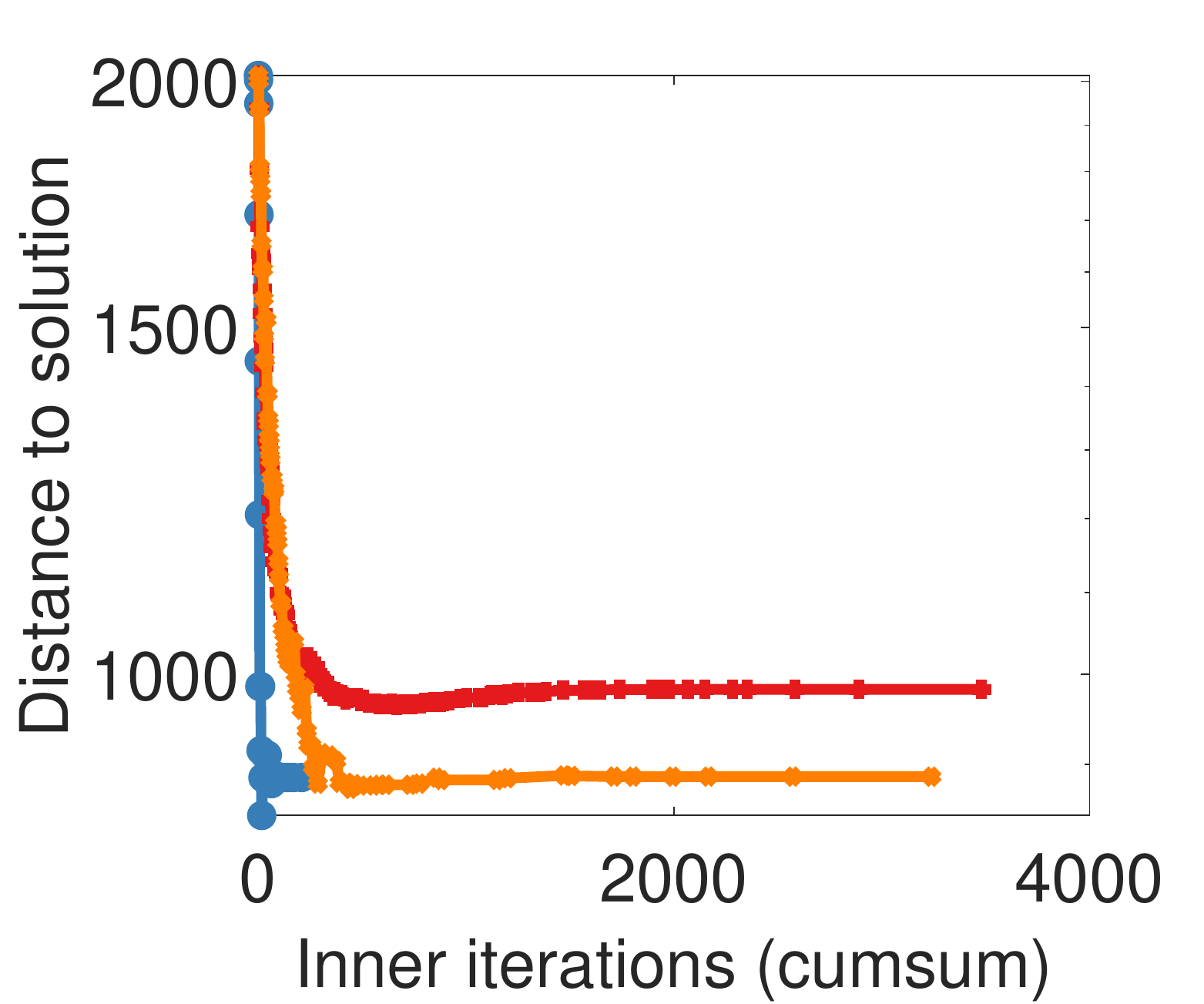}}
    \subfloat[\texttt{Sparse}, \texttt{HighCN} (\texttt{RSD:iter})]{\includegraphics[width = 0.2\textwidth, height = 0.16\textwidth]{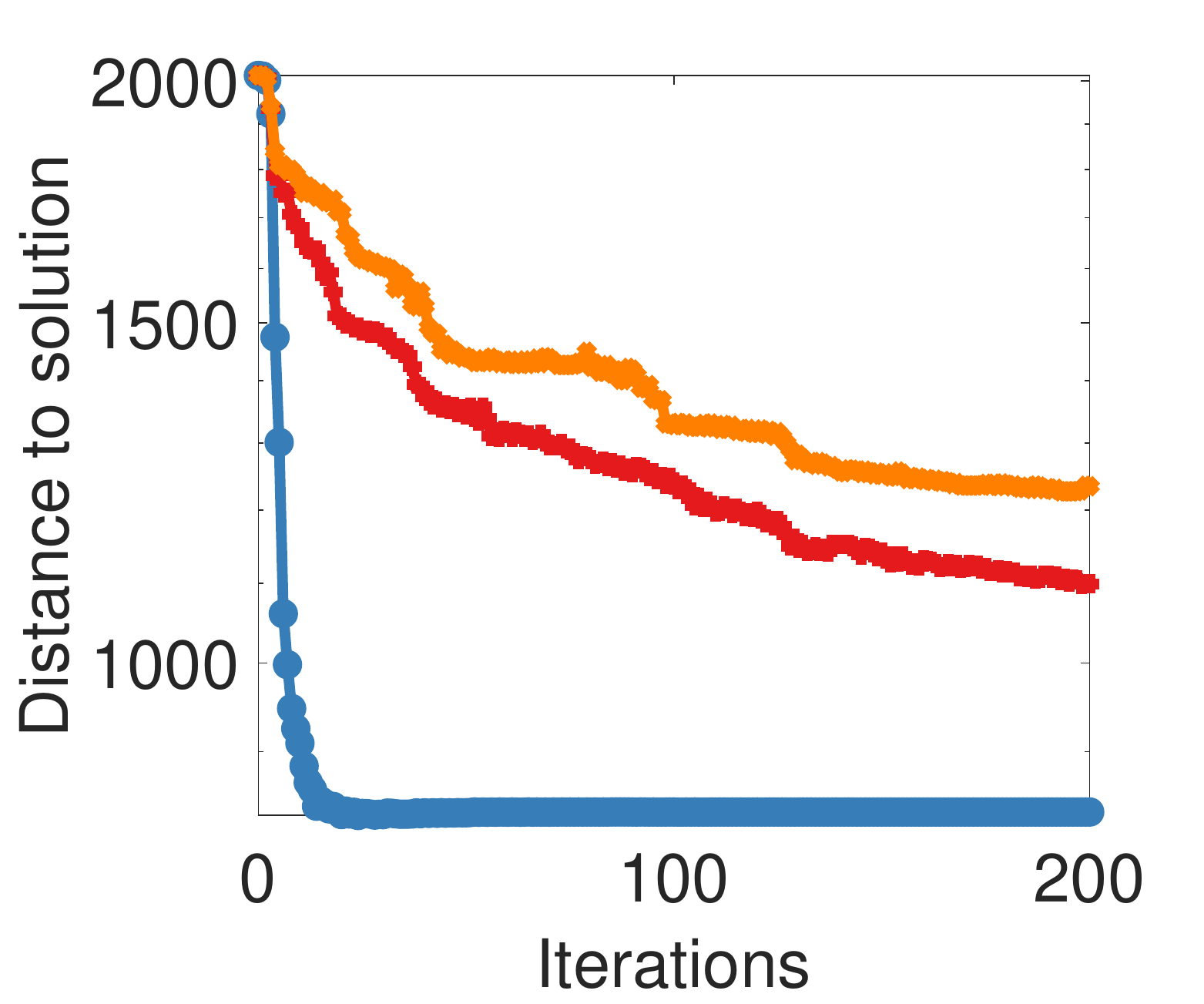}}
    \subfloat[\texttt{Sparse}, \texttt{HighCN} (\texttt{RSD:time})]{\includegraphics[width = 0.2\textwidth, height = 0.16\textwidth]{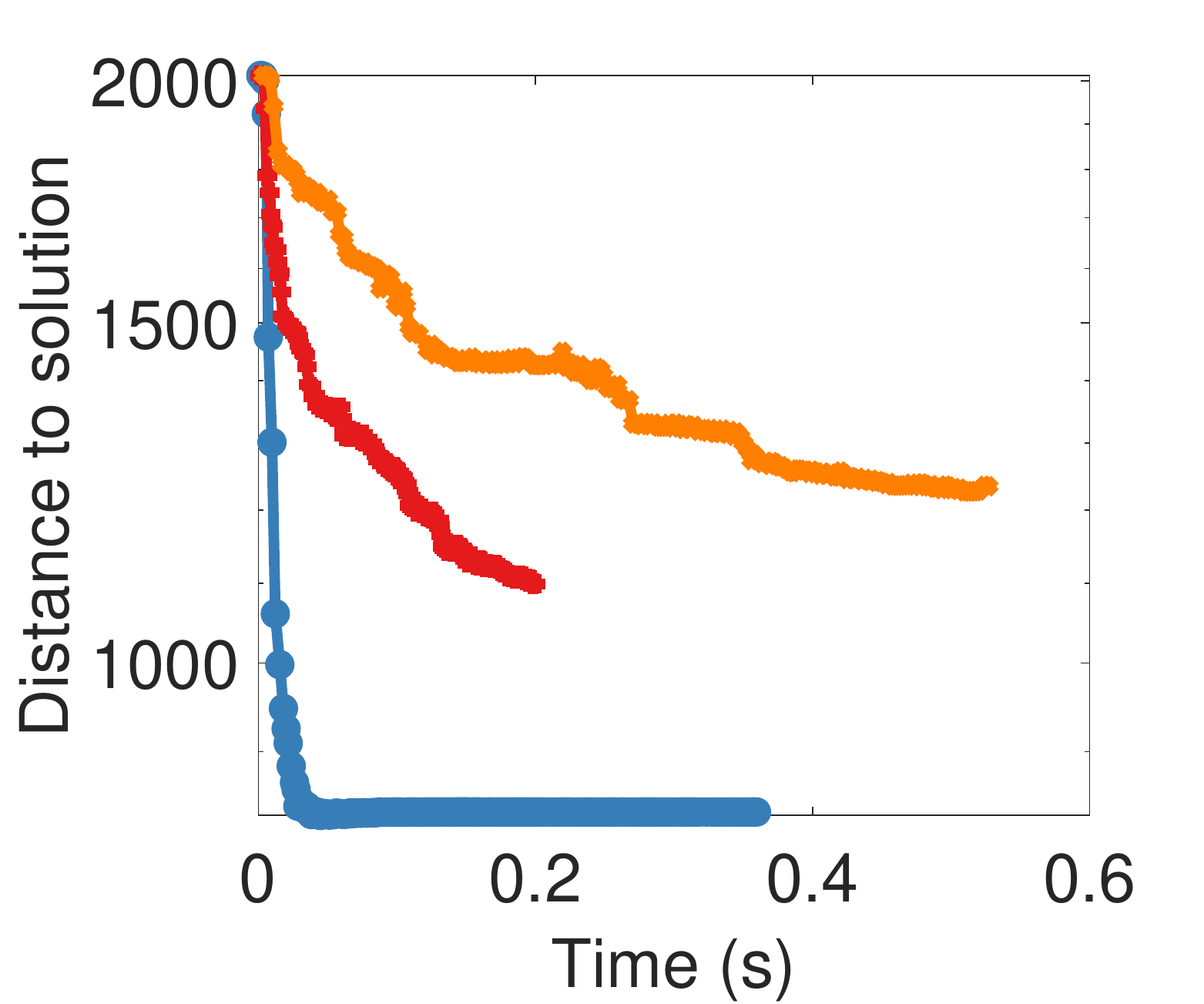}}
    \caption{Weighted least squares problem.} \label{wls_problem_figure}
\end{figure*}

\paragraph{Lyapunov equations.} 
Continuous Lyapunov matrix equation, $\bA \bX + \bX \bA = \bC$ with $\bX \in \sS_{++}^n$, are commonly employed in analyzing optimal control systems and differential equations \cite{rothschild1970comparison,lin2015new}. 
When $\bA$ is stable, i.e., $\lambda_i(\bA) > 0$ and $\bC \in \sS_{++}^n$, the solution $\bX^* \succ \bzero$ and is unique \cite{lancaster1970explicit}. When $\bC \in \sS_{+}^n$ and is low rank, $\bX^*\in \sS_{+}^n$ is also low rank. 
We optimize the following problem for solving the Lyapunov equation~\cite{vandereycken2010riemannian}: $\min_{\bX \in \sS_{++}^n} f(\bX) = \trace(\bX \bA \bX) - \trace(\bX \bC)$. The Euclidean gradient and Hessian are respectively $\nabla f(\bX) = \bA \bX + \bX \bA - \bC$ and $\nabla^2 f(\bX)[\bU] = \bA \bU + \bU \bA$ with $\bH(\bX) = \bA \oplus \bA$. At optimal $\bX^*$, the condition number $\kappa(\bH(\bX^*)) = \kappa(\bA)$.

We experiment with two settings for the matrix $\bA$, i.e. $\bA$ as the Laplace operator on the unit square where we generate $7$ interior points so that $n = 49$ (\texttt{Ex1}), and $\bA$ is a particular Toeplitz matrix with $n = 50$ (\texttt{Ex2}). The generated $\bA$ matrices are ill-conditioned. The above settings correspond to Examples 7.1 and 7.3 in \cite{lin2015new}. Under each setting, $\bX^*$ is set to be either full or low rank. The matrix $\bC$ is generated as $\bC =\bA \bX^* + \bX^* \bA$. 
The full rank $\bX^*$ is generated from the full-rank Wishart distribution while the low rank $\bX^*$ is a diagonal matrix with $r = 10$ ones and $n-r$ zeros in the diagonal. We label the four cases as \texttt{Ex1Full}, \texttt{Ex1Low}, \texttt{Ex2Full}, and \texttt{Ex2Low}. The results are shown in Figures \ref{lyapunov_TR_DML_figure}(a)-(d), where we observe that in all four cases, the BW geometry outperforms both AI and LE geometries. 

\paragraph{Trace regression.} We consider the regularization-free trace regression model \cite{slawski2015regularization} for estimating covariance and kernel matrices \cite{scholkopf2002learning,cai2015rop}. 
The optimization problem is $\min_{\bX \in \sS_{++}^d} f(\bX) = \frac{1}{2m} \sum_{i=1}^m (\by_i - \trace(\bA_i^\top\bX) )^2$, where $\bA_i = \ba_i\ba_i^\top$, $i = 1,...,m$ are some rank-one measurement matrices. Thus, we have $\nabla f(\bX) = \sum_{i=1}^m (\ba_i^\top \bX \ba_i - \by_i)\bA_i$ and $\nabla^2 f(\bX)[\bU] = \sum_{i=1}^m (\ba_i^\top \bU \ba_i)\bA_i$. 

We create $\bX^*$ as a rank-$r$ Wishart matrix and $\{\bA_i\}$ as rank-one Wishart matrices and generate $\by_i = \trace(\bA_i\bX^*) + \sigma \epsilon_i$ with $\epsilon_i \sim \mathcal{N}(0,1)$, $\sigma = 0.1$. We consider two choices, $(m, d, r) = (1000, 50, 50)$ and $(1000, 50, 10)$, which are respectively labelled as \texttt{SynFull} and \texttt{SynLow}. From Figures \ref{lyapunov_TR_DML_figure}(e)\&(f), we also observe that convergence to the optimal solution is faster for the BW geometry.

\paragraph{Metric learning.} Distance metric learning (DML) aims to learn a distance function from samples and a popular family of such distances is the Mahalanobis distance, i.e. $d_{\bM}(\bx, \by) = \sqrt{(\bx - \by)^\top\bM(\bx - \by)}$ for any $\bx,\by \in \sR^d$. The distance is parameterized by a symmetric positive semi-definite matrix $\bM$. We refer readers to this survey \cite{suarez2021tutorial} for more discussions on this topic. We particularly consider a logistic discriminant learning formulation \cite{guillaumin2009you}. Given a training sample $\{\bx_i, y_i \}_{i=1}^N$, denote the link $t_{ij} = 1$ if $y_i = y_j$ and $t_{ij} = 0$ otherwise. The objective is given by $\min_{\bM \in \sS_{++}^d} f(\bM) = -\sum_{i,j} \left(  t_{ij} \log p_{ij} + (1-t_{ij})\log (1-p_{ij}) \right), \, \text{ with } \, p_{ij} = (1 + \exp(d_{\bM}(\bx_i, \bx_j)))^{-1}$. We can derive the matrix Hessian as $\bH(\bM) = \sum_{i,j}p_{ij}(1-p_{ij}) (\bx_i - \bx_j)(\bx_i - \bx_j)^\top \otimes (\bx_i - \bx_j)(\bx_i - \bx_j)^\top$. Notice $\kappa(\bH(\bM^*))$ depends on $\bM^*$ only through the constants $p_{ij}$. Thus, the condition number will not be much affected by $\kappa(\bM^*)$. 

We consider two real datasets, \texttt{glass} and \texttt{phoneme}, from the Keel database \cite{alcala2009keel}. The number of classes is denoted as $c$. The statistics of these two datasets are $(N, d, c) = (241, 9, 7)$ for \texttt{glass} $(5404, 5, 2)$ for \texttt{phoneme}. In Figures \ref{lyapunov_TR_DML_figure}(g)\&(h), we similarly see the advantage of using the BW metric compared to the other two metrics that behave similarly.

\begin{figure*}[t]
\captionsetup{justification=centering}
    \centering
    \subfloat[\texttt{LYA}: \texttt{Ex1Full}]{\includegraphics[width = 0.2\textwidth, height = 0.16\textwidth]{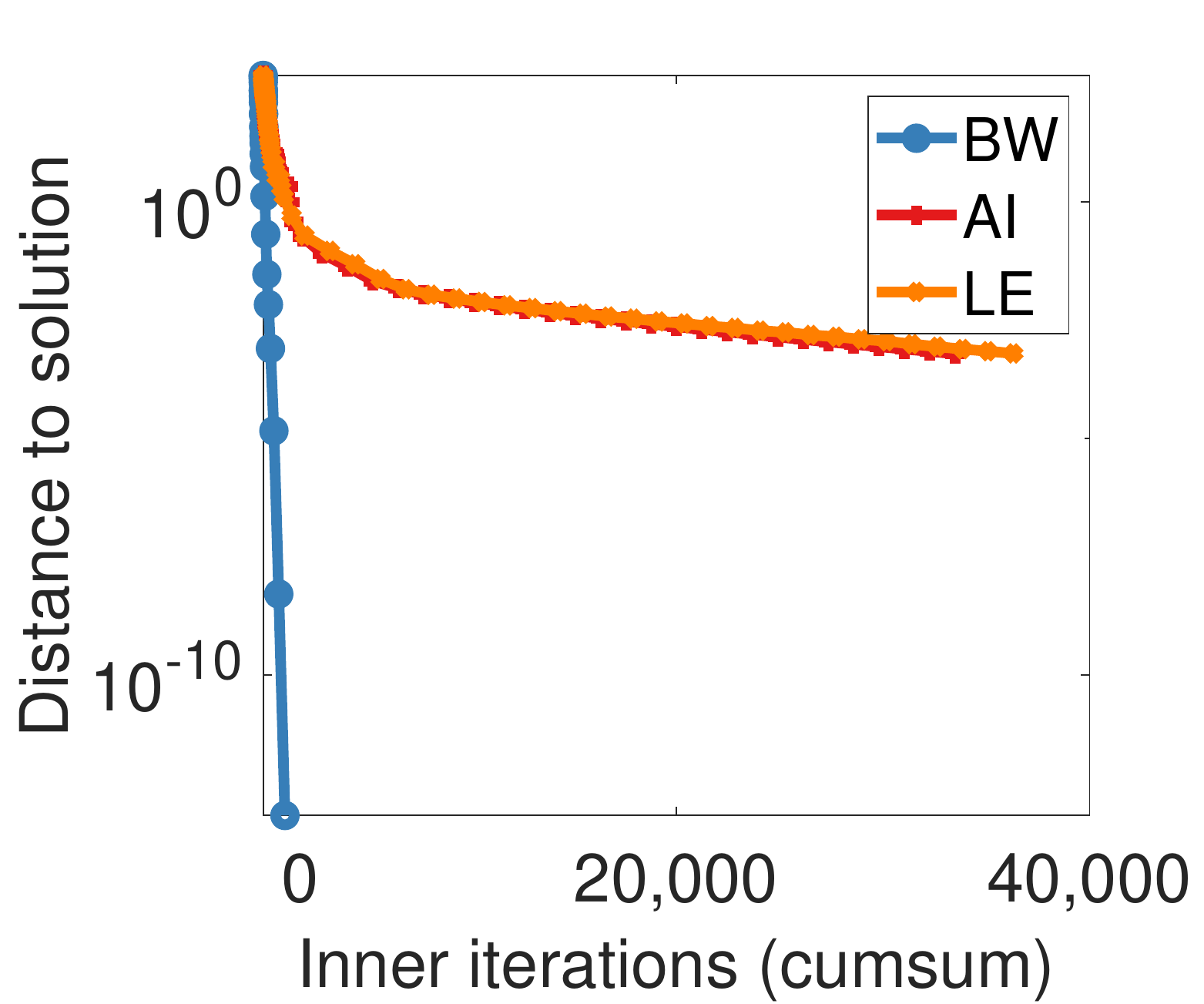}} 
    \hspace*{2pt}
    \subfloat[\texttt{LYA}: \texttt{Ex1Low}]{\includegraphics[width = 0.2\textwidth, height = 0.16\textwidth]{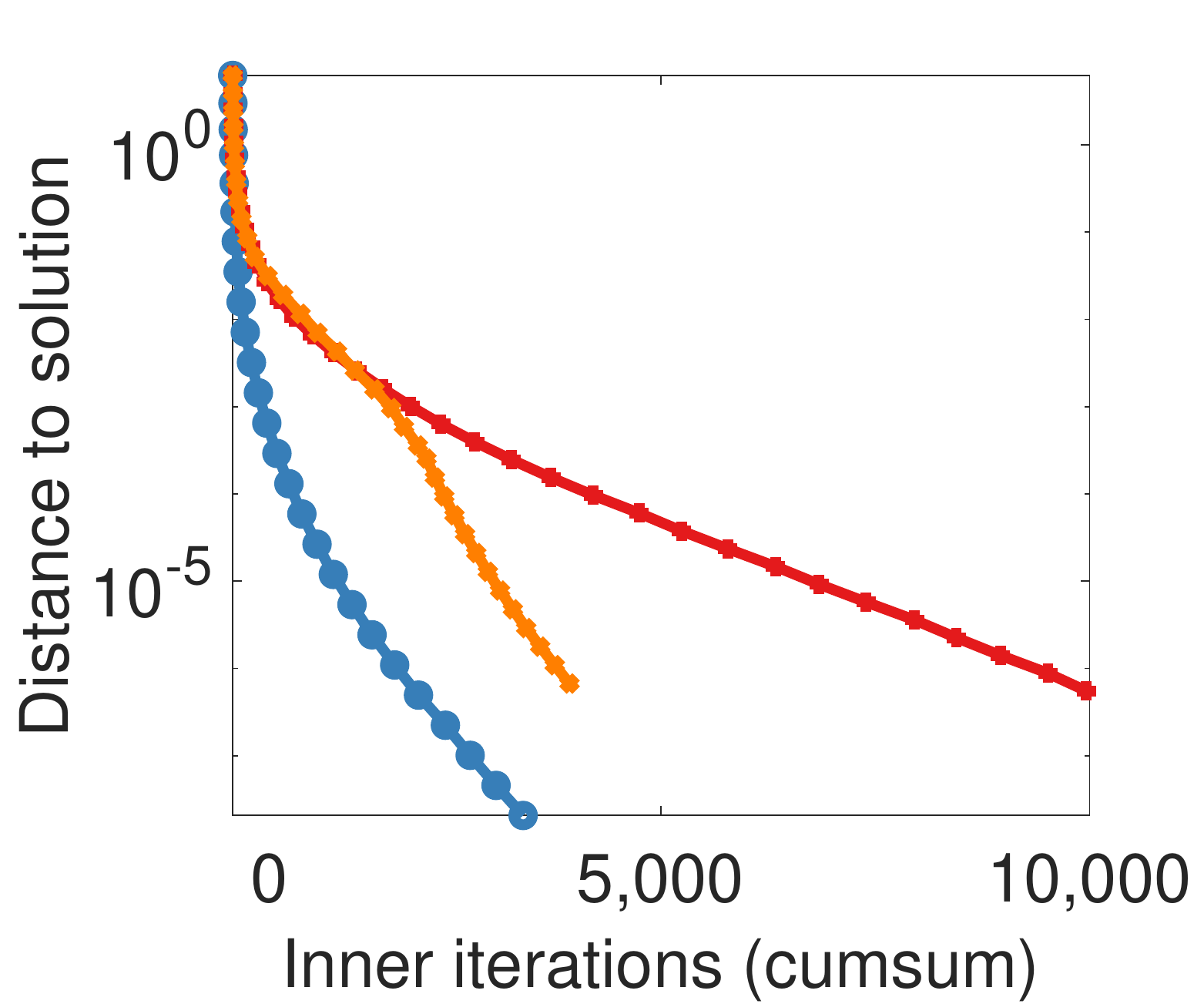}}
    \hspace*{2pt}
    \subfloat[\texttt{LYA}: \texttt{Ex2Full}]{\includegraphics[width = 0.2\textwidth, height = 0.16\textwidth]{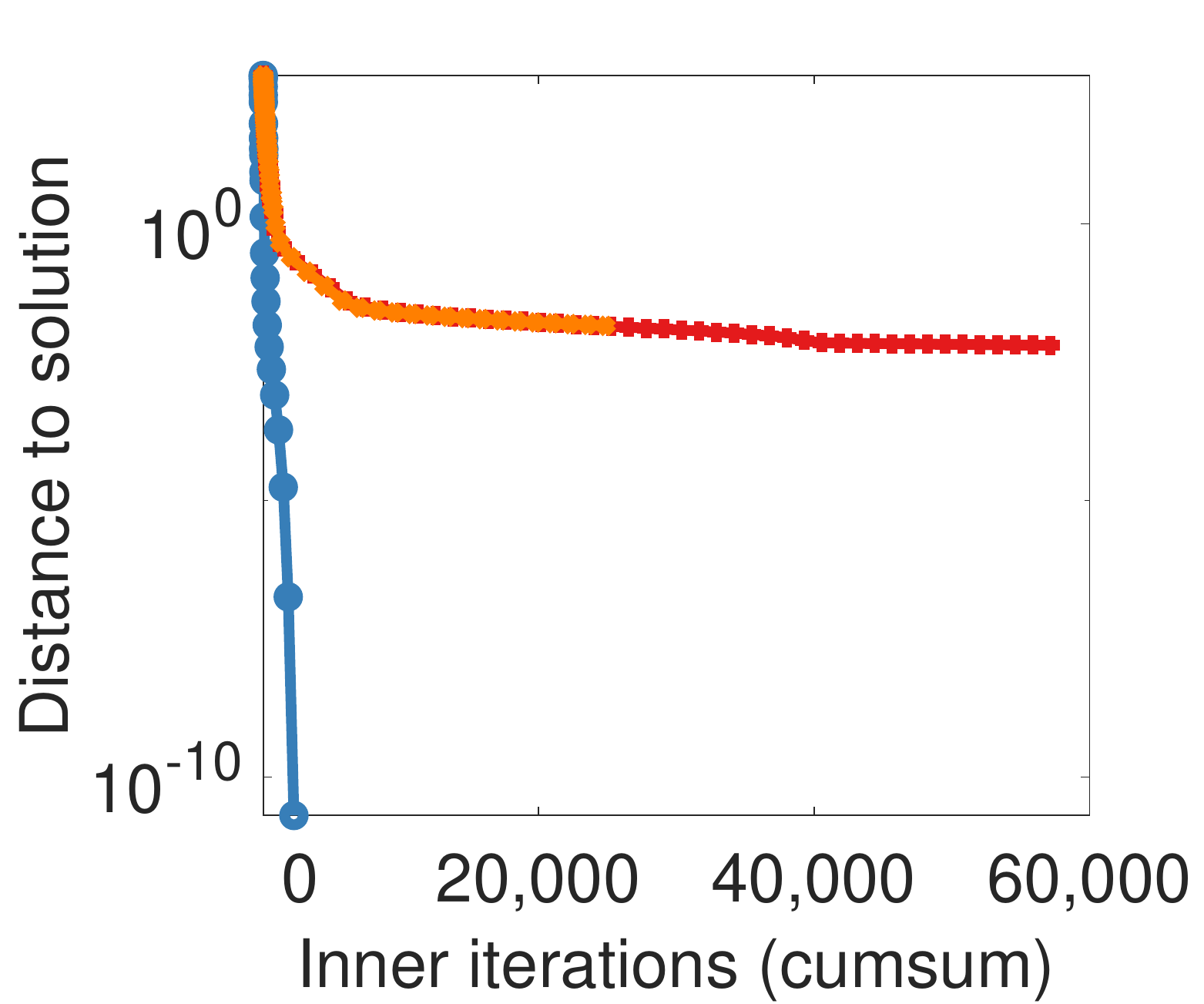}}
    \hspace*{2pt}
    \subfloat[\texttt{LYA}: \texttt{Ex2Low}]{\includegraphics[width = 0.2\textwidth, height = 0.16\textwidth]{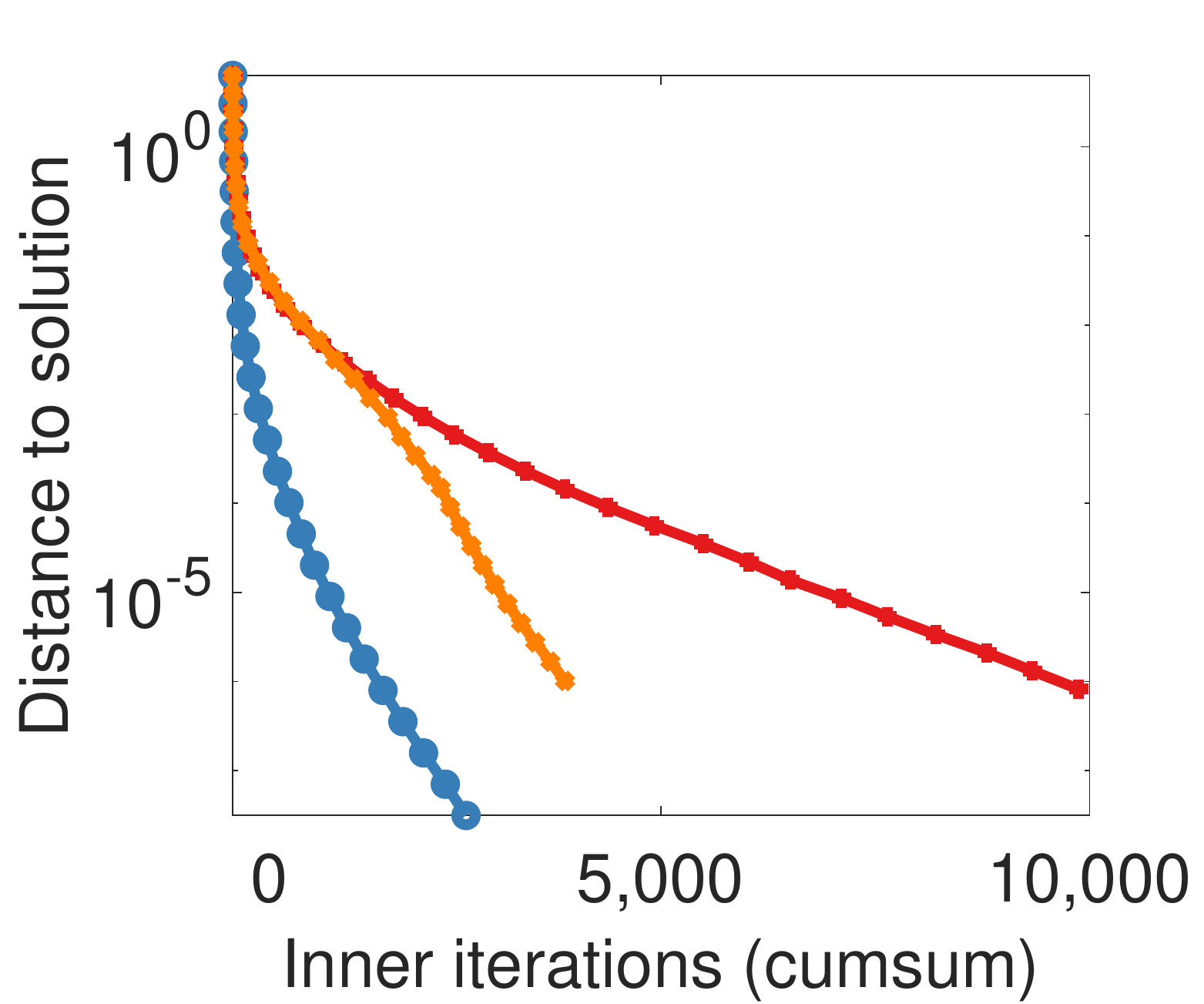}}
    \\
    \subfloat[{\texttt{TR}}: \texttt{SynFull}]{\includegraphics[width = 0.2\textwidth, height = 0.16\textwidth]{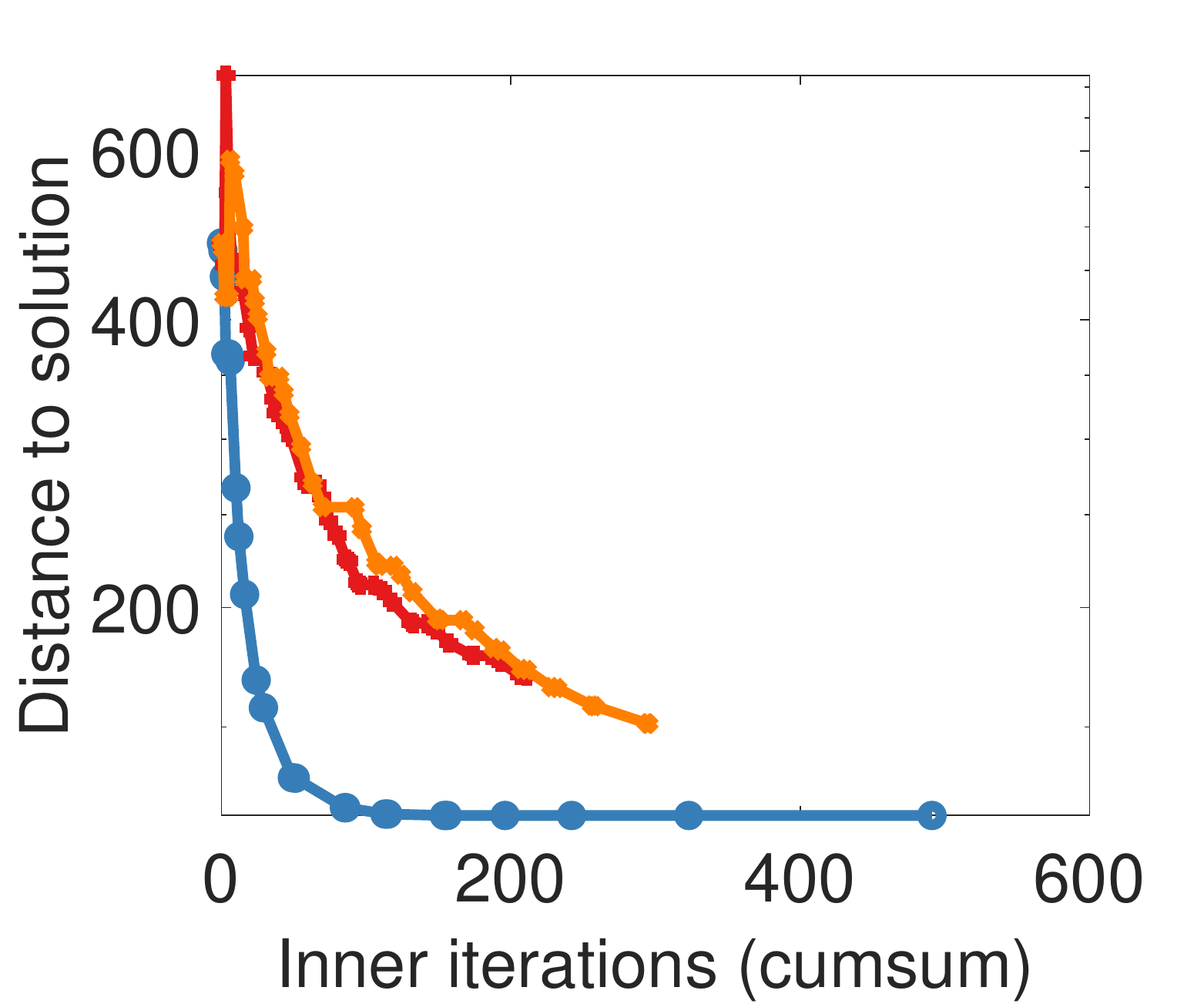}}
    \hspace*{2pt}
    \subfloat[\texttt{TR}: \texttt{SynLow}]{\includegraphics[width = 0.2\textwidth, height = 0.16\textwidth]{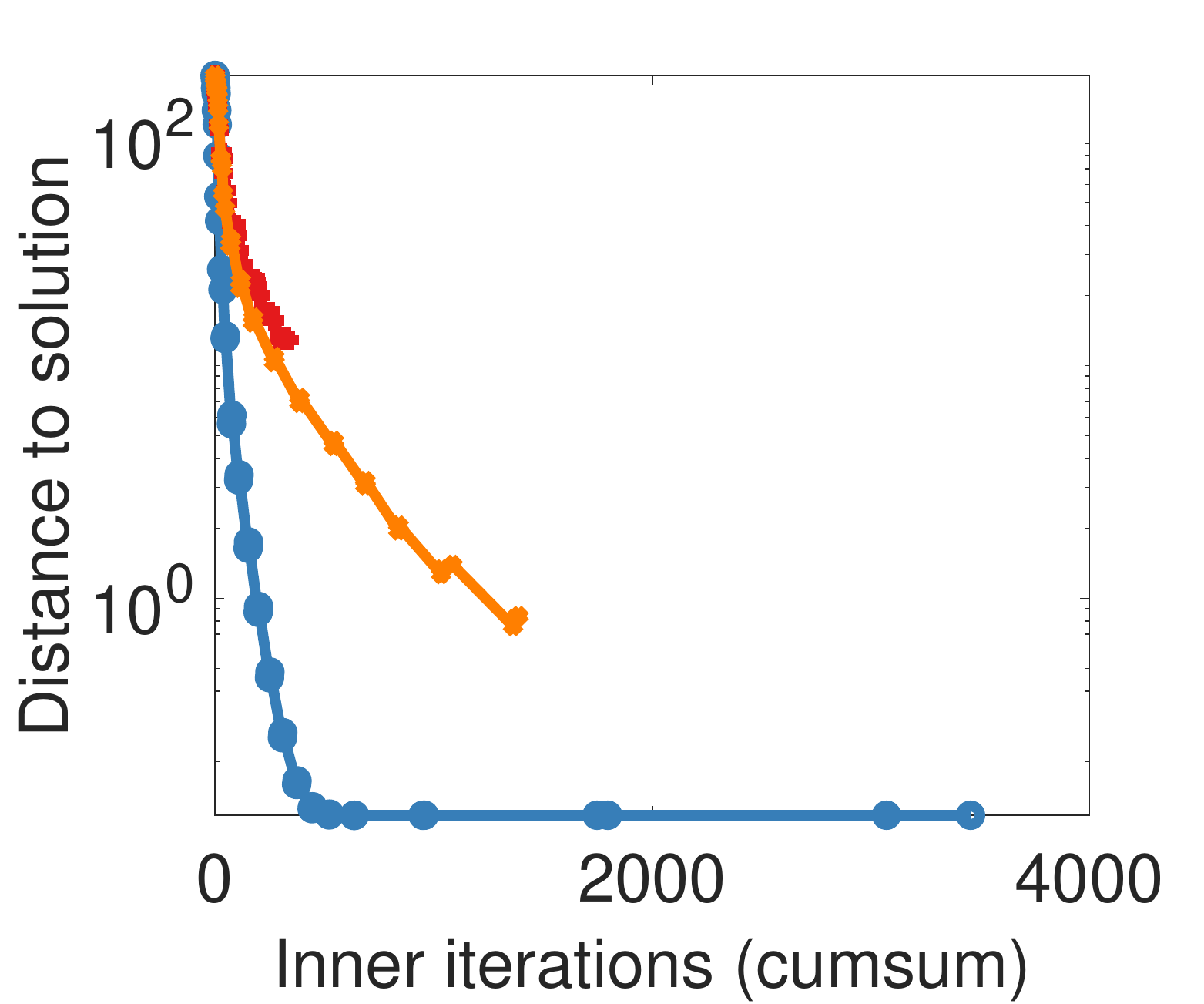}}
    \hspace*{2pt}
    \subfloat[\texttt{DML}: \texttt{glass}]{\includegraphics[width = 0.2\textwidth, height = 0.16\textwidth]{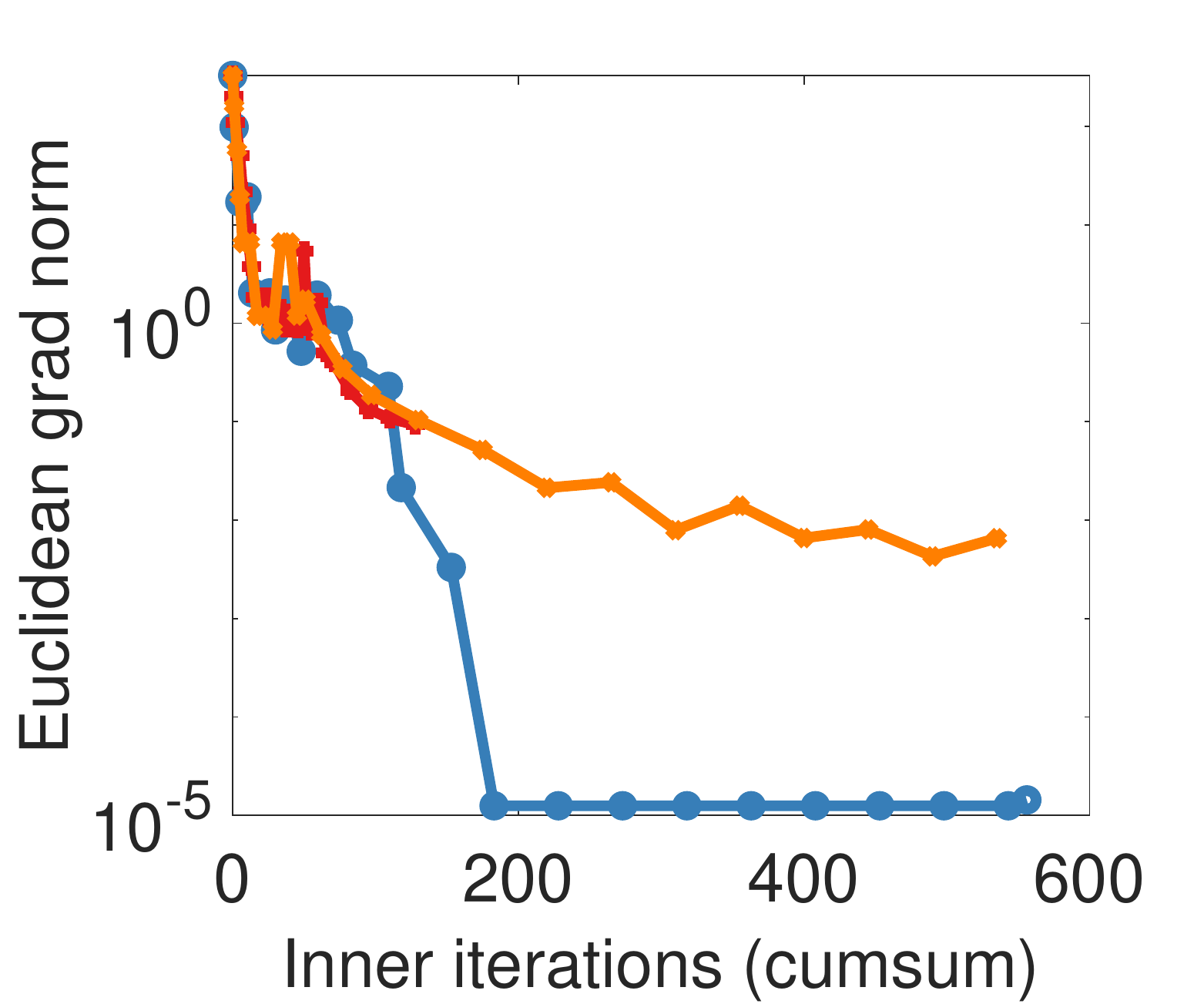}}
    \hspace*{2pt}
    \subfloat[\texttt{DML}: \texttt{phoneme}]{\includegraphics[width = 0.2\textwidth, height = 0.16\textwidth]{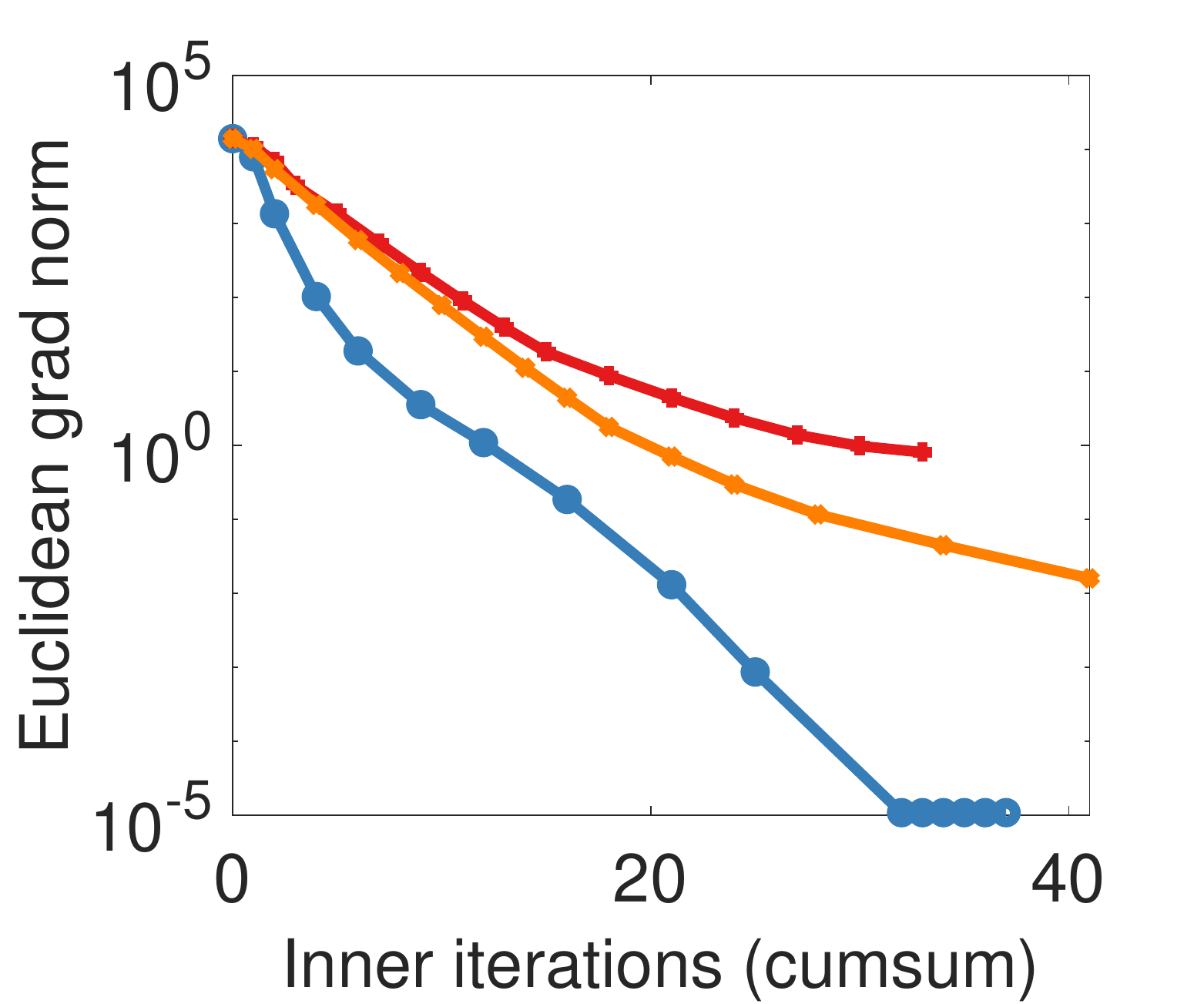}}
    \caption{Lyapunov equation (a, b, c, d), trace regression (e, f), and metric learning (g, h) problems.} 
    \label{lyapunov_TR_DML_figure}
\end{figure*}


\paragraph{Log-det maximization.} As discussed in Section~\ref{cn_analysis_subsect}, log-det optimization is one instance where $\kappa^*_{\rm bw} \geq \kappa^*_{\rm ai}$. We first consider minimizing negative log-determinant along with a linear function as studied in \cite{wang2010solving}. That is, for some $\bC \in \sS_{++}^n$, the objective is $\min_{\bX \in \sS_{++}^n} f(\bX) = \trace(\bX \bC) - \log\det(\bX)$. The Euclidean gradient and Hessian are given by $\nabla f(\bX) = \bC - \bX^{-1}$ and $\nabla^2 f(\bX)[\bU] = \bX^{-1} \bU \bX^{-1}$. This problem is geodesic convex under both AI and BW metrics. We generate $\bX^*$ the same way as in the example of weighted least square with $n = 50$ and set $\bC = (\bX^*)^{-1}$. We consider two cases with condition number cn = 10 (\texttt{LowCN}) and $10^3$ (\texttt{HighCN}). As expeted, we observe faster convergence of AI and LE metrics over the BW metric in Figures \ref{logdet_optimization_figure}(a)\&(b). This is even more evident when the condition number increases.

\paragraph{Gaussian mixture model.} 
Another notable example of log-det optimization is the Gaussian density estimation and mixture model problem. Following \cite{hosseini2020alternative}, we consider a reformulated problem on augmented samples $\by_i^\top = [\bx_i^\top;1], i =1,...,N$ where $\bx_i \in \sR^d$ are the original samples. The density is parameterized by the augmented covariance matrix $\bSigma \in \sR^{d+1}$. Notice that the log-likelihood of Gaussian is geodesic convex on $\Mai$, but not on $\Mbw$. We, therefore, define $\bS = \bSigma^{-1}$ and the reparameterized log-likelihood is $p_{\gN}(\bY; \bS) = \sum_{i=1}^N \log \left( (2\pi)^{1 - d/2} \exp(1/2) \det(\bS)^{1/2} \exp(-\frac{1}{2} \by_i^\top \bS \by_i )\right)$, which is now geodesic convex on $\Mbw$ due to Proposition \ref{prop_lin_quad}. Hence, we can solve the problem of Gaussian mixture model similar as in \cite{hosseini2020alternative}. More details are given in supplementary material. 

Here, we test on a dataset included in the MixEst package \cite{hosseini2015mixest}. The dataset has $1580$ samples in $\sR^2$ with $3$ Gaussian components. In Figure \ref{logdet_optimization_figure}(c), we observe a similar pattern with RTR as in the log-det example. We also include performance of RSGD, which is often preferred for large scale problems. We set the batch size to be $50$ and consider a decaying step size, with the best initialized step size shown in Figures \ref{logdet_optimization_figure}(d)\&(e). Following \cite{arthur2006k}, the algorithms are initialized with \texttt{kmeans++}. We find that the AI geometry still maintains its advantage under the stochastic setting.


\begin{figure*}[t]
\captionsetup{justification=centering}
    \centering
    \subfloat[\texttt{LD}: \texttt{LowCN} (\texttt{RTR})]{\includegraphics[width = 0.2\textwidth, height = 0.16\textwidth]{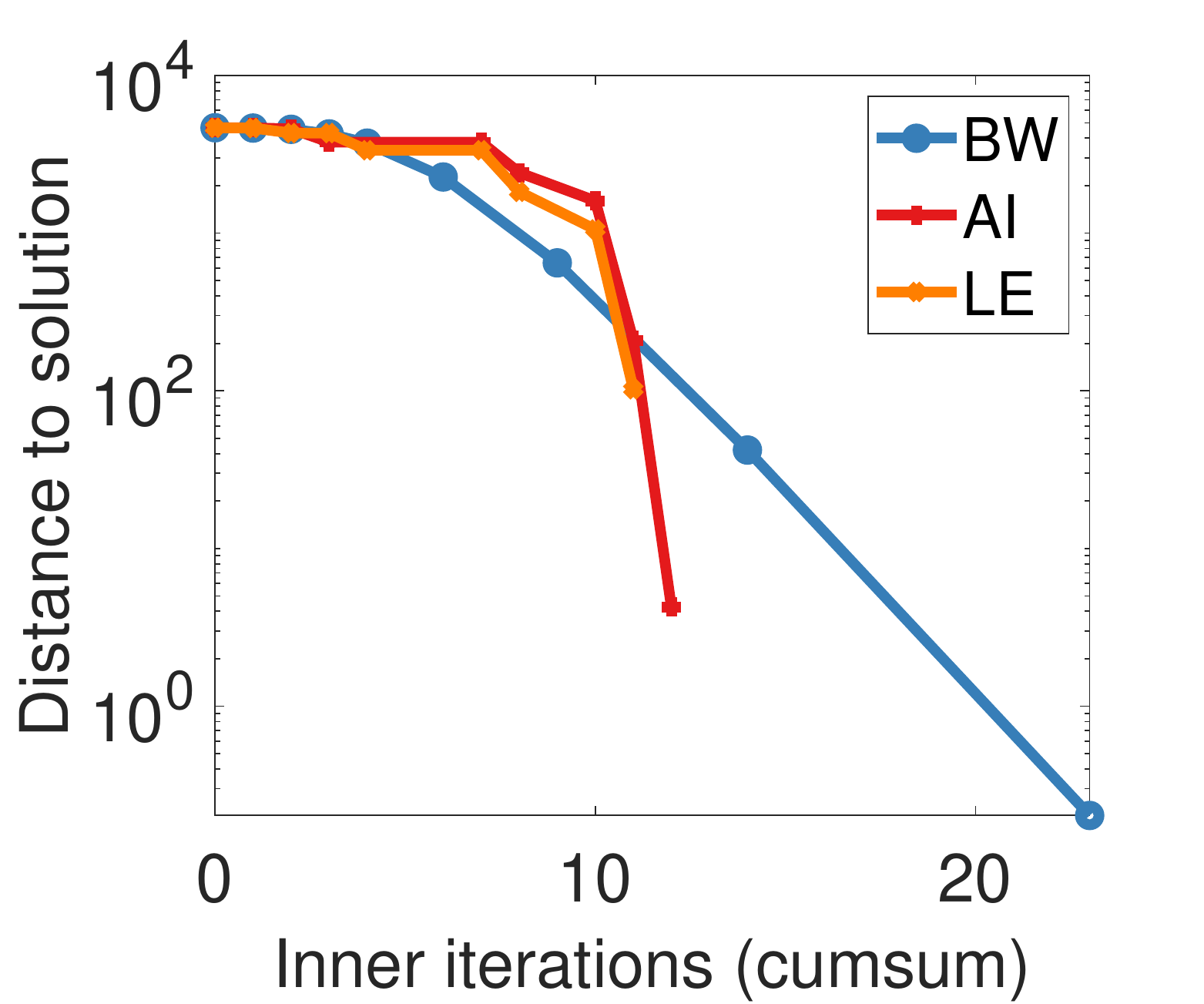}}
    \subfloat[\texttt{LD}: \texttt{HighCN} (\texttt{RTR})]{\includegraphics[width = 0.2\textwidth, height = 0.16\textwidth]{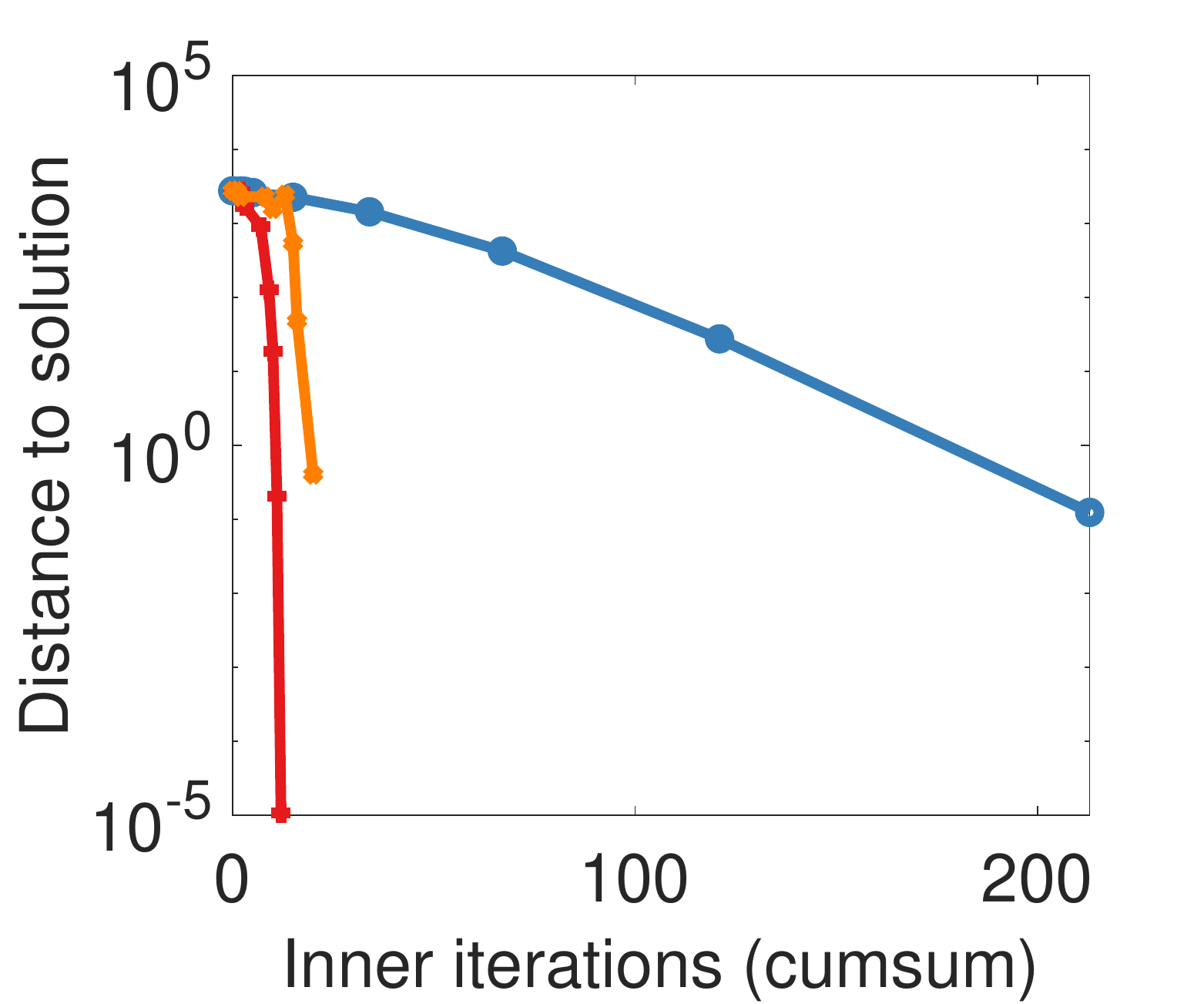}}
    \subfloat[\texttt{GMM}: (\texttt{RTR})]{\includegraphics[width = 0.2\textwidth, height = 0.16\textwidth]{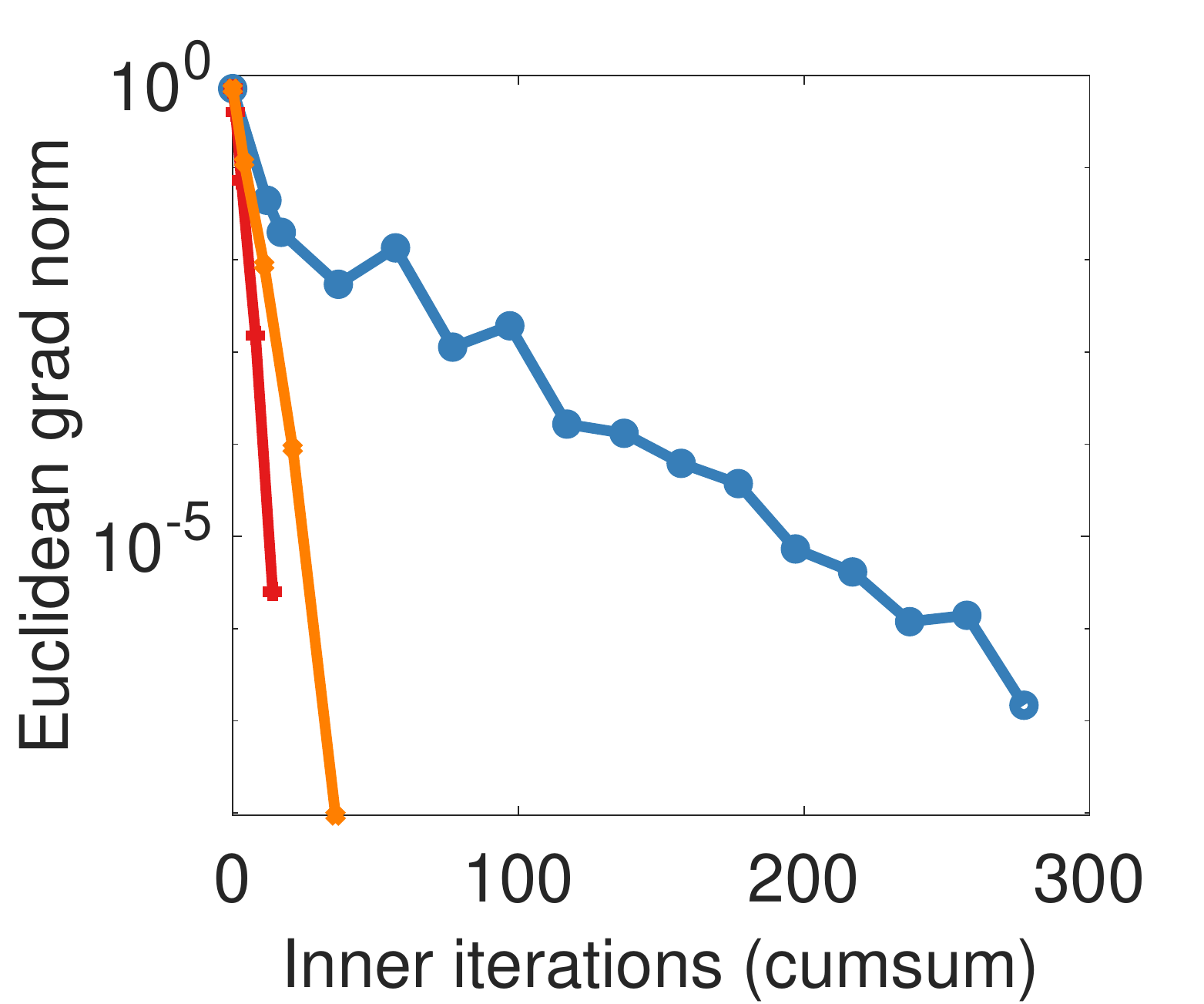}}
    \subfloat[\texttt{GMM}: (\texttt{RSGD})]{\includegraphics[width = 0.2\textwidth, height = 0.16\textwidth]{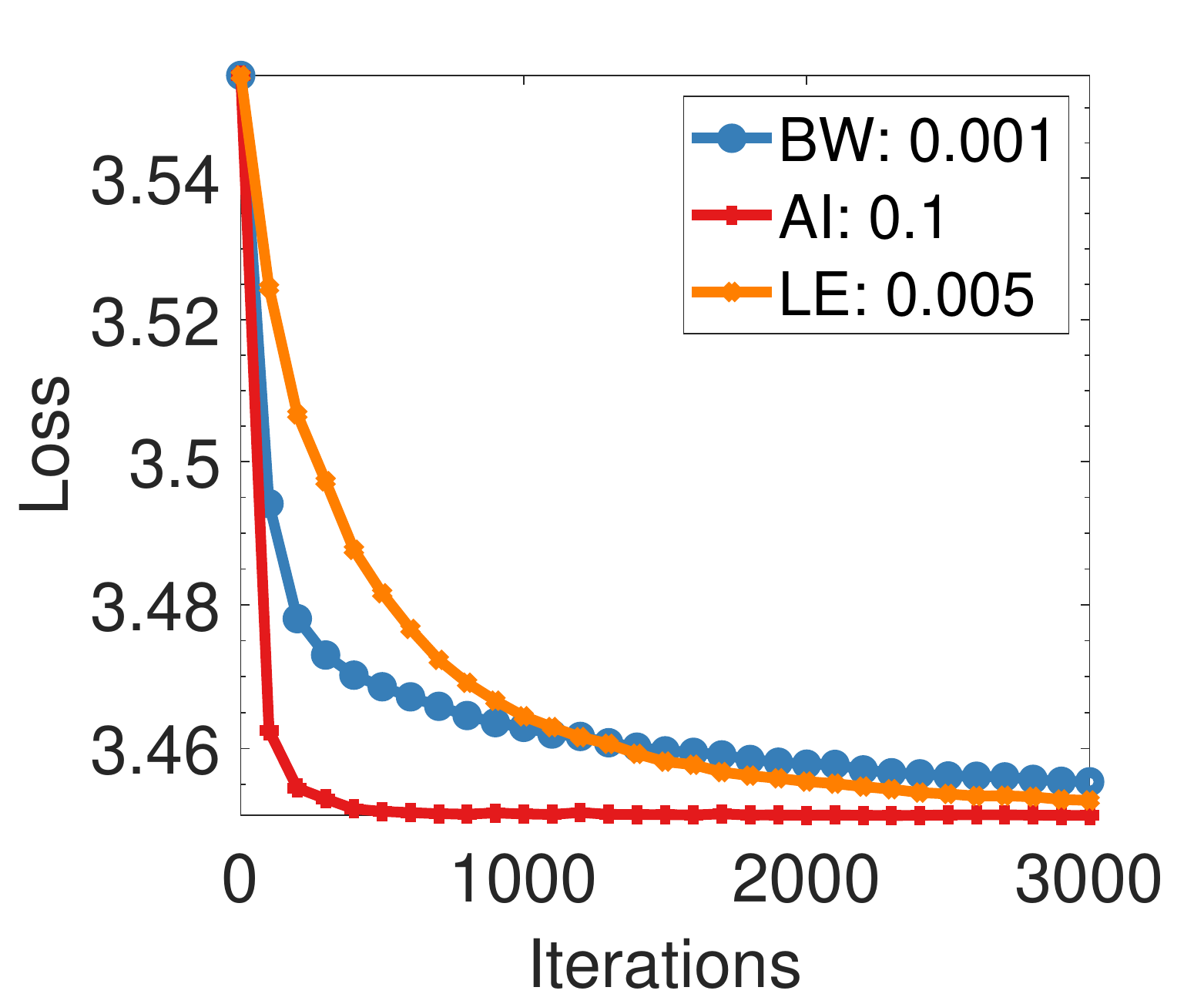}}
    \subfloat[\texttt{GMM}: (\texttt{RSGD})]{\includegraphics[width = 0.2\textwidth, height = 0.16\textwidth]{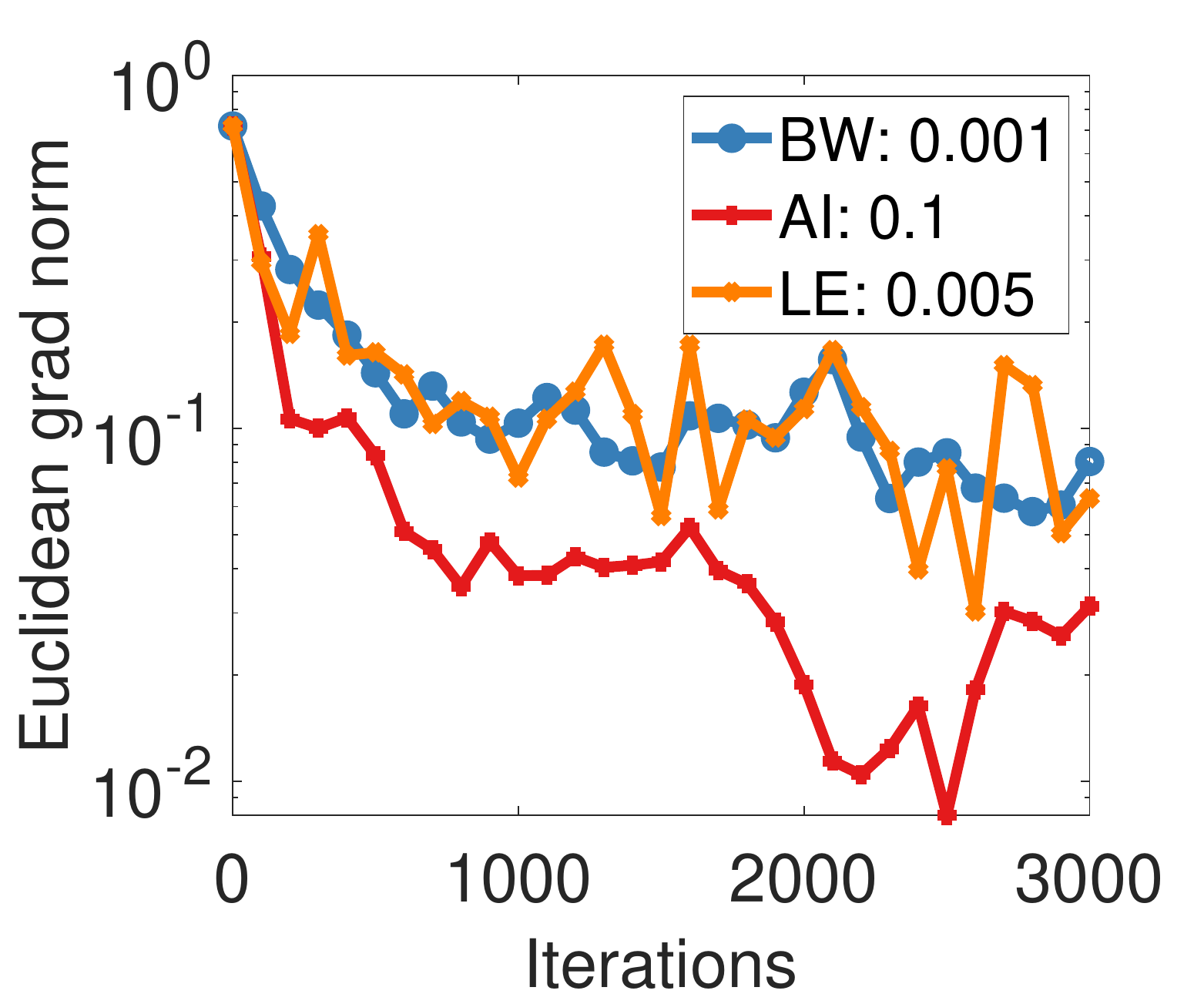}}
    \caption{Log-det maximization (a, b) and Gaussian mixture model (c, d, e) problems.}
    \label{logdet_optimization_figure}
\end{figure*}

\section{Conclusion and discussion}
In this paper, we show that the less explored Bures-Wasserstein geometry for SPD matrices is often a better choice than the Affine-Invariant geometry for optimization, particularly for learning ill-conditioned matrices. Also, a systematic analysis shows that the BW metric preserves geodesic convexity of some popular cost functions and leads to better rates for certain function classes. 



Our comparisons are based on optimization over generic cost functions. For specific problems, however, there may exist other alternative metrics that potentially work better. This is an interesting research direction to pursue. We also remark that optimization is not the only area where the AI and BW geometries can be compared. It would be also useful to compare the two metrics for other learning problems on SPD matrices, such as barycenter learning. In addition, kernel methods have been studied on the SPD manifold \cite{jayasumana2013kernel,harandi2012sparse,zhang2015learning} that embed SPD matrices to a high dimensional feature space, known as the Reproducing Kernel Hilbert Space (RKHS). Such representations are used for subsequent learning tasks such as clustering or classification. But only a positive definite kernel provides a valid RKHS. We show (in the supplementary material) that the induced Gaussian kernel based on the BW distance is a positive definite kernel unlike the case for the AI metric. This difference highlights a potential advantage of the BW metric for representation learning on SPD matrices.

\clearpage

\bibliographystyle{amsalpha}
\bibliography{references}

\clearpage

\begin{center}
    \LARGE \textbf{Supplementary}
\end{center}

\appendix

\section{Bures-Wasserstein geometry of SPD matrices}
Here, we include a complete summary of the Bures-Wasserstein geometry. We refer readers to \cite{bhatia2019bures,van2020bures,malago2018wasserstein} for a more detailed discussion.  

The Bures-Wasserstein distance on $\sS_{++}^n$ is given by: 
\begin{equation}
    d_{\rm bw}(\bX, \bY) = \left(\trace(\bX) + \trace(\bY) - 2 \trace(\bX^{1/2}\bY\bX^{1/2})^{1/2} \right)^{1/2}, \label{bw_distance}
\end{equation}
which corresponds to the $L^2$-Wasserstein distance between zero-centered non-degenerate Gaussian measures. The distance is realized by solving the Procrustes problem, i.e. $d_{\rm bw} = \min_{\bP \in \bO(n)} \| \bX^{1/2} - \bY^{1/2} \bP \|_{\rm F}$, where $\bO(n)$ denotes the orthogonal group. The minimum is attained when $\bP$ is the unitary polar factor of $\bY^{1/2}\bX^{1/2}$. The distance defined in \eqref{bw_distance} is indeed a Riemannian distance on $\sS_{++}^n$ induced from a Riemannian submersion. That is, the space of SPD matrices can be identified as a quotient space on the general linear group GL$(n)$ with the action of orthogonal group $\bO(n)$. The quotient map $\pi: {\rm GL}(n) \xrightarrow{} {\rm GL}(n)/\bO(n)$ thus defines a Riemannian submersion. By endowing a Euclidean metric on GL$(n)$, we can induce the BW metric on SPD manifold, shown in Table \ref{table_optimization_ingredient}. Similarly the induced geodesic is given by the following proposition~\cite{bhatia2019bures,van2020bures}.

\begin{proposition}[Geodesics of $\Mbw$ \cite{bhatia2019bures,van2020bures}]
\label{geodesic_def}
For any $\bX, \bY \in \Mbw$, a geodesic $\gamma$ connecting $\bX, \bY$ is given by 
\begin{equation*}
    \gamma(t) = \left( (1 - t) \bX^{1/2} + t \bY^{1/2} \bP \right) \left( (1 - t) \bX^{1/2} + t \bY^{1/2} \bP \right)^\top,
\end{equation*}
where $\bP \in {O}(n)$ is the unitary polar factor of $\bY^{1/2} \bX^{1/2}$. 
\end{proposition}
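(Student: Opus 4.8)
The plan is to exploit the Riemannian submersion structure recalled above: $\pi\colon \mathrm{GL}(n)\to\sS_{++}^n$, $\pi(\bA)=\bA\bA^\top$, where $\mathrm{GL}(n)$ carries the flat Frobenius metric and $\bO(n)$ acts on the right by $\bA\mapsto\bA\bQ$. By O'Neill's theorem for Riemannian submersions, a geodesic of the total space whose velocity is horizontal at one point is horizontal everywhere, and its image under $\pi$ is a geodesic of the base. Since $\mathrm{GL}(n)$ is an open subset of a Euclidean space, its geodesics are exactly the straight segments $\bA(t)=(1-t)\bA_0+t\bA_1$ that remain invertible. So it suffices to (i) pick lifts $\bA_0$ of $\bX$ and $\bA_1$ of $\bY$ whose connecting segment has horizontal velocity, (ii) check $\bA(t)\in\mathrm{GL}(n)$ for all $t\in[0,1]$, and (iii) compute $\pi(\bA(t))$.

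First I would record the horizontal space. The vertical space at $\bA$ is the tangent space to the orbit $\{\bA\bQ:\bQ\in\bO(n)\}$, i.e.\ $\{\bA\,\mathbf{\Omega}:\mathbf{\Omega}^\top=-\mathbf{\Omega}\}$, and from $\langle\bA\,\mathbf{\Omega},\bH\rangle_{\rm F}=-\trace(\mathbf{\Omega}\,\bA^\top\bH)$ its orthogonal complement is $\mathcal{H}_\bA=\{\bH:\bA^\top\bH\text{ is symmetric}\}$. Now take $\bA_0=\bX^{1/2}$, so $\pi(\bA_0)=\bX$, and $\bA_1=\bY^{1/2}\bP$, so $\pi(\bA_1)=\bY^{1/2}\bP\bP^\top\bY^{1/2}=\bY$, where $\bP$ is the unitary polar factor in $\bY^{1/2}\bX^{1/2}=\bP\bS$ with $\bS=(\bX^{1/2}\bY\bX^{1/2})^{1/2}\succ\bzero$. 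Transposing $\bY^{1/2}\bX^{1/2}=\bP\bS$ gives $\bX^{1/2}\bY^{1/2}=\bS\bP^\top$, hence $\bA_0^\top\bA_1=\bX^{1/2}\bY^{1/2}\bP=\bS$ is symmetric; since $\bX=\bA_0^\top\bA_0$ is symmetric, the constant velocity $\bA_1-\bA_0$ satisfies $\bA_0^\top(\bA_1-\bA_0)=\bS-\bX$, which is symmetric, so the velocity is horizontal at $\bA_0$. (In fact a one-line computation shows $\bA(t)^\top(\bA_1-\bA_0)=(1-t)(\bS-\bX)+t(\bP^\top\bY\bP-\bS)$ is symmetric for every $t$, re-deriving the propagation of horizontality directly.)

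Next I would verify that $\bA(t)=(1-t)\bX^{1/2}+t\bY^{1/2}\bP$ stays invertible on $[0,1]$: expanding with $\bA_0^\top\bA_1=\bA_1^\top\bA_0=\bS$ gives $\bA(t)^\top\bA(t)=(1-t)^2\bX+2t(1-t)\,\bS+t^2\bP^\top\bY\bP$, a sum of positive semidefinite matrices in which $(1-t)^2\bX$ (for $t<1$) or $t^2\bP^\top\bY\bP$ (at $t=1$) is positive definite, so $\bA(t)^\top\bA(t)\succ\bzero$ throughout. Then $\gamma(t):=\pi(\bA(t))=\big((1-t)\bX^{1/2}+t\bY^{1/2}\bP\big)\big((1-t)\bX^{1/2}+t\bY^{1/2}\bP\big)^\top$ is a geodesic of $\Mbw$ with $\gamma(0)=\bX$ and $\gamma(1)=\bY$, which is exactly the claimed formula; it is moreover length-minimizing because $\bP$ solves the Procrustes problem $\min_{\bP\in\bO(n)}\|\bX^{1/2}-\bY^{1/2}\bP\|_{\rm F}$, so the Frobenius length of the lifted segment equals $\dbw(\bX,\bY)$. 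The step I expect to need the most care is the horizontality bookkeeping — in particular pinning down the identity $\bA_0^\top\bA_1=\bS$ and correctly invoking (or re-proving) the propagation of horizontality — together with the invertibility check in (ii), which is precisely what certifies that the segment is a geodesic of the \emph{open} manifold $\mathrm{GL}(n)$ and not merely a line in ambient matrix space; the remaining computations are routine.
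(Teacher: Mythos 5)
Your proof is correct: the submersion $\pi(\bA)=\bA\bA^\top$ from $\mathrm{GL}(n)$ with the Frobenius metric, the horizontality check via $\bA_0^\top\bA_1=\bS=(\bX^{1/2}\bY\bX^{1/2})^{1/2}$, and the invertibility of $\bA(t)$ all go through as you describe. The paper itself does not prove this proposition (it only cites \cite{bhatia2019bures,van2020bures}), but your argument is exactly the Riemannian-submersion route the paper sketches in the surrounding discussion, so nothing further is needed.
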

Followed by this proposition, one can derive the Riemannian exponential map as in Table \ref{table_optimization_ingredient}. The inverse exponential map, also known as the logarithm map only exists in a open set around a center point $\bX$. This is because the BW geometry is not unique-geodesic due to the non-negative curvature. Such open neighbourhood around $\bX$ is given by $\Omega = \{ {\rm Exp}_{\bX}(\bU) : \bI + \L_{\bX}[\bU] \in \sS_{++}^n \}$. In this set, the exponential map is a local diffeomorphism from the manifold to the tangent space and the logarithm map is provided by ${\rm Log}_{\bX}(\bY) = (\bX\bY)^{1/2} + (\bY\bX)^{1/2}- 2\bX$, for any $\bX, \bY \in \Omega$. It is noted that $\Mbw$ is geodesic incomplete while $\Mai$ and $\Mle$ are geodesic complete. One can follow \cite{takatsu2008wasserstein} to complete the space by extending the metric to positive semi-definite matrices. 

\paragraph{Relationship between the BW metric and the Procrustes metric.} Here we highlight that the BW metric is a special form of the more general Procrustes metric, which is studied in \cite{dryden2009non}. 
\begin{definition}[Procrustes metric]
For any $\bX, \bY \in \sS_{++}^n$, the Procrustes distance is defined as $d_{\rm pc}(\bX,\bY) = \min_{\bP \in \bO(n)} \| \bL_\bX - \bL_\bY \bP \|_{\rm F}$, where $\bX = \bL_\bX\bL_\bX^\top, \bY = \bL_\bY\bL_\bY^\top$ for some decomposition factors $\bL_\bX, \bL_\bY$.
\end{definition}
Thus it is easy to see that under the BW metric, $\bL_\bX = \bX^{1/2}, \bL_{\bY} = \bY^{1/2}$. Another choice of $\bL_\bX, \bL_\bY$ can be the Cholesky factor, which is a lower triangular matrix with positive diagonals. The optimal $\bP = \bU\bV^\top$ is obtained from the singular value decomposition of $\bL_{\bY}^\top\bL_{\bX} = \bU\bSigma\bV^\top$. Under Procrustes metric, one can similarly derive a geodesic as $c(t) = \left( (1-t)\bL_{\bX} + t\bL_\bY\bP \right)\left( (1-t)\bL_{\bX} + t\bL_\bY\bP \right)^\top$, which corresponds to $\gamma(t)$ in Proposition \ref{geodesic_def}. This space is also incomplete with non-negative curvature.

\section{Log-Euclidean geometry and its Riemannian gradient computation}
This section presents a summary on the Log-Euclidean (LE) geometry \cite{arsigny2007geometric,quang2014log} and derives its Riemannian gradient for Riemannian optimization, which should be of independent interest. 

The Log-Euclidean metric is a bi-invariant metric on the Lie group structure of SPD matrices with the group operation $\bX \odot \bY := \exp(\log(\bX) + \log(\bY))$ for any $\bX, \bY \in \sS_{++}^n$. This metric is induced from the Euclidean metric on the space of symmetric matrices, $\sS^n$ through the matrix exponential. Hence the LE metric is given by $\langle \bU,\bV \rangle_{\rm le} = \trace(\D_\bU \log(\bX) \D_\bV \log(\bX))$, for $\bU,\bV \in \sS^n$ and the LE distance is $d_{\rm le}(\bX,\bY) = \| \log(\bX) - \log(\bY) \|_{\rm F}$. One can also derive the exponential map associated with the metric as ${\rm Exp}_{\bX}(\bU) = \exp(\log(\bX) + \D_{\bU}\log(\bX))$. 

Because of the derivative of matrix logarithm in the LE metric, it appears challenging to derive a simple form of Riemannian gradient based on the definition given in the main text. Hence, we follow the work \cite{tsuda05a,meyer2011regression,quang2014log} to consider the parameterization of SPD matrices by the symmetric matrices through the matrix exponential. Therefore, the optimization of $f(\bX), \bX \in \sS_{++}^n$ becomes optimization of $g(\bS) := f(\exp(\bS))$, $\bS \in \sS^n$, which is a linear space with the Euclidean metric. Then, the Riemannian gradient of $g(\bS)$ is derived as 
\begin{equation*}
    \grad g(\bS) = \{ \D_{\nabla f(\exp(\bS))} \exp(\bS)  \}_{\rm S}.
\end{equation*}
To compute the Riemannian gradient, we need to evaluate the directional derivative of matrix exponential along $\nabla f(\exp(\bS))$. This can be efficiently computed via the function over a block triangular matrix \cite{al2009computing}. That is, for any $\bV \in \sS^n$, the directional derivative of $\exp(\bS)$ along $\bV$ is given by the upper block triangular of the following matrix:
\begin{equation*}
    \exp \left(\begin{bmatrix} \bS & \bV \\ \bzero & \bS \end{bmatrix} \right) = \begin{bmatrix} \exp(\bS) & \D_{\bV}\exp(\bS) \\ \bzero & \exp(\bS) \end{bmatrix}.
\end{equation*}
This provides an efficient way to compute the Riemannian gradient of $g(\bS)$ over $\sS^n$. However, computing the Riemannian Hessian of $g(\bS)$, requires further evaluating the directional derivative of $\grad g(\bS)$, which to the best of our knowledge, is difficult. Thus in experiments, we approach the Hessian with finite difference of the gradient. This is sufficient to ensure global convergence of the Riemannian trust region method \cite{boumal2015riemannian}.
\begin{remark}[Practical considerations]
For Riemannian optimization algorithms, every iteration requires to evaluate the matrix exponential for a matrix of size $2n \times 2n$, which can be costly. Also, the matrix exponential may result in unstable gradients and updates, particularly when $\nabla g(\bS)$ involves matrix inversions. This is the case for the log-det optimization problem where $f(\exp(\bS)) = -\log\det(\exp(\bS))$. Hence, $\nabla f(\exp(\bS)) = (\exp(\bS))^{-1}$. Nevertheless, for log-det optimization, we can simplify the function to $f(\exp(\bS)) = -\trace(\bS)$, with $\nabla f(\exp(\bS)) = -\bI$. 
\end{remark}

\section{Positive definite kernel on BW geometry}
In this section, we show the existence of a positive definite Gaussian kernel on $\Mbw$. \cite{oh2019kernel,de2020wasserstein} have studied the Wasserstein distance kernel. First, we present the definition of a positive (resp. negative) definite function as in \cite{berg1984harmonic}. 
\begin{definition}
Consider $\mathcal{X}$ be a nonempty set. A function $f: \mathcal{X} \times \mathcal{X} \xrightarrow{} \sR$ is called positive definite if and only if $k$ is symmetric and for all integers $m \geq 2$, $\{ x_1, ..., x_m \} \subseteq \mathcal{X}$ and $\{ c_1, ..., c_m \} \subseteq \sR$, it satisfies $\sum_{i,j=1}^m c_i c_j f(x_i, x_j) \geq 0$. A function $f$ is called negative definite if and only if under the same conditions, it satisfies $\sum_{i,j=1}^m c_i c_j f(x_i, x_j) \leq 0$ with $\sum_{i=1}^m c_i = 0$.
\end{definition}

The following Theorem shows the Gaussian kernel induced from BW distance is positive definite on SPD manifold.
\begin{theorem}
\label{kernel_theorem}
The induced Gaussian kernel $k(\cdot, \cdot) := \exp(-d_{\rm bw}^2(\cdot, \cdot)/(2\sigma^2))$ is positive definite.
\end{theorem}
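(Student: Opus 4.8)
The plan is to reduce the statement to Schoenberg's classical theorem. Recall (see \cite{berg1984harmonic}) that for a symmetric map $\psi:\mathcal{X}\times\mathcal{X}\to\sR$ that vanishes on the diagonal, the kernel $\exp(-\psi/(2\sigma^2))$ is positive definite for every $\sigma>0$ if and only if $\psi$ is negative definite. Since $d_{\rm bw}$ is a metric, $\psi:=d_{\rm bw}^2$ is symmetric and vanishes on the diagonal, so it suffices to prove that $d_{\rm bw}^2$ is a negative definite kernel on $\sS_{++}^n$; symmetry of $k$ itself is then immediate.

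To check negative definiteness, fix $m\geq2$, matrices $\bX_1,\dots,\bX_m\in\sS_{++}^n$ and reals $c_1,\dots,c_m$ with $\sum_i c_i=0$. Writing $d_{\rm bw}^2(\bX_i,\bX_j)=\trace(\bX_i)+\trace(\bX_j)-2\,\trace((\bX_i^{1/2}\bX_j\bX_i^{1/2})^{1/2})$ and using $\sum_i c_i=0$, the two linear trace terms contribute $\sum_{i,j}c_ic_j\trace(\bX_i)=(\sum_j c_j)(\sum_i c_i\trace(\bX_i))=0$, and likewise for $\trace(\bX_j)$. Hence $\sum_{i,j}c_ic_j\,d_{\rm bw}^2(\bX_i,\bX_j)=-2\sum_{i,j}c_ic_j\,F(\bX_i,\bX_j)$, where $F(\bX,\bY):=\trace((\bX^{1/2}\bY\bX^{1/2})^{1/2})=\|\bY^{1/2}\bX^{1/2}\|_{*}$ is the affinity kernel underlying the BW distance. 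So the whole claim comes down to proving that $F$ is positive definite on $\sS_{++}^n$.

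Establishing positive definiteness of $F$ is the crux, and where I expect the real work to lie. Two routes look promising. The first is to exhibit an explicit feature representation $F(\bX,\bY)=\int\langle\Phi_\omega(\bX),\Phi_\omega(\bY)\rangle\,d\mu(\omega)$ with $\Phi_\omega$ mapping into a Hilbert space and $\mu\geq0$, say by turning the Procrustes formula $F(\bX,\bY)=\max_{\bP\in\bO(n)}\langle\bX^{1/2},\bY^{1/2}\bP\rangle_{\rm F}$ (or the optimal-transport-map picture of the BW geometry) into an averaged rather than a maximal identity. The second is to transport a known positive definiteness result for Wasserstein-distance kernels \cite{oh2019kernel,de2020wasserstein}: since $d_{\rm bw}(\bX,\bY)$ is the $L^2$-Wasserstein distance between $\gN_0(\bX)$ and $\gN_0(\bY)$, and the Wasserstein and BW geodesics coincide (Lemma \ref{match_geodesic_lemma}), such a result would descend to $\Mbw$ directly. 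The delicate part is handling the orthogonal alignment hidden inside $F$; by contrast, the Schoenberg reduction and the cancellation of the trace terms are routine.

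Finally, once $F$ is positive definite we obtain $\sum_{i,j}c_ic_j\,d_{\rm bw}^2(\bX_i,\bX_j)\leq0$ whenever $\sum_i c_i=0$, i.e.\ $d_{\rm bw}^2$ is negative definite, and Schoenberg's theorem yields that $k(\cdot,\cdot)=\exp(-d_{\rm bw}^2(\cdot,\cdot)/(2\sigma^2))$ is positive definite.
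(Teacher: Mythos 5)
Your reduction coincides exactly with the paper's own proof: it too invokes the Schoenberg-type equivalence (via Theorem 4.3 of \cite{jayasumana2013kernel}), expands $d_{\rm bw}^2$, cancels the two linear trace terms using $\sum_i c_i=0$, and is left with the claim that $\sum_{i,j}c_ic_j\,F(\bX_i,\bX_j)\geq 0$ for $F(\bX,\bY)=\trace((\bX^{1/2}\bY\bX^{1/2})^{1/2})=\|\bY^{1/2}\bX^{1/2}\|_*$ whenever $\sum_i c_i=0$. The only difference is that the paper simply writes ``$\leq 0$'' at that point with no justification, whereas you explicitly flag this as the crux and leave it open. So your proposal is incomplete at precisely the step where the paper's argument is silent.

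That gap cannot be closed, because the required inequality is false. Neither of your routes works: $F$ is a pointwise \emph{maximum} of the positive definite kernels $(\bX,\bY)\mapsto\langle\bX^{1/2},\bY^{1/2}\bP\rangle_{\rm F}$ over $\bP\in\bO(n)$, and a maximum of positive definite kernels need not be positive definite; and the Wasserstein-kernel literature you would transport from (\cite{oh2019kernel,de2020wasserstein}) in fact records that $W_2^2$ on $\sR^d$, $d\geq 2$, is \emph{not} negative definite, so there is nothing off the shelf to descend to $\Mbw$. Concretely, take the unit vectors $x_1=(1,0)$, $x_2=(0,1)$, $x_3=(1,1)/\sqrt{2}$, $x_4=(1,-1)/\sqrt{2}$ in $\sR^2$ and set $\bX_i^{\epsilon}=x_ix_i^\top+\epsilon\bI\in\sS_{++}^2$. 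By continuity of the BW distance on the closed cone, $F(\bX_i^{\epsilon},\bX_j^{\epsilon})\to|x_i^\top x_j|$ and $d_{\rm bw}^2(\bX_i^{\epsilon},\bX_j^{\epsilon})\to 2-2|x_i^\top x_j|$ as $\epsilon\to 0$. With $c=(1,1,-1,-1)$, which sums to zero, one computes $\sum_{i,j}c_ic_j\,d_{\rm bw}^2(\bX_i^{\epsilon},\bX_j^{\epsilon})\to 8(\sqrt{2}-1)>0$, so the sum is strictly positive for small $\epsilon>0$. Hence $d_{\rm bw}^2$ is not negative definite on $\sS_{++}^2$, and by the very Schoenberg equivalence you (and the paper) invoke, the Gaussian kernel $\exp(-d_{\rm bw}^2/(2\sigma^2))$ fails to be positive definite for some bandwidth $\sigma$. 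Any correct statement must therefore restrict the setting (e.g.\ $n=1$, or commuting families of matrices, where $F(\bX,\bY)=\trace(\bX^{1/2}\bY^{1/2})$ is genuinely positive definite) or modify the kernel.
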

\begin{proof}[Proof of Theorem \ref{kernel_theorem}]
From Theorem 4.3 in \cite{jayasumana2013kernel}, it suffices to prove the BW distance $d^2_{\rm bw}$ is negative definite. Indeed for any $m \geq 2$, $\{ \bX_1, ..., \bX_m \} \subseteq \Mbw$, $\{ c_1,...,c_m\} \subseteq \sR$ with $\sum_{i=1}^m c_i = 0$, we have
\begin{align*}
    &\sum_{i,j=1}^m c_i c_j d^2_{\rm bw}(\bX_i, \bX_j) \\
    = &\sum_{j=1}^m c_j \sum_{i=1}^m c_i \trace(\bX_i) + \sum_{i=1}^m c_i \sum_{j=1}^m c_j \trace(\bX_j) - 2 \sum_{i,j=1}^m c_i c_j \trace( \bX_i^{1/2} \bX_j \bX_i^{1/2} )^{1/2} \\
    = &-2\sum_{i,j=1}^m c_i c_j \trace( \bX_i^{1/2} \bX_j \bX_i^{1/2} )^{1/2} \leq 0.
\end{align*}
This shows $d^2_{\rm bw}$ is negative definite and thus the exponentiated Gaussian kernel is positive definite.
\end{proof}

\section{Proof for Section \ref{cn_analysis_subsect}: Condition number of Riemannian Hessian}

\begin{proof}[Proof of Lemma \ref{condition_number_bound_lemma}]
Under AI metric, first note that for any $\bX \in \sS_{++}^n$,
\begin{align}
    \kappa(\bX \otimes \bX) &= \| \bX \otimes \bX \|_2 \| (\bX \otimes \bX)^{-1} \|_2 \nonumber\\
    &=  \| \bX \otimes \bX \|_2  \| \bX^{-1} \otimes \bX^{-1} \|_2 \nonumber\\
    &= \| \bX\|^2_2 \| \bX^{-1} \|^2_2 = \kappa(\bX)^2, \nonumber
\end{align}
where we apply the norm properties for Kronecker product. Next denote the $i$-th largest eigenvalue as $\lambda_i(\bA)$ for $1 \leq i \leq d$ where $\bA \in \sR^{d \times d}$. Then, 
\begin{align}
    \kappa_{\rm ai} = \kappa( ( \bX \otimes \bX )\bH(\bX) ) = \frac{\lambda_1(( \bX \otimes \bX )\bH(\bX))}{\lambda_{n^2}(( \bX \otimes \bX )\bH(\bX))} &\geq \frac{\lambda_1(( \bX \otimes \bX )) \lambda_{n^2}(\bH(\bX))}{\lambda_{n^2}(( \bX \otimes \bX )) \lambda_1(\bH(\bX))} \nonumber\\
    &= \kappa(( \bX \otimes \bX ))/\kappa(\bH(\bX)) \nonumber\\
    &= \kappa(\bX)^2/\kappa(\bH(\bX)),  \nonumber
\end{align}
where the first inequality uses the eigenvalue bound for matrix product, i.e. $\lambda_i(\bA) \lambda_{d}(\bB) \leq \lambda_i(\bA \bB) \leq \lambda_i(\bA) \lambda_1(\bB)$ for $\bA, \bB \in \sR^{d\times d}$ \cite{merikoski2004inequalities}. The upper bound on $\kappa_{\rm ai}^*$ is easily obtained by noting $k(\bA \bB) \leq \kappa(\bA) \kappa(\bB)$.

Similarly for the BW metric, we first note that because $\bX \in \sS_{++}^n$, $\bX \oplus \bX \in \sS_{++}^{n^2}$ by spectrum property of Kronecker sum \cite{hardy2019matrix}. Then we have
\begin{align}
    \kappa(\bX \oplus \bX) = \frac{\lambda_1(\bX \oplus \bX)}{\lambda_{n^2}(\bX \oplus \bX)} = \frac{2\lambda_1(\bX)}{2\lambda_{n}(\bX)} = \kappa(\bX), \nonumber
\end{align}
where the second equality is again due to the spectrum property. Then the lower and upper bounds of the condition number on $\kappa(( \bX \oplus \bX )\bH(\bX))$ are derived similarly.
\end{proof}

\section{Proofs for Section \ref{sect_cur_subsect}: Sectional Curvature and trigonometry distance bound derivation}

\begin{proof}[Proof of Lemma \ref{match_geodesic_lemma}]
The proof follows by noticing that the push-forward interpolation between two non-degenerate Gaussians is a Gaussian with covariance given by interpolation of the covariances.

From Lemma 2.3 in \cite{takatsu2008wasserstein}, for any $\bX, \bY \in \sS_{++}^n$, the geodesic between $\gN_0(\bX)$ and $\gN_0(\bY)$ under $L^2$-Wasserstein metric is $\gN_0(\omega(t))$, where 
\begin{equation}
    \omega(t) = \left( (1-t) \bI + t \bT \right) \bX \left( (1-t) \bI + t \bT \right), \label{geodesic_wass}
\end{equation}
with $\bT = \bY^{1/2} (\bY^{1/2} \bX \bY^{1/2})^{-1/2} \bY^{1/2}$ as the pushforward map from $\gN_0(\bX)$ to $\gN_0(\bY)$. It is clear that the interpolation of two non-degenerate Gaussian measures is also a non-degenerate Gaussian. To show $\omega(t) = \gamma(t)$, We only need to show $\bY^{1/2} \bP \bX^{-1/2} =\bT$, where $\bP$ is the unitary polar factor of $\bY^{1/2} \bX^{1/2}$. By noting that $\bP = \bY^{1/2} (\bX\bY)^{-1/2} \bX^{1/2}$ from eq. (35) in \cite{bhatia2019bures}, we have 
$\bY^{1/2} \bP \bX^{-1/2} = \bY (\bX\bY)^{-1/2}$. On the other hand, $\bT = \bY \bY^{-1/2} (\bY^{1/2} \bX \bY^{1/2})^{-1/2} \bY^{1/2} = \bY (\bX \bY)^{-1/2}$, where the second equality can be seen as follows. Denote $\bC := (\bY^{1/2} \bX \bY^{1/2})^{-1/2}$, then 
\begin{align}
    \bI = \bC \bY^{1/2} \bX \bY^{1/2} \bC  &=  \bY^{-1/2}\bC \bY^{1/2} \bX \bY^{1/2} \bC \bY^{1/2} \nonumber\\
    &= \bY^{-1/2}\bC \bY^{1/2} \bX \bY \bY^{-1/2}\bC \bY^{1/2}. \nonumber
\end{align}
From this result, we have $\bY^{-1/2}\bC \bY^{1/2} = (\bX\bY)^{-1/2}$. This completes the proof. 
\end{proof}

\begin{proof}[Proof of Lemma \ref{bw_curvature_lemma}]
Let $\mu_0, \mu_1, \nu \in \gN_0$ with covariance matrix $\bX, \bY, \bZ \in \sS_{++}^n$ and denote $\mu_t :=( (1-t)\id + tT_{\mu_0 \xrightarrow{} \mu_1})_{\#} \mu_0$, which is the interpolated Gaussian measure between $\mu_0, \mu_1$. From the matching geodesics in Lemma \ref{match_geodesic_lemma}, we have $\mu_t \equiv \gN_0(\gamma(t))$. Then based on standard Theorem on Wasserstein distance (e.g. Theorem 7.3.2 in \cite{ambrosio2008gradient}), we have $\gW_2^2(\mu_t, \nu) \geq (1-t) \gW_2^2 (\mu_0, \nu) + t \gW_2^2(\mu_1, \nu) - t(1-t) \gW_2^2 (\mu_0, \mu_1)$. Given the accordance between $L^2$-Wasserstein distance between zero-mean Gaussians and geodesic distance between their corresponding covariance matrices on $\Mbw$, we have 
\begin{align}
    d^2_{\rm bw}( \gamma(t), \bZ ) \geq (1-t) d^2_{\rm bw}(\bX, \bZ) + t d^2_{\rm bw}(\bY, \bZ) - t(1-t) d^2_{\rm bw} (\bX, \bY) \nonumber
\end{align}
holds for any $\bX, \bY, \bZ \in \sS_{++}^n$. This suggests $\Mbw$ is a non-negatively curved Alexandrov space with non-negative sectional curvature. 
\end{proof}

\begin{proof}[Proof of Lemma \ref{trigonometry_lemma}]
Given $\Mai$ is a non-positively curved space, the proof under AI metric can be found in \cite{zhang2016first}, which reduces to proving the claim for hyperbolic space with constant curvature $-1$. Similarly, for non-negatively curved space, it becomes studying the hypersphere with constant curvature $1$. Let $\triangle \tilde{x}\tilde{y}\tilde{z}$ be the comparison triangle on $T_\bX\M_{\rm bw}$ such that $\tilde{y} = y, \tilde{z} = z$ and $\theta$ is the angle between side $\tilde{y}$ and $\tilde{z}$. Because $\Omega$ is a uniquely geodesic subset as per Assumption \ref{basic_assump}, we have $d(\bX, \bY) = \| {\rm Exp}_{\bX}^{-1}(\bY) \|_{\rm bw}$ for any $\bX, \bY \in \Omega$. Thus, we can immediately see $\tilde{x}^2 = \| {\rm Exp}_{\bX}^{-1}(\bY) - {\rm Exp}_{\bX}^{-1}(\bZ)  \|_{\rm bw}^2 = y^2 + z^2 - 2 y z \cos(\theta)$. Then from the Toponogov Theorem (Theorem 2.2 in \cite{meyer1989toponogov}) and the assumption of unique geodesic, we have $x \leq \tilde{x}$, which shows for unit hypersphere:
\begin{equation}
    x^2 \leq y^2 + z^2 - 2yz \cos(\theta). \label{sphere_result}
\end{equation}
Next, we see that for the space of constant curvature $0$, it satisfies $x^2 = y^2 + z^2 - 2yz\cos(\theta)$. Thus we can focus on where the curvature is positive, i.e. $K > 0$. For such space, we have the following generalized law of cosines \cite{meyer1989toponogov}: 
\begin{equation}
    \cos(\sqrt{K} x) = \cos(\sqrt{K} y)\cos(\sqrt{K} z) + \sin(\sqrt{K} y) \sin(\sqrt{K}z) \cos(\theta), \nonumber
\end{equation}
which can be viewed as a geodesic triangle on unit hypersphere with side lengths $\sqrt{K} x$, $\sqrt{K} y, \sqrt{K} z$. Thus, substituting these side lengths in \eqref{sphere_result} proves the desired result for positively curved space. 
\end{proof}

\section{Proofs for Section \ref{convergence_sect}: Convergence analysis}

\begin{proof}[Proof of Lemma \ref{local_psd_lemma}]
The proof follows mainly from the continuity of $\frac{d^2}{dt^2} (f\circ {\rm Exp})$ in both $t, \bX, \bU$. 

First note at optimality, we have for $\bU \in T_{\bX^*}\Omega$ with $\| \bU\|=1$, $\lambda_{\min}^* \leq \langle \hess f(\bX^*)[\bU], \bU \rangle \leq \lambda_{\max}^*$. Because exponential map is a second-order retraction, by standard theory (e.g. Proposition 5.5.5 in \cite{absil2009optimization}), $\hess f(\bX) = \nabla^2 (f \circ {\rm Exp_{\bX}})(\bzero)$ and $\langle \hess f(\bX)[\bU], \bU \rangle = \frac{d^2}{dt^2} f({\rm Exp}_{\bX}(t\bU) )|_{t=0}$ for any $\bX \in \M, \bU \in T_\bX\M$. Thus at optimality, we have
\begin{equation}
    \lambda_{\min}^* \leq \frac{d^2}{dt^2} f({\rm Exp}_{\bX}(t\bU))|_{\bX=\bX^*,t =0} \leq \lambda_{\max}^*. \nonumber
\end{equation}
By the continuity of $\frac{d^2}{dt^2} (f\circ {\rm Exp})$, we can always find a constant $\alpha \geq 1$ such that $\lambda_{\min}^*/\alpha \leq \frac{d^2}{dt^2} f({\rm Exp}_{\bX}(t\bU)) \leq \alpha \lambda_{\max}^*$ holds for all $\bX \in \Omega$, $\| \bU\|=1$ and $t$ such that ${\rm Exp}_{\bX}(t\bU) \in \Omega$. In general, $\alpha$ scales with the size of $\Omega$. 
\end{proof}

\begin{proof}[Proof of Theorem \ref{local_converge_SD}]
From Theorem 14 in \cite{zhang2016first}, we have for either metric, 
\begin{equation}
    f(\bX_t) - f(\bX^*) \leq \frac{1}{2} (1 - \min\{ \frac{1}{\zeta}, \frac{\mu}{L} \})^{t-2} D^2 L, \nonumber
\end{equation}
where $L, \mu$ are the constants for geodesic smoothness and strongly convex. As discussed in the main text, $L = \alpha\lambda^*_{\max}$ and $\mu = \lambda_{\min}^*/\alpha$, where $\lambda_{\min}^*$ and $\lambda_{\max}^*$ are eigenvalues under either metric. Based on standard result on $\mu$-geodesic strongly convexity, we have $f(\bX_t) - f(\bX^*) \geq \frac{\mu}{2} d^2(\bX_t, \bX^*)$. Combining this result and Lemma \ref{trigonometry_lemma} and \ref{condition_number_bound_lemma} gives the result.
\end{proof}

\begin{proof}[Proof of Theorem \ref{local_convergence_TR}]
From Theorem 4.13 in \cite{absil2007trust}, we have for either metric, $d(\bX_t, \bX^*) \leq c \, d^2(\bX_{t-1}, \bX^*)$ for some $c \geq (\frac{\rho}{\lambda^*_{\min}} + \lambda_{\min}^* + \ell) (\kappa^*)^2 \geq (2\sqrt{\rho} + \ell) (\kappa^*)^2$. 
\end{proof}

\section{Proofs for Section \ref{geodesic_convex_sect}: Geodesic convexity}

\subsection{Preliminaries}
In addition to the definition of geodesic convexity in the main text, we also use a second-order characterization of geodesic convexity, which is equivalent to the definition \cite{vishnoi2018geodesic}. 
\begin{lemma}[Second-order characterization of geodesic convexity]
Under the same settings as in Definition \ref{geodesic_convex_def}, a twice-continuously differentiable function $f$ is called geodesic convex if 
$\forall\, t \in [0,1], \frac{d^2 f(\gamma(t))}{d t^2} \geq 0$. 
\end{lemma}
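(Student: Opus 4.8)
The plan is to reduce the statement to the classical one-dimensional fact that a $C^2$ function on an interval is convex if and only if its second derivative is nonnegative. Fix any $x, y \in \mathcal{X}$ and let $\gamma:[0,1]\to\mathcal{X}$ be the distance-minimizing geodesic joining them, which exists and stays inside $\mathcal{X}$ by the geodesic convexity of $\mathcal{X}$ together with Assumption \ref{basic_assump}. Define the scalar function $\phi(t) := f(\gamma(t))$. Since $f$ is twice continuously differentiable and $\gamma$ is smooth, $\phi \in C^2([0,1])$ and $\frac{d^2 f(\gamma(t))}{dt^2} = \phi''(t)$. The key observation is that the defining inequality of Definition \ref{geodesic_convex_def}, namely $f(\gamma(t)) \le (1-t)f(x) + t f(y)$, is exactly the statement $\phi(t) \le (1-t)\phi(0) + t\phi(1)$, i.e., that $\phi$ is convex on $[0,1]$ in the ordinary Euclidean sense. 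Thus the lemma is equivalent to the one-variable claim that $\phi$ is convex on $[0,1]$ if and only if $\phi''(t) \ge 0$ for all $t$.

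For the sufficiency direction (the one actually invoked in the applications), I would assume $\phi'' \ge 0$ on $[0,1]$, so that $\phi'$ is nondecreasing. Fixing $t \in (0,1)$ and applying the mean value theorem separately on $[0,t]$ and on $[t,1]$ produces points $s_1 \in (0,t)$ and $s_2 \in (t,1)$ with $\phi'(s_1) = (\phi(t)-\phi(0))/t$ and $\phi'(s_2) = (\phi(1)-\phi(t))/(1-t)$. Since $s_1 < s_2$ and $\phi'$ is nondecreasing we have $\phi'(s_1)\le\phi'(s_2)$, and rearranging this inequality yields $\phi(t) \le (1-t)\phi(0)+t\phi(1)$, which is the geodesic convexity inequality. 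Equivalently, one could integrate $\phi''$ twice or invoke the integral remainder form of Taylor's theorem to obtain the same chord estimate.

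For the necessity direction I would assume $\phi$ convex and recover $\phi'' \ge 0$ by passing to the limit in the second-order difference quotient: convexity gives $\phi(t+h) + \phi(t-h) - 2\phi(t) \ge 0$ for all small $h$, so dividing by $h^2$ and letting $h \to 0$ yields $\phi''(t) \ge 0$ at every interior $t$, and continuity of $\phi''$ then extends the inequality to the endpoints $t \in \{0,1\}$.

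The main obstacle is not analytic depth, since the single-variable calculus is routine, but rather making the reduction rigorous. One must confirm that geodesic convexity of $f$ is genuinely equivalent to ordinary convexity of all restrictions $\phi = f\circ\gamma$ along distance-minimizing geodesics, which follows directly from Definition \ref{geodesic_convex_def}, and that such a geodesic is available and unique within $\mathcal{X}$ so that $\gamma$, and hence $\phi$, is well-defined and $C^2$; this is exactly what the totally-normal and geodesic-convex-set hypotheses of Assumption \ref{basic_assump} guarantee. Once these bookkeeping points are settled, the equivalence follows immediately from the standard single-variable argument outlined above.
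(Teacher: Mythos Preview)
Your argument is correct: the reduction to the one-variable function $\phi = f\circ\gamma$ is exactly how one establishes this equivalence, and both directions of your single-variable calculus are sound. There is nothing to compare against, however, because the paper does not actually prove this lemma; it simply states the second-order characterization as a known fact, citing \cite{vishnoi2018geodesic} for the equivalence with Definition~\ref{geodesic_convex_def}.
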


\subsection{Proof}
\begin{proof}[Proof of Proposition \ref{prop_lin_quad}]
The main idea is to apply the second-order characterization of geodesic convexity. Let $f_1(\bX) = \trace(\bX \bA)$ and $f_2(\bX) = \trace(\bX \bA \bX)$. For claim of linear function, given any $\bA \in \sS_+^n$, it can be factorized as $\bA = \bL^\top \bL$ for some $\bL \in \sR^{m \times n}$. Thus $f_1(\bX) = \trace(\bL\bX\bL^\top)$. Denote $\pi(t) := (1-t) \bX^{1/2} + t\bY^{1/2}\bP$ and thus the geodesic $\gamma(t) = \pi(t) \pi(t)^\top$. By standard calculus, we can write the first-order and second-order derivatives as
\begin{align}
    \frac{d f_1(\gamma(t))}{dt} &= 2\trace\left( \bL ( \bY^{1/2}\bP - \bX^{1/2}) \pi(t)^\top  \bL^\top \right), \nonumber\\
    \frac{d^2 f_1(\gamma(t))}{dt^2} &= 2 \trace\left( \bL ( \bY^{1/2}\bP - \bX^{1/2})( \bY^{1/2}\bP - \bX^{1/2})^\top \bL^\top \right) \geq 0. \nonumber
\end{align}
For claim on quadratic function $f_2(\bX)$, let $\tilde{\bX} := \bX^{1/2}, \tilde{\bY} := \bY^{1/2}\bP$ and the first-order derivative can be similarly derived as 
\begin{align}
    \frac{d f_2(\gamma(t))}{dt} = &2 \trace(\tilde{\bY} \pi(t)^\top \bA \pi(t) \pi(t)^\top) - 2 \trace(\tilde{\bX} \pi(t)^\top \bA \pi(t) \pi(t)^\top) - 2 \trace(\tilde{\bX} \pi(t)^\top \pi(t) \pi(t)^\top \bA) \nonumber \\
    &+ 2 \trace(\tilde{\bY} \pi(t)^\top \pi(t) \pi(t)^\top \bA). \nonumber
\end{align}
The second-order derivative is derived and simplified as 
\begin{align}
    \frac{d^2 f_2(\gamma(t))}{dt^2} = &2\| \tilde{\bY} \pi(t)^\top \bL^\top - \tilde{\bX} \pi(t)^\top \bL^\top \|^2_{\rm F} + 2\| \bL \tilde{\bY} \pi(t)^\top - \bL \tilde{\bX} \pi(t)^\top \|_{\rm F}^2 \label{first_term}\\
    &+4 \trace\left( ( \tilde{\bX} - \tilde{\bY} )  ( \tilde{\bX} - \tilde{\bY} )^\top \{ \bA \pi(t) \pi(t)^\top \}_{\rm S} \right) \label{second_term}\\
    &+4 \trace\left( \bL \left( \tilde{\bY} \pi(t)^\top - \tilde{\bX} \pi(t)^\top  \right)^2 \bL^\top \right) \geq 0 \label{third_term},
\end{align}
where $\| \cdot \|_{\rm F}$ is the Frobenius norm. Terms \eqref{first_term} and \eqref{third_term} are clearly non-negative. Term \eqref{second_term} is also non-negative by noting $\{ \bA \pi(t) \pi(t)^\top \}_{\rm S} \in \sS_+^n$. 

To prove the claim on geodesic convexity of $f_3(\bX) = - \log\det (\bX)$, we use the definition of geodesic convexity and applies the fact that $\det(\bA\bA^\top) = (\det(\bA))^2$ and $\det(\bA + \bB) \geq \det(\bA) + \det(\bB)$ for $\bA, \bB \succ \bzero$.

That is, for any $\bX, \bY \in \sS_{++}^n$ and $t \in [0,1]$, the geodesic $\gamma(t)$ with respect to metric $\gbw$ joining $\bX, \bY$ is given in Proposition \ref{geodesic_def}. Thus, 
\begin{align}
    \log\det(\gamma(t)) &= 2 \log \det (\pi(t)) \label{prop1_eq1}\\
    &= 2\log\det( (1 - t)\bI + t \bY^{1/2} \bP \bX^{-1/2} )\bX^{1/2}) \nonumber\\
    &= 2 \log\det ( (1 - t)\bI + t \bY^{1/2} \bP  \bX^{-1/2} )  + 2 \log \det(\bX^{1/2}) \nonumber\\
    &\geq 2 \log ( (1-t)\det (\bI) + t \det(\bY^{1/2}\bP  \bX^{-1/2}) ) + 2 \log \det(\bX^{1/2}) \label{prop1_eq2}\\
    &\geq 2(1-t) \log\det(\bI) + 2t \log\det(\bY^{1/2} \bP  \bX^{-1/2}) + 2 \log \det(\bX^{1/2}) \label{prop1_eq3} \\
    &= 2t\log\det(\bY^{1/2}\bP ) - 2t\log\det (\bX^{1/2}) + 2 \log\det(\bX^{1/2}) \nonumber\\
    &= t\log\det(\bY) + (1 - t) \log\det(\bX) \label{propo1_eq4}
\end{align}
where \eqref{prop1_eq1} uses the fact that $\det(\bA\bA^\top) = (\det(\bA))^2$ and inequality \eqref{prop1_eq2} uses the fact that $\det(\bA + \bB) \geq \det(\bA) + \det(\bB)$ for $\bA, \bB \in \sS_{++}^n$ and from Lemma 1 in \cite{van2020bures}, we have $\bY^{1/2} \bP  \bX^{-1/2} \in \sS_{++}^n$ with $\bP $ as the orthogonal polar factor of $\bX^{1/2}\bY^{1/2}$. Inequality \eqref{prop1_eq3} follows from the concavity of logarithm. Equality \eqref{propo1_eq4} uses the fact that $\det(\bP)^2 = 1$ for $\bP  \in O(n)$. This shows $\log \det$ is geodesically concave. And because logarithm is strictly concave, inequality \eqref{prop1_eq3} reduces to equality only when $t = 0, 1$. Thus strict geodesic concavity is proved. Now the proof is complete.
\end{proof}



\begin{proposition}
\label{gde_g_convex}
The log-likelihood of reparameterized Gaussian $f(\bS) = p_{\gN}(\bY; \bS)$ is geodesic concave on $\Mbw$. 
\end{proposition}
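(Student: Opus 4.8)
\emph{Proof proposal.} The plan is to reduce the claim to Proposition~\ref{prop_lin_quad} by expanding the log-likelihood into a constant plus a positive multiple of $\log\det$ and a negative linear term. First I would write out the logarithm in the definition of $p_{\gN}$:
\begin{align}
f(\bS) = p_{\gN}(\bY;\bS) &= \sum_{i=1}^N\Big[ \big(1 - \tfrac{d}{2}\big)\log(2\pi) + \tfrac12 + \tfrac12\log\det(\bS) - \tfrac12\, \by_i^\top \bS \by_i \Big] \nonumber \\
&= C + \tfrac{N}{2}\log\det(\bS) - \tfrac12\trace\!\big(\bS\, \bA\big), \nonumber
\end{align}
where $C$ collects the $\bS$-independent terms and $\bA := \sum_{i=1}^N \by_i\by_i^\top \in \sS_+^n$, using $\sum_i \by_i^\top\bS\by_i = \trace\!\big(\bS\sum_i \by_i\by_i^\top\big)$.

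Next, I would apply Proposition~\ref{prop_lin_quad} twice. Since $\bA \in \sS_+^n$, the function $\bX \mapsto \trace(\bX\bA)$ is geodesic convex on $\Mbw$, so $-\tfrac12\trace(\bS\bA)$ is geodesic concave; and since $\bX\mapsto-\log\det(\bX)$ is strictly geodesic convex on $\Mbw$, the term $\tfrac{N}{2}\log\det(\bS)$ is (strictly) geodesic concave. It then remains to note the elementary fact that a nonnegative combination of geodesic concave functions, plus a constant, is geodesic concave: along any distance-minimizing geodesic $\gamma$ in the geodesically convex domain on which these statements hold, each concave term $g$ satisfies $g(\gamma(t)) \ge (1-t)g(\gamma(0)) + t\,g(\gamma(1))$, and adding these inequalities with the positive weights $\tfrac{N}{2}$ and $\tfrac12$ preserves the direction, while the constant $C$ contributes an equality. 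Hence $f$ is geodesic concave on $\Mbw$, and it is strictly so because of the $\log\det$ term.

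I do not expect any real obstacle; the work is essentially bookkeeping once the expansion is written down. The only points that deserve a line of care are (i) that geodesic (con)vexity on $\Mbw$ must be read on the geodesically convex neighbourhood used in Proposition~\ref{prop_lin_quad}, so that the two concavity conclusions are compatible, and (ii) that the combination coefficients $\tfrac{N}{2},\tfrac12 \ge 0$ --- both trivially true. As an alternative one could differentiate $\log\det(\gamma(t))$ along the geodesic of Proposition~\ref{geodesic_def} and check $\tfrac{d^2}{dt^2}f(\gamma(t)) \le 0$ directly, reusing the computations in the proof of Proposition~\ref{prop_lin_quad}, but the additive decomposition is the cleaner route.
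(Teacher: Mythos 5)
Your proposal is correct and takes essentially the same route as the paper: both decompose the log-likelihood (up to an $\bS$-independent constant) into a positive multiple of $\log\det(\bS)$ plus the negative of a linear trace term, and invoke Proposition~\ref{prop_lin_quad} for the geodesic concavity of each piece. The only cosmetic difference is that you aggregate the samples into $\bA=\sum_i \by_i\by_i^\top$ before applying the linear-function case, whereas the paper argues per-sample and sums at the end.
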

\begin{proof}[Proof of Proposition \ref{gde_g_convex}]
To prove $f(\bS)$ is geodesic convex, it suffices to show that $f(\bS) = \log( \det(\bS)^{1/2}  \allowbreak \exp(-\frac{1}{2} \by_i^\top \bS \by_i )  )$ is geodesic concave. That is, for a geodesic $\gamma(t)$ connecting $\bX, \bY$, we have
\begin{align}
    f(\gamma(t)) &= \log \left(\det(\gamma(t))^{1/2}\exp(-\frac{1}{2} \by_i^\top \gamma(t) \by_i ) \right) \nonumber\\
    &=  \frac{1}{2}\log\det( \gamma(t) ) - \frac{1}{2} \by_i^\top \gamma(t) \by_i \nonumber \\
    &\geq \frac{1-t}{2} \log\det(\bX) + \frac{t}{2} \log\det(\bY) - \frac{1-t}{2} \by_i^\top \bX \by_i - \frac{t}{2} \by_i^\top \bY \by_i \label{gmm_theo_eq1}\\
    &= (1-t) f(\bX) + t f(\bY). \nonumber
\end{align}
where inequality \eqref{gmm_theo_eq1} follows from Proposition \ref{prop_lin_quad}. We further notice that as $\log\det$ is strictly geodesically concave, so is $f(\bS)$.
\end{proof}

\begin{remark}[Gaussian mixture model]
Under the BW metric, consider the reformulated GMM model with $K$ components:
\begin{equation}
    \max_{\{\bS_j \in \sS_{++}^{d+1}\}_{j=1}^K, \{\omega_j\}_{j=1}^{K-1}} {L} = \sum_{i=1}^n \log \left( \sum_{j=1}^K \frac{\exp(\omega_j)}{\sum_{j=1}^K\exp(\omega_j)} {p}_{\mathcal{N}} (\by_i; \bS)  \right), \label{reformulated_gmm}
\end{equation}
where $\omega_K = 0$, ${p}_{\mathcal{N}}(\by_i; \bS) := (2\pi)^{1-d/2} \det(\bS)^{1/2} \exp(\frac{1}{2}-\frac{1}{2} \by_i^\top \bS \by_i)$. It is easy to see that problem \eqref{reformulated_gmm} is geodesically convex for each component. Also, the optimal solution for problem \eqref{reformulated_gmm} is unchanged given the inverse transformation on SPD matrices is one-to-one. That is, if $\bS_j^*$ maximizes problem \eqref{reformulated_gmm}, $(\bS_j^*)^{-1}$ maximizes the problem in \cite{hosseini2020alternative}. Based on Theorem 1 in \cite{hosseini2020alternative}, our local maximizer can be written as parameters of the original GMM problem, i.e. $\{\mu_j, \Sigma_j\}$:
\begin{equation}
    (\bS^*_j)^{-1} = \begin{bmatrix}
                        \bSigma^*_j +  {\mu^*_j \mu^*_j}^{T} & \mu^*_j \\
                        {\mu^*_j}^\top & 1
                        \end{bmatrix}, \quad \text{ for } j = 1,2,...,K. \nonumber
\end{equation}
\end{remark}

\begin{proof}[Proof of Proposition \ref{prop_geometric}]
The proof is based on \cite[Theorem~6]{bhatia2019bures}, where we can show the geometric mean under BW geometry also satisfies the convexity with respect to the Loewner ordering. That is, $\gamma(t) \preceq (1-t) \bX + t \bY$. Then the proof then follows as in Theorem 2.3 in \cite{sra2015conic}. 
\end{proof}

\clearpage
\section{Additional experimental results}
Here, we include additional experiment results to further consolidate the findings in the main text. For each problem instance, we compare the convergence of Riemannian steepest descent (RSD), Riemannian conjugate gradient (RCG), and Riemannian trust region (RTR) methods wherever applicable. In addition to the distance to solution in the main text, we also include convergence in loss values and compare the results against runtime. Robustness to randomness in initialization is examined as well.

\subsection{Weighted least squares}
Figures \ref{WLS_RSD_figure}, \ref{WLS_RCG_figure}, and \ref{WLS_RTR_figure} present results on weighted least squares problem for RSD, RCG, and RTR methods respectively. On all the problem instances, we observe the consistent advantage of using the BW geometry over other alternatives, particularly for learning ill-conditioned matrices. For the case where $\bA$ is sparse, algorithms on AI and LE geometry may converge to some other local minimizers that are far from the optimal solution. Figure \ref{WLS_rng_figure} shows five independent runs with different initializations. We see that the AI and LE geometries can be sensitive to the initialization as they may converge to different solutions depending on the initializations.

\subsection{Lyapunov matrix equation}
From Figures \ref{LYA_RSD_figure}, \ref{LYA_RCG_figure}, and \ref{LYA_RTR_figure}, we observe that the BW geometry still maintains its superiority for learning full rank and approximating low rank matrices. Figure \ref{Lya_rng_figure} presents the sensitivity to randomness where we find that BW geometry is more stable. 

\subsection{Trace regression}
From Figures \ref{TR_RSD_figure}, \ref{TR_RCG_figure}, \ref{TR_RTR_figure}, and \ref{TR_rng_figure}, similar observations can be made regarding the superiority and robustness of the BW geometry.

\subsection{Metric learning}
For the example of distance metric learning, apart from the two datasets considered in the main text, we also include experiments on four other datasets, \texttt{Iris}, \texttt{Balance}, \texttt{Newthyroid} and \texttt{Popfailure} from the Keel database \cite{alcala2009keel}. 

The results are presented in Figures \ref{DML_RCG_figure}, \ref{DML_RTR_figure}, and \ref{DML_rng_figure} for the RCG and RTR methods where BW geometry also shows its advantage on real datasets.


\subsection{Log-det maximization and Gaussian mixture model}
Finally, the results for log-det maximization and Gaussian mixture model (GMM) problems are shown in Figures \ref{Logdet_RCG_figure}, \ref{Logdet_RTR_figure}, \ref{GMM_figure}, and \ref{Logdet_gmm_rng_figure}. For these two problems, we see a comparative advantage of using the AI geometry over the BW geometry, consistent with the discussion in Section \ref{cn_analysis_subsect}. 

Although LE geometry performs similarly as AI for the example of log-det maximization, its per-iteration cost is much higher. For the example of GMM, we include the convergence plot of the  Expectation-Maximization baseline algorithm \cite{dempster1977maximum}.

We see the consistent superiority of AI geometry over the other two alternatives. We also observe a comparable performance between the LE and BW geometries while LE geometry appears to be less stable for first-order methods.

\begin{figure*}[!th]
\captionsetup{justification=centering}
    \centering
    \subfloat[\texttt{Dense}, \texttt{LowCN} (\texttt{Loss})]{\includegraphics[width = 0.2\textwidth, height = 0.16\textwidth]{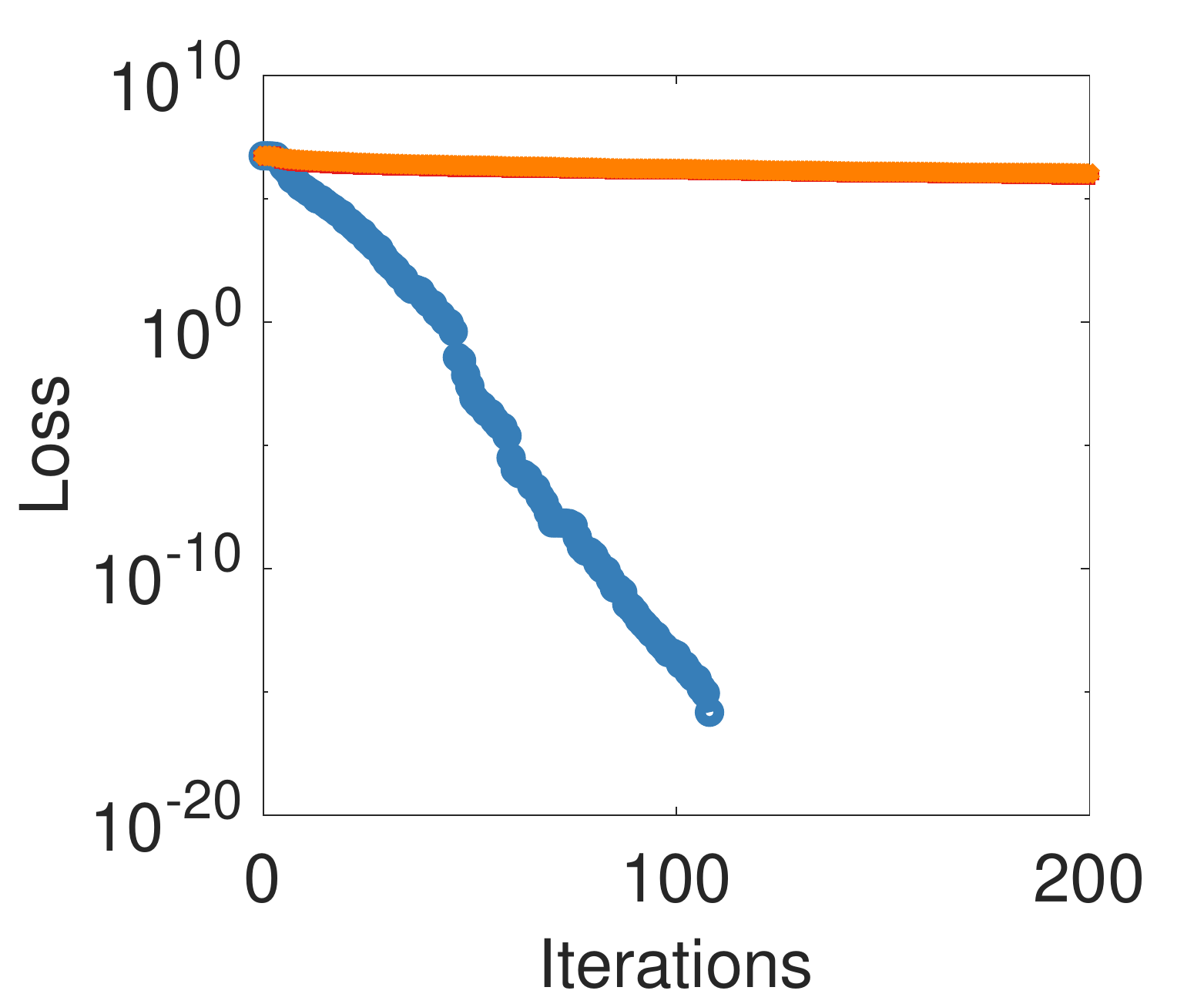}} 
    \subfloat[\texttt{Dense}, \texttt{LowCN} (\texttt{Disttosol})]{\includegraphics[width = 0.2\textwidth, height = 0.16\textwidth]{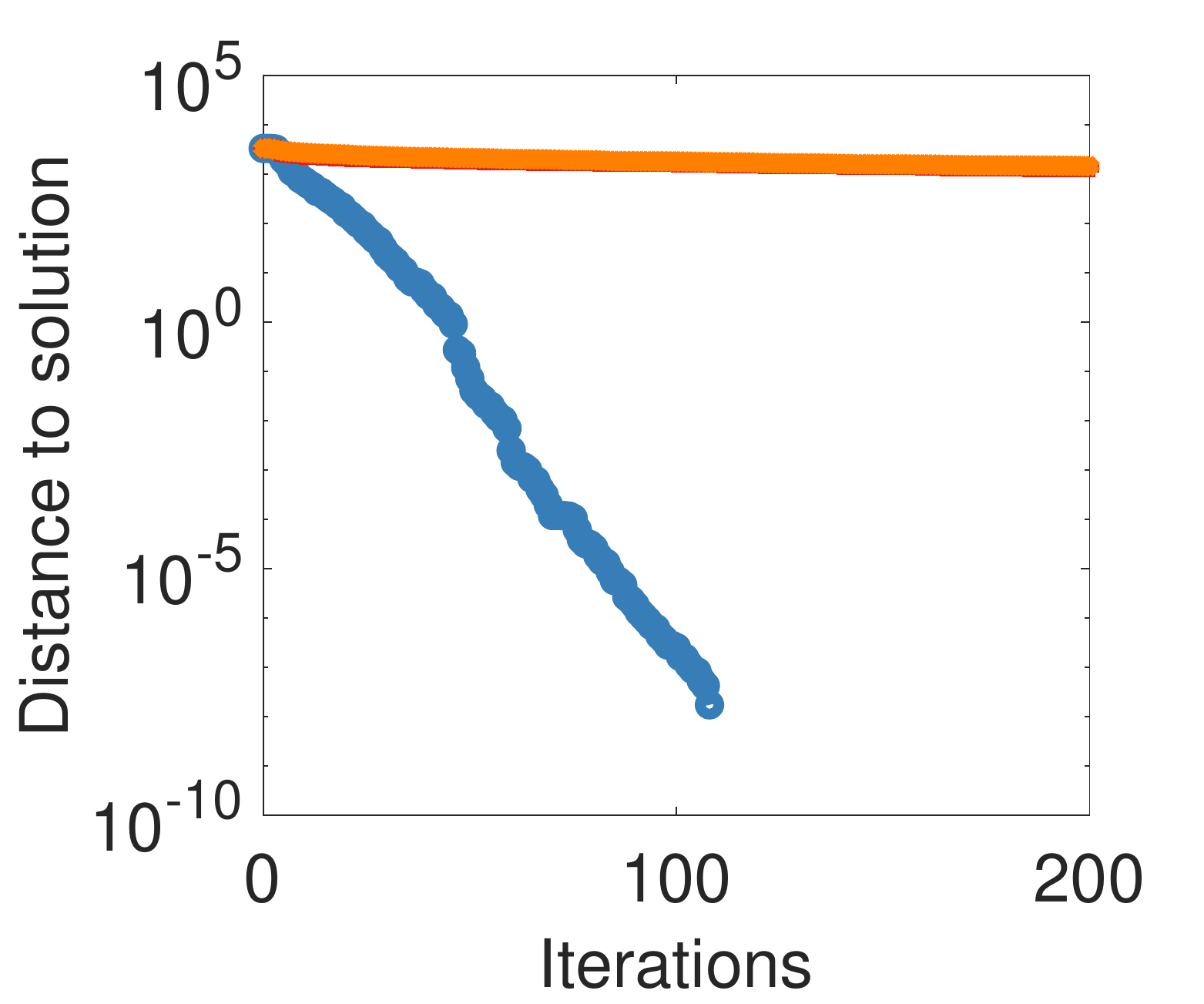}}
    \subfloat[\texttt{Dense}, \texttt{LowCN} (\texttt{Time})]{\includegraphics[width = 0.2\textwidth, height = 0.16\textwidth]{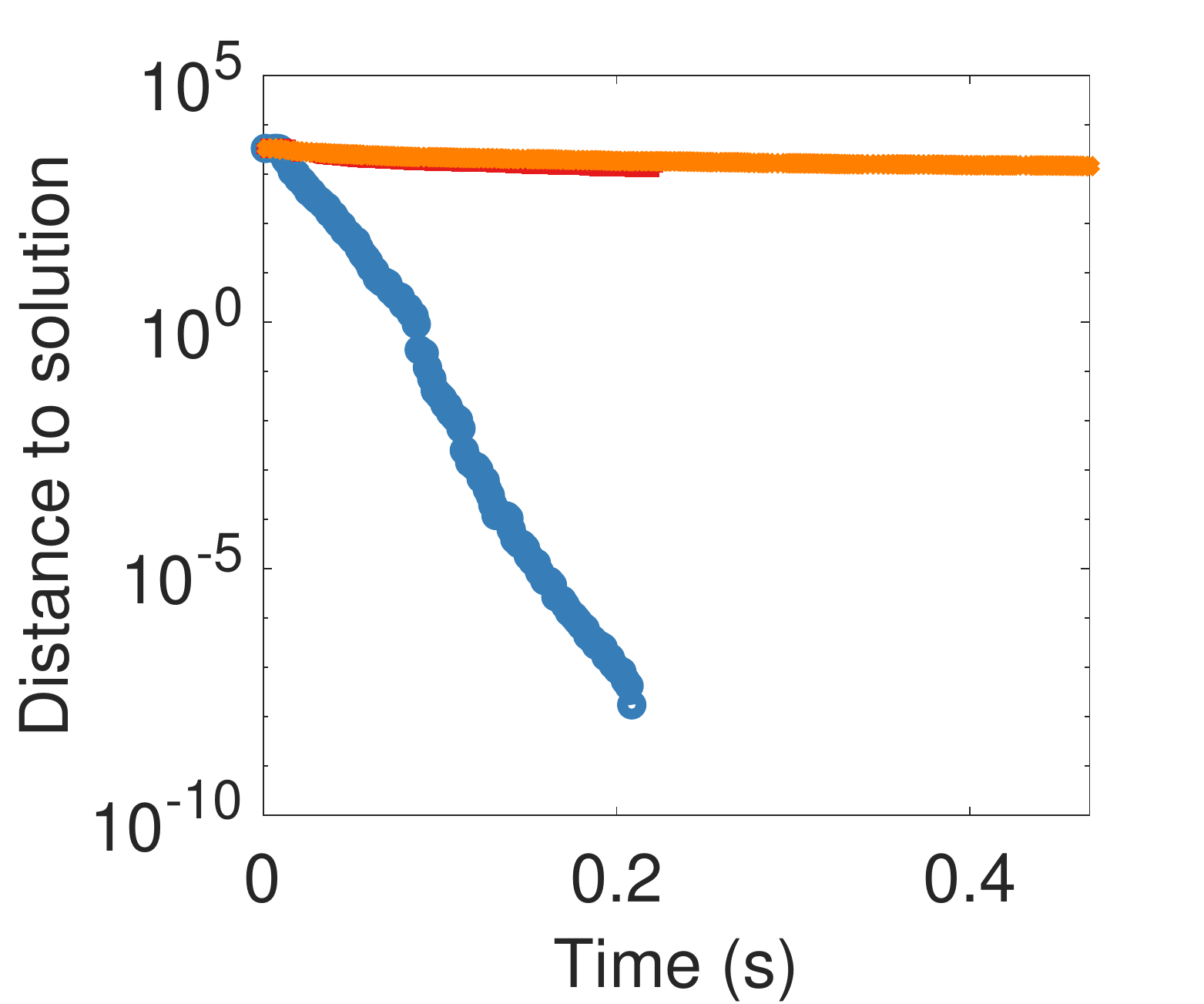}}
    \subfloat[\texttt{Dense}, \texttt{HighCN} (\texttt{Loss})]{\includegraphics[width = 0.2\textwidth, height = 0.16\textwidth]{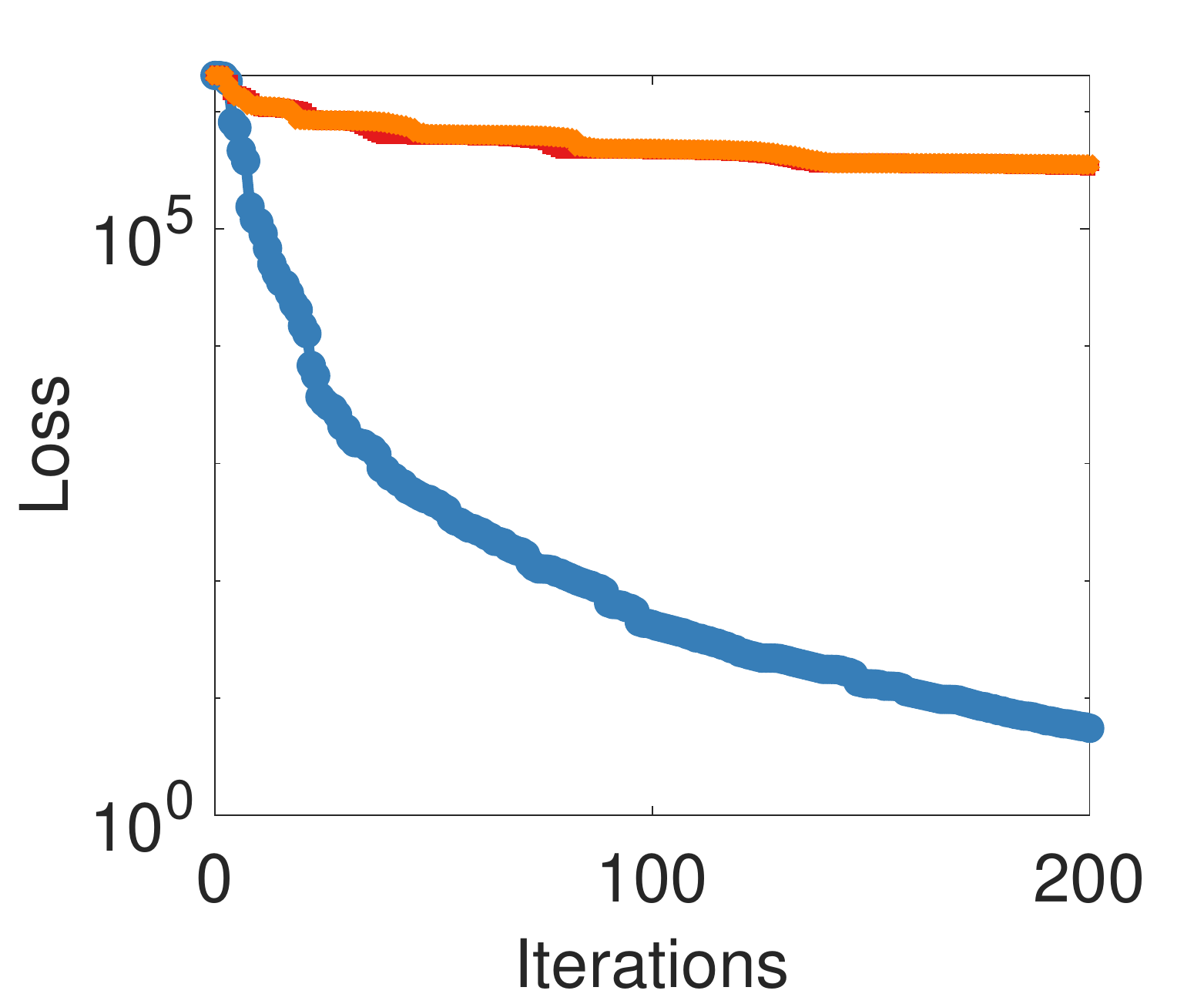}}
    \subfloat[\texttt{Dense}, \texttt{HighCN} (\texttt{Disttosol})]{\includegraphics[width = 0.2\textwidth, height = 0.16\textwidth]{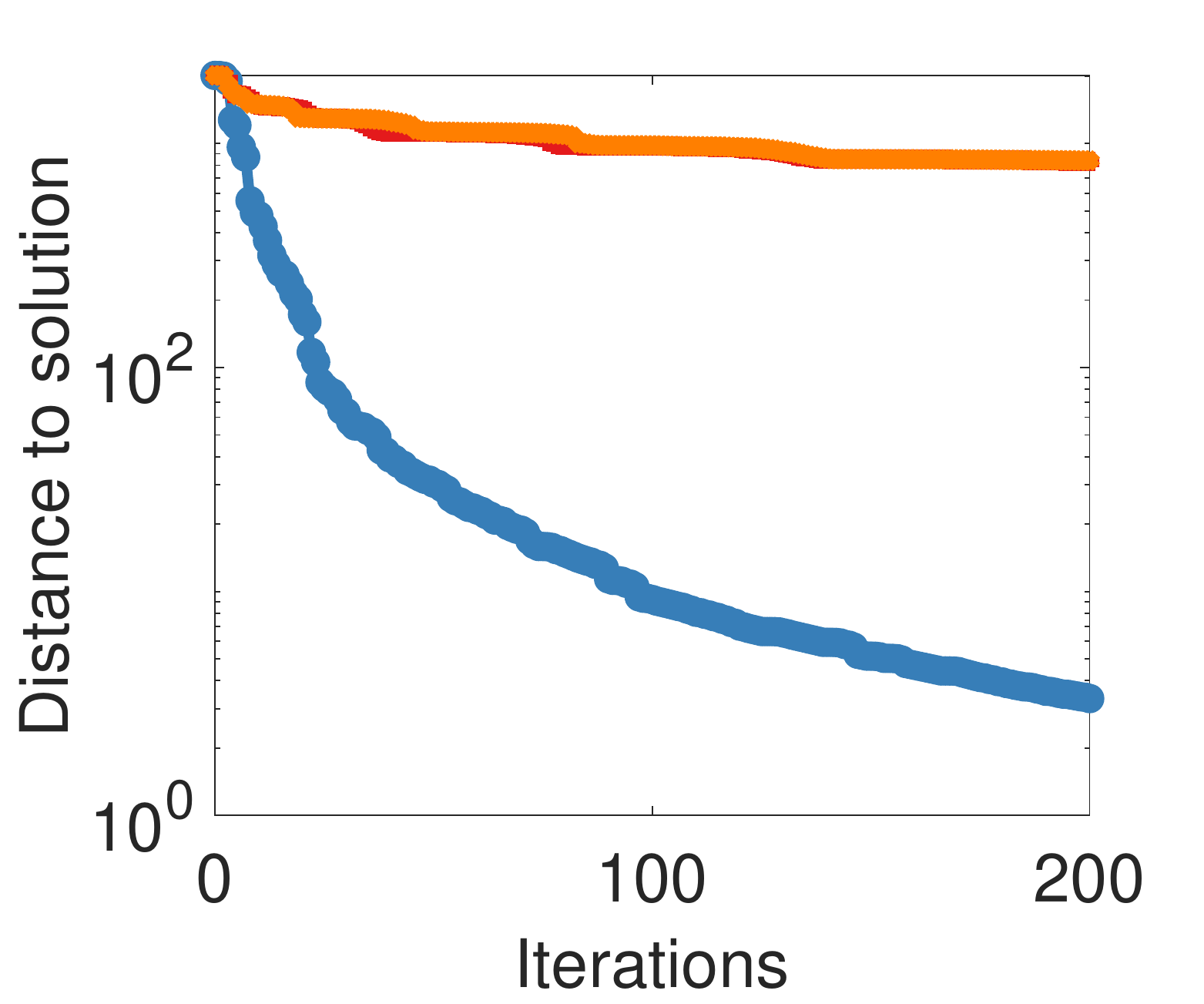}}
    \\
    \subfloat[\texttt{Dense}, \texttt{HighCN} (\texttt{Time})]{\includegraphics[width = 0.2\textwidth, height = 0.16\textwidth]{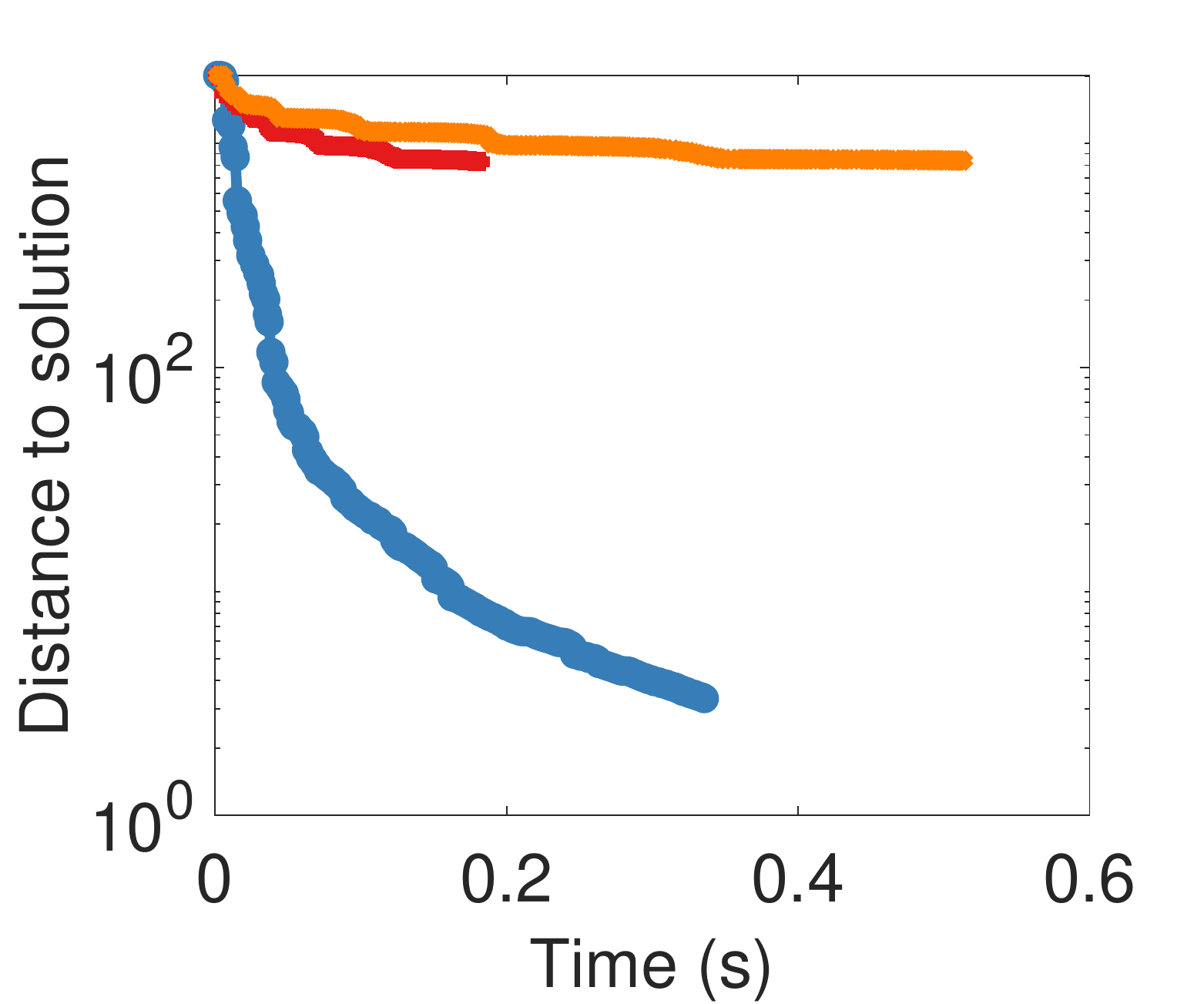}}
    \subfloat[\texttt{Sparse}, \texttt{LowCN} (\texttt{Loss})]{\includegraphics[width = 0.2\textwidth, height = 0.16\textwidth]{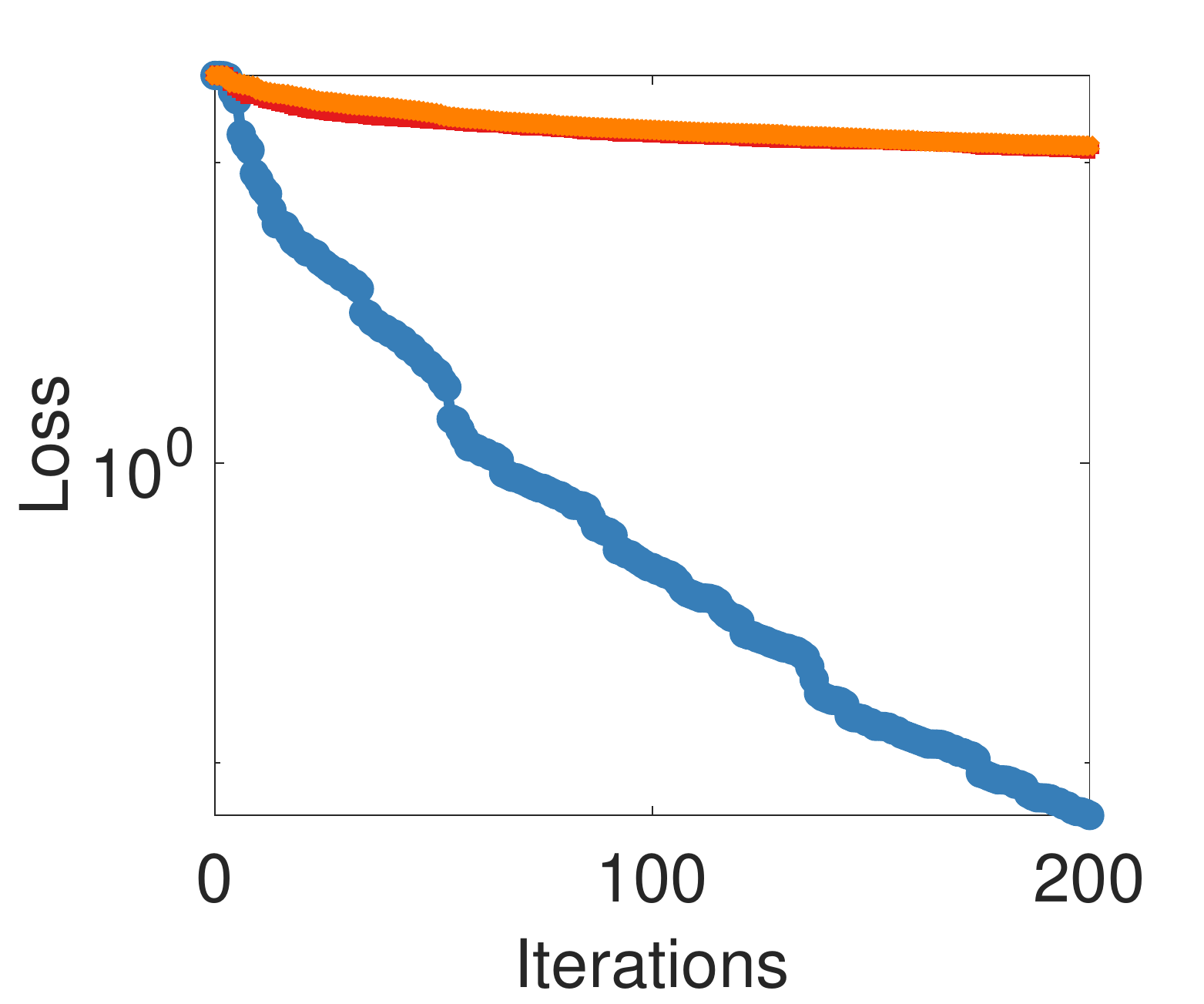}}
    \subfloat[\texttt{Sparse}, \texttt{LowCN} (\texttt{Disttosol})]{\includegraphics[width = 0.2\textwidth, height = 0.16\textwidth]{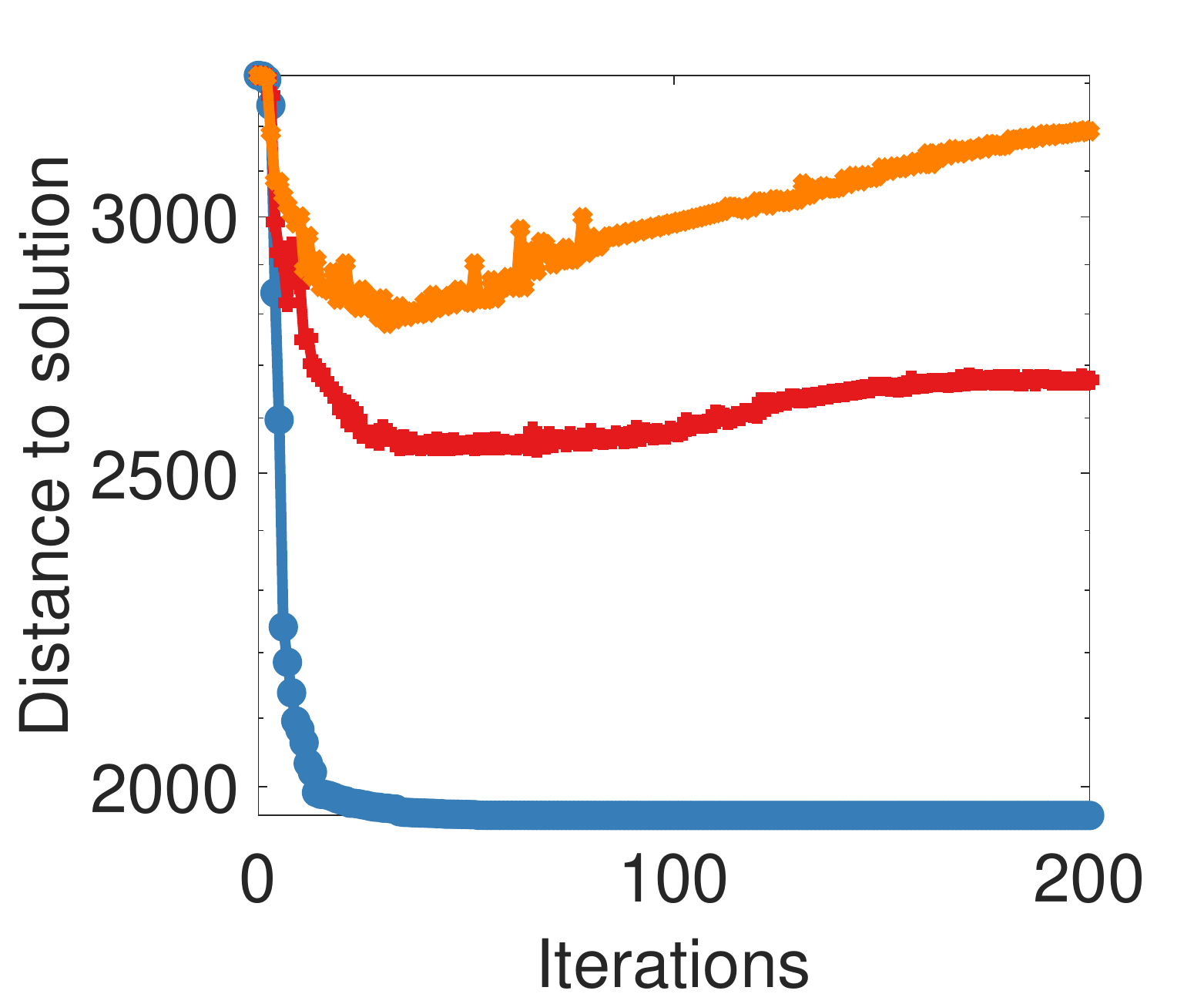}}
    \subfloat[\texttt{Sparse}, \texttt{LowCN} (\texttt{Time})]{\includegraphics[width = 0.2\textwidth, height = 0.16\textwidth]{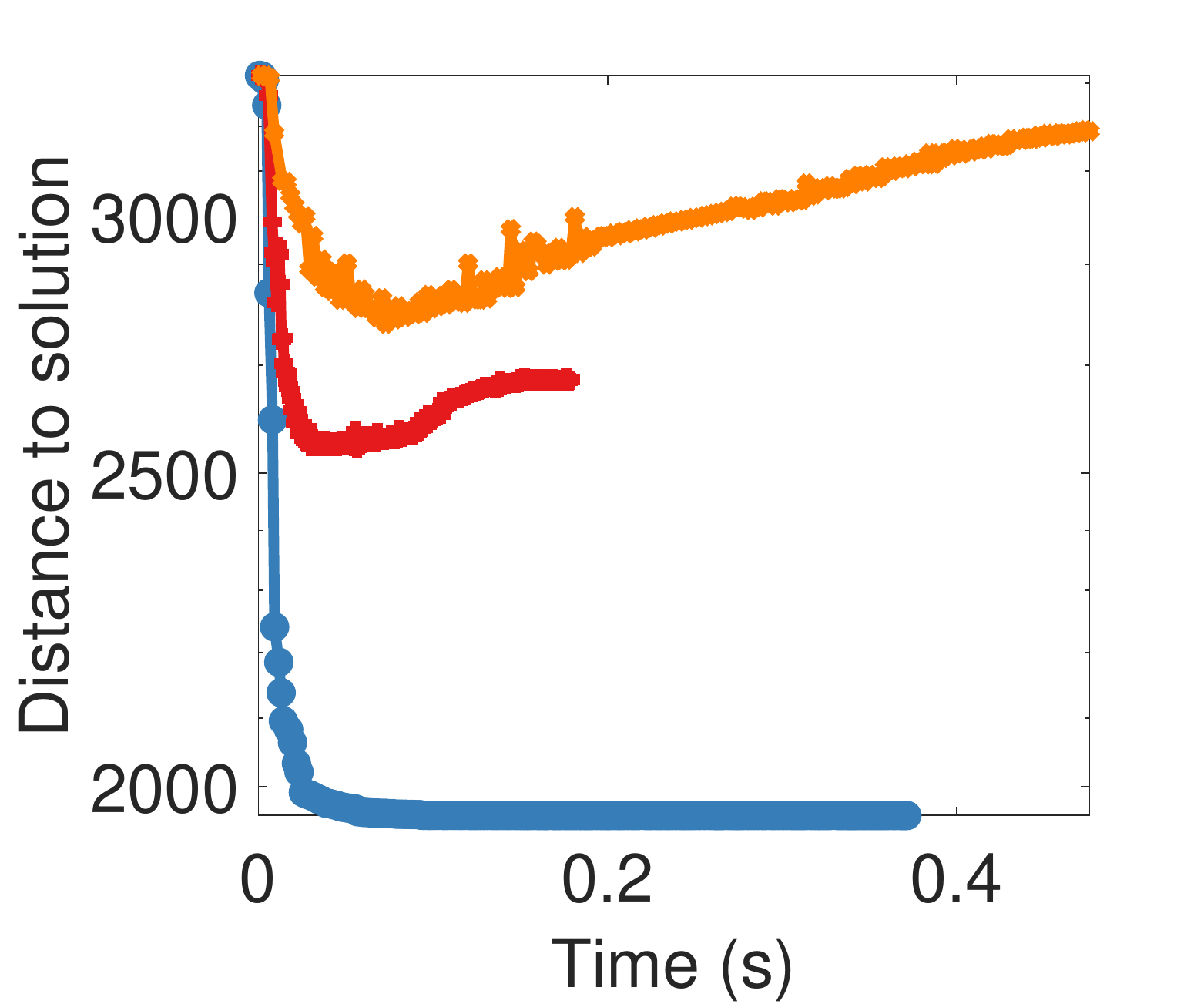}}
    \subfloat[\texttt{Sparse}, \texttt{HighCN} (\texttt{Loss})]{\includegraphics[width = 0.2\textwidth, height = 0.16\textwidth]{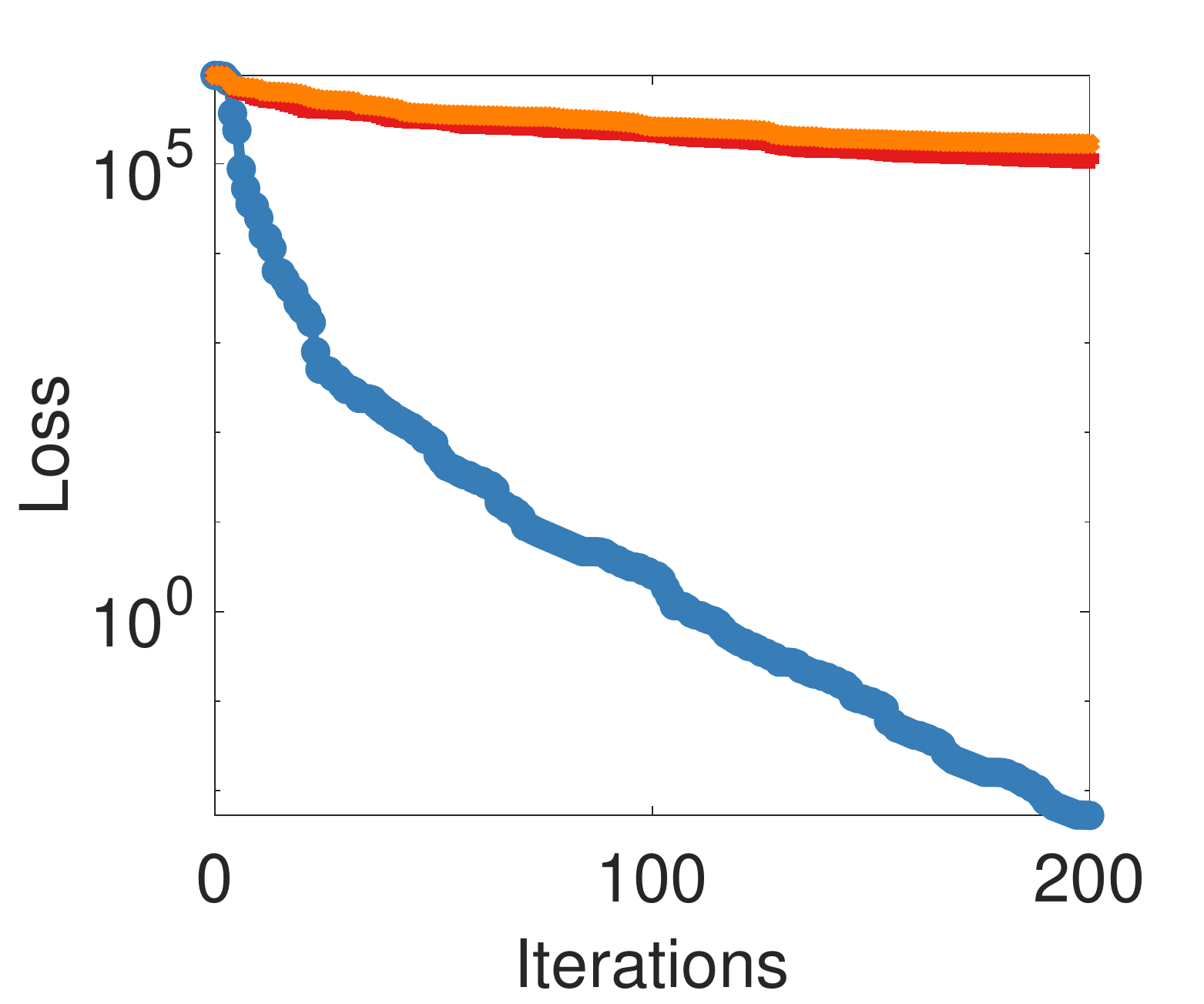}}
    \\
    \subfloat[\texttt{Sparse}, \texttt{HighCN} (\texttt{Disttosol})]{\includegraphics[width = 0.2\textwidth, height = 0.16\textwidth]{WLS/WLS_sparse_1000_SD_disttosol.pdf}}
    \subfloat[\texttt{Sparse}, \texttt{HighCN} (\texttt{Time})]{\includegraphics[width = 0.2\textwidth, height = 0.16\textwidth]{WLS/WLS_sparse_1000_SD_disttime.pdf}}
    \caption{Riemannian steepest descent on weighted least square problem (loss, distance to solution, runtime).} 
    \label{WLS_RSD_figure}
\end{figure*}

\begin{figure*}[!th]
\captionsetup{justification=centering}
    \centering
    \subfloat[\texttt{Dense}, \texttt{LowCN} (\texttt{Loss})]{\includegraphics[width = 0.2\textwidth, height = 0.16\textwidth]{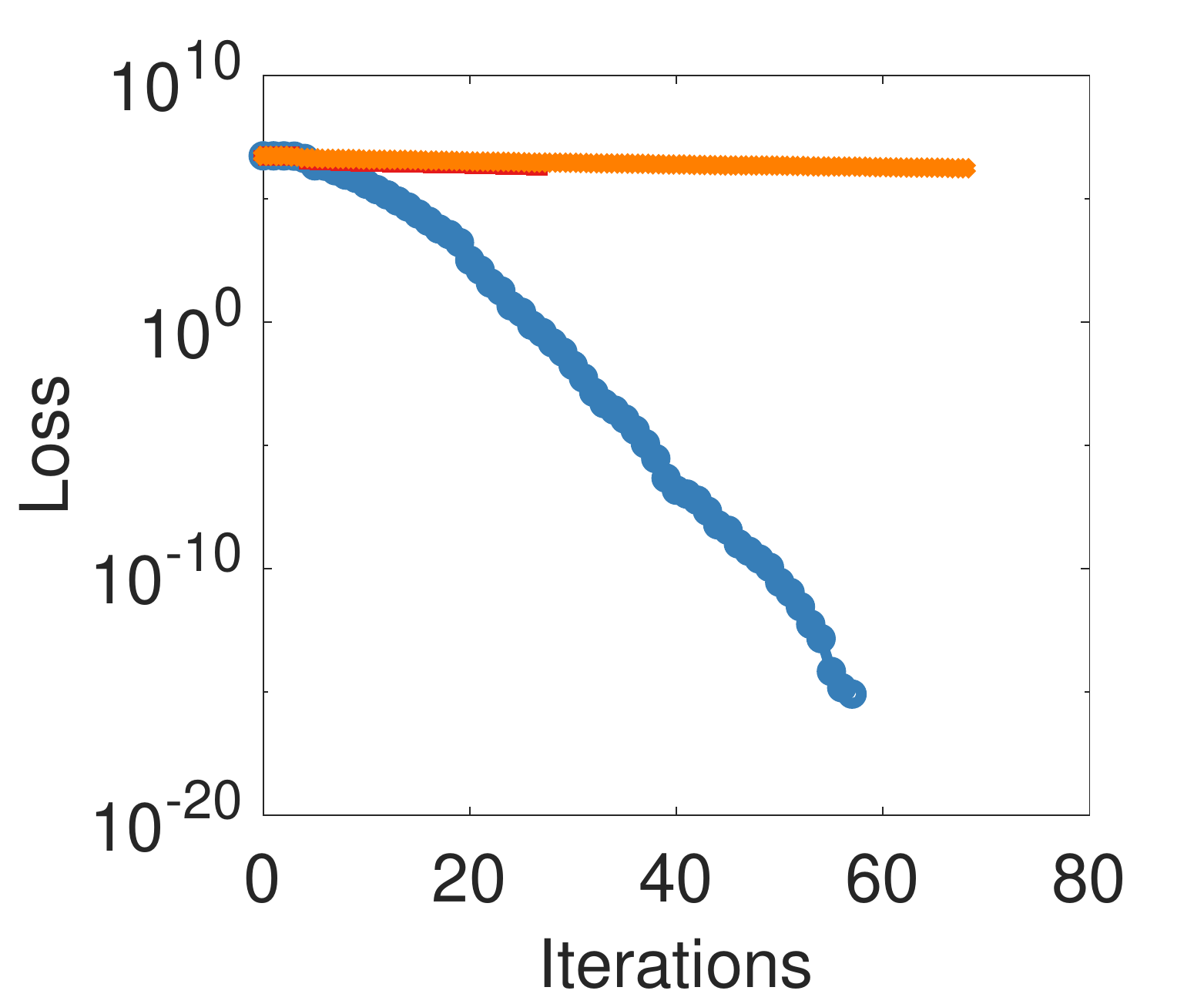}} 
    \subfloat[\texttt{Dense}, \texttt{LowCN} (\texttt{Disttosol})]{\includegraphics[width = 0.2\textwidth, height = 0.16\textwidth]{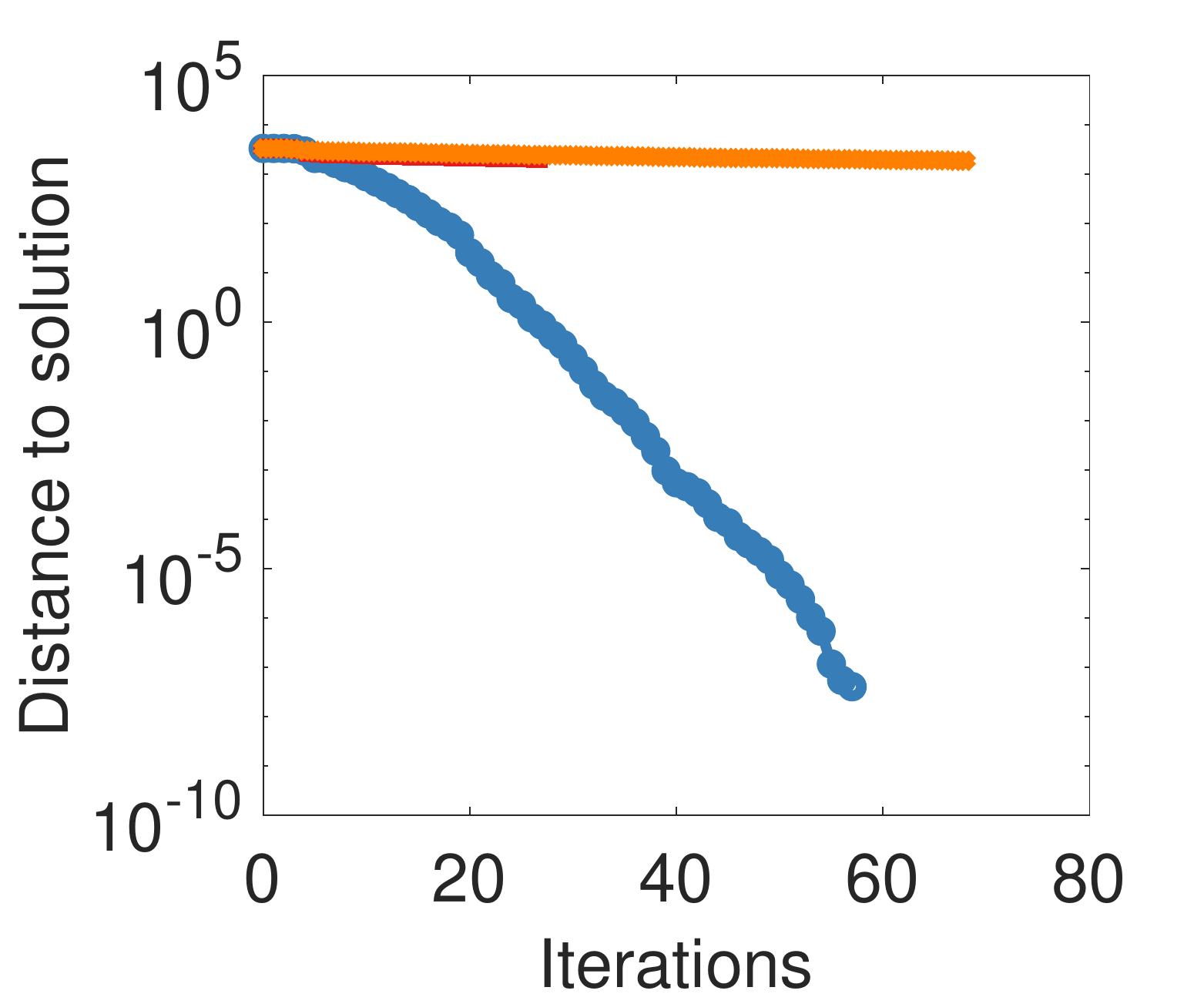}}
    \subfloat[\texttt{Dense}, \texttt{LowCN} (\texttt{Time})]{\includegraphics[width = 0.2\textwidth, height = 0.16\textwidth]{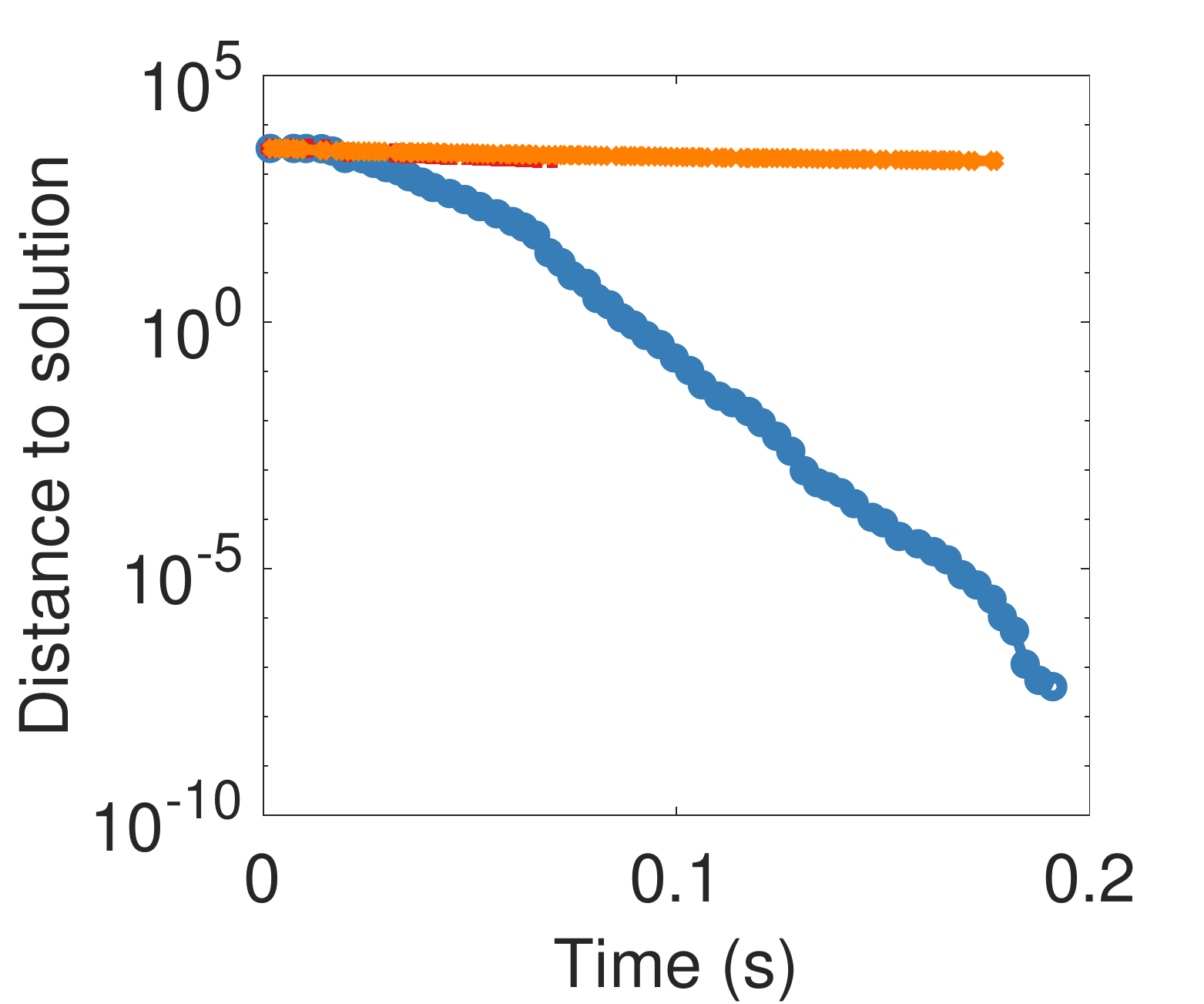}}
    \subfloat[\texttt{Dense}, \texttt{HighCN} (\texttt{Loss})]{\includegraphics[width = 0.2\textwidth, height = 0.16\textwidth]{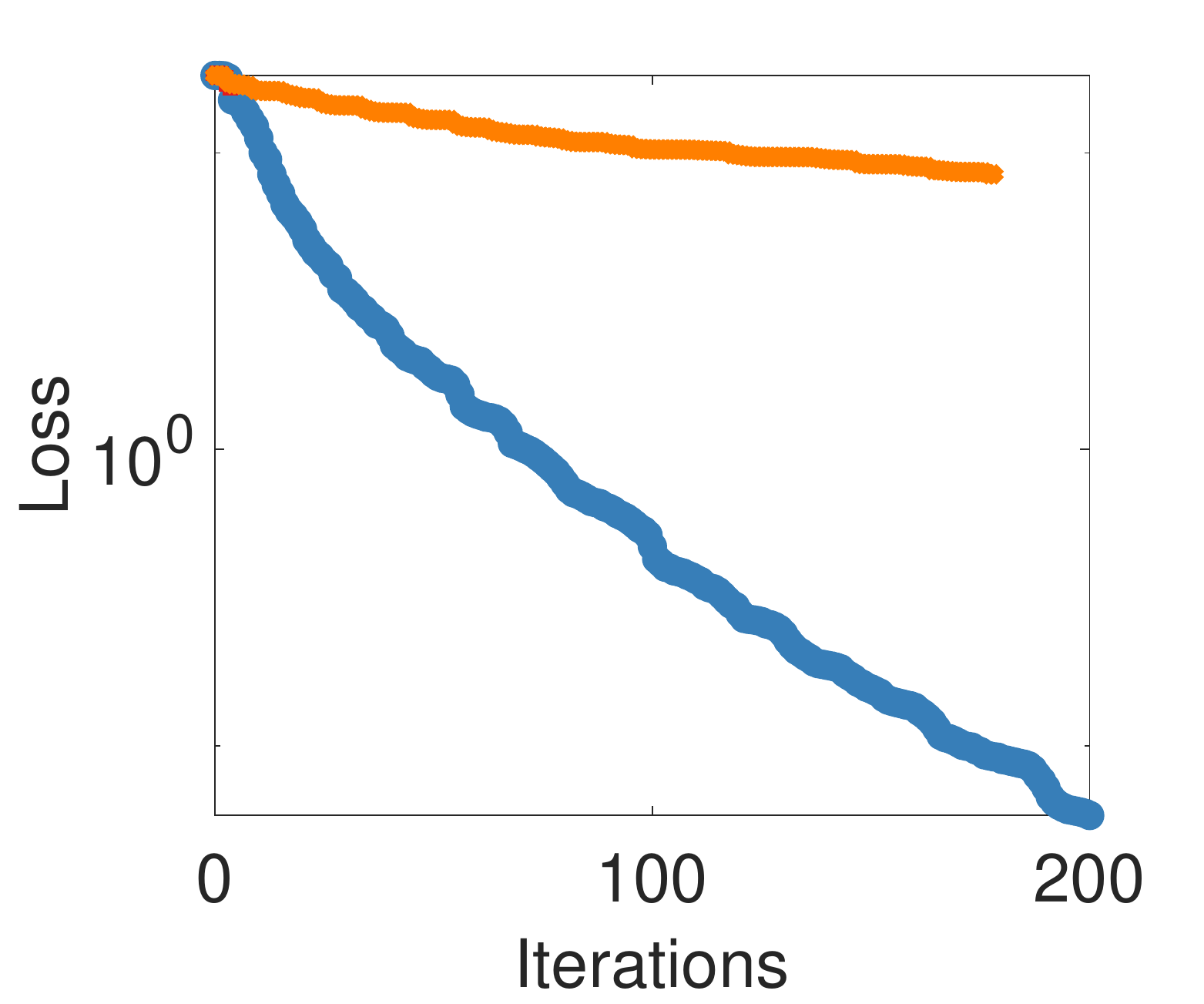}}
    \subfloat[\texttt{Dense}, \texttt{HighCN} (\texttt{Disttosol})]{\includegraphics[width = 0.2\textwidth, height = 0.16\textwidth]{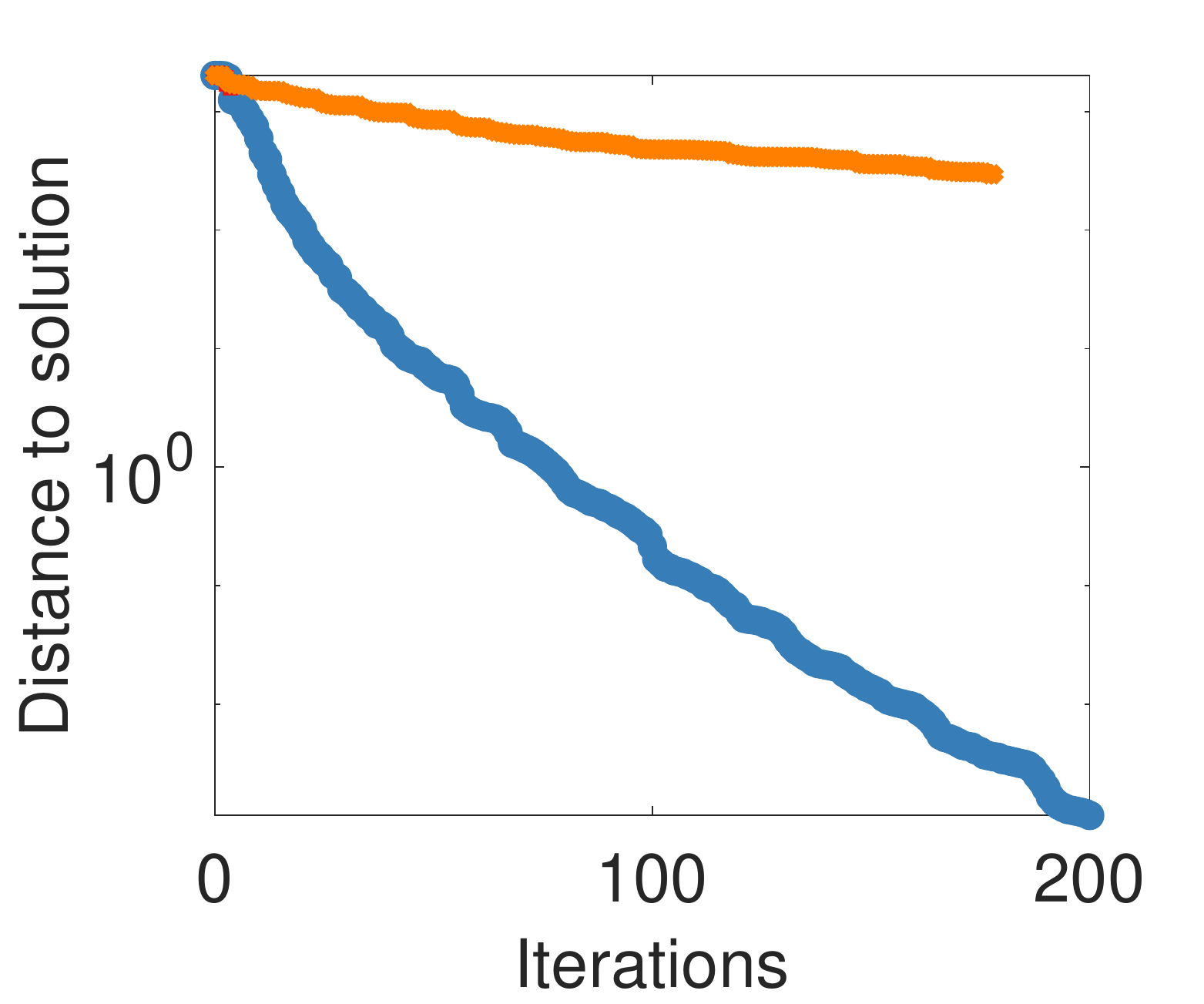}}
    \\
    \subfloat[\texttt{Dense}, \texttt{HighCN} (\texttt{Time})]{\includegraphics[width = 0.2\textwidth, height = 0.16\textwidth]{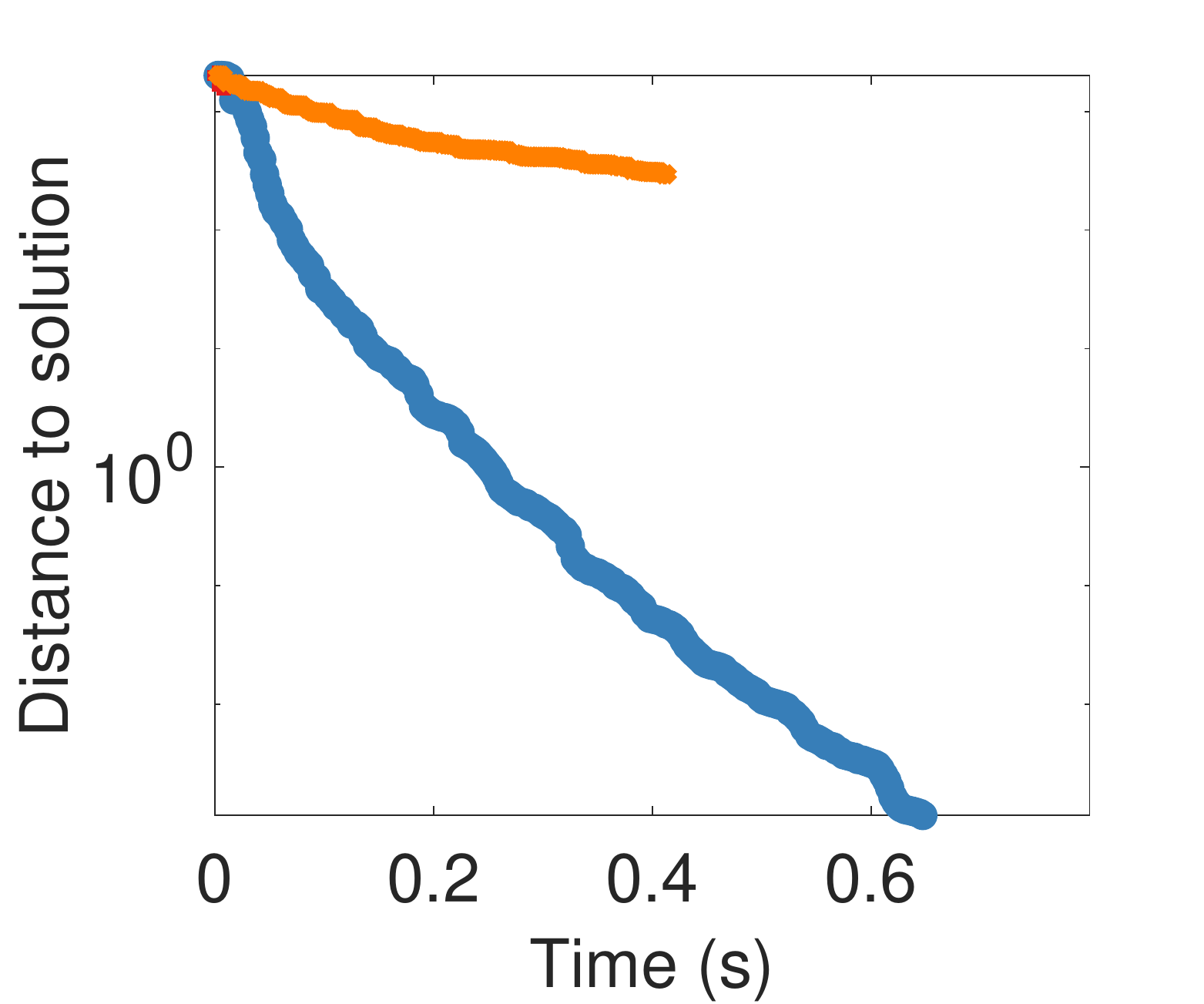}}
    \subfloat[\texttt{Sparse}, \texttt{LowCN} (\texttt{Loss})]{\includegraphics[width = 0.2\textwidth, height = 0.16\textwidth]{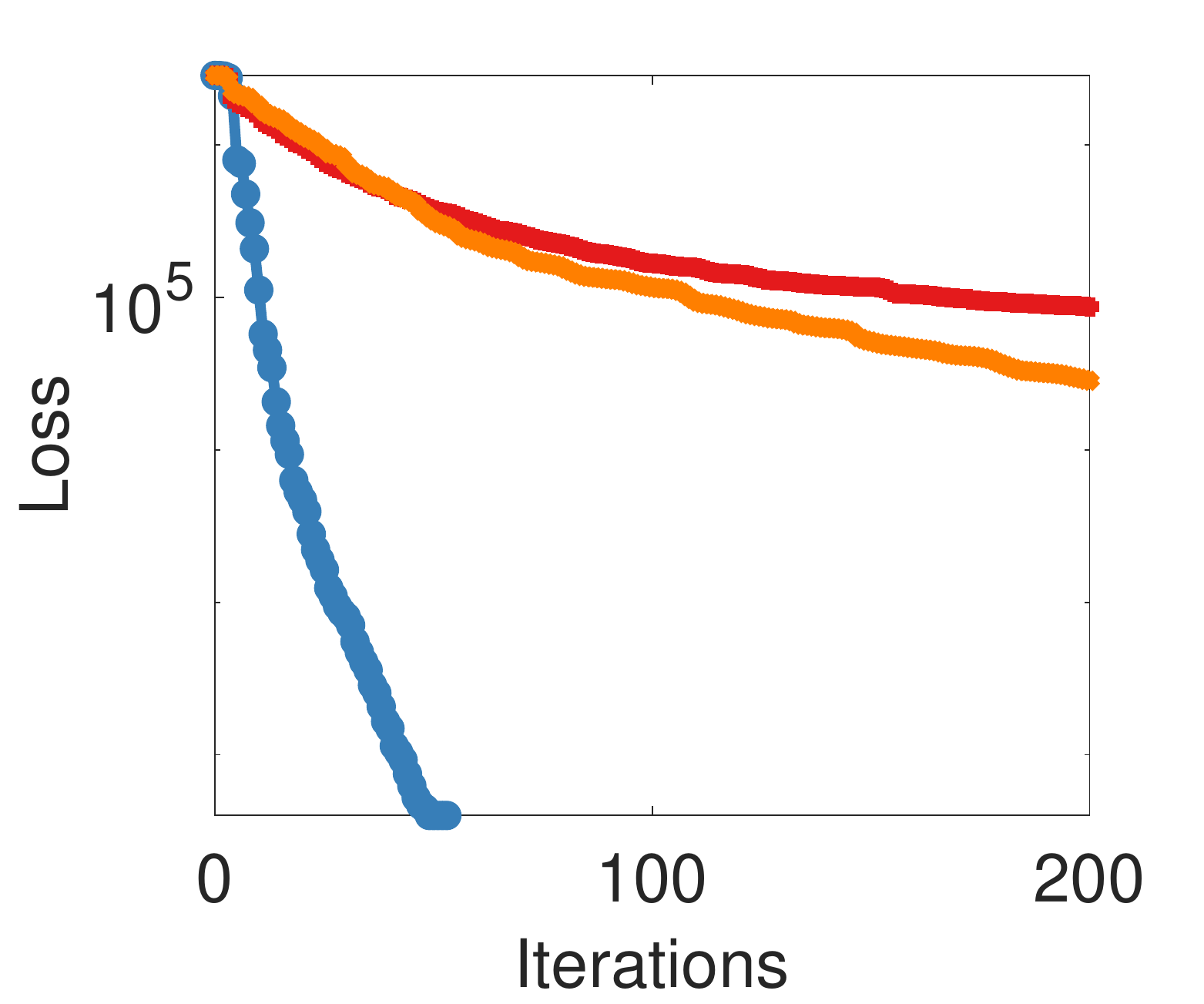}}
    \subfloat[\texttt{Sparse}, \texttt{LowCN} (\texttt{Disttosol})]{\includegraphics[width = 0.2\textwidth, height = 0.16\textwidth]{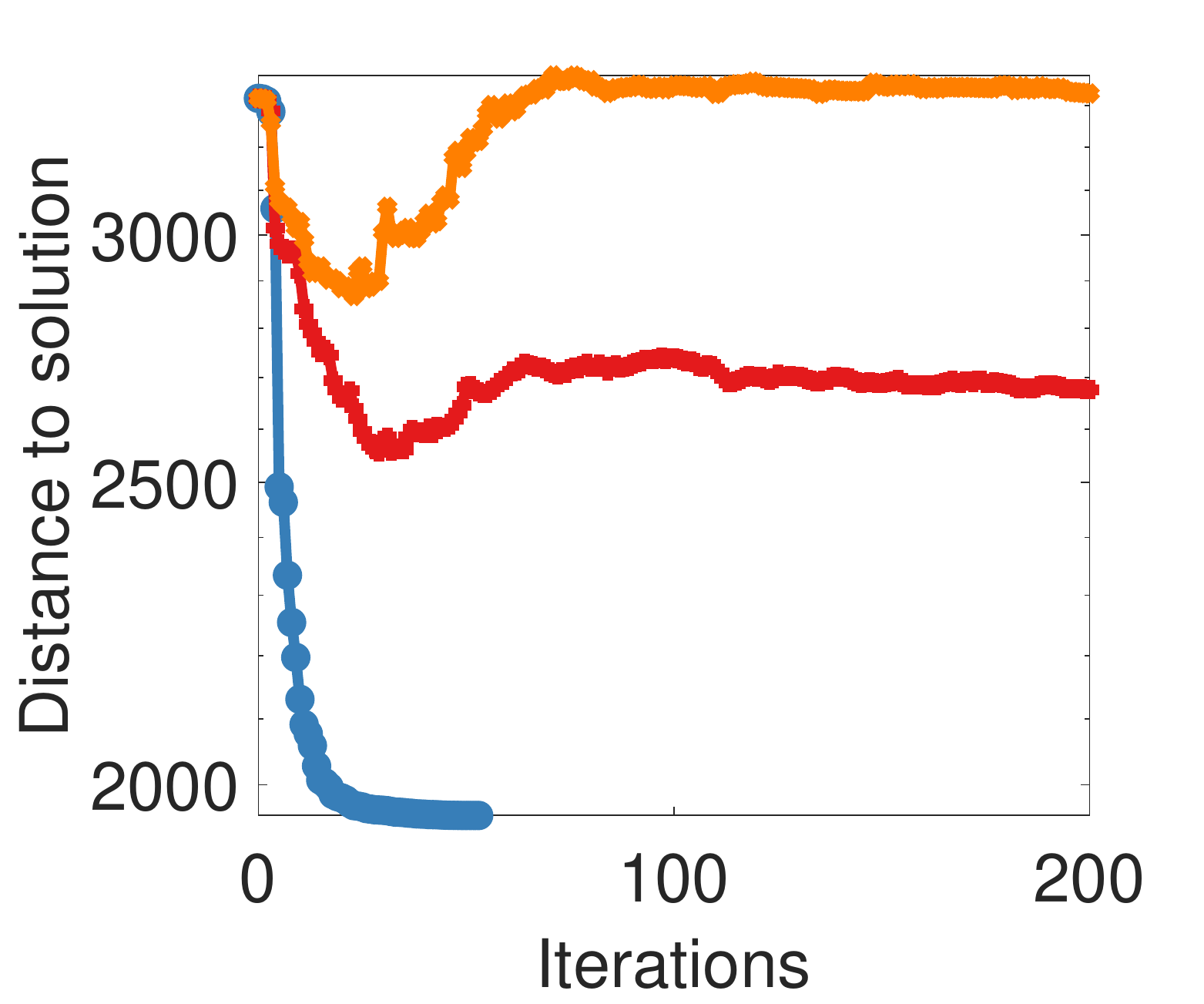}}
    \subfloat[\texttt{Sparse}, \texttt{LowCN} (\texttt{Time})]{\includegraphics[width = 0.2\textwidth, height = 0.16\textwidth]{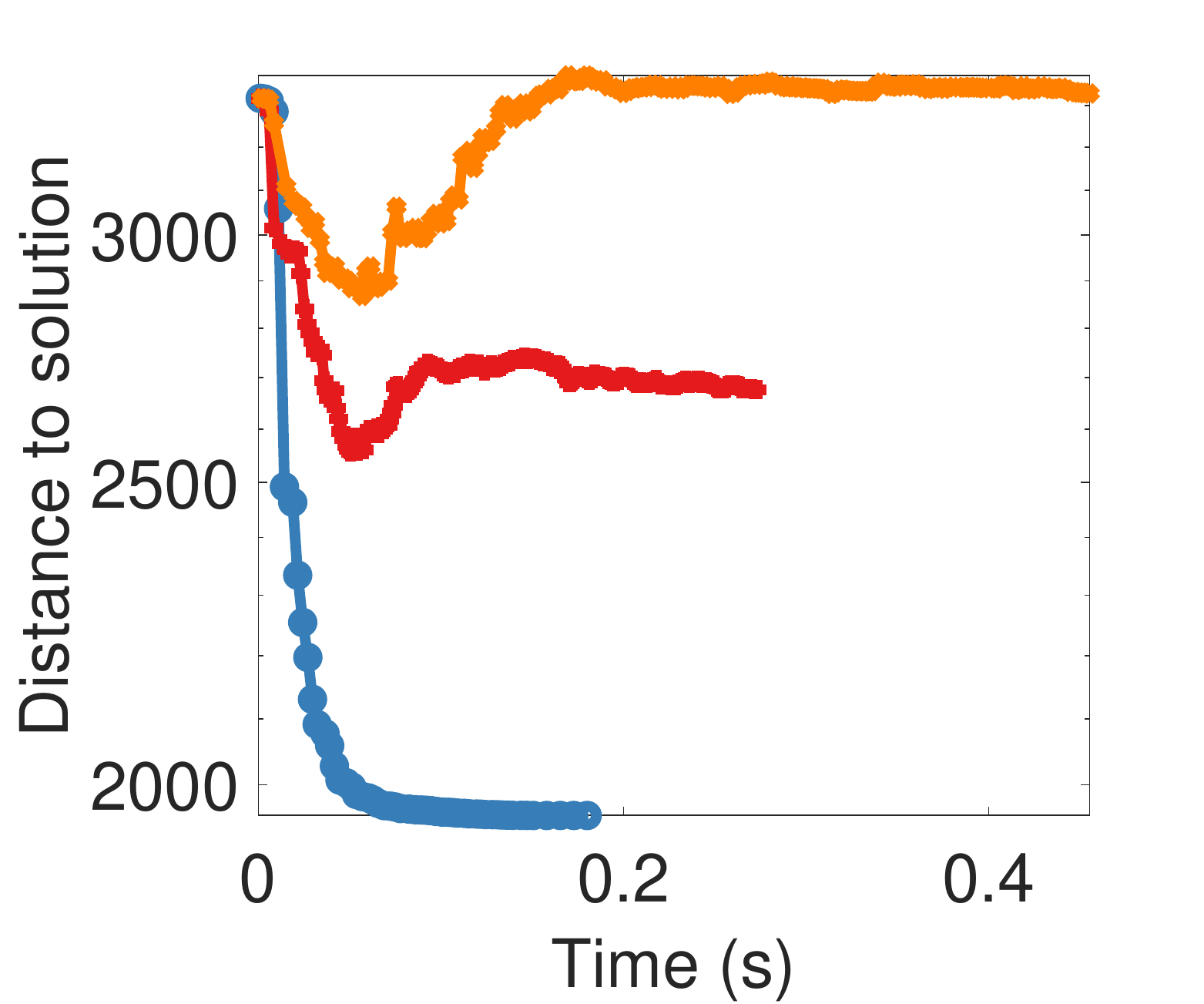}}
    \subfloat[\texttt{Sparse}, \texttt{HighCN} (\texttt{Loss})]{\includegraphics[width = 0.2\textwidth, height = 0.16\textwidth]{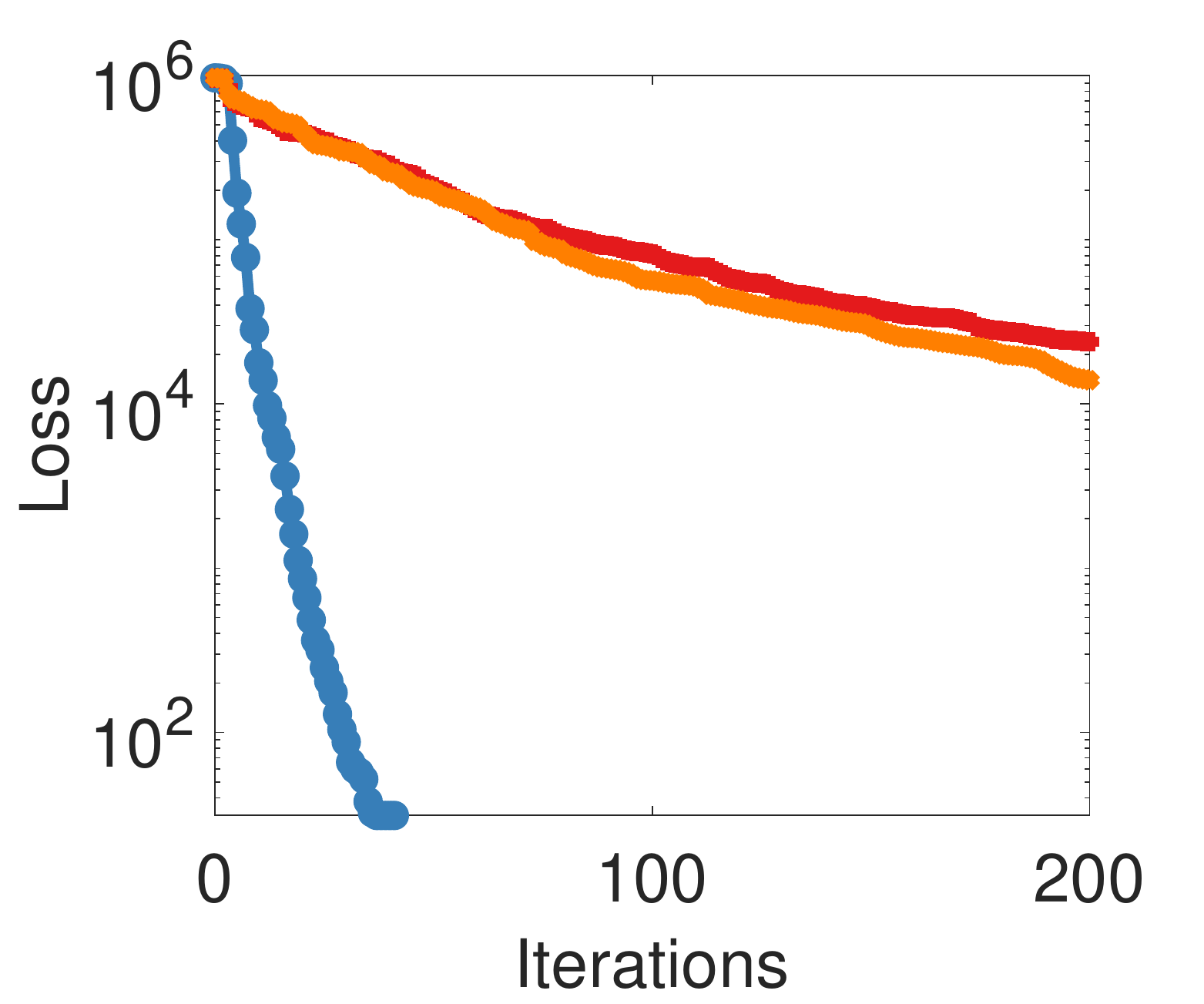}}
    \\
    \subfloat[\texttt{Sparse}, \texttt{HighCN} (\texttt{Disttosol})]{\includegraphics[width = 0.2\textwidth, height = 0.16\textwidth]{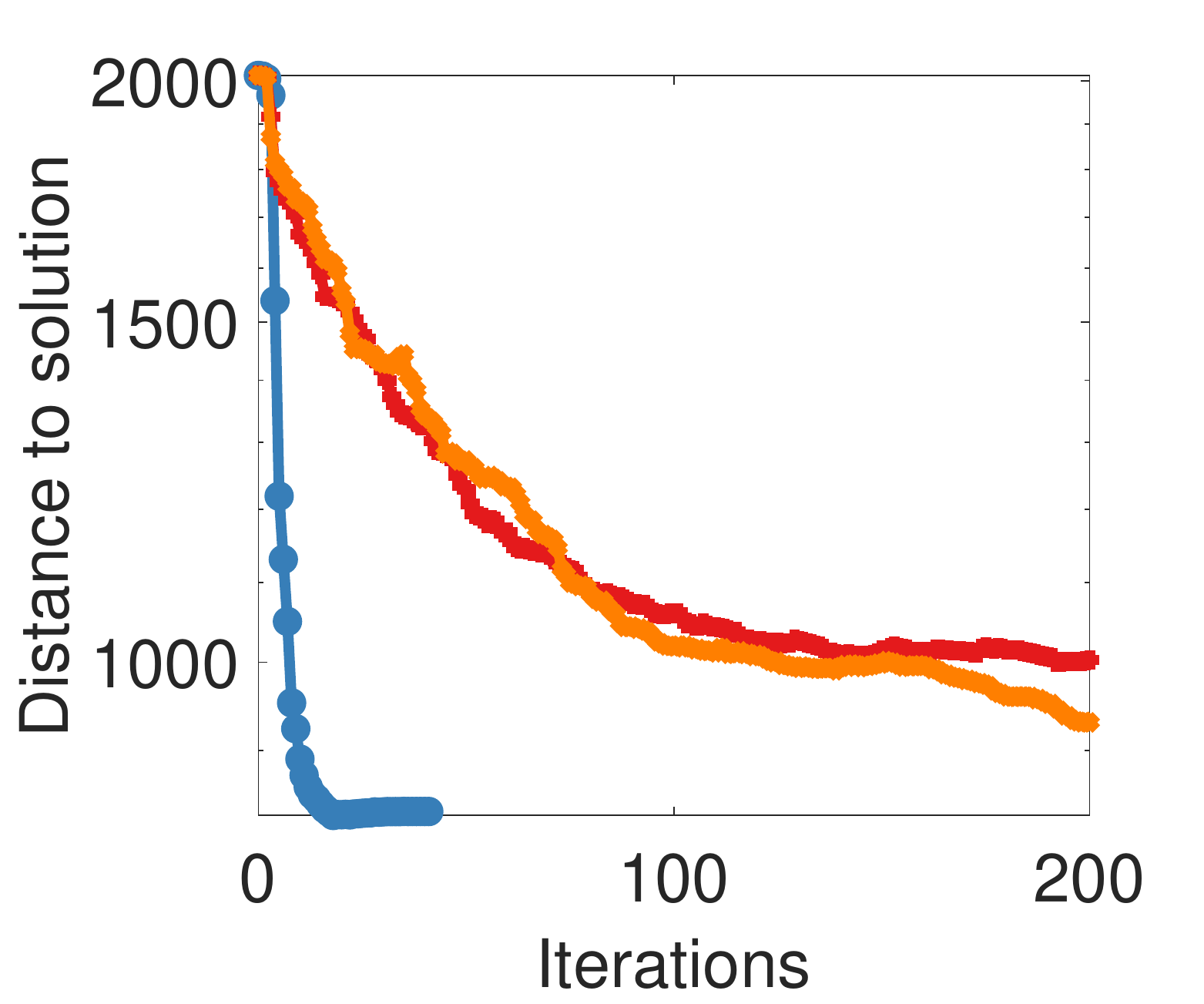}}
    \subfloat[\texttt{Sparse}, \texttt{HighCN} (\texttt{Time})]{\includegraphics[width = 0.2\textwidth, height = 0.16\textwidth]{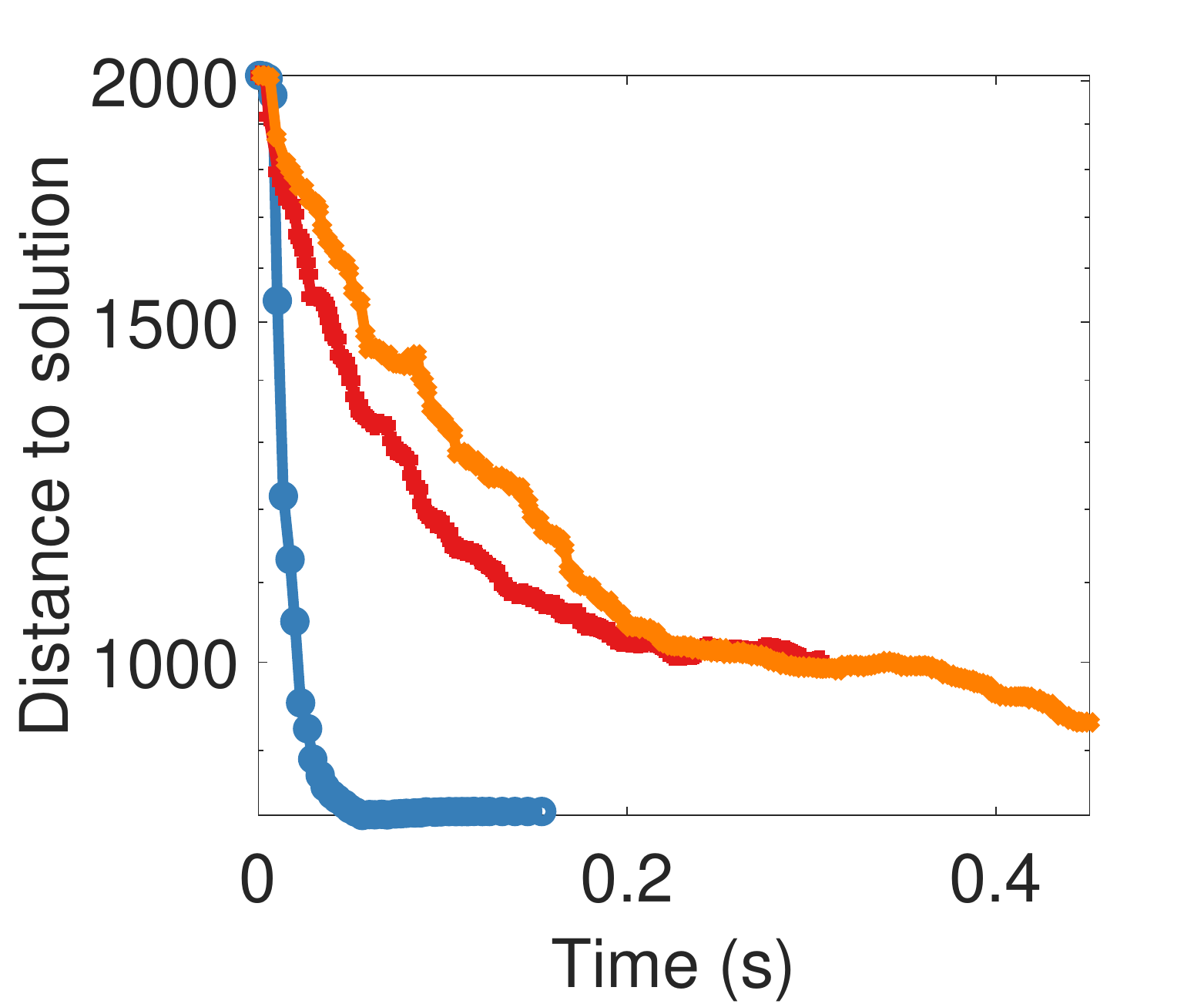}}
    \caption{Riemannian conjugate gradient on weighted least square problem (loss, distance to solution, runtime).} 
    \label{WLS_RCG_figure}
\end{figure*}

\begin{figure*}[!th]
\captionsetup{justification=centering}
    \centering
    \subfloat[\texttt{Dense}, \texttt{LowCN} (\texttt{Loss})]{\includegraphics[width = 0.2\textwidth, height = 0.16\textwidth]{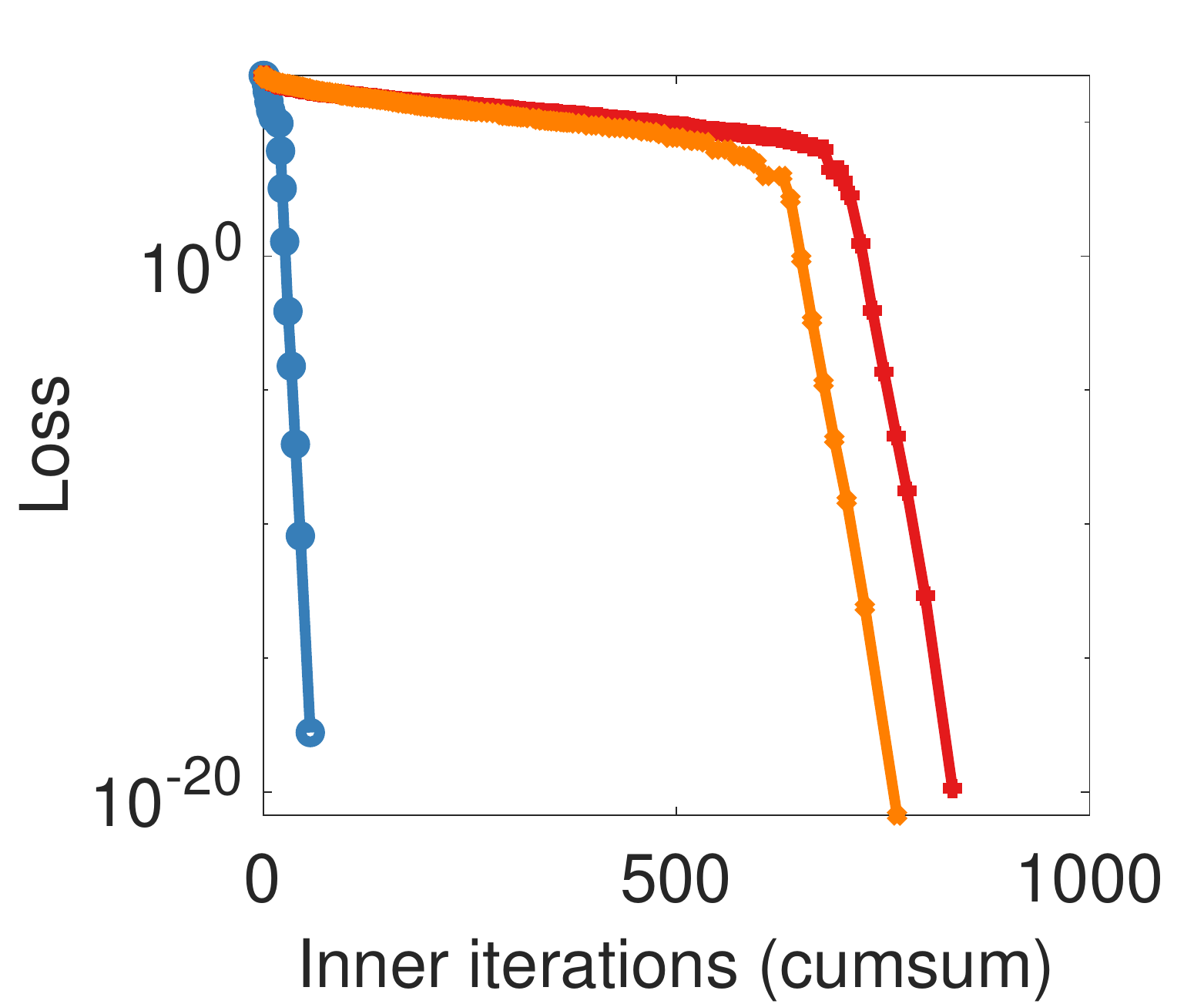}} 
    \subfloat[\texttt{Dense}, \texttt{LowCN} (\texttt{Disttosol})]{\includegraphics[width = 0.2\textwidth, height = 0.16\textwidth]{WLS/WLS_full_10_TR_disttosol.pdf}}
    \subfloat[\texttt{Dense}, \texttt{LowCN} (\texttt{Time})]{\includegraphics[width = 0.2\textwidth, height = 0.16\textwidth]{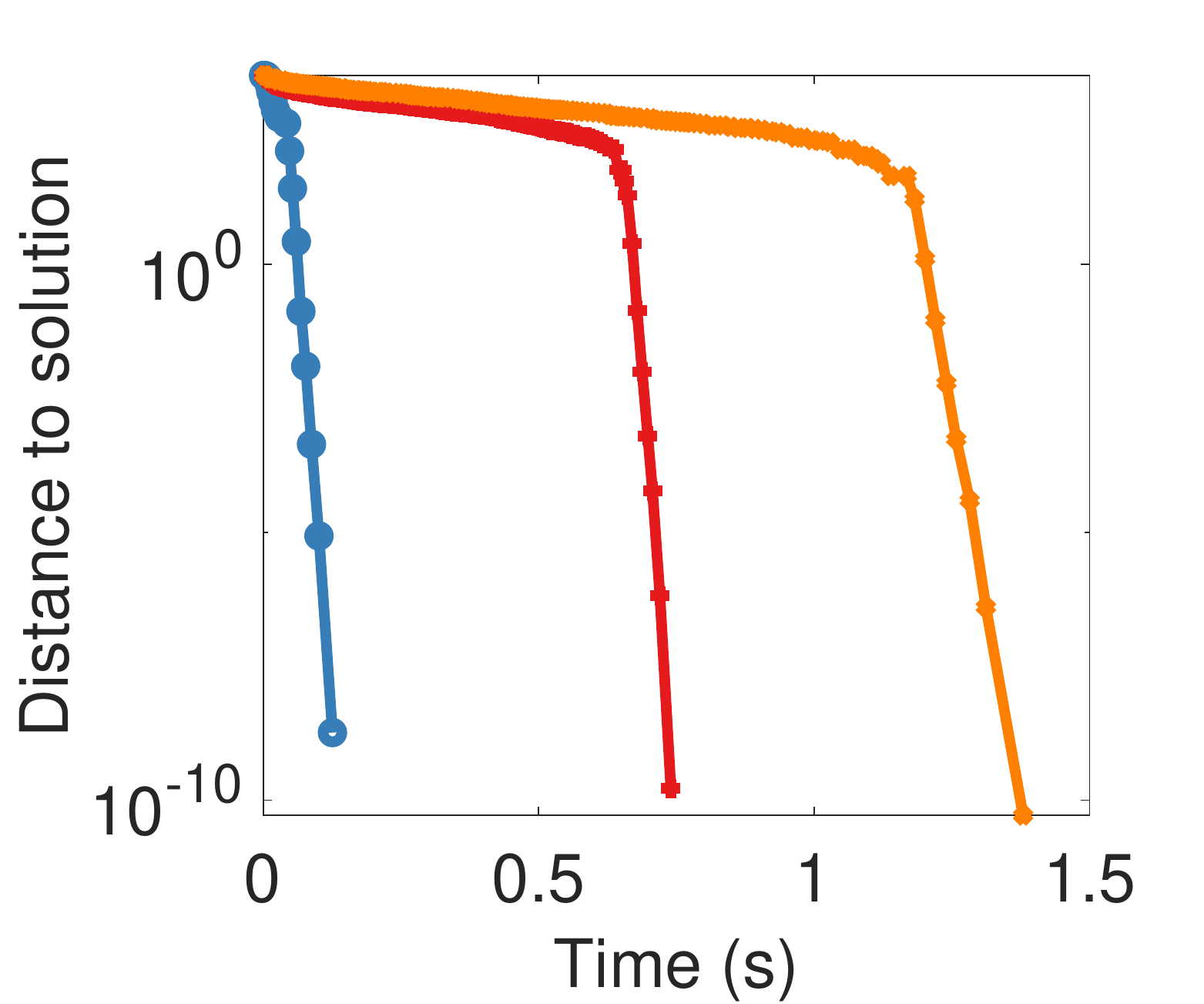}}
    \subfloat[\texttt{Dense}, \texttt{HighCN} (\texttt{Loss})]{\includegraphics[width = 0.2\textwidth, height = 0.16\textwidth]{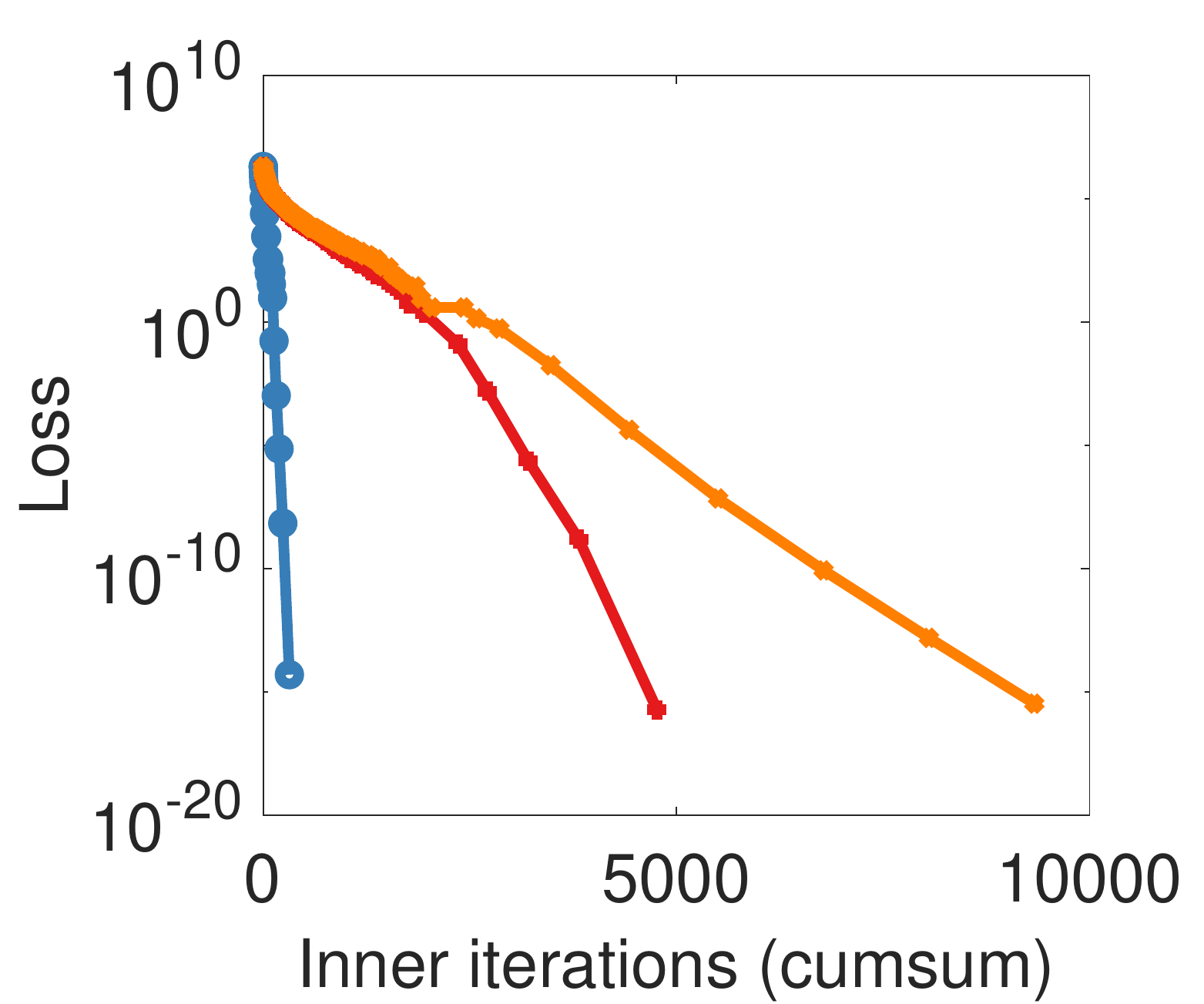}}
    \subfloat[\texttt{Dense}, \texttt{HighCN} (\texttt{Disttosol})]{\includegraphics[width = 0.2\textwidth, height = 0.16\textwidth]{WLS/WLS_full_1000_TR_disttosol.pdf}}
    \\
    \subfloat[\texttt{Dense}, \texttt{HighCN} (\texttt{Time})]{\includegraphics[width = 0.2\textwidth, height = 0.16\textwidth]{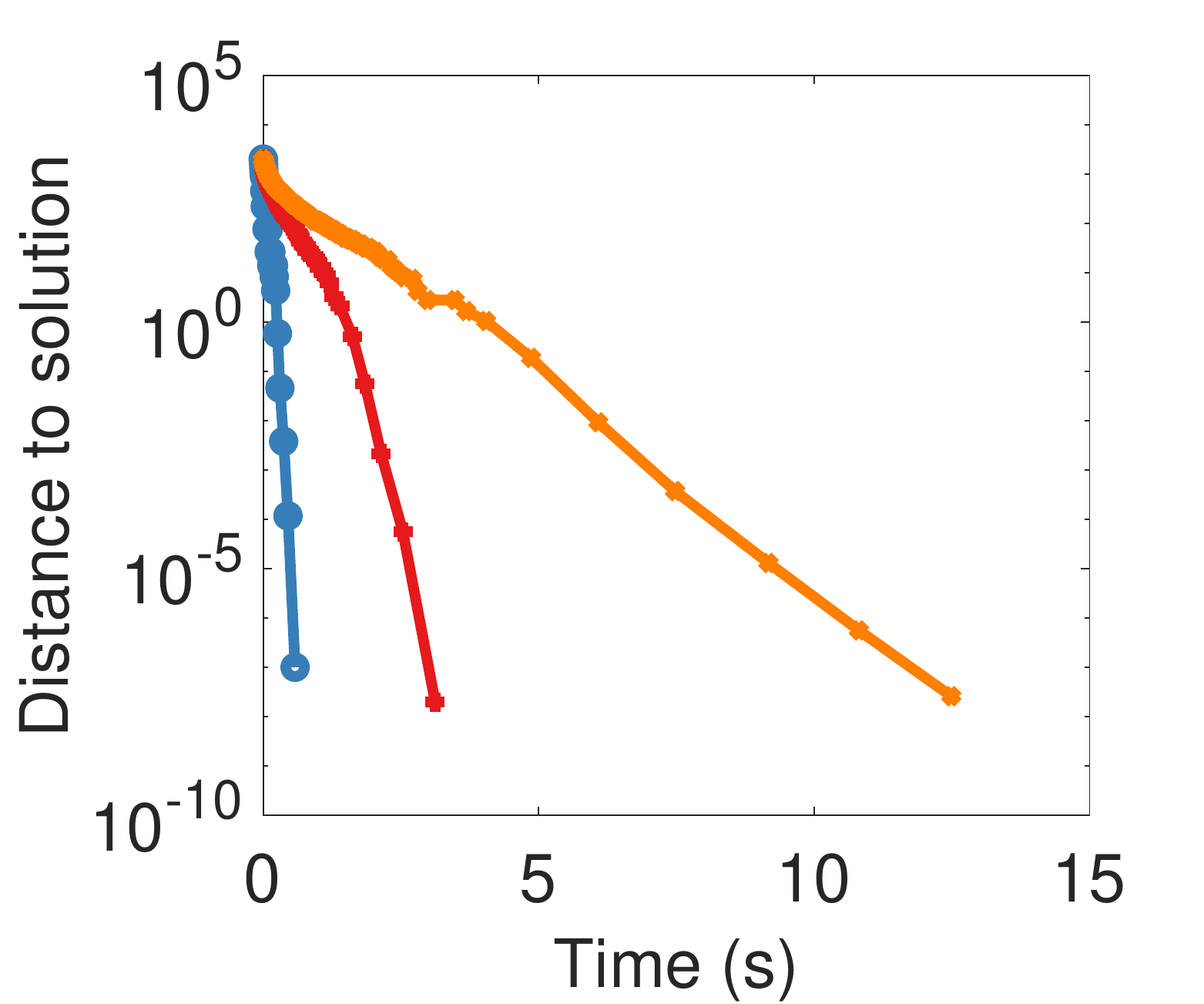}}
    \subfloat[\texttt{Sparse}, \texttt{LowCN} (\texttt{Loss})]{\includegraphics[width = 0.2\textwidth, height = 0.16\textwidth]{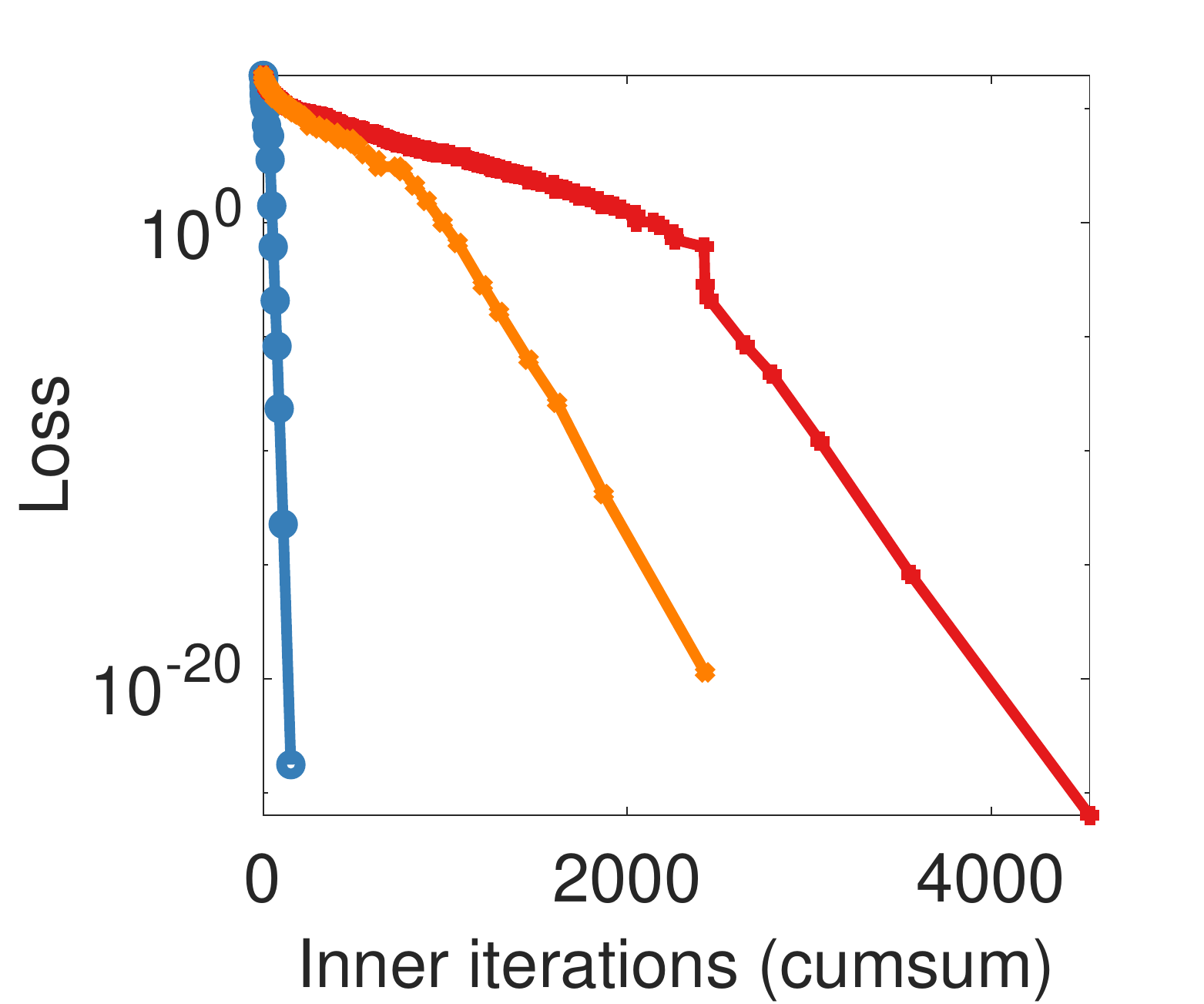}}
    \subfloat[\texttt{Sparse}, \texttt{LowCN} (\texttt{Disttosol})]{\includegraphics[width = 0.2\textwidth, height = 0.16\textwidth]{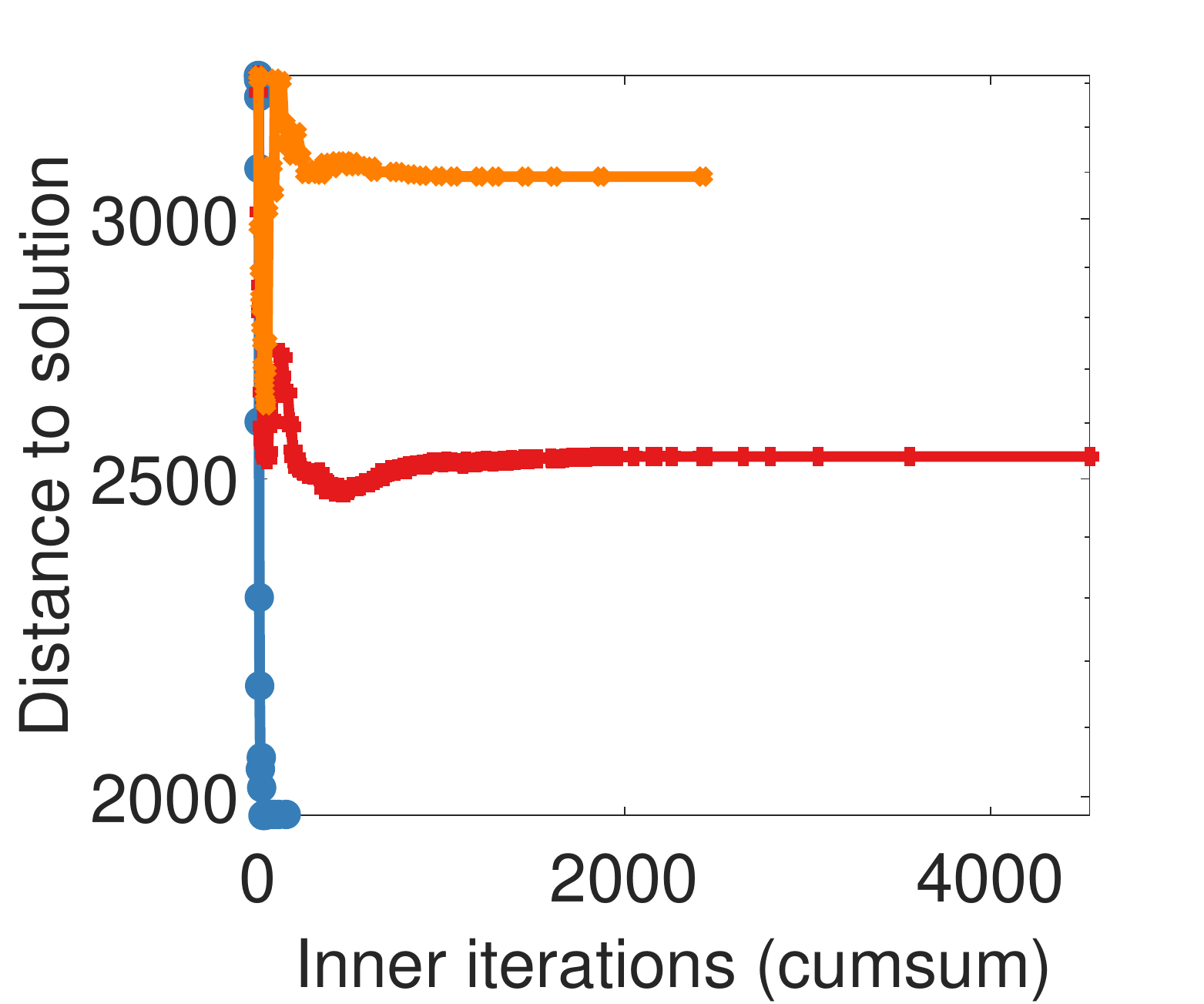}}
    \subfloat[\texttt{Sparse}, \texttt{LowCN} (\texttt{Time})]{\includegraphics[width = 0.2\textwidth, height = 0.16\textwidth]{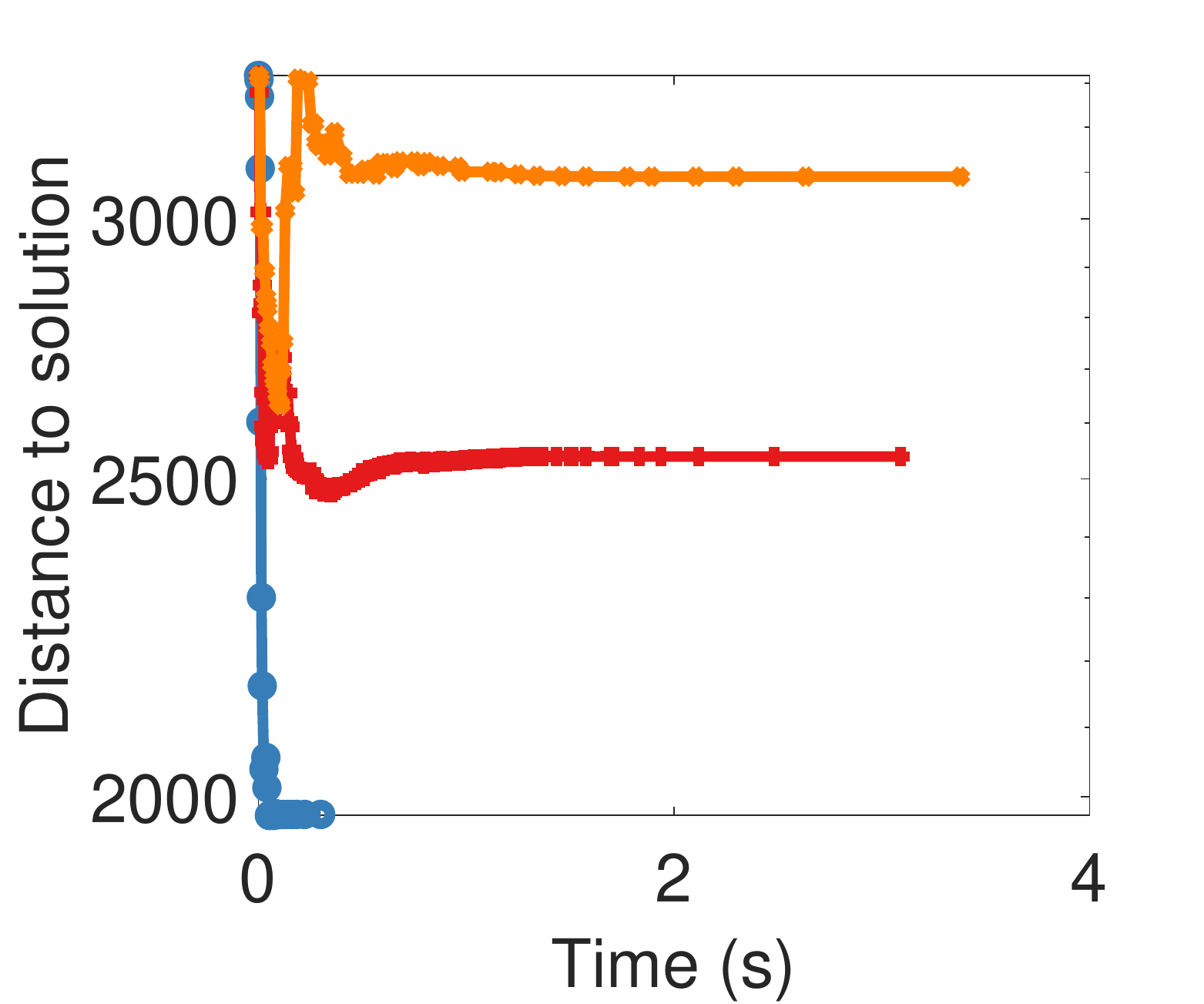}}
    \subfloat[\texttt{Sparse}, \texttt{HighCN} (\texttt{Loss})]{\includegraphics[width = 0.2\textwidth, height = 0.16\textwidth]{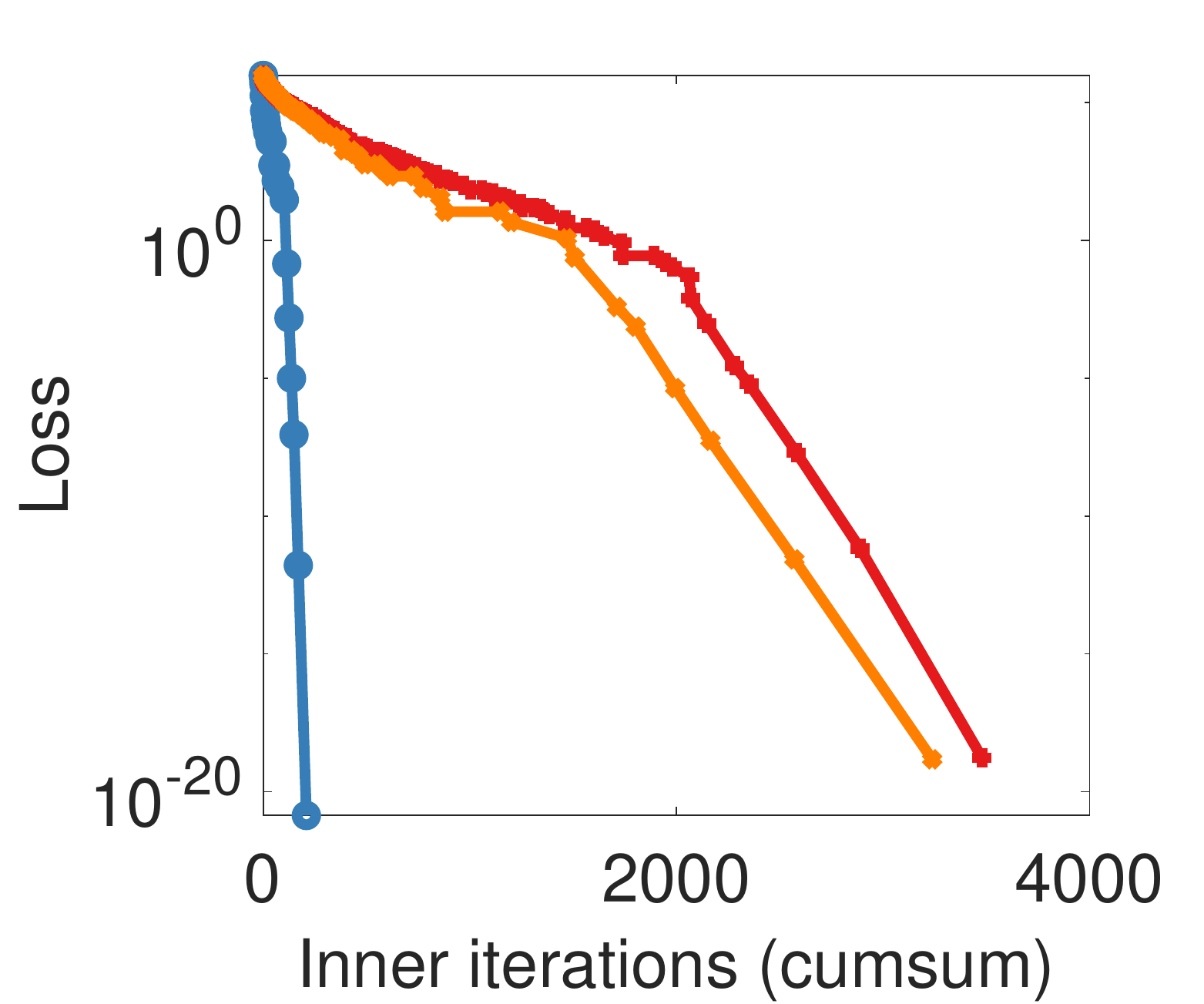}}
    \\
    \subfloat[\texttt{Sparse}, \texttt{HighCN} (\texttt{Disttosol})]{\includegraphics[width = 0.2\textwidth, height = 0.16\textwidth]{WLS/WLS_sparse_1000_TR_disttosol.pdf}}
    \subfloat[\texttt{Sparse}, \texttt{HighCN} (\texttt{Time})]{\includegraphics[width = 0.2\textwidth, height = 0.16\textwidth]{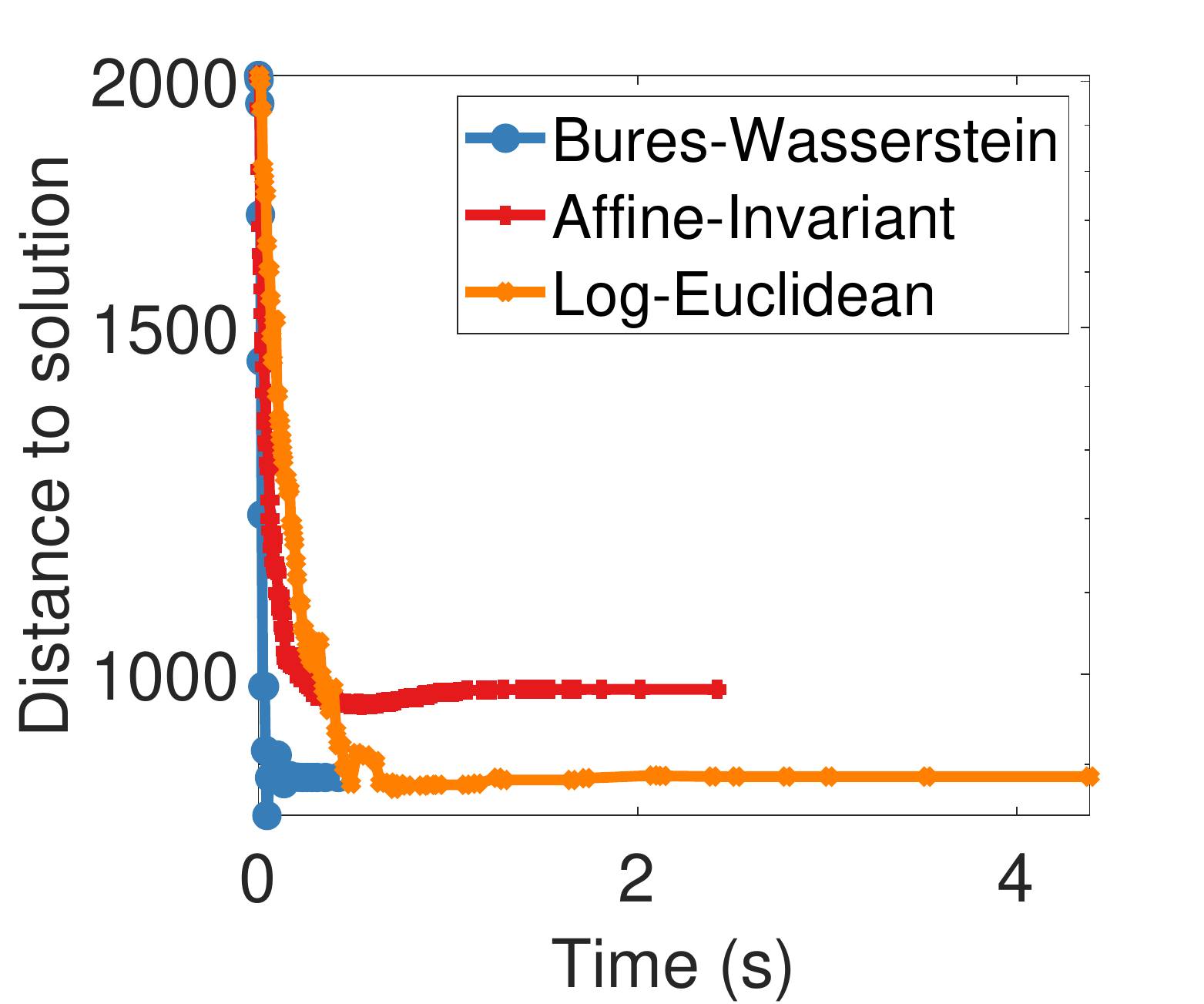}}
    \caption{Riemannian trust region on weighted least square problem (loss, distance to solution, runtime).} 
    \label{WLS_RTR_figure}
\end{figure*}

\begin{figure}[!th]
\captionsetup{justification=centering}
    \centering
    \subfloat[\texttt{Dense}, \texttt{LowCN} (\texttt{Run1})]{\includegraphics[width = 0.2\textwidth, height = 0.16\textwidth]{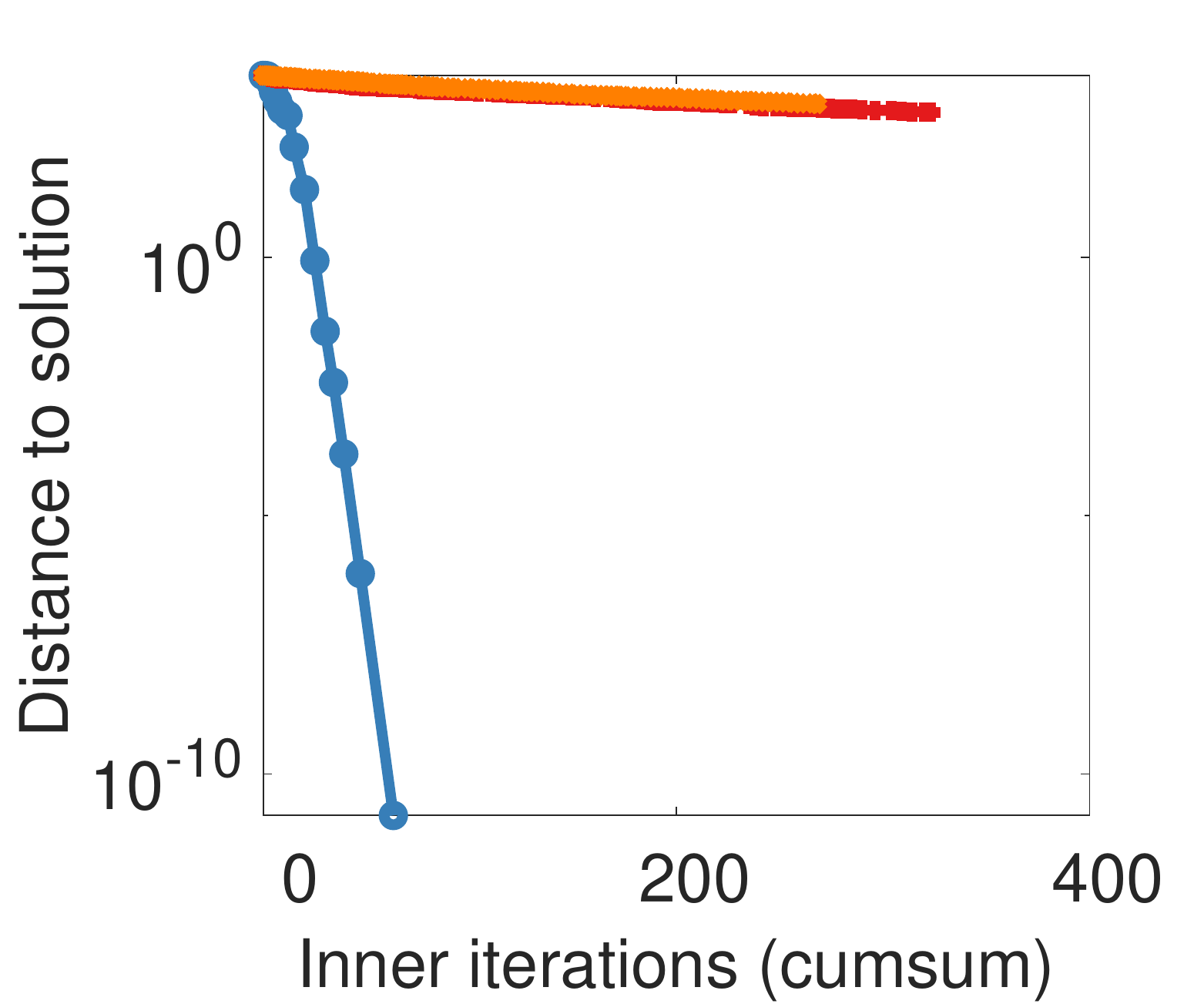}} 
    \subfloat[(\texttt{Run2})]{\includegraphics[width = 0.2\textwidth, height = 0.16\textwidth]{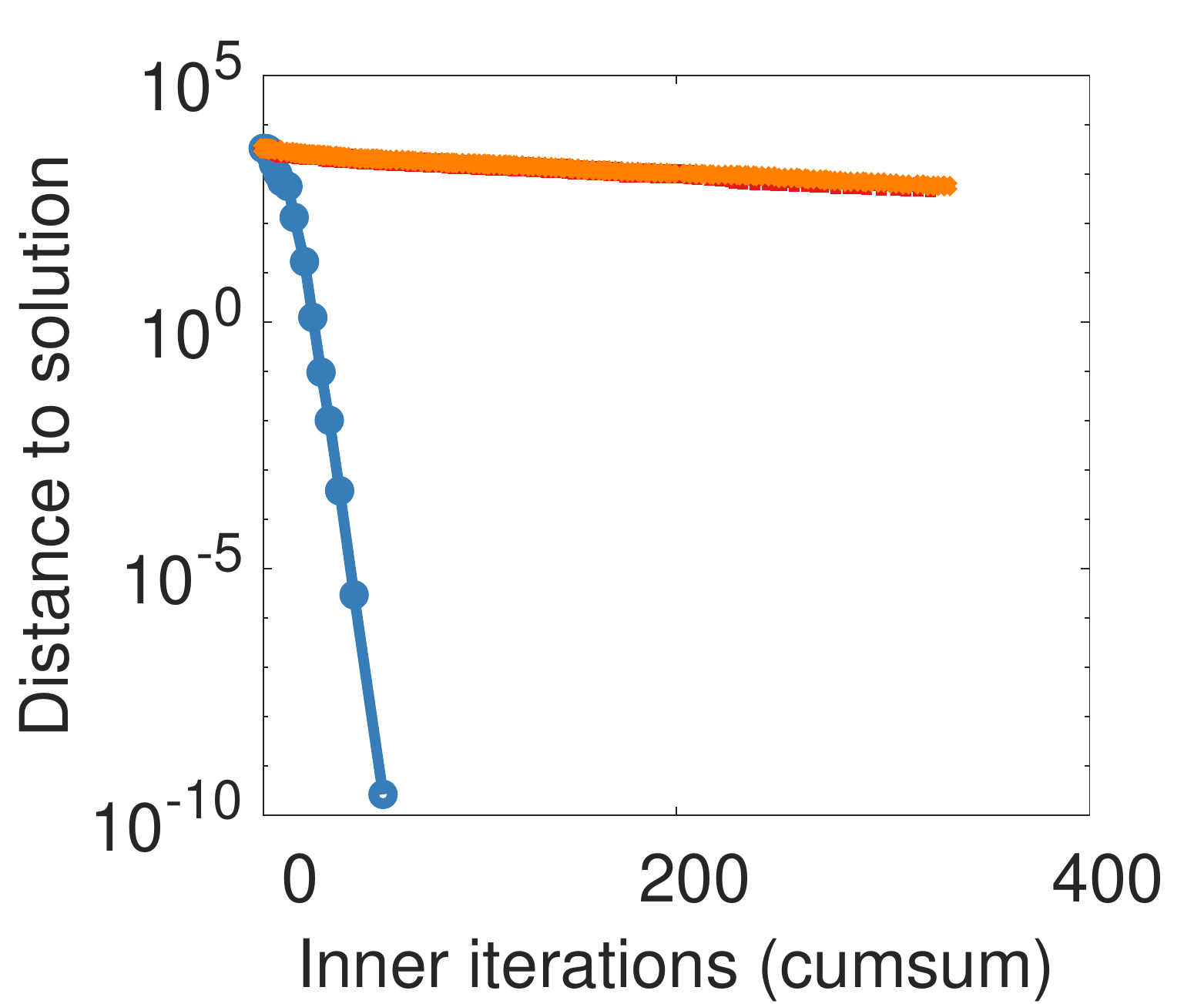}} 
    \subfloat[(\texttt{Run3})]{\includegraphics[width = 0.2\textwidth, height = 0.16\textwidth]{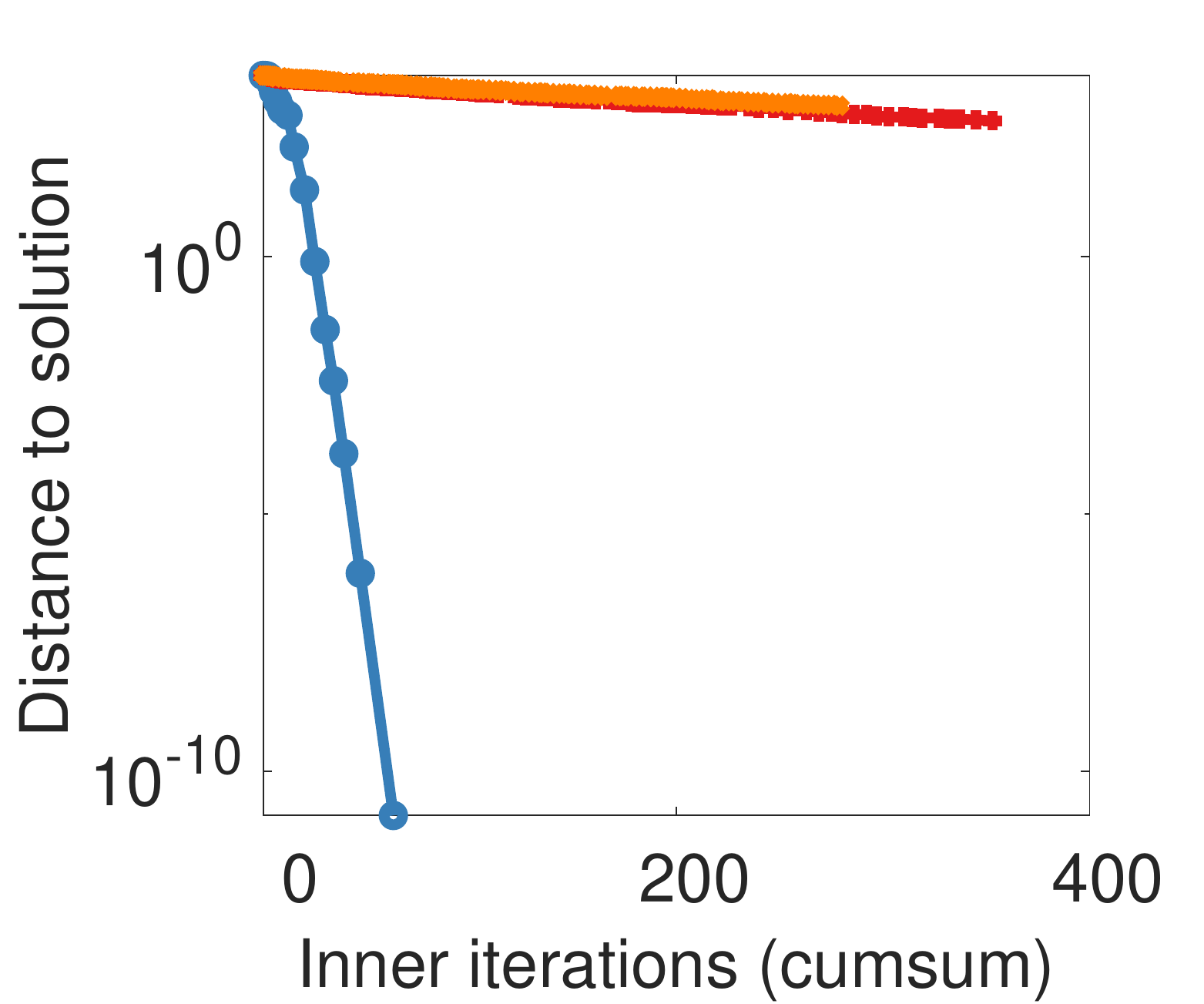}}
    \subfloat[(\texttt{Run4})]{\includegraphics[width = 0.2\textwidth, height = 0.16\textwidth]{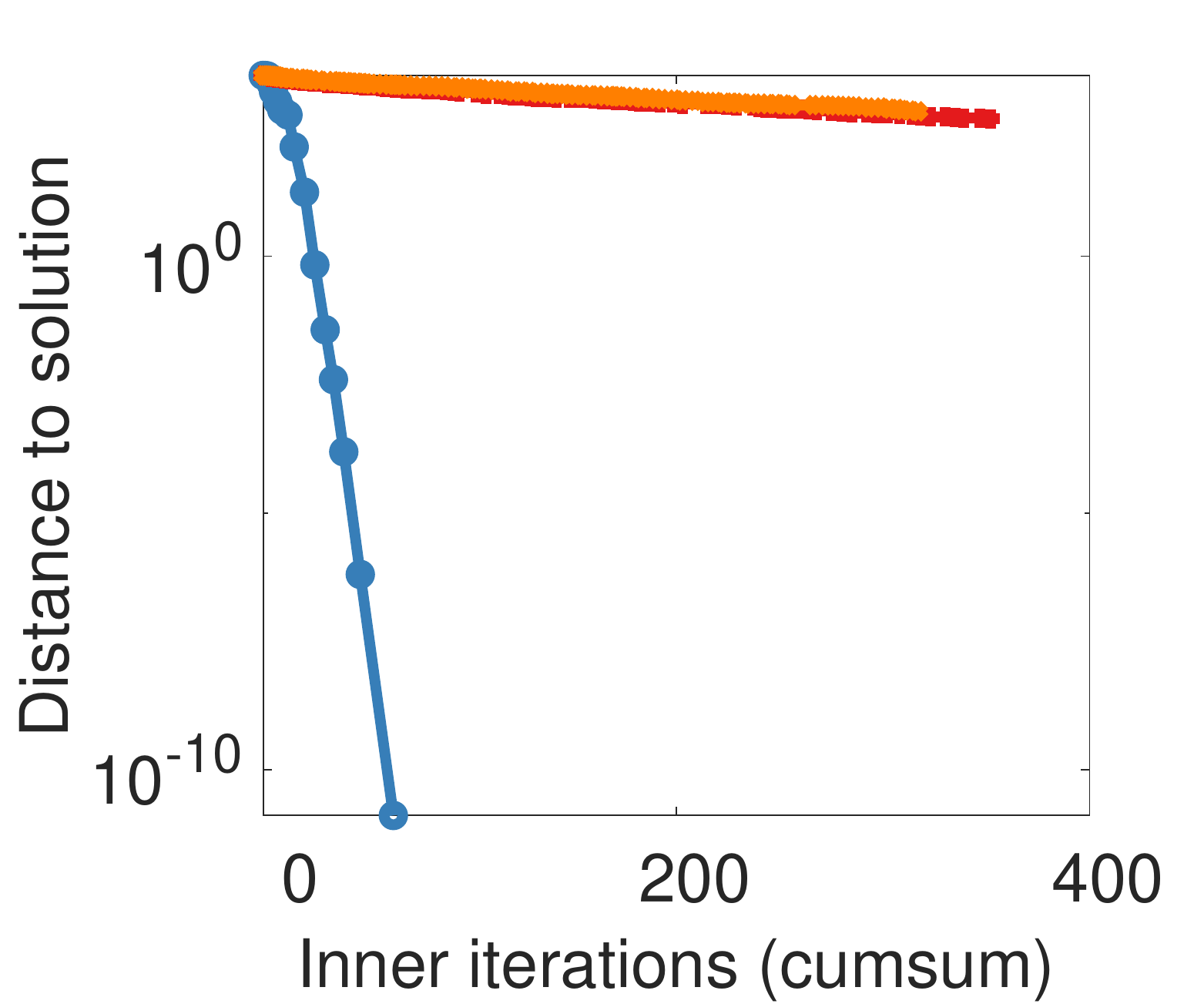}}
    \subfloat[(\texttt{Run5})]{\includegraphics[width = 0.2\textwidth, height = 0.16\textwidth]{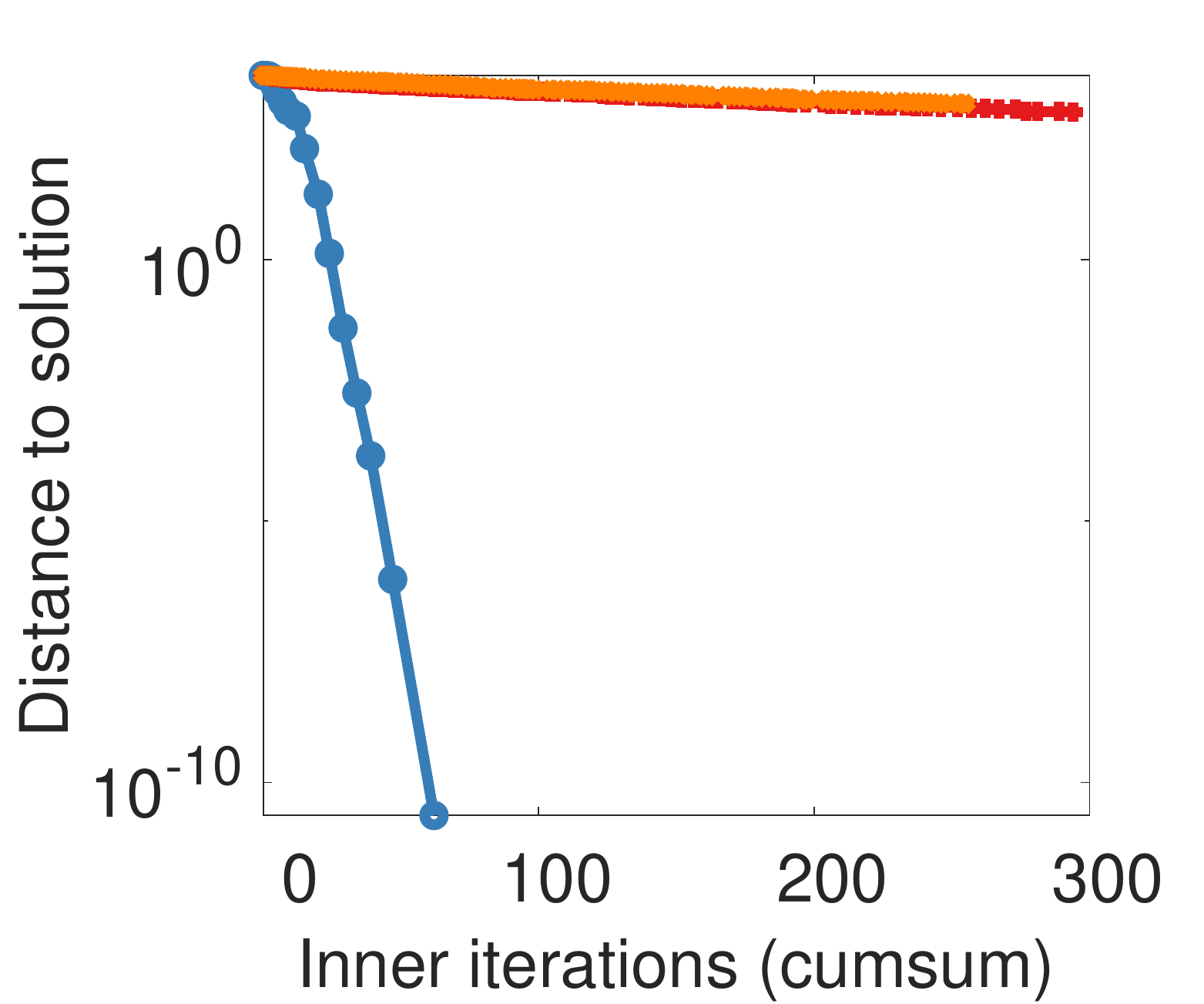}} 
    \\
    \subfloat[\texttt{Dense}, \texttt{HighCN} (\texttt{Run1})]{\includegraphics[width = 0.2\textwidth, height = 0.16\textwidth]{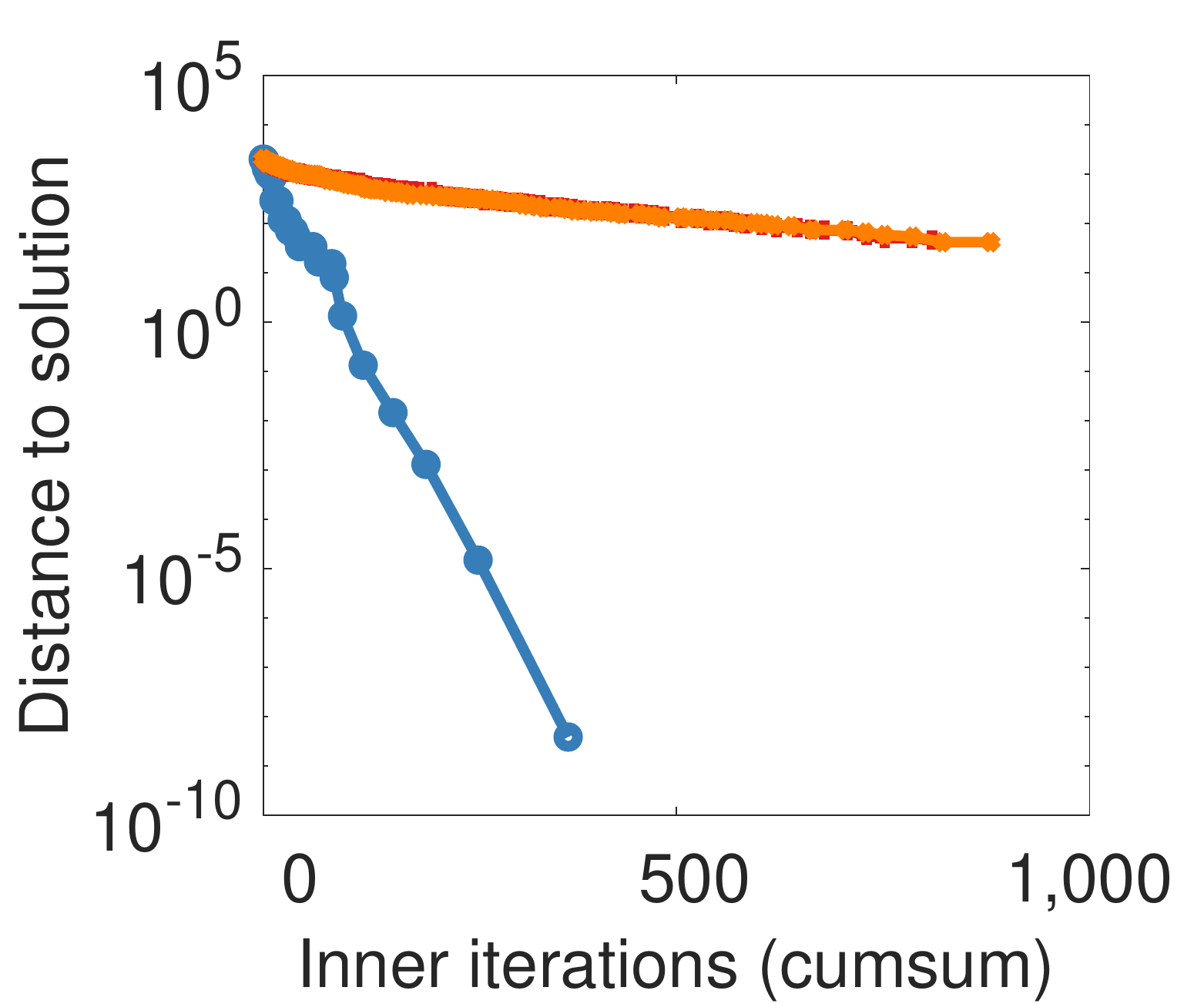}} 
    \subfloat[(\texttt{Run2})]{\includegraphics[width = 0.2\textwidth, height = 0.16\textwidth]{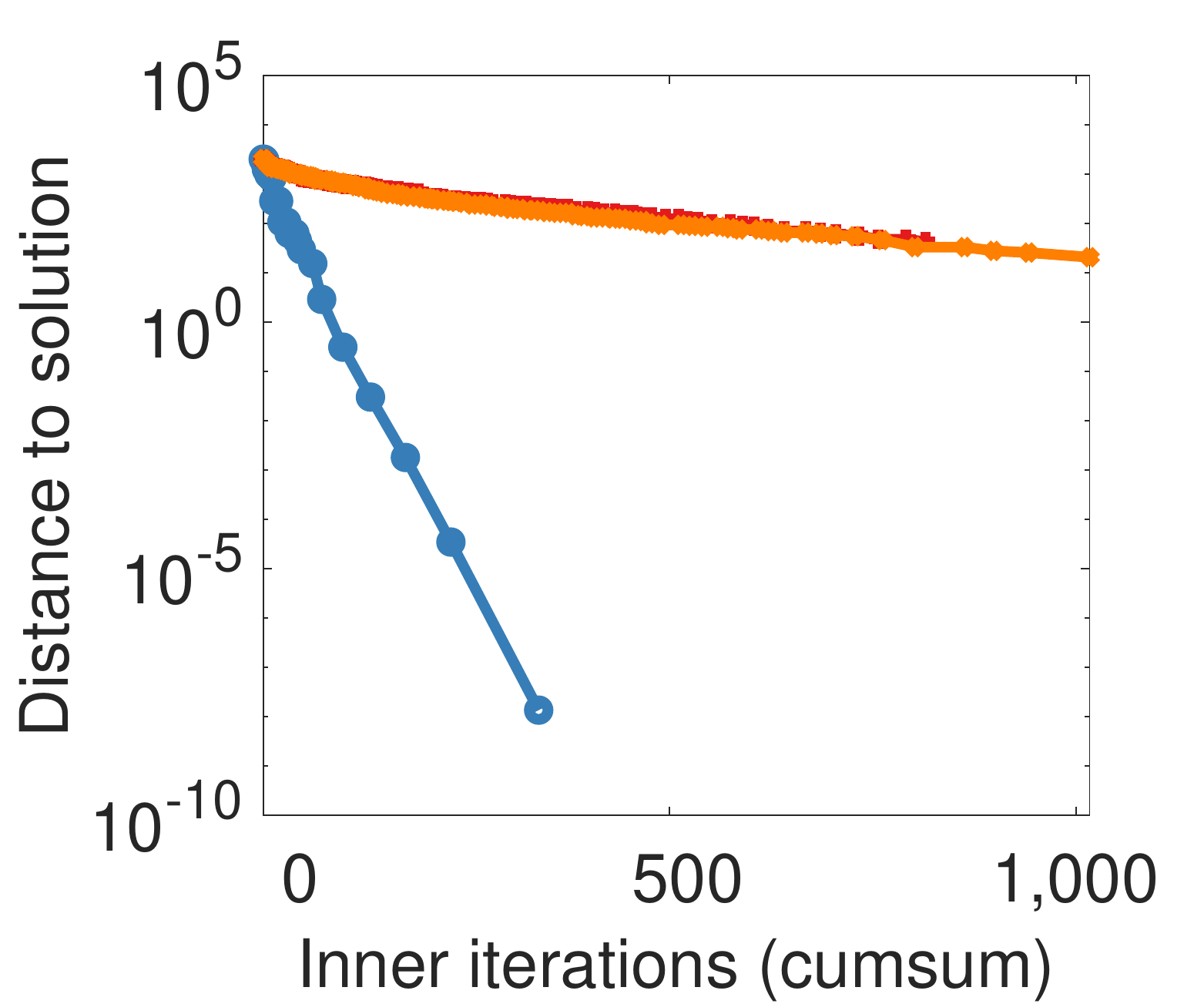}} 
    \subfloat[(\texttt{Run3})]{\includegraphics[width = 0.2\textwidth, height = 0.16\textwidth]{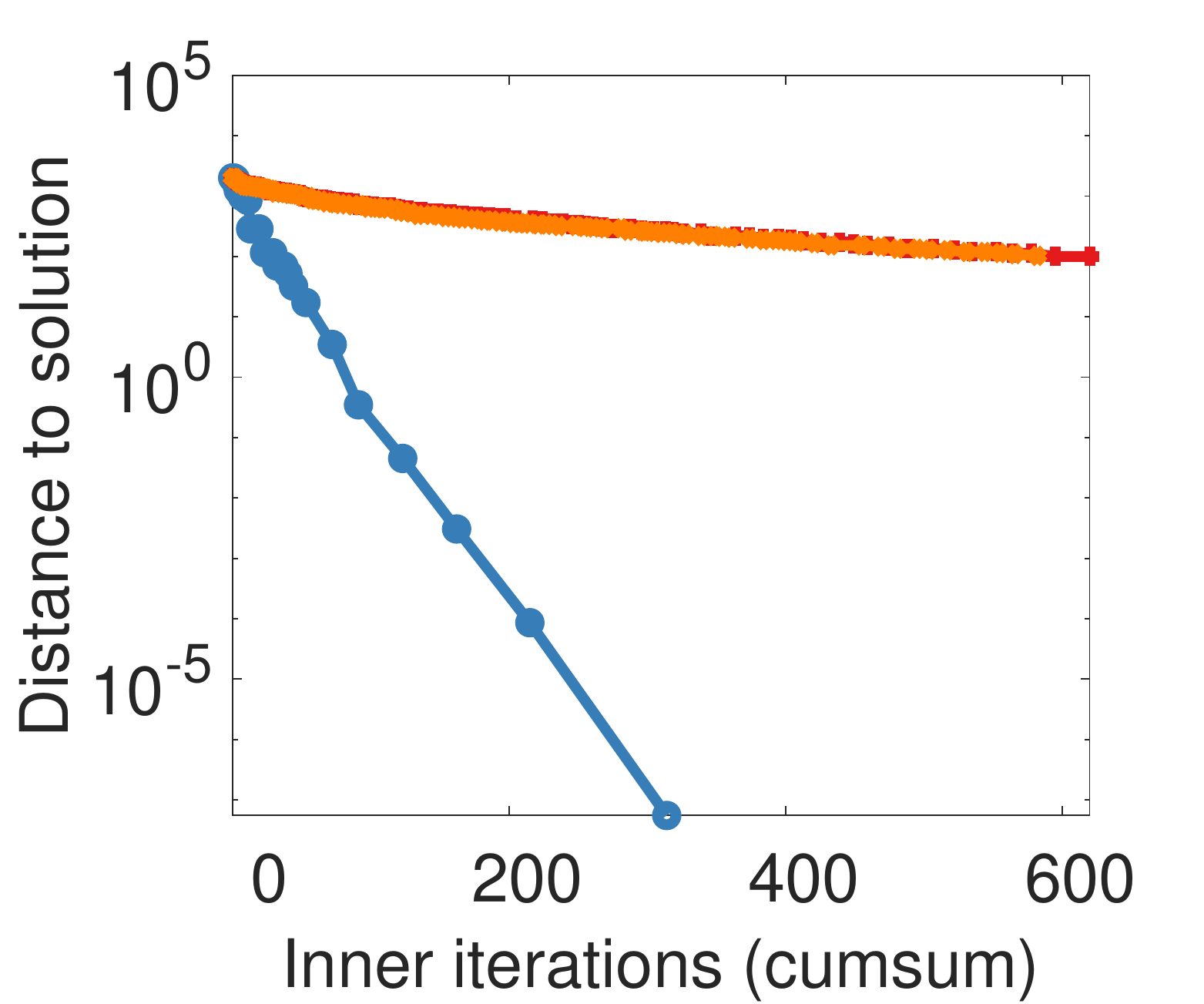}}
    \subfloat[(\texttt{Run4})]{\includegraphics[width = 0.2\textwidth, height = 0.16\textwidth]{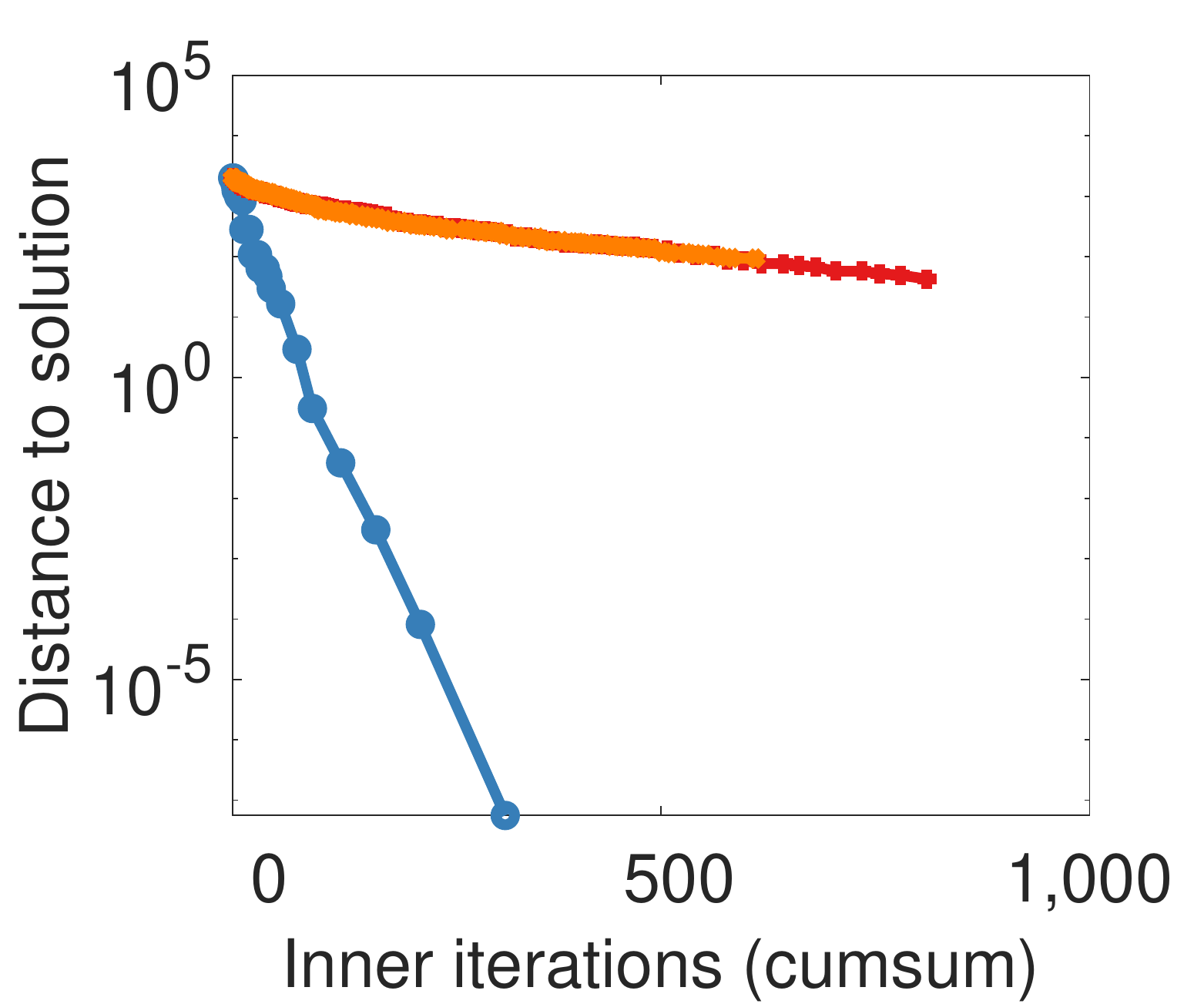}}
    \subfloat[(\texttt{Run5})]{\includegraphics[width = 0.2\textwidth, height = 0.16\textwidth]{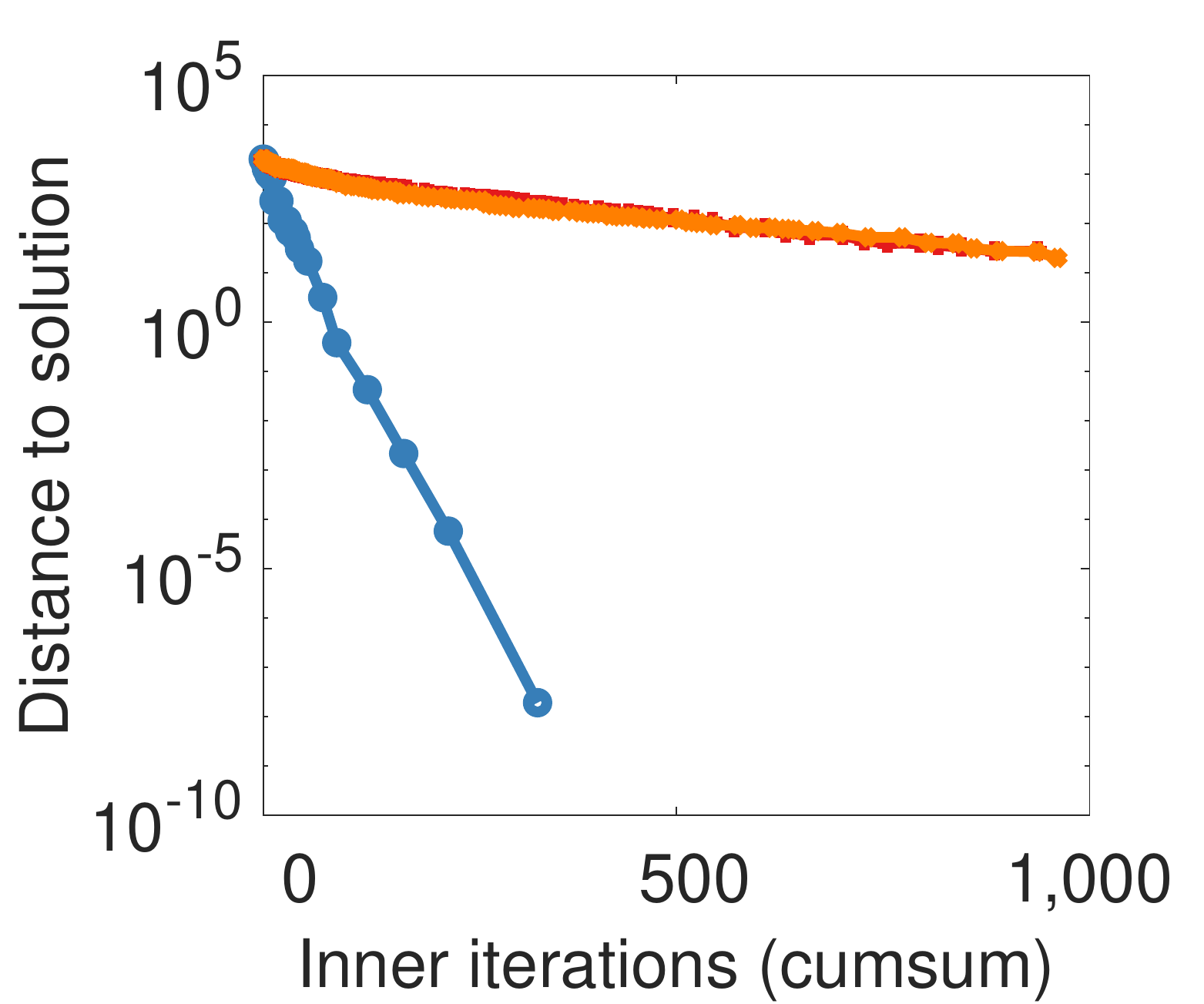}} 
    \\
    \subfloat[\texttt{Sparse}, \texttt{LowCN} (\texttt{Run1})]{\includegraphics[width = 0.2\textwidth, height = 0.16\textwidth]{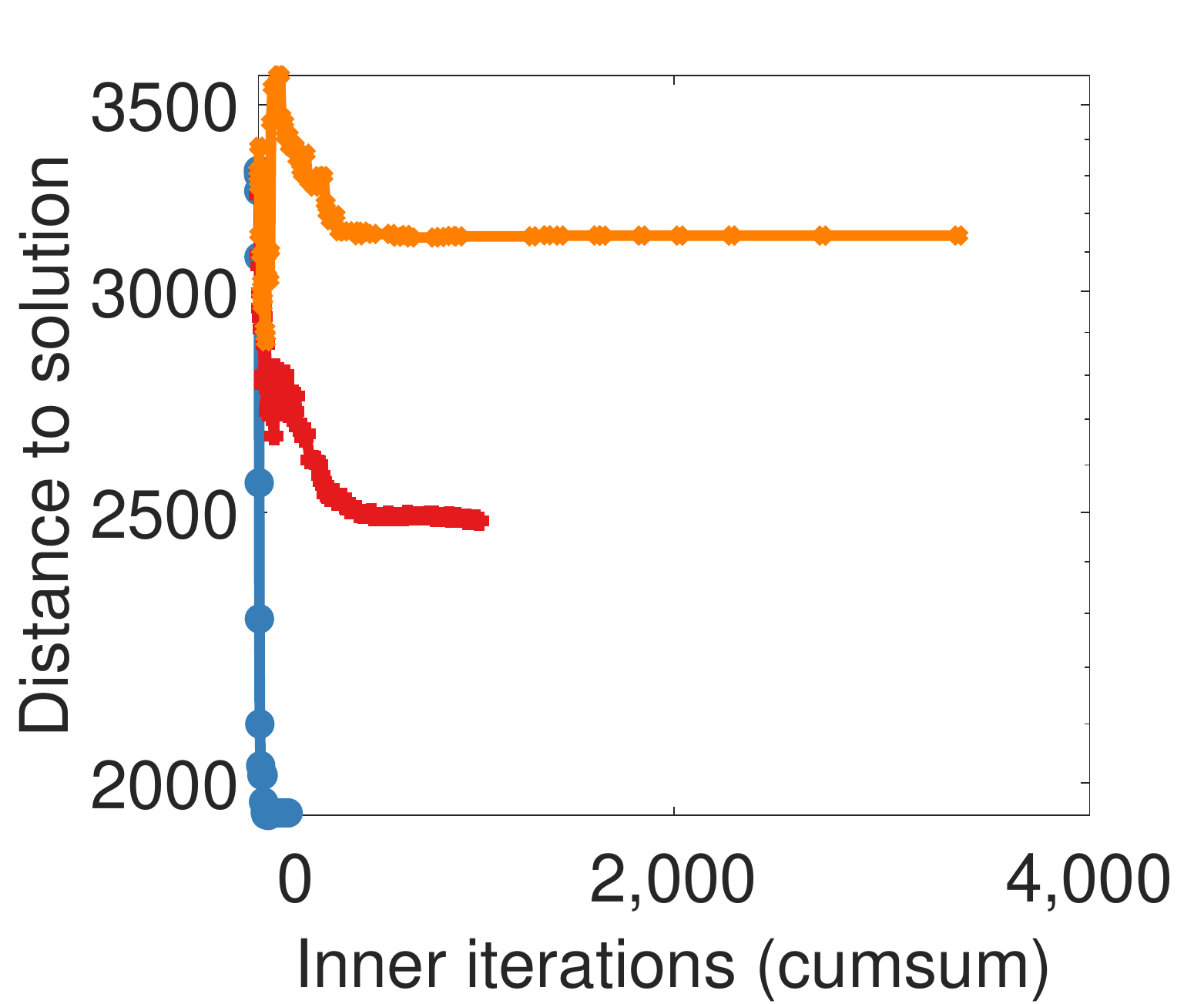}} 
    \subfloat[(\texttt{Run2})]{\includegraphics[width = 0.2\textwidth, height = 0.16\textwidth]{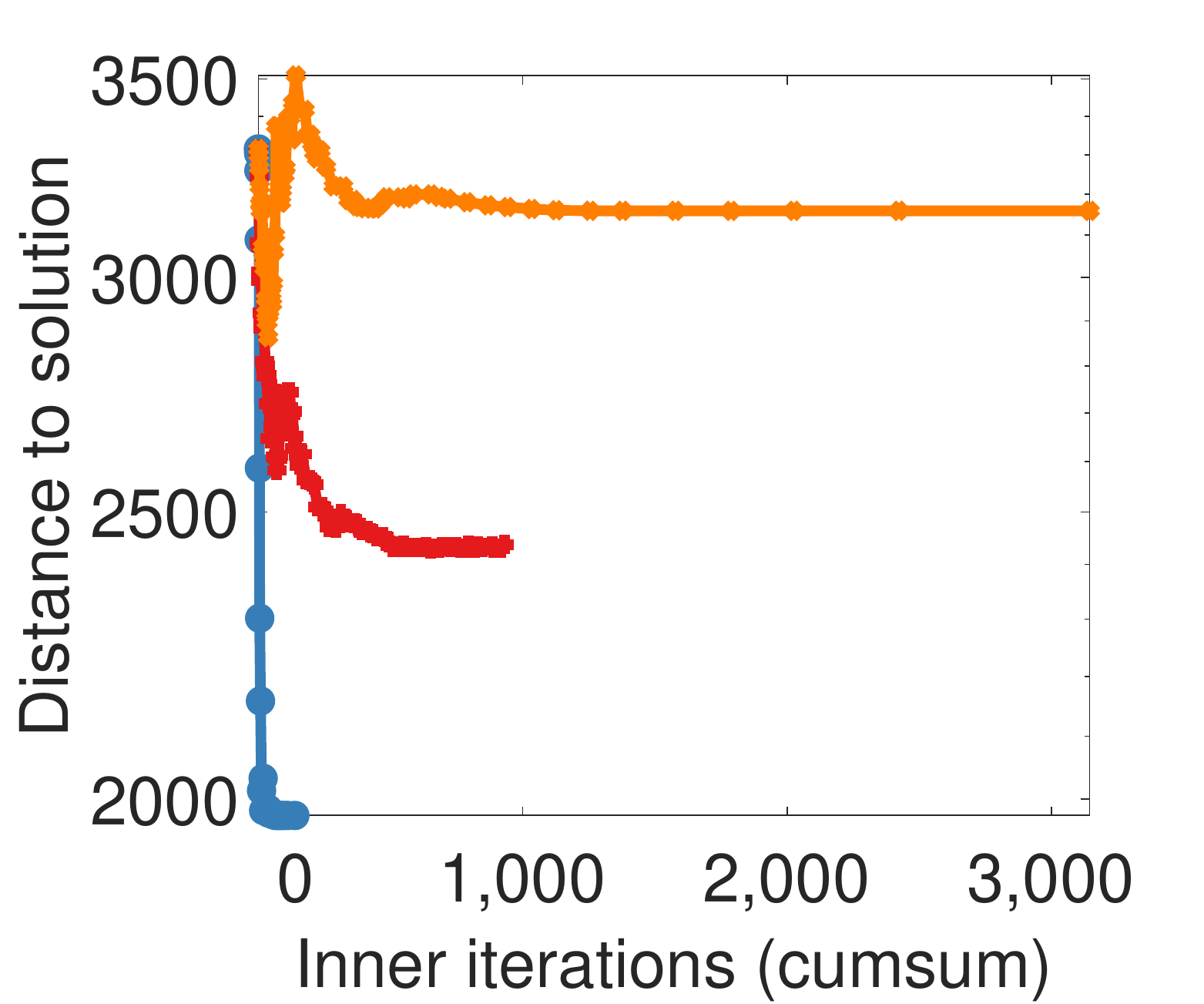}} 
    \subfloat[(\texttt{Run3})]{\includegraphics[width = 0.2\textwidth, height = 0.16\textwidth]{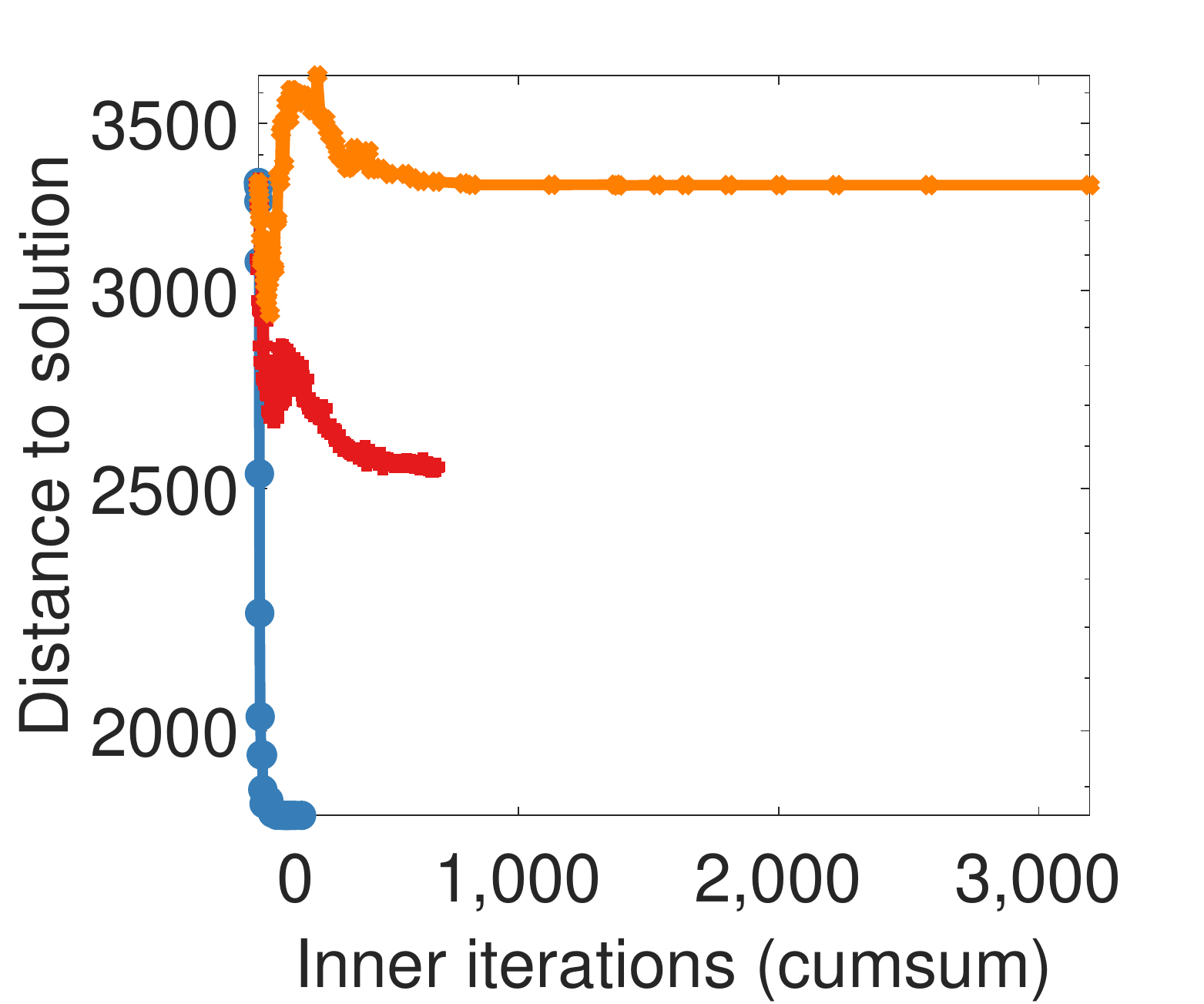}}
    \subfloat[(\texttt{Run4})]{\includegraphics[width = 0.2\textwidth, height = 0.16\textwidth]{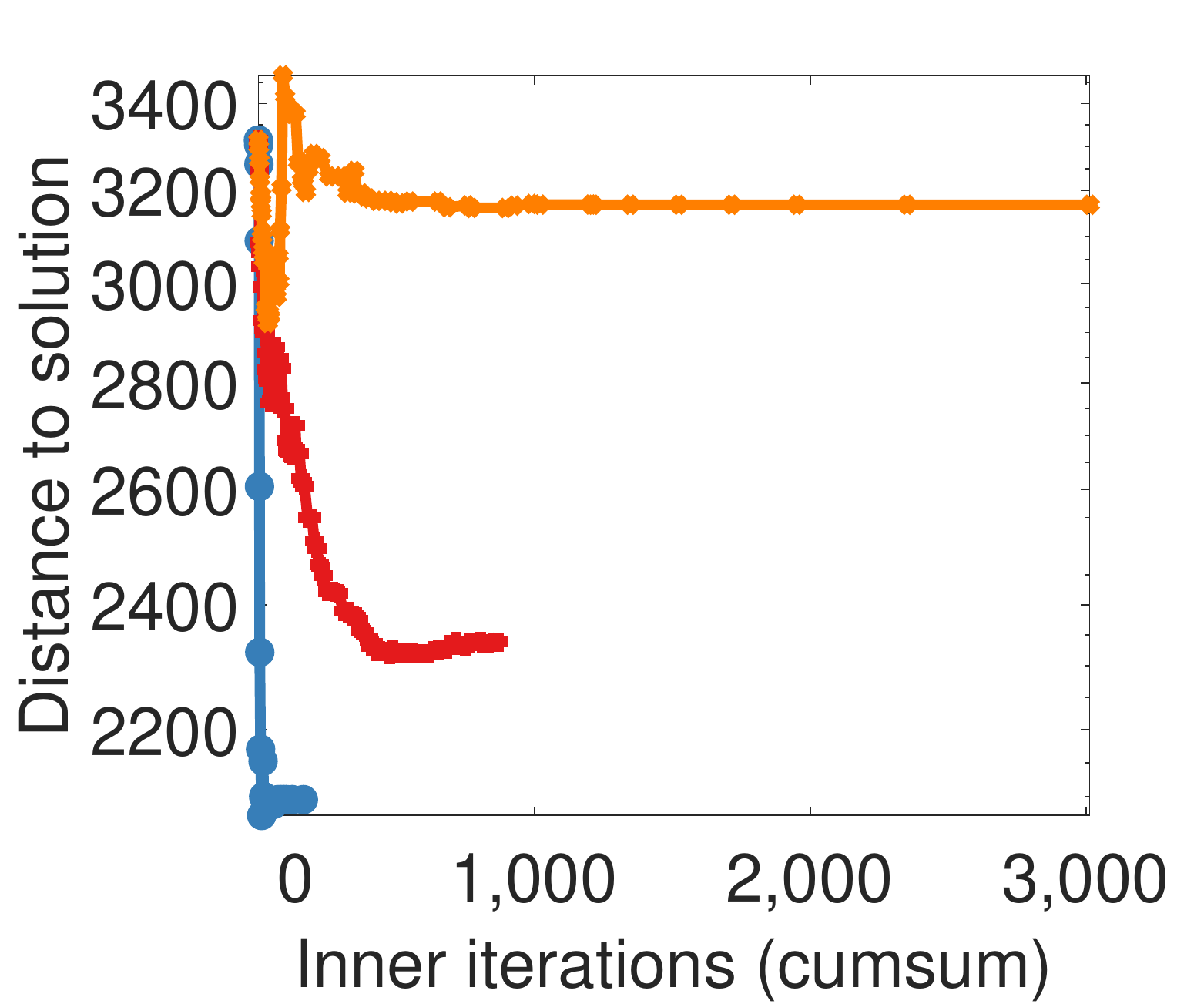}}
    \subfloat[(\texttt{Run5})]{\includegraphics[width = 0.2\textwidth, height = 0.16\textwidth]{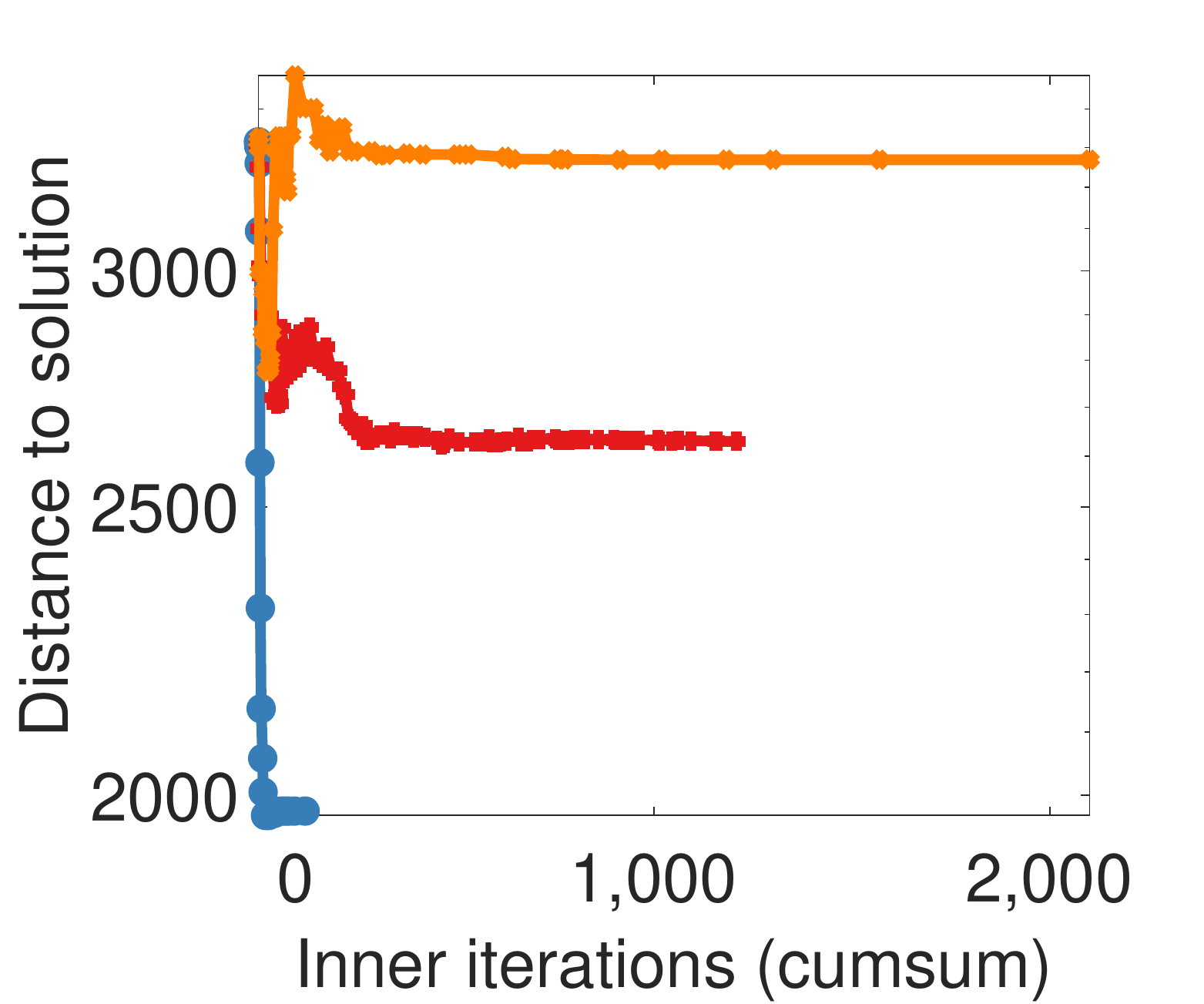}} 
    \\
    \subfloat[\texttt{Sparse}, \texttt{HighCN} (\texttt{Run1})]{\includegraphics[width = 0.2\textwidth, height = 0.16\textwidth]{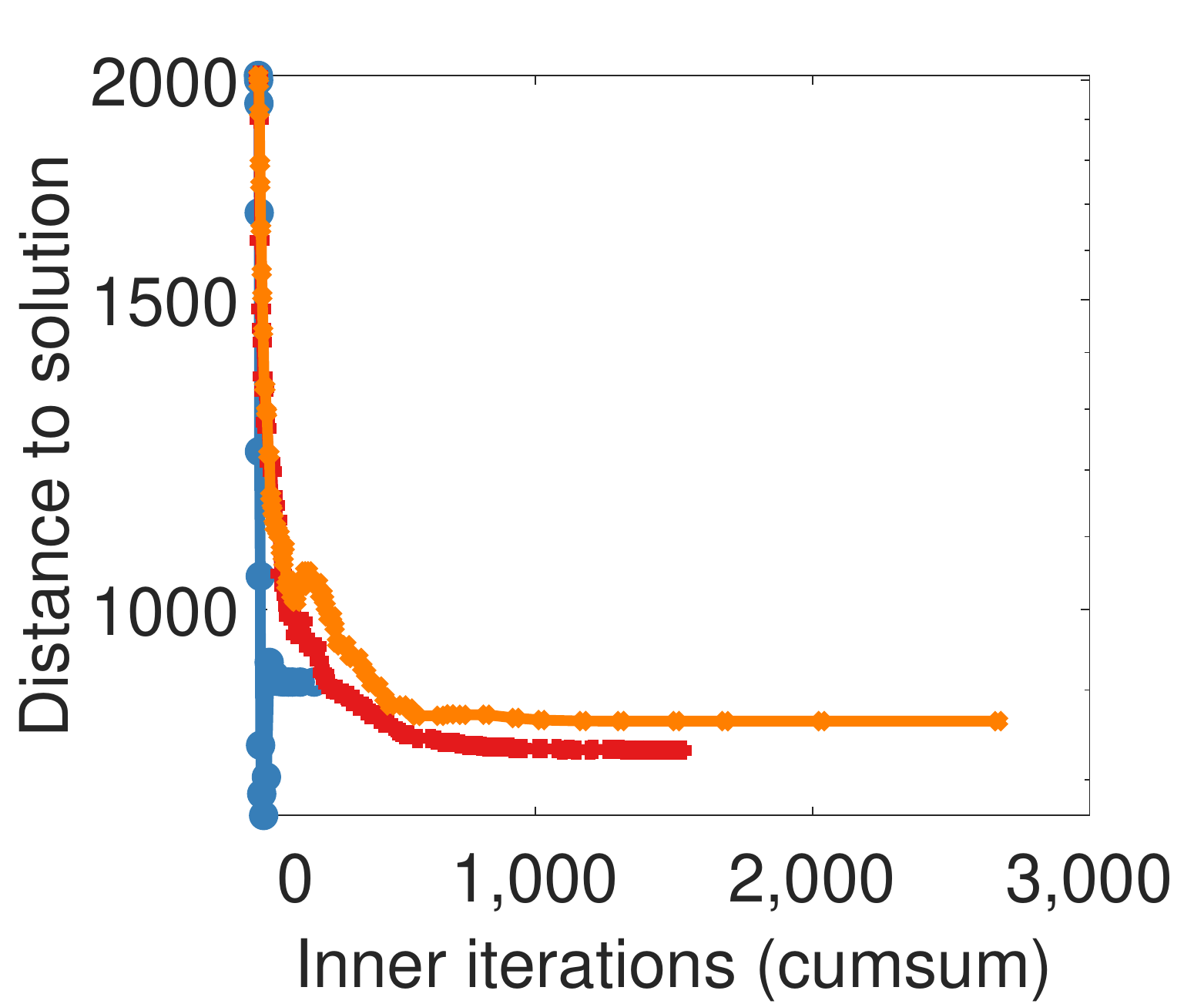}} 
    \subfloat[(\texttt{Run2})]{\includegraphics[width = 0.2\textwidth, height = 0.16\textwidth]{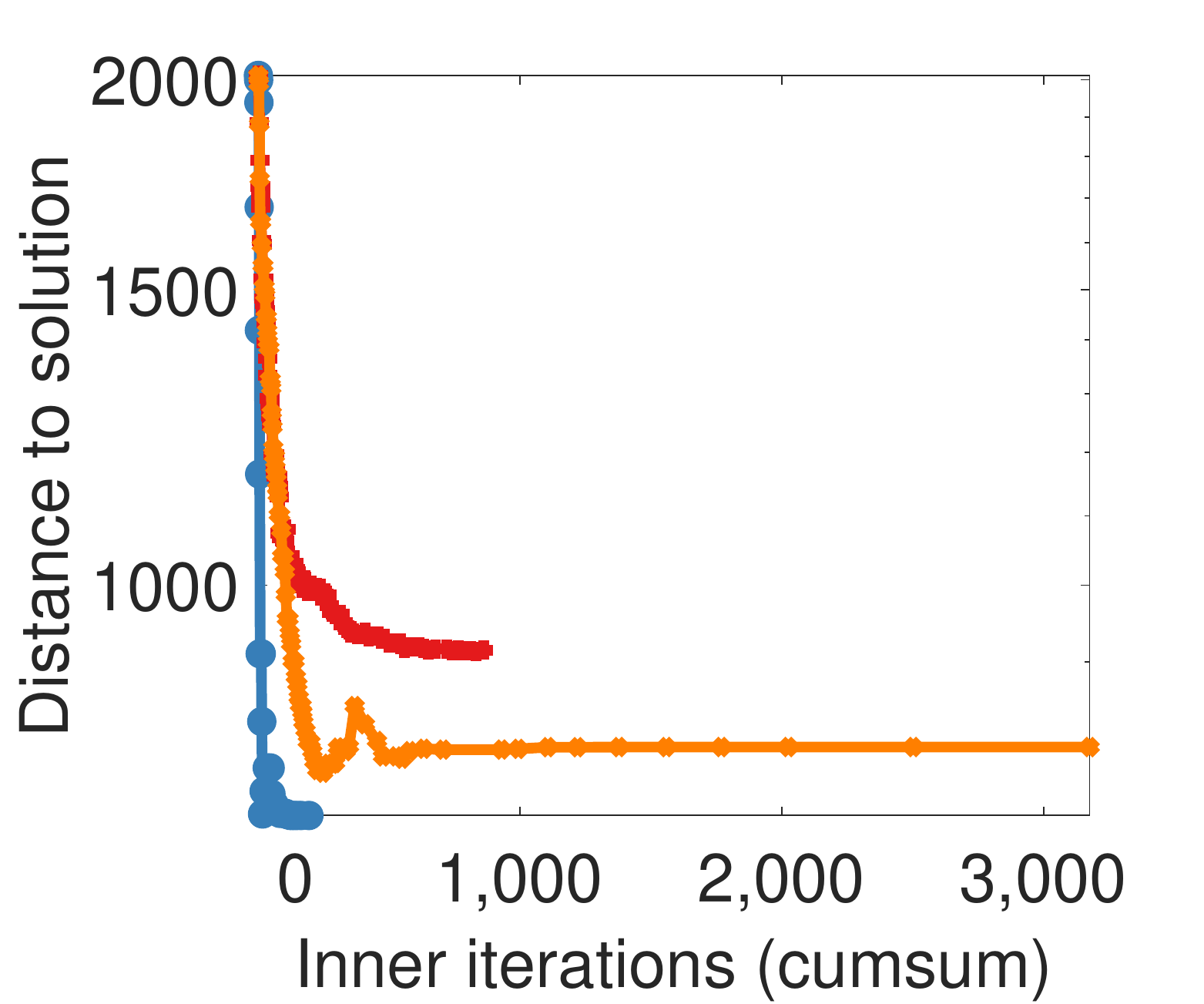}} 
    \subfloat[(\texttt{Run3})]{\includegraphics[width = 0.2\textwidth, height = 0.16\textwidth]{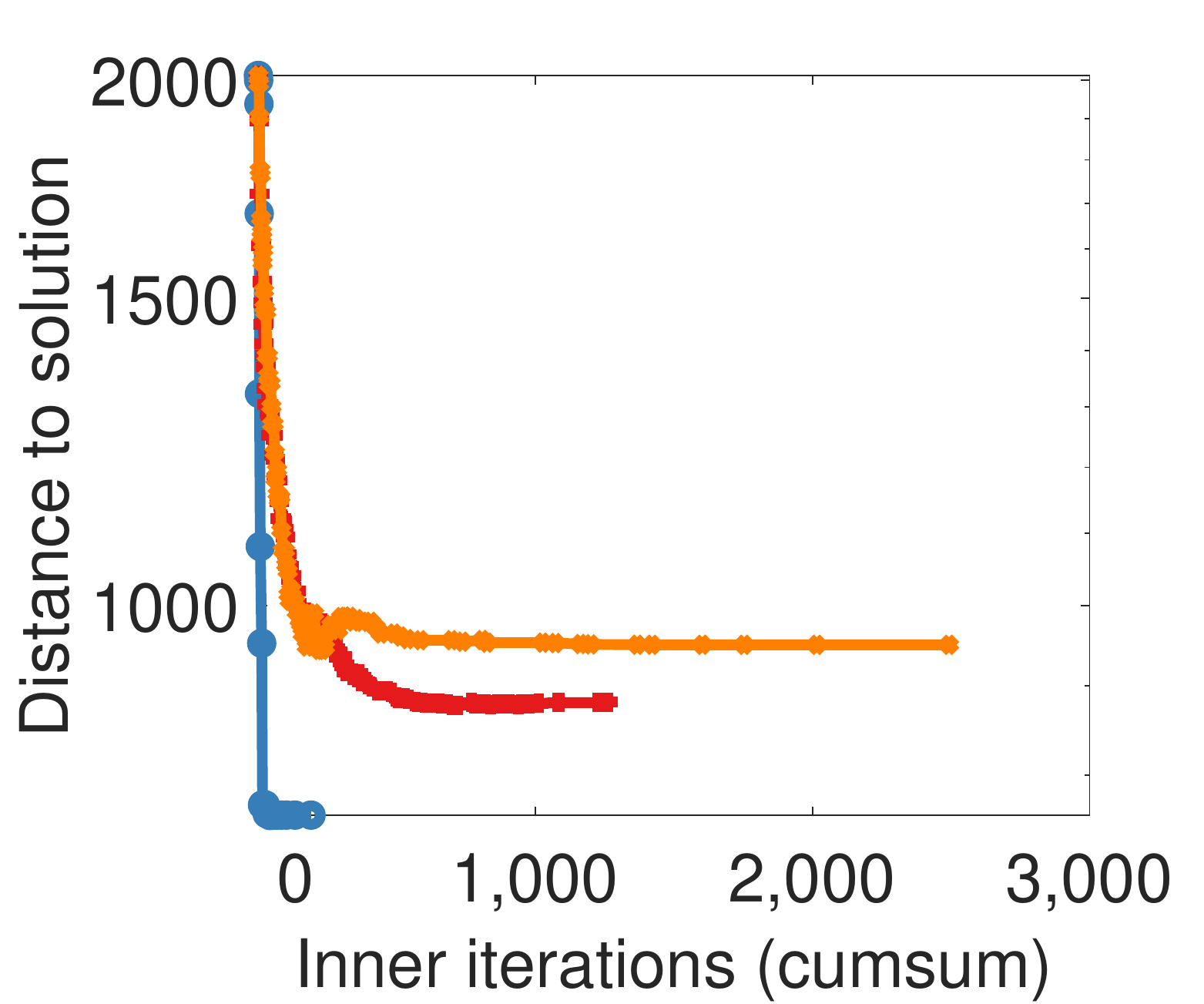}}
    \subfloat[(\texttt{Run4})]{\includegraphics[width = 0.2\textwidth, height = 0.16\textwidth]{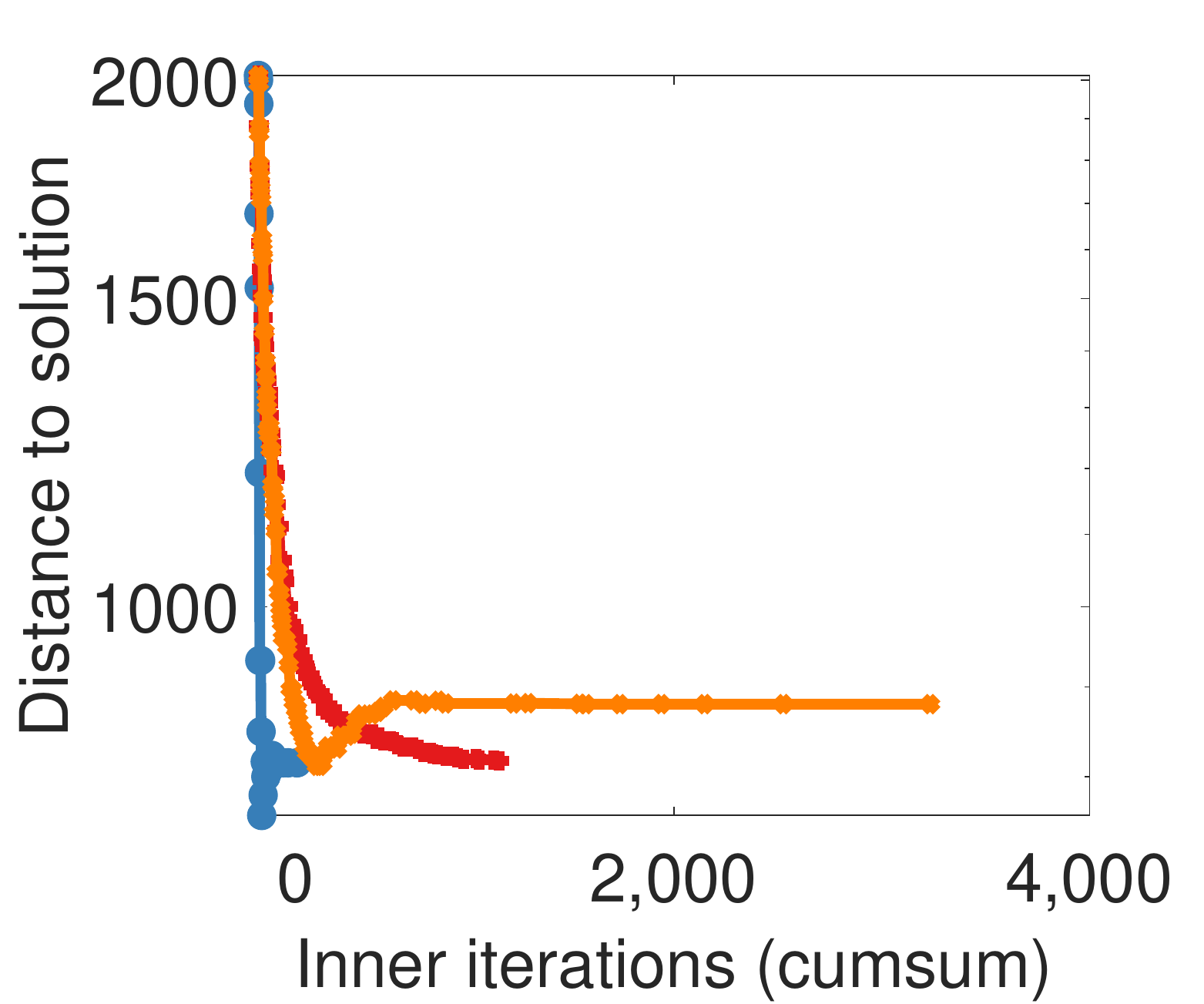}}
    \subfloat[(\texttt{Run5})]{\includegraphics[width = 0.2\textwidth, height = 0.16\textwidth]{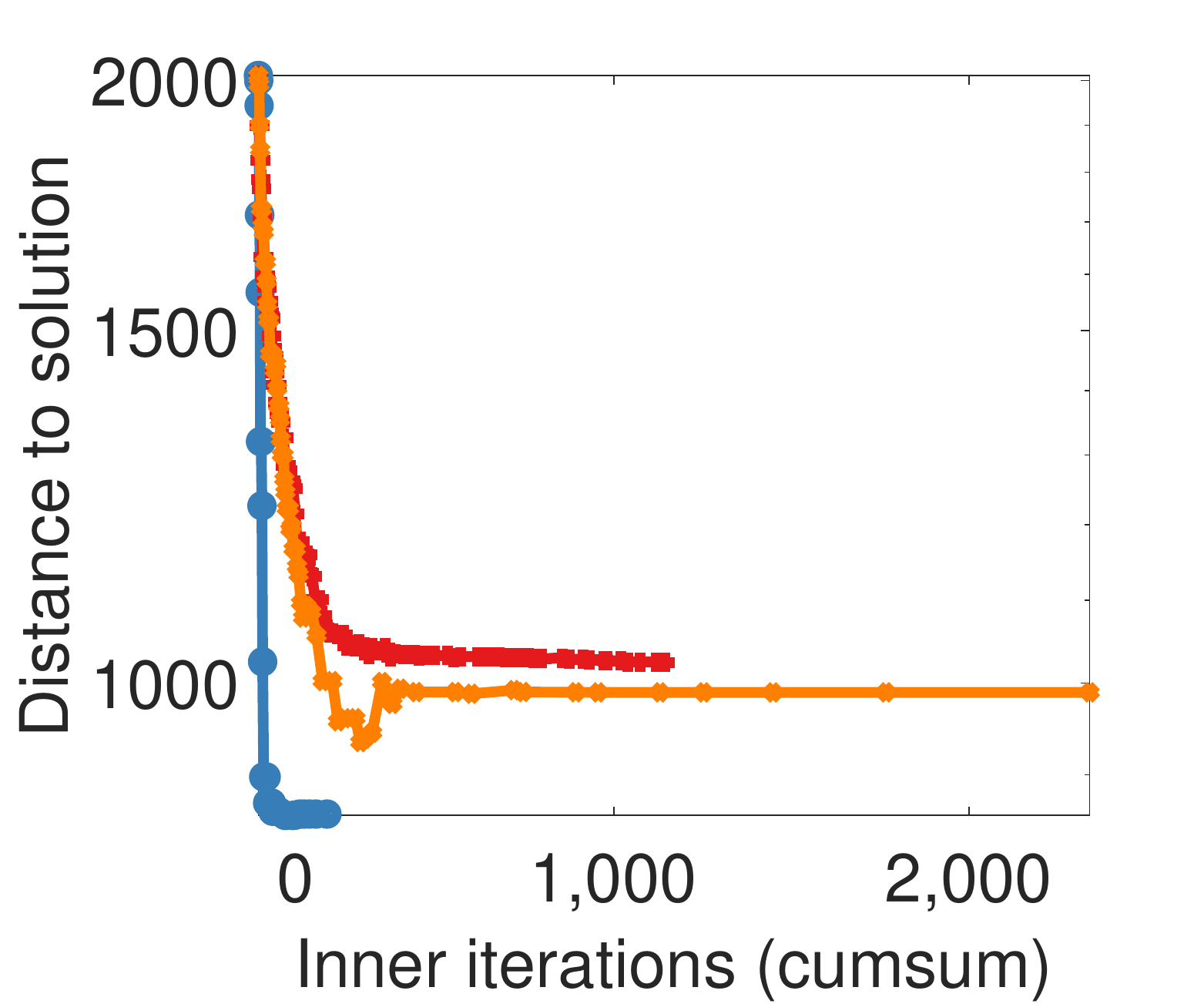}} 
    \caption{Sensitivity to randomness on weighted least square problem with Riemannian trust region.}
    \label{WLS_rng_figure}
\end{figure}

\begin{figure*}[!th]
\captionsetup{justification=centering}
    \centering
    \subfloat[\texttt{Ex1Full} (\texttt{Loss})]{\includegraphics[width = 0.2\textwidth, height = 0.16\textwidth]{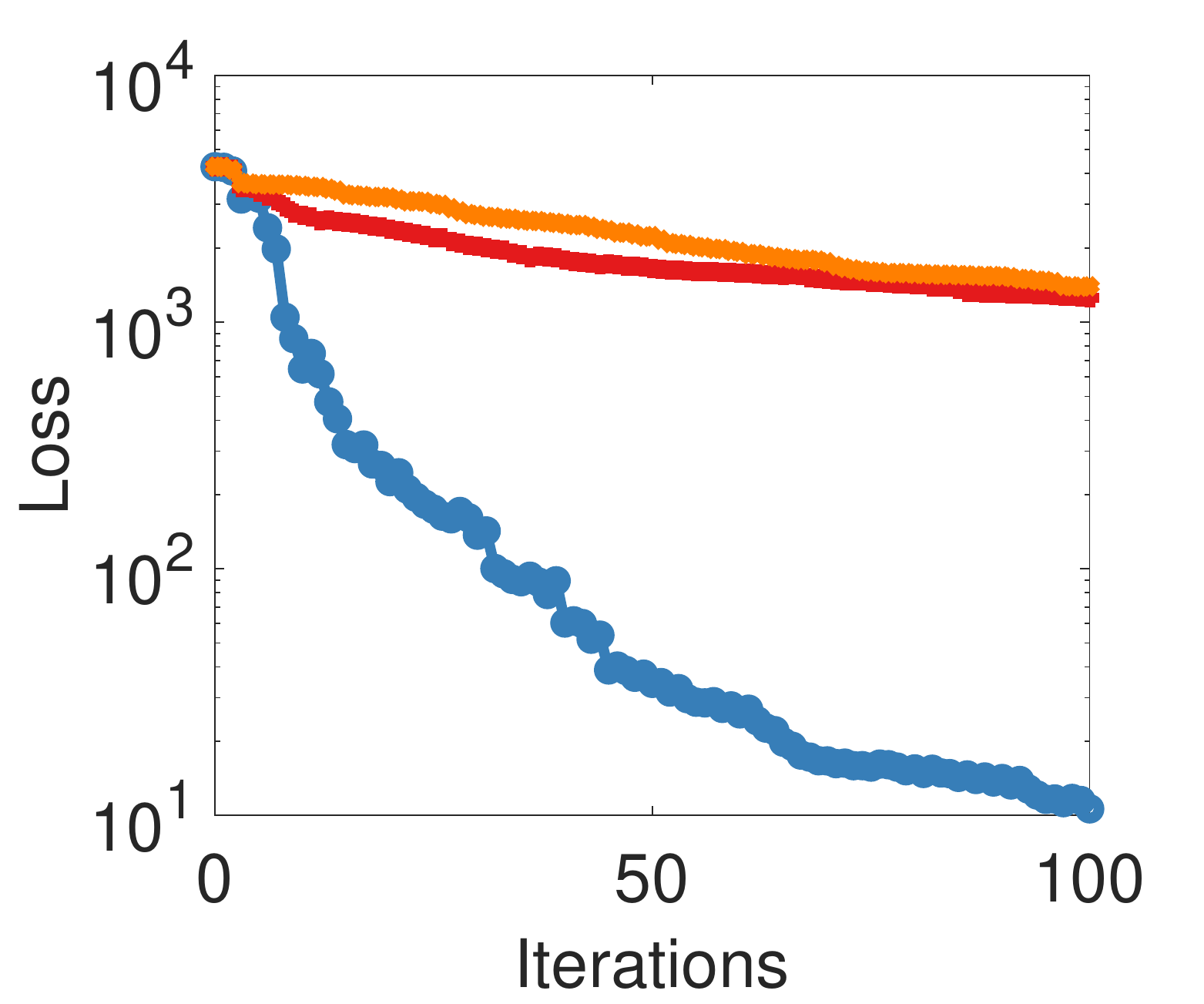}} 
    \subfloat[\texttt{Ex1Full} (\texttt{Disttosol})]{\includegraphics[width = 0.2\textwidth, height = 0.16\textwidth]{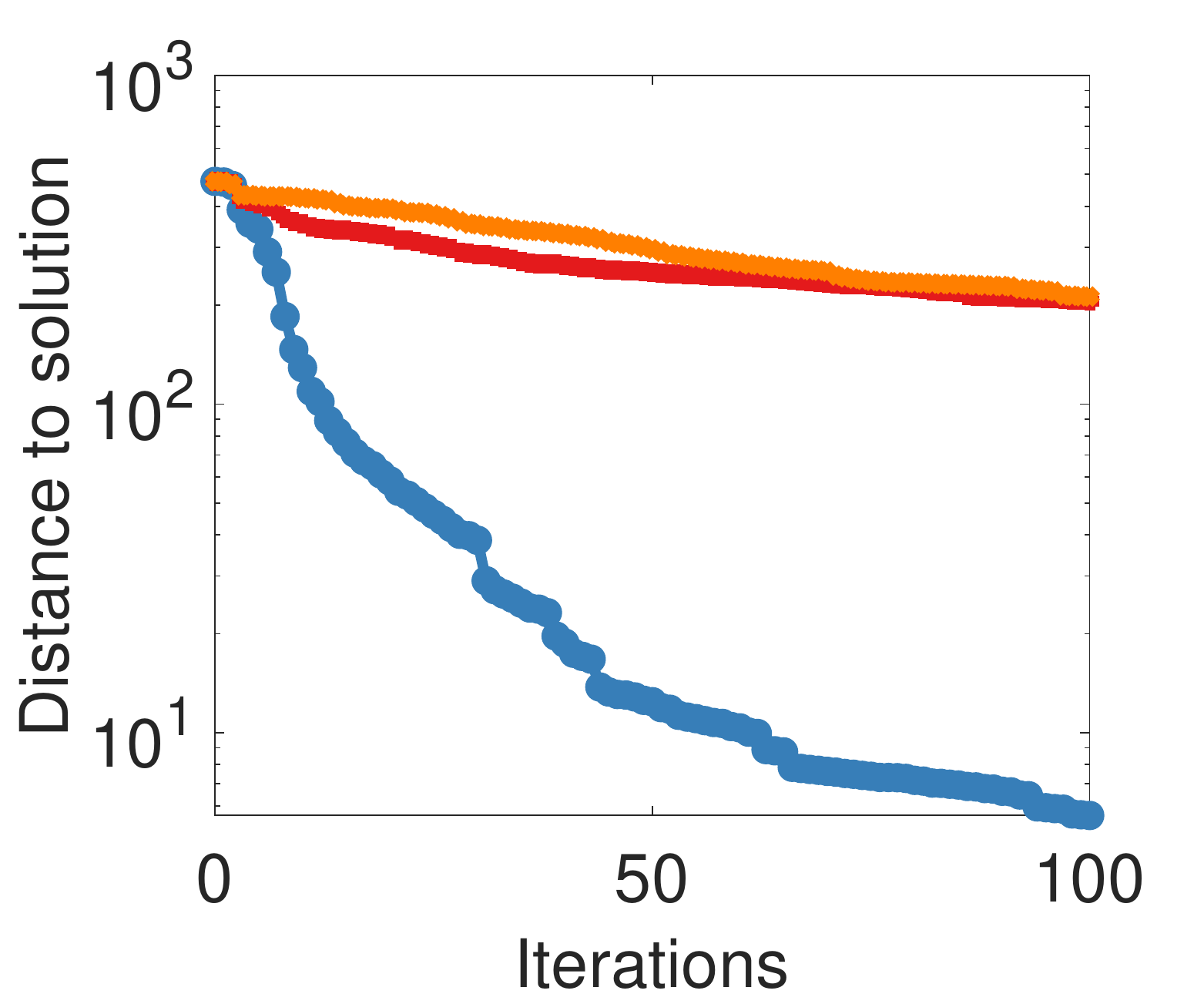}}
    \subfloat[\texttt{Ex1Full} (\texttt{Time})]{\includegraphics[width = 0.2\textwidth, height = 0.16\textwidth]{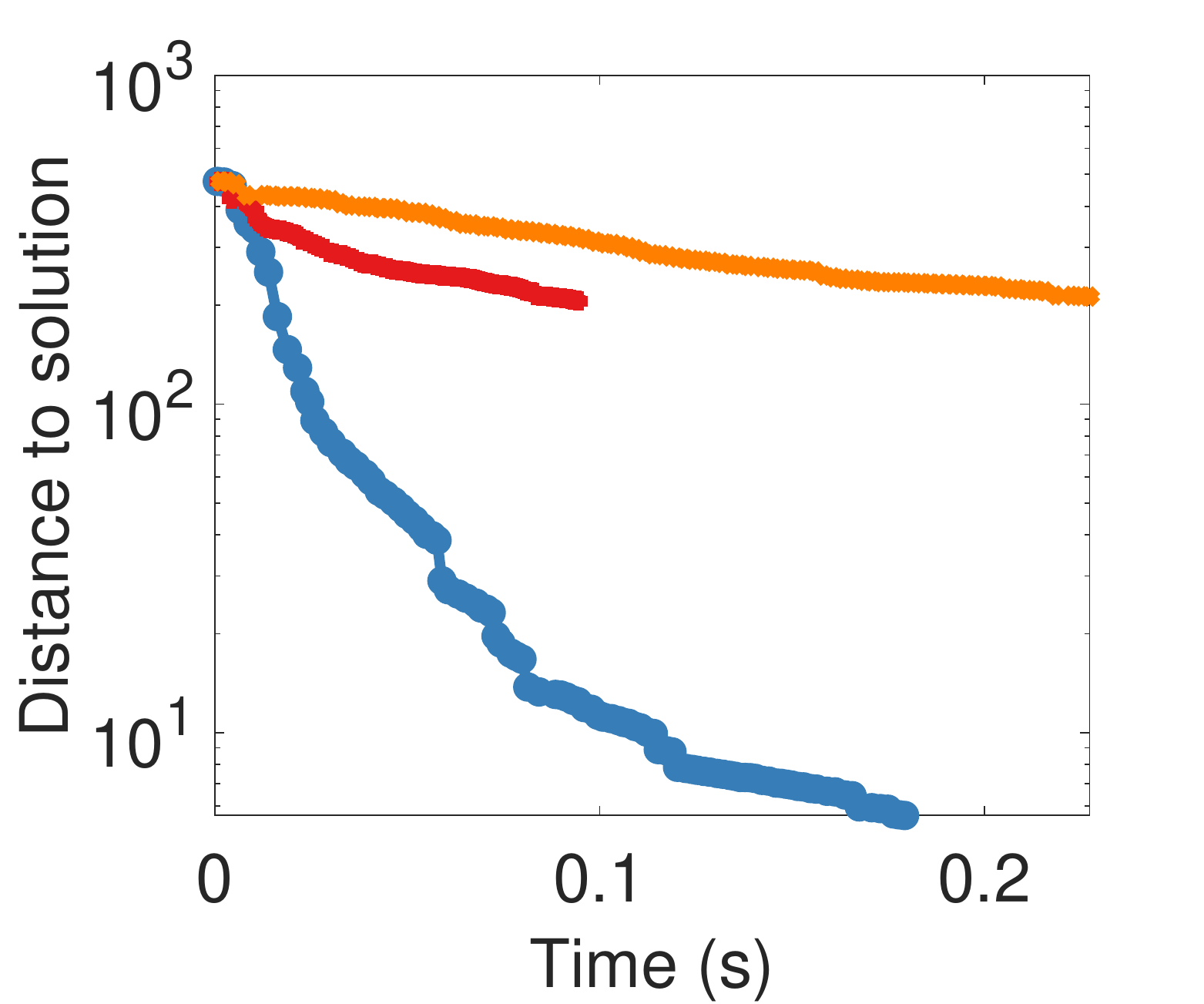}}
    \subfloat[\texttt{Ex1Low} (\texttt{Loss})]{\includegraphics[width = 0.2\textwidth, height = 0.16\textwidth]{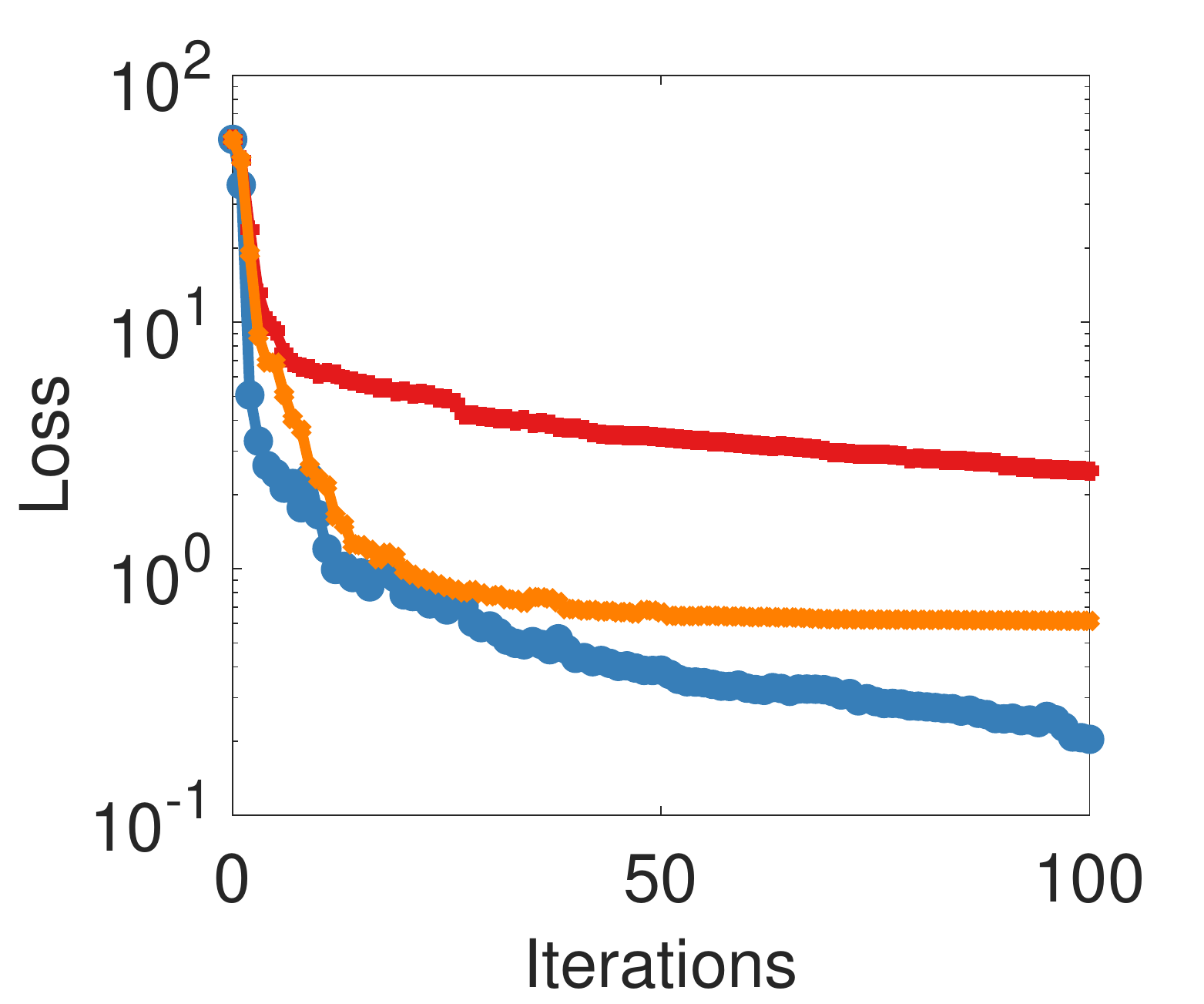}}
    \subfloat[\texttt{Ex1Low} (\texttt{Disttosol})]{\includegraphics[width = 0.2\textwidth, height = 0.16\textwidth]{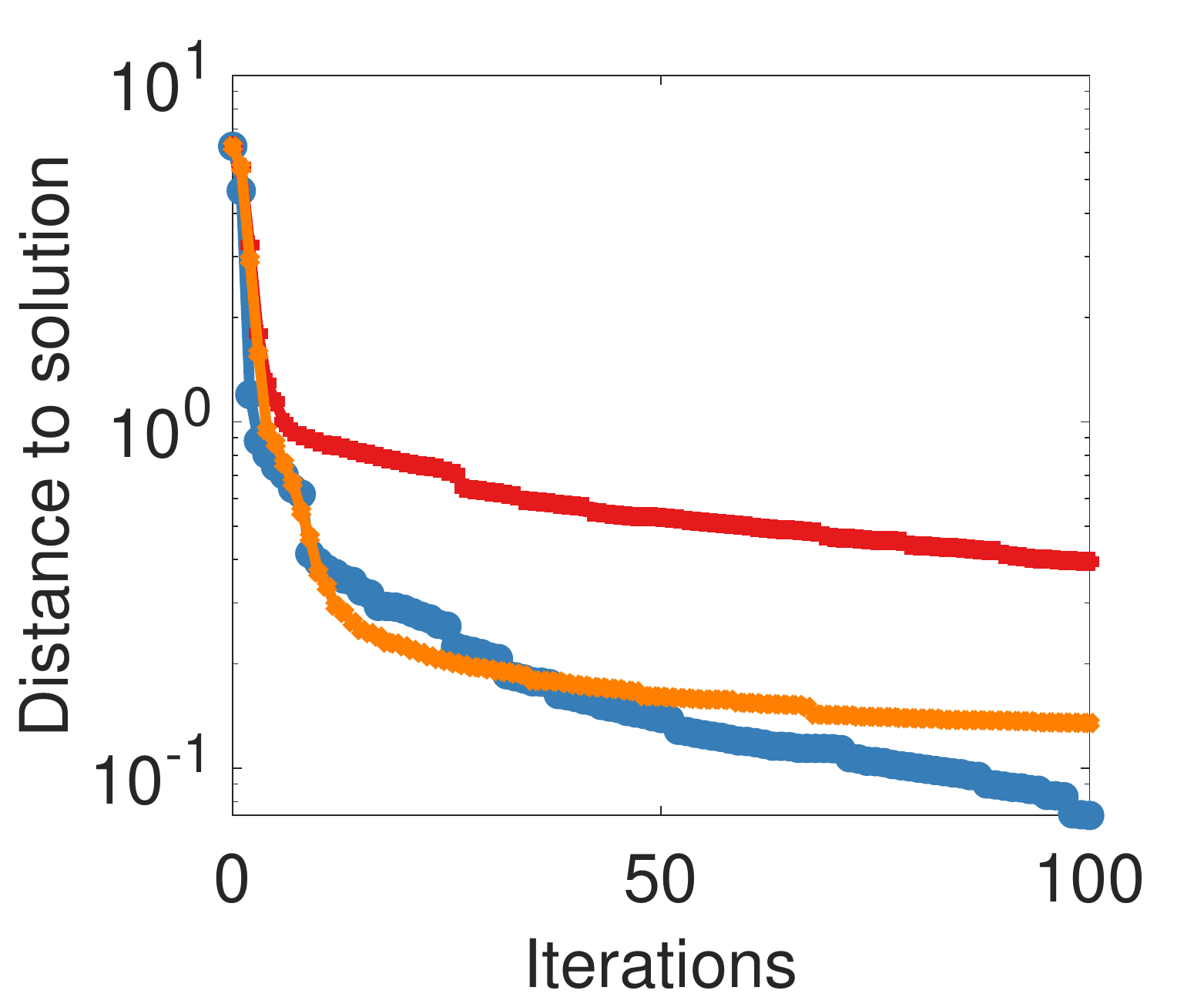}}
    \\
    \subfloat[\texttt{Ex1Low} (\texttt{Time})]{\includegraphics[width = 0.2\textwidth, height = 0.16\textwidth]{Lya/LYA_Ex1Full_SD_disttime.pdf}}
    \subfloat[\texttt{Ex2Full} (\texttt{Loss})]{\includegraphics[width = 0.2\textwidth, height = 0.16\textwidth]{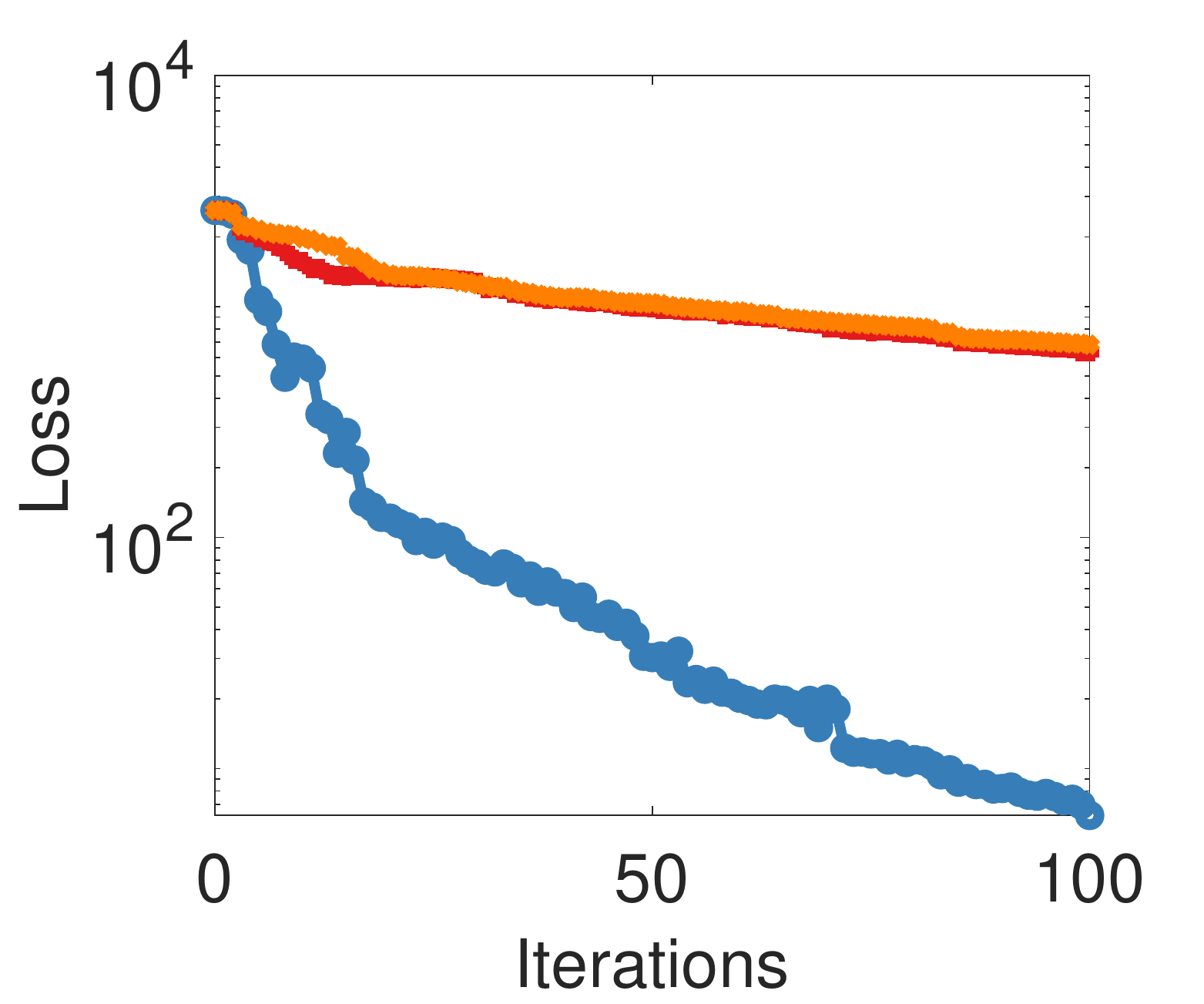}}
    \subfloat[\texttt{Ex2Full} (\texttt{Disttosol})]{\includegraphics[width = 0.2\textwidth, height = 0.16\textwidth]{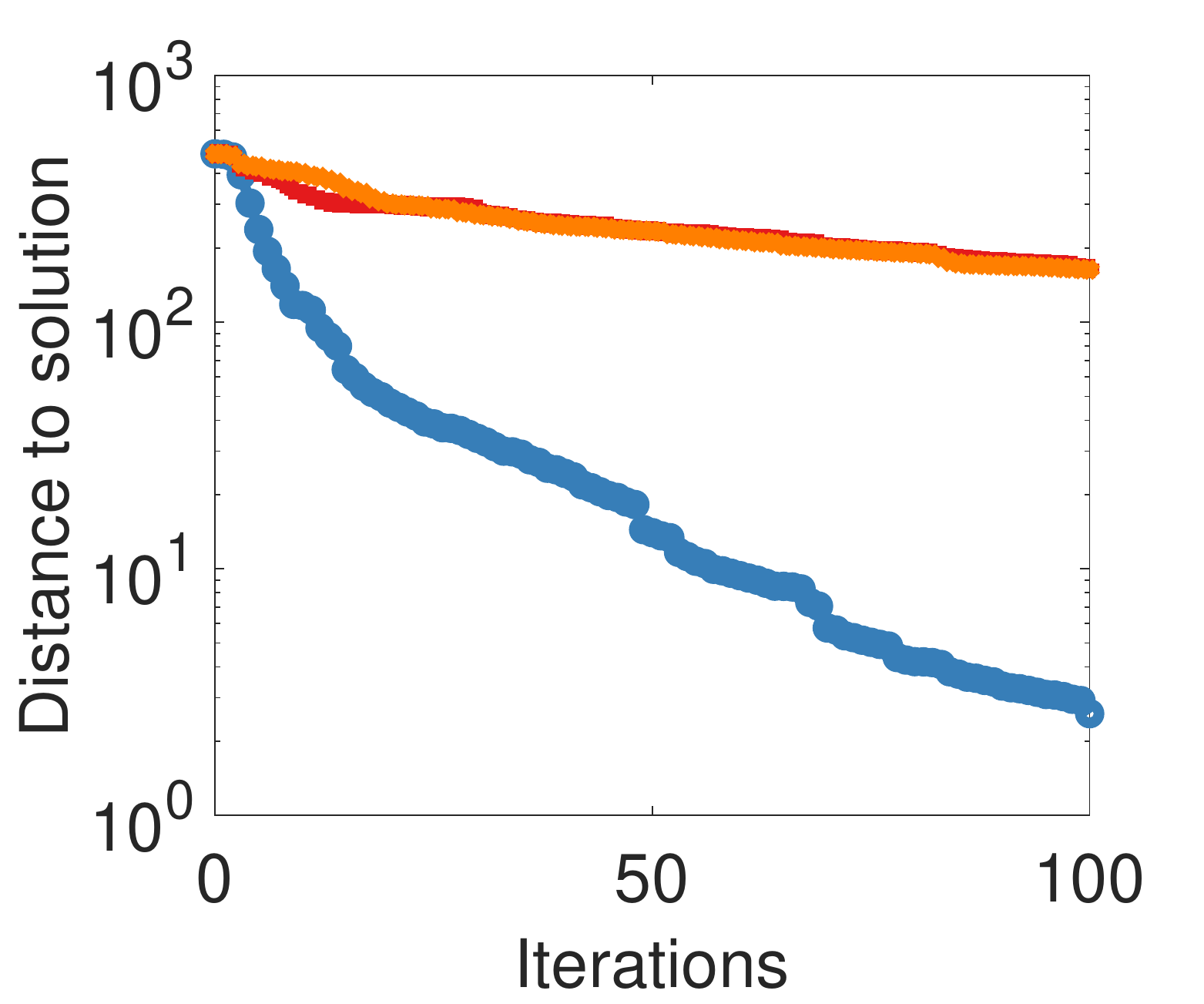}}
    \subfloat[\texttt{Ex2Full} (\texttt{Time})]{\includegraphics[width = 0.2\textwidth, height = 0.16\textwidth]{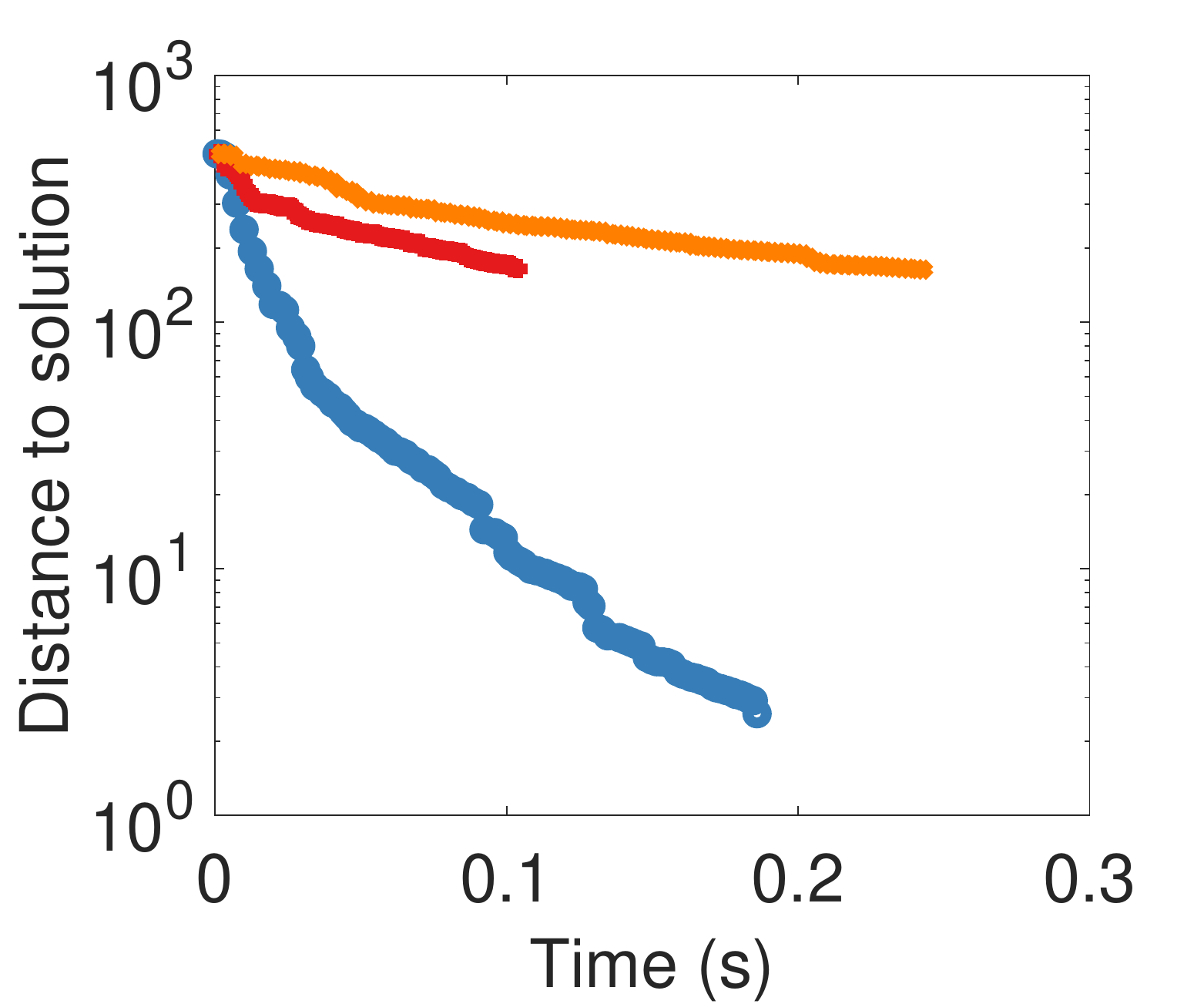}}
    \subfloat[\texttt{Ex2Low} (\texttt{Loss})]{\includegraphics[width = 0.2\textwidth, height = 0.16\textwidth]{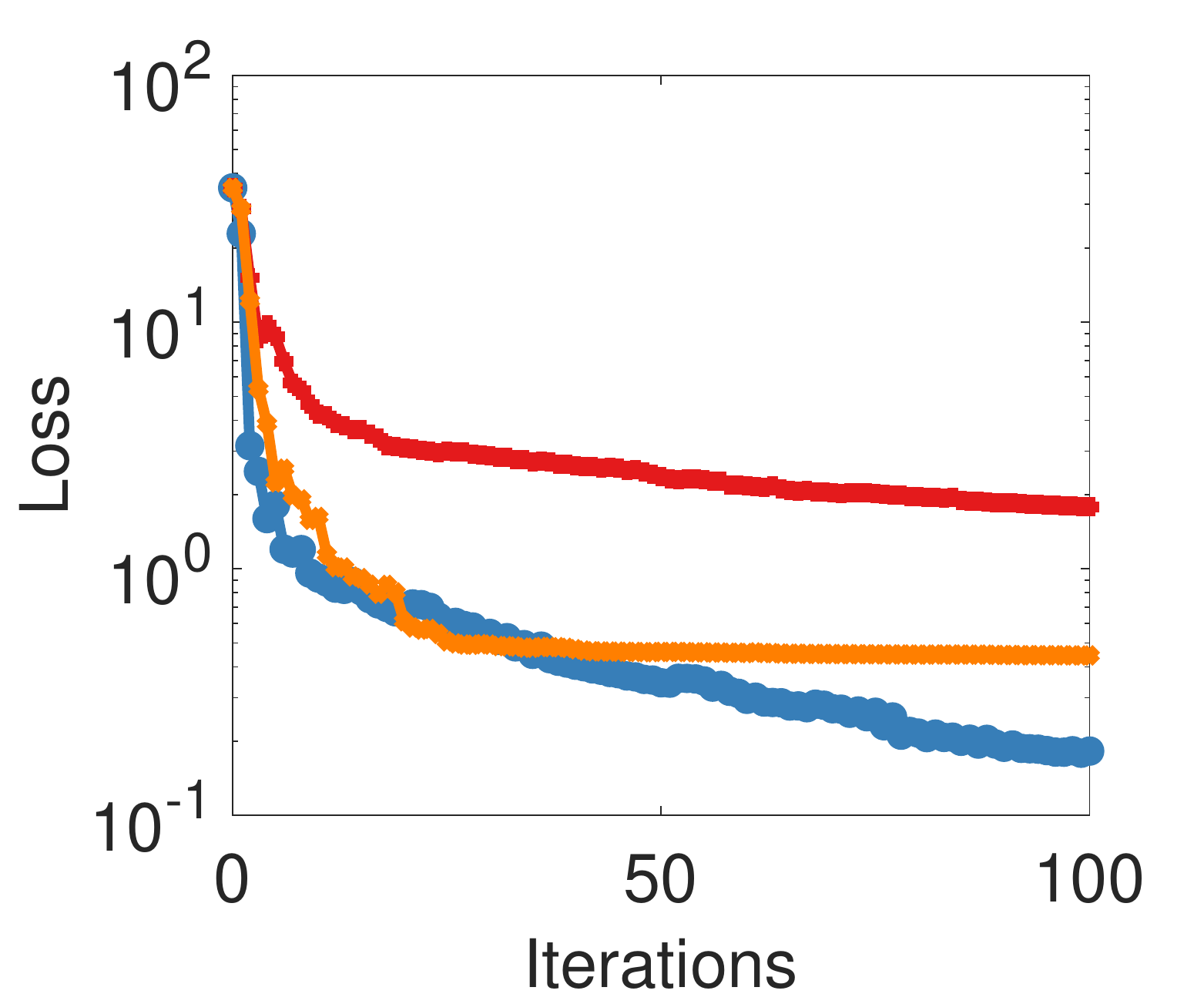}}
    \\
    \subfloat[\texttt{Ex2Low} (\texttt{Disttosol})]{\includegraphics[width = 0.2\textwidth, height = 0.16\textwidth]{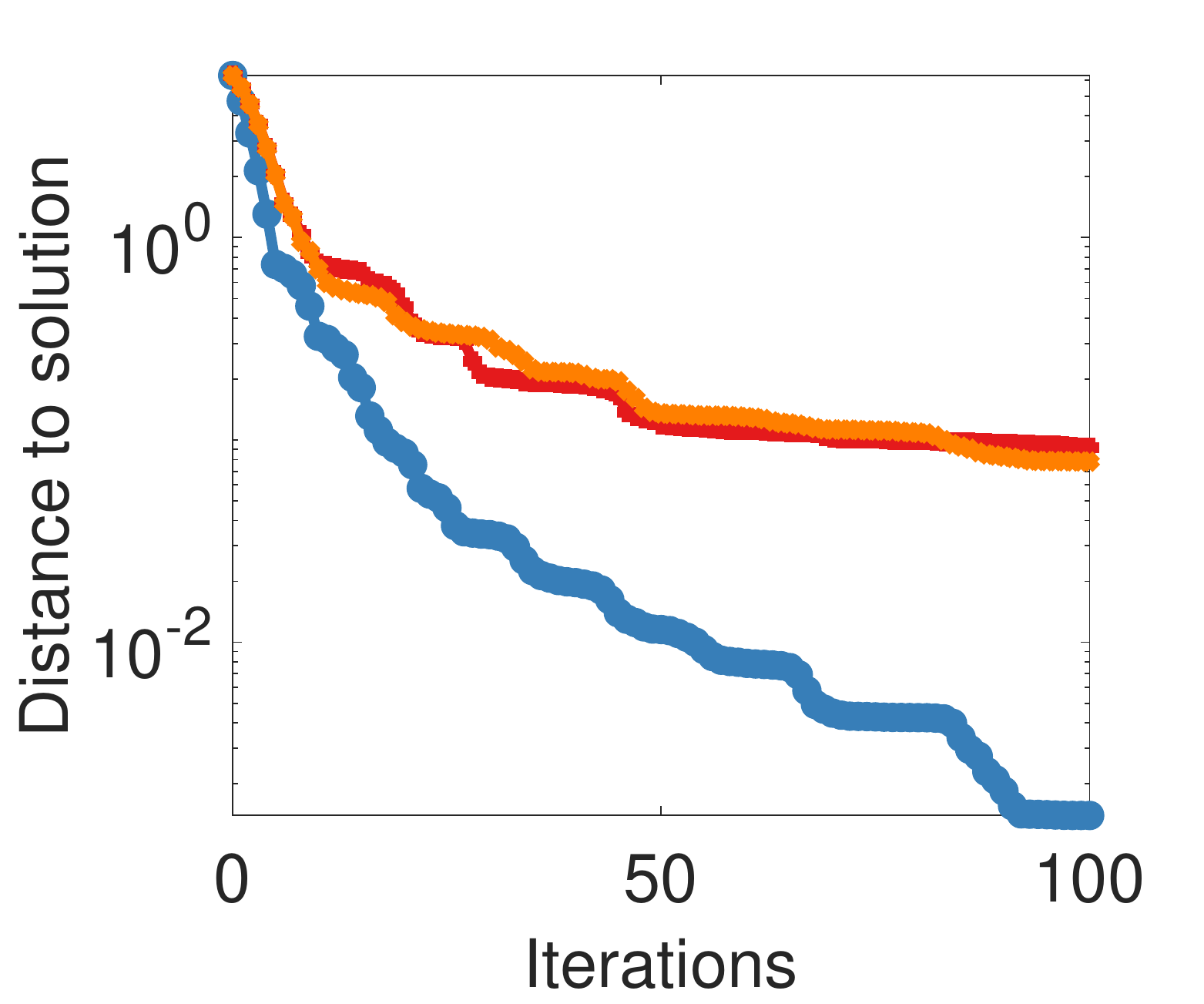}}
    \subfloat[\texttt{Ex2Low} (\texttt{Time})]{\includegraphics[width = 0.2\textwidth, height = 0.16\textwidth]{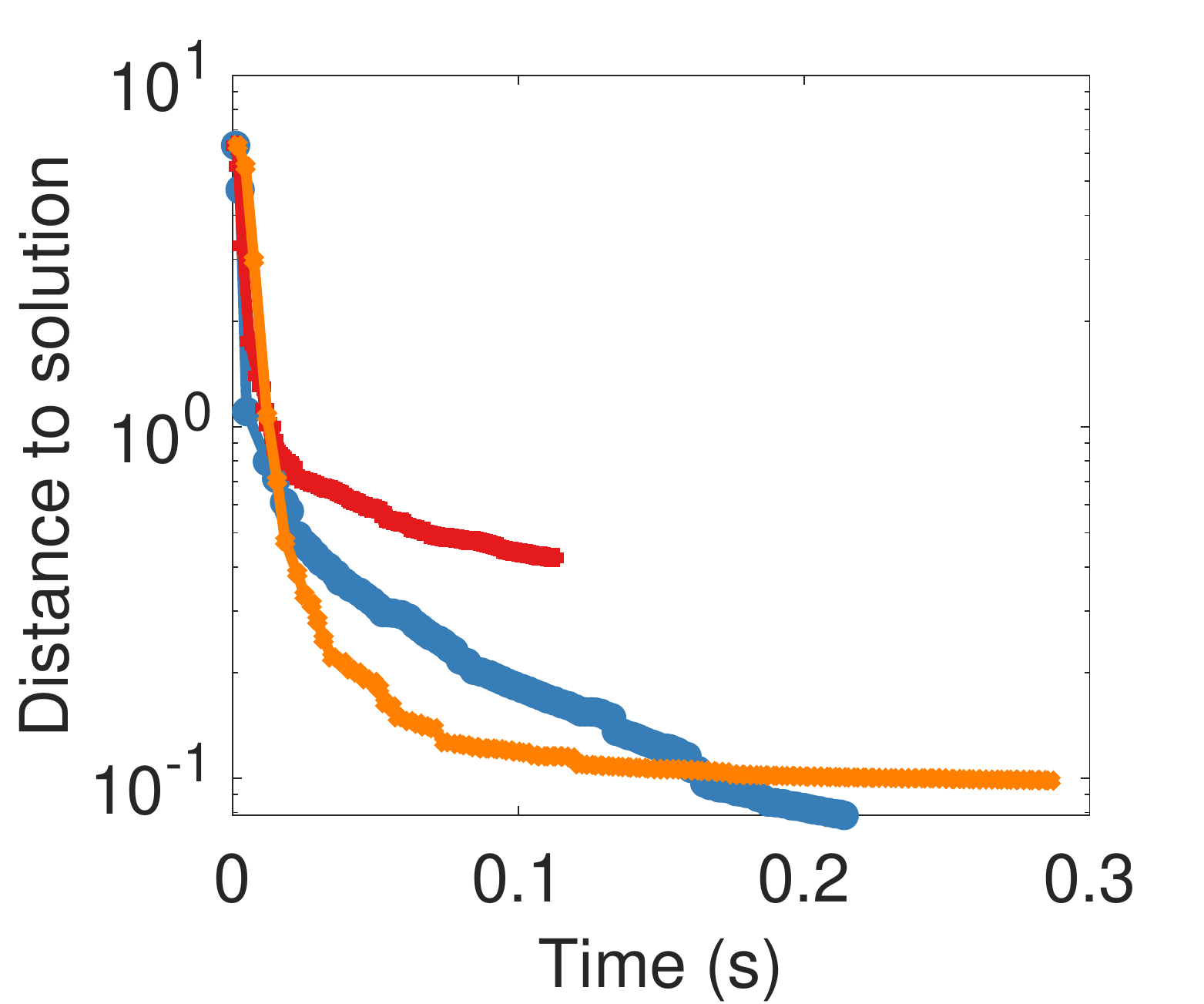}}
    \caption{Riemannian steepest descent on Lyapunov equation problem (loss, distance to solution, runtime).} 
    \label{LYA_RSD_figure}
\end{figure*}

\begin{figure*}[!th]
\captionsetup{justification=centering}
    \centering
    \subfloat[\texttt{Ex1Full} (\texttt{Loss})]{\includegraphics[width = 0.2\textwidth, height = 0.16\textwidth]{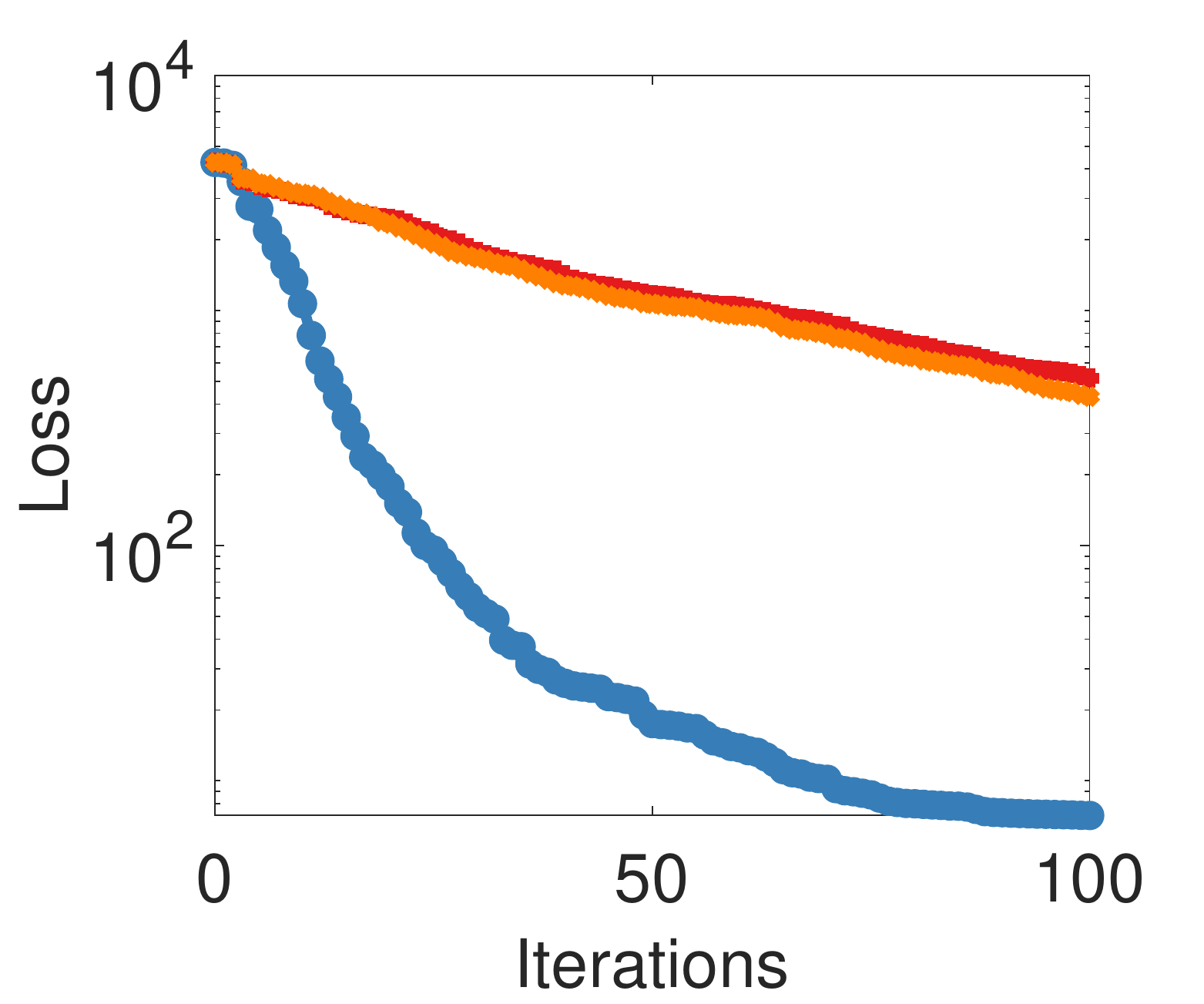}} 
    \subfloat[\texttt{Ex1Full} (\texttt{Disttosol})]{\includegraphics[width = 0.2\textwidth, height = 0.16\textwidth]{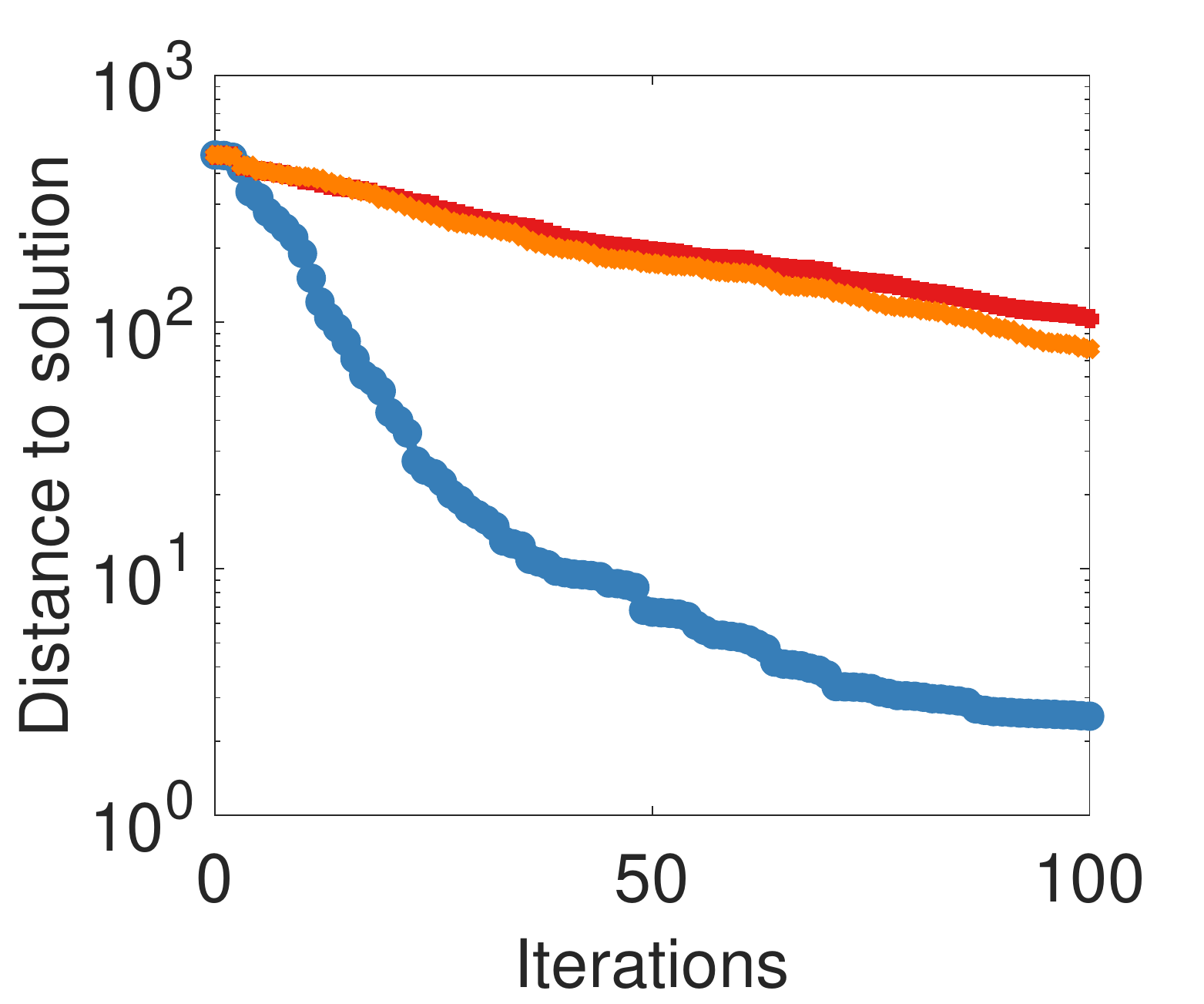}}
    \subfloat[\texttt{Ex1Full} (\texttt{Time})]{\includegraphics[width = 0.2\textwidth, height = 0.16\textwidth]{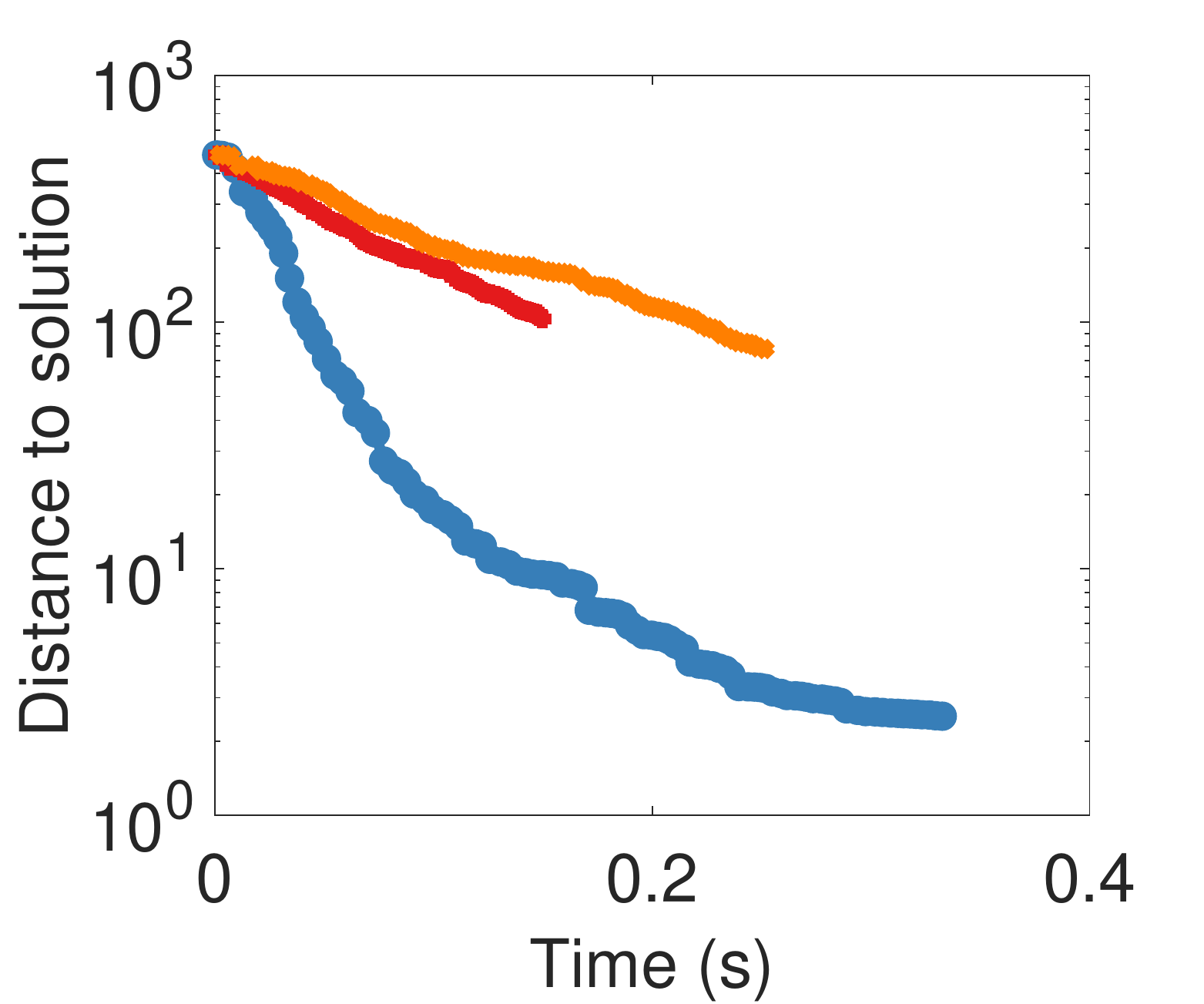}}
    \subfloat[\texttt{Ex1Low} (\texttt{Loss})]{\includegraphics[width = 0.2\textwidth, height = 0.16\textwidth]{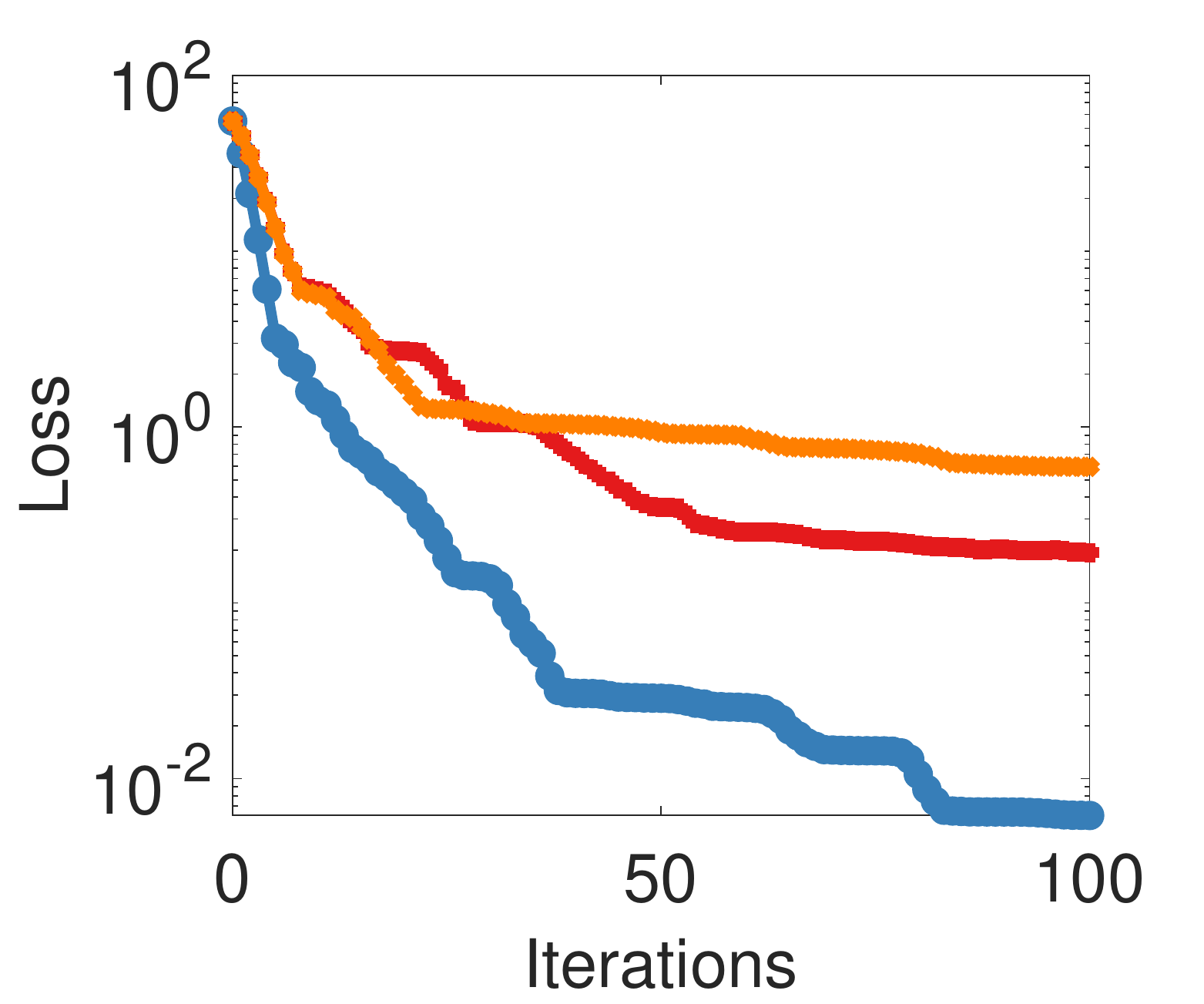}}
    \subfloat[\texttt{Ex1Low} (\texttt{Disttosol})]{\includegraphics[width = 0.2\textwidth, height = 0.16\textwidth]{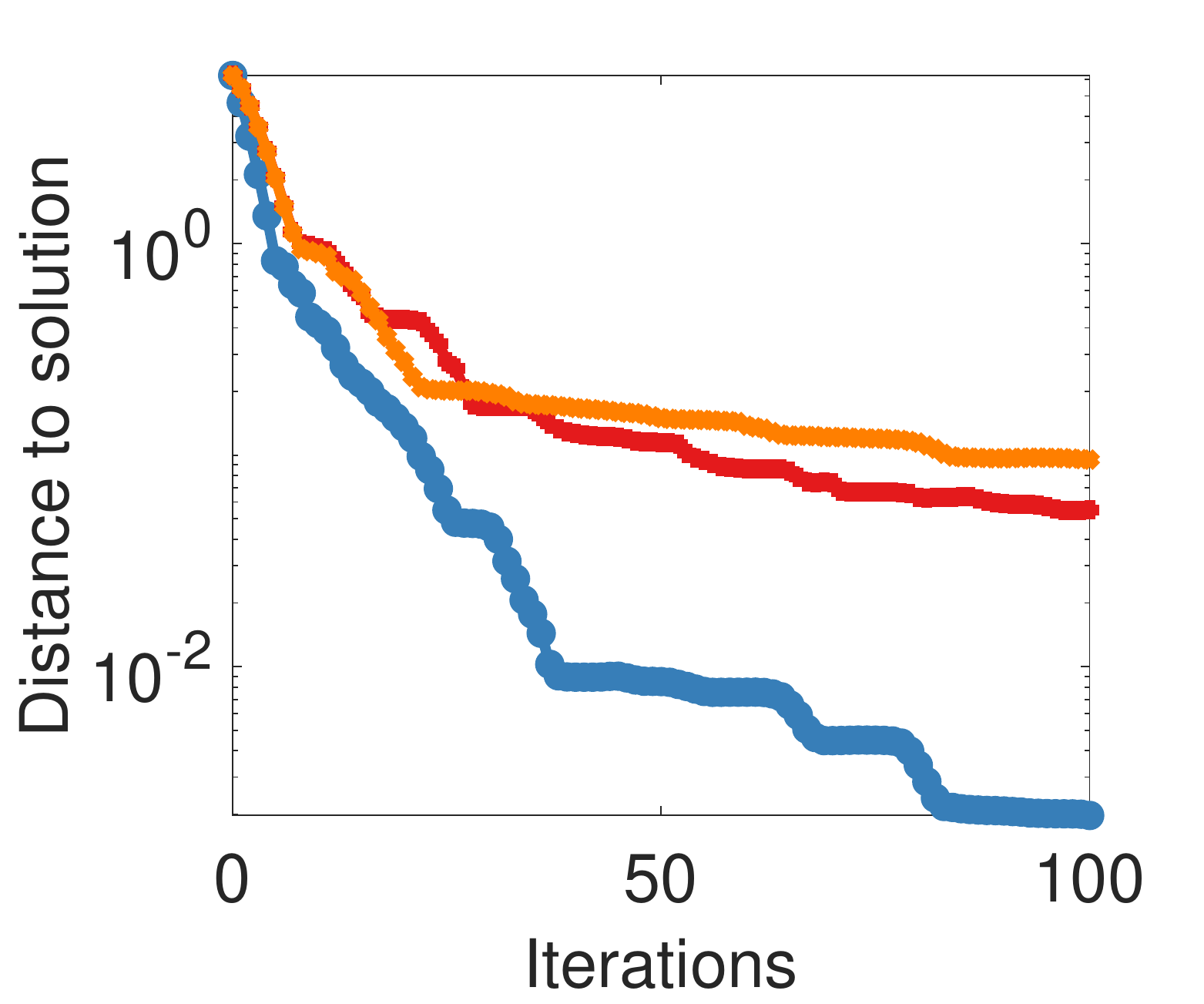}}
    \\
    \subfloat[\texttt{Ex1Low} (\texttt{Time})]{\includegraphics[width = 0.2\textwidth, height = 0.16\textwidth]{Lya/LYA_Ex1Full_CG_disttime.pdf}}
    \subfloat[\texttt{Ex2Full} (\texttt{Loss})]{\includegraphics[width = 0.2\textwidth, height = 0.16\textwidth]{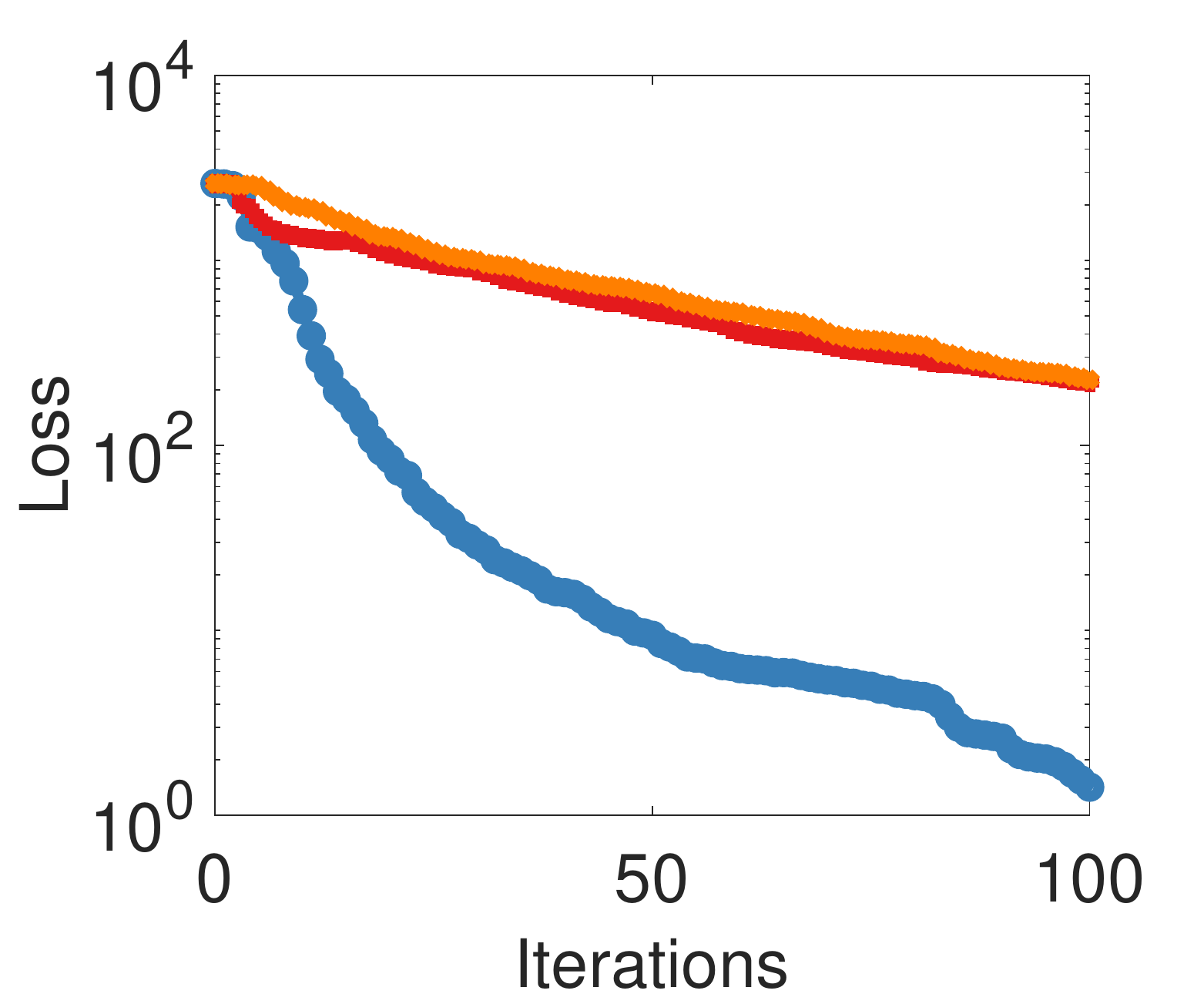}}
    \subfloat[\texttt{Ex2Full} (\texttt{Disttosol})]{\includegraphics[width = 0.2\textwidth, height = 0.16\textwidth]{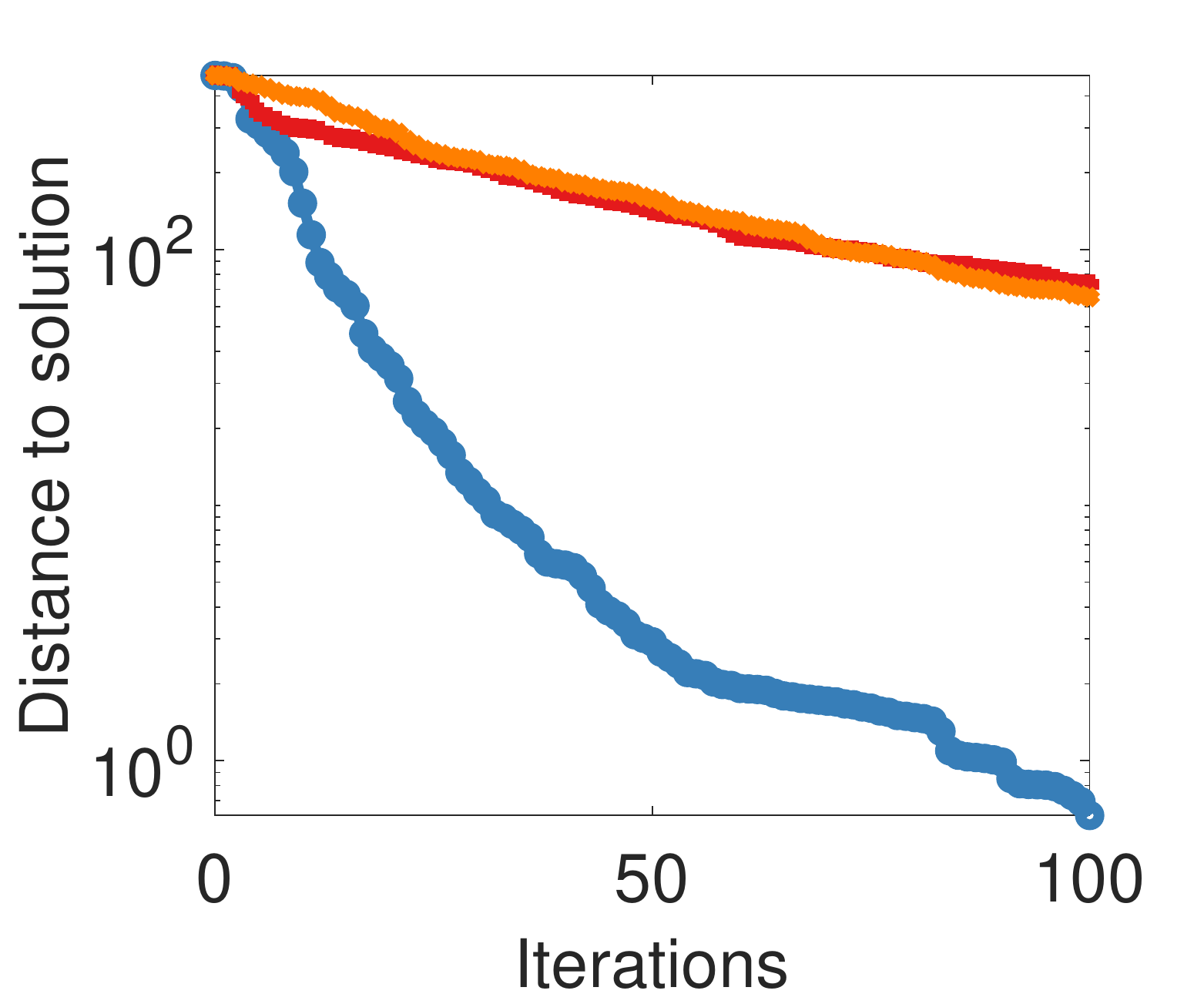}}
    \subfloat[\texttt{Ex2Full} (\texttt{Time})]{\includegraphics[width = 0.2\textwidth, height = 0.16\textwidth]{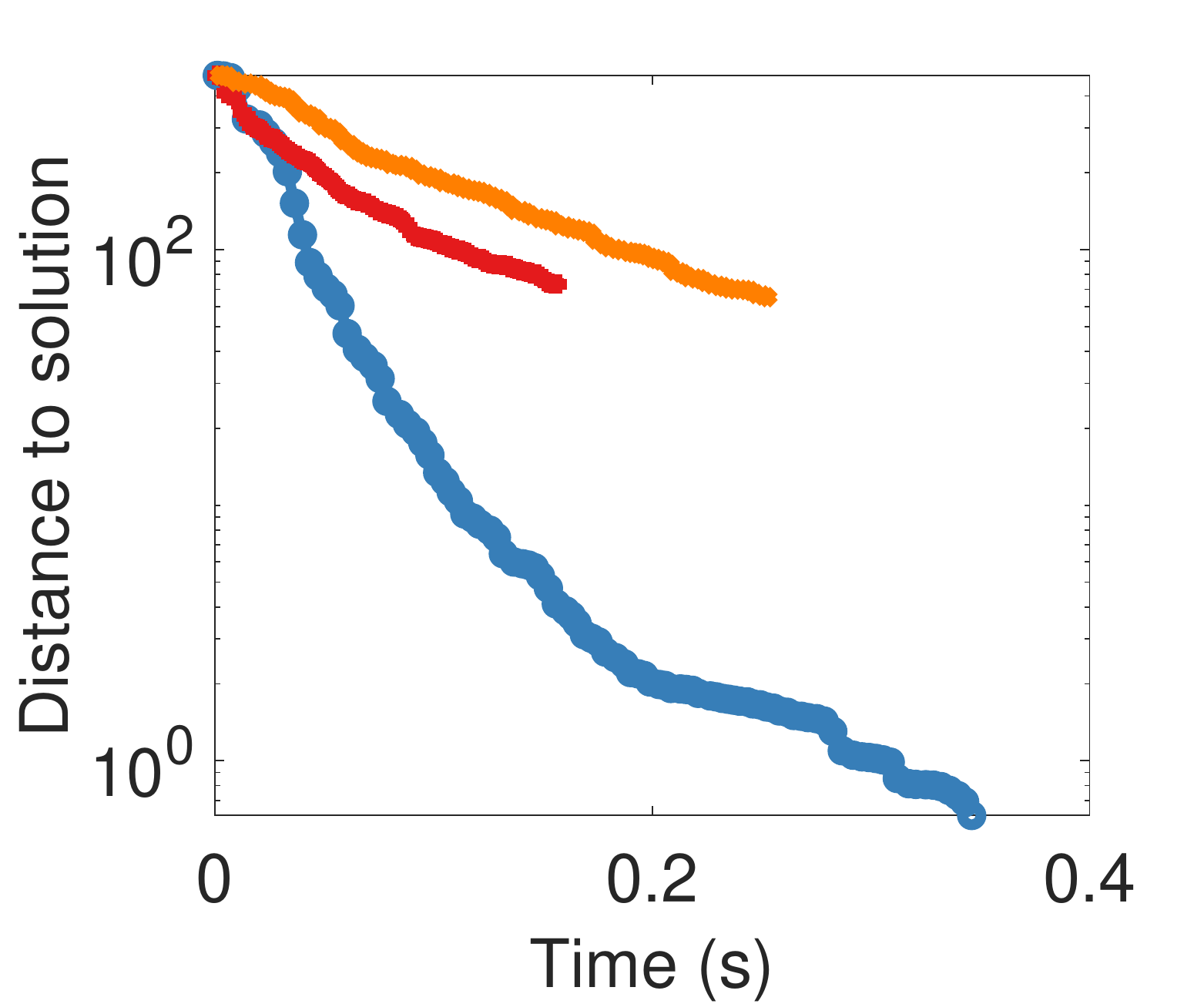}}
    \subfloat[\texttt{Ex2Low} (\texttt{Loss})]{\includegraphics[width = 0.2\textwidth, height = 0.16\textwidth]{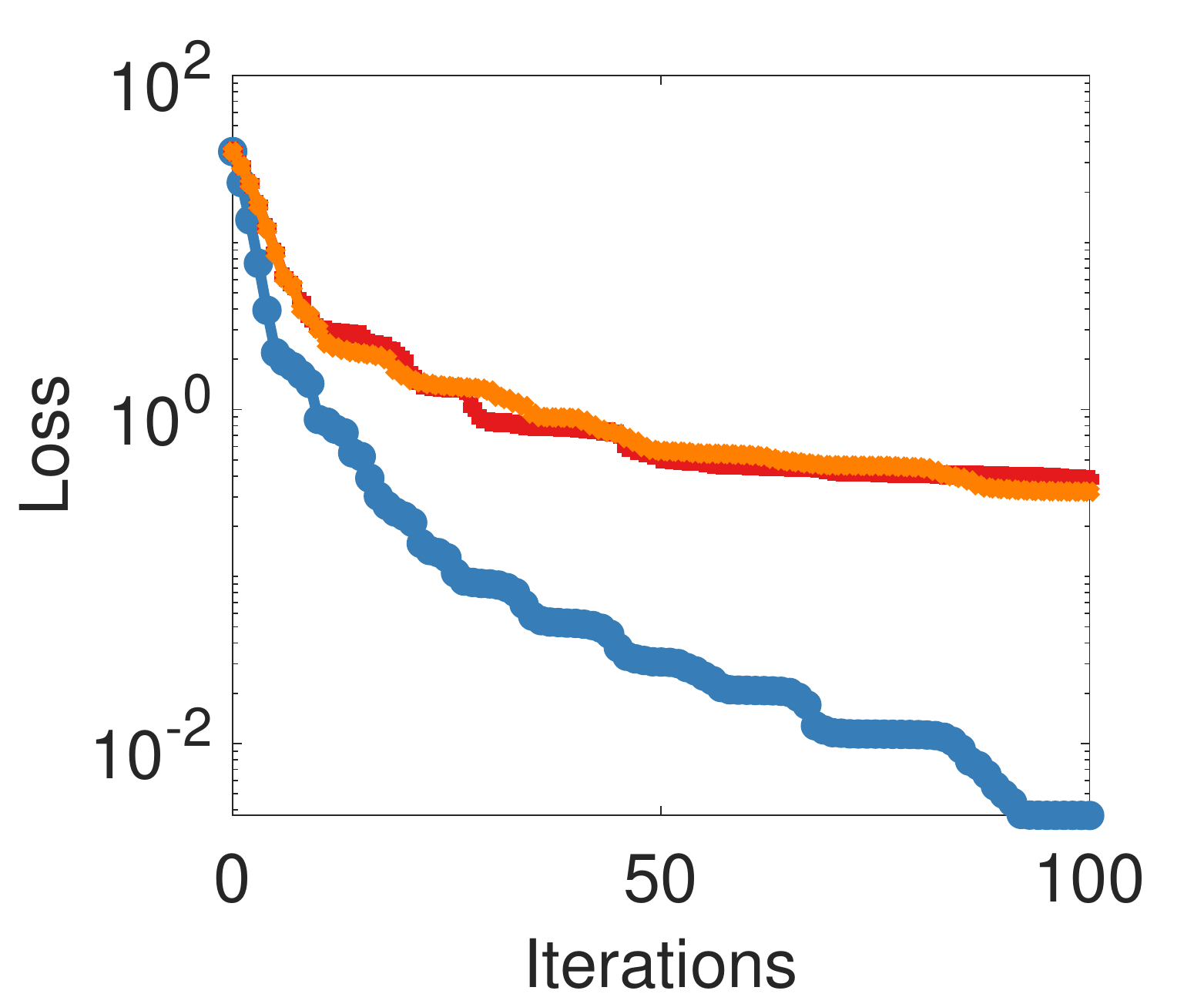}}
    \\
    \subfloat[\texttt{Ex2Low} (\texttt{Disttosol})]{\includegraphics[width = 0.2\textwidth, height = 0.16\textwidth]{Lya/LYA_Ex2Low_CG_disttosol.pdf}}
    \subfloat[\texttt{Ex2Low} (\texttt{Time})]{\includegraphics[width = 0.2\textwidth, height = 0.16\textwidth]{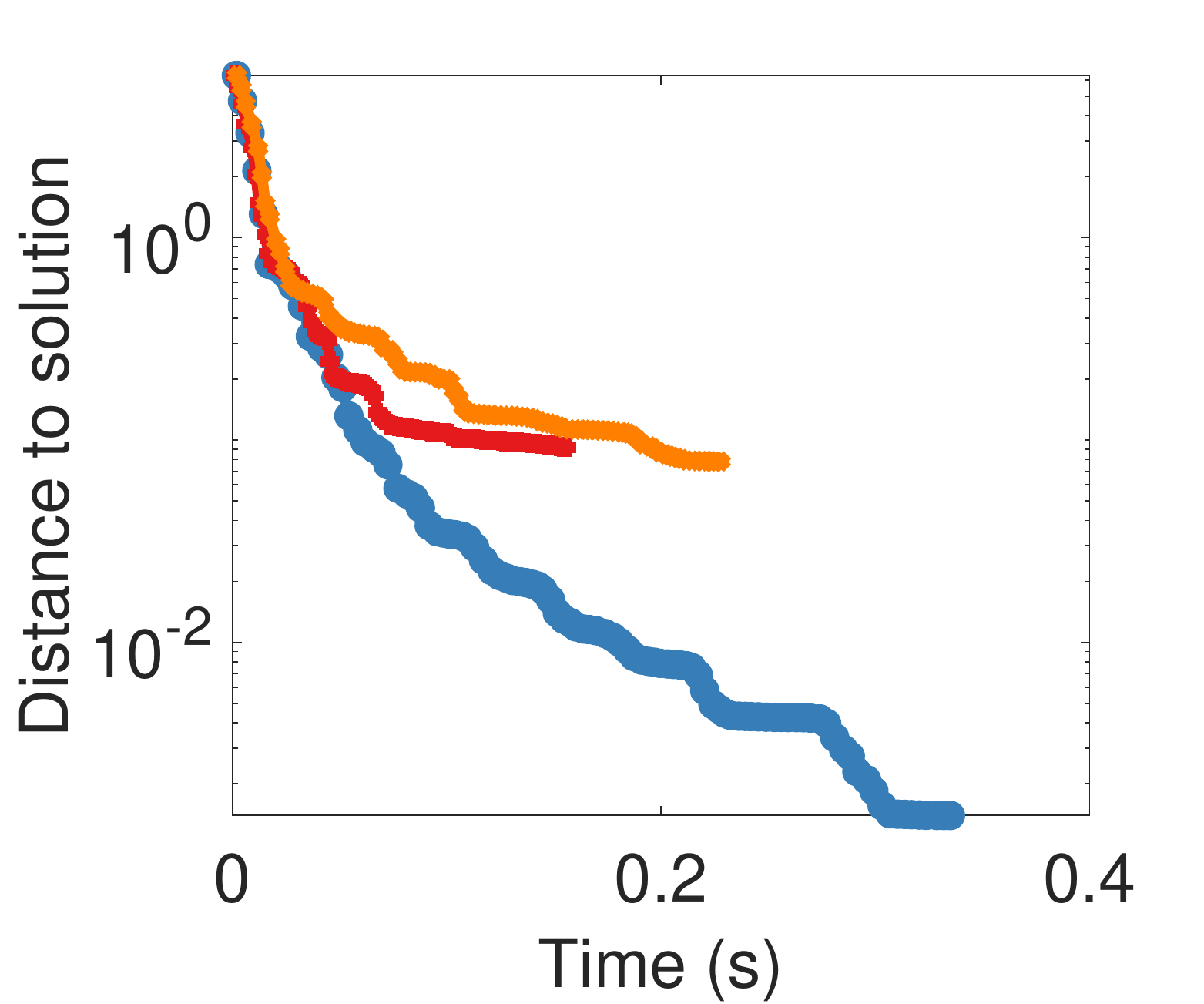}}
    \caption{Riemannian conjugate gradient on Lyapunov equation problem (loss, distance to solution, runtime).} 
    \label{LYA_RCG_figure}
\end{figure*}

\begin{figure*}[!th]
\captionsetup{justification=centering}
    \centering
    \subfloat[\texttt{Ex1Full} (\texttt{Loss})]{\includegraphics[width = 0.2\textwidth, height = 0.16\textwidth]{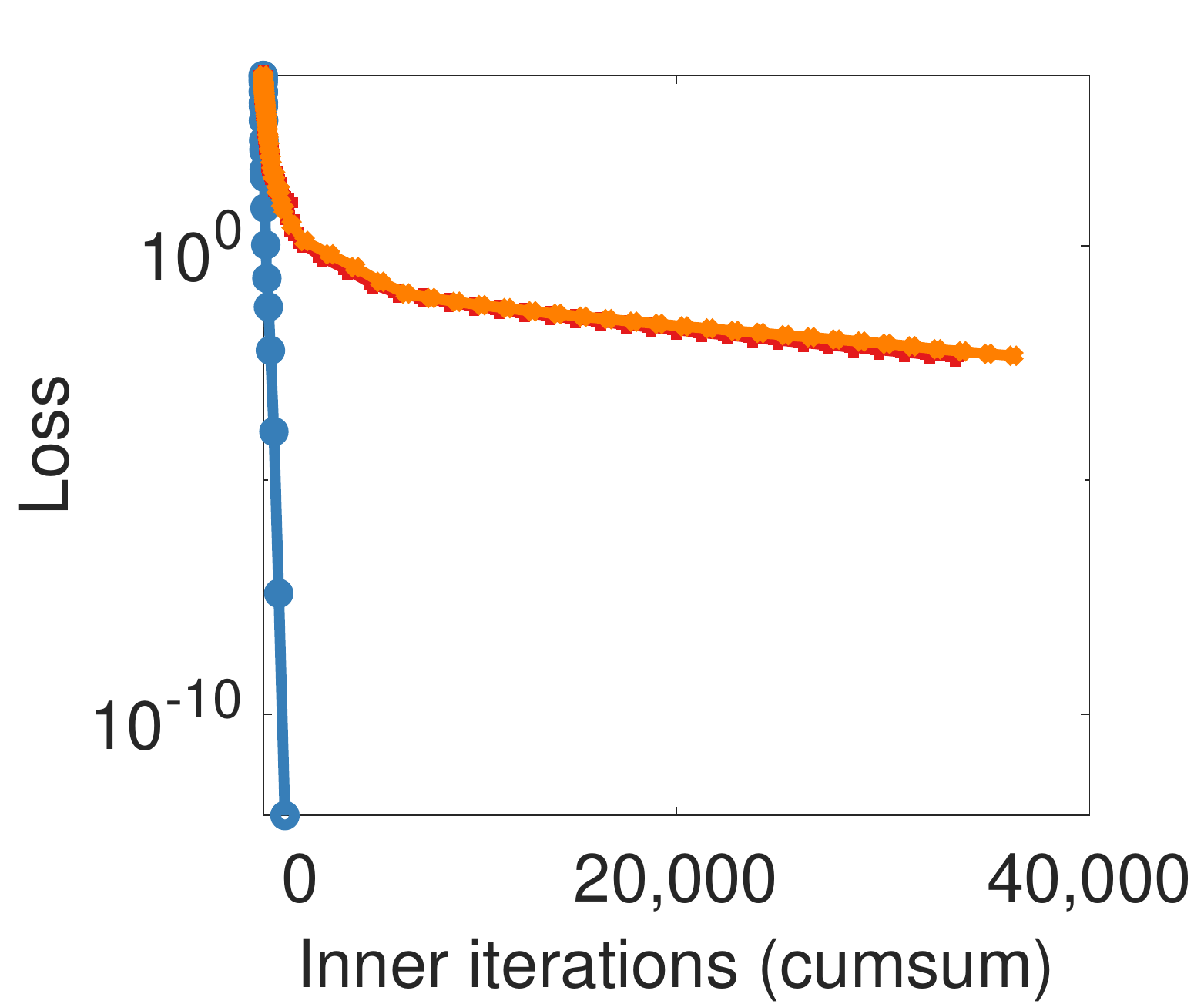}} 
    \subfloat[\texttt{Ex1Full} (\texttt{Disttosol})]{\includegraphics[width = 0.2\textwidth, height = 0.16\textwidth]{Lya/LYA_Ex1Full_TR_disttosol.pdf}}
    \subfloat[\texttt{Ex1Full} (\texttt{Time})]{\includegraphics[width = 0.2\textwidth, height = 0.16\textwidth]{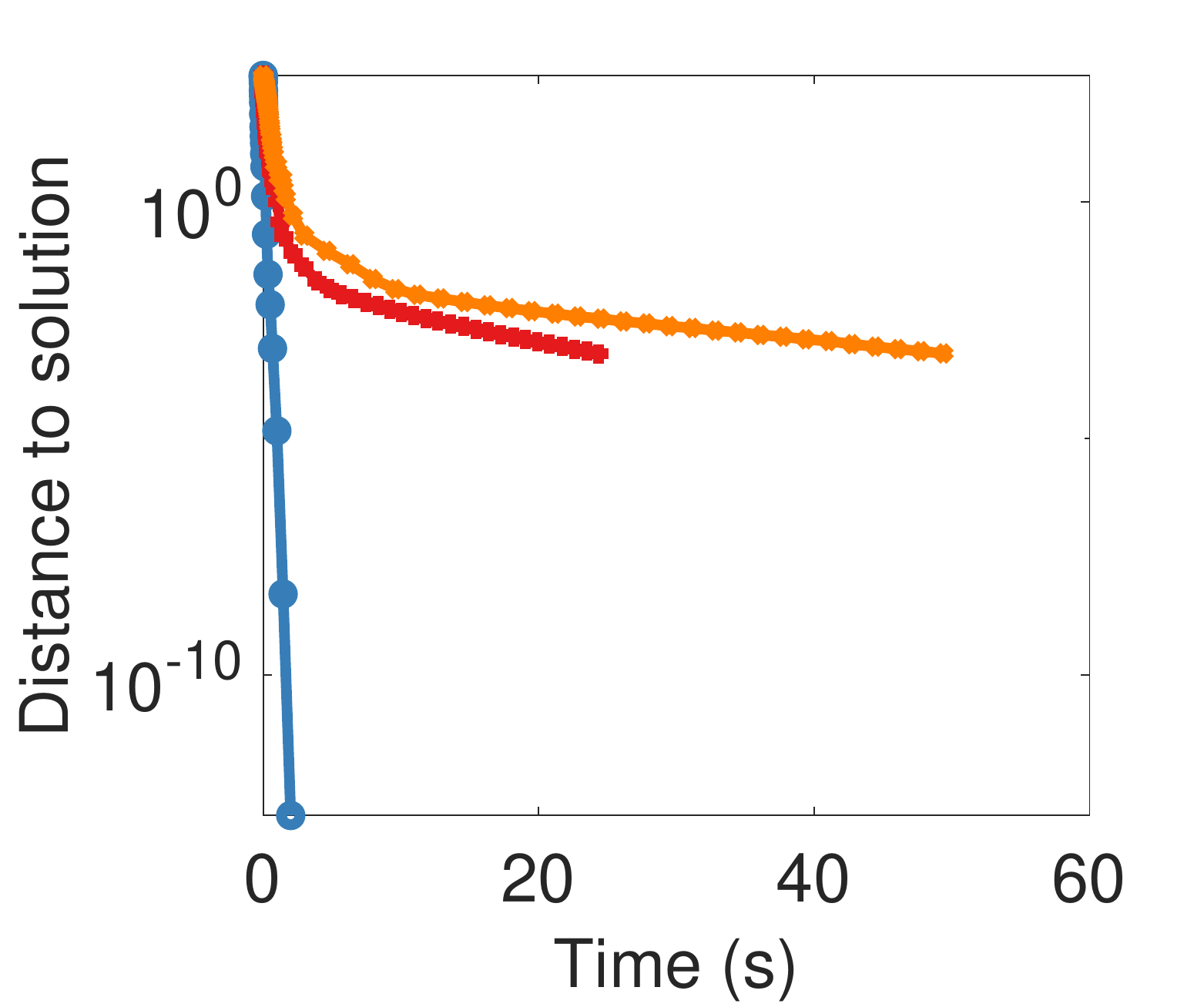}}
    \subfloat[\texttt{Ex1Low} (\texttt{Loss})]{\includegraphics[width = 0.2\textwidth, height = 0.16\textwidth]{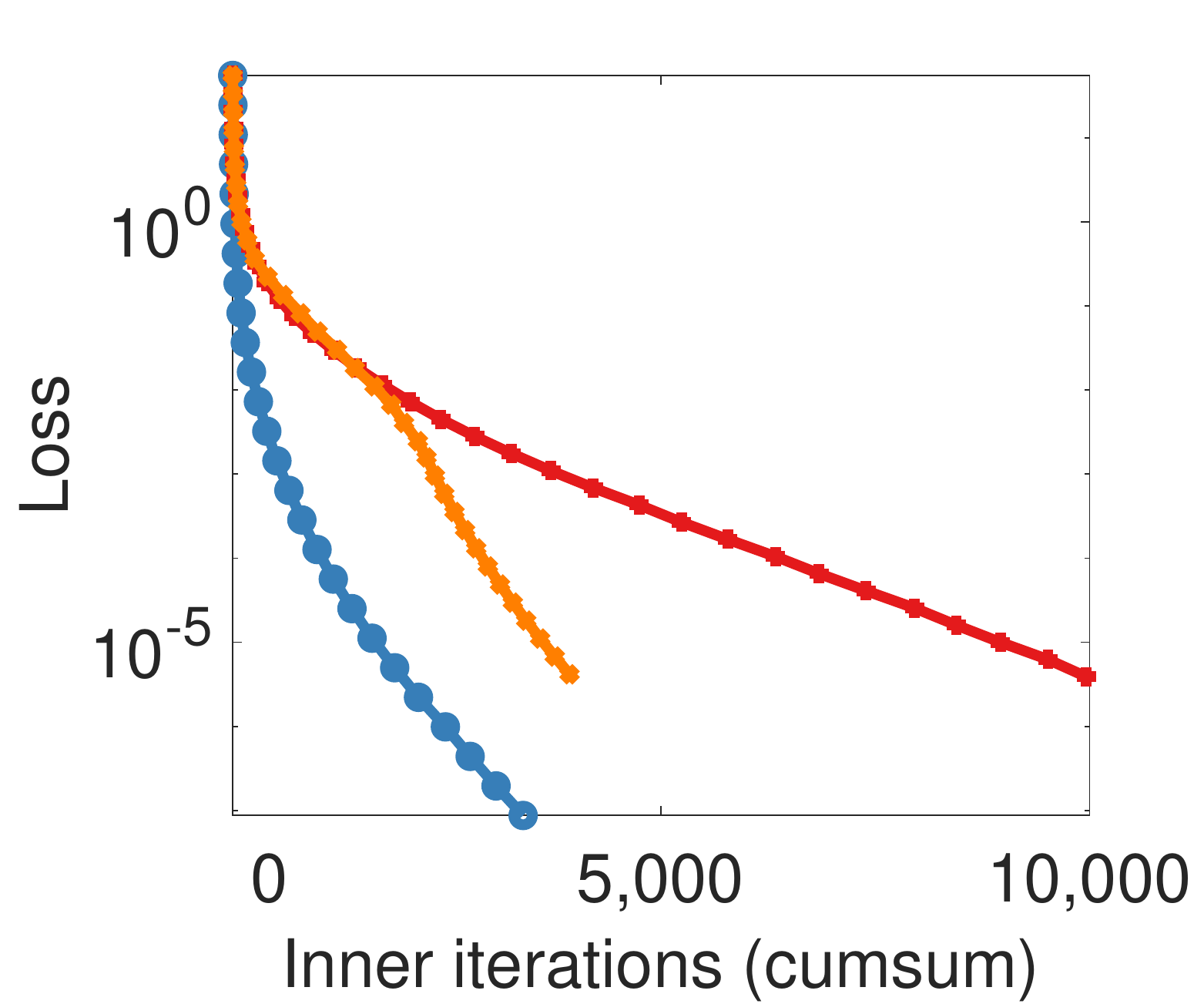}}
    \subfloat[\texttt{Ex1Low} (\texttt{Disttosol})]{\includegraphics[width = 0.2\textwidth, height = 0.16\textwidth]{Lya/LYA_Ex1Low_TR_disttosol.pdf}}
    \\
    \subfloat[\texttt{Ex1Low} (\texttt{Time})]{\includegraphics[width = 0.2\textwidth, height = 0.16\textwidth]{Lya/LYA_Ex1Full_TR_disttime.pdf}}
    \subfloat[\texttt{Ex2Full} (\texttt{Loss})]{\includegraphics[width = 0.2\textwidth, height = 0.16\textwidth]{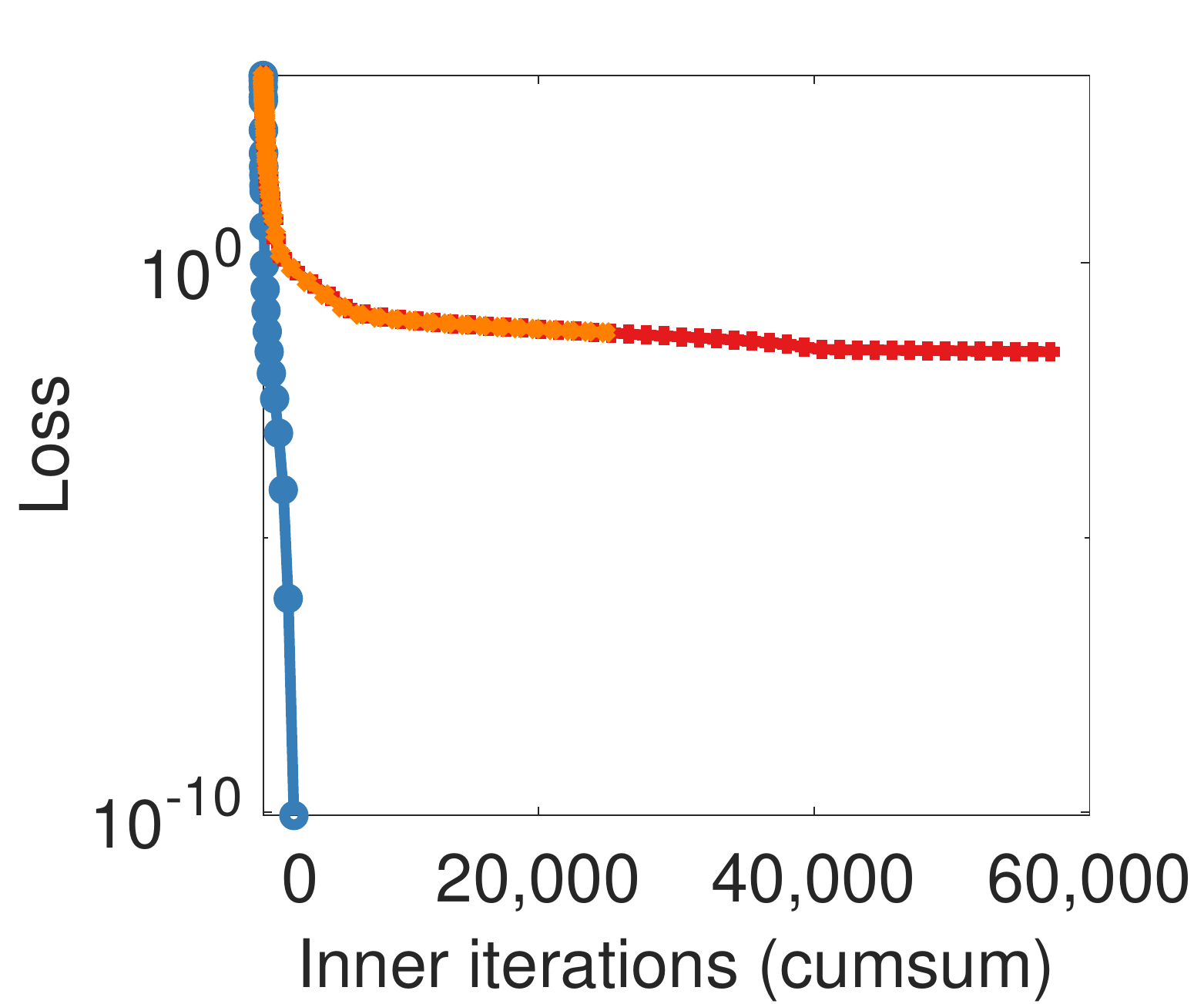}}
    \subfloat[\texttt{Ex2Full} (\texttt{Disttosol})]{\includegraphics[width = 0.2\textwidth, height = 0.16\textwidth]{Lya/LYA_Ex2Full_TR_disttosol.pdf}}
    \subfloat[\texttt{Ex2Full} (\texttt{Time})]{\includegraphics[width = 0.2\textwidth, height = 0.16\textwidth]{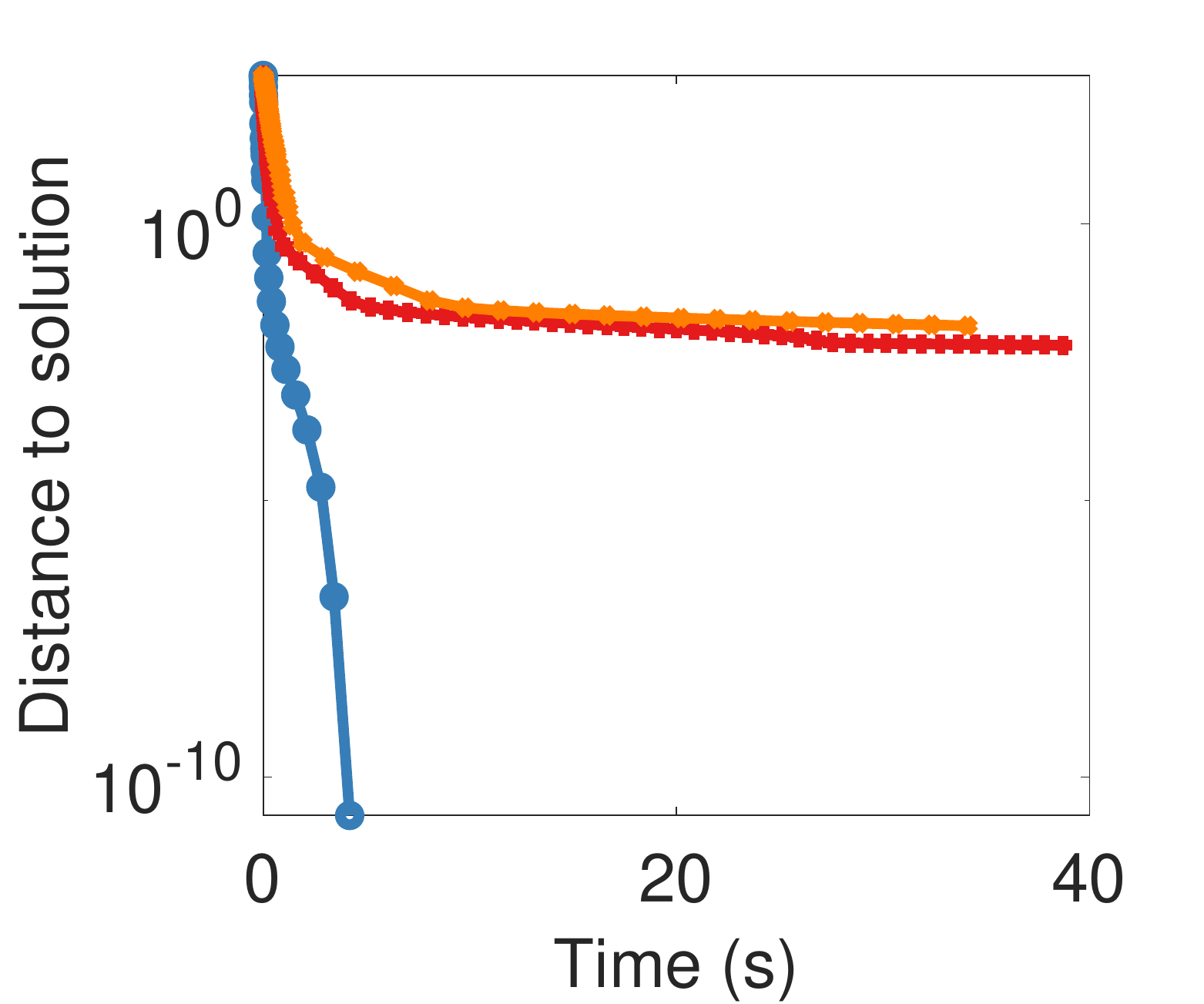}}
    \subfloat[\texttt{Ex2Low} (\texttt{Loss})]{\includegraphics[width = 0.2\textwidth, height = 0.16\textwidth]{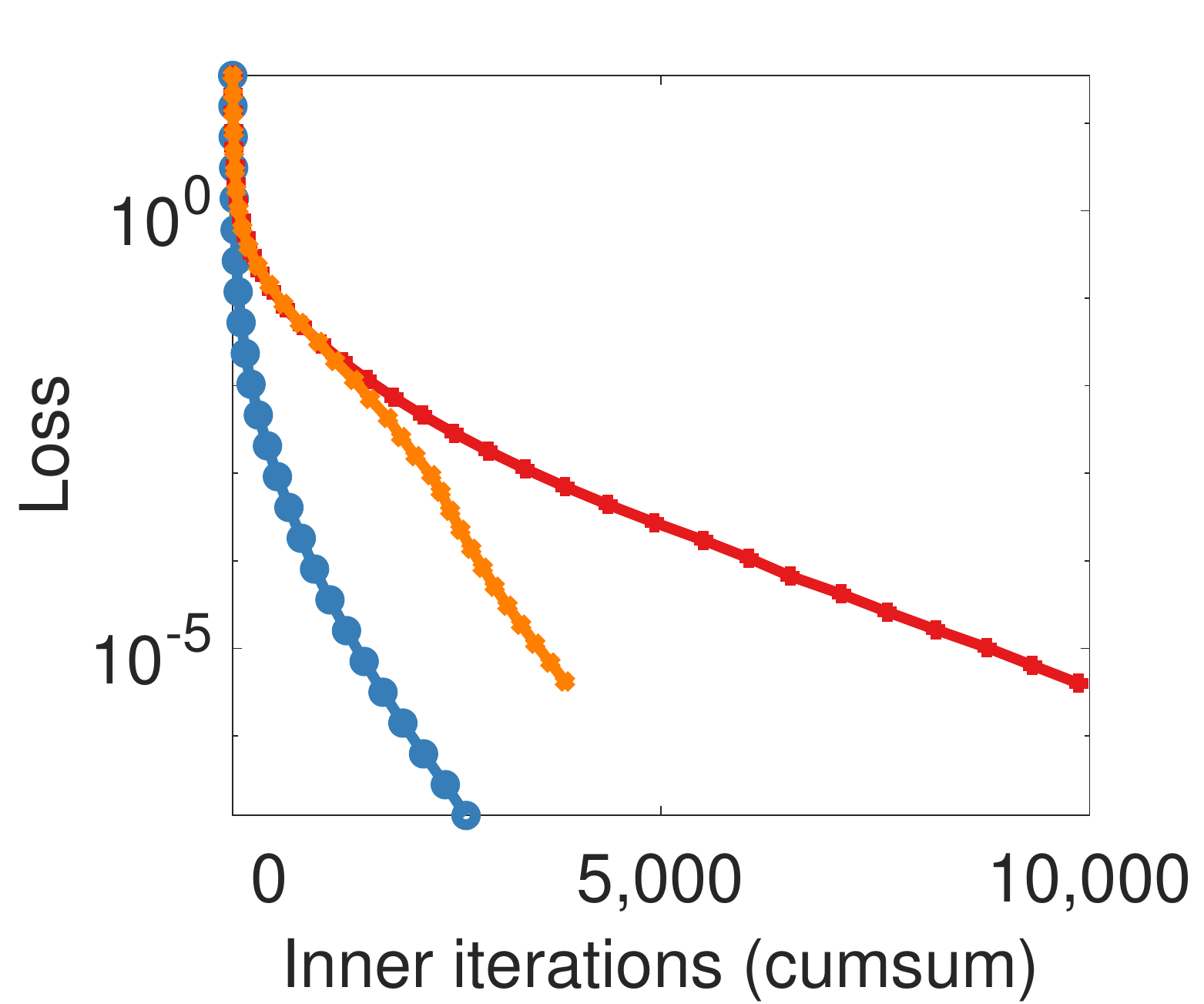}}
    \\
    \subfloat[\texttt{Ex2Low} (\texttt{Disttosol})]{\includegraphics[width = 0.2\textwidth, height = 0.16\textwidth]{Lya/LYA_Ex2Low_TR_disttosol.pdf}}
    \subfloat[\texttt{Ex2Low} (\texttt{Time})]{\includegraphics[width = 0.2\textwidth, height = 0.16\textwidth]{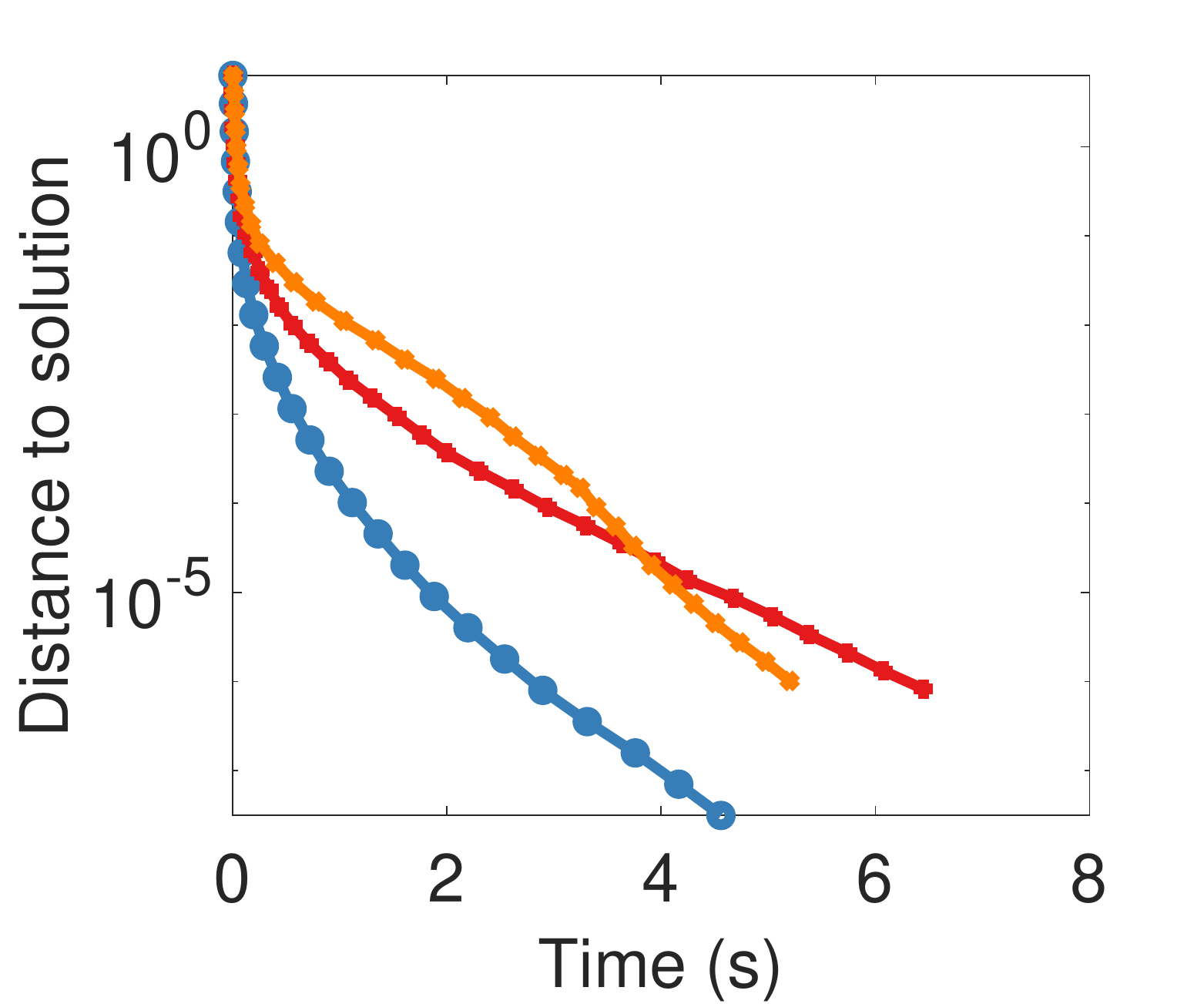}}
    \caption{Riemannian trust region on Lyapunov equation problem (loss, distance to solution, runtime).} 
    \label{LYA_RTR_figure}
\end{figure*}

\begin{figure*}[!th]
\captionsetup{justification=centering}
    \centering
    \subfloat[\texttt{Ex1Full} (\texttt{Run1})]{\includegraphics[width = 0.2\textwidth, height = 0.16\textwidth]{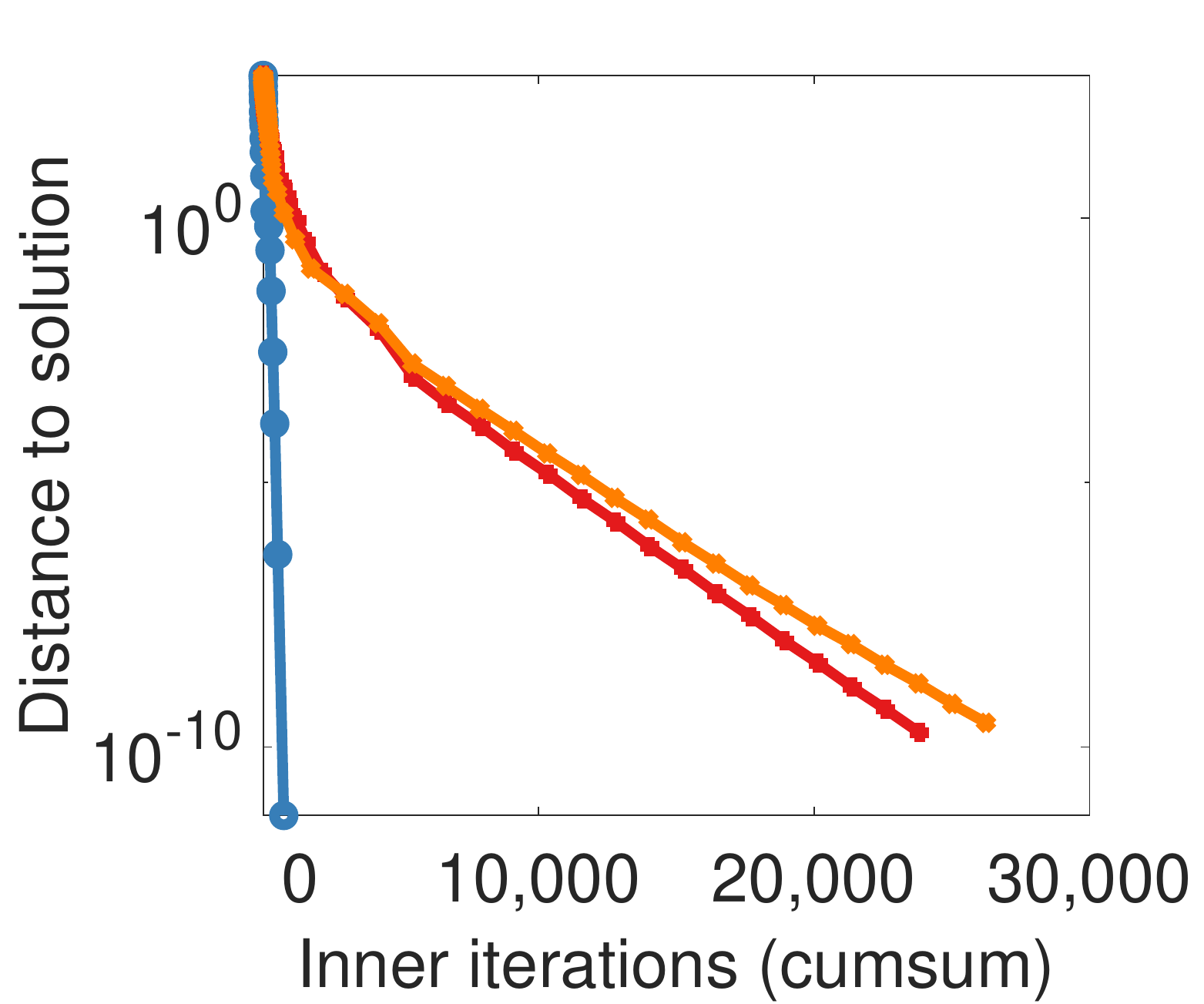}} 
    \subfloat[(\texttt{Run2})]{\includegraphics[width = 0.2\textwidth, height = 0.16\textwidth]{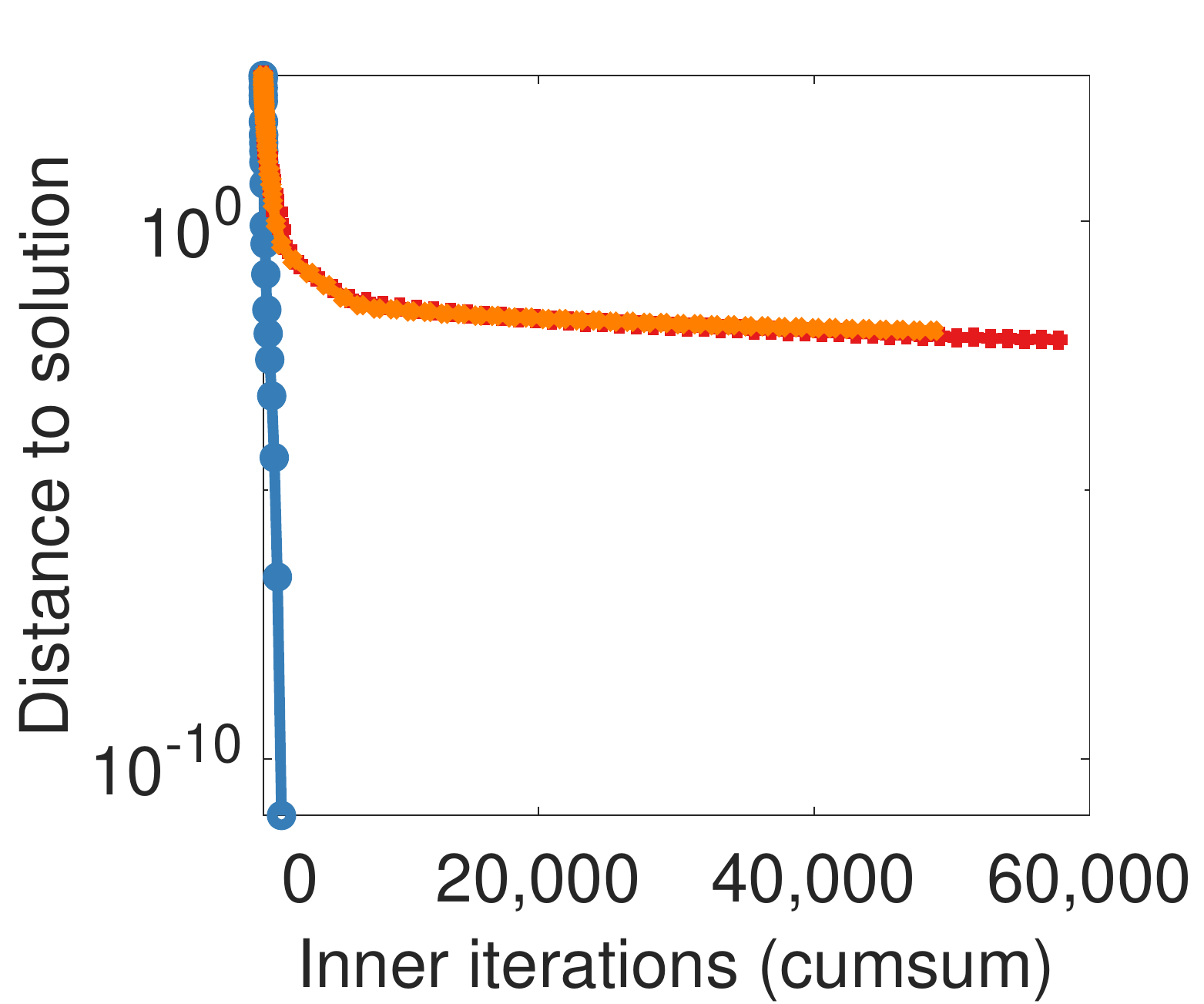}} 
    \subfloat[(\texttt{Run3})]{\includegraphics[width = 0.2\textwidth, height = 0.16\textwidth]{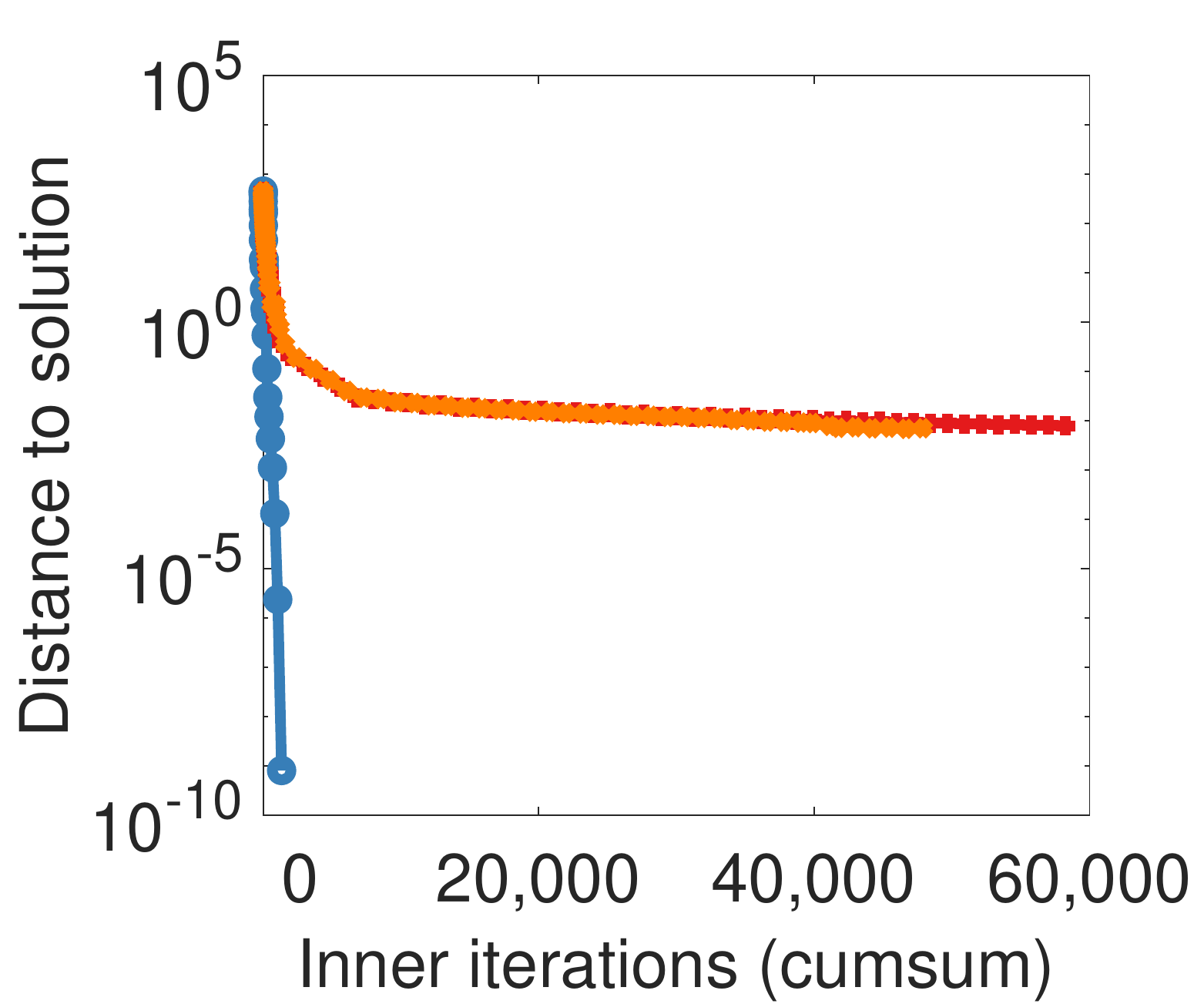}}
    \subfloat[(\texttt{Run4})]{\includegraphics[width = 0.2\textwidth, height = 0.16\textwidth]{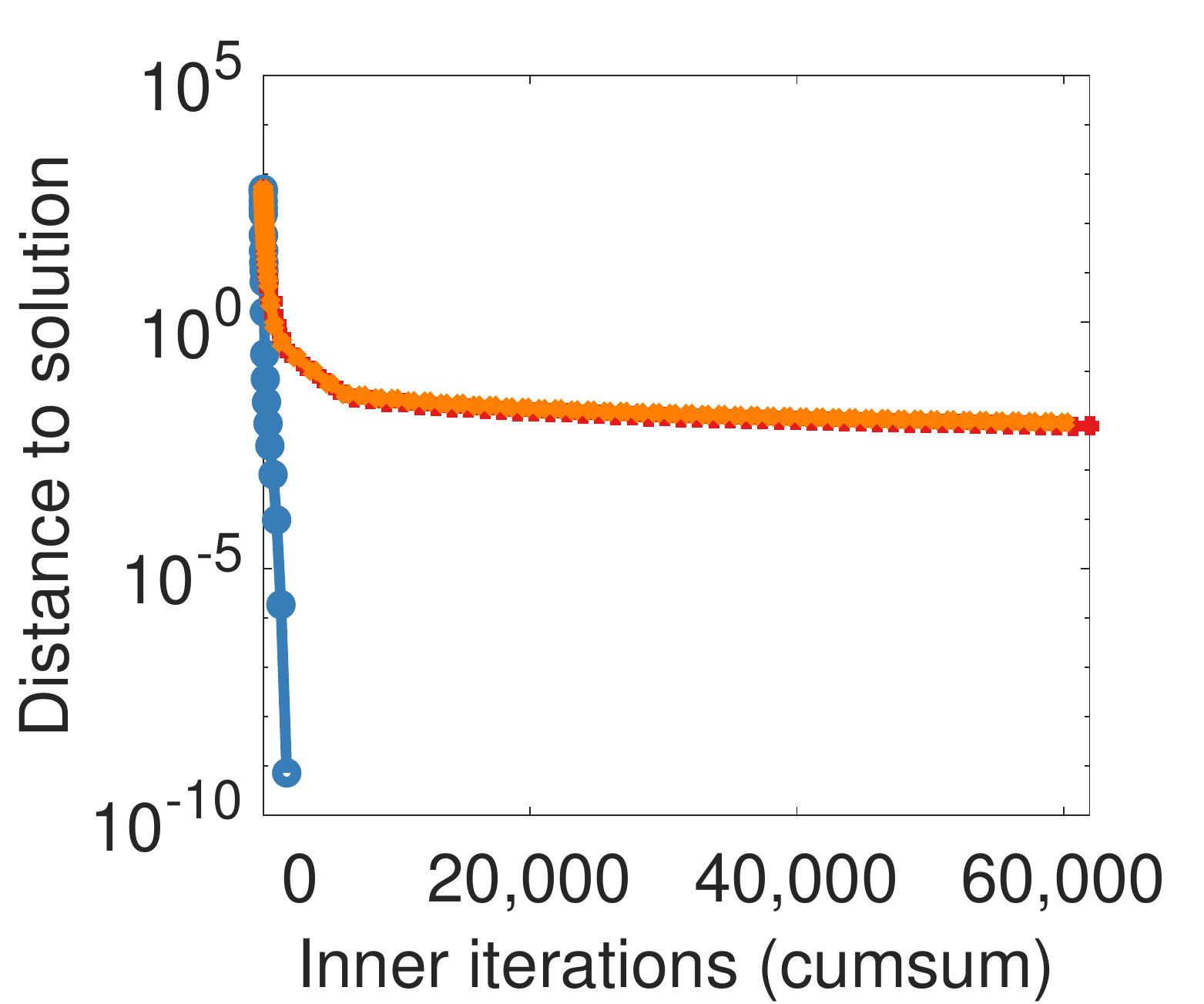}}
    \subfloat[(\texttt{Run5})]{\includegraphics[width = 0.2\textwidth, height = 0.16\textwidth]{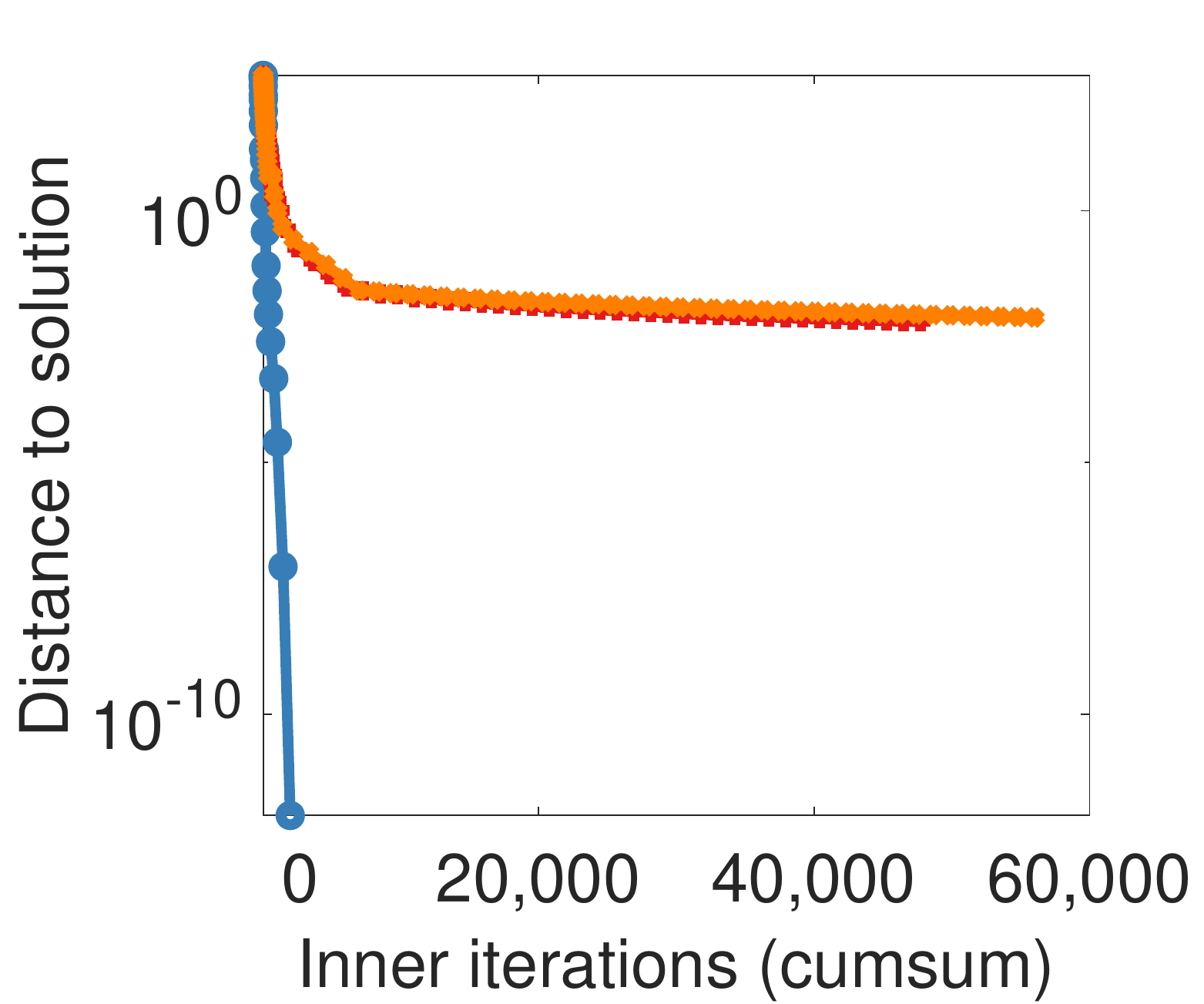}}
    \\
    \subfloat[\texttt{Ex1Low} (\texttt{Run1})]{\includegraphics[width = 0.2\textwidth, height = 0.16\textwidth]{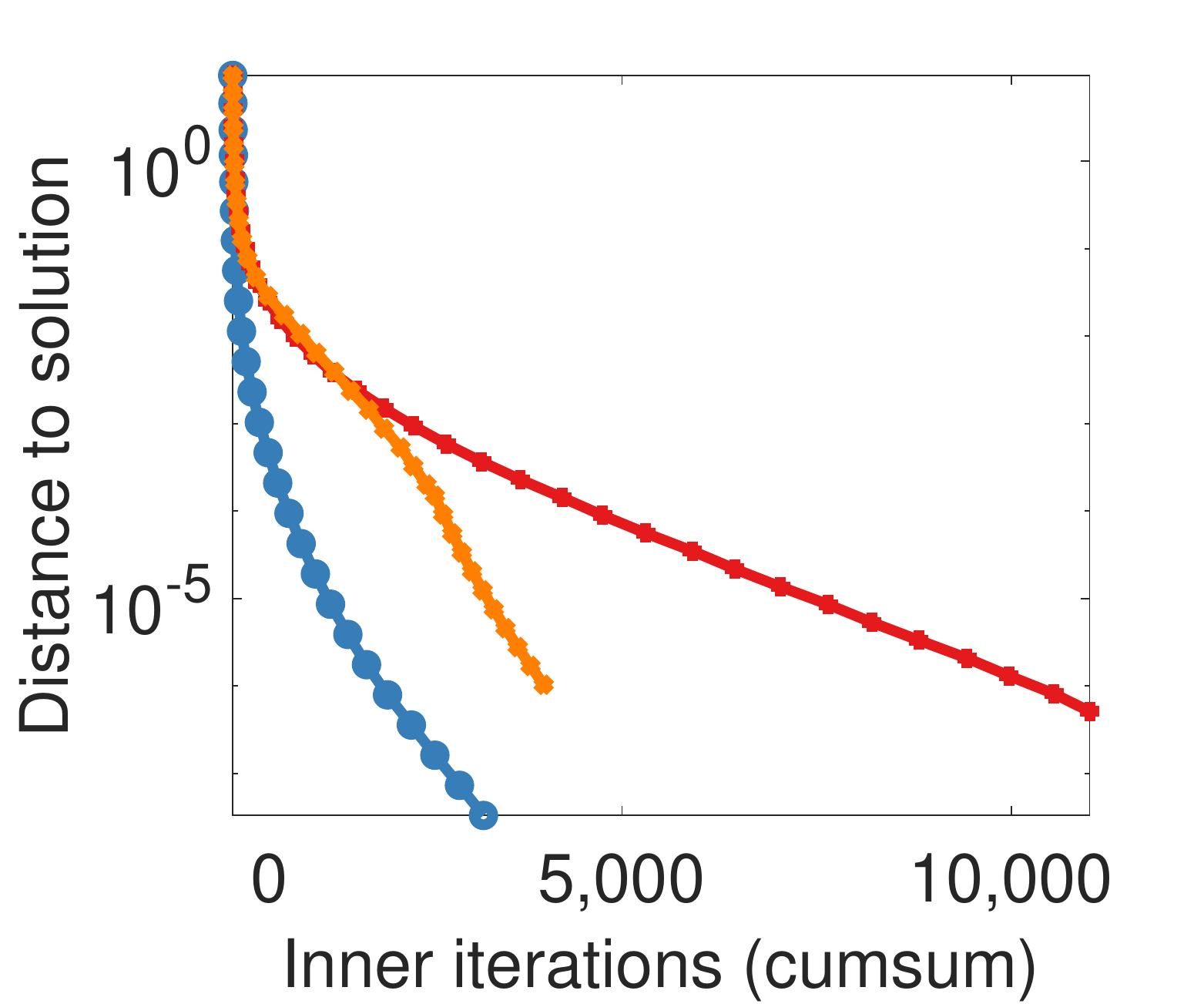}} 
    \subfloat[(\texttt{Run2})]{\includegraphics[width = 0.2\textwidth, height = 0.16\textwidth]{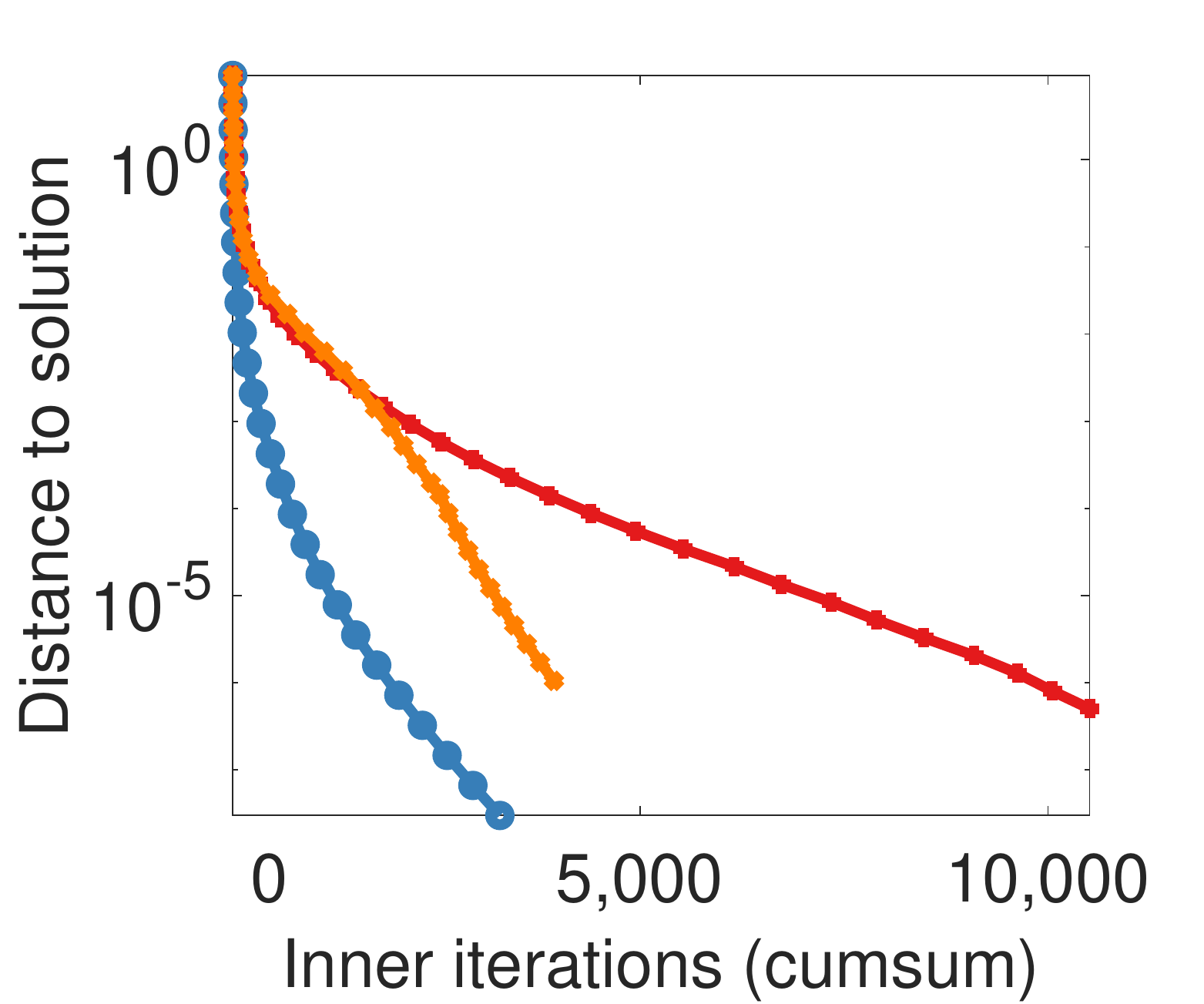}} 
    \subfloat[(\texttt{Run3})]{\includegraphics[width = 0.2\textwidth, height = 0.16\textwidth]{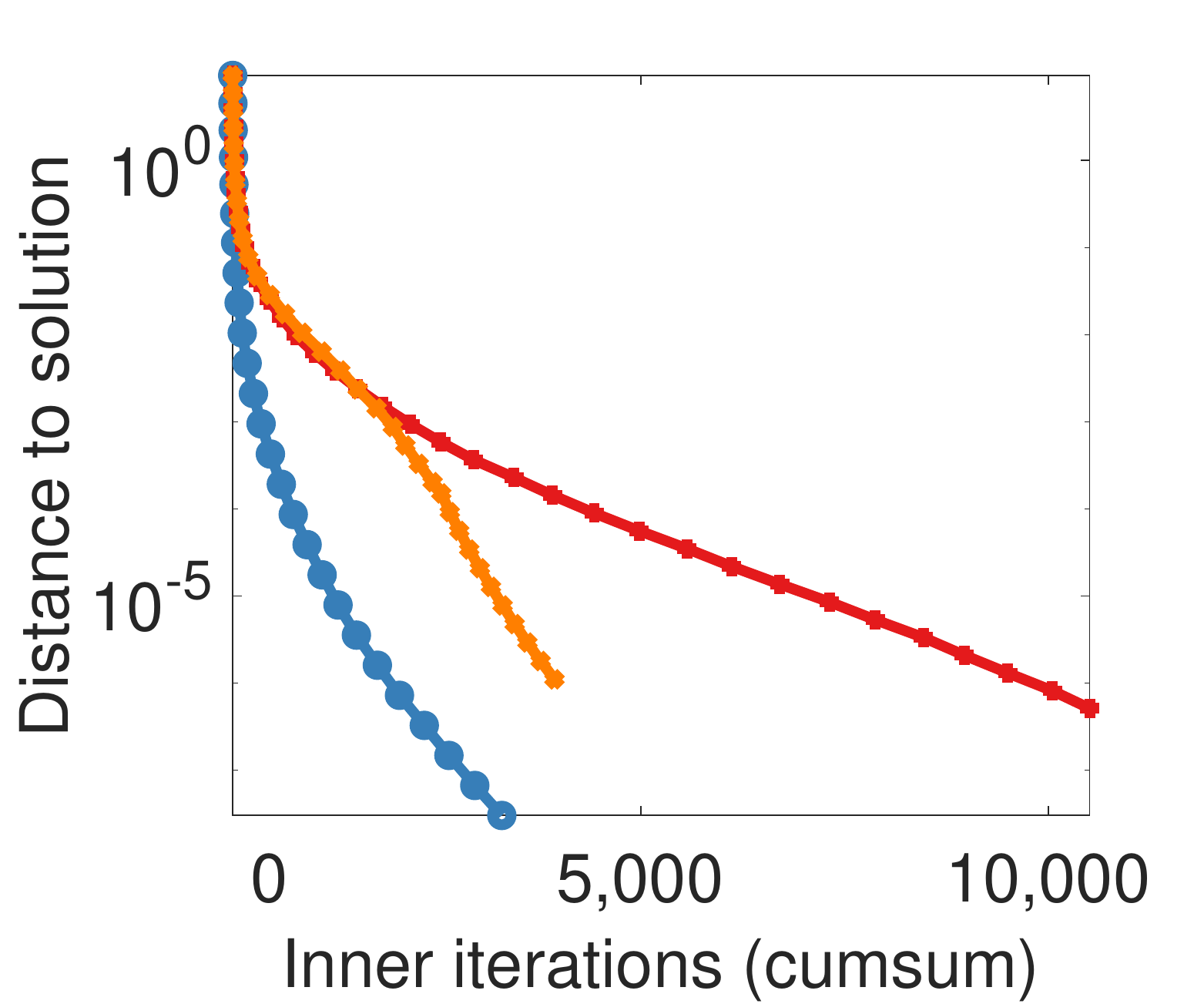}}
    \subfloat[(\texttt{Run4})]{\includegraphics[width = 0.2\textwidth, height = 0.16\textwidth]{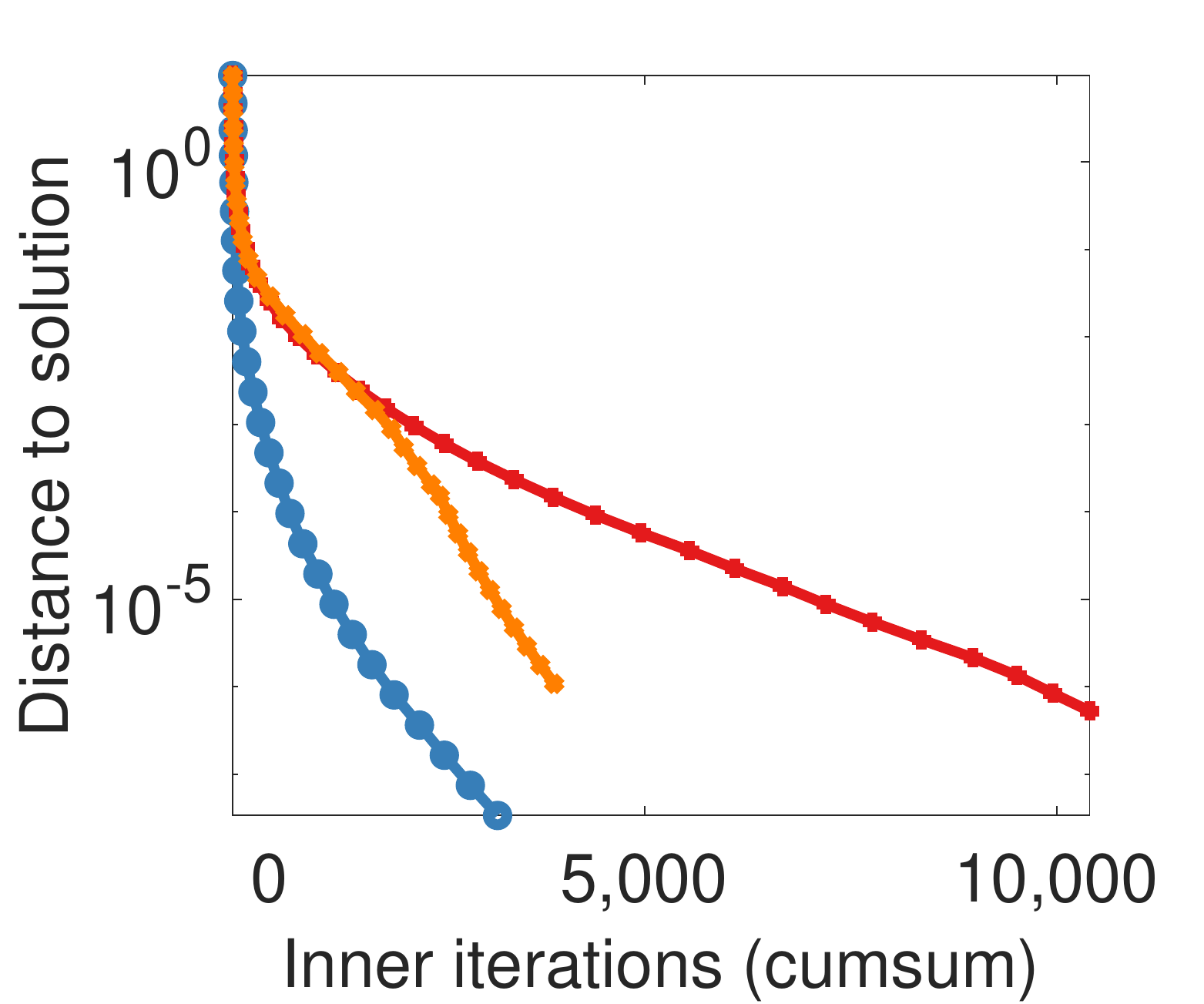}}
    \subfloat[(\texttt{Run5})]{\includegraphics[width = 0.2\textwidth, height = 0.16\textwidth]{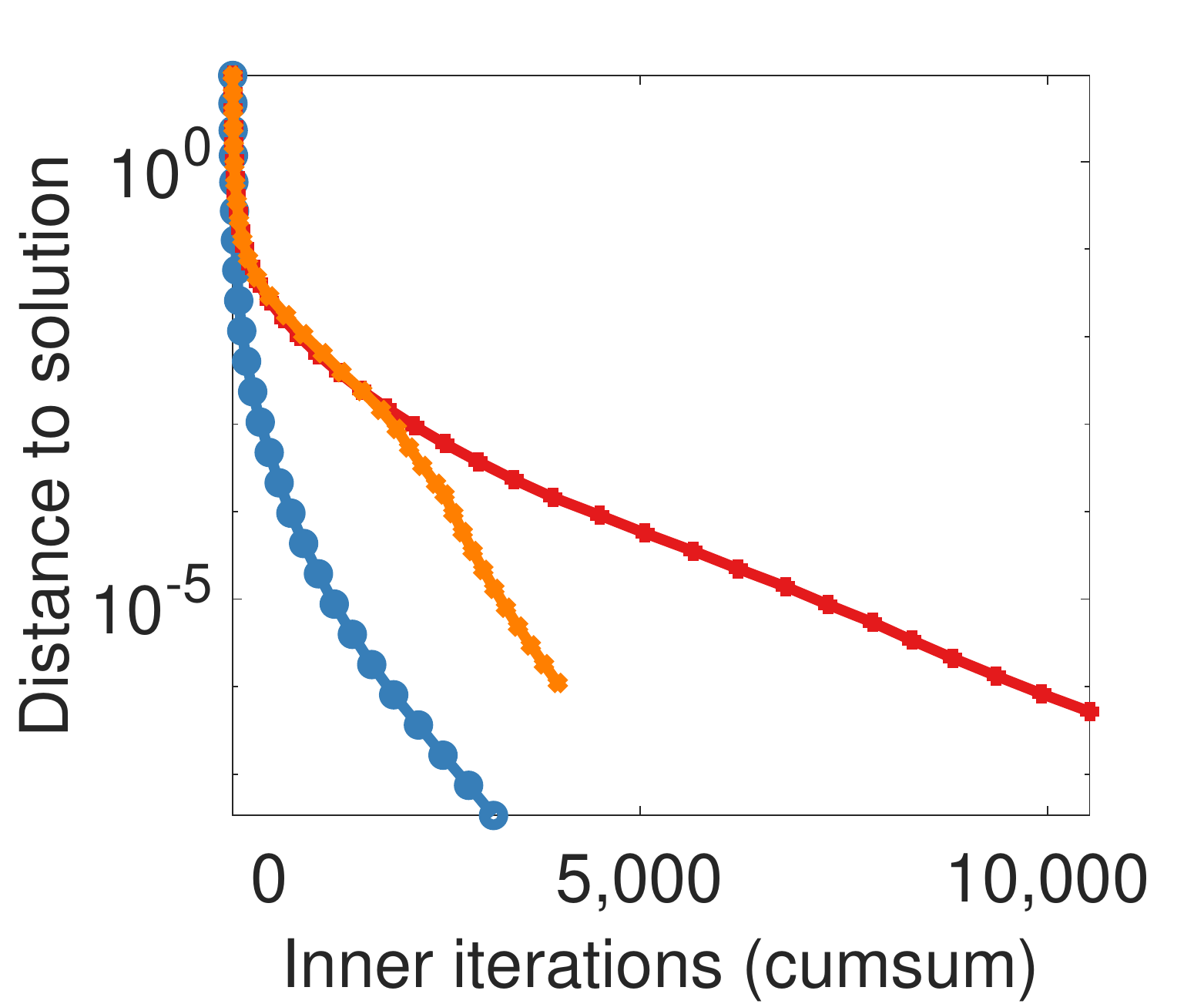}}
    \\
    \subfloat[\texttt{Ex2Full} (\texttt{Run1})]{\includegraphics[width = 0.2\textwidth, height = 0.16\textwidth]{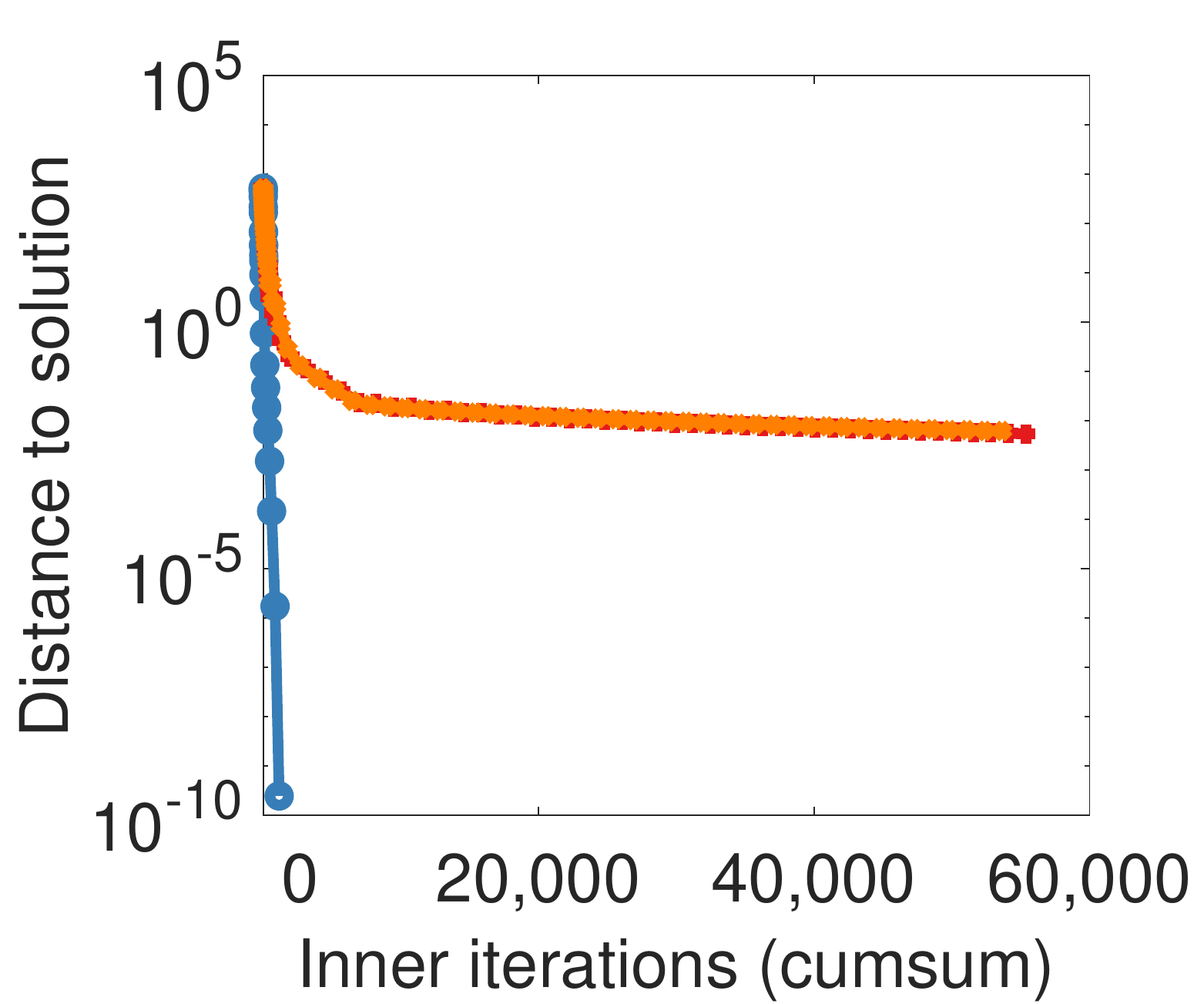}} 
    \subfloat[(\texttt{Run2})]{\includegraphics[width = 0.2\textwidth, height = 0.16\textwidth]{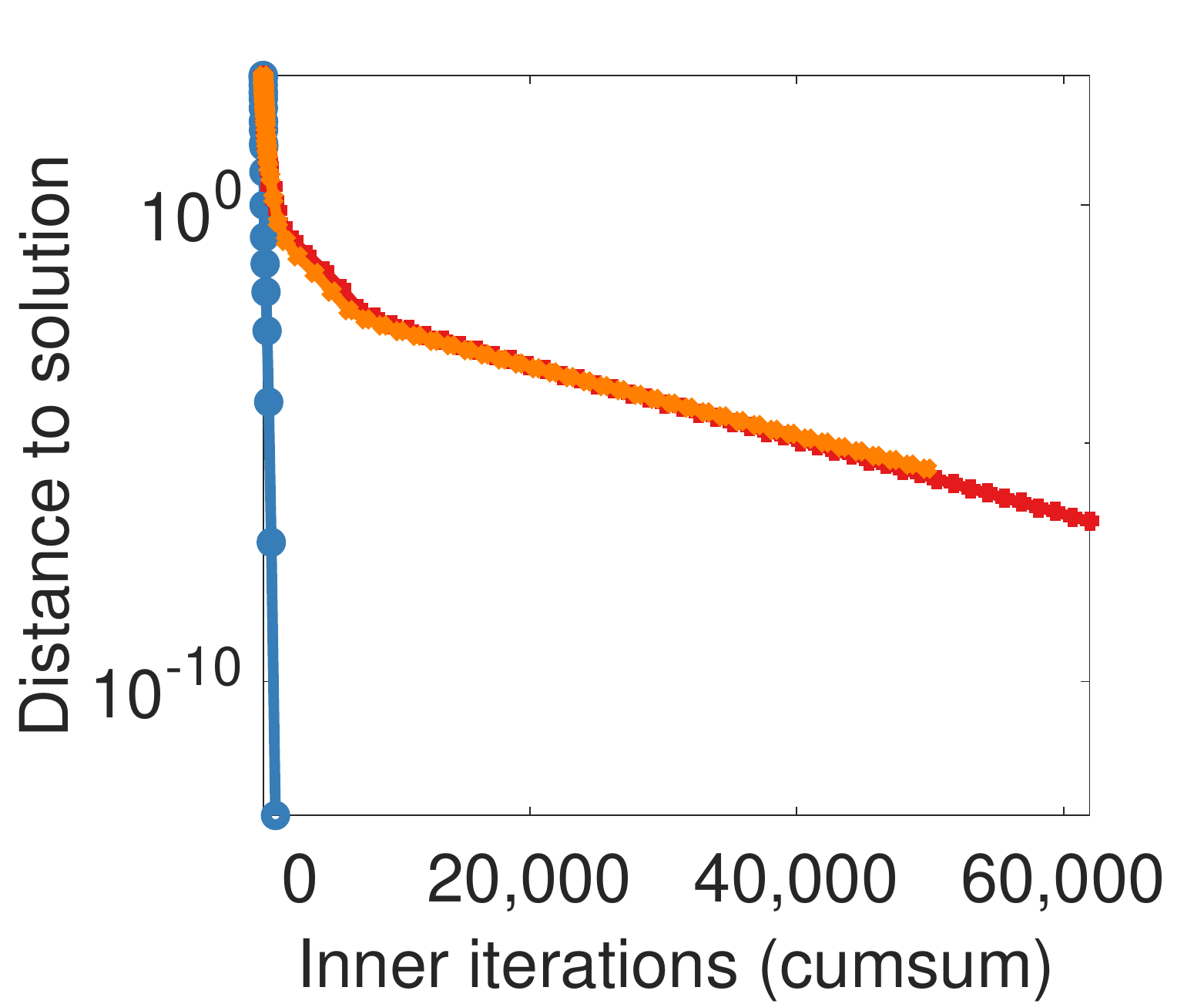}} 
    \subfloat[(\texttt{Run3})]{\includegraphics[width = 0.2\textwidth, height = 0.16\textwidth]{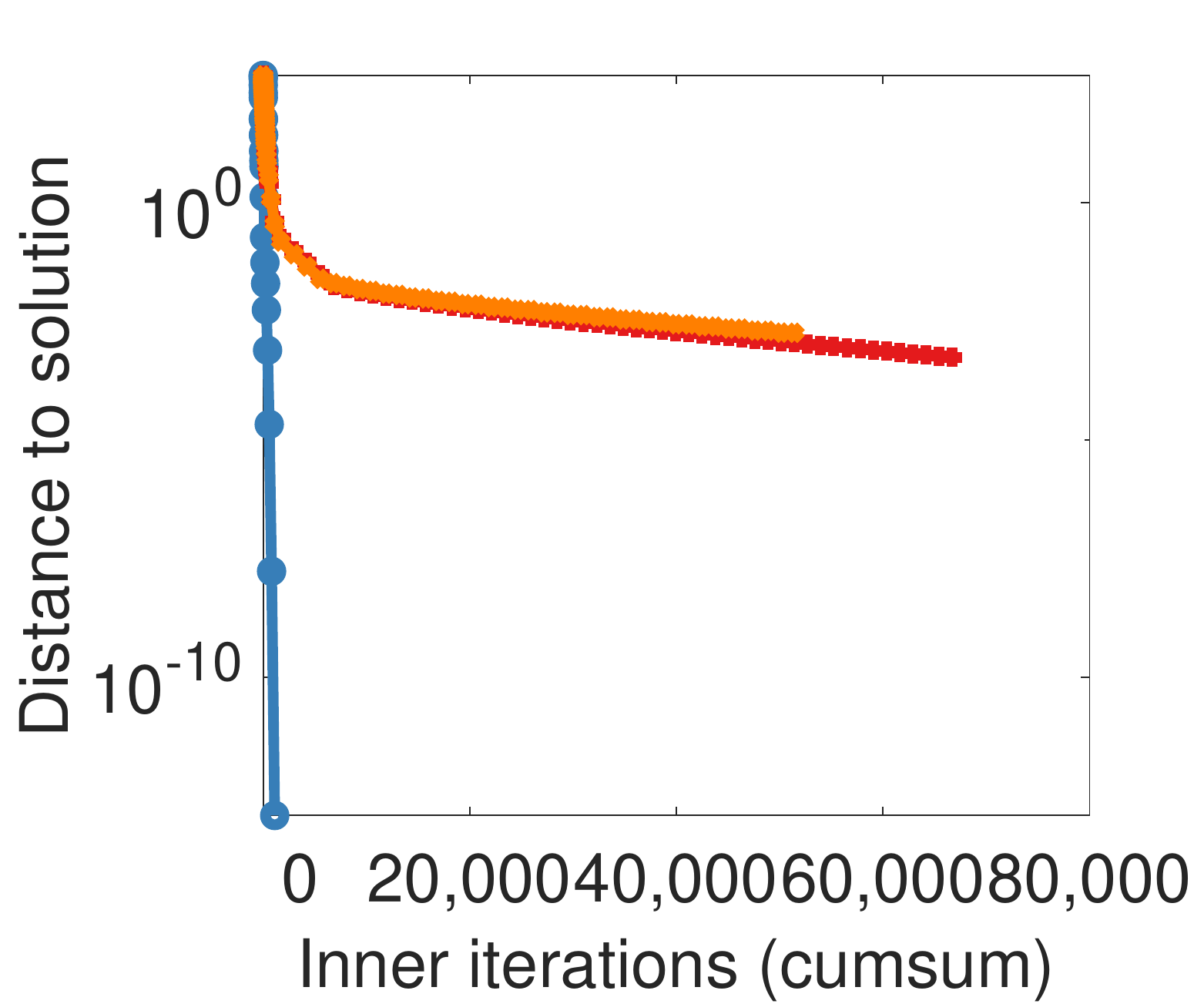}}
    \subfloat[(\texttt{Run4})]{\includegraphics[width = 0.2\textwidth, height = 0.16\textwidth]{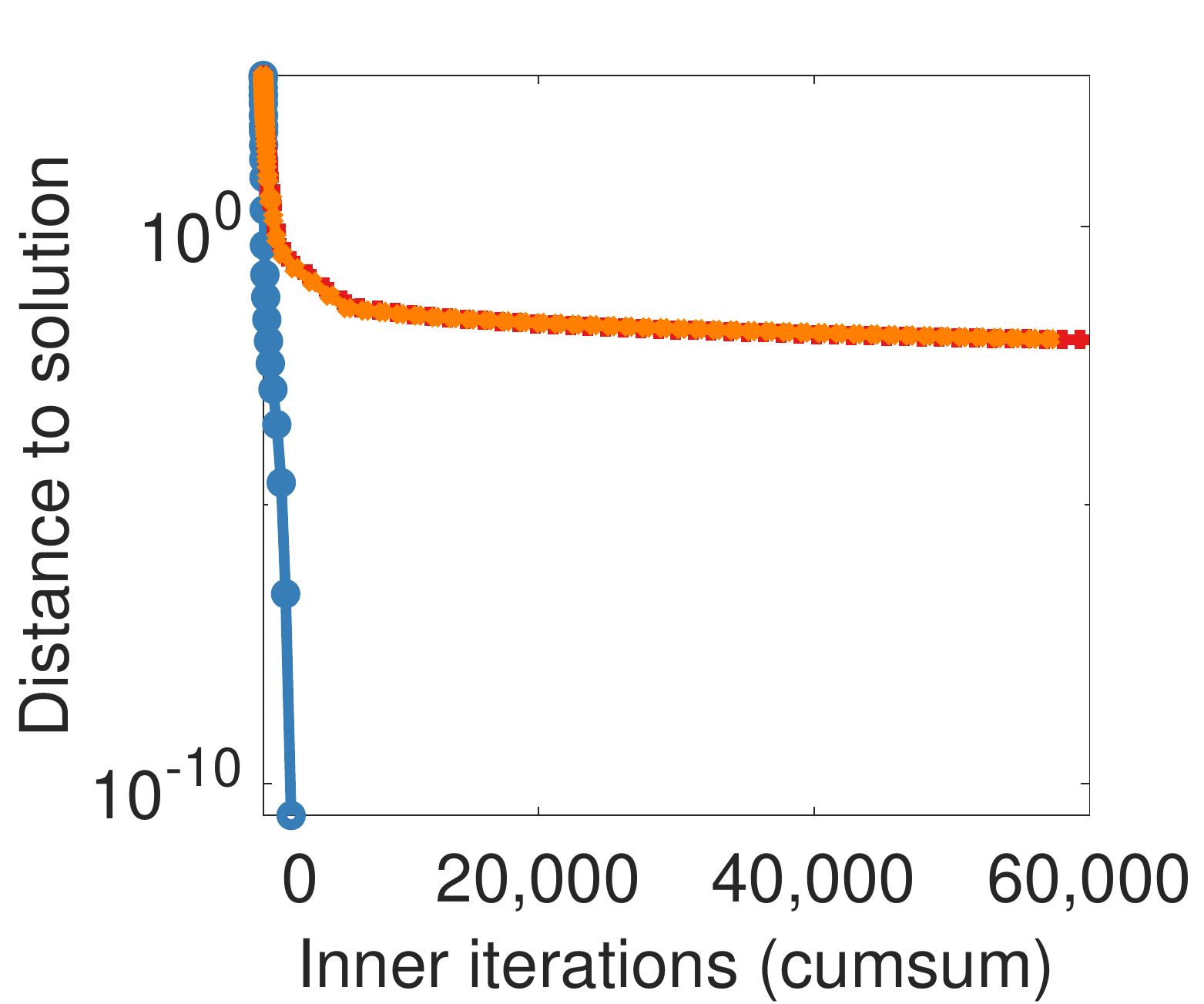}}
    \subfloat[(\texttt{Run5})]{\includegraphics[width = 0.2\textwidth, height = 0.16\textwidth]{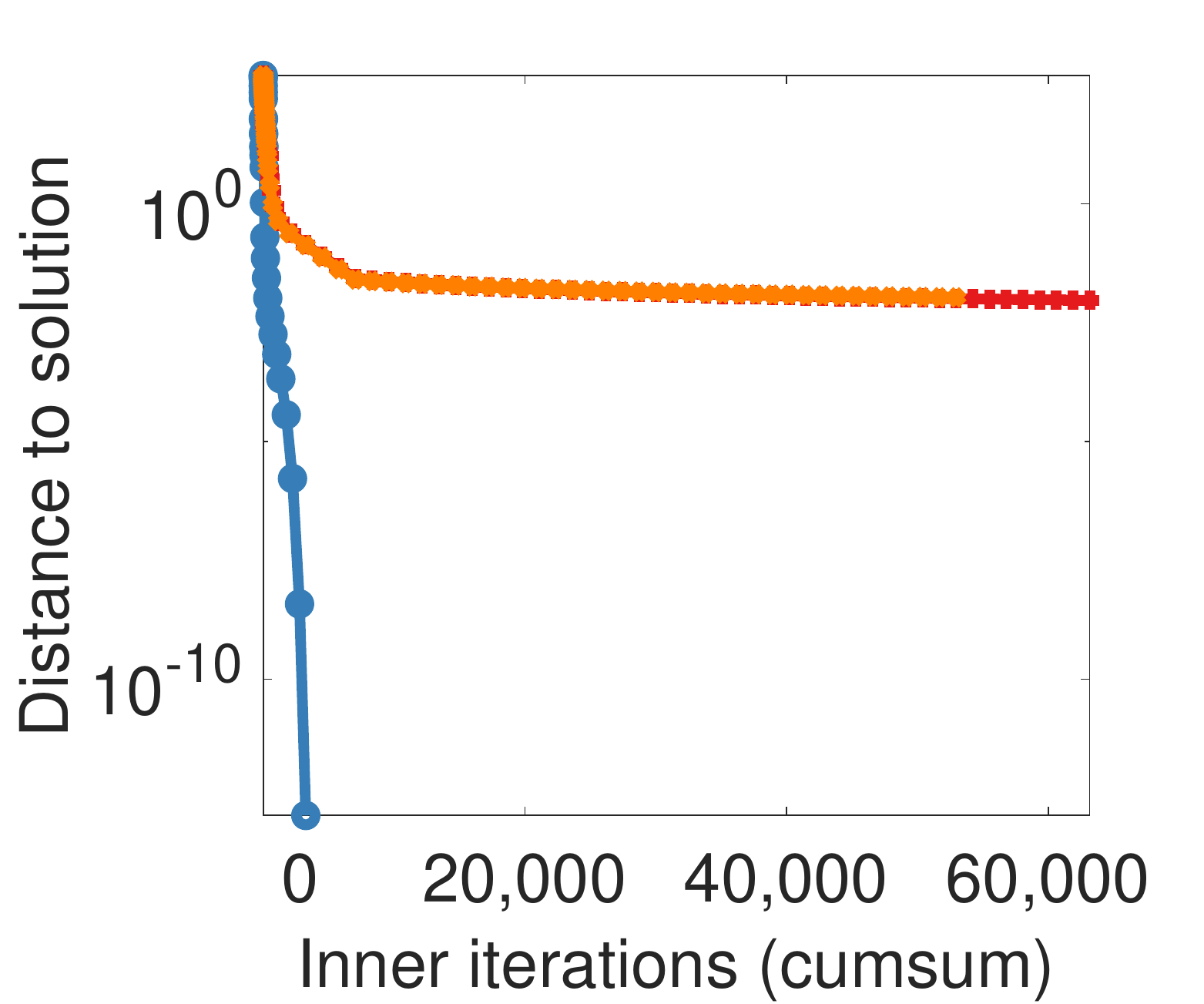}}
    \\
    \subfloat[\texttt{Ex2Low} (\texttt{Run1})]{\includegraphics[width = 0.2\textwidth, height = 0.16\textwidth]{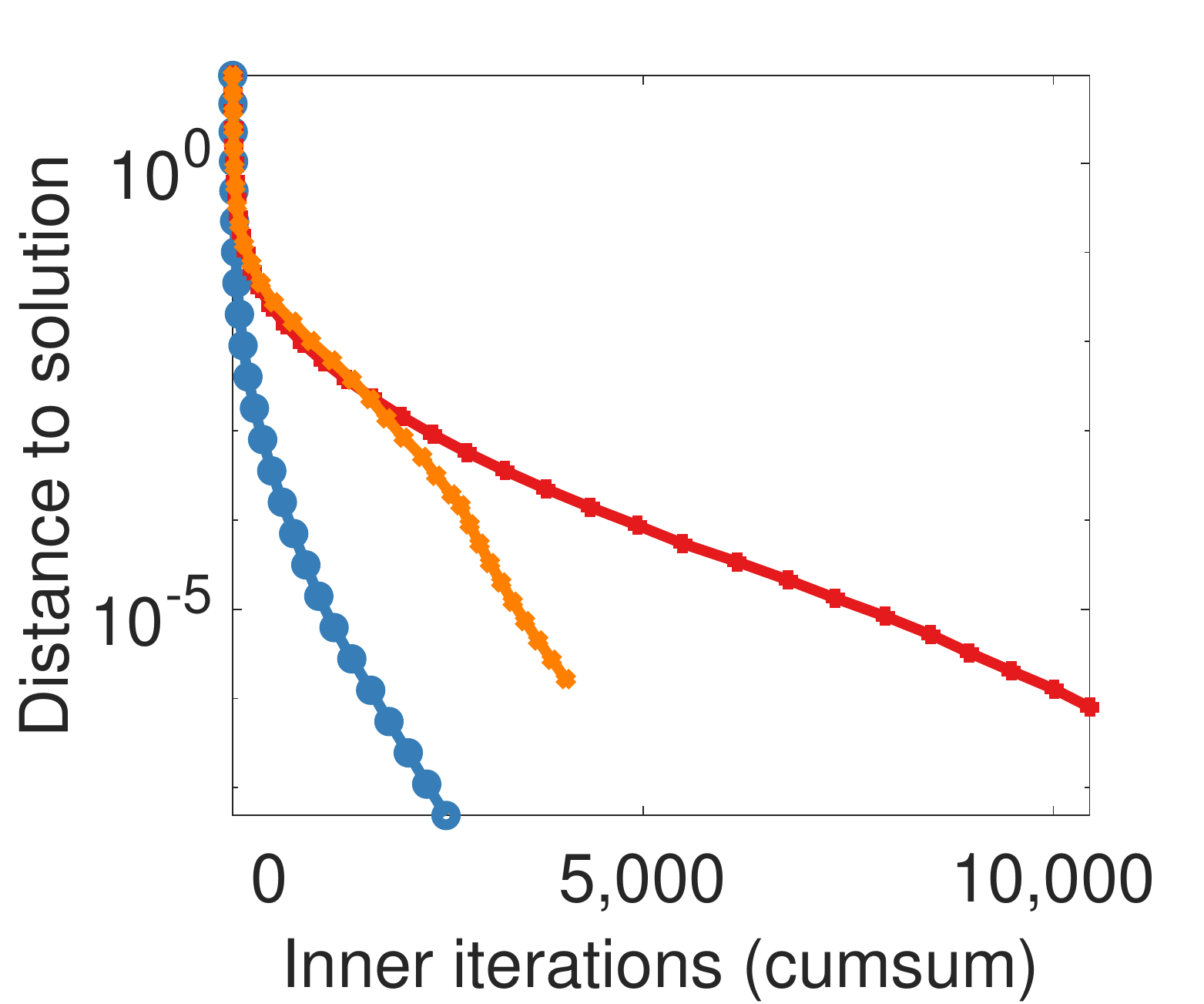}} 
    \subfloat[(\texttt{Run2})]{\includegraphics[width = 0.2\textwidth, height = 0.16\textwidth]{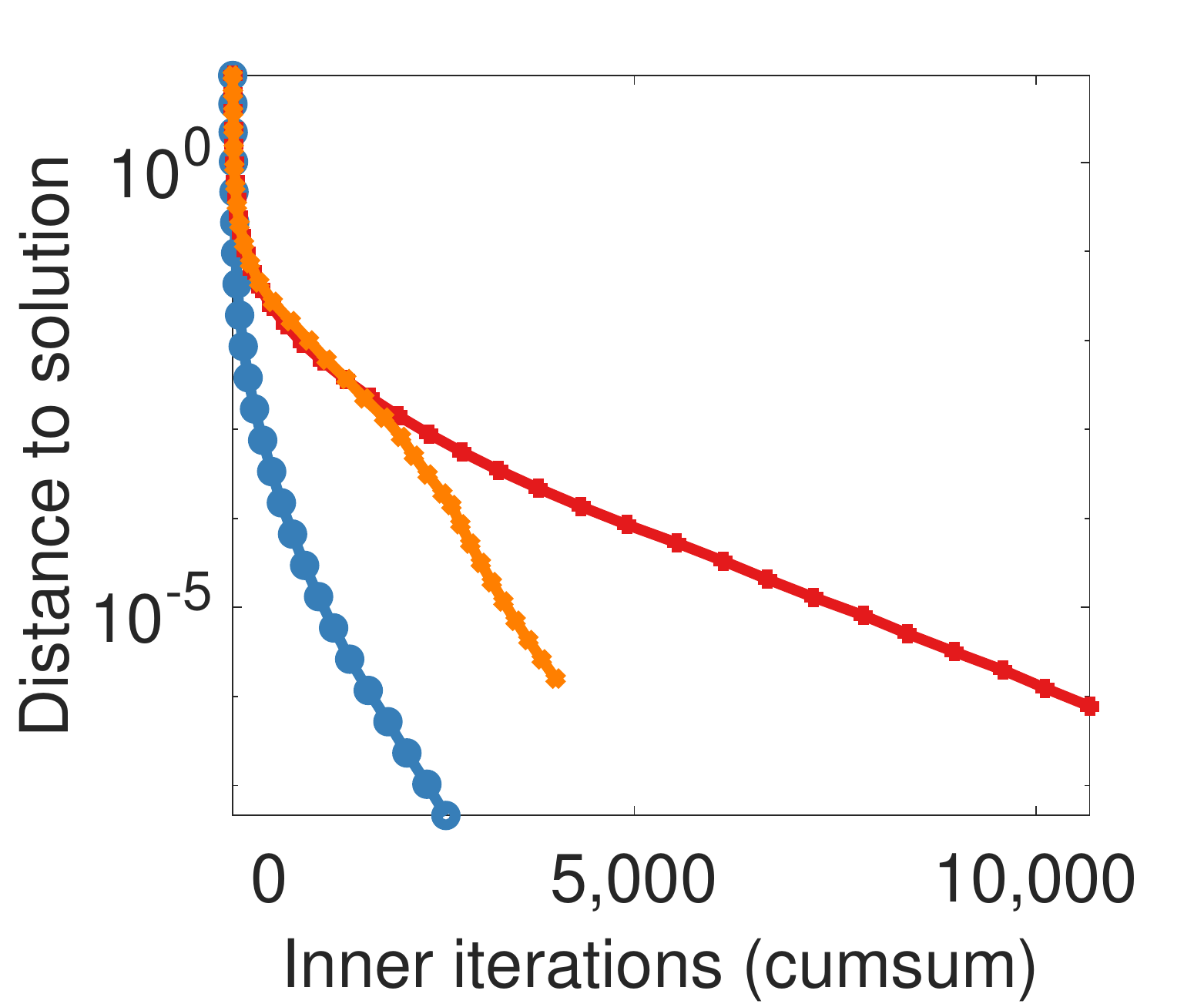}} 
    \subfloat[(\texttt{Run3})]{\includegraphics[width = 0.2\textwidth, height = 0.16\textwidth]{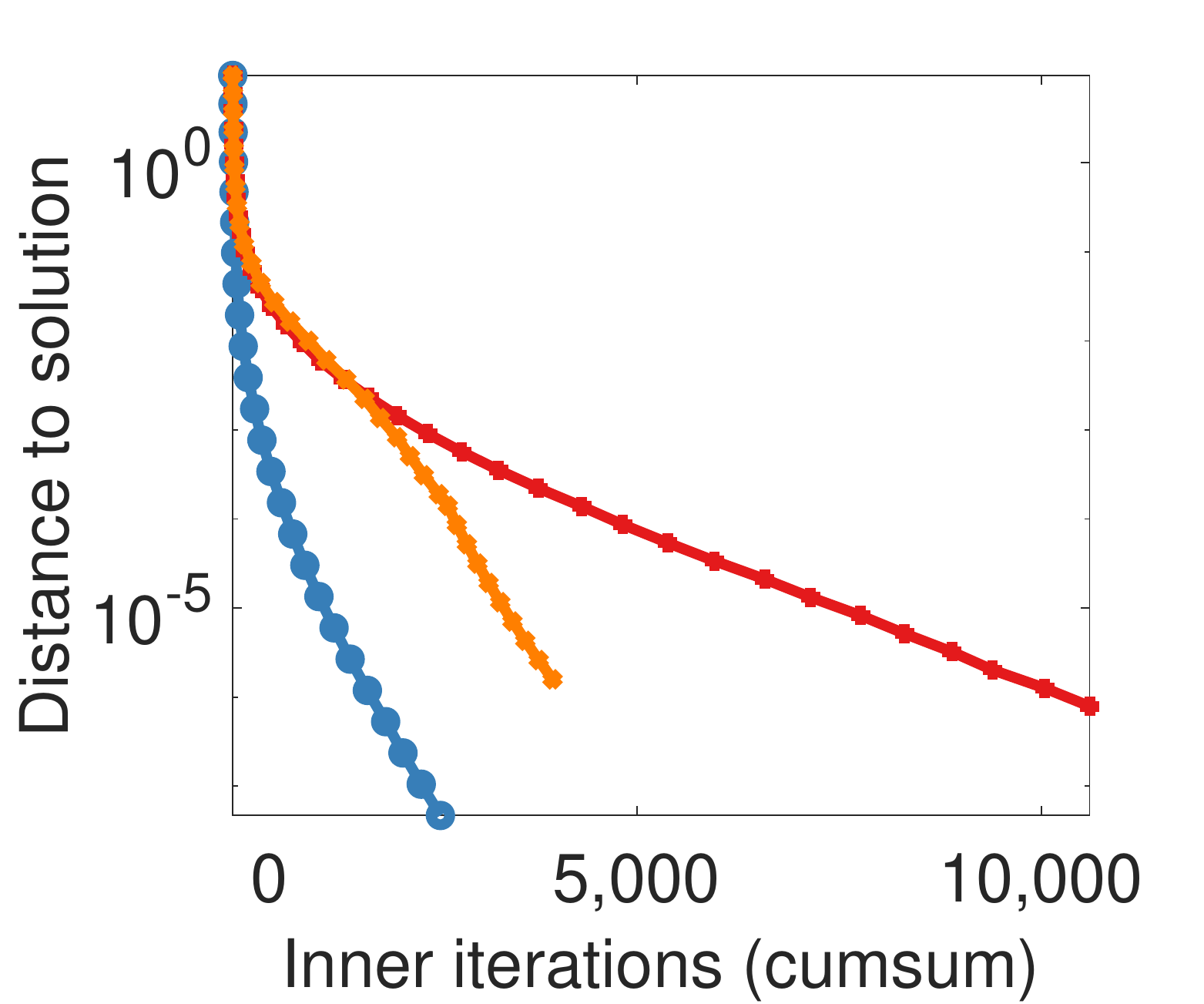}}
    \subfloat[(\texttt{Run4})]{\includegraphics[width = 0.2\textwidth, height = 0.16\textwidth]{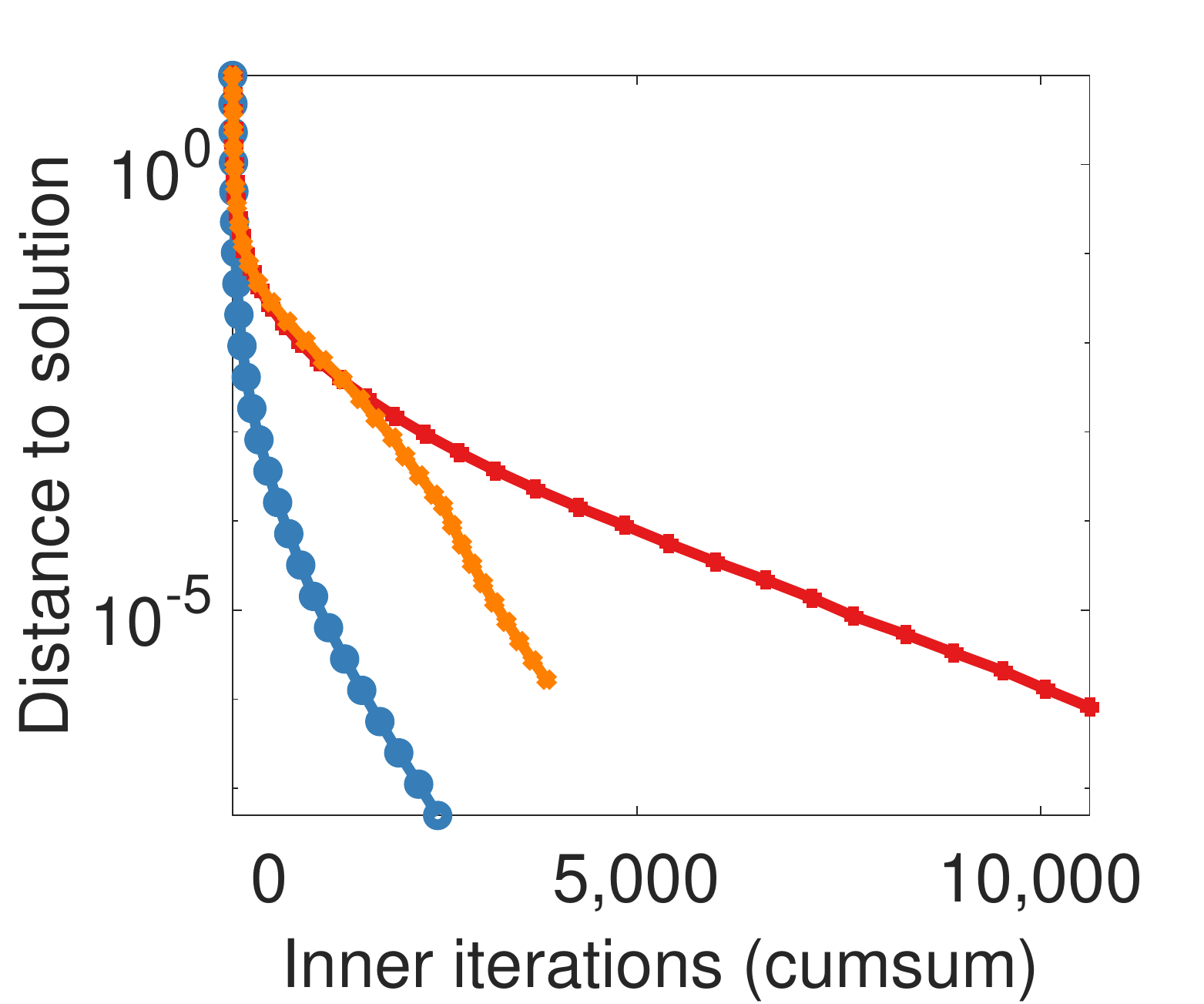}}
    \subfloat[(\texttt{Run5})]{\includegraphics[width = 0.2\textwidth, height = 0.16\textwidth]{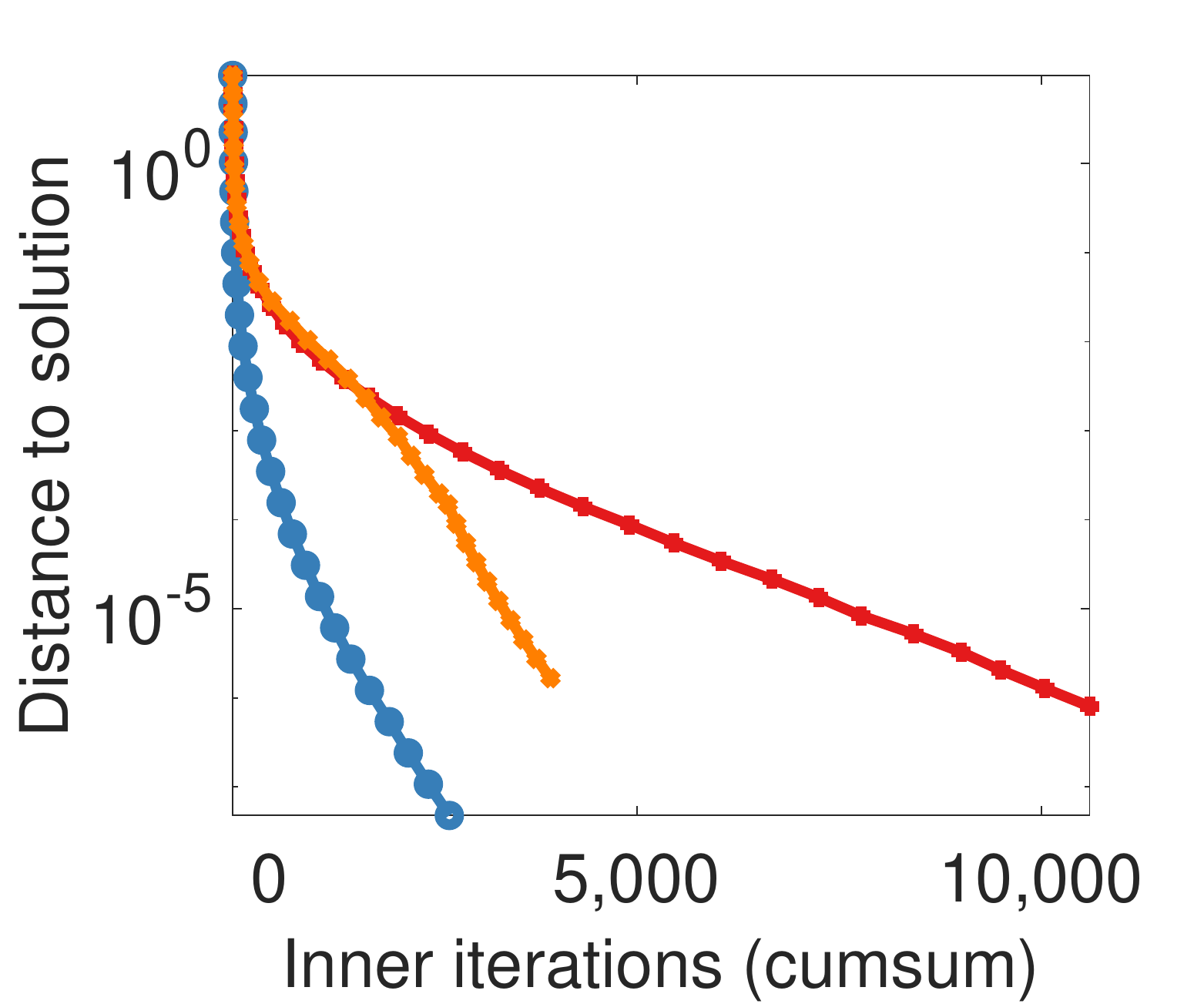}}
    \caption{Sensitivity to randomness on Lyapunov equation problem with Riemannian trust region.}
    \label{Lya_rng_figure}
\end{figure*}

\begin{figure*}[!th]
\captionsetup{justification=centering}
    \centering
    \subfloat[\texttt{SynFull} (\texttt{Loss})]{\includegraphics[width = 0.2\textwidth, height = 0.16\textwidth]{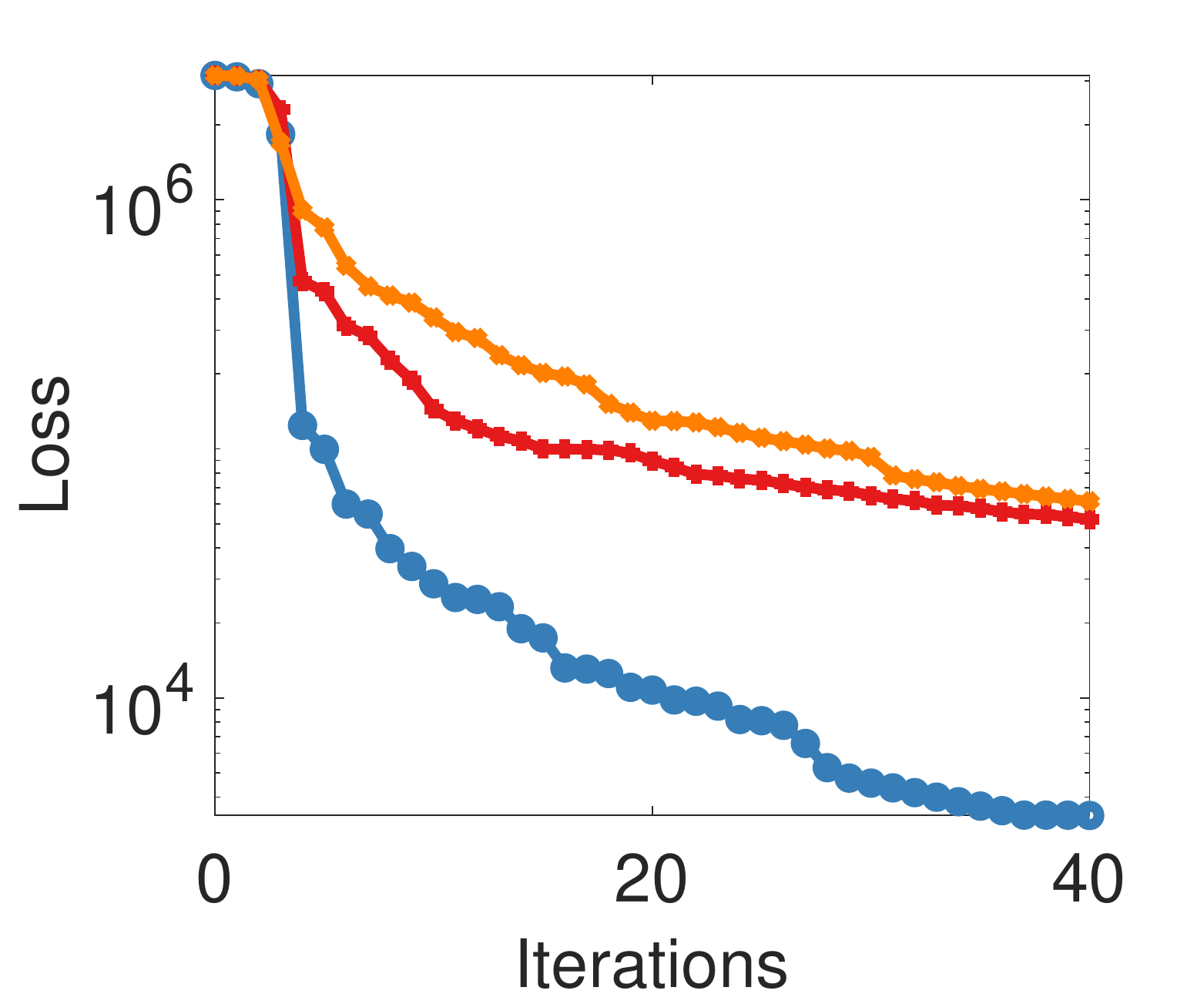}}
    \subfloat[\texttt{SynFull} (\texttt{Disttosol})]{\includegraphics[width = 0.2\textwidth, height = 0.16\textwidth]{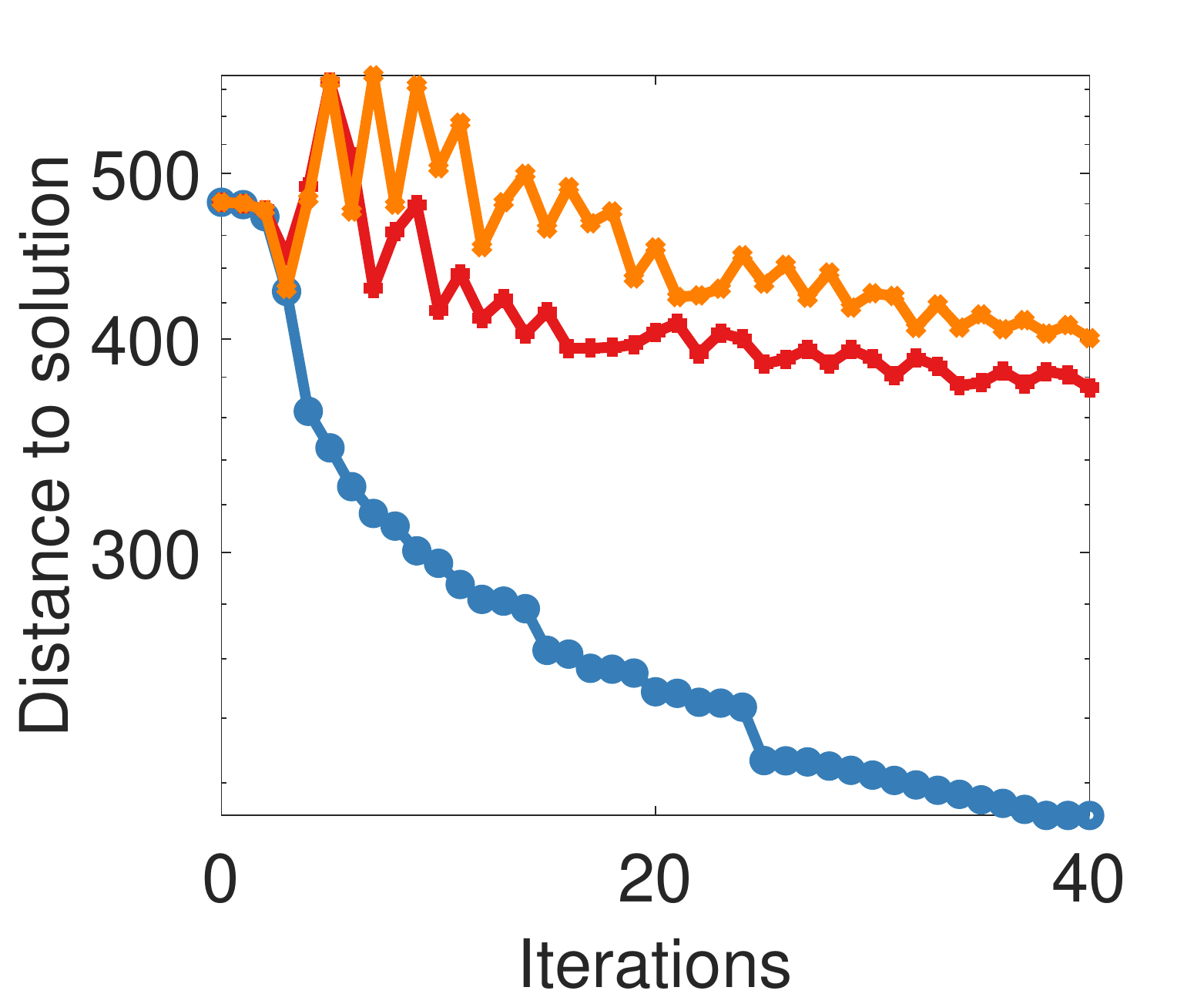}}
    \subfloat[\texttt{SynFull} (\texttt{Time})]{\includegraphics[width = 0.2\textwidth, height = 0.16\textwidth]{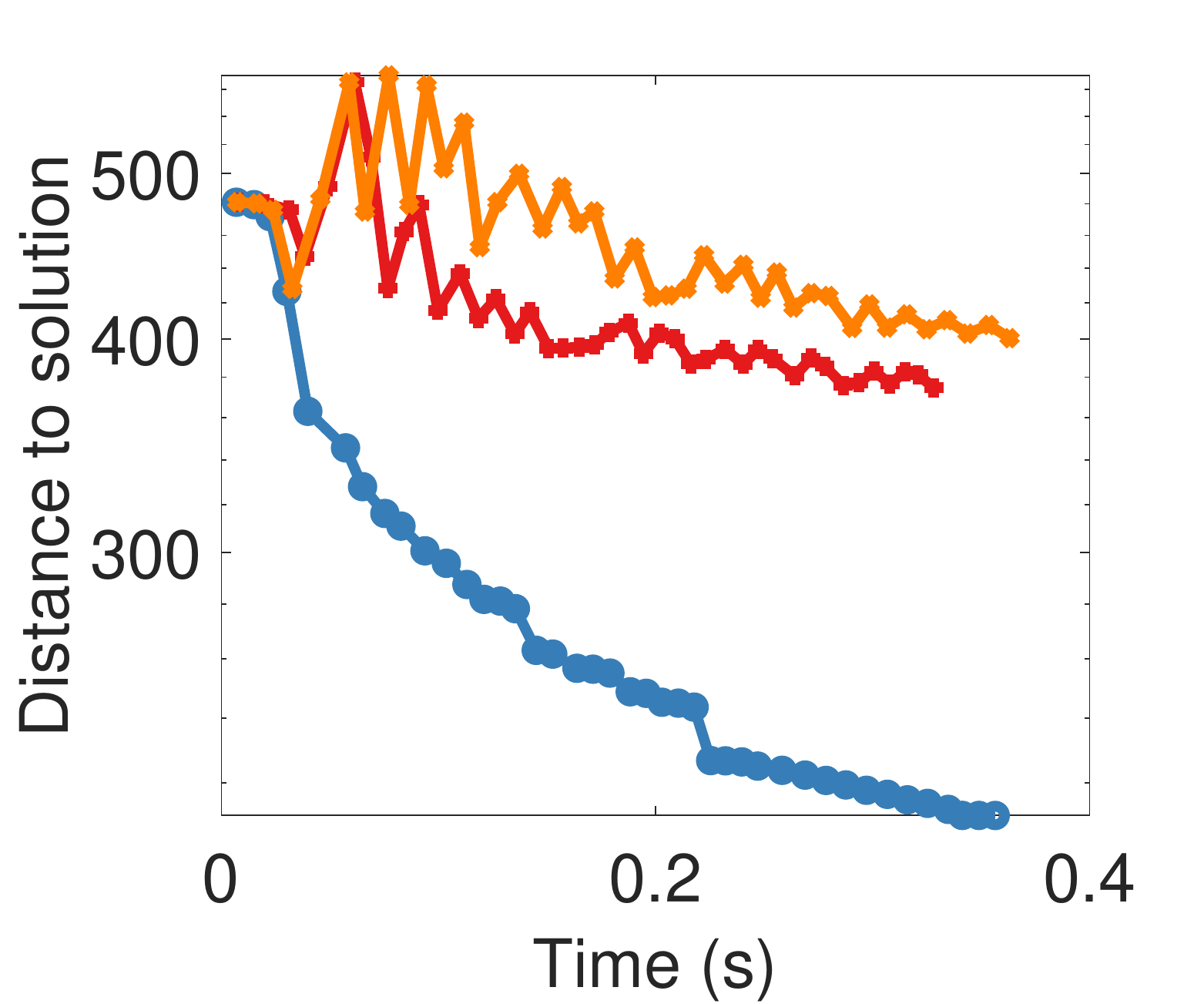}} 
    \\
    \subfloat[\texttt{SynLow} (\texttt{Loss})]{\includegraphics[width = 0.2\textwidth, height = 0.16\textwidth]{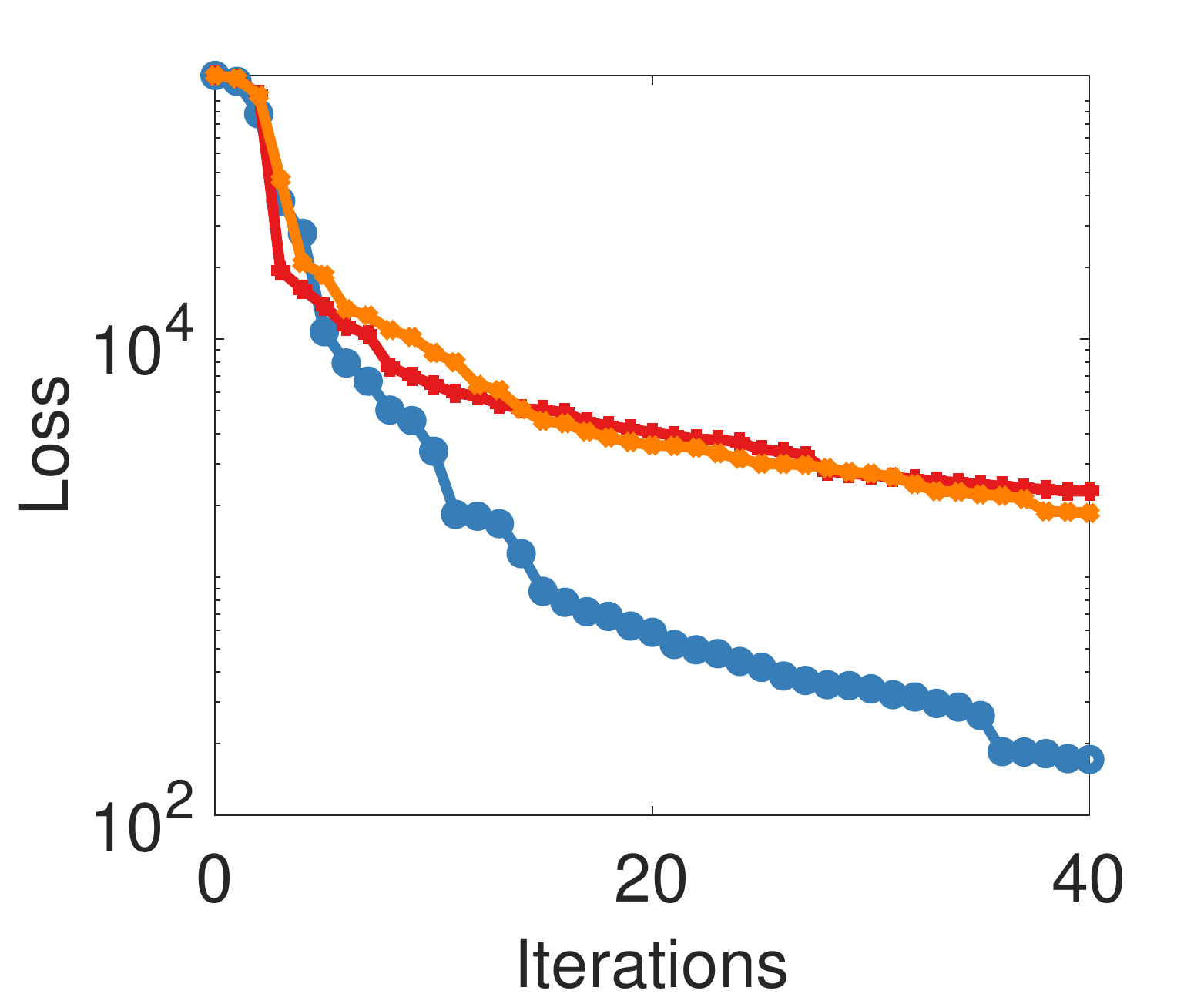}}
    \subfloat[\texttt{SynLow} (\texttt{Disttosol})]{\includegraphics[width = 0.2\textwidth, height = 0.16\textwidth]{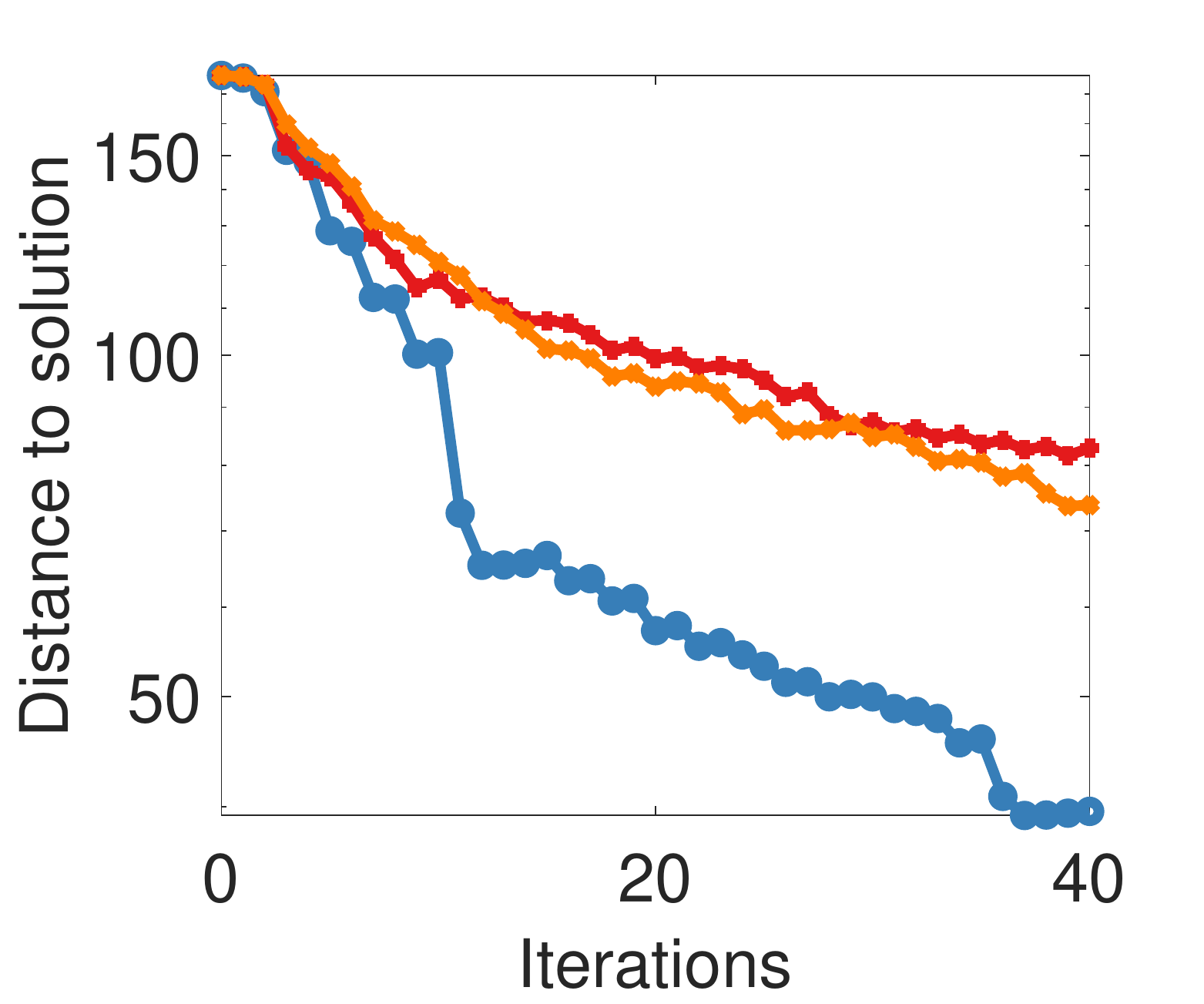}}
    \subfloat[\texttt{SynLow} (\texttt{Time})]{\includegraphics[width = 0.2\textwidth, height = 0.16\textwidth]{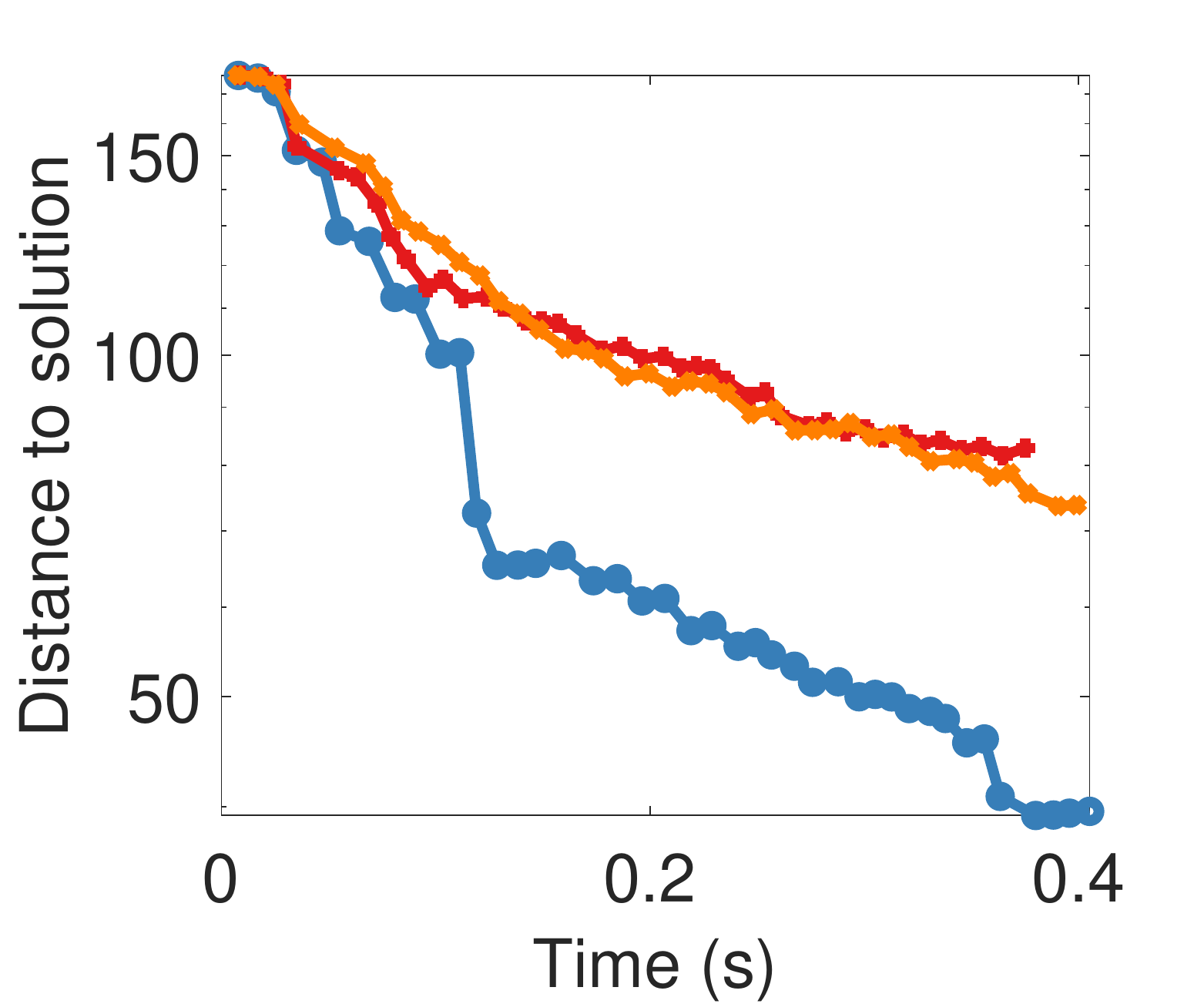}} 
    \caption{Riemannian steepest descent on trace regression problem (loss, distance to solution, runtime).} 
    \label{TR_RSD_figure}
\end{figure*}

\begin{figure*}[!th]
\captionsetup{justification=centering}
    \centering
    \subfloat[\texttt{SynFull} (\texttt{Loss})]{\includegraphics[width = 0.2\textwidth, height = 0.16\textwidth]{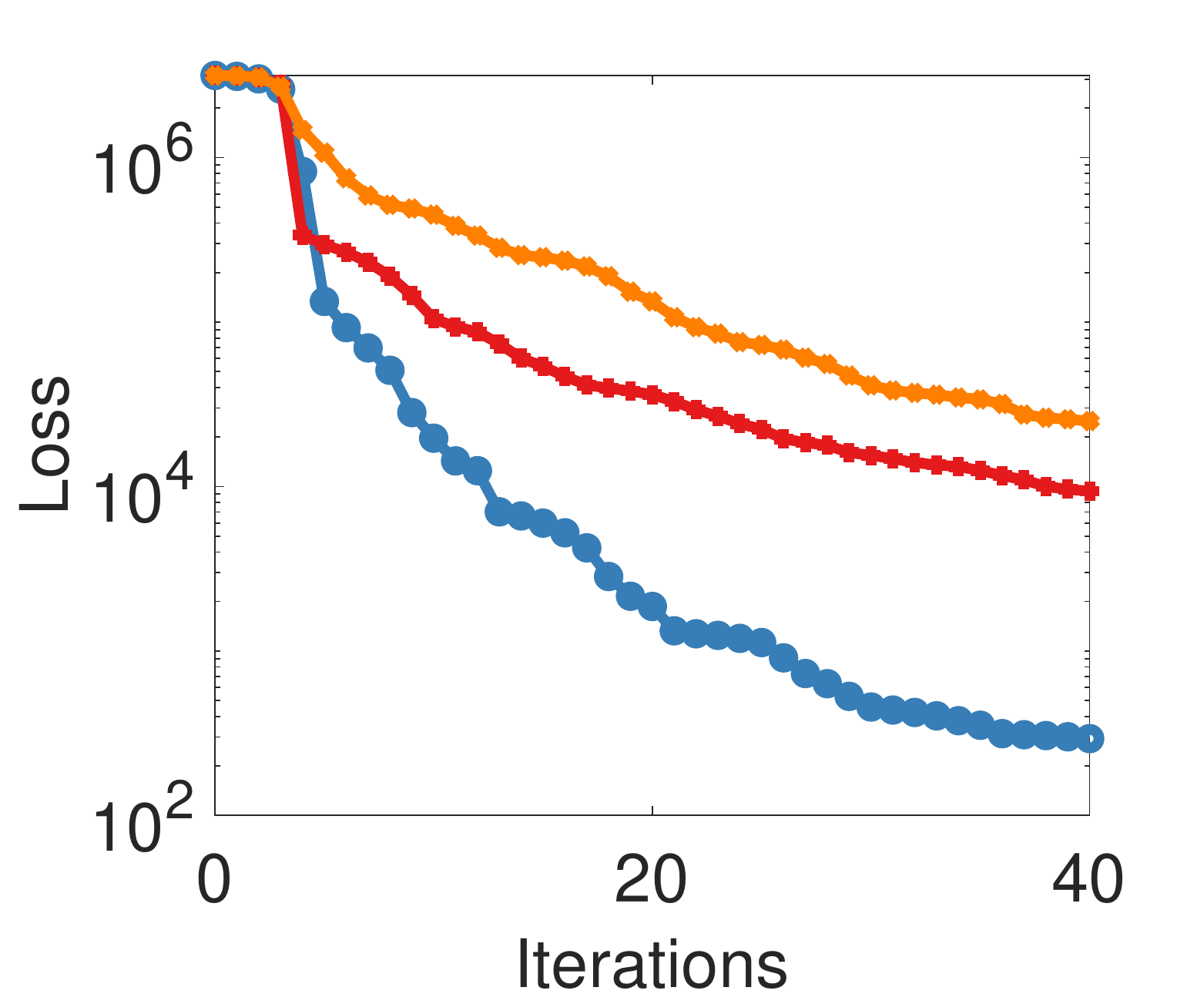}}
    \subfloat[\texttt{SynFull} (\texttt{Disttosol})]{\includegraphics[width = 0.2\textwidth, height = 0.16\textwidth]{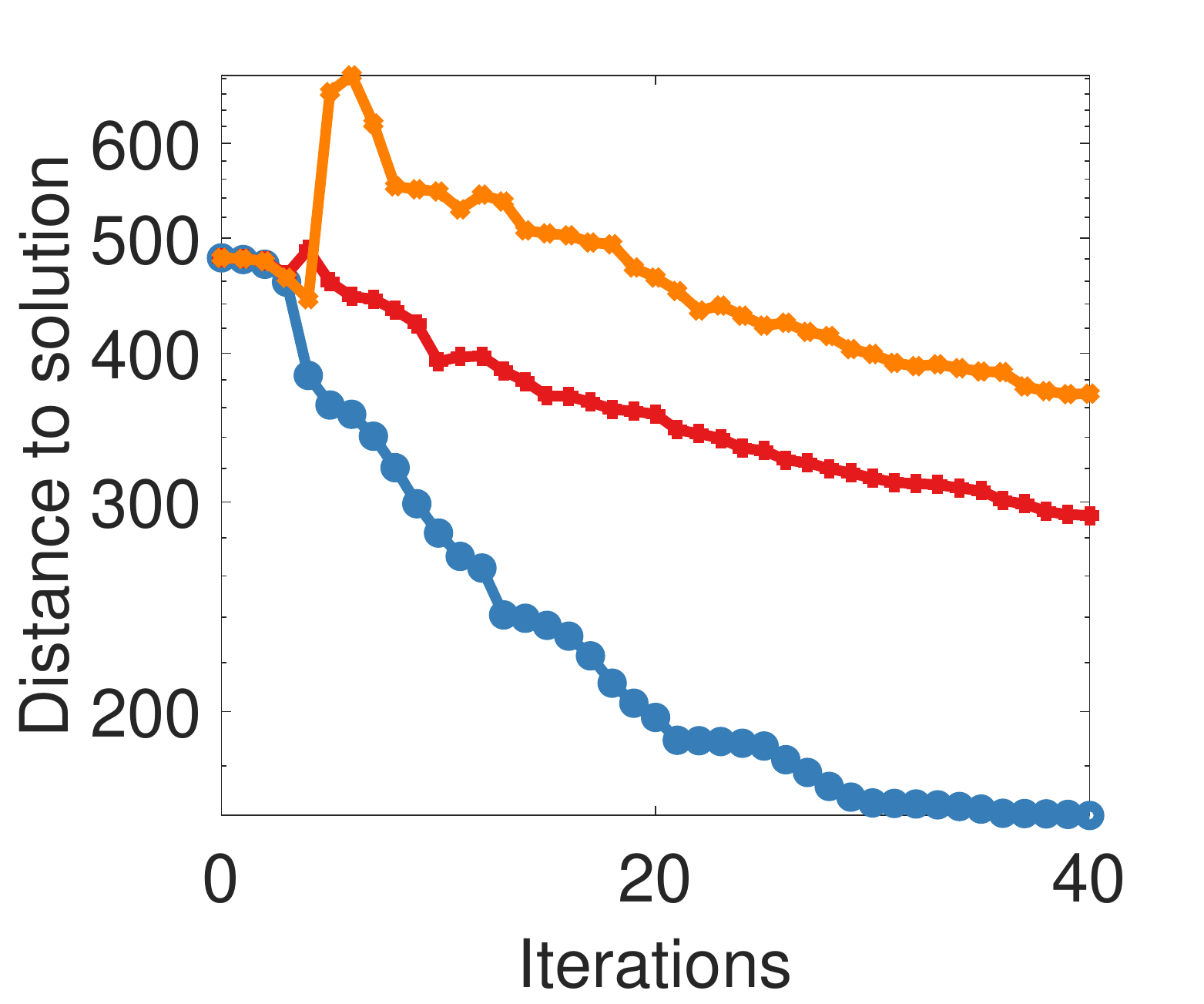}}
    \subfloat[\texttt{SynFull} (\texttt{Time})]{\includegraphics[width = 0.2\textwidth, height = 0.16\textwidth]{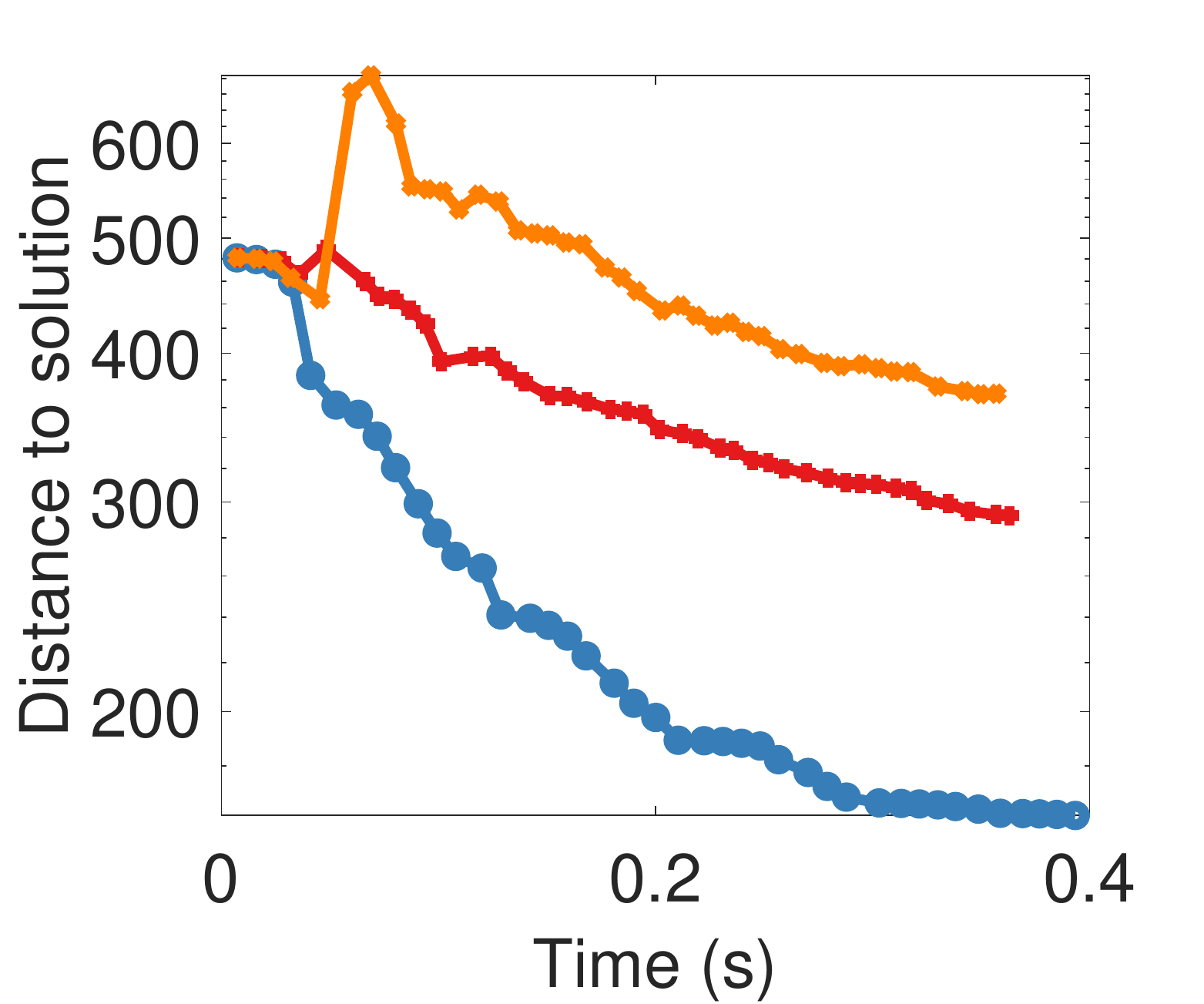}} 
    \\
    \subfloat[\texttt{SynLow} (\texttt{Loss})]{\includegraphics[width = 0.2\textwidth, height = 0.16\textwidth]{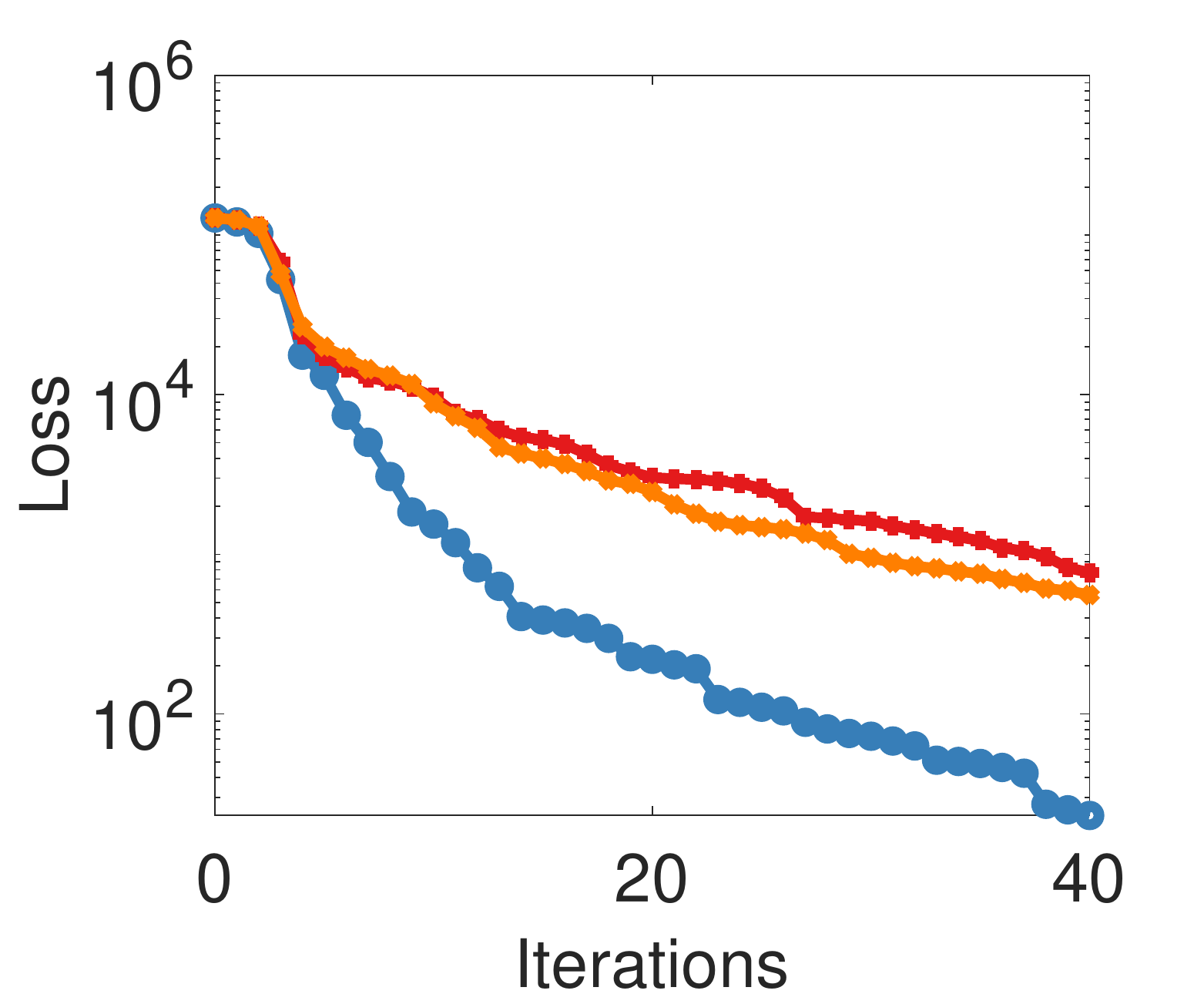}}
    \subfloat[\texttt{SynLow} (\texttt{Disttosol})]{\includegraphics[width = 0.2\textwidth, height = 0.16\textwidth]{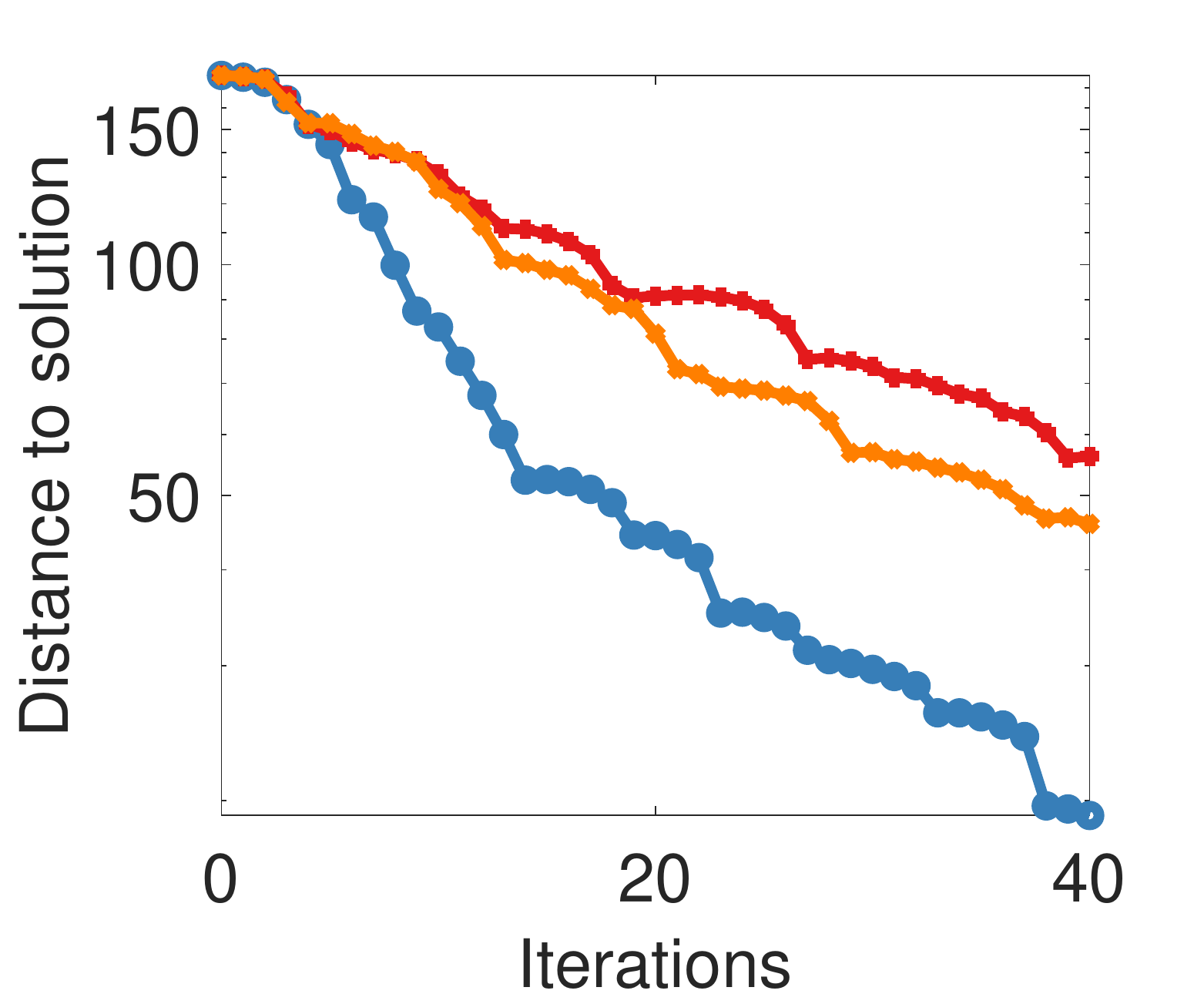}}
    \subfloat[\texttt{SynLow} (\texttt{Time})]{\includegraphics[width = 0.2\textwidth, height = 0.16\textwidth]{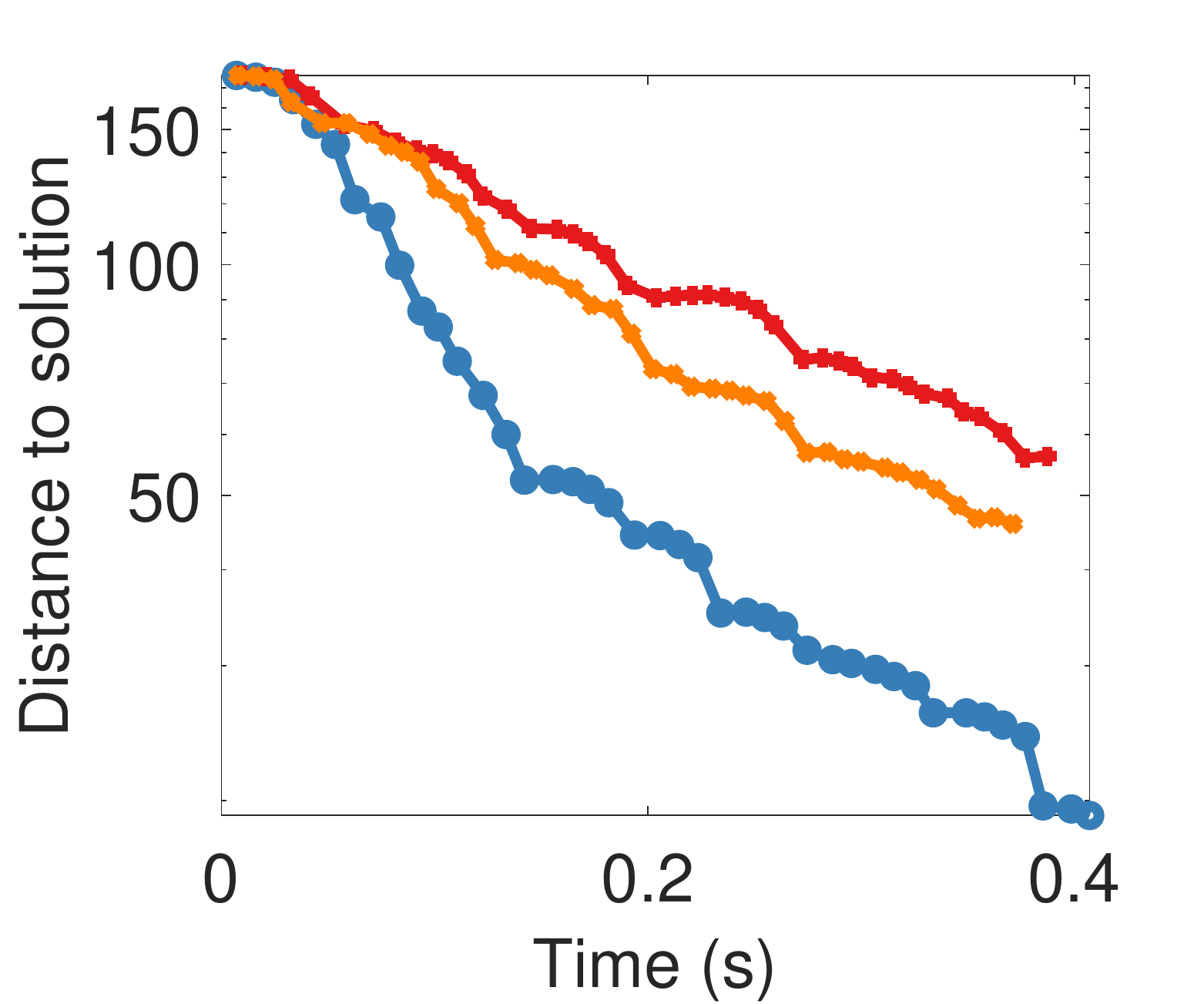}} 
    \caption{Riemannian conjugate gradient on trace regression problem (loss, distance to solution, runtime).} 
    \label{TR_RCG_figure}
\end{figure*}

\begin{figure*}[!th]
\captionsetup{justification=centering}
    \centering
    \subfloat[\texttt{SynFull} (\texttt{Loss})]{\includegraphics[width = 0.2\textwidth, height = 0.16\textwidth]{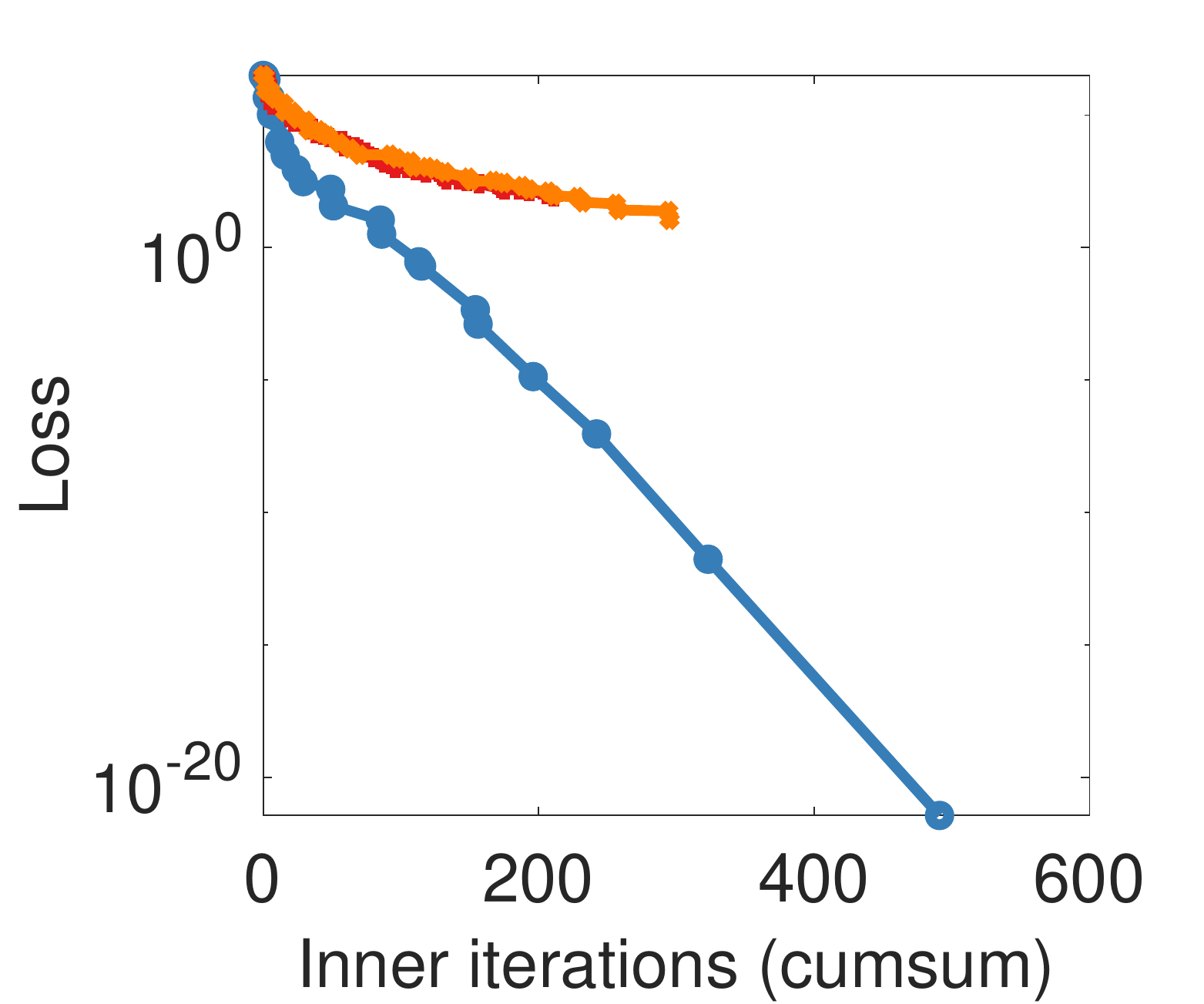}}
    \subfloat[\texttt{SynFull} (\texttt{Disttosol})]{\includegraphics[width = 0.2\textwidth, height = 0.16\textwidth]{TrReg/TrReg_SynFull_TR_disttosol.pdf}}
    \subfloat[\texttt{SynFull} (\texttt{Time})]{\includegraphics[width = 0.2\textwidth, height = 0.16\textwidth]{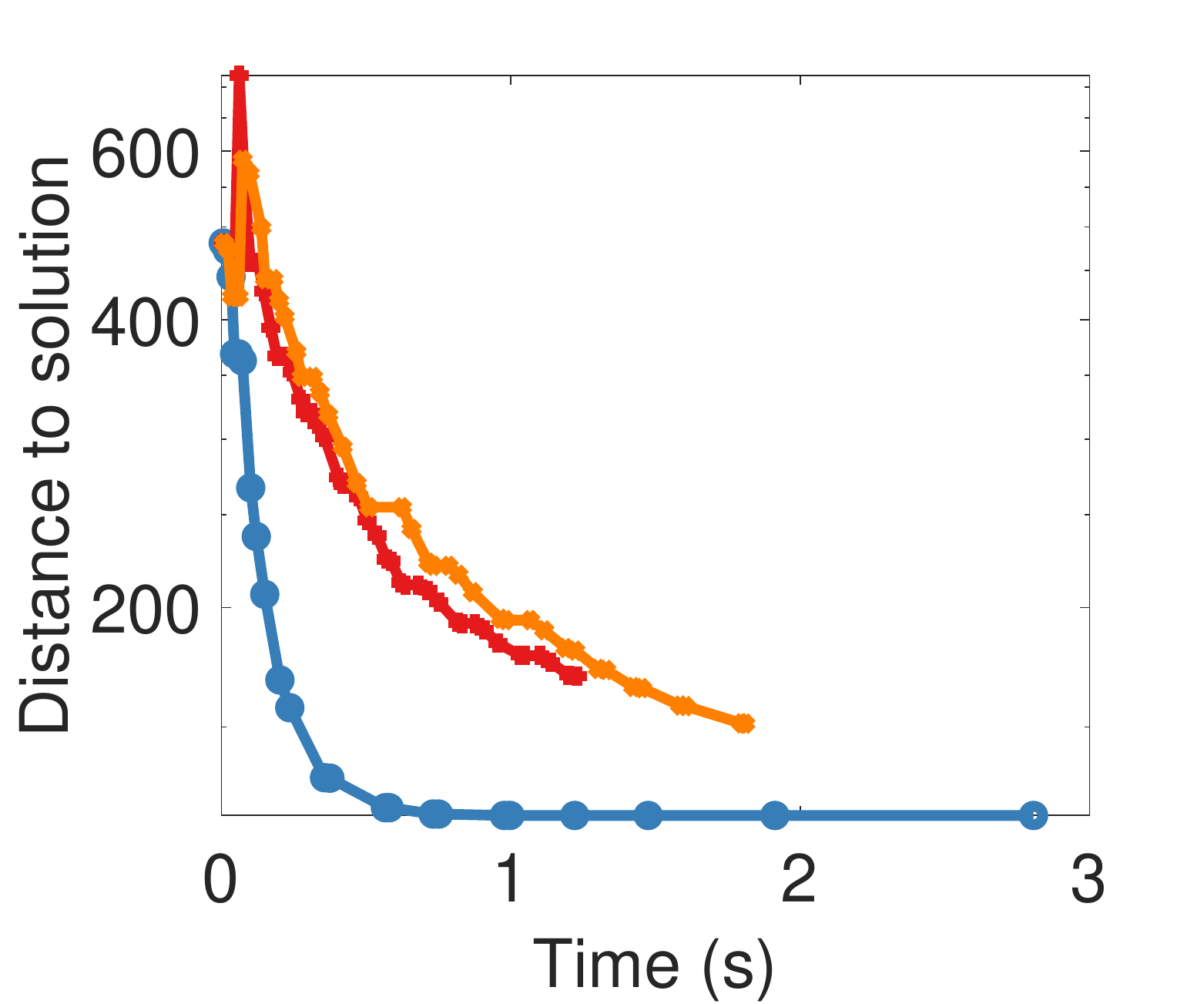}} 
    \\
    \subfloat[\texttt{SynLow} (\texttt{Loss})]{\includegraphics[width = 0.2\textwidth, height = 0.16\textwidth]{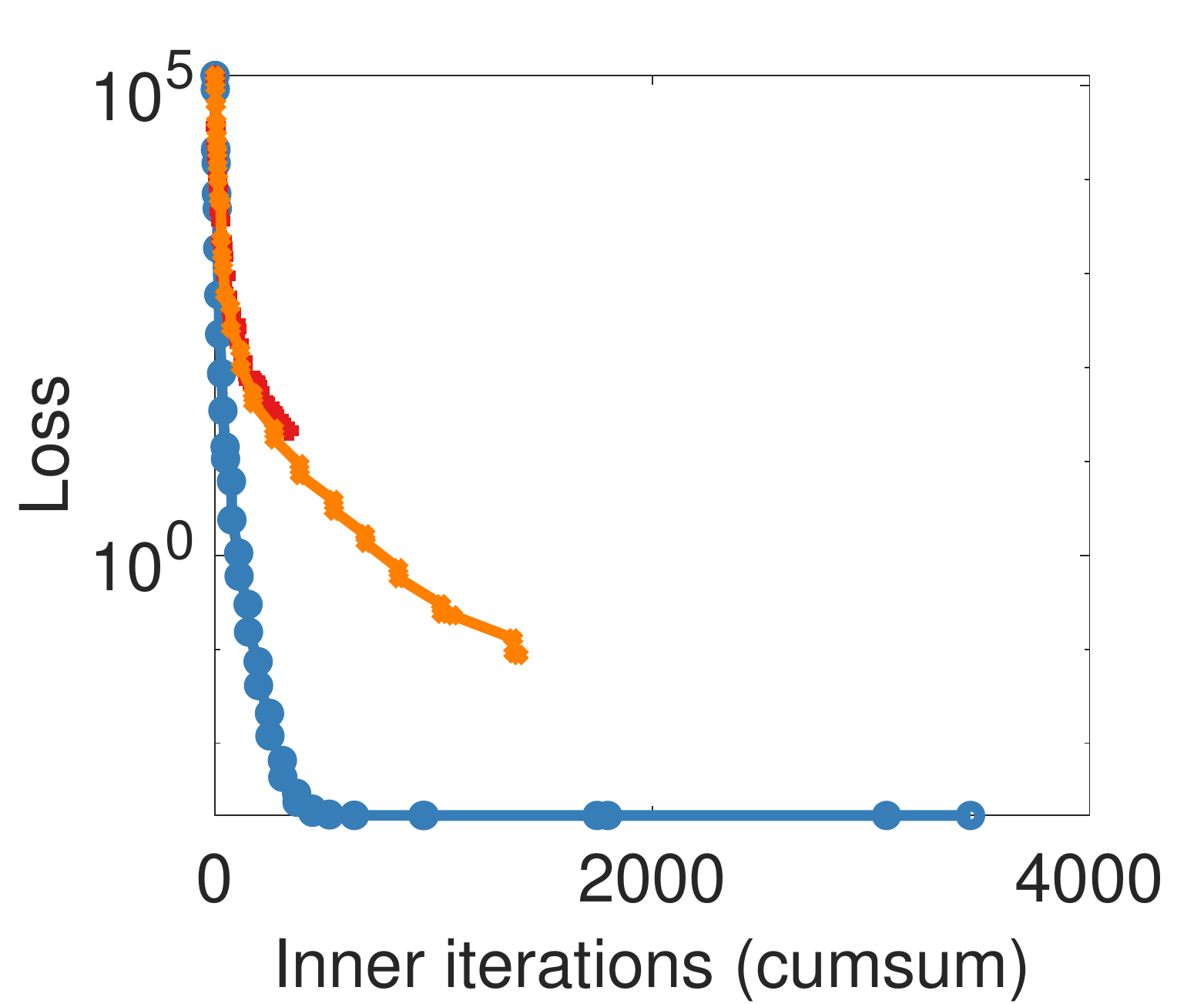}}
    \subfloat[\texttt{SynLow} (\texttt{Disttosol})]{\includegraphics[width = 0.2\textwidth, height = 0.16\textwidth]{TrReg/TrReg_SynLow_TR_disttosol.pdf}}
    \subfloat[\texttt{SynLow} (\texttt{Time})]{\includegraphics[width = 0.2\textwidth, height = 0.16\textwidth]{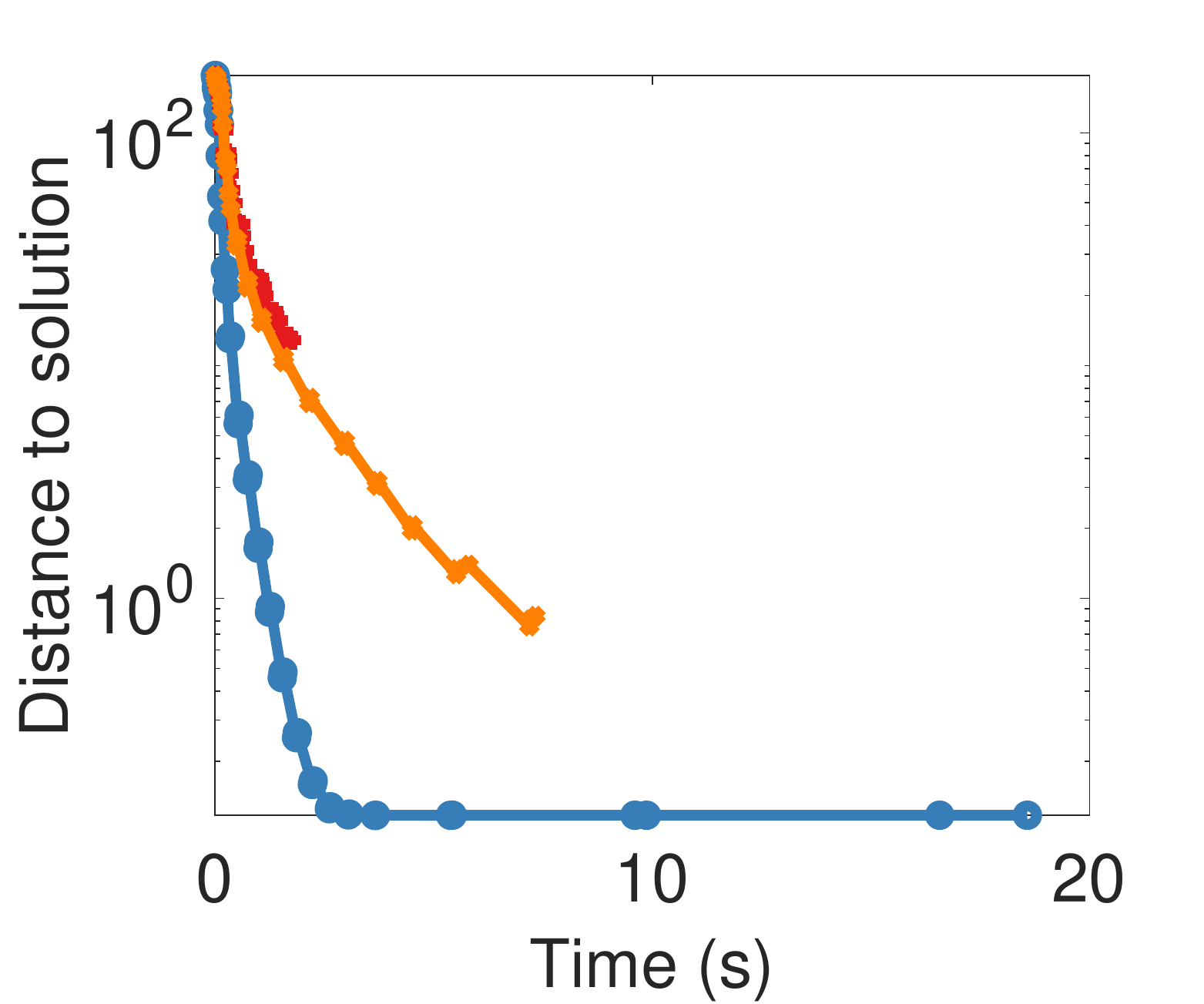}} 
    \caption{Riemannian trust region on trace regression problem (loss, distance to solution, runtime).} 
    \label{TR_RTR_figure}
\end{figure*}

\begin{figure*}[!th]
\captionsetup{justification=centering}
    \centering
    \subfloat[\texttt{SynFull} (\texttt{Run1})]{\includegraphics[width = 0.2\textwidth, height = 0.16\textwidth]{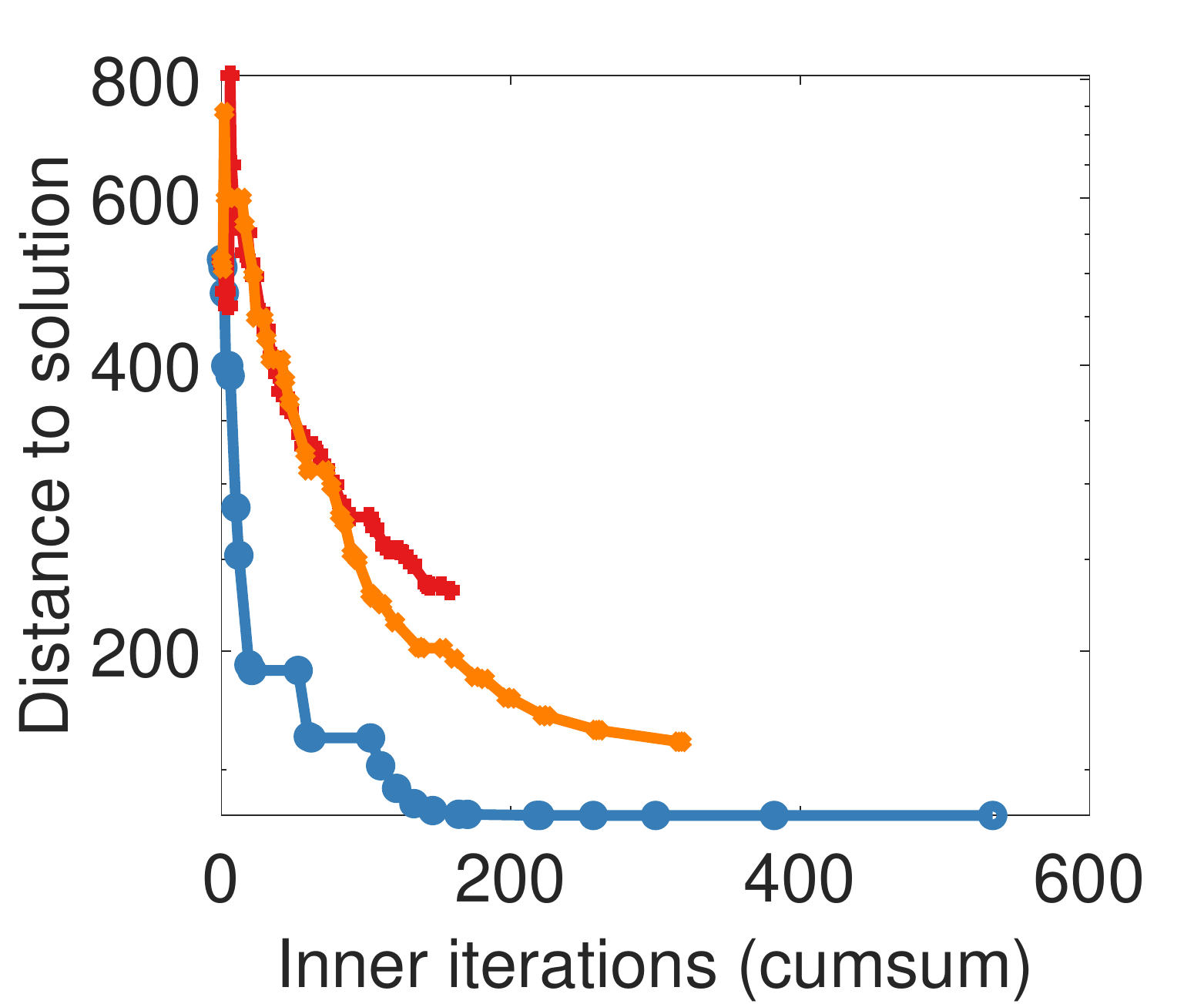}}
    \subfloat[(\texttt{Run2})]{\includegraphics[width = 0.2\textwidth, height = 0.16\textwidth]{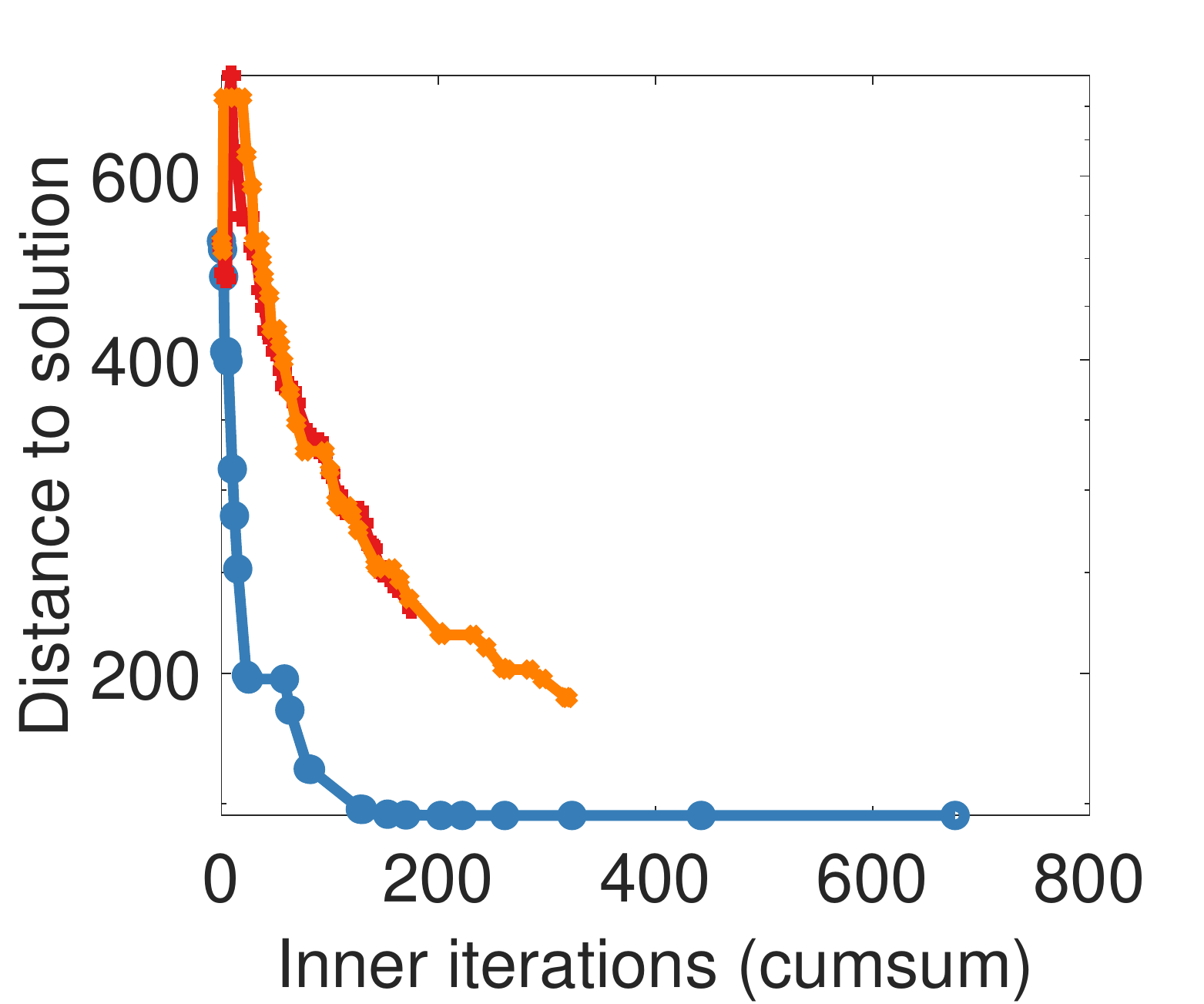}}
    \subfloat[(\texttt{Run3})]{\includegraphics[width = 0.2\textwidth, height = 0.16\textwidth]{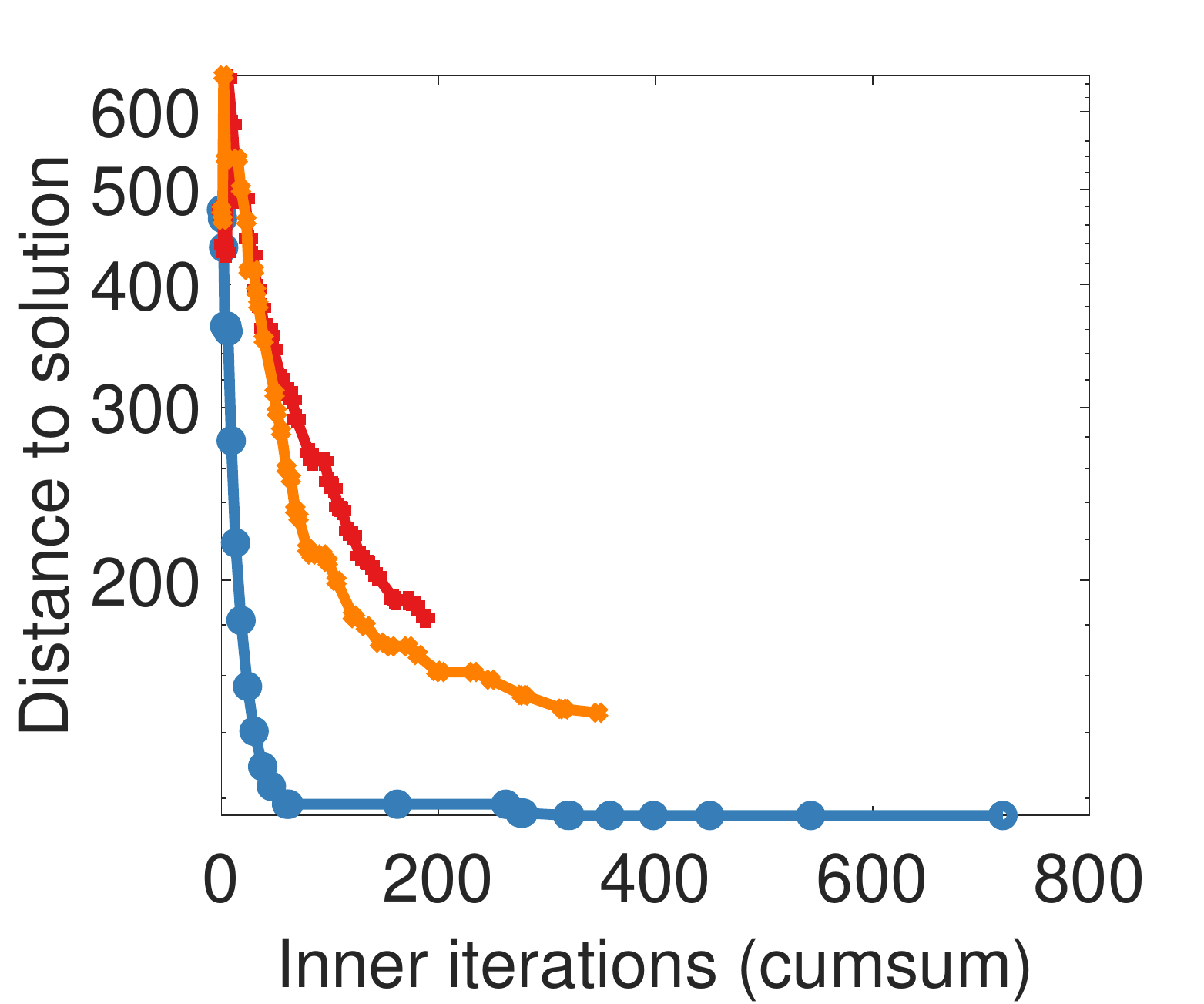}}
    \subfloat[(\texttt{Run4})]{\includegraphics[width = 0.2\textwidth, height = 0.16\textwidth]{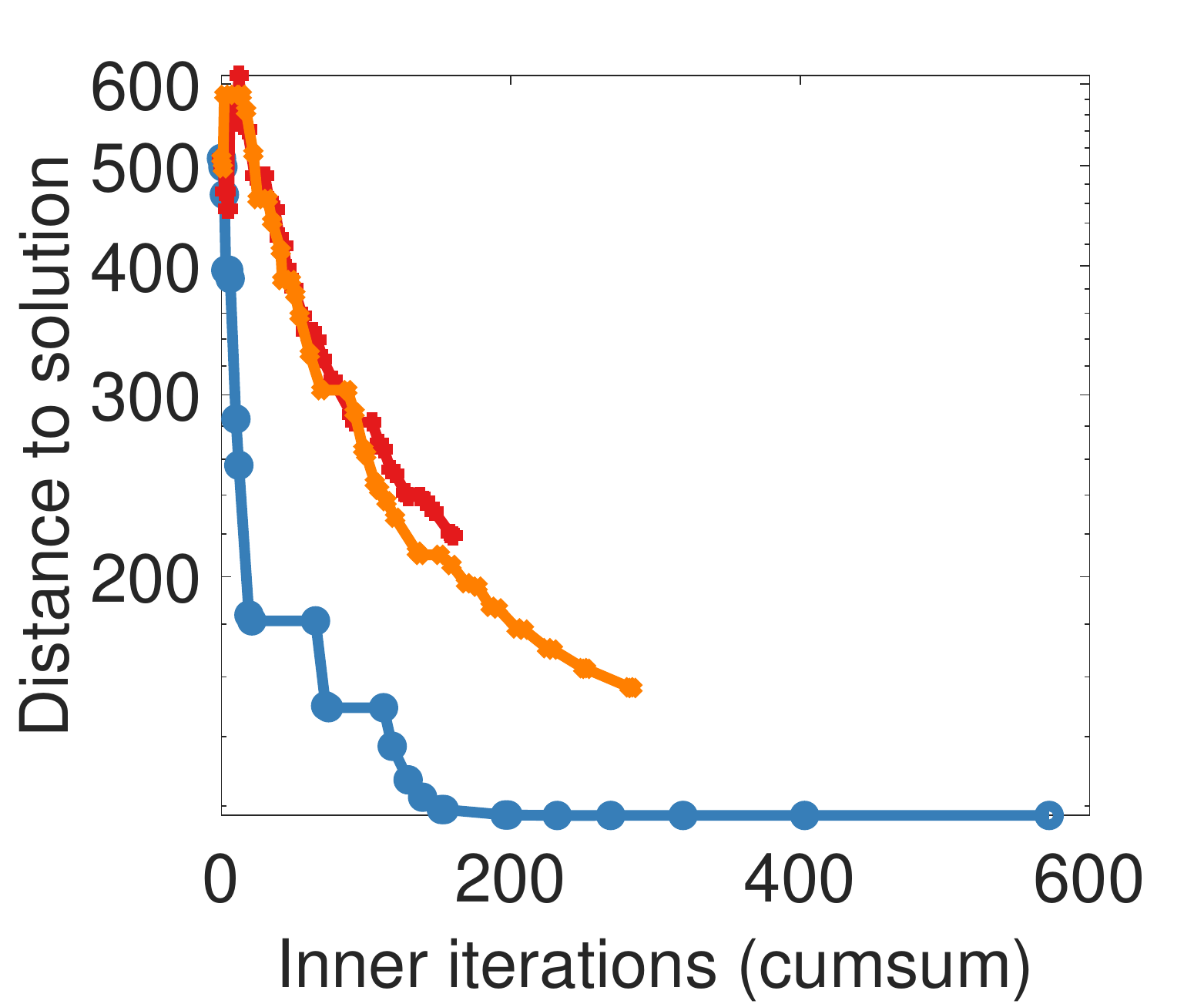}}
    \subfloat[(\texttt{Run5})]{\includegraphics[width = 0.2\textwidth, height = 0.16\textwidth]{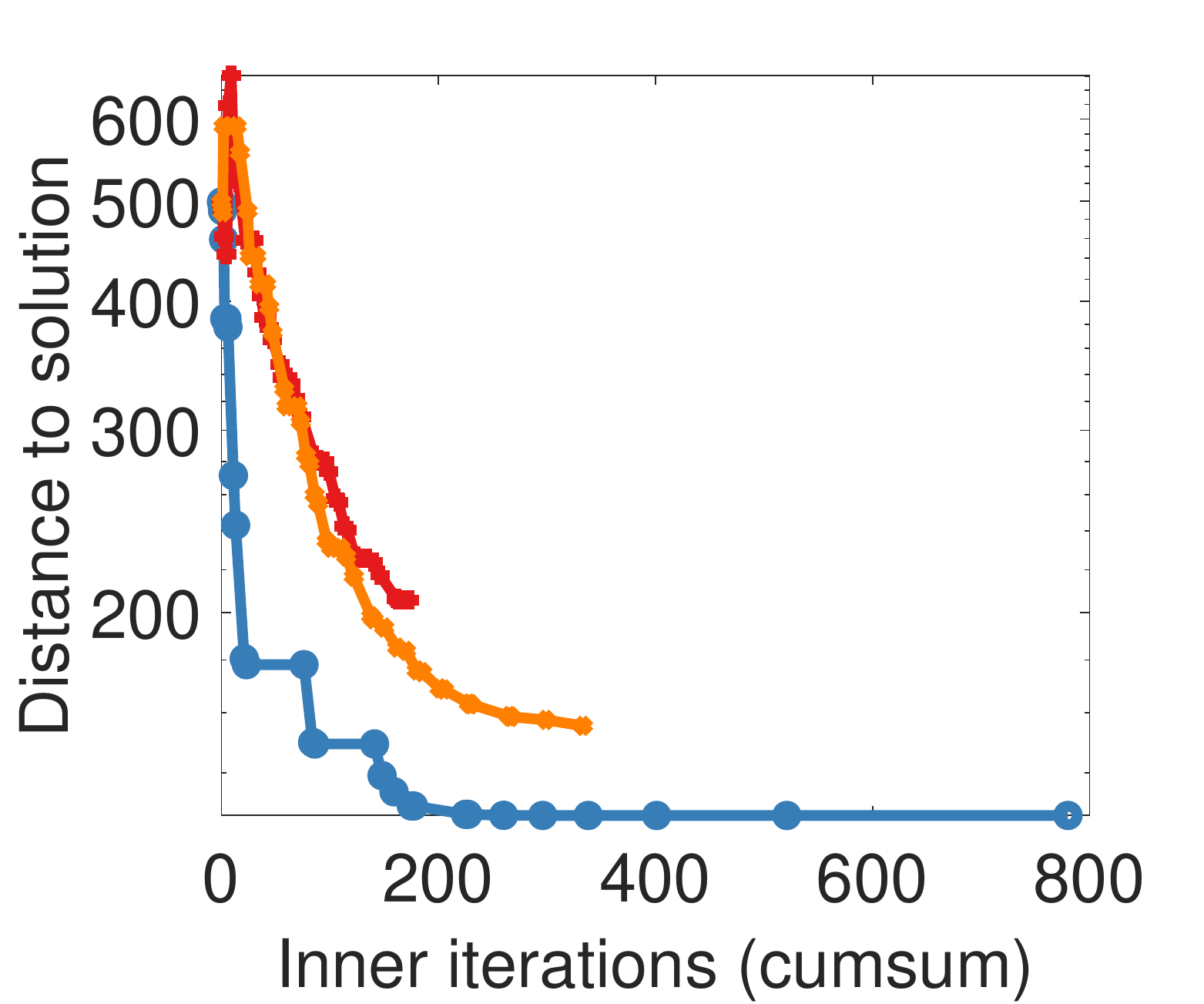}}
    \\
    \subfloat[\texttt{SynLow} (\texttt{Run1})]{\includegraphics[width = 0.2\textwidth, height = 0.16\textwidth]{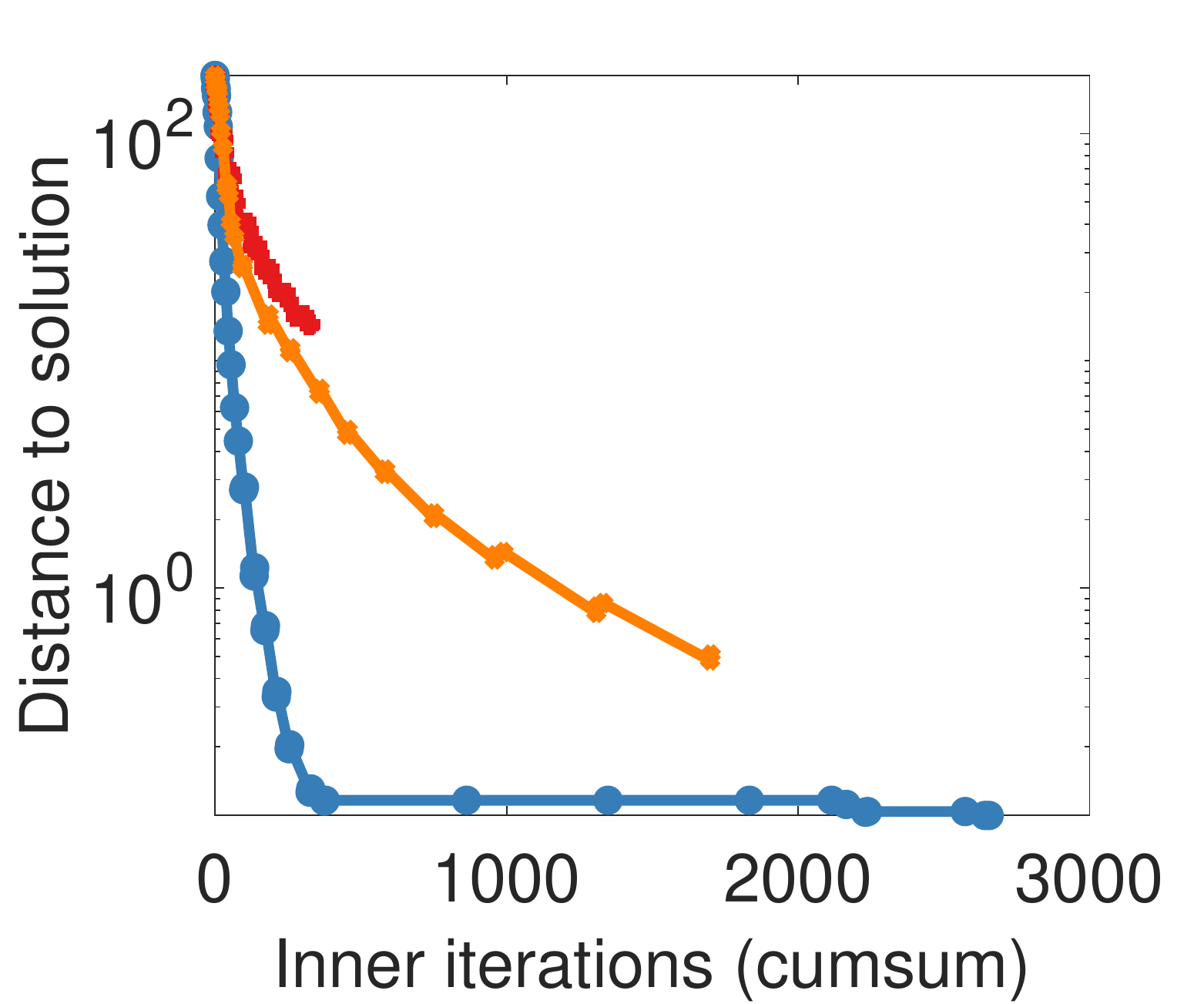}}
    \subfloat[(\texttt{Run2})]{\includegraphics[width = 0.2\textwidth, height = 0.16\textwidth]{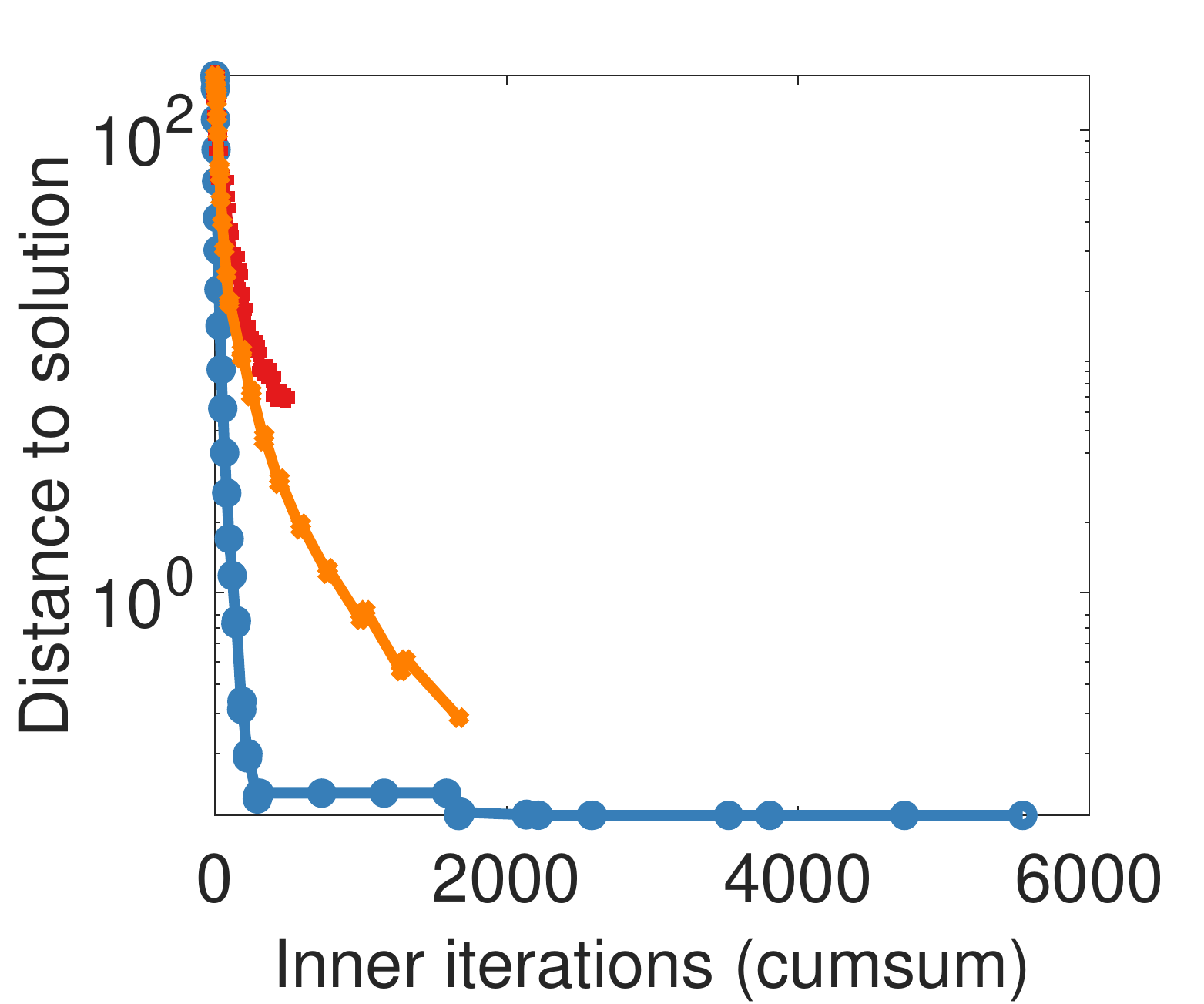}}
    \subfloat[(\texttt{Run3})]{\includegraphics[width = 0.2\textwidth, height = 0.16\textwidth]{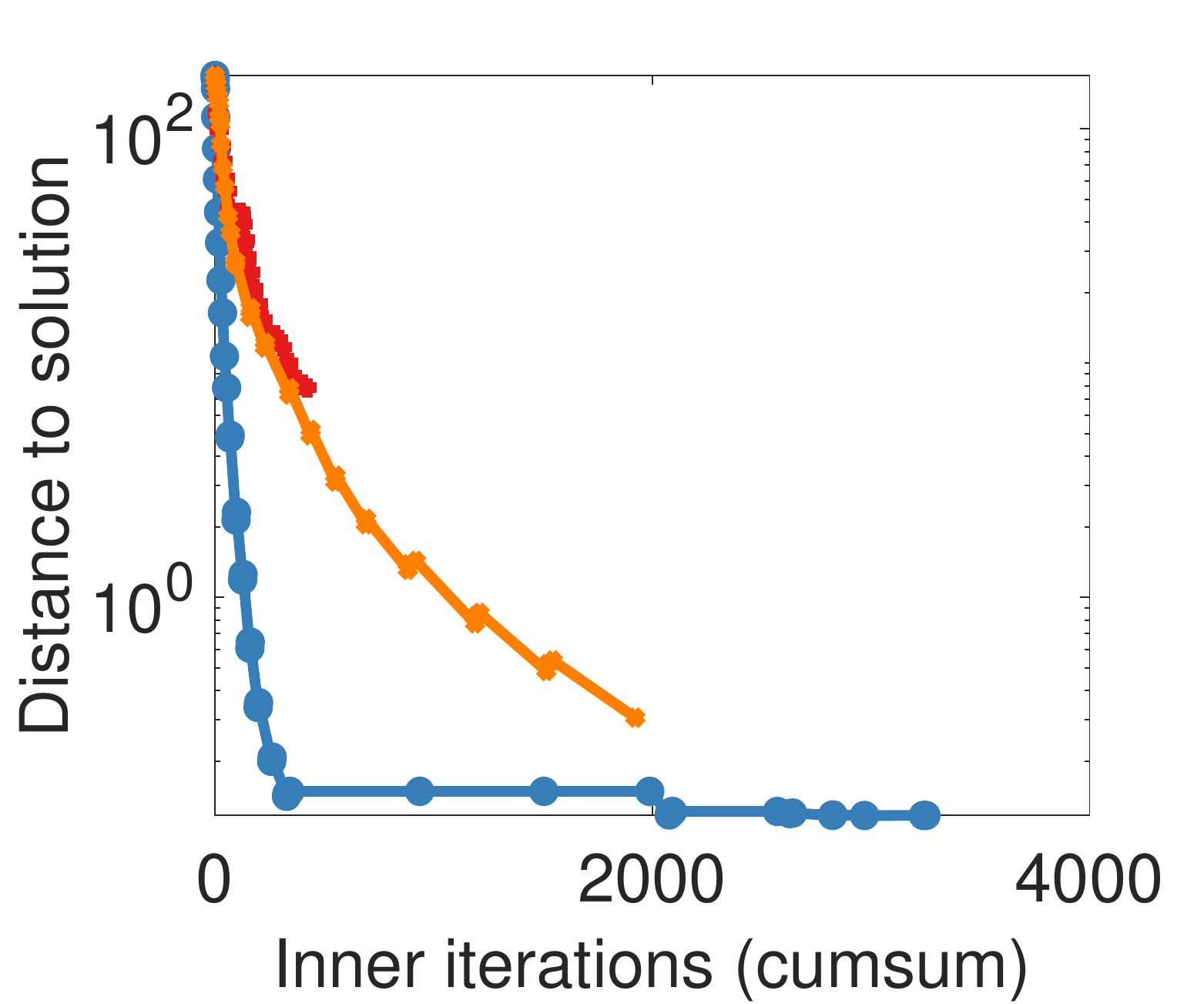}}
    \subfloat[(\texttt{Run4})]{\includegraphics[width = 0.2\textwidth, height = 0.16\textwidth]{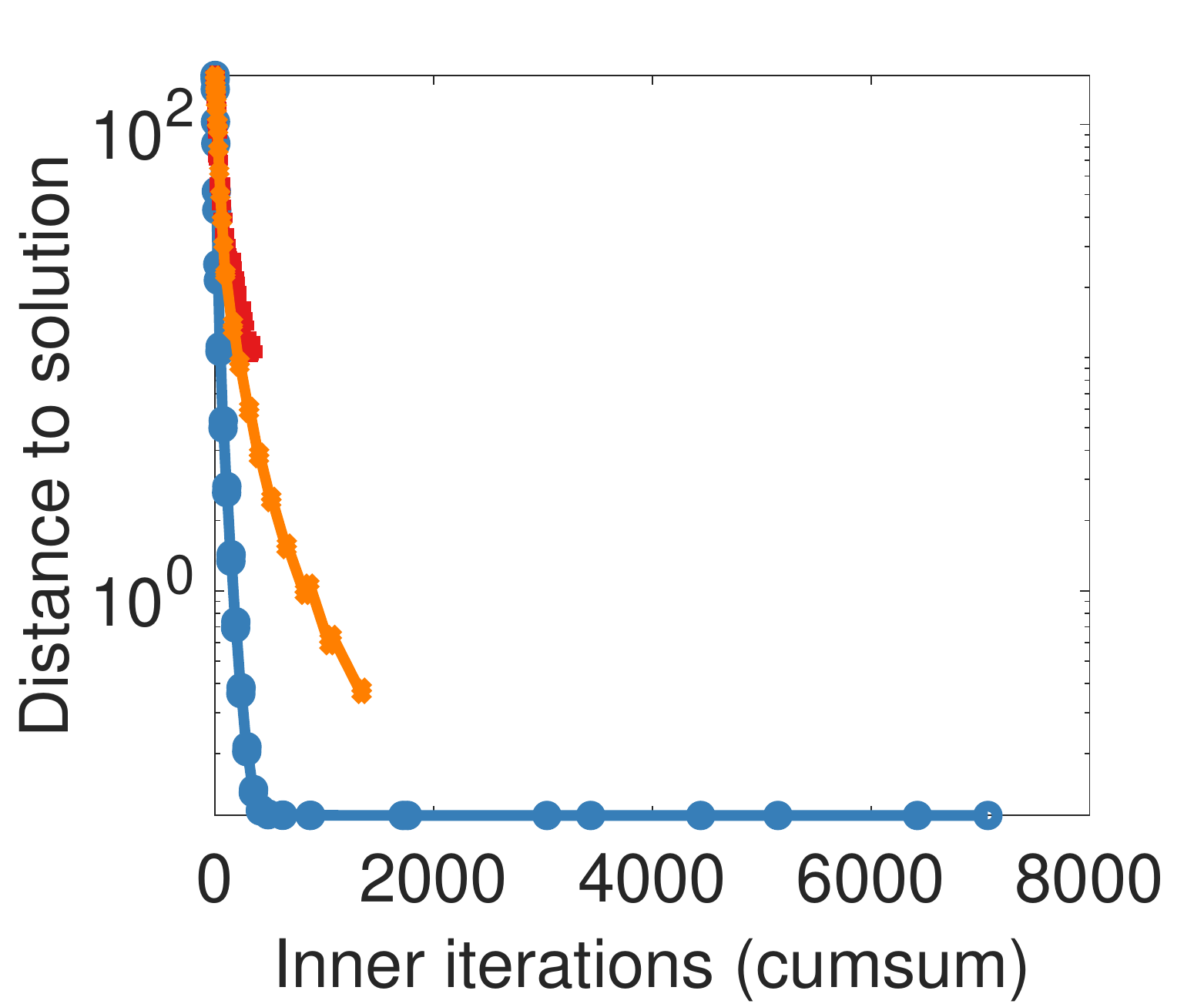}}
    \subfloat[(\texttt{Run5})]{\includegraphics[width = 0.2\textwidth, height = 0.16\textwidth]{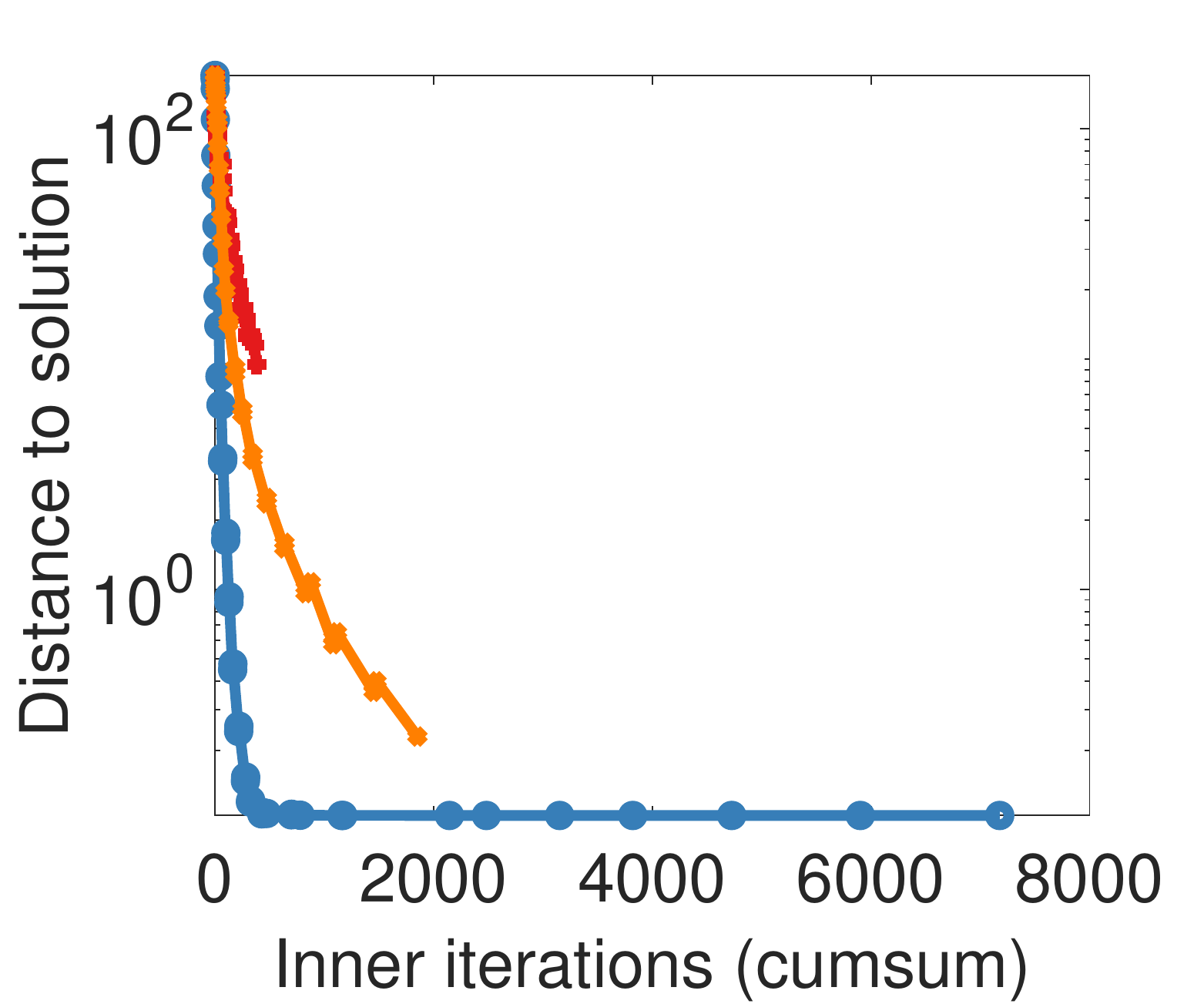}}
    \caption{Sensitivity to randomness on trace regression problem with Riemannian trust region.} 
    \label{TR_rng_figure}
\end{figure*}

\begin{figure*}[!th]
\captionsetup{justification=centering}
    \centering
    \subfloat[\texttt{Iris} (\texttt{Loss})]{\includegraphics[width = 0.2\textwidth, height = 0.16\textwidth]{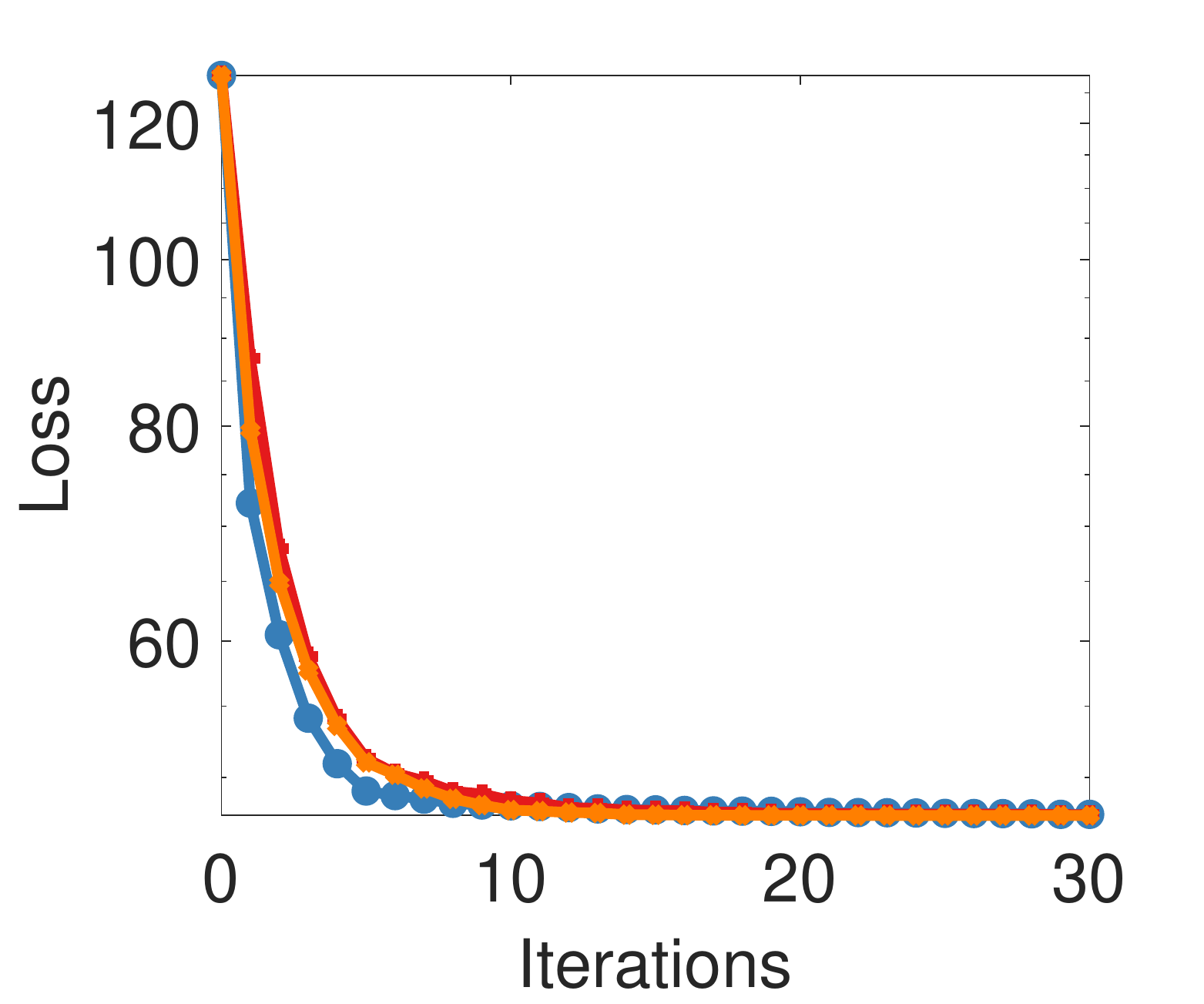}} 
    \subfloat[\texttt{Iris} (\texttt{Egradnorm})]{\includegraphics[width = 0.2\textwidth, height = 0.16\textwidth]{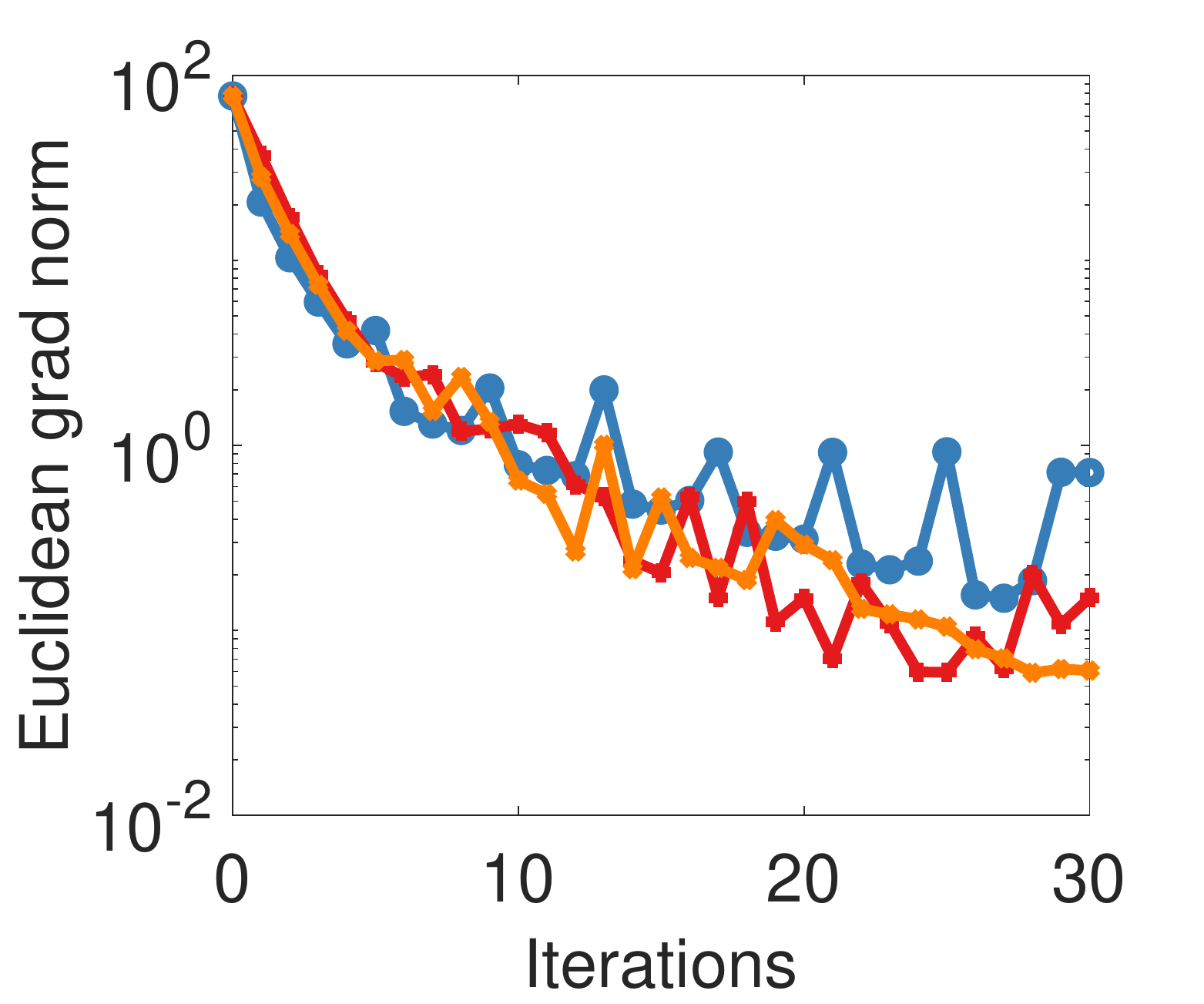}} 
    \subfloat[\texttt{Iris} (\texttt{Time})]{\includegraphics[width = 0.2\textwidth, height = 0.16\textwidth]{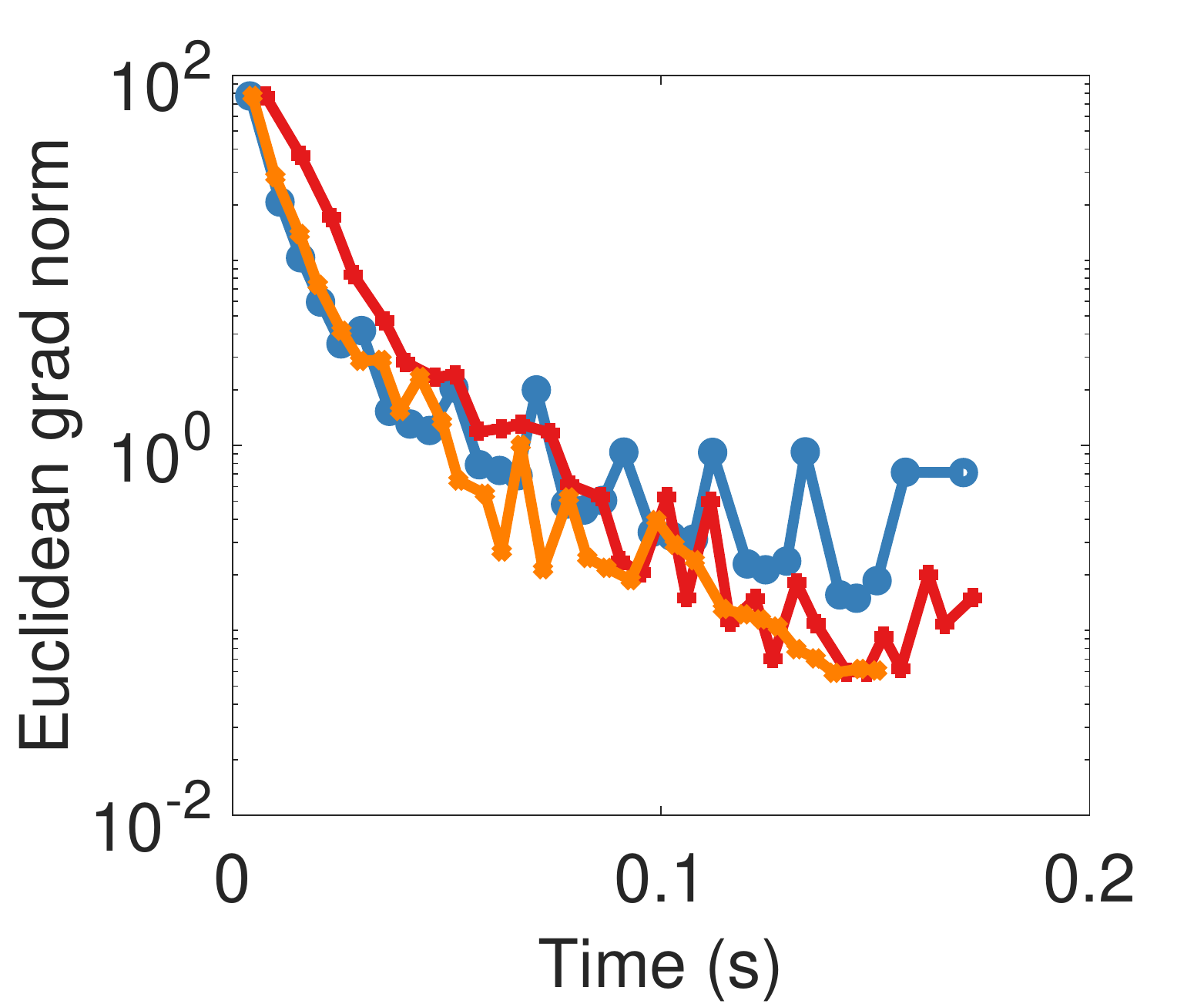}} 
    \subfloat[\texttt{Balance} (\texttt{Loss})]{\includegraphics[width = 0.2\textwidth, height = 0.16\textwidth]{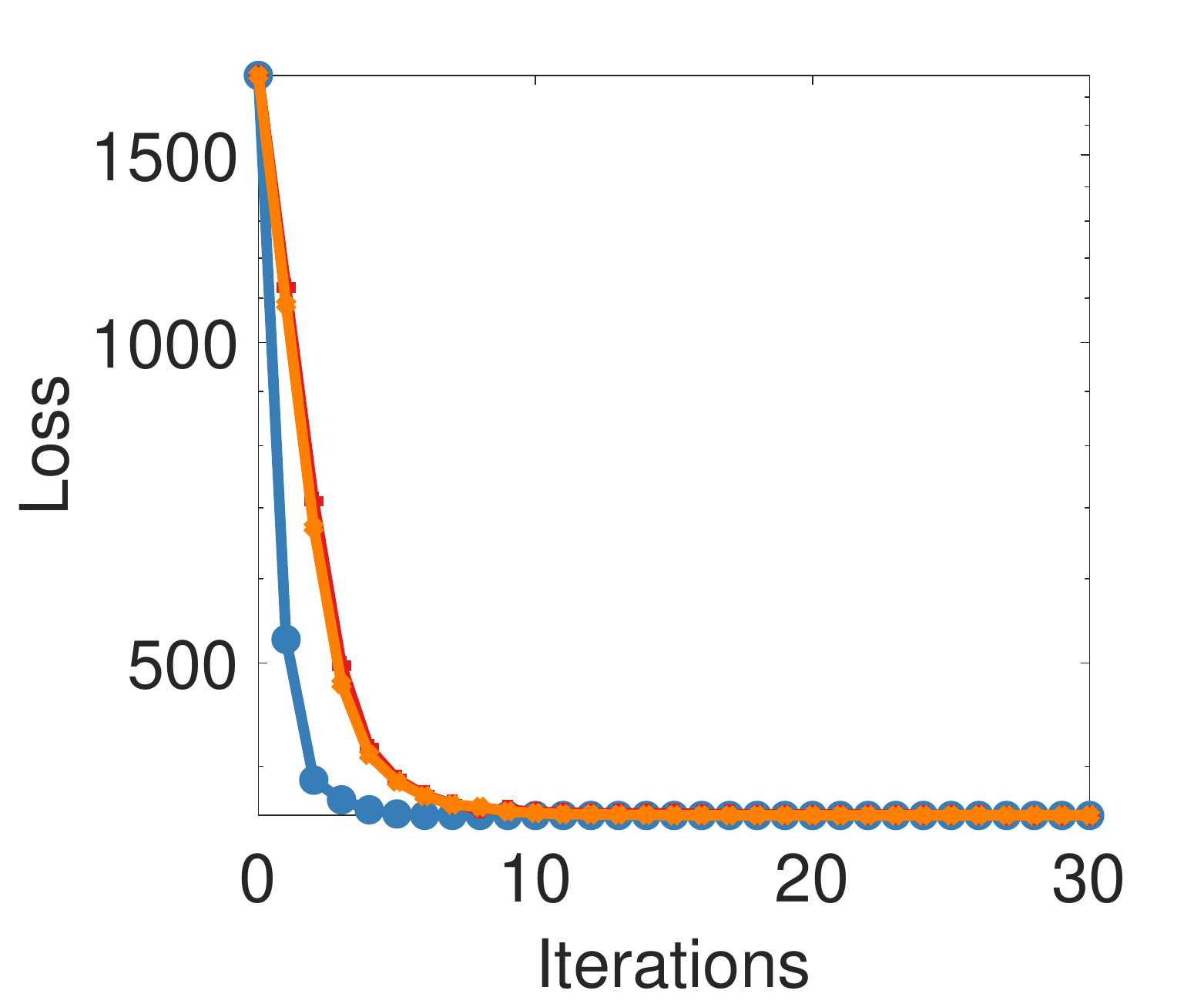}} 
    \subfloat[\texttt{Balance} (\texttt{Egradnorm})]{\includegraphics[width = 0.2\textwidth, height = 0.16\textwidth]{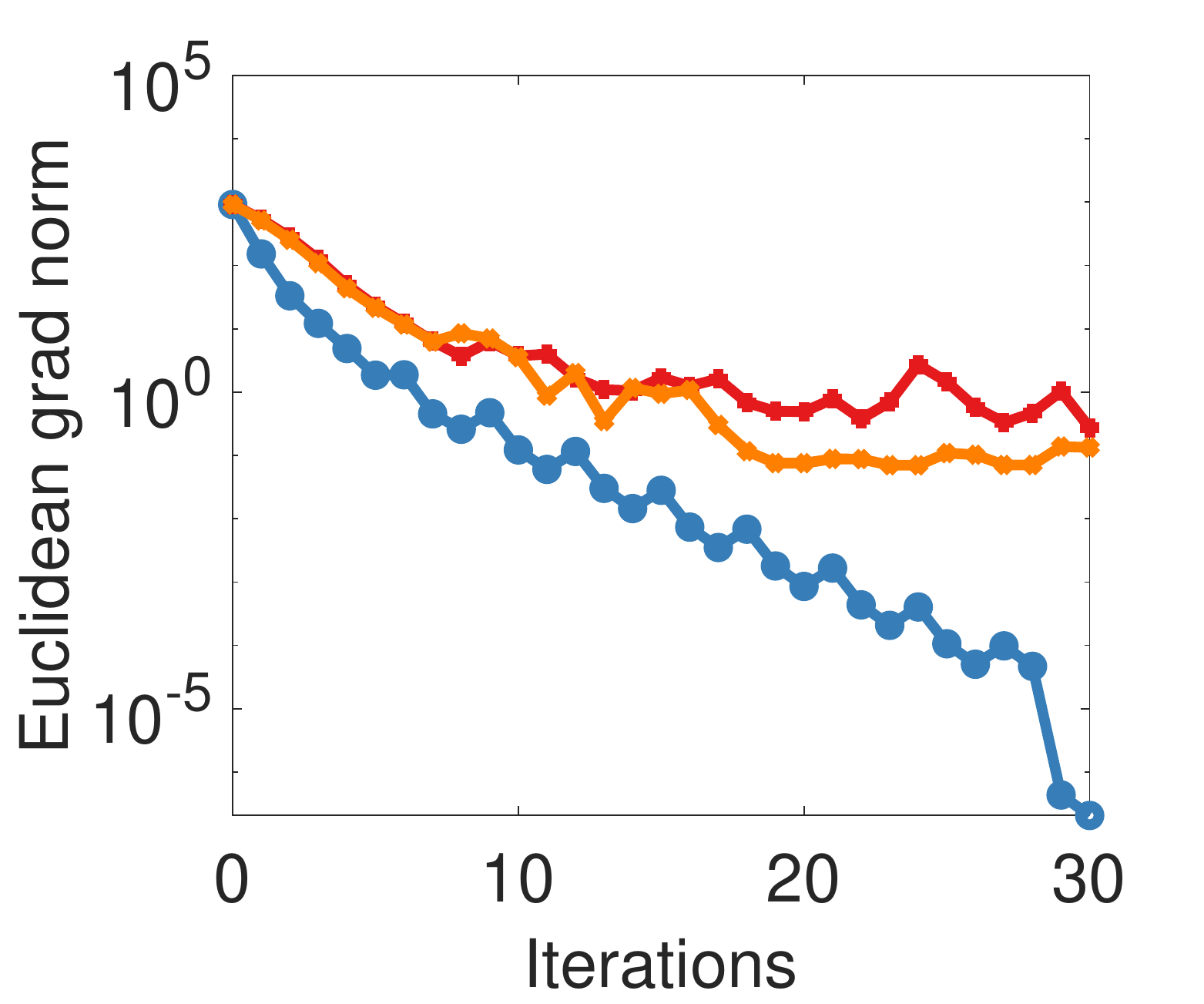}}
    \\
    \subfloat[\texttt{Balance} (\texttt{Time})]{\includegraphics[width = 0.2\textwidth, height = 0.16\textwidth]{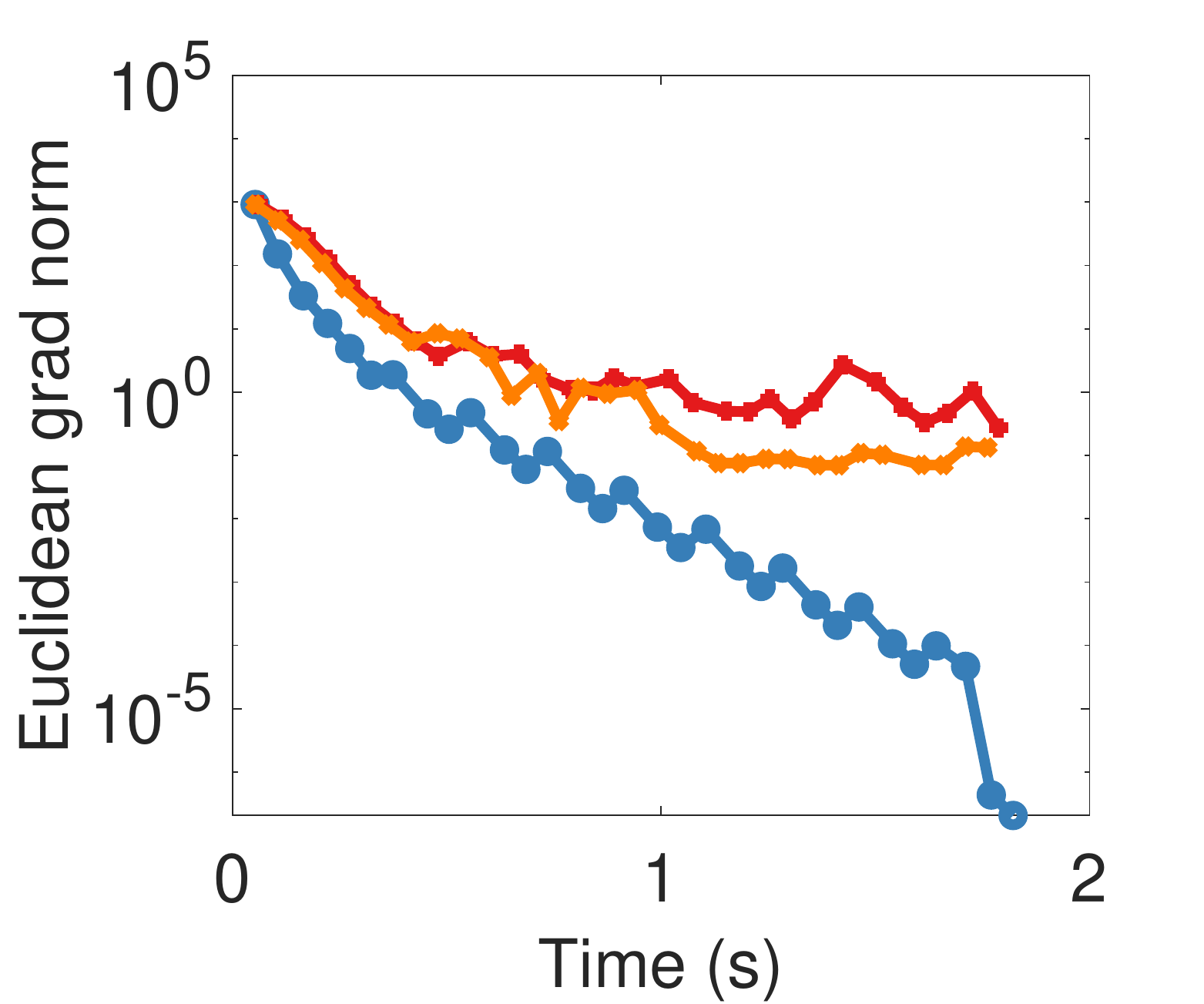}}
    \subfloat[\texttt{Glass} (\texttt{Loss})]{\includegraphics[width = 0.2\textwidth, height = 0.16\textwidth]{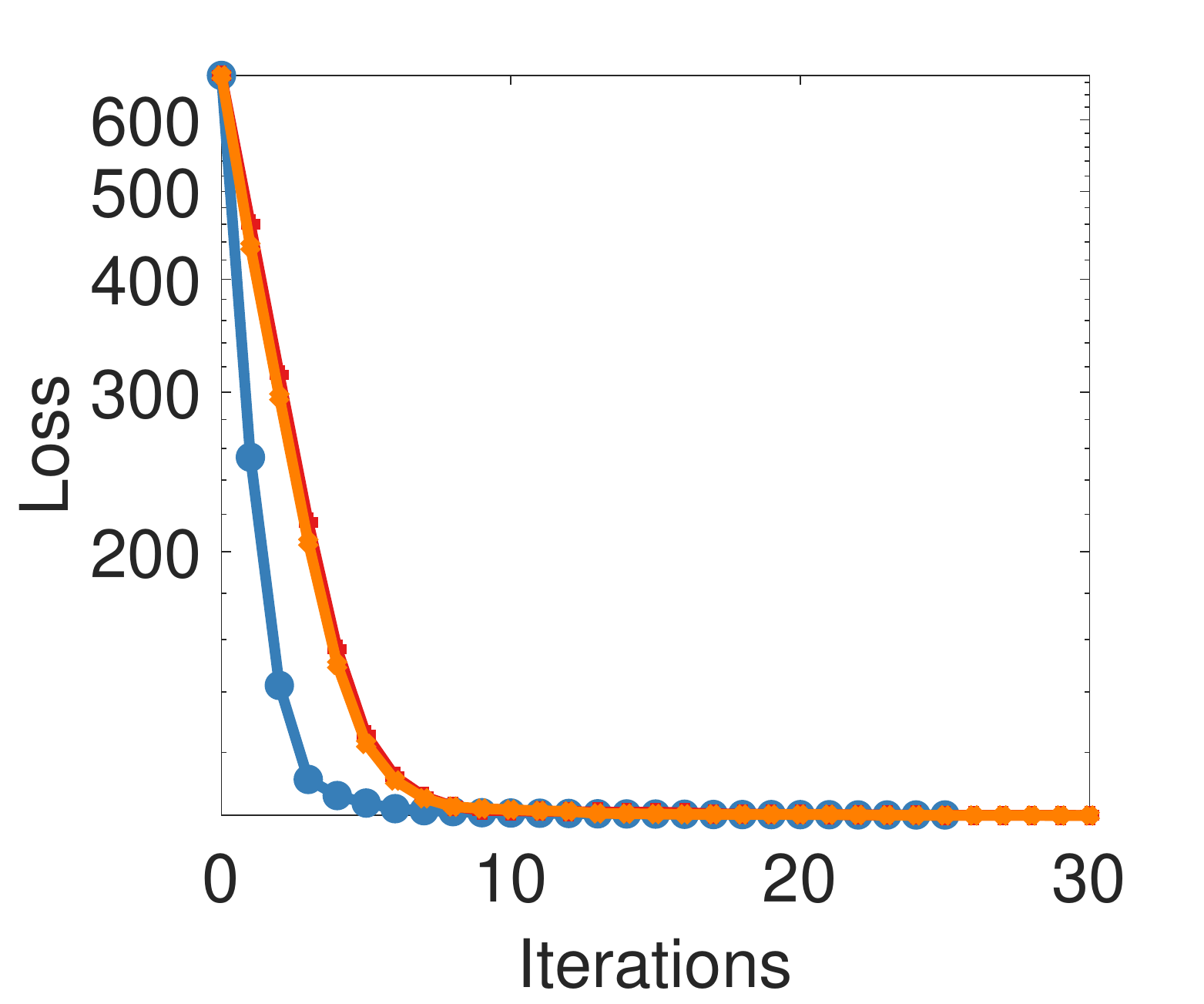}}
    \subfloat[\texttt{Glass} (\texttt{Egradnorm})]{\includegraphics[width = 0.2\textwidth, height = 0.16\textwidth]{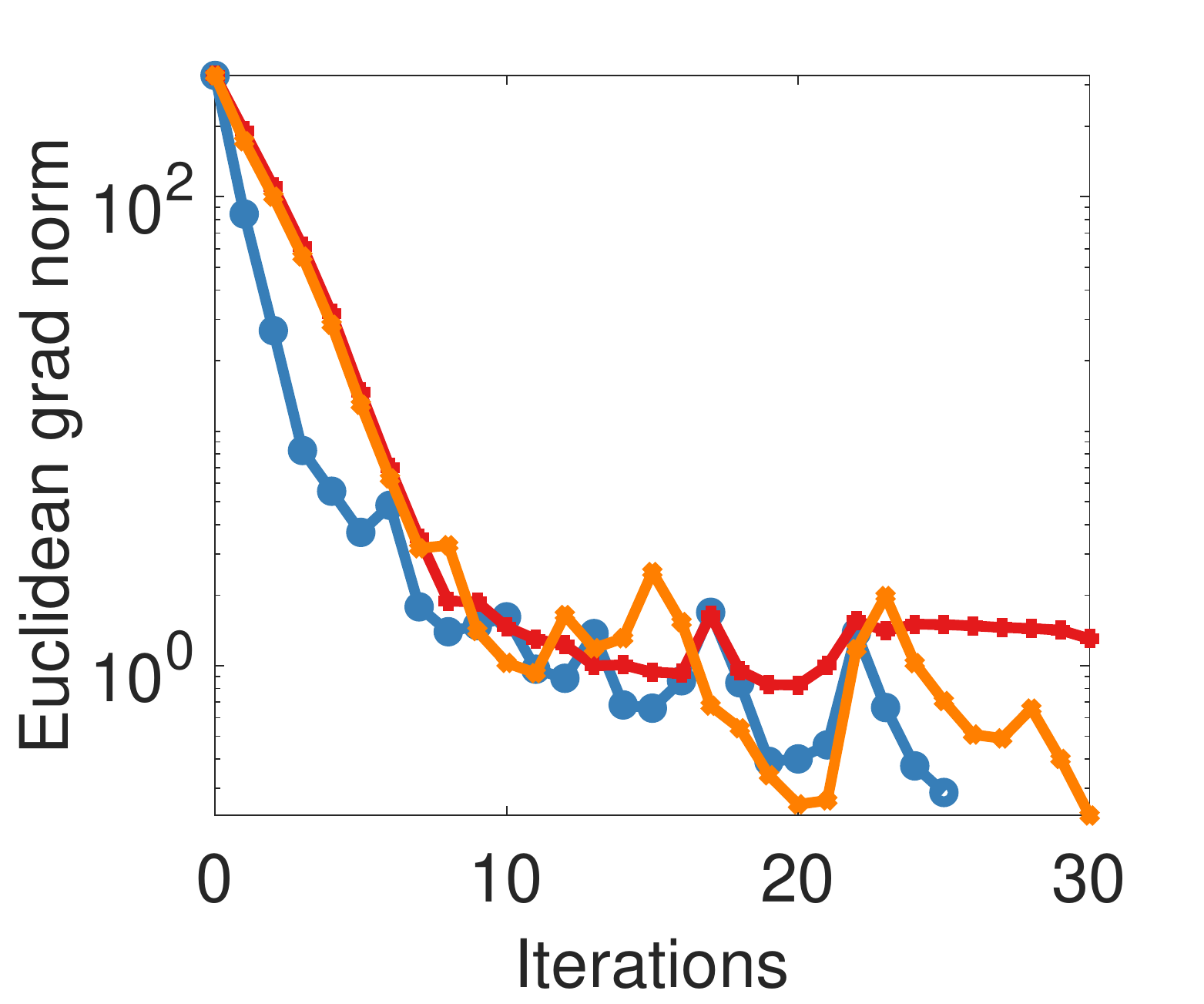}}
    \subfloat[\texttt{Glass} (\texttt{Time})]{\includegraphics[width = 0.2\textwidth, height = 0.16\textwidth]{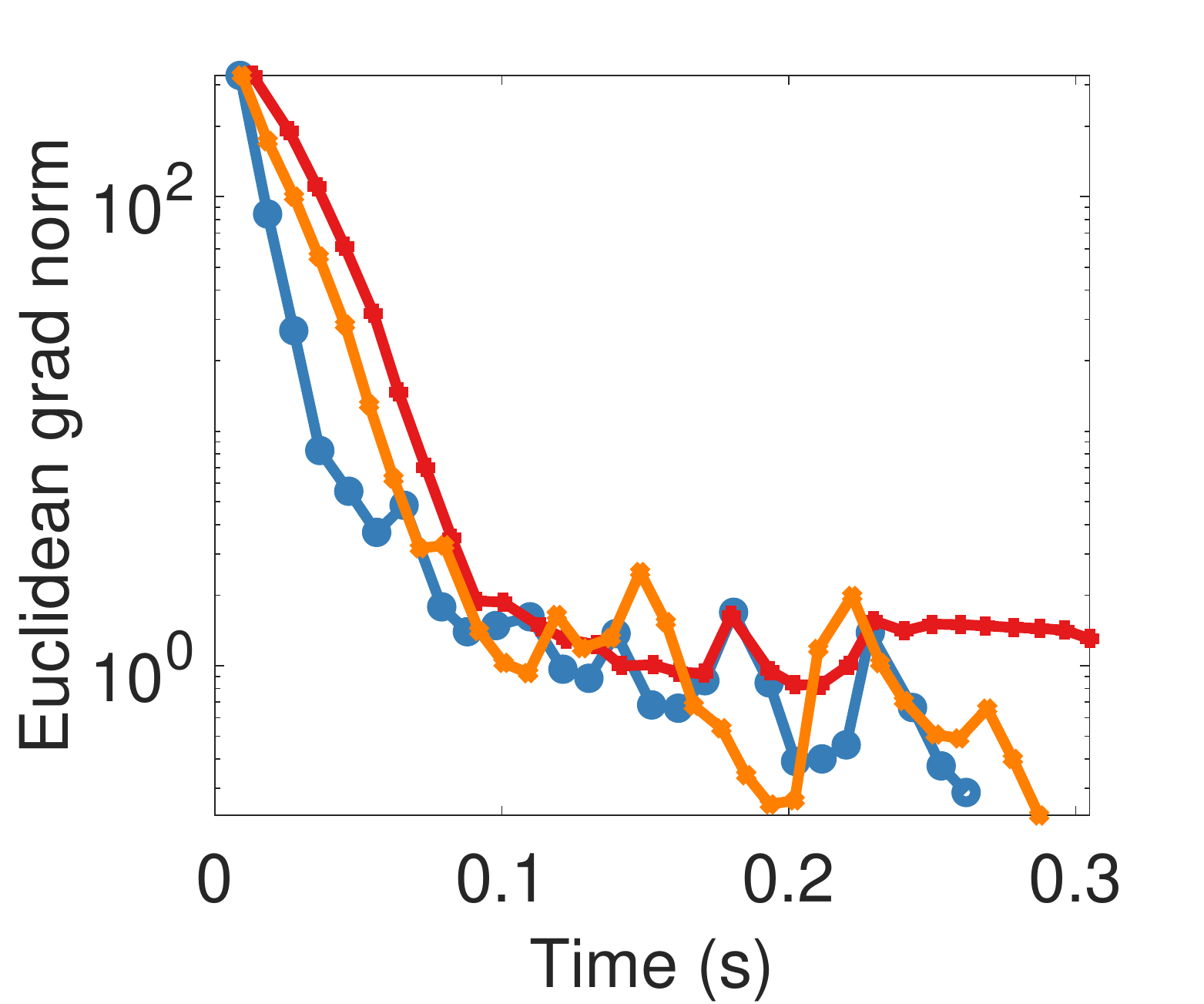}}
    \subfloat[\texttt{Phoneme} (\texttt{Loss})]{\includegraphics[width = 0.2\textwidth, height = 0.16\textwidth]{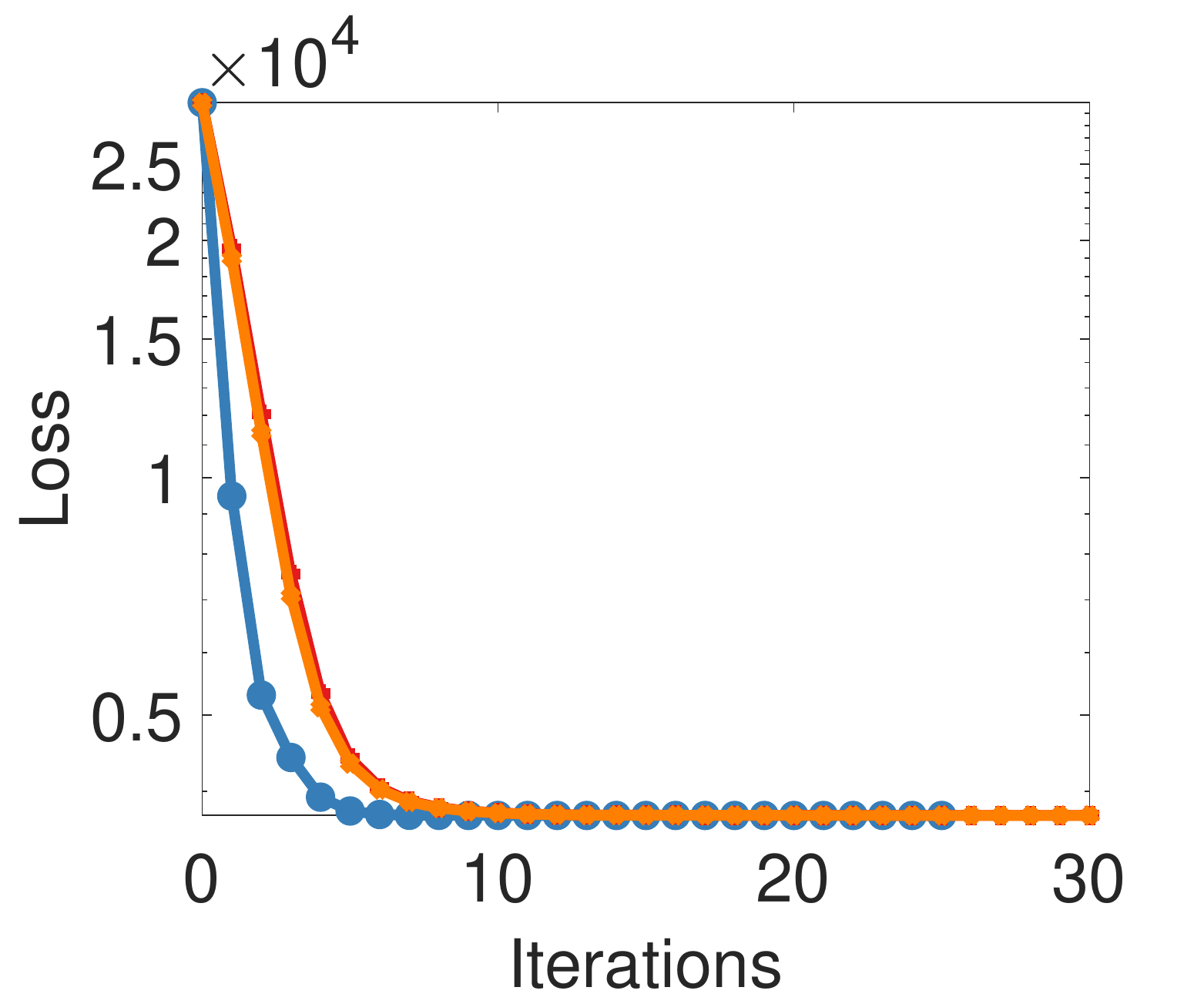}}
    \\
    \subfloat[\texttt{Phoneme} (\texttt{Egradnorm})]{\includegraphics[width = 0.2\textwidth, height = 0.16\textwidth]{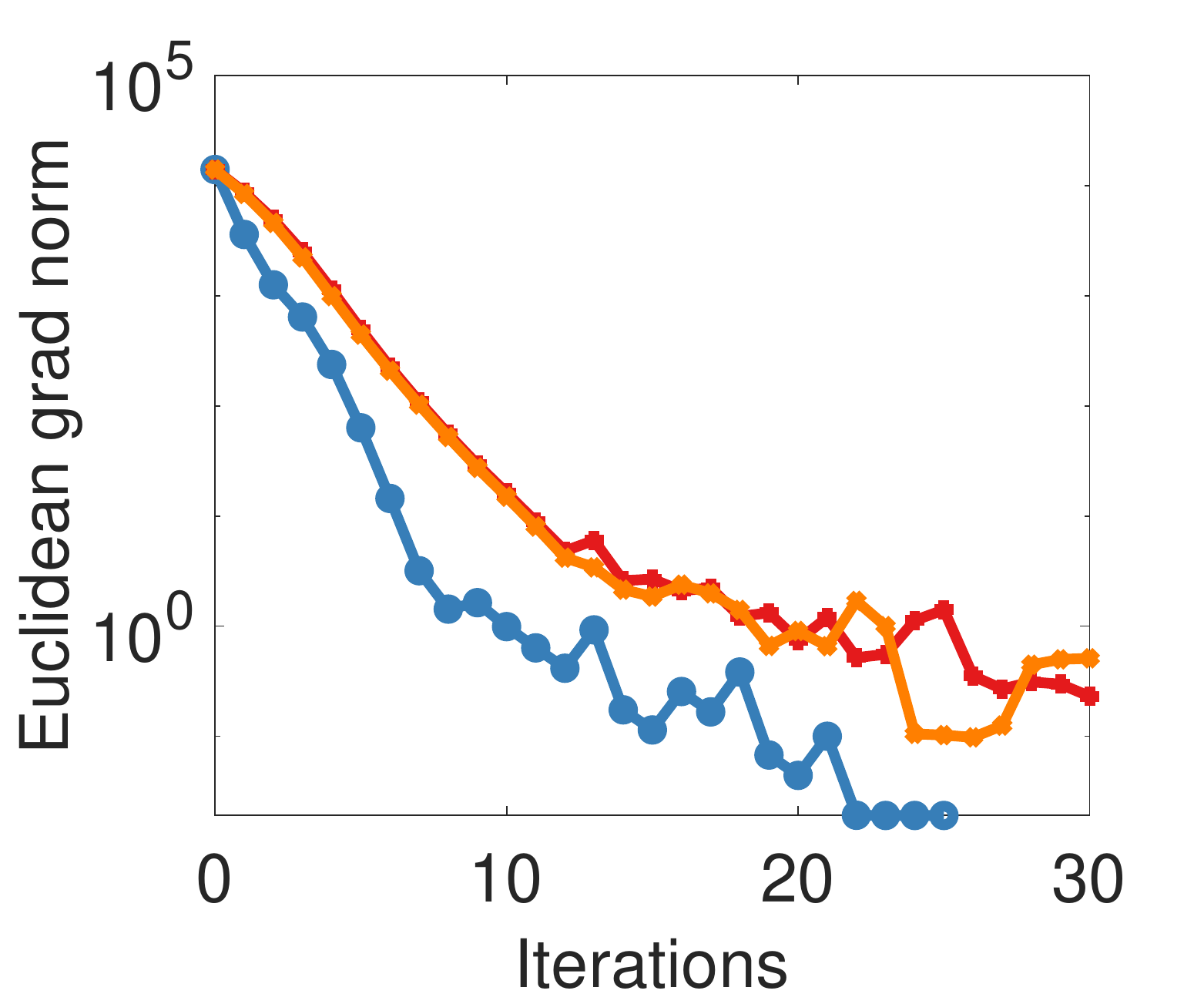}} 
    \subfloat[\texttt{Phoneme} (\texttt{Time})]{\includegraphics[width = 0.2\textwidth, height = 0.16\textwidth]{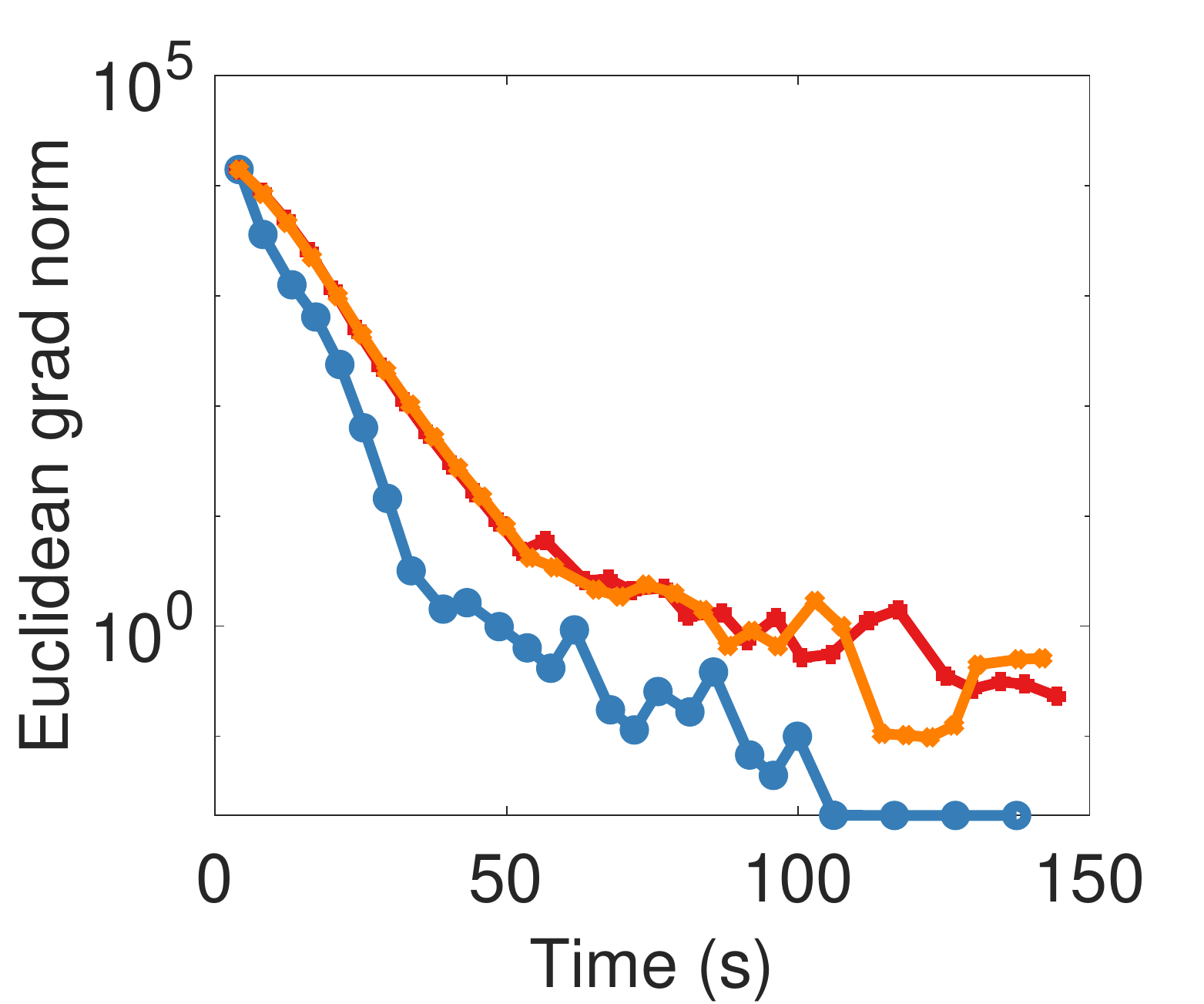}} 
    \subfloat[\texttt{Newthyroid} (\texttt{Loss})]{\includegraphics[width = 0.2\textwidth, height = 0.16\textwidth]{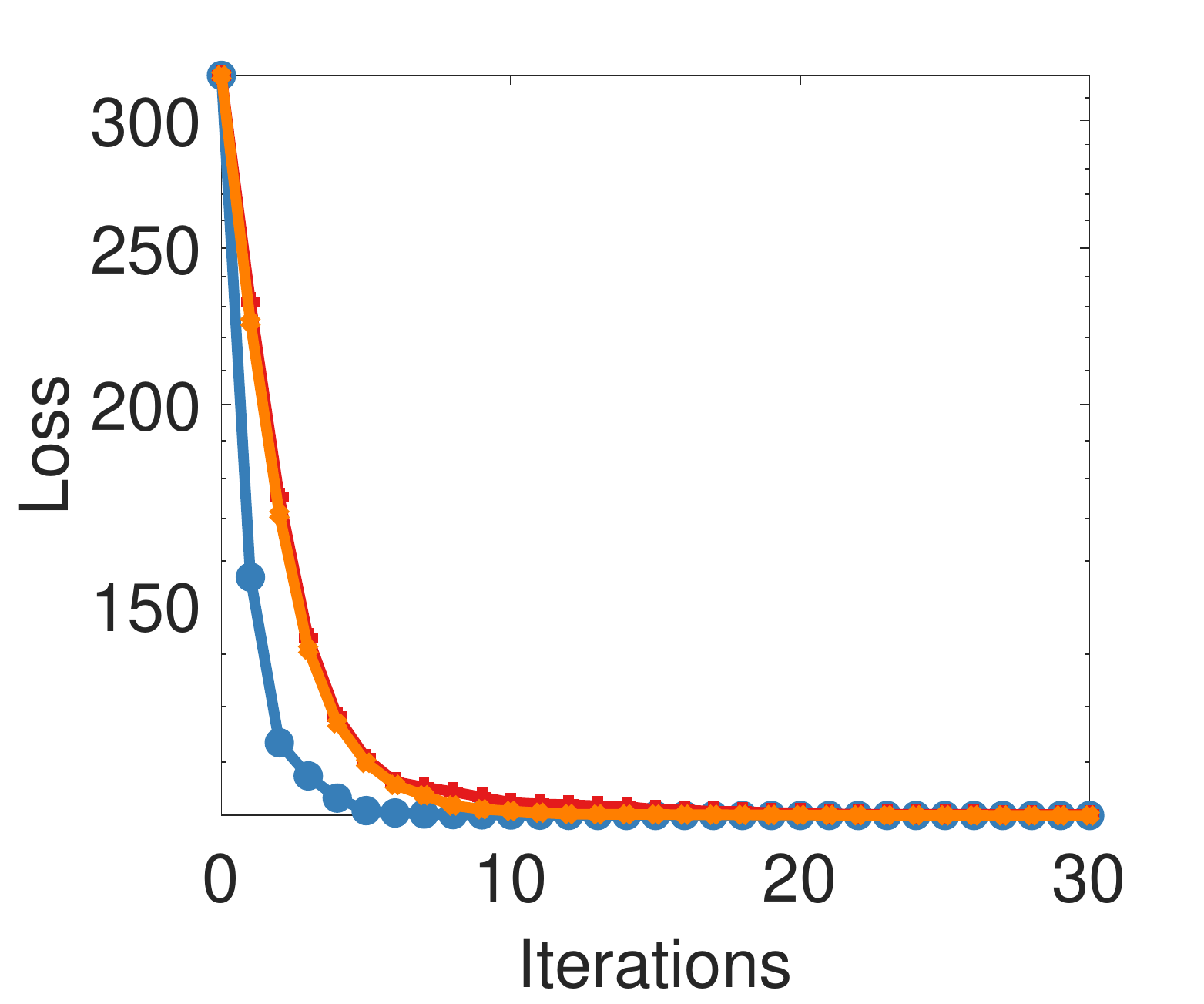}} \subfloat[\texttt{Newthyroid} (\texttt{Egradnorm})]{\includegraphics[width = 0.2\textwidth, height = 0.16\textwidth]{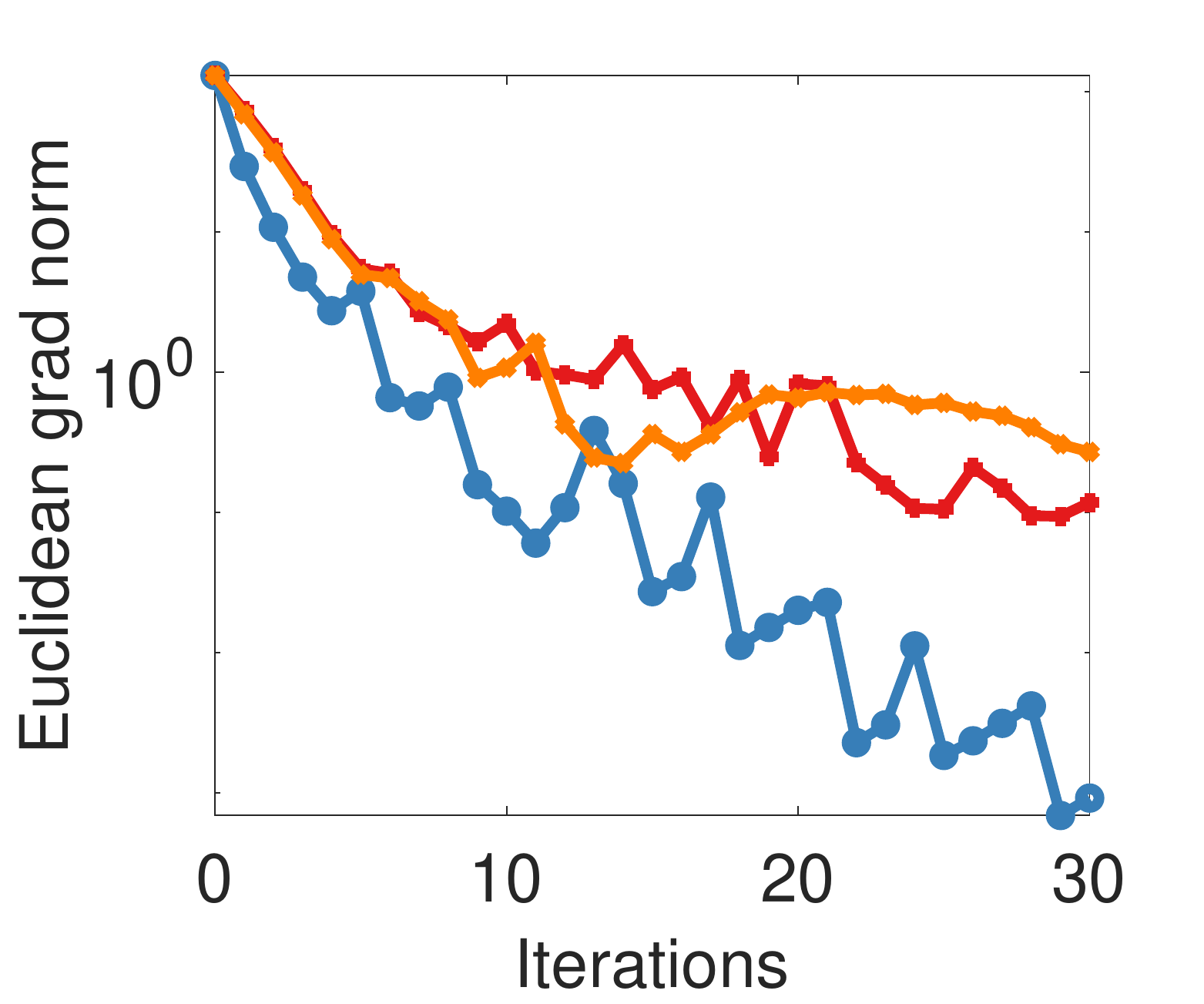}}
    \subfloat[\texttt{Newthyroid} (\texttt{Time})]{\includegraphics[width = 0.2\textwidth, height = 0.16\textwidth]{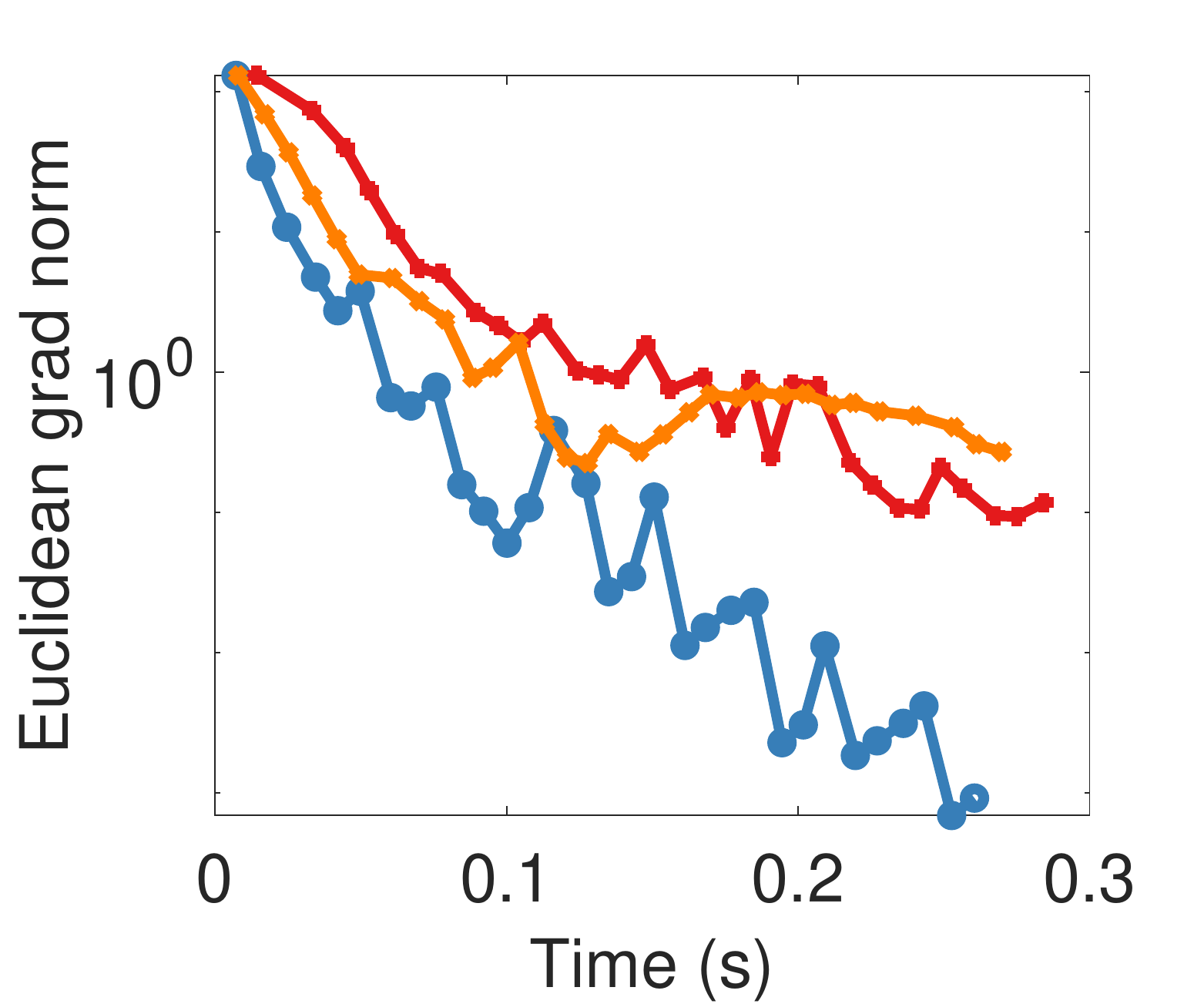}}
    \\
    \subfloat[\texttt{Popfailure} (\texttt{Loss})]{\includegraphics[width = 0.2\textwidth, height = 0.16\textwidth]{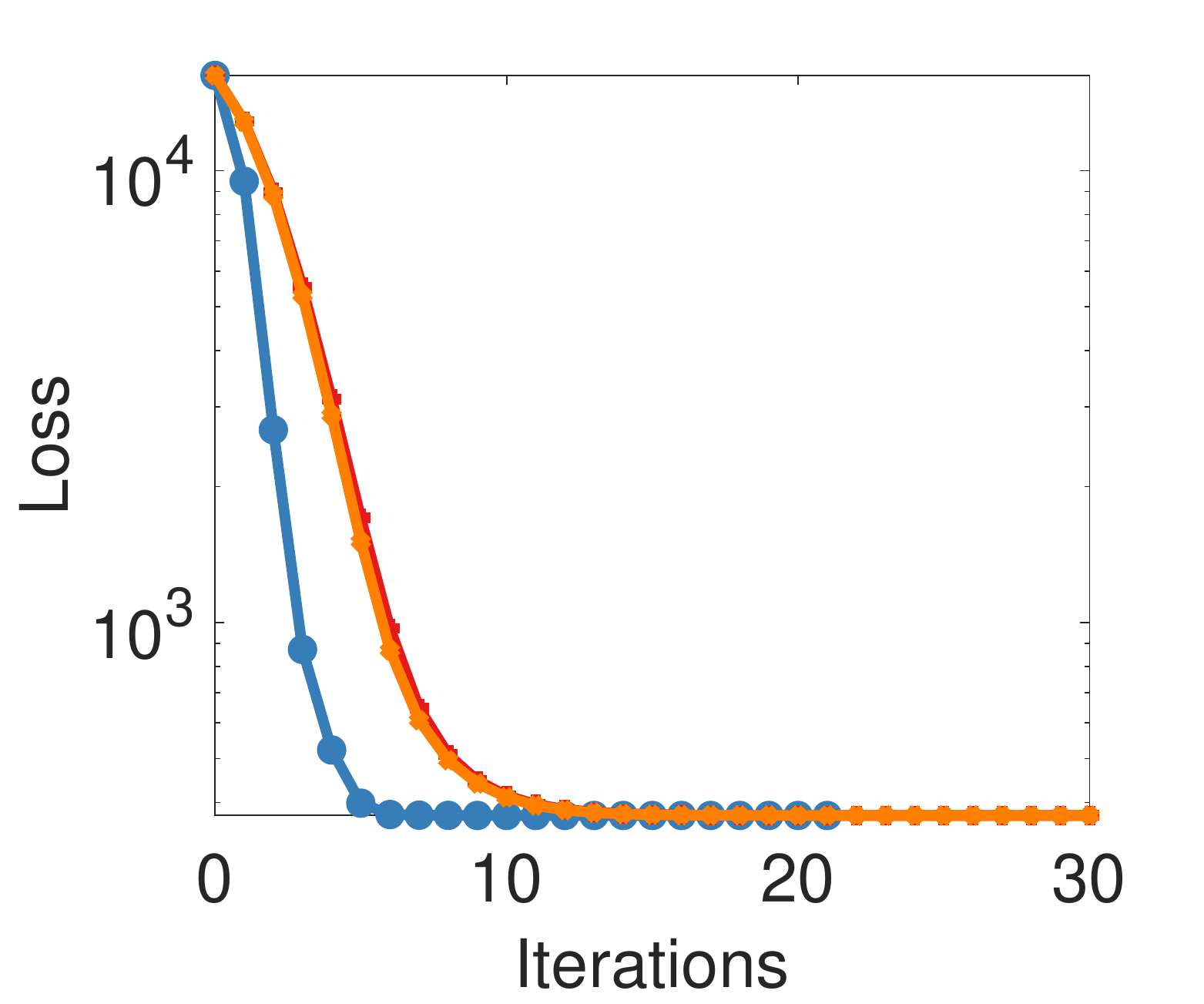}}
    \subfloat[\texttt{Popfailure} (\texttt{Egradnorm})]{\includegraphics[width = 0.2\textwidth, height = 0.16\textwidth]{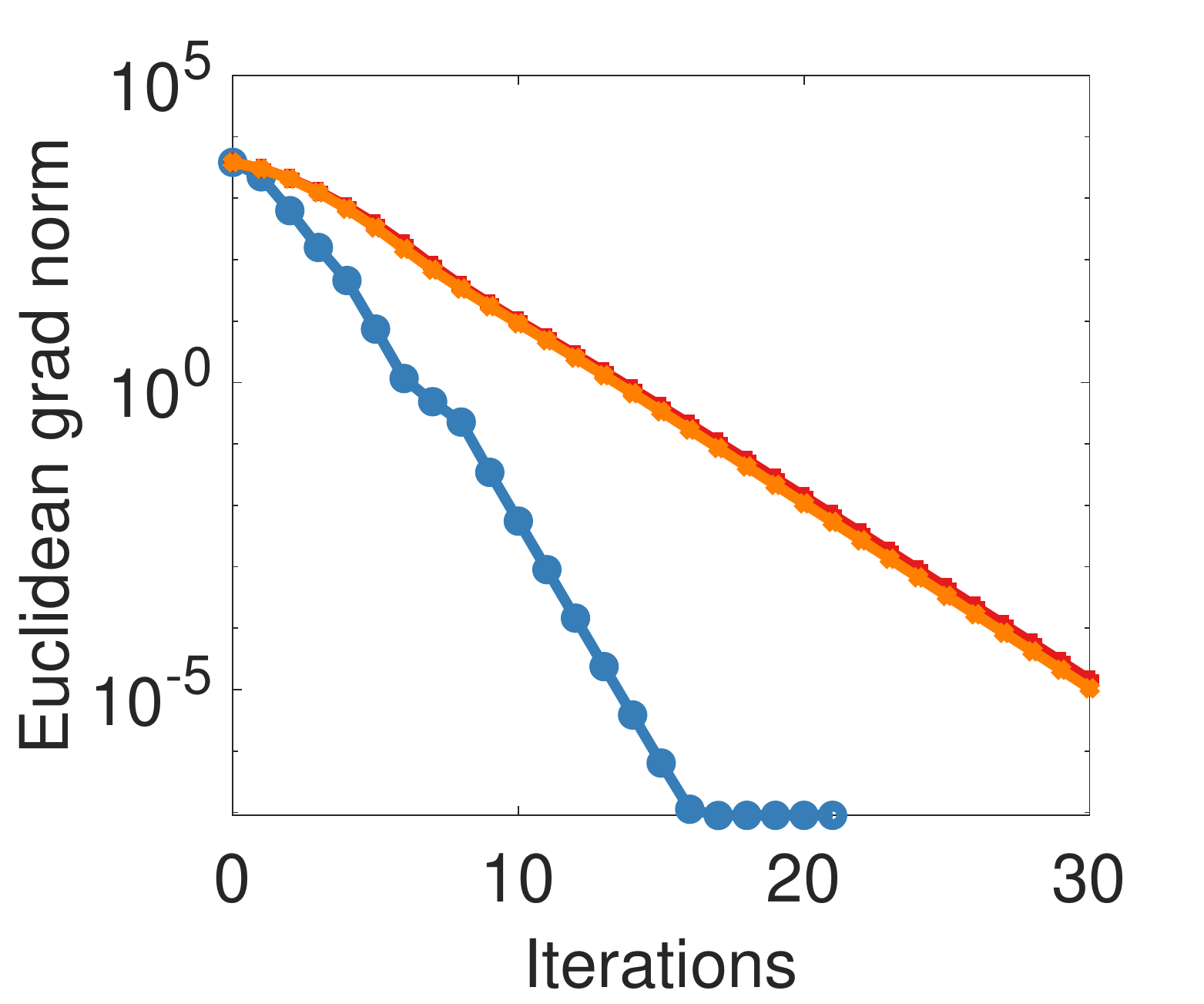}}
    \subfloat[\texttt{Popfailure} (\texttt{Time})]{\includegraphics[width = 0.2\textwidth, height = 0.16\textwidth]{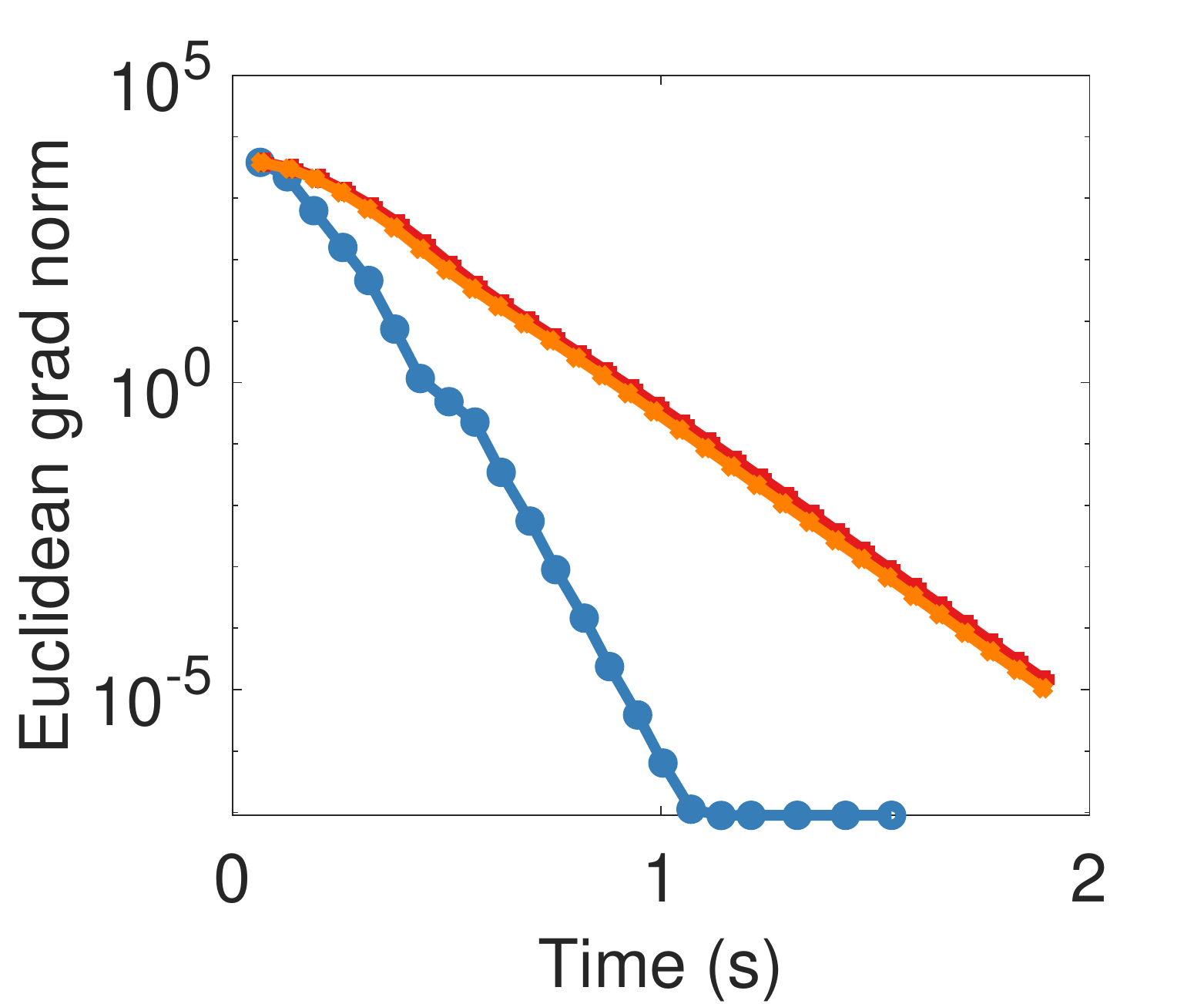}}
    \caption{Riemannian conjugate gradient on metric learning problem (loss, modified Euclidean gradient, runtime).} 
    \label{DML_RCG_figure}
\end{figure*}

\begin{figure*}[!th]
\captionsetup{justification=centering}
    \centering
    \subfloat[\texttt{Iris} (\texttt{Loss})]{\includegraphics[width = 0.2\textwidth, height = 0.16\textwidth]{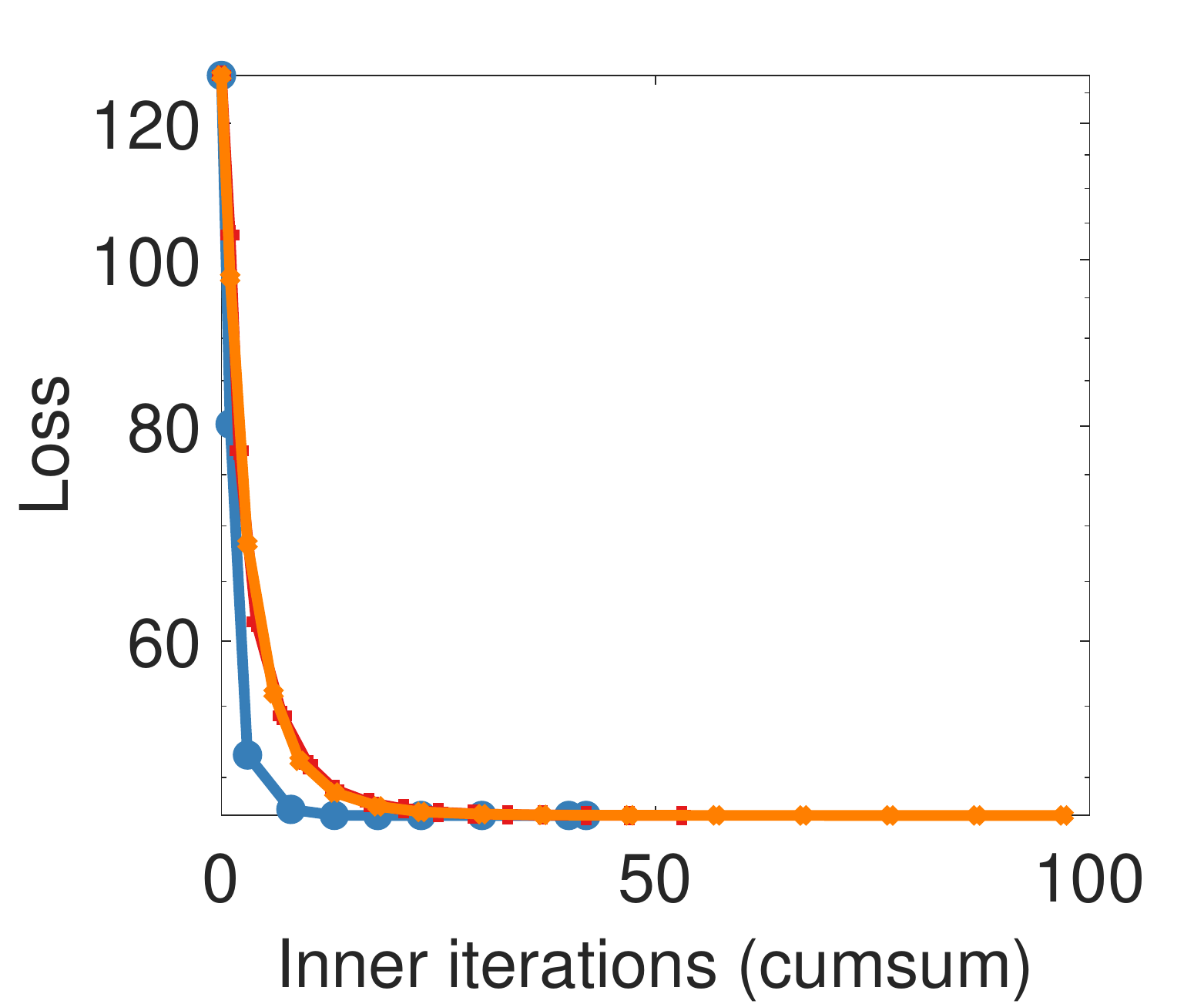}} 
    \subfloat[\texttt{Iris} (\texttt{Egradnorm})]{\includegraphics[width = 0.2\textwidth, height = 0.16\textwidth]{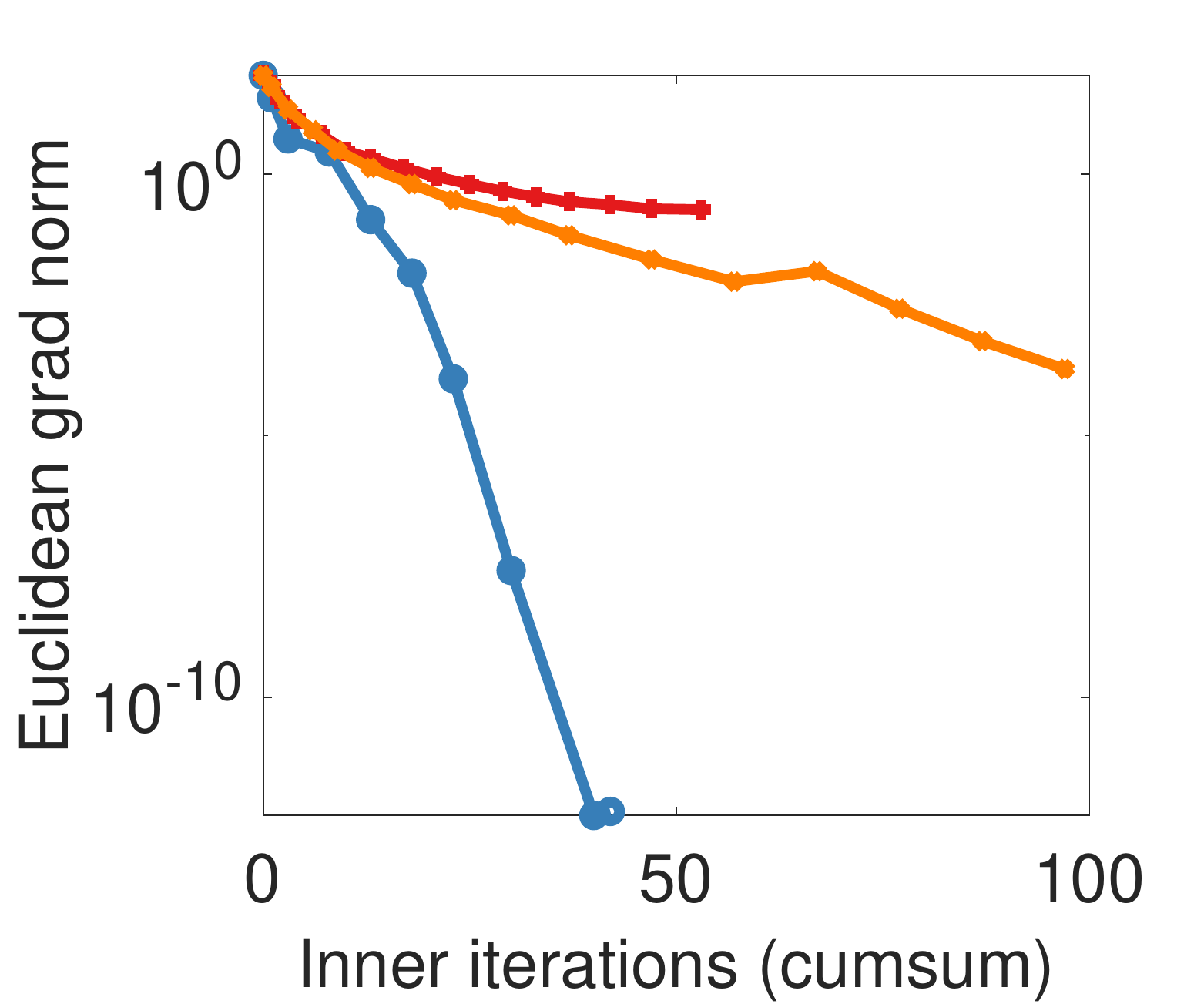}} 
    \subfloat[\texttt{Iris} (\texttt{Time})]{\includegraphics[width = 0.2\textwidth, height = 0.16\textwidth]{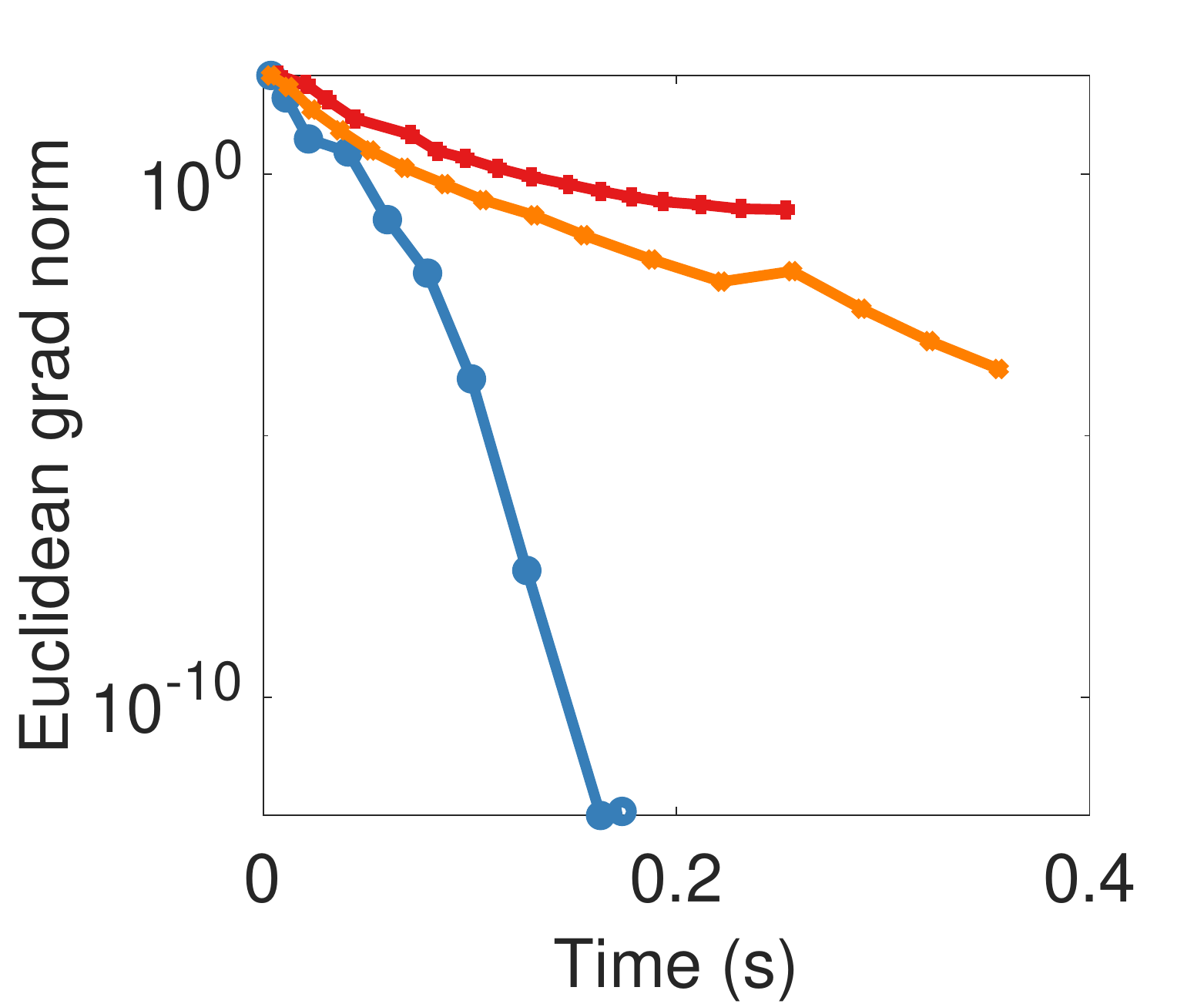}} 
    \subfloat[\texttt{Balance} (\texttt{Loss})]{\includegraphics[width = 0.2\textwidth, height = 0.16\textwidth]{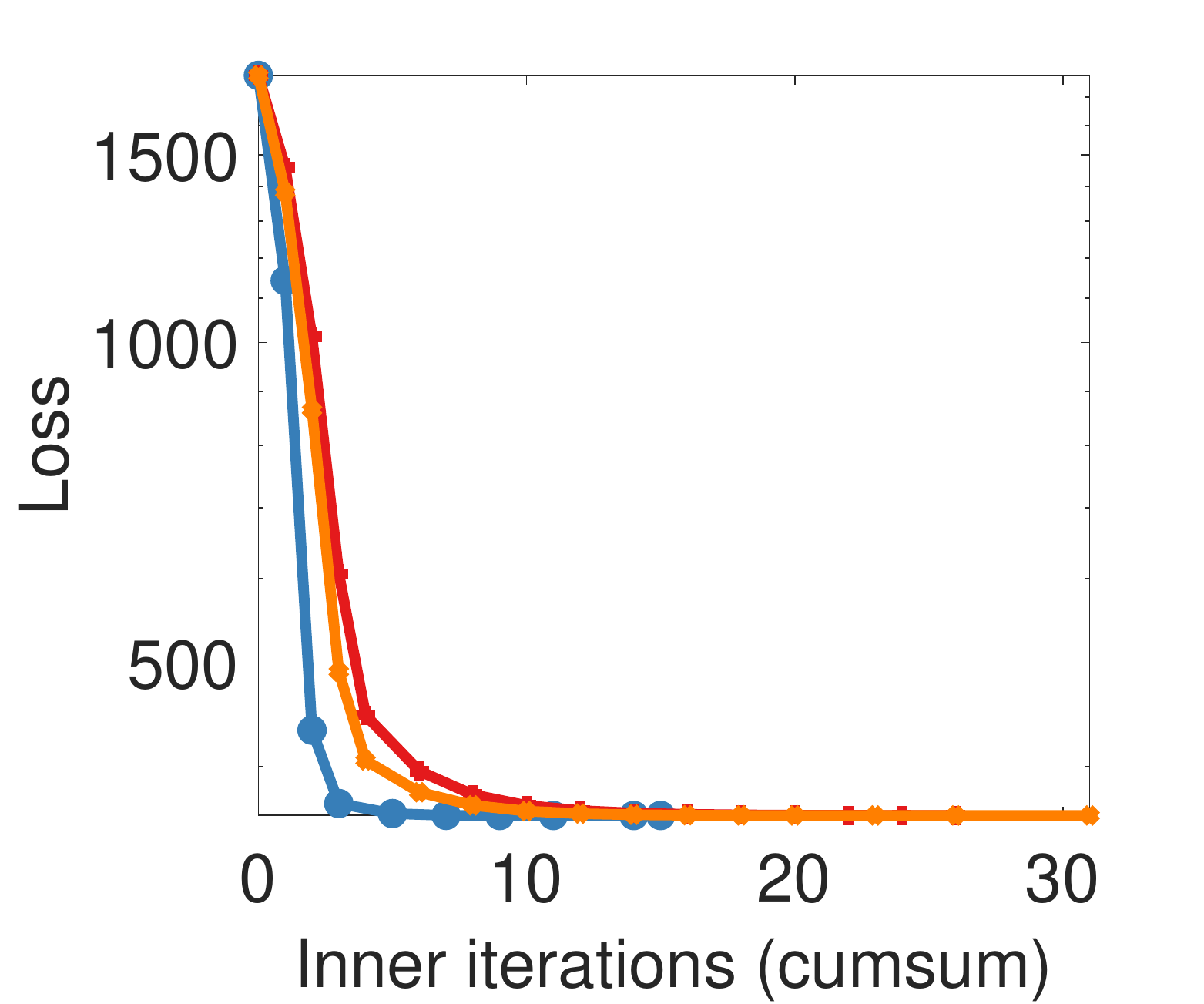}} 
    \subfloat[\texttt{Balance} (\texttt{Egradnorm})]{\includegraphics[width = 0.2\textwidth, height = 0.16\textwidth]{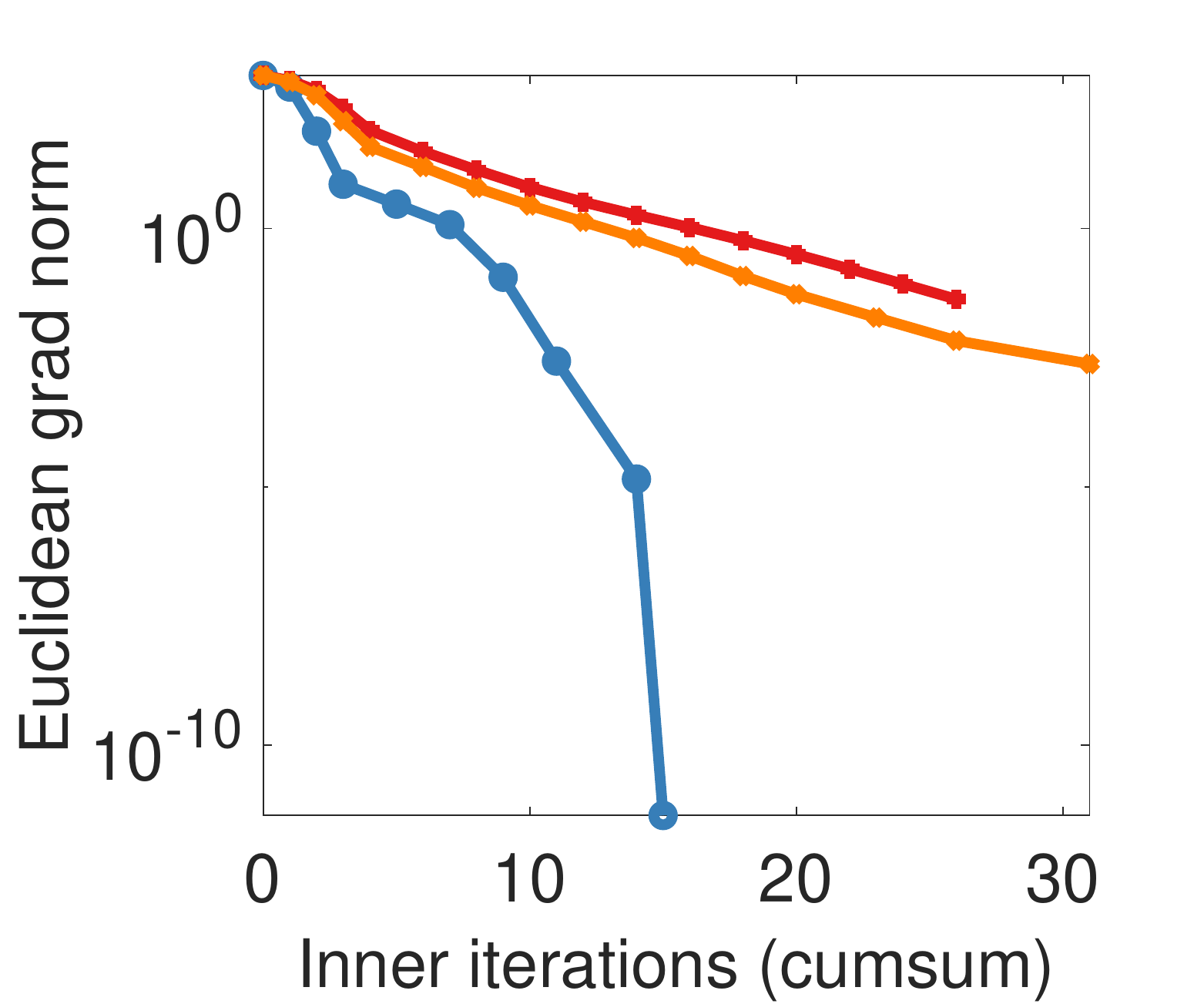}}
    \\
    \subfloat[\texttt{Balance} (\texttt{Time})]{\includegraphics[width = 0.2\textwidth, height = 0.16\textwidth]{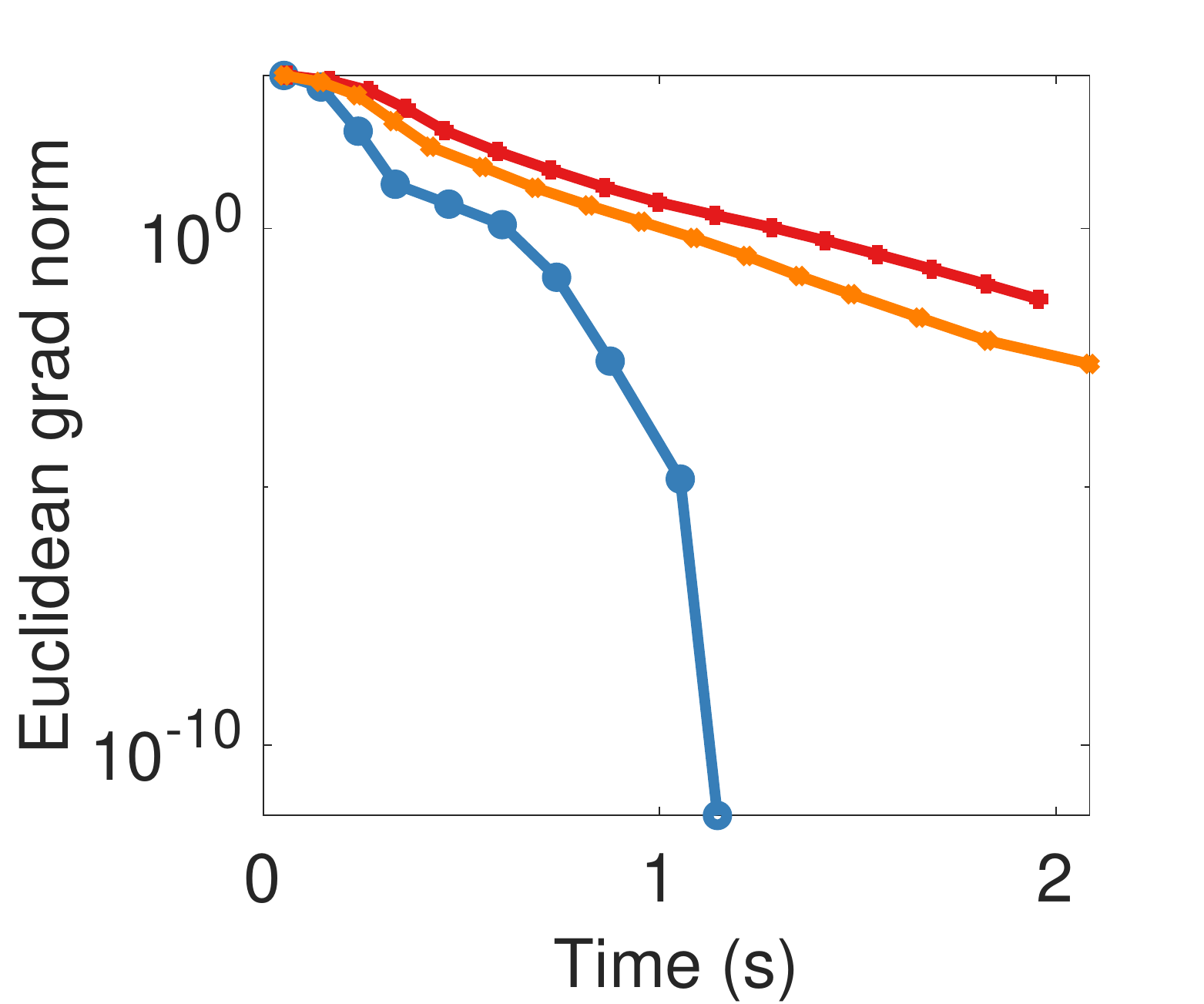}}
    \subfloat[\texttt{Glass} (\texttt{Loss})]{\includegraphics[width = 0.2\textwidth, height = 0.16\textwidth]{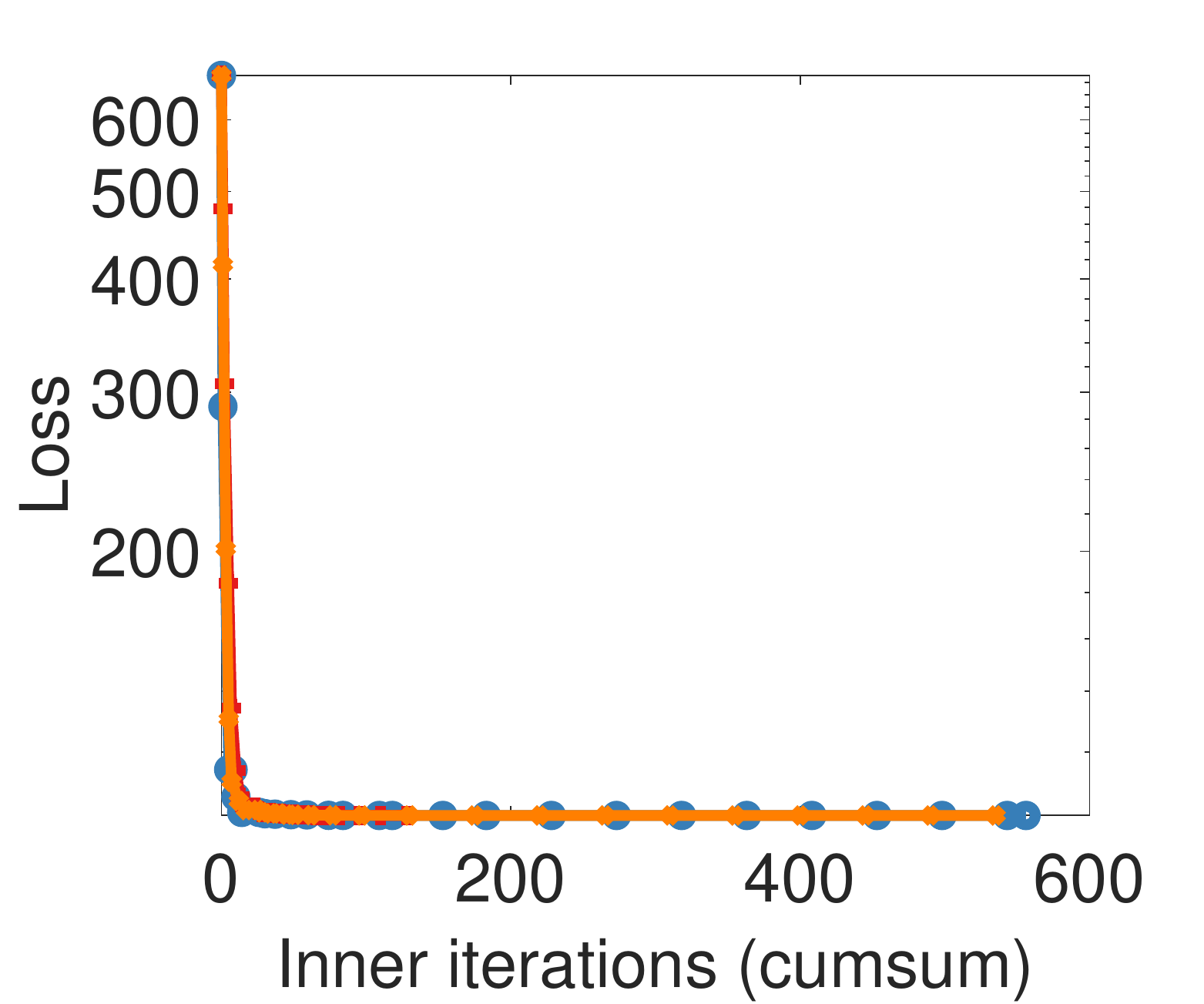}}
    \subfloat[\texttt{Glass} (\texttt{Egradnorm})]{\includegraphics[width = 0.2\textwidth, height = 0.16\textwidth]{DML/DML_glass_TR_egradnorm.pdf}}
    \subfloat[\texttt{Glass} (\texttt{Time})]{\includegraphics[width = 0.2\textwidth, height = 0.16\textwidth]{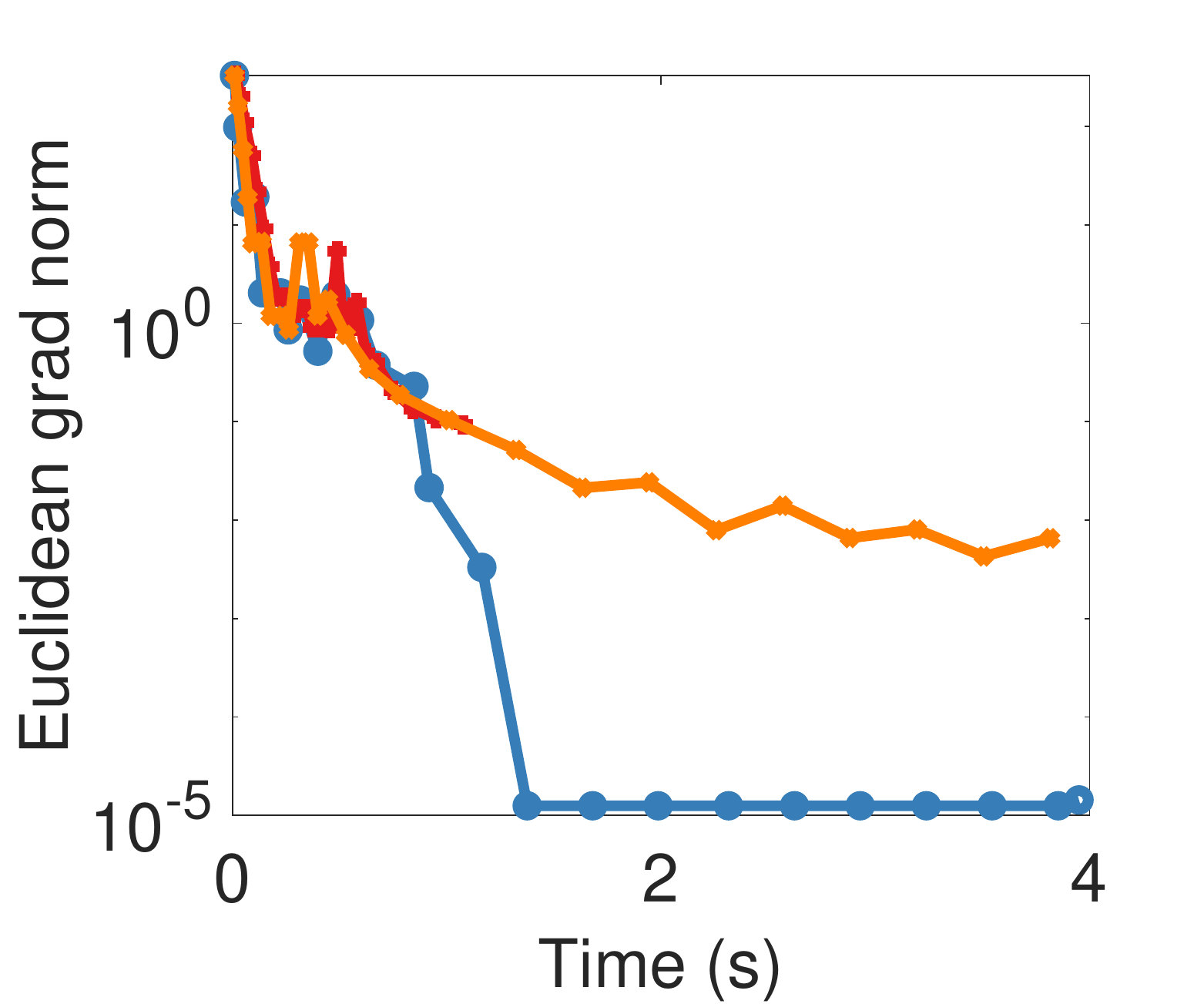}}
    \subfloat[\texttt{Phoneme} (\texttt{Loss})]{\includegraphics[width = 0.2\textwidth, height = 0.16\textwidth]{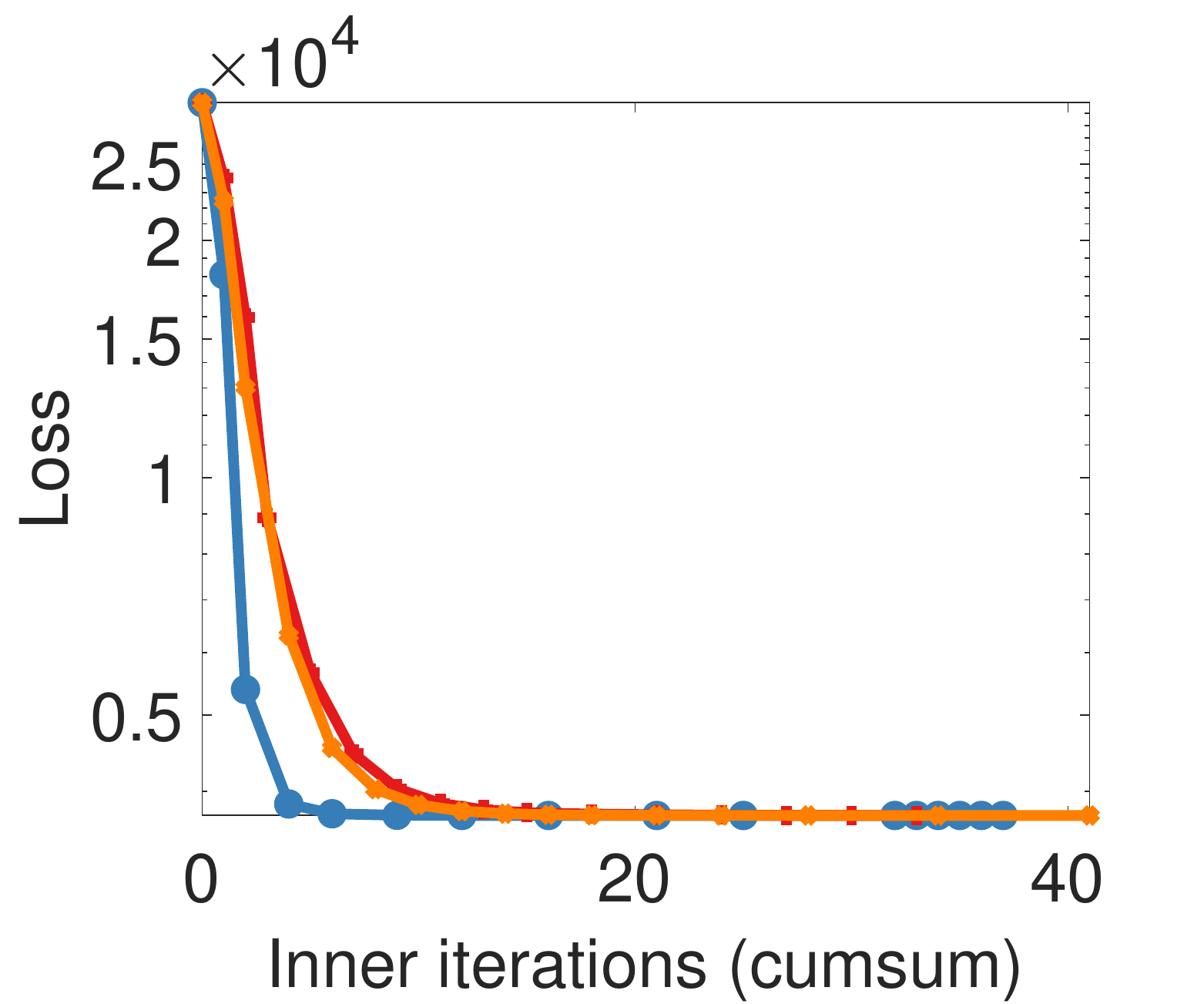}}
    \\
    \subfloat[\texttt{Phoneme} (\texttt{Egradnorm})]{\includegraphics[width = 0.2\textwidth, height = 0.16\textwidth]{DML/DML_phoneme_TR_egradnorm.pdf}} 
    \subfloat[\texttt{Phoneme} (\texttt{Time})]{\includegraphics[width = 0.2\textwidth, height = 0.16\textwidth]{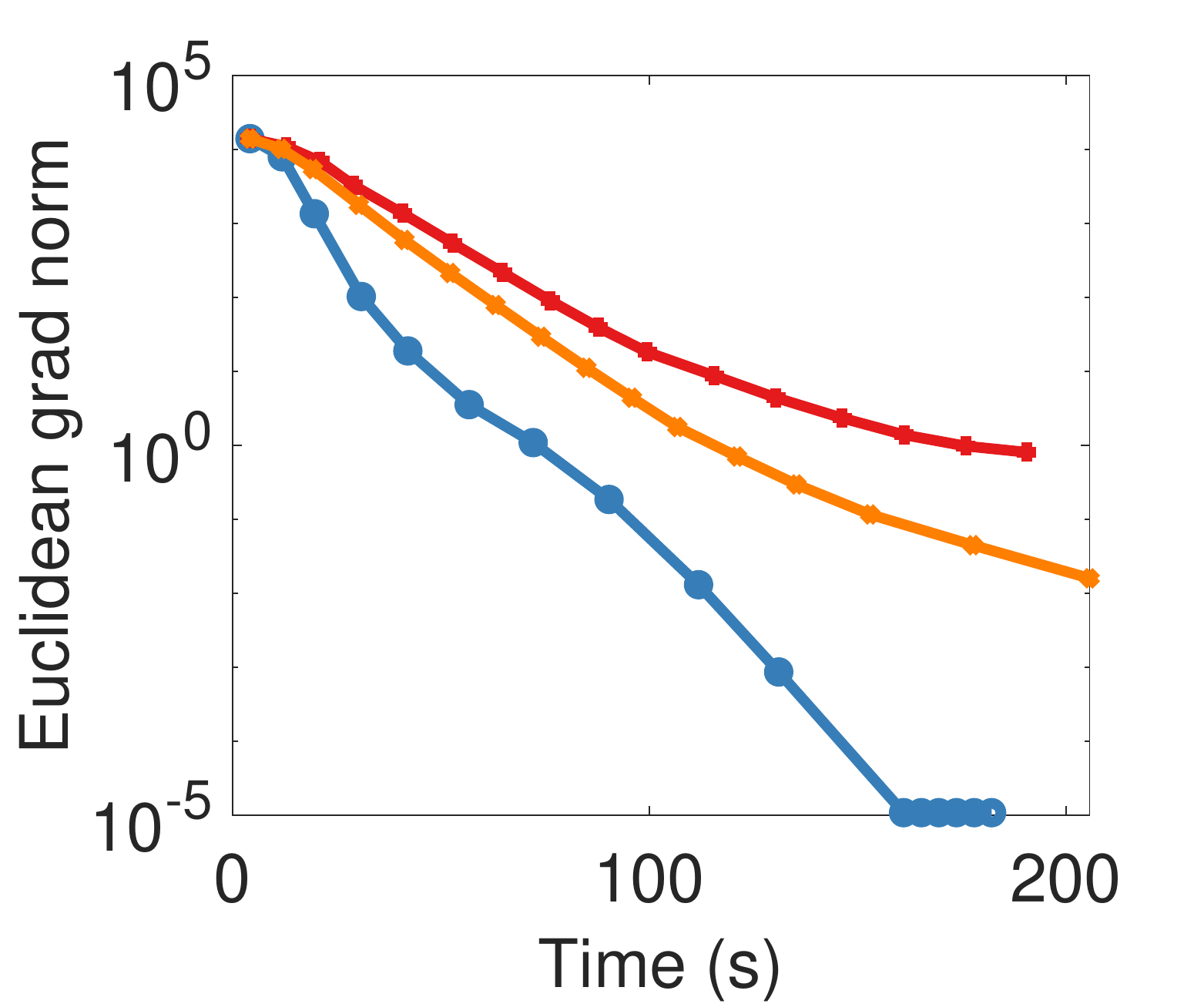}} 
    \subfloat[\texttt{Newthyroid} (\texttt{Loss})]{\includegraphics[width = 0.2\textwidth, height = 0.16\textwidth]{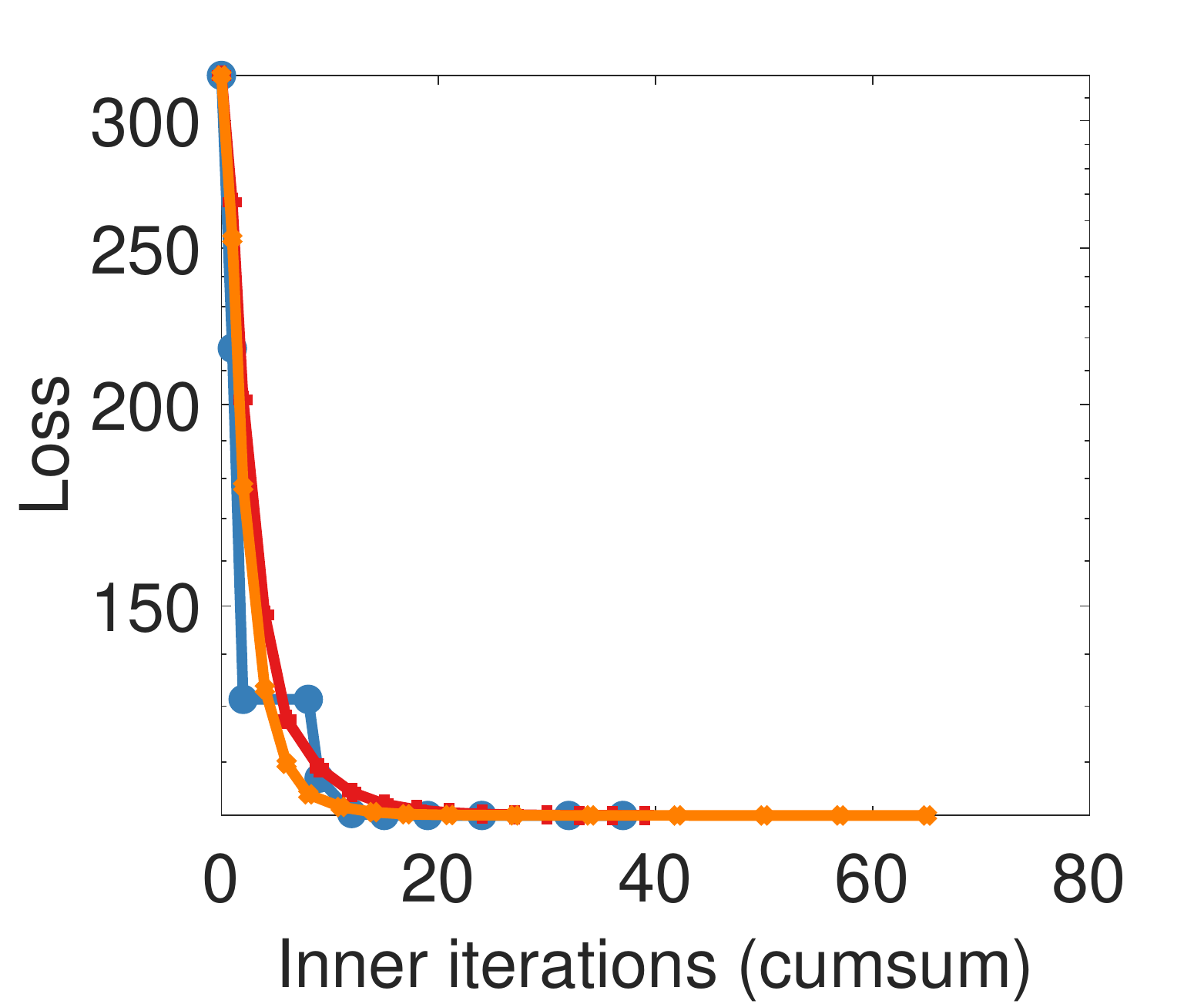}} \subfloat[\texttt{Newthyroid} (\texttt{Egradnorm})]{\includegraphics[width = 0.2\textwidth, height = 0.16\textwidth]{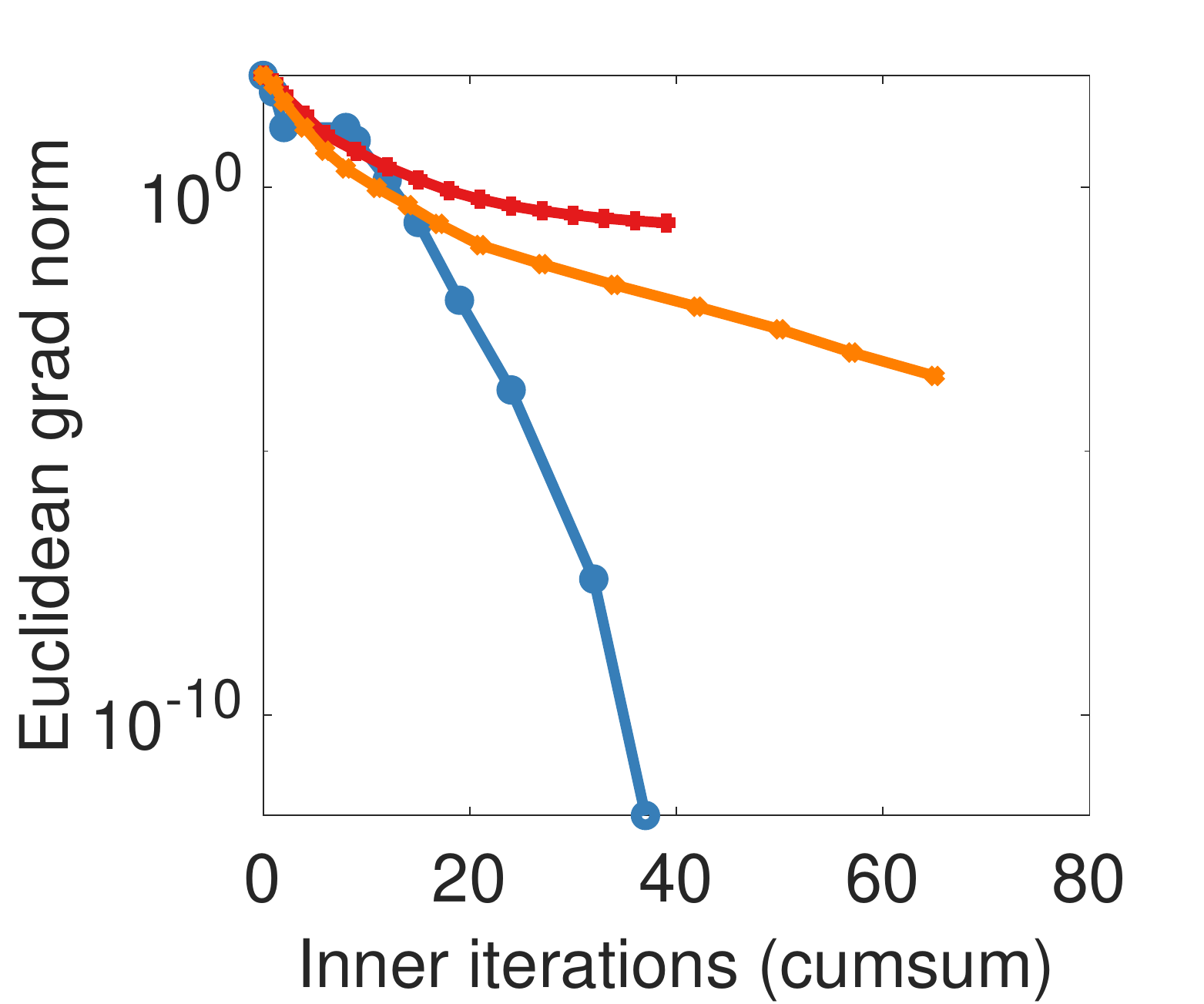}}
    \subfloat[\texttt{Newthyroid} (\texttt{Time})]{\includegraphics[width = 0.2\textwidth, height = 0.16\textwidth]{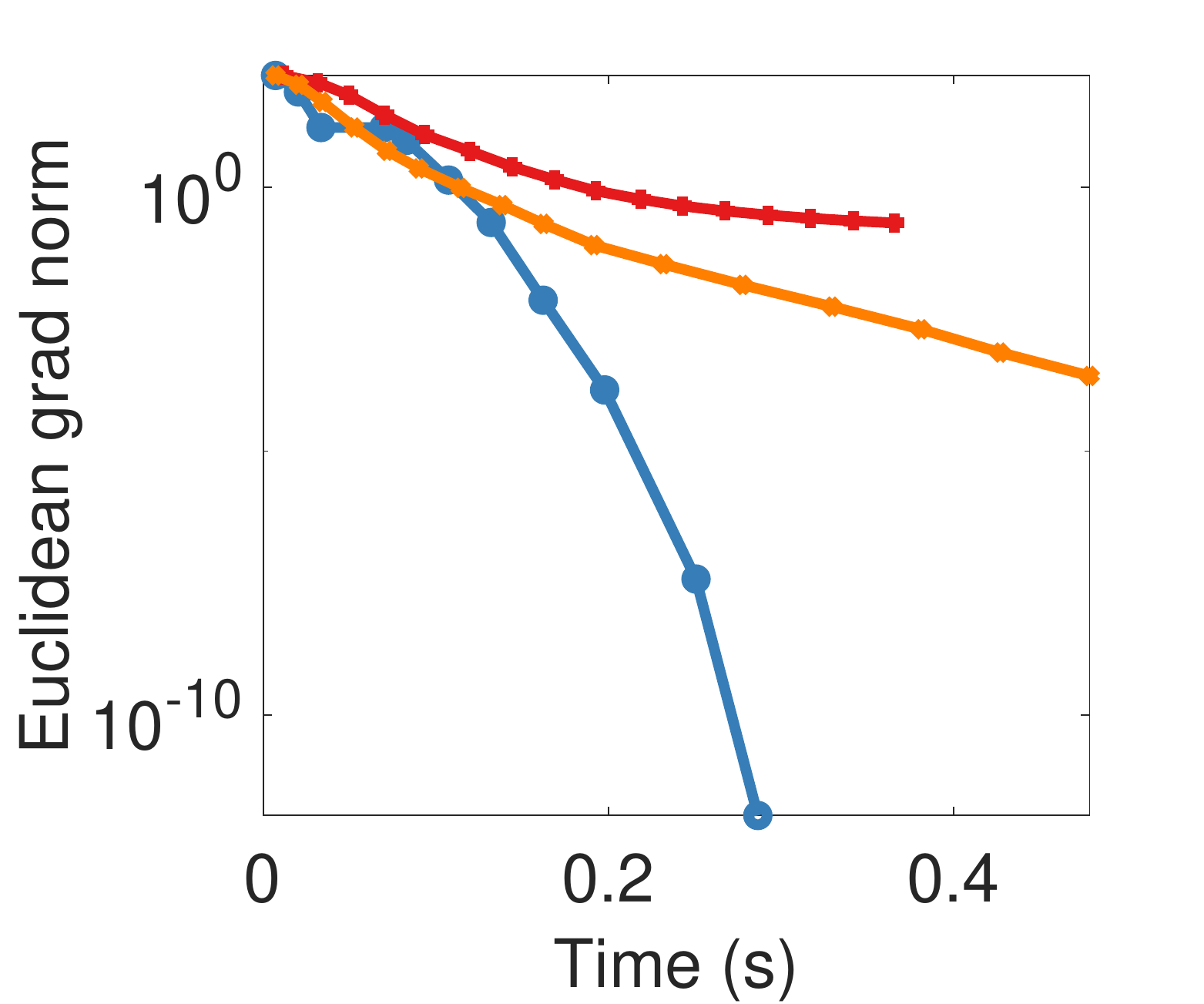}}
    \\
    \subfloat[\texttt{Popfailure} (\texttt{Loss})]{\includegraphics[width = 0.2\textwidth, height = 0.16\textwidth]{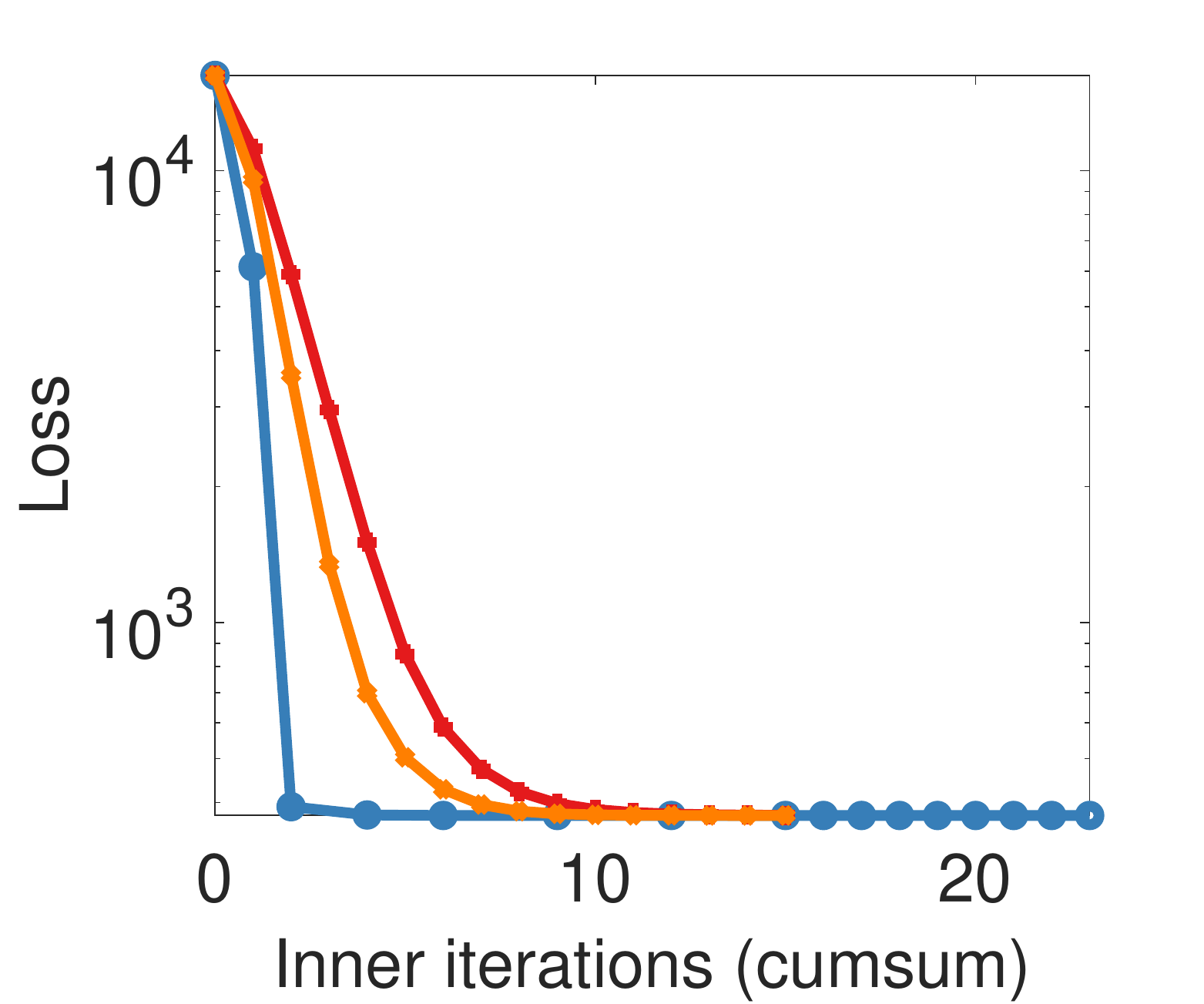}}
    \subfloat[\texttt{Popfailure} (\texttt{Egradnorm})]{\includegraphics[width = 0.2\textwidth, height = 0.16\textwidth]{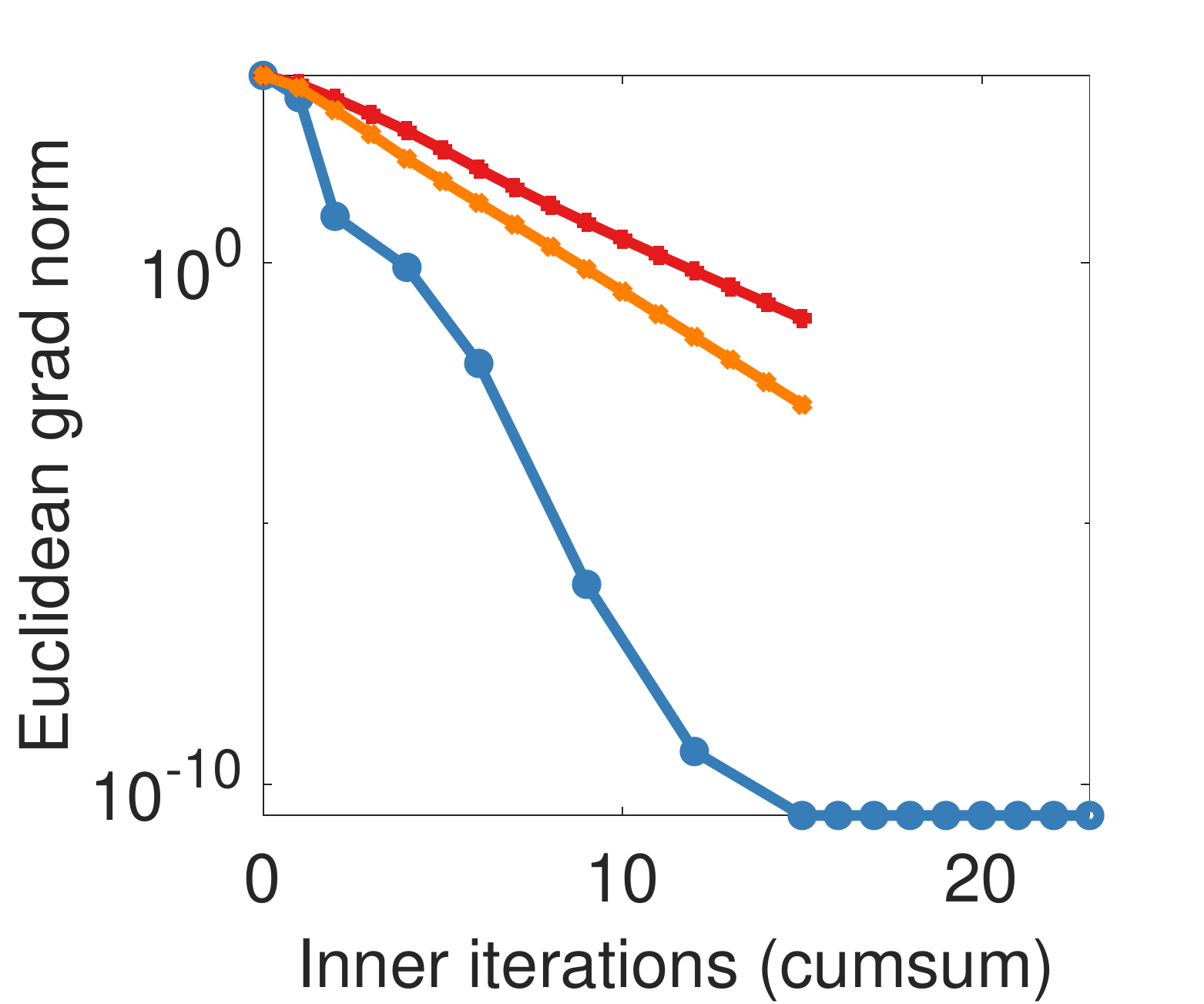}}
    \subfloat[\texttt{Popfailure} (\texttt{Time})]{\includegraphics[width = 0.2\textwidth, height = 0.16\textwidth]{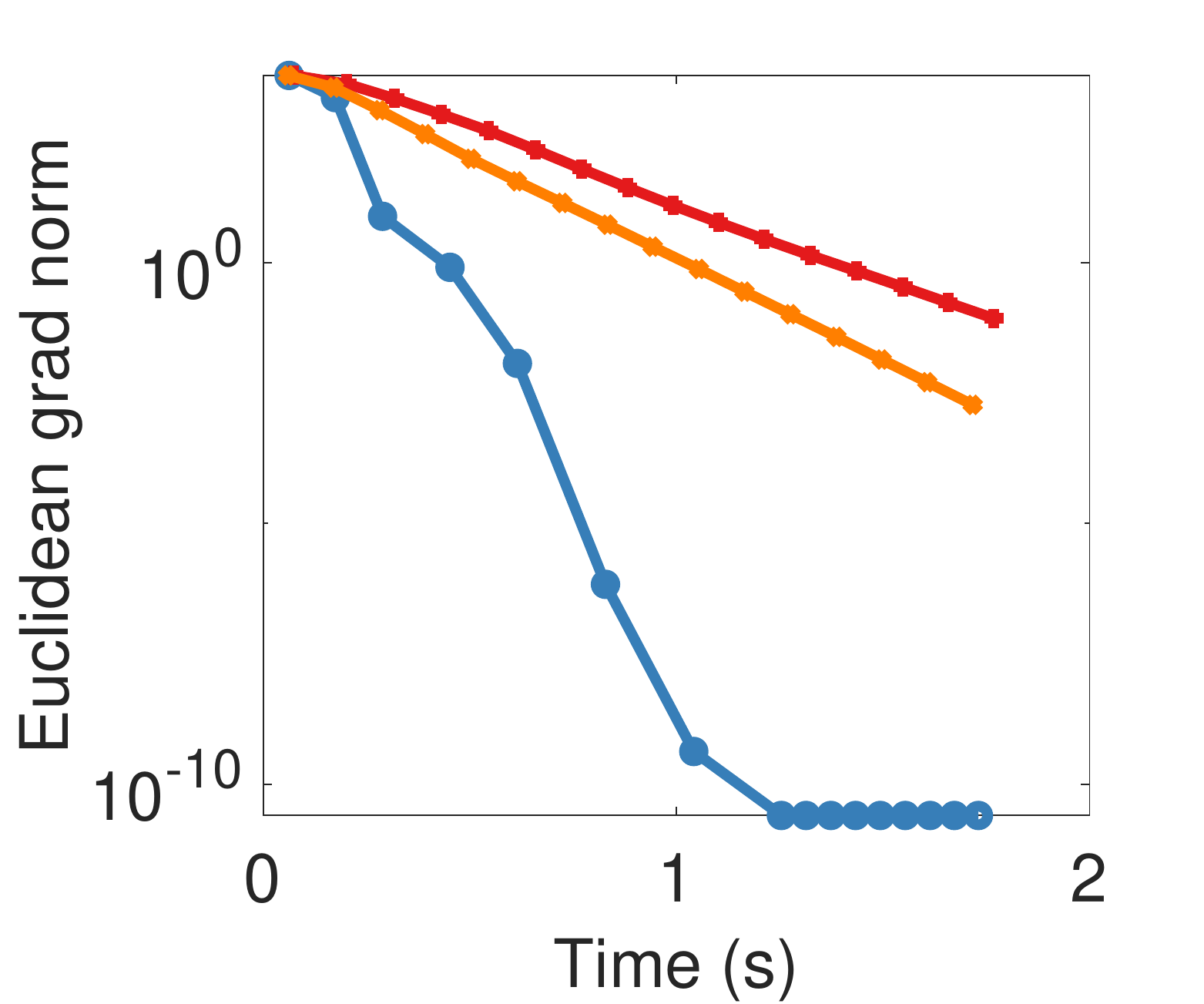}}
    \caption{Riemannian trust region on metric learning problem (loss, modified Euclidean gradient, runtime).} 
    \label{DML_RTR_figure}
\end{figure*}

\begin{figure*}[!th]
\captionsetup{justification=centering}
    \centering
    \subfloat[\texttt{Balance} (\texttt{Run1})]{\includegraphics[width = 0.2\textwidth, height = 0.16\textwidth]{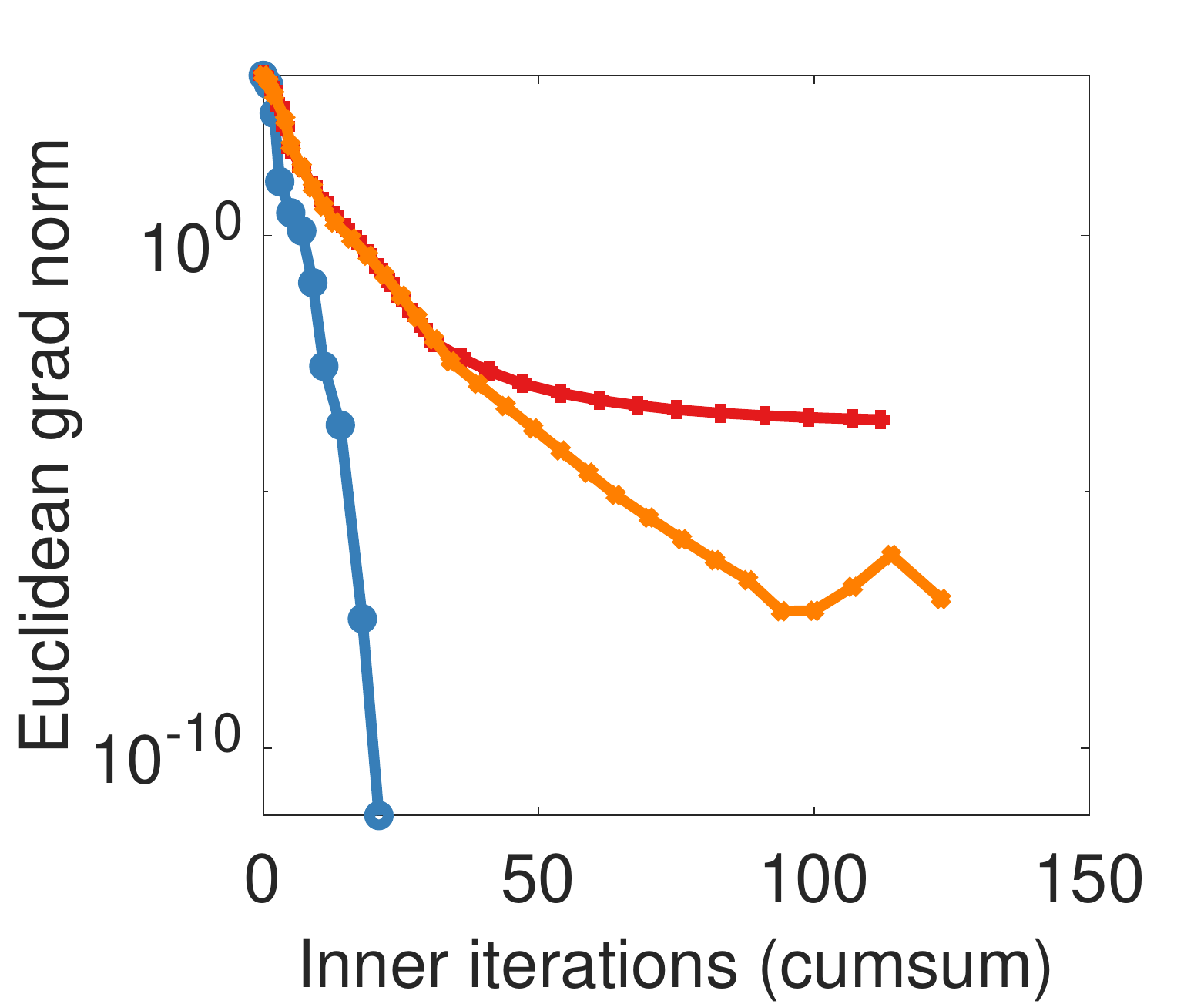}} 
    \subfloat[(\texttt{Run2})]{\includegraphics[width = 0.2\textwidth, height = 0.16\textwidth]{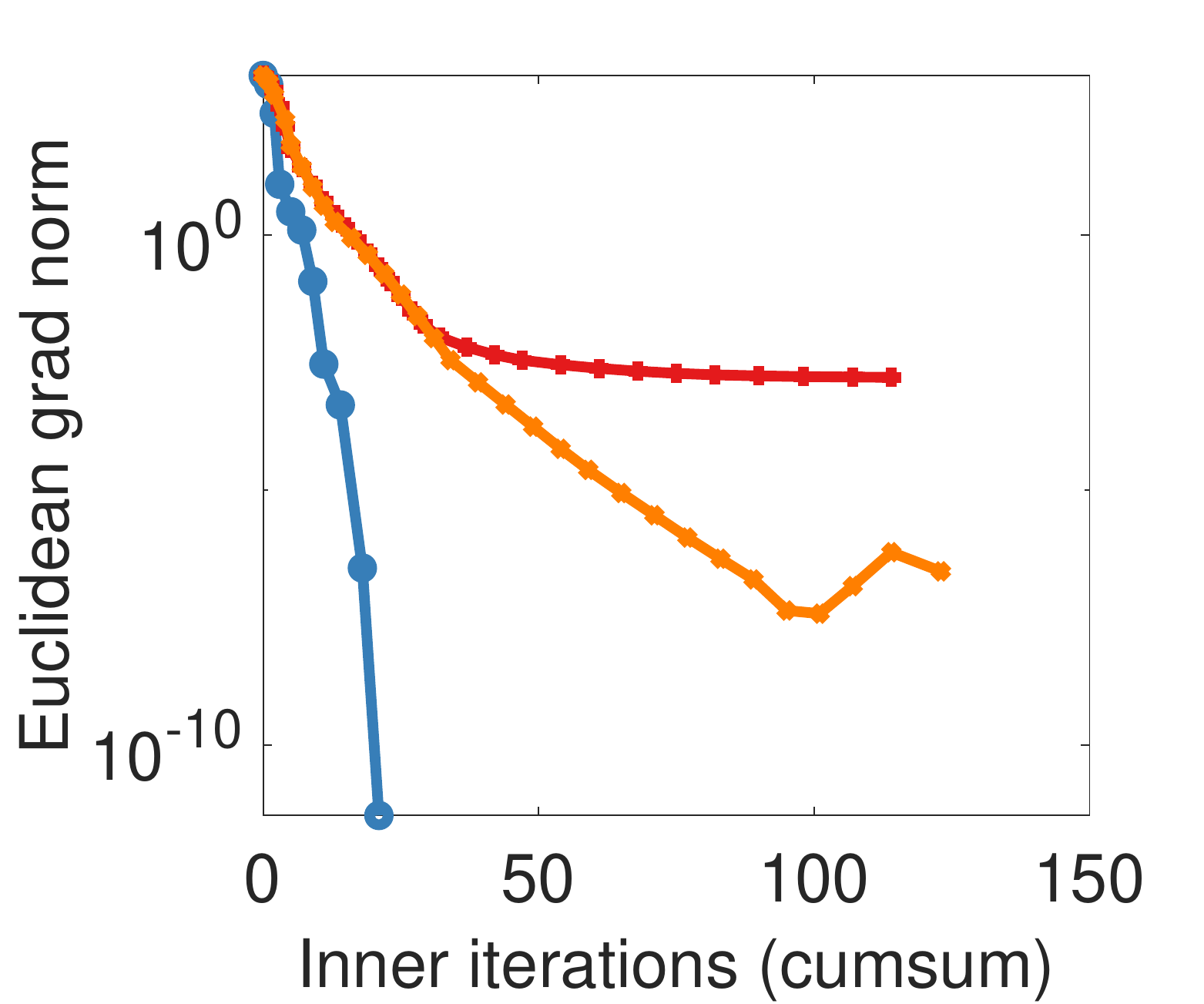}}
    \subfloat[(\texttt{Run2})]{\includegraphics[width = 0.2\textwidth, height = 0.16\textwidth]{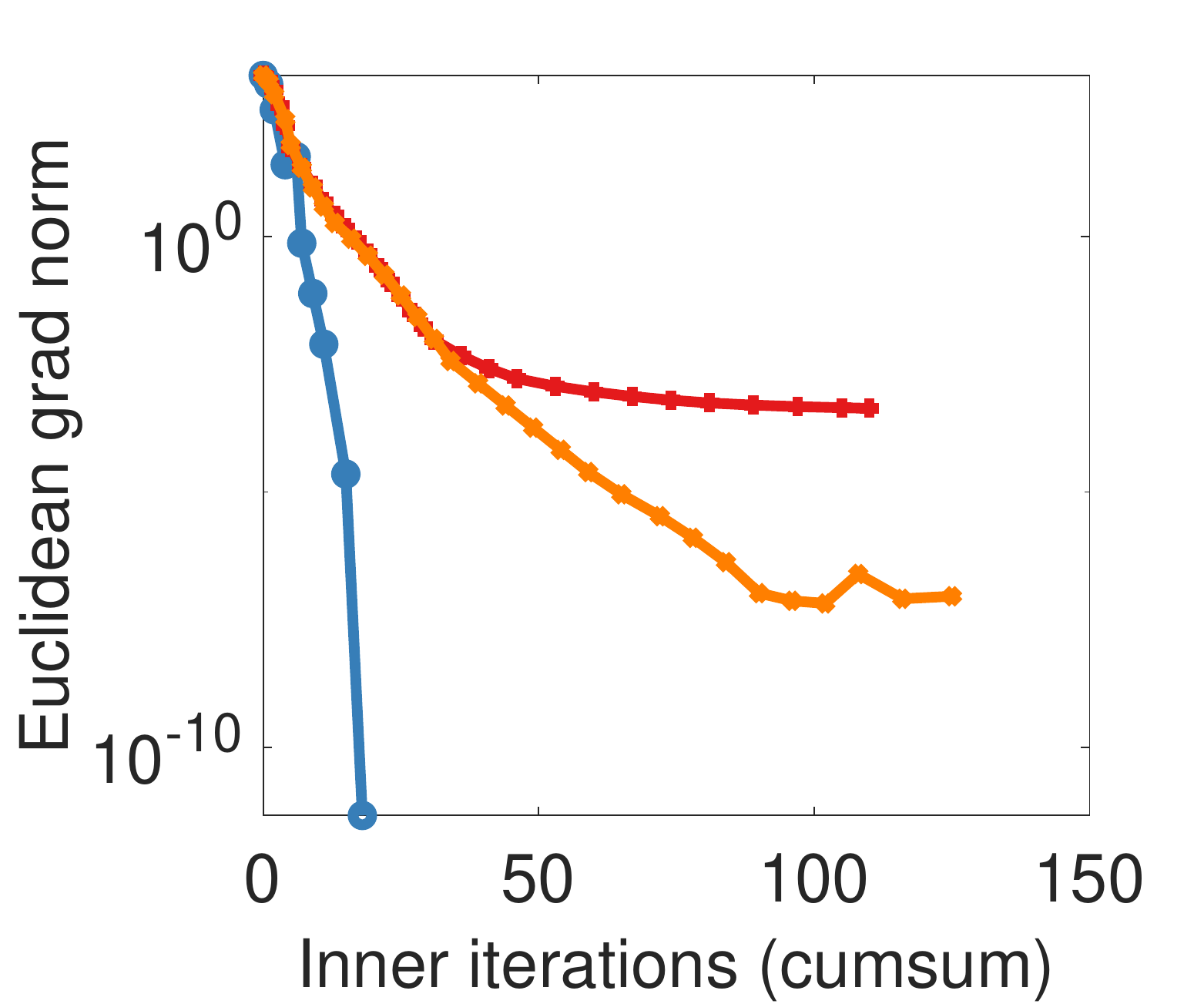}}
    \subfloat[(\texttt{Run2})]{\includegraphics[width = 0.2\textwidth, height = 0.16\textwidth]{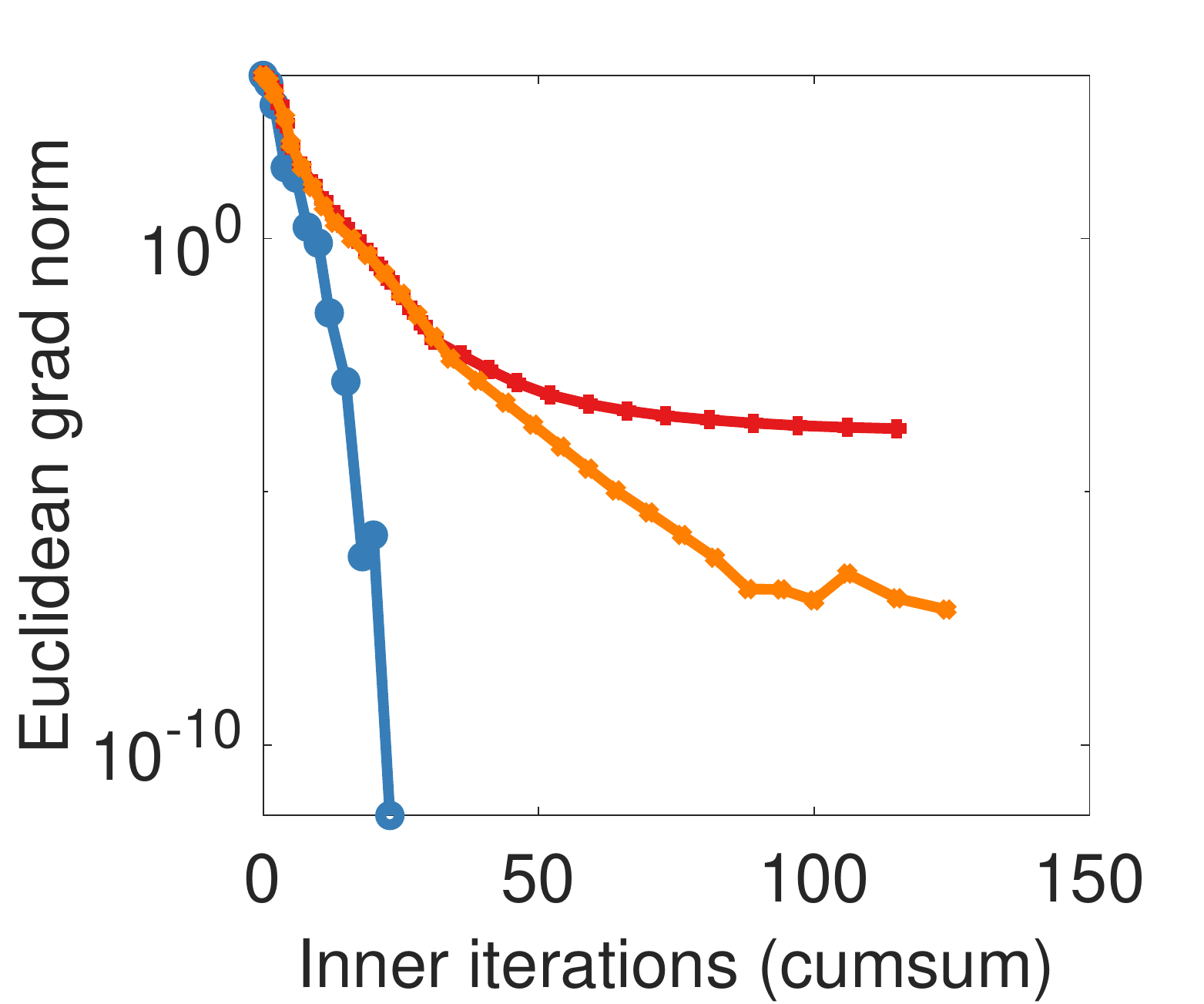}}
    \subfloat[(\texttt{Run2})]{\includegraphics[width = 0.2\textwidth, height = 0.16\textwidth]{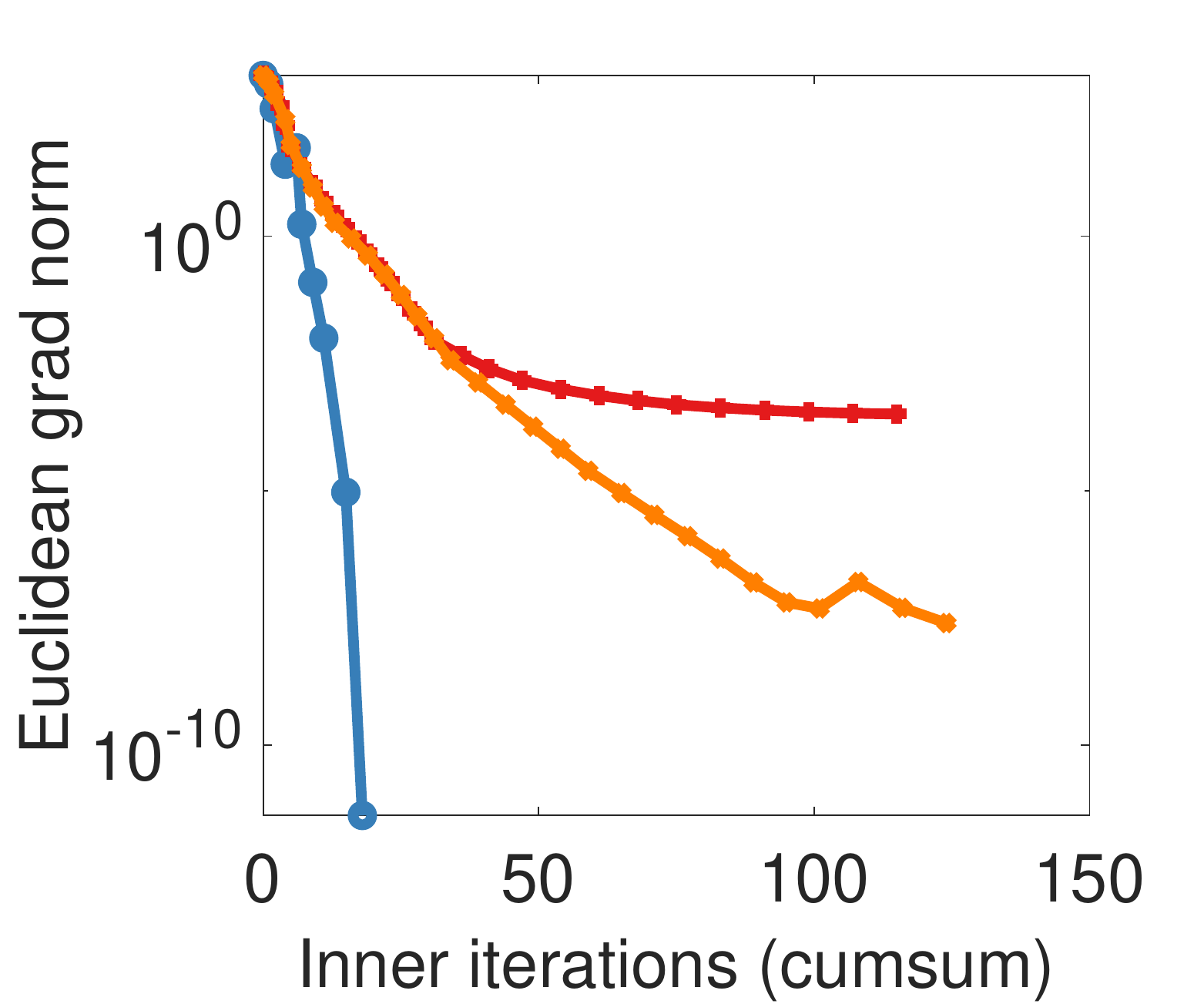}}
    \\
    \subfloat[\texttt{Popfailure} (\texttt{Run1})]{\includegraphics[width = 0.2\textwidth, height = 0.16\textwidth]{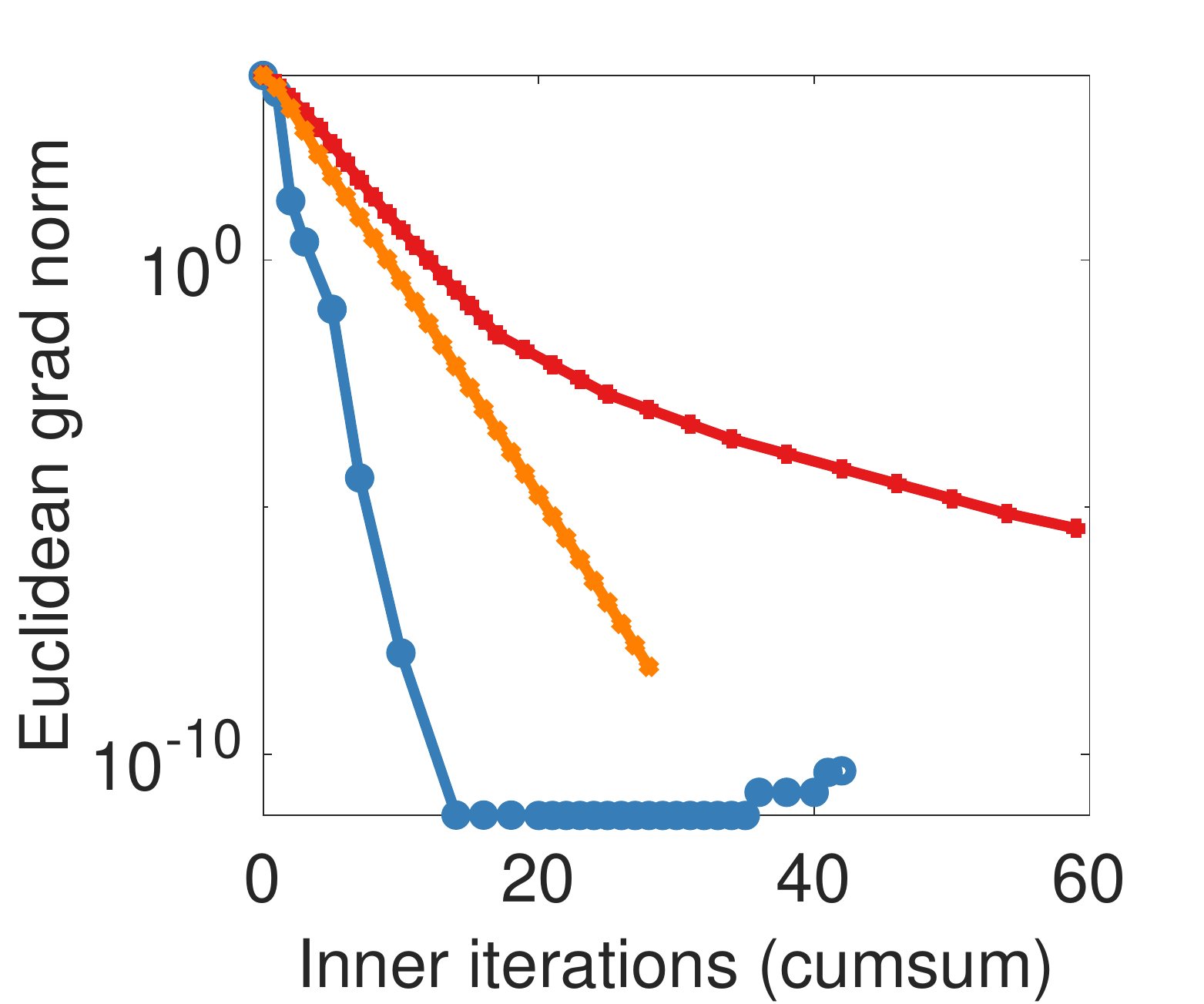}} 
    \subfloat[(\texttt{Run2})]{\includegraphics[width = 0.2\textwidth, height = 0.16\textwidth]{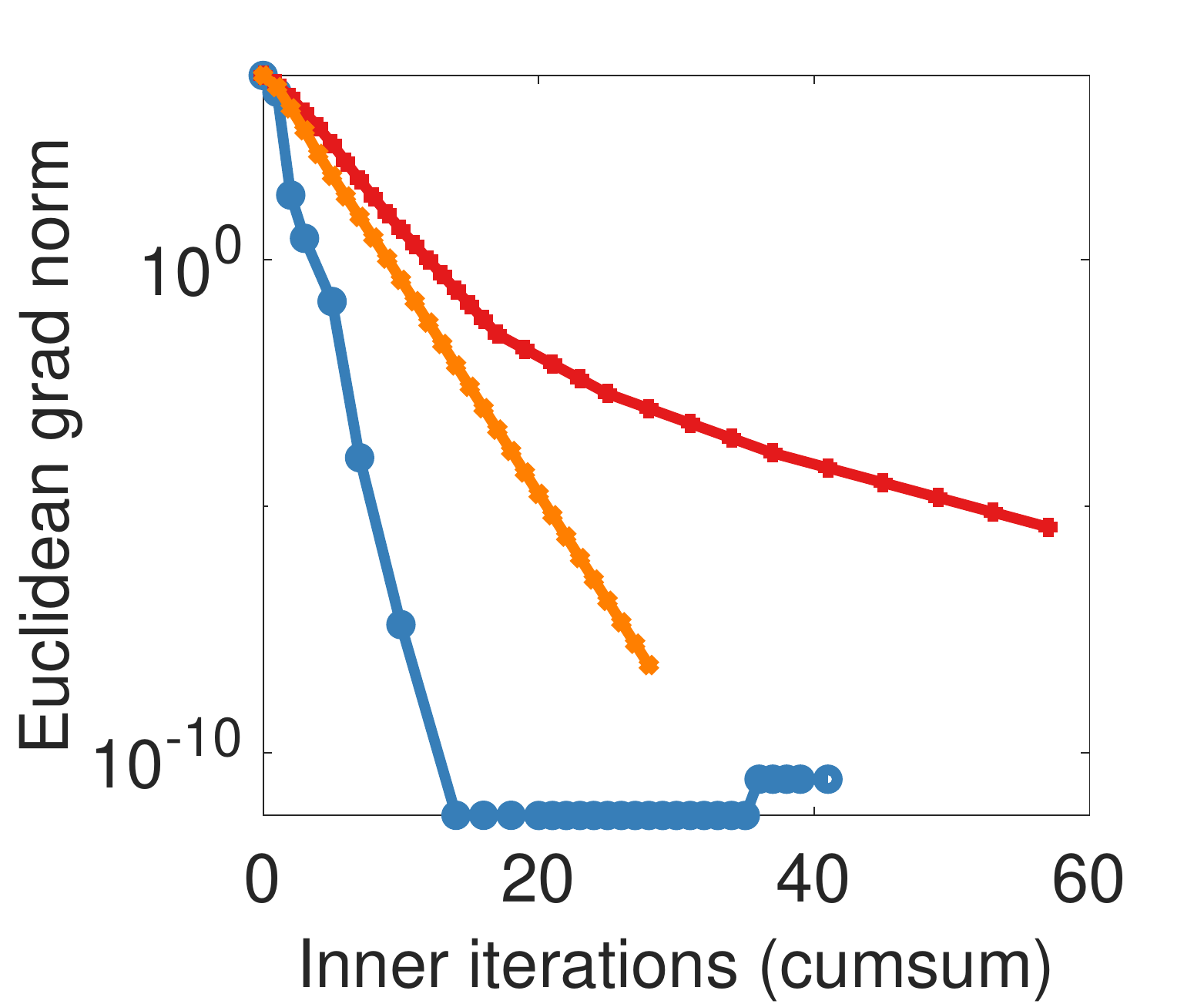}}
    \subfloat[(\texttt{Run2})]{\includegraphics[width = 0.2\textwidth, height = 0.16\textwidth]{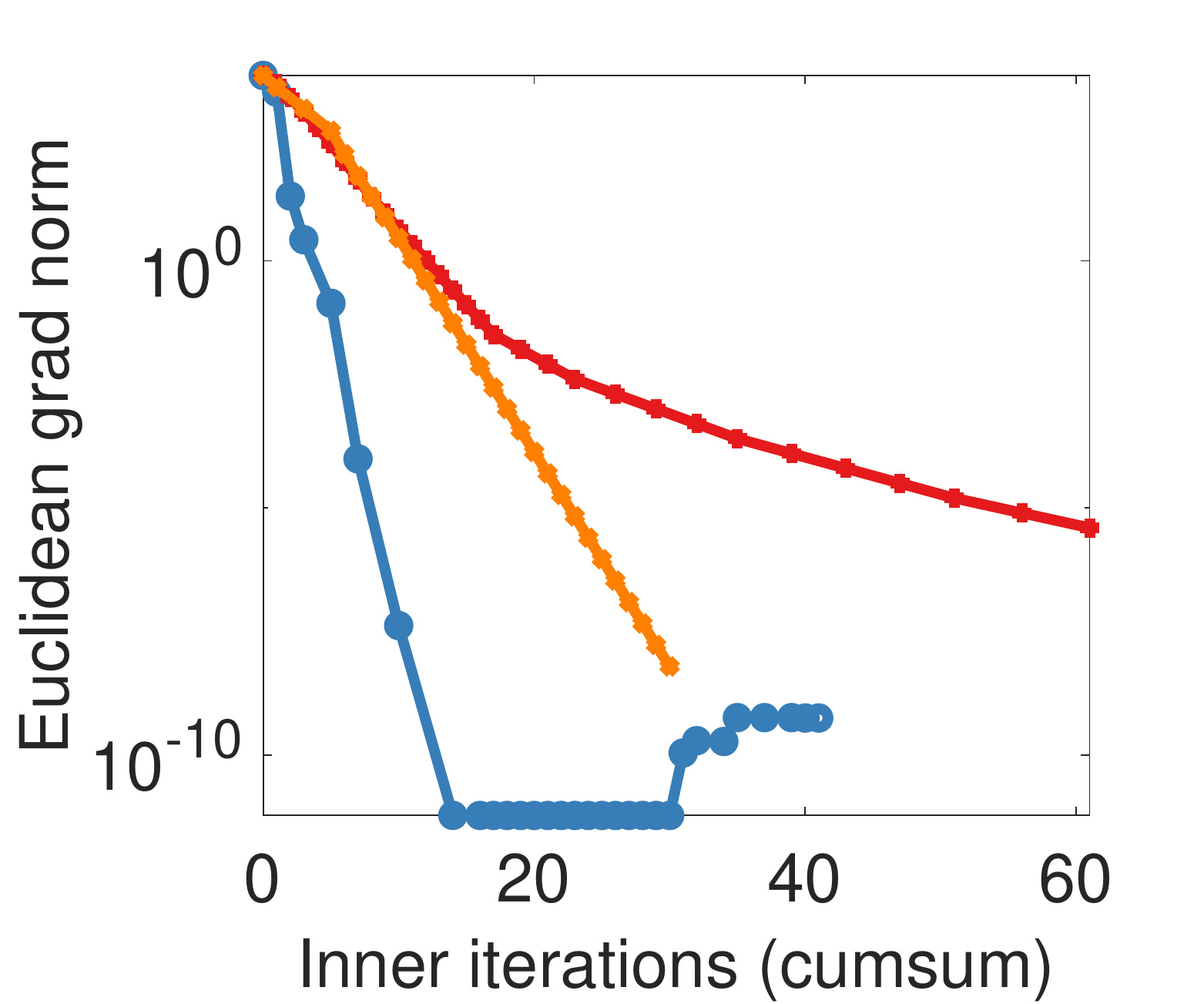}}
    \subfloat[(\texttt{Run2})]{\includegraphics[width = 0.2\textwidth, height = 0.16\textwidth]{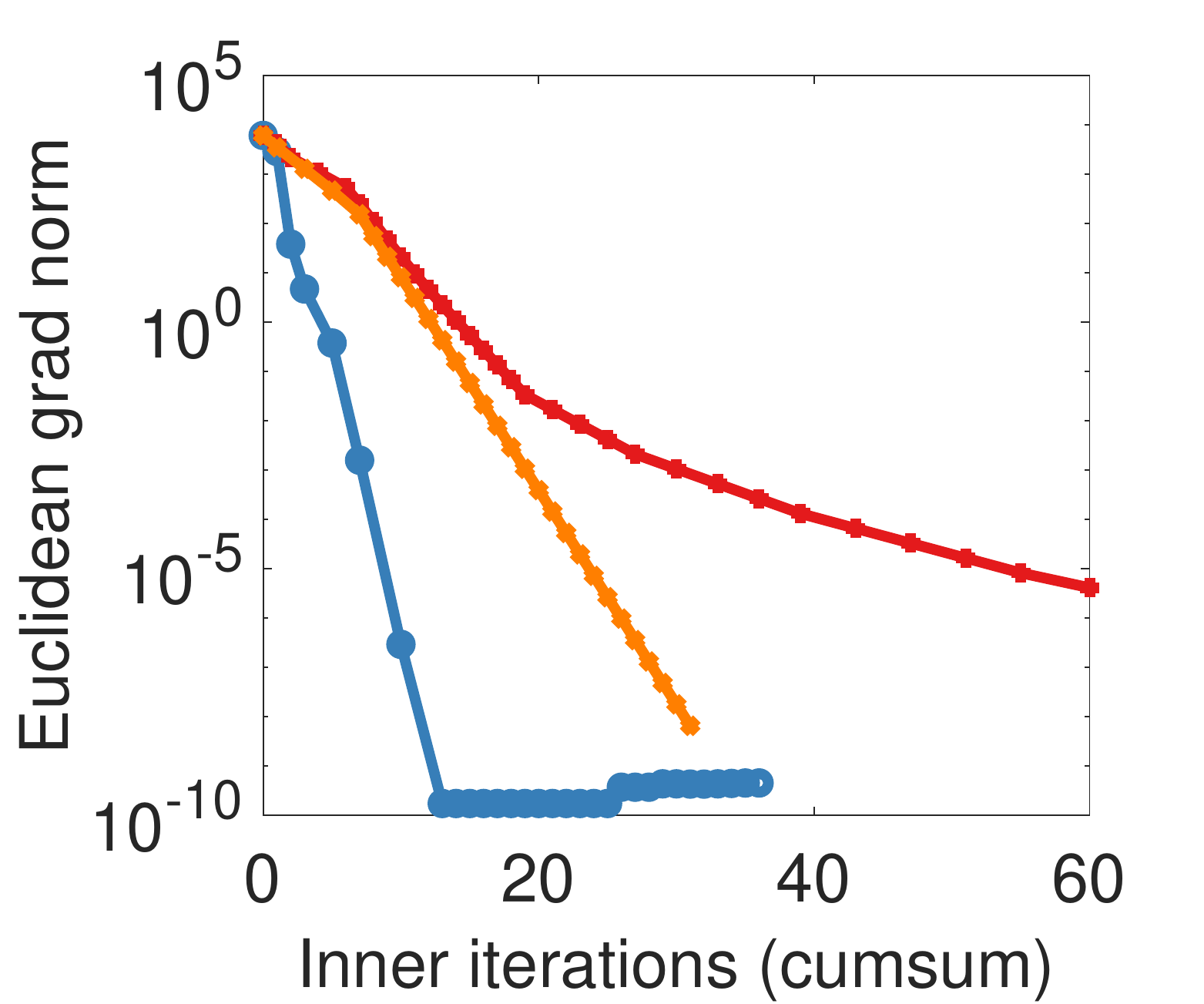}}
    \subfloat[(\texttt{Run2})]{\includegraphics[width = 0.2\textwidth, height = 0.16\textwidth]{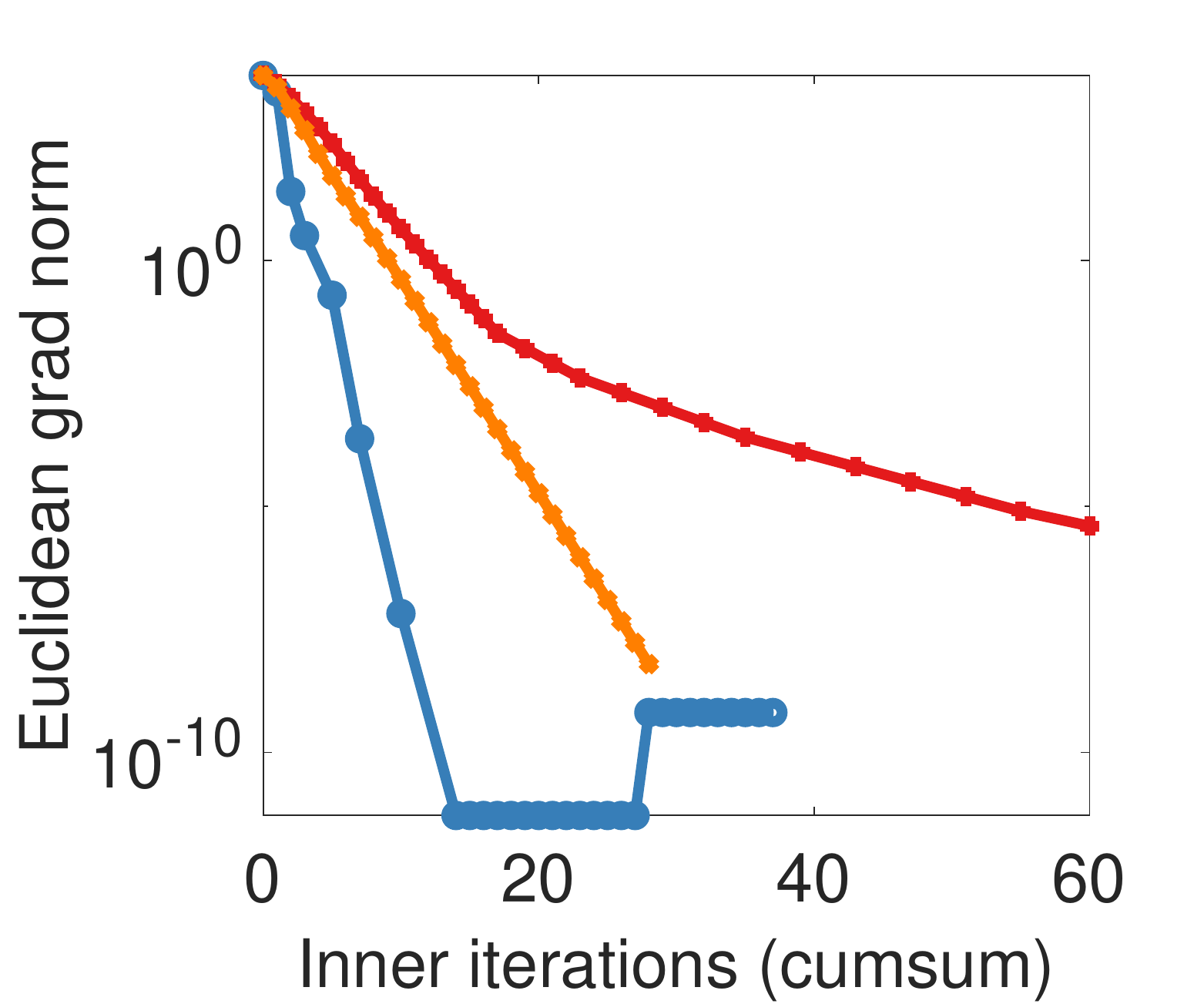}}
    \\
    \subfloat[\texttt{Glass} (\texttt{Run1})]{\includegraphics[width = 0.2\textwidth, height = 0.16\textwidth]{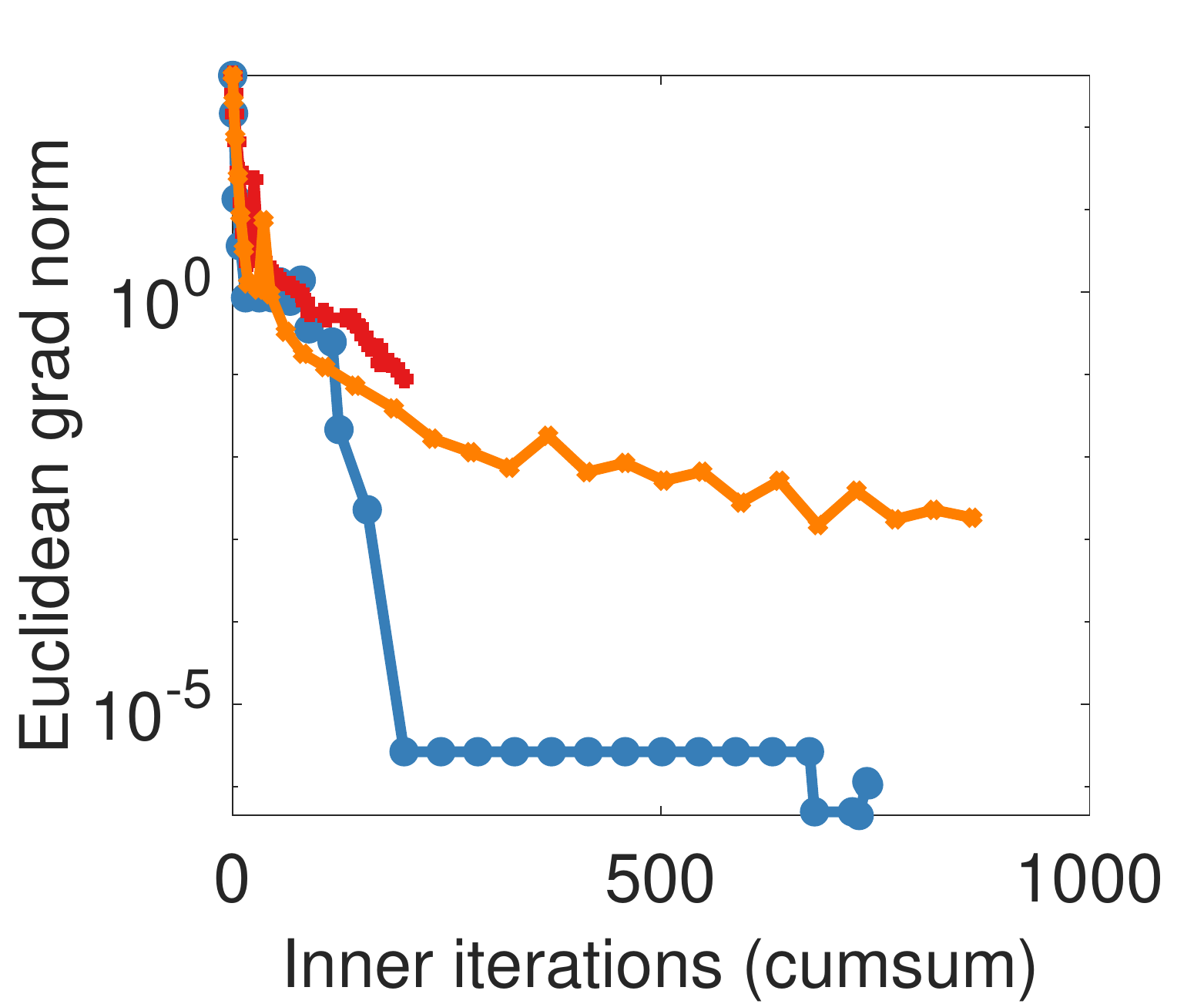}} 
    \subfloat[(\texttt{Run2})]{\includegraphics[width = 0.2\textwidth, height = 0.16\textwidth]{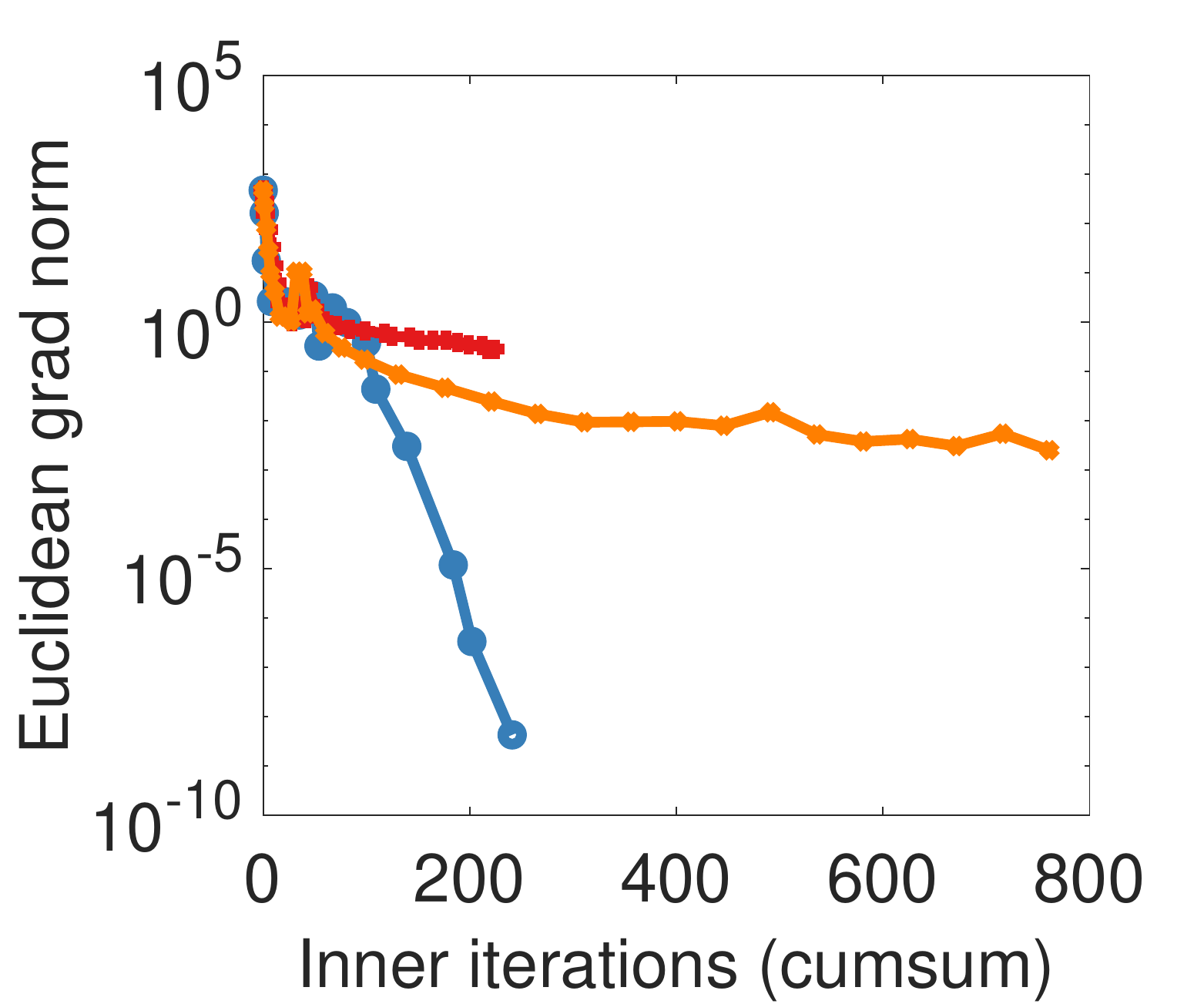}}
    \subfloat[(\texttt{Run2})]{\includegraphics[width = 0.2\textwidth, height = 0.16\textwidth]{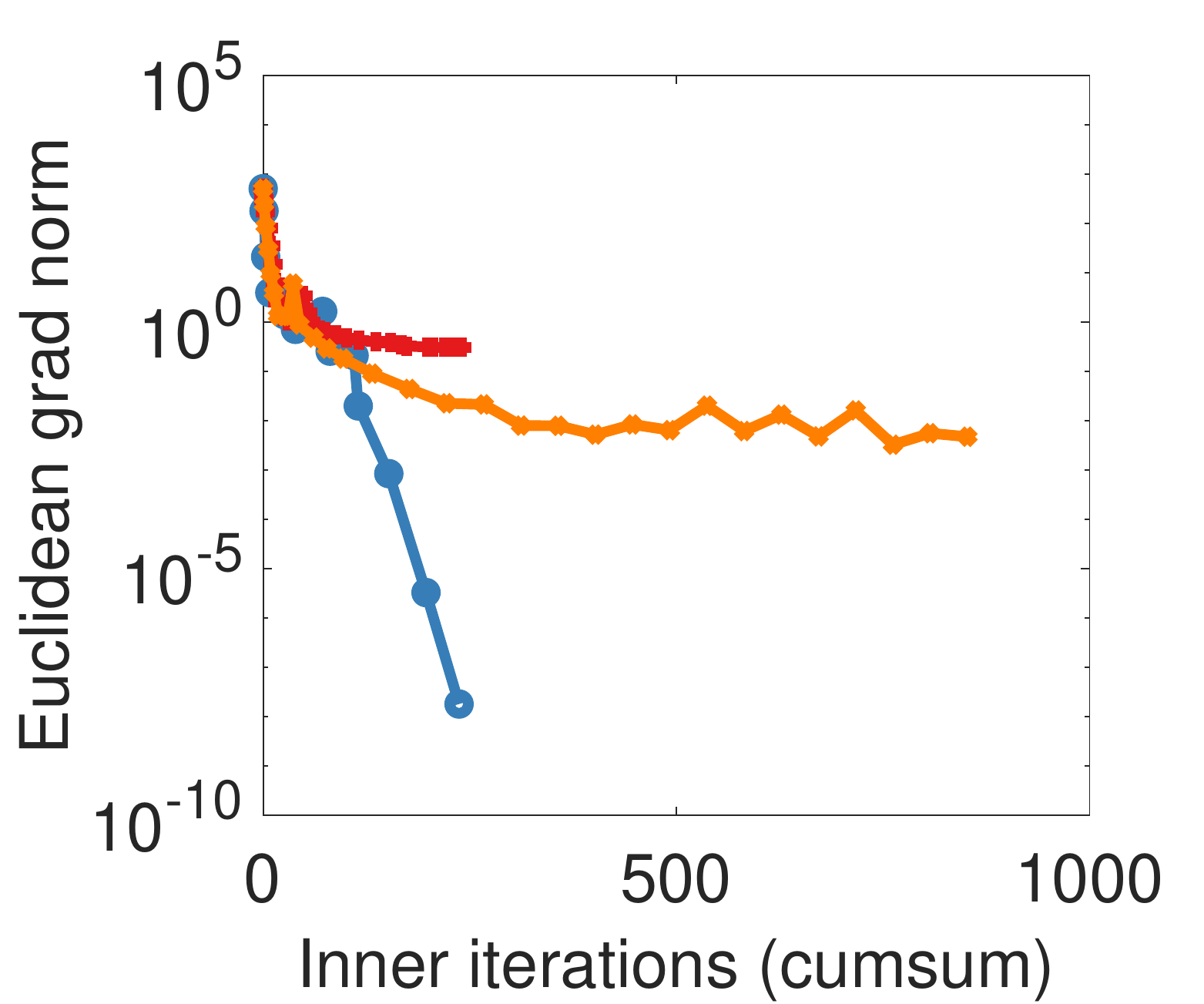}}
    \subfloat[(\texttt{Run2})]{\includegraphics[width = 0.2\textwidth, height = 0.16\textwidth]{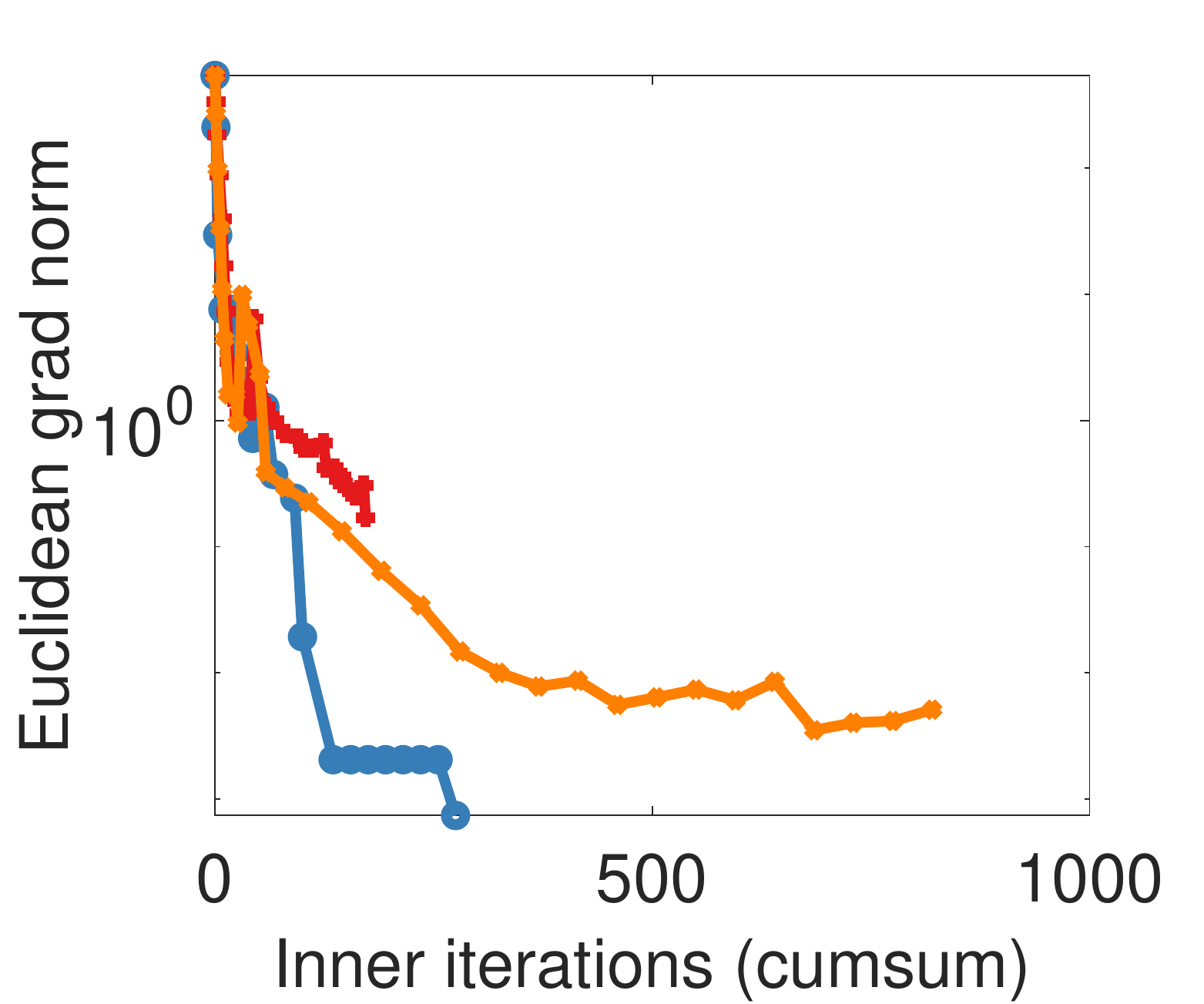}}
    \subfloat[(\texttt{Run2})]{\includegraphics[width = 0.2\textwidth, height = 0.16\textwidth]{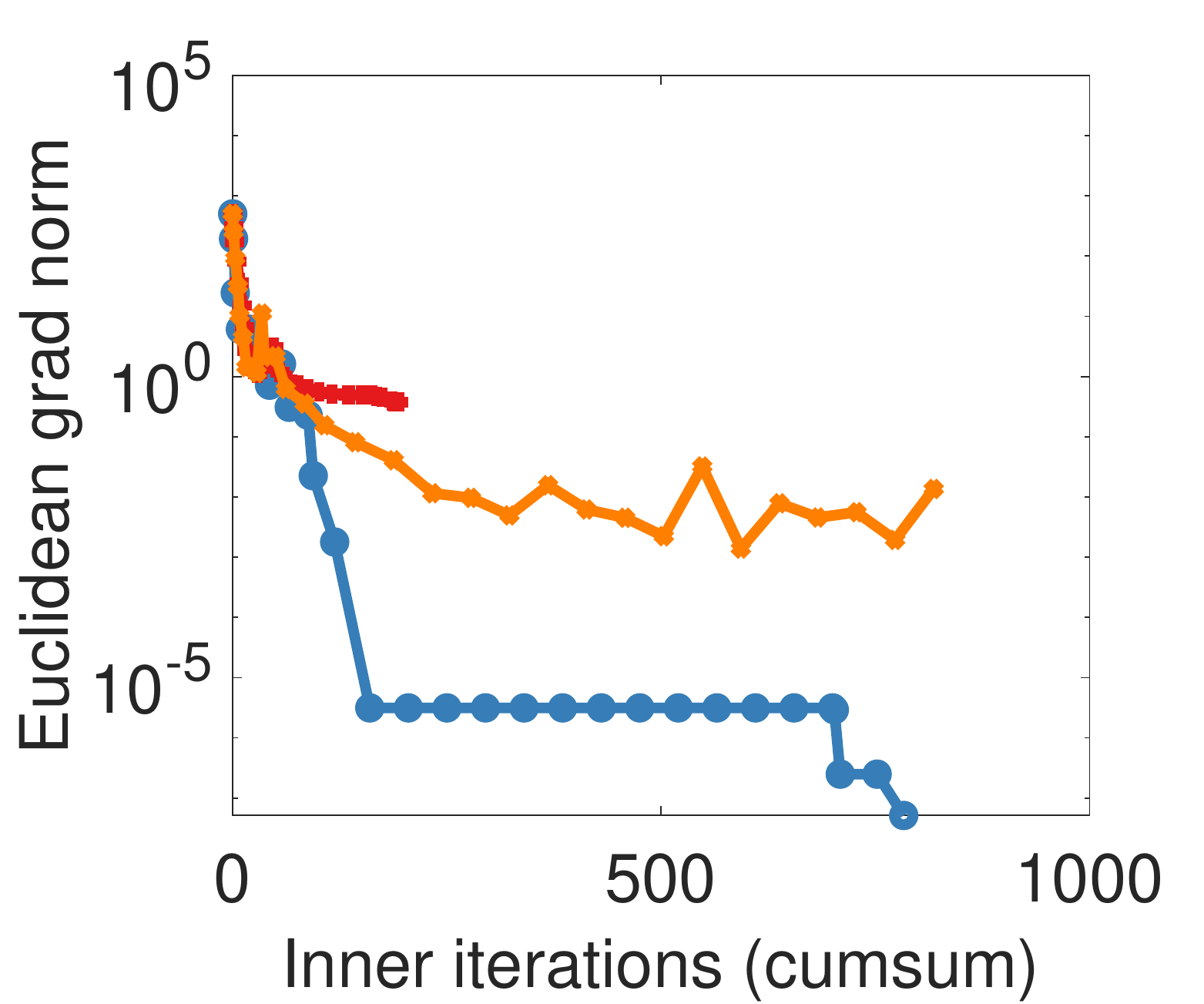}}
    \caption{Sensitivity to randomness on metric learning problem with Riemannian trust region.} 
    \label{DML_rng_figure}
\end{figure*}

\begin{figure*}[!th]
\captionsetup{justification=centering}
    \centering
    \subfloat[\texttt{LowCN} (\texttt{Loss})]{\includegraphics[width = 0.2\textwidth, height = 0.16\textwidth]{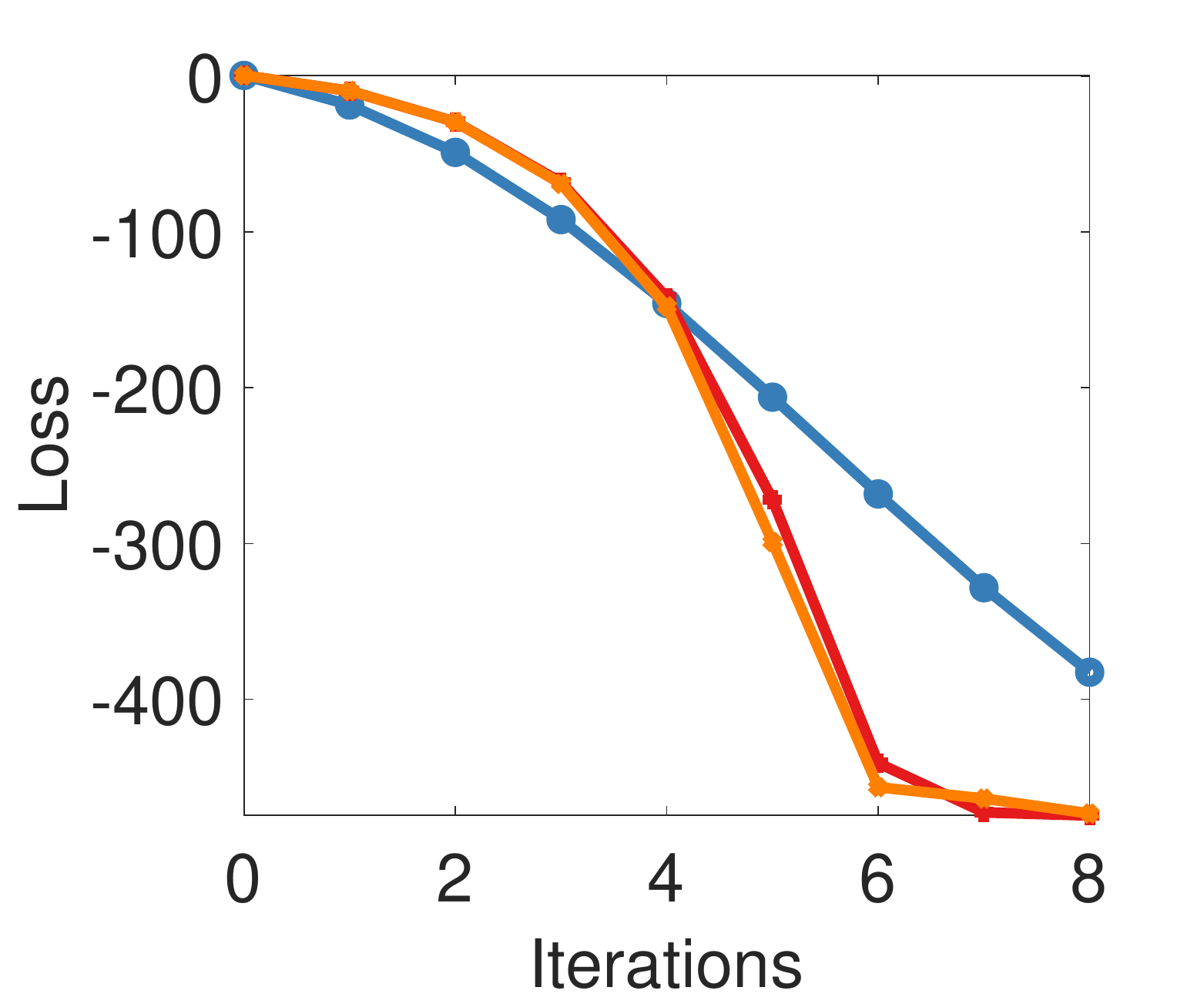}}
    \subfloat[\texttt{LowCN} (\texttt{Disttosol})]{\includegraphics[width = 0.2\textwidth, height = 0.16\textwidth]{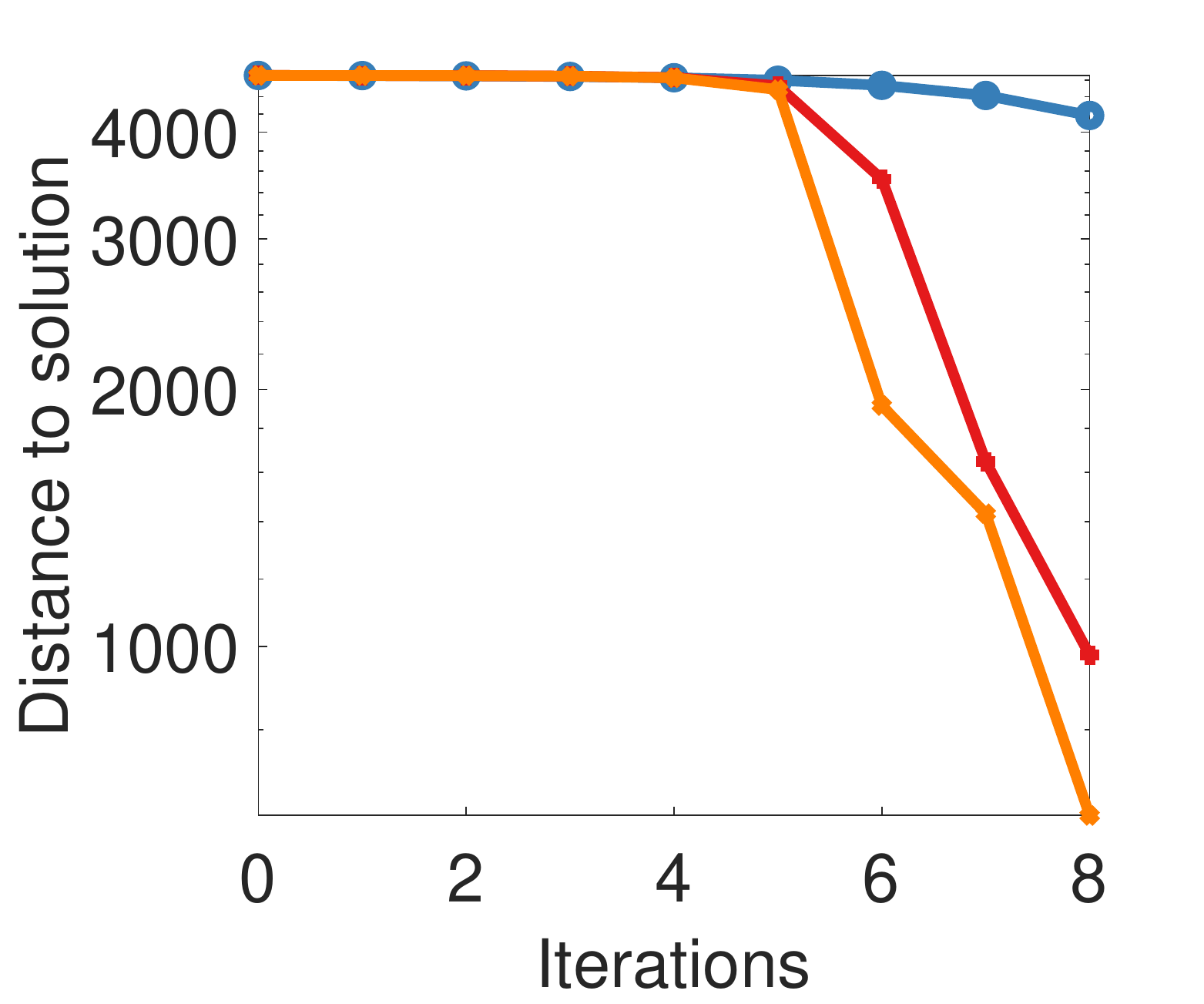}}
    \subfloat[\texttt{LowCN} (\texttt{Time})]{\includegraphics[width = 0.2\textwidth, height = 0.16\textwidth]{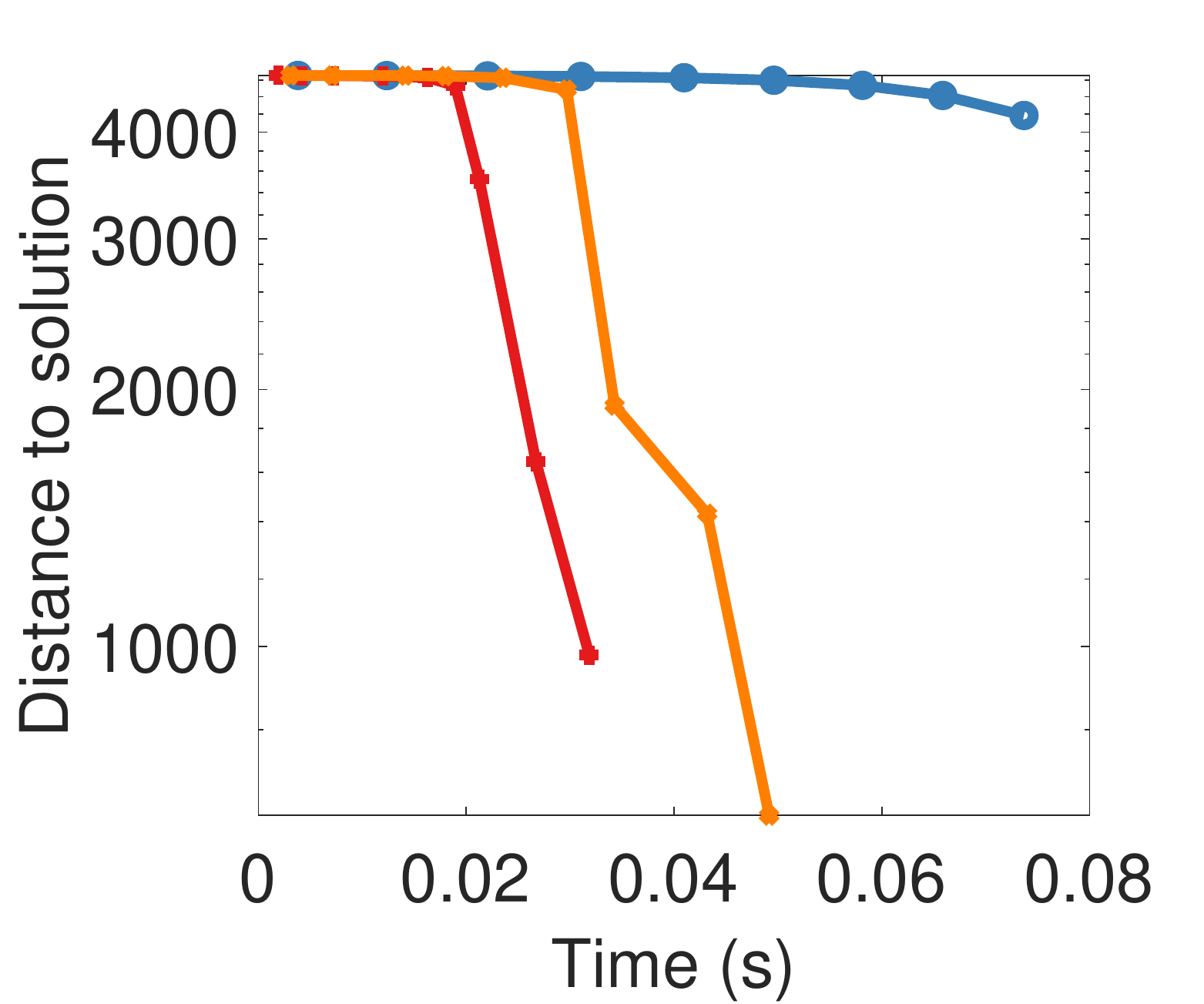}} 
    \\
    \subfloat[\texttt{HighCN} (\texttt{Loss})]{\includegraphics[width = 0.2\textwidth, height = 0.16\textwidth]{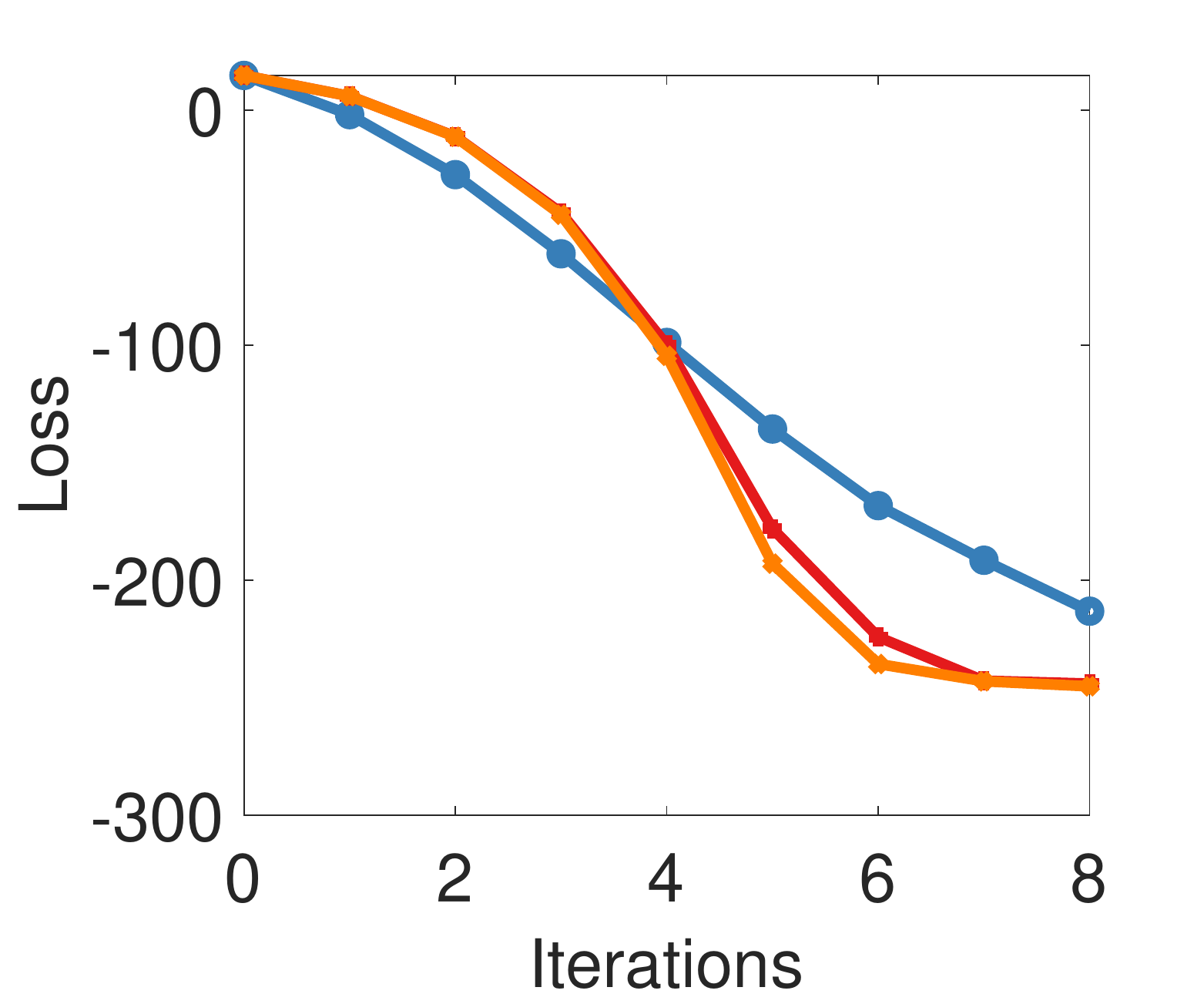}}
    \subfloat[\texttt{HighCN} (\texttt{Disttosol})]{\includegraphics[width = 0.2\textwidth, height = 0.16\textwidth]{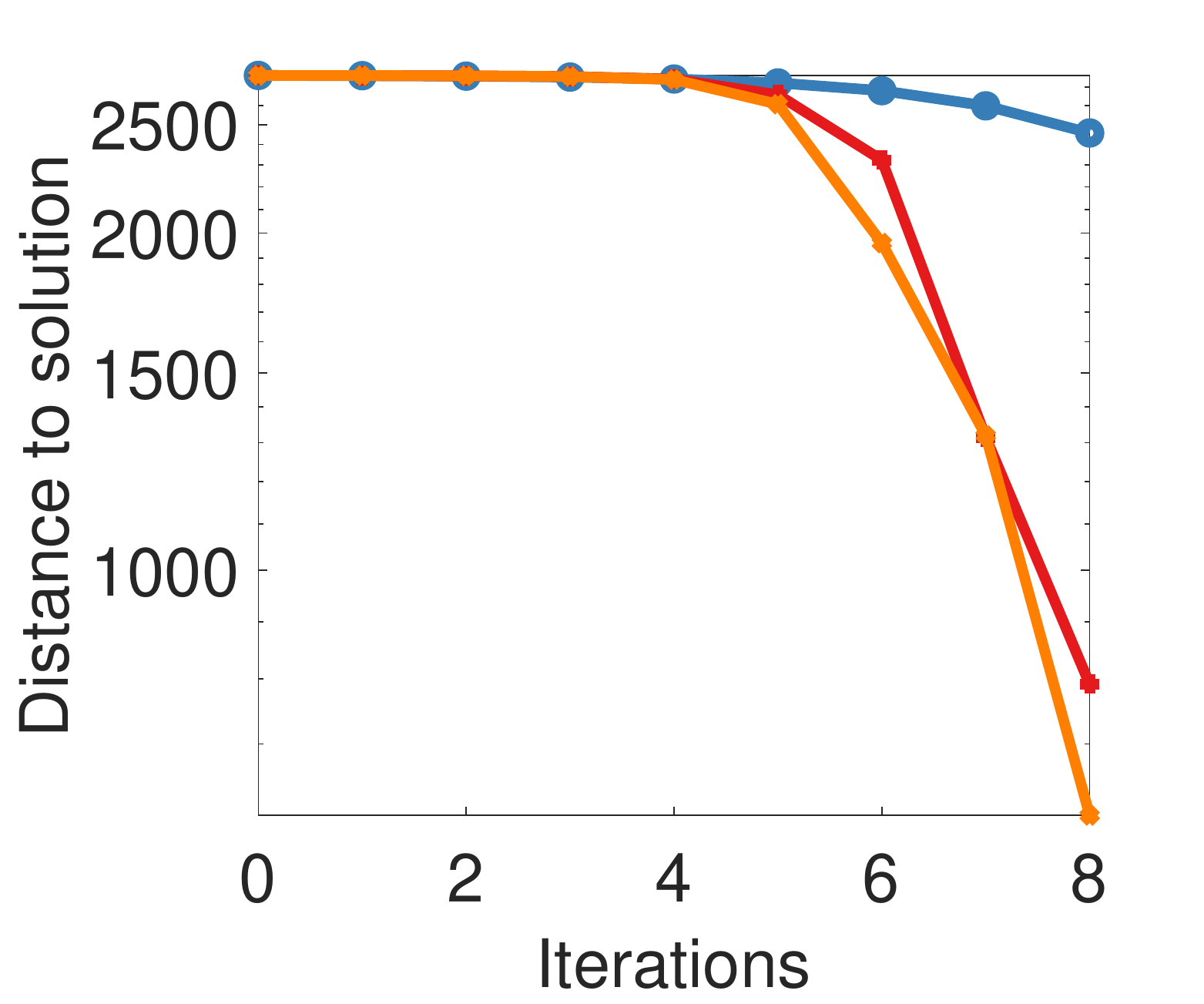}}
    \subfloat[\texttt{HighCN} (\texttt{Time})]{\includegraphics[width = 0.2\textwidth, height = 0.16\textwidth]{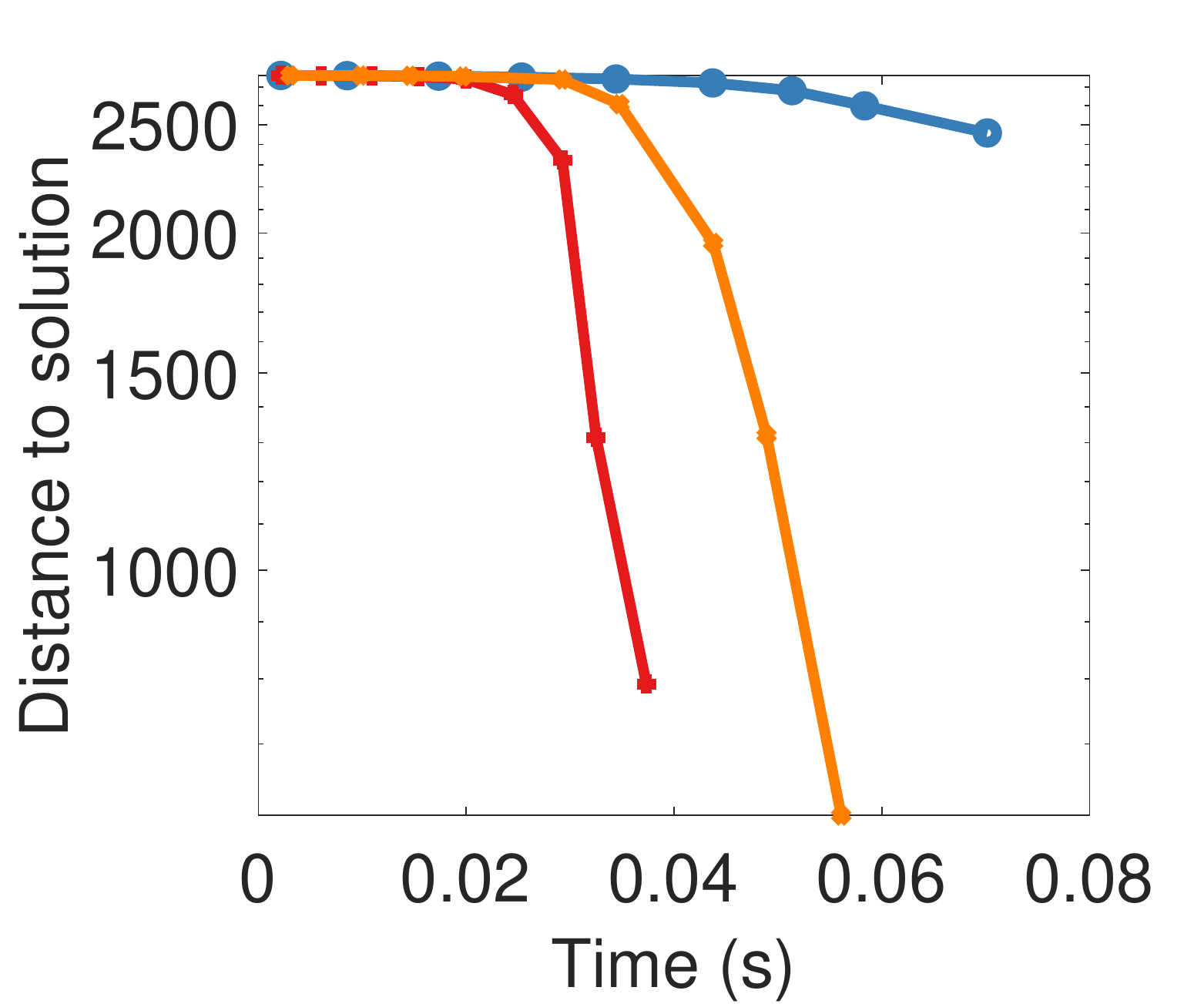}} 
    \caption{Riemannian conjugate gradient on log-det maximization problem (loss, distance to solution, runtime).} 
    \label{Logdet_RCG_figure}
\end{figure*}

\begin{figure*}[!th]
\captionsetup{justification=centering}
    \centering
    \subfloat[\texttt{LowCN} (\texttt{Loss})]{\includegraphics[width = 0.2\textwidth, height = 0.16\textwidth]{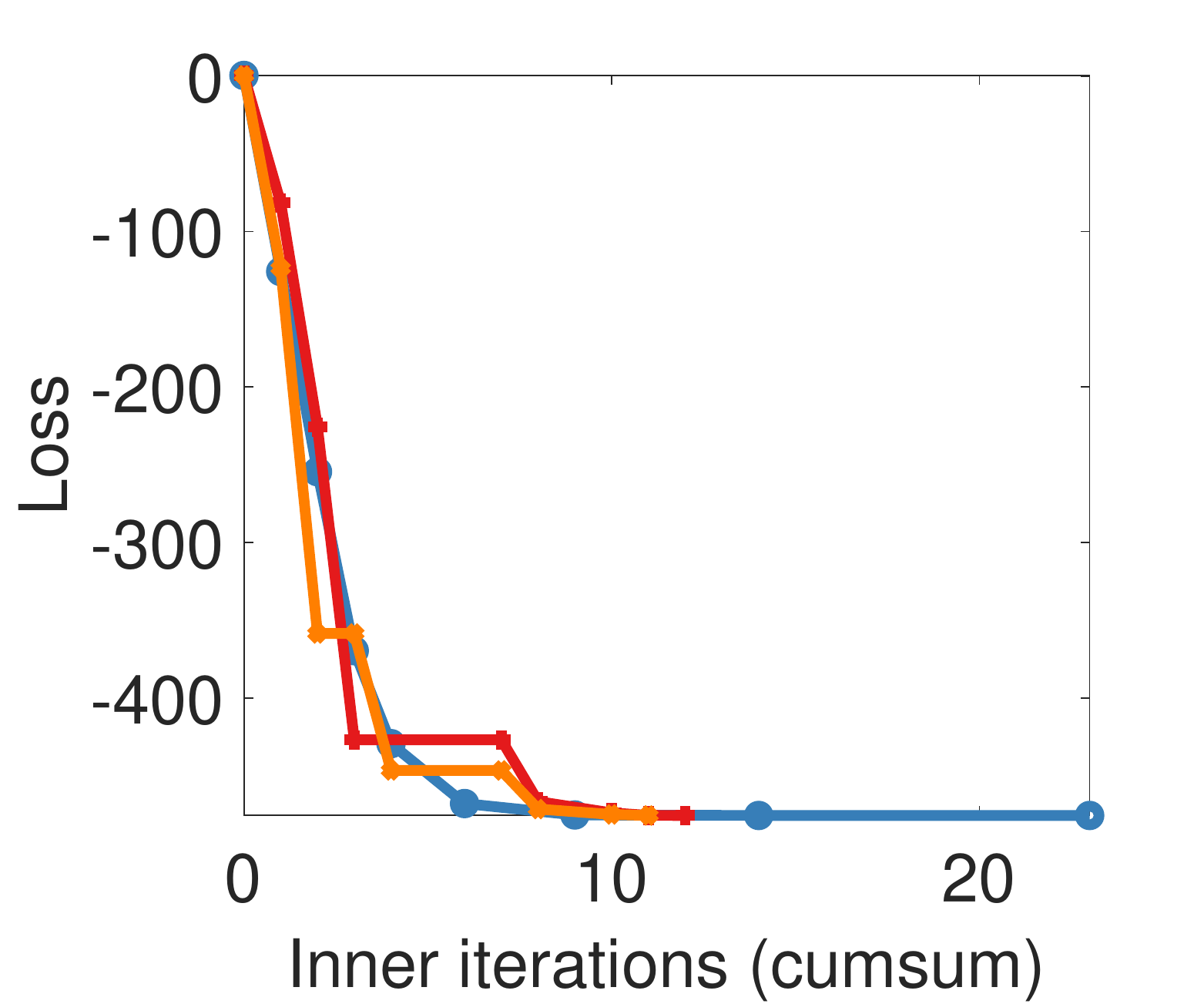}}
    \subfloat[\texttt{LowCN} (\texttt{Disttosol})]{\includegraphics[width = 0.2\textwidth, height = 0.16\textwidth]{logdet/logdet_Ex1_10_TR_disttosol.pdf}}
    \subfloat[\texttt{LowCN} (\texttt{Time})]{\includegraphics[width = 0.2\textwidth, height = 0.16\textwidth]{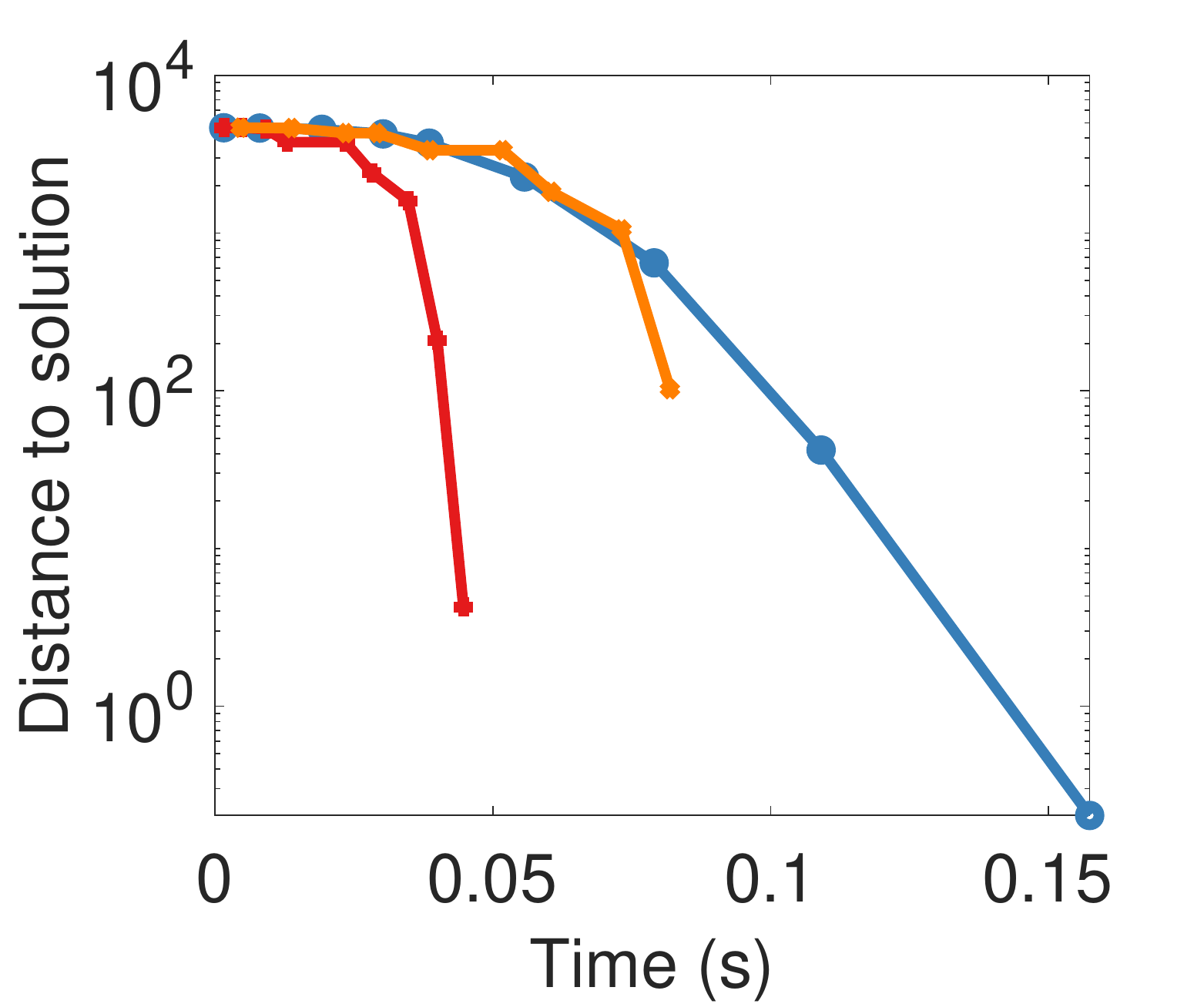}} 
    \\
    \subfloat[\texttt{HighCN} (\texttt{Loss})]{\includegraphics[width = 0.2\textwidth, height = 0.16\textwidth]{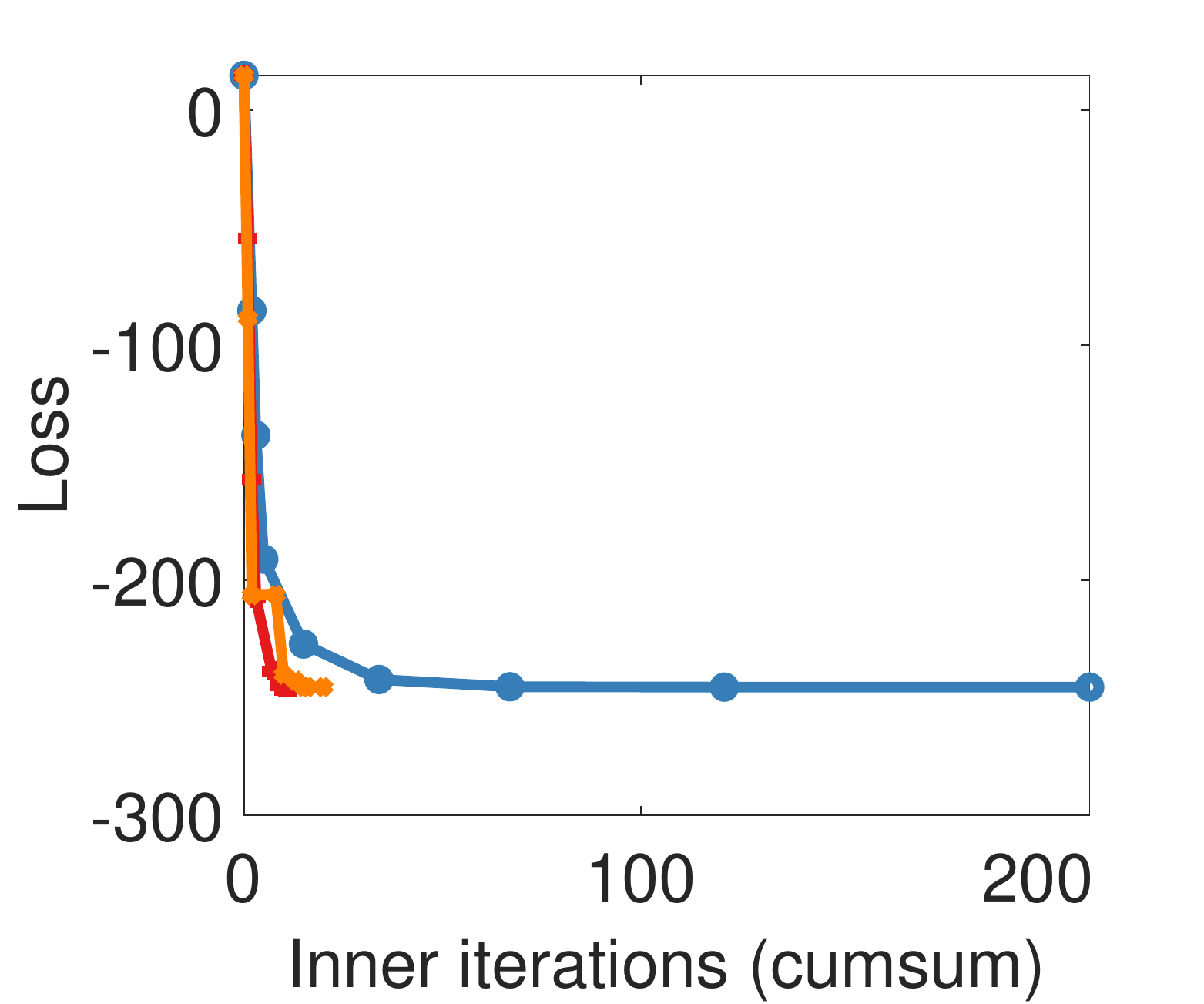}}
    \subfloat[\texttt{HighCN} (\texttt{Disttosol})]{\includegraphics[width = 0.2\textwidth, height = 0.16\textwidth]{logdet/logdet_Ex1_1000_TR_disttosol.pdf}}
    \subfloat[\texttt{HighCN} (\texttt{Time})]{\includegraphics[width = 0.2\textwidth, height = 0.16\textwidth]{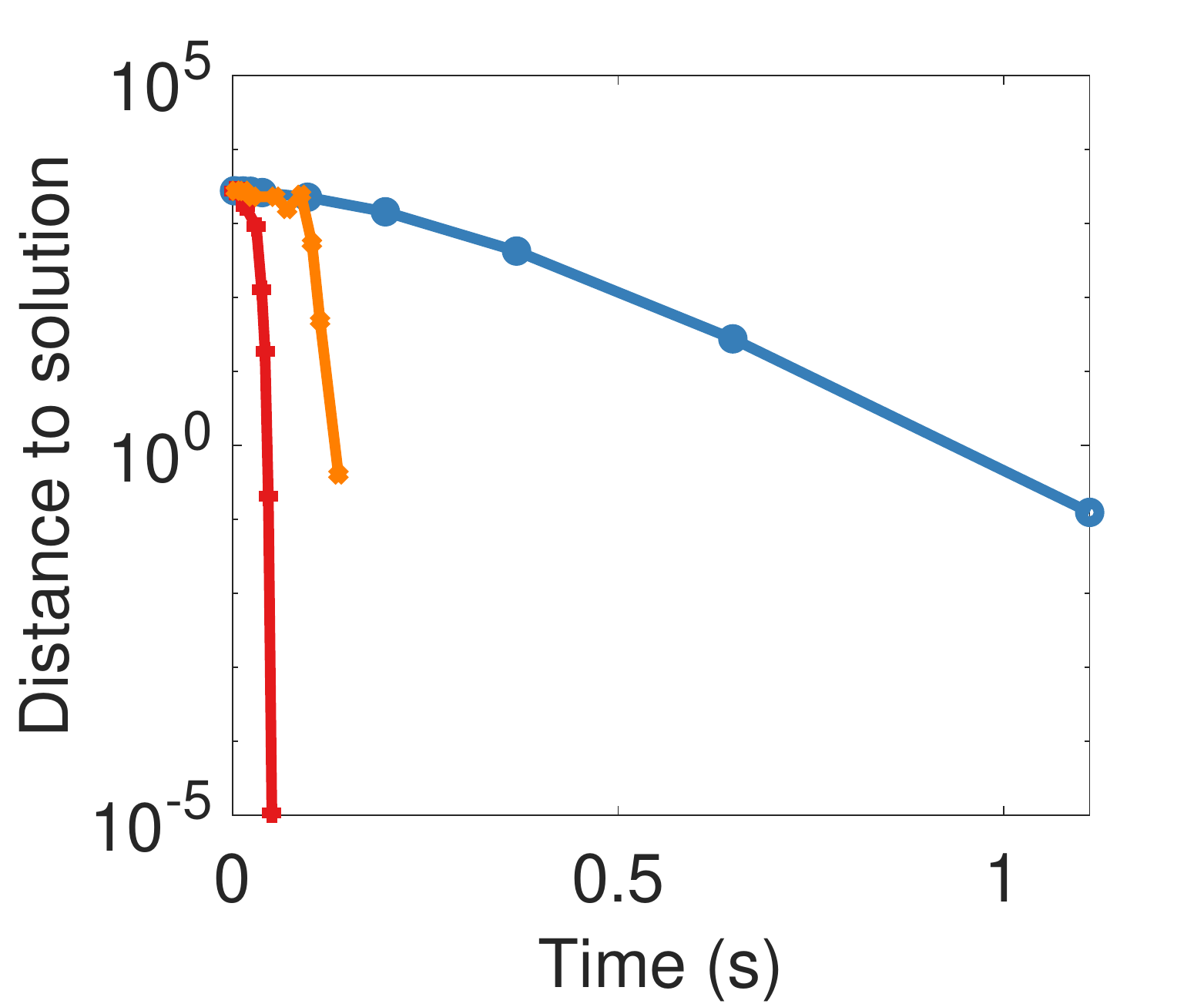}} 
    \caption{Riemannian trust region on log-det maximization problem (loss, distance to solution, runtime).} 
    \label{Logdet_RTR_figure}
\end{figure*}

\begin{figure*}[!th]
\captionsetup{justification=centering}
    \centering
    \subfloat[\texttt{RGD} (\texttt{Loss})]{\includegraphics[width = 0.2\textwidth, height = 0.16\textwidth]{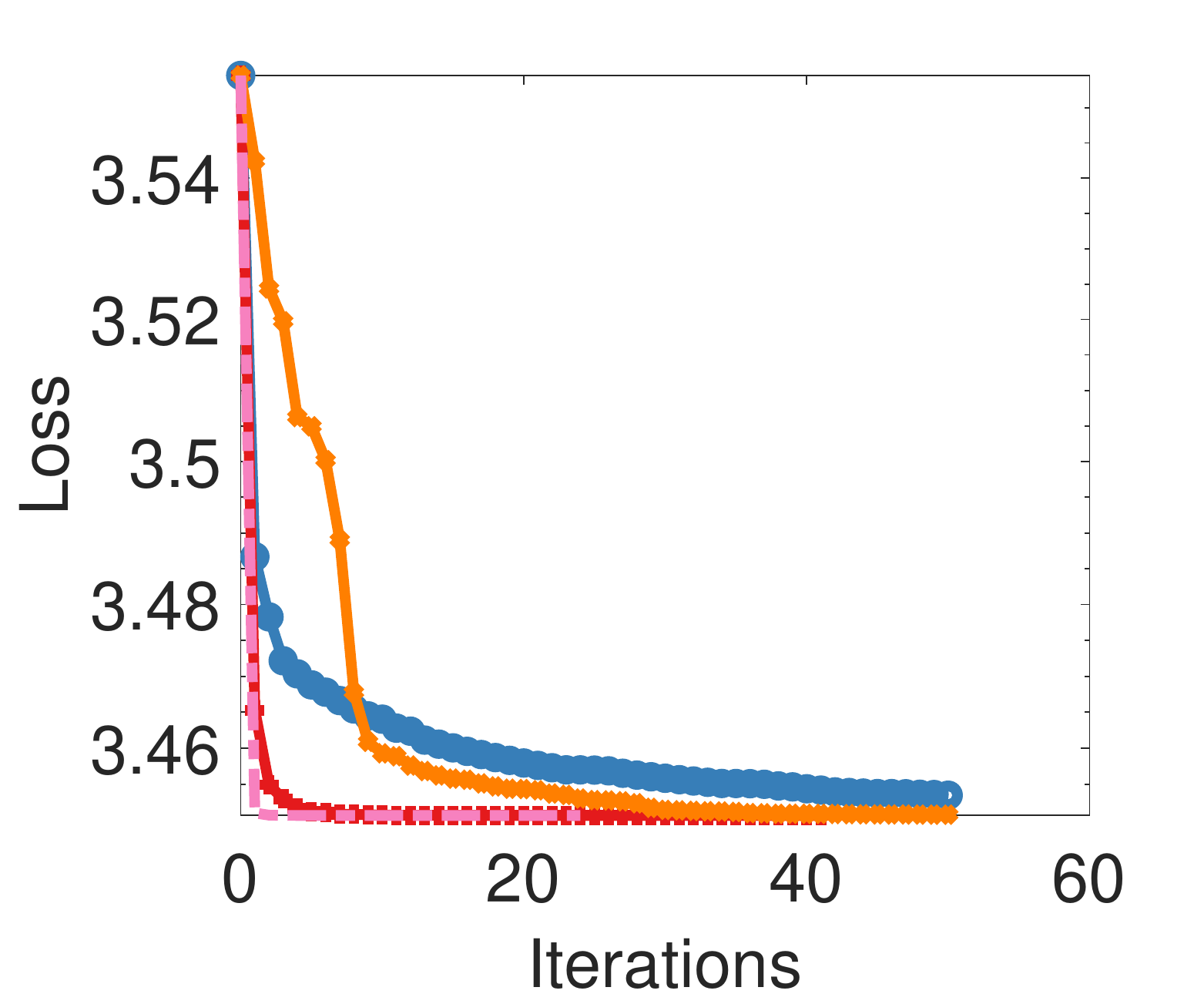}} 
    \subfloat[\texttt{RGD} (\texttt{Egradnorm})]{\includegraphics[width = 0.2\textwidth, height = 0.16\textwidth]{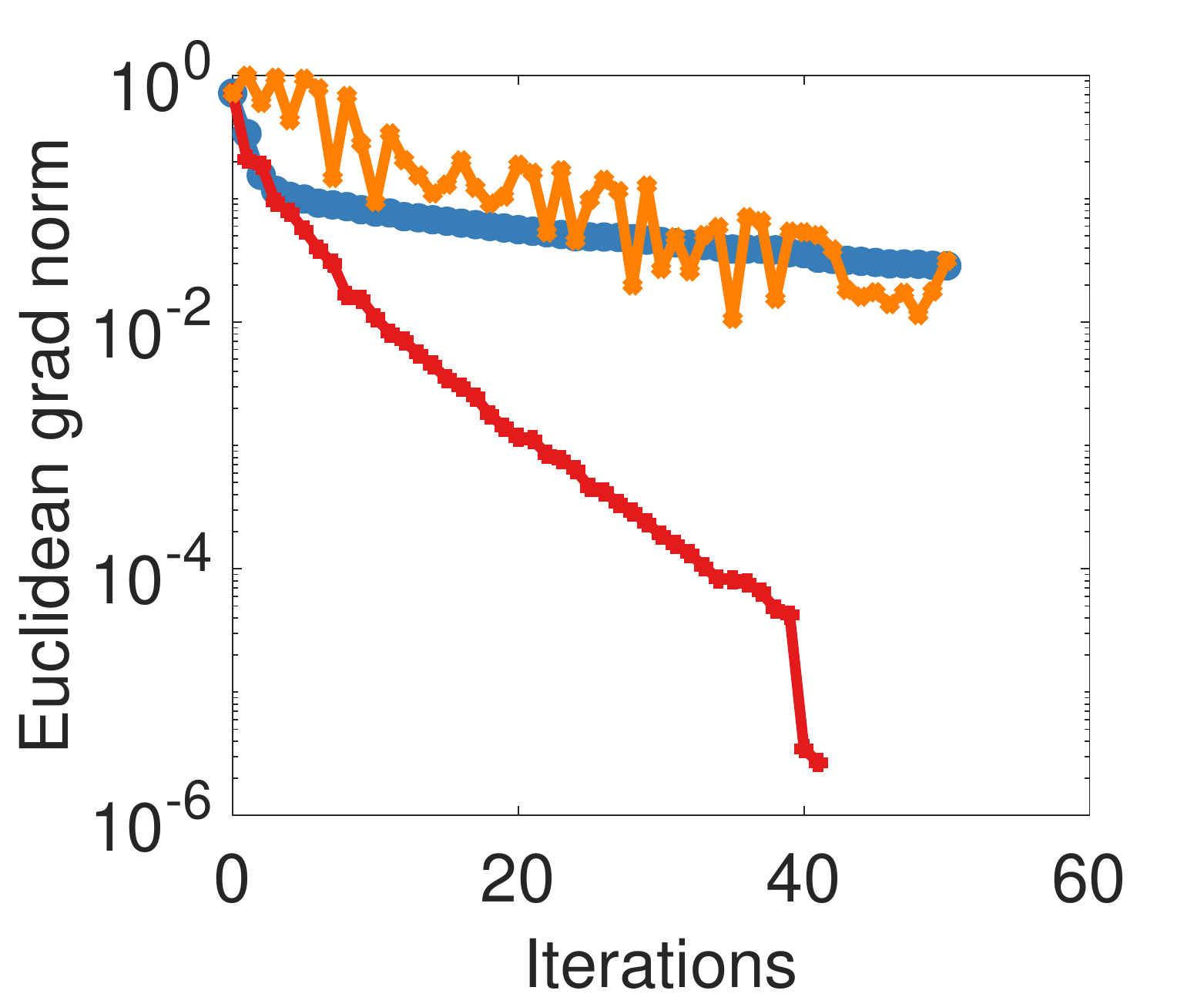}}
    \subfloat[\texttt{RGD} (\texttt{Time})]{\includegraphics[width = 0.2\textwidth, height = 0.16\textwidth]{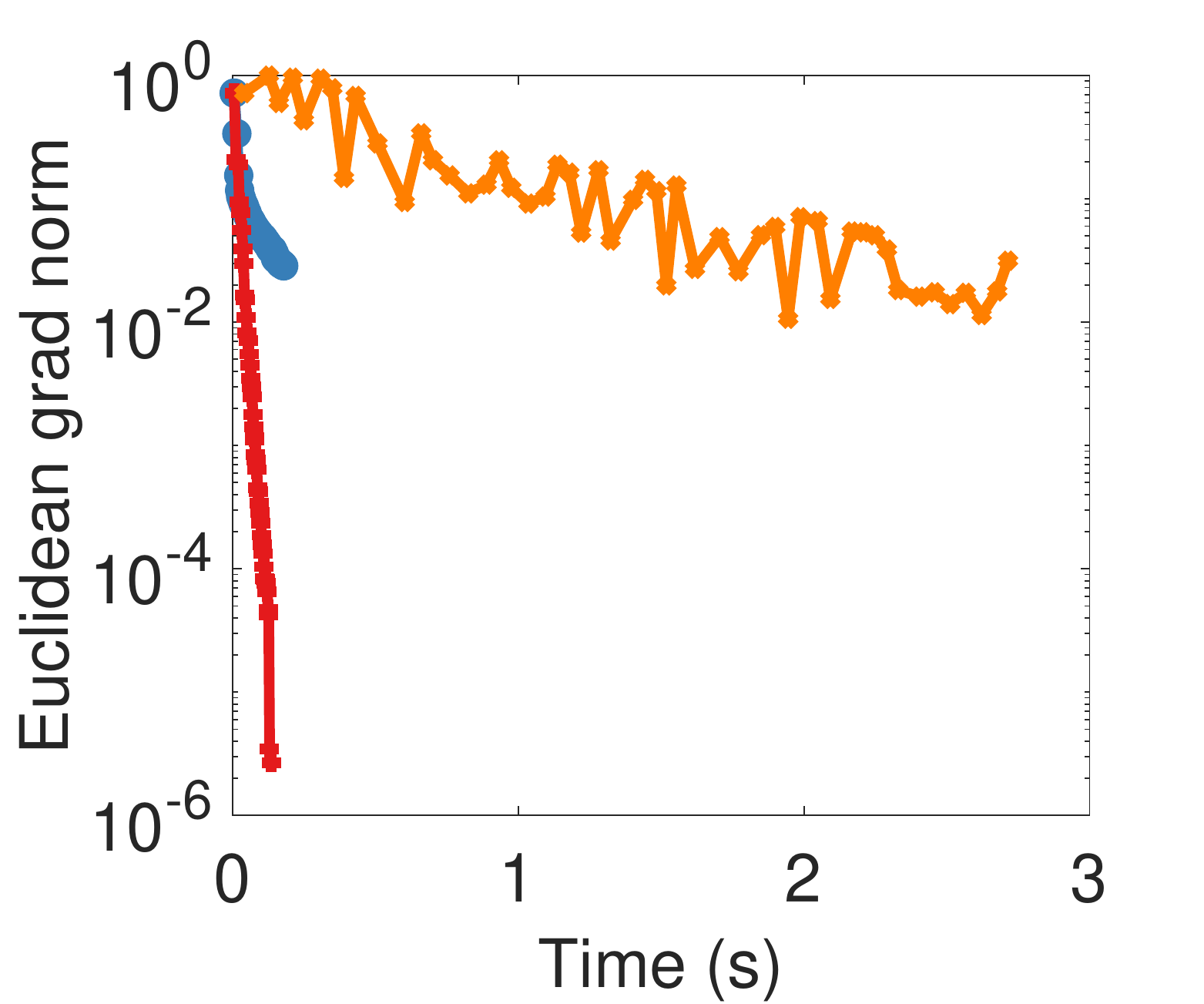}}
    \subfloat[\texttt{RSGD} (\texttt{Loss})]{\includegraphics[width = 0.2\textwidth, height = 0.16\textwidth]{GMM/gmm_Ex1_SGD_cost.pdf}}
    \subfloat[\texttt{RSGD} (\texttt{Egradnorm})]{\includegraphics[width = 0.2\textwidth, height = 0.16\textwidth]{GMM/gmm_Ex1_SGD_egradnorm.pdf}}
    \\
    \subfloat[\texttt{RSGD} (\texttt{Time})]{\includegraphics[width = 0.2\textwidth, height = 0.16\textwidth]{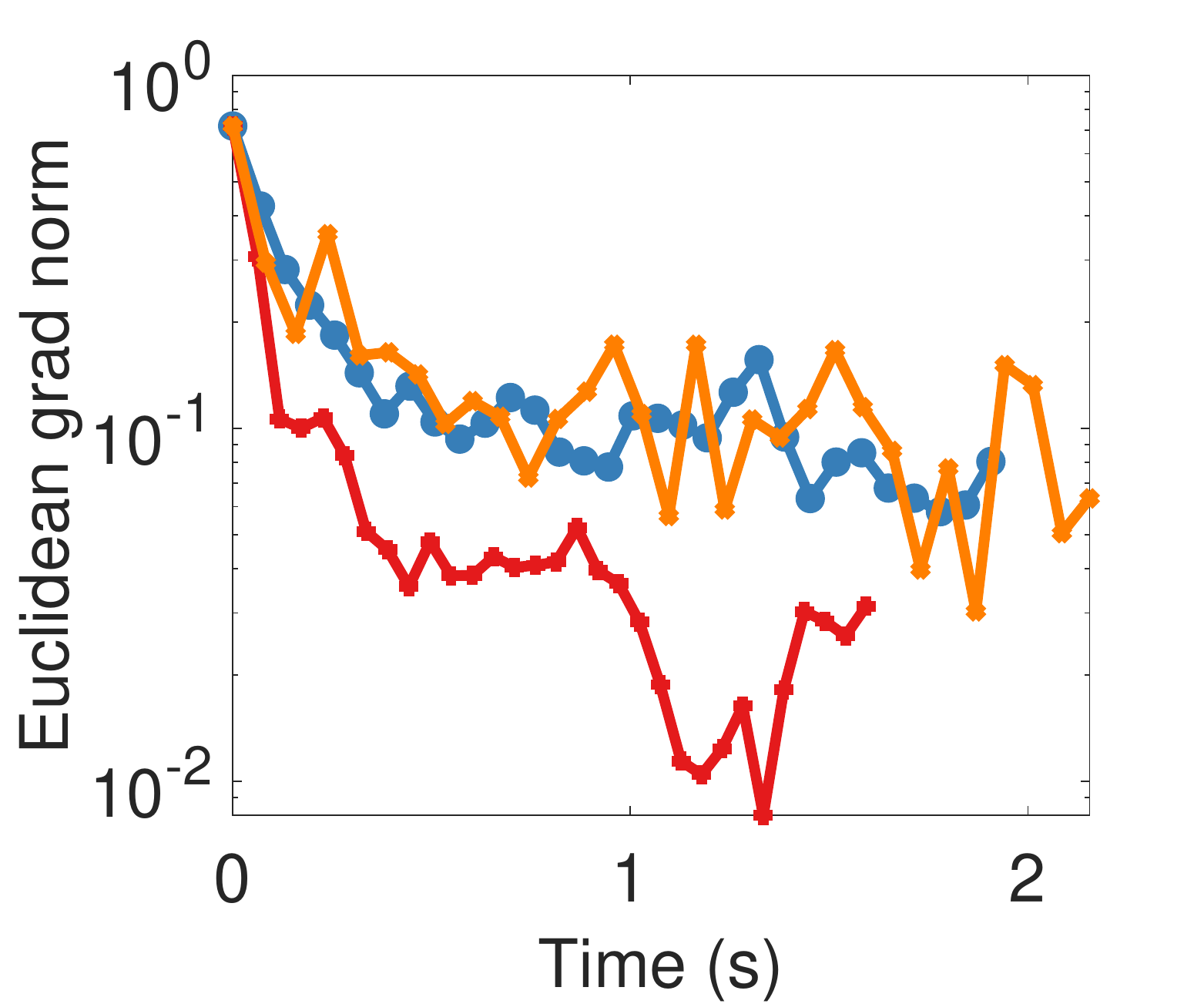}}
    \subfloat[\texttt{RCG} (\texttt{Loss})]{\includegraphics[width = 0.2\textwidth, height = 0.16\textwidth]{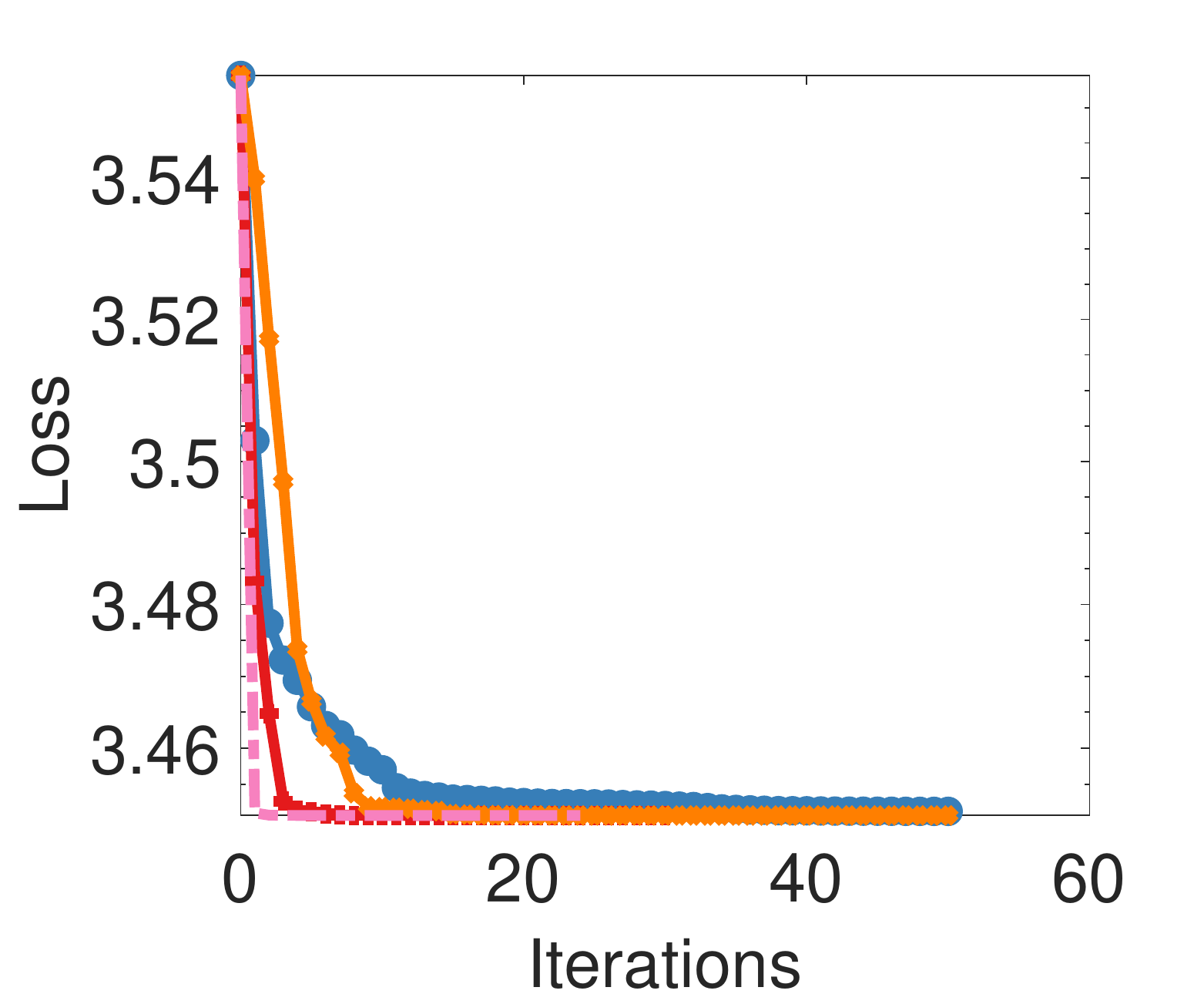}}
    \subfloat[\texttt{RCG} (\texttt{Egradnorm})]{\includegraphics[width = 0.2\textwidth, height = 0.16\textwidth]{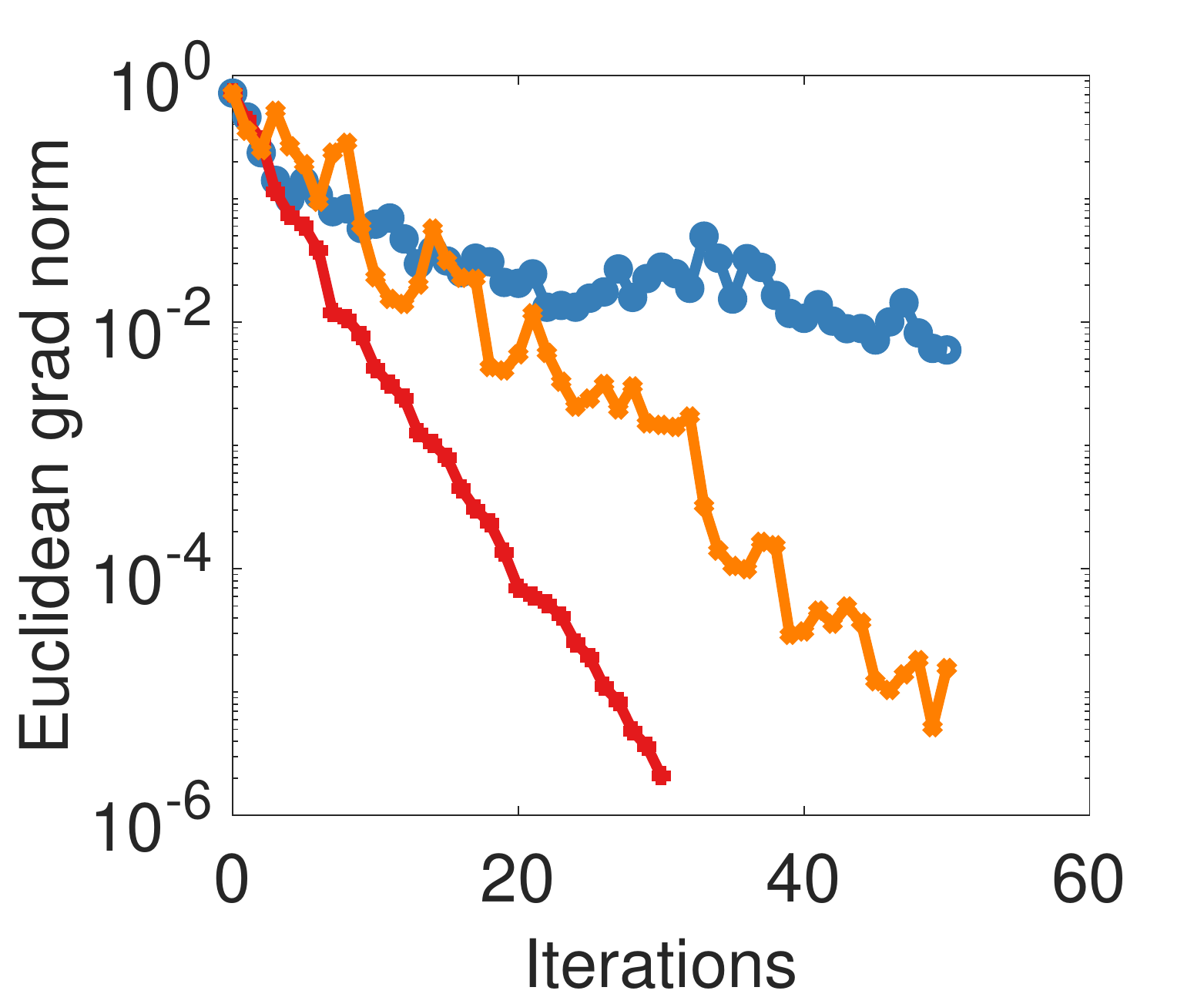}}
    \subfloat[\texttt{RCG} (\texttt{Time})]{\includegraphics[width = 0.2\textwidth, height = 0.16\textwidth]{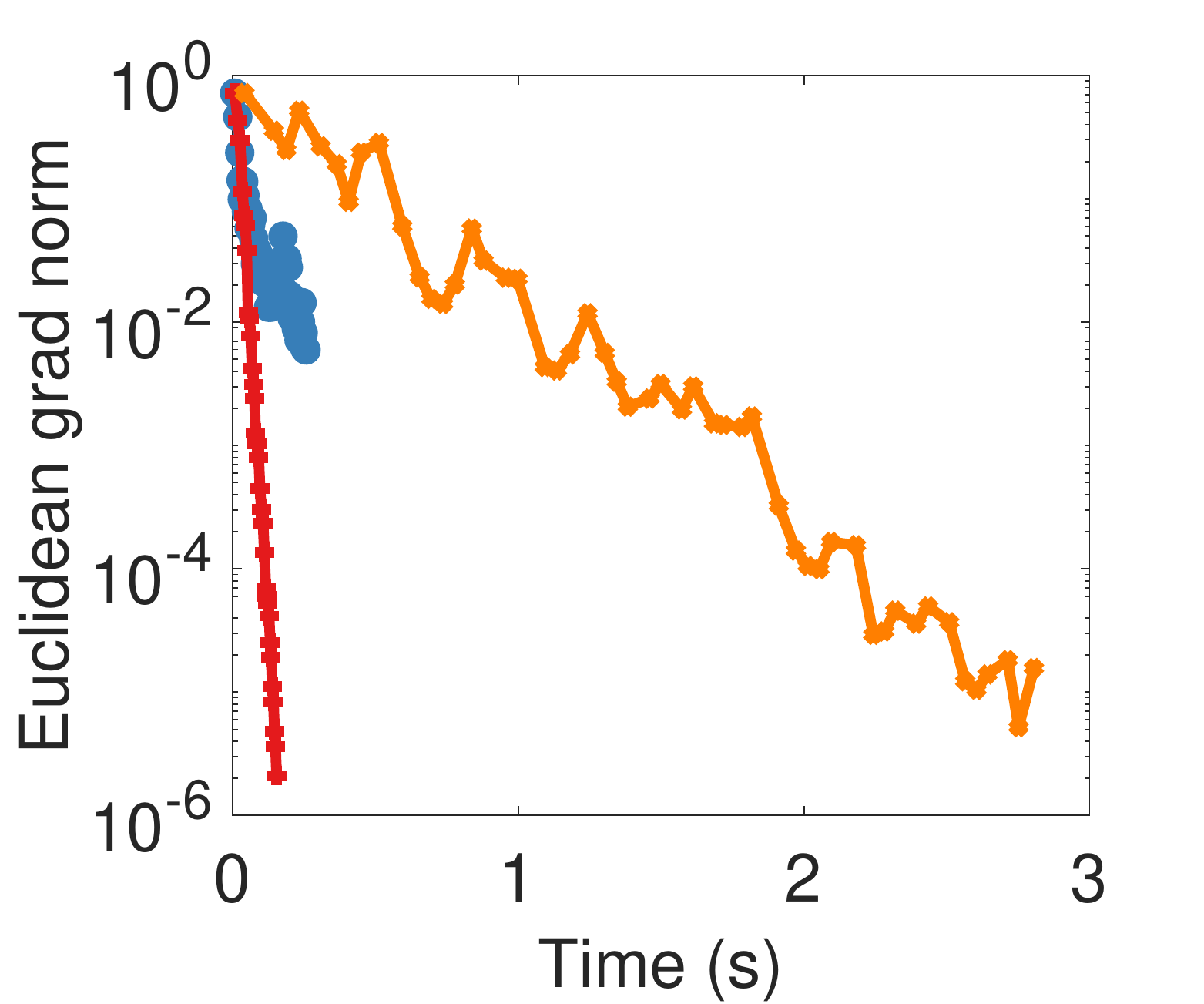}}
    \subfloat[\texttt{RTR} (\texttt{Loss})]{\includegraphics[width = 0.2\textwidth, height = 0.16\textwidth]{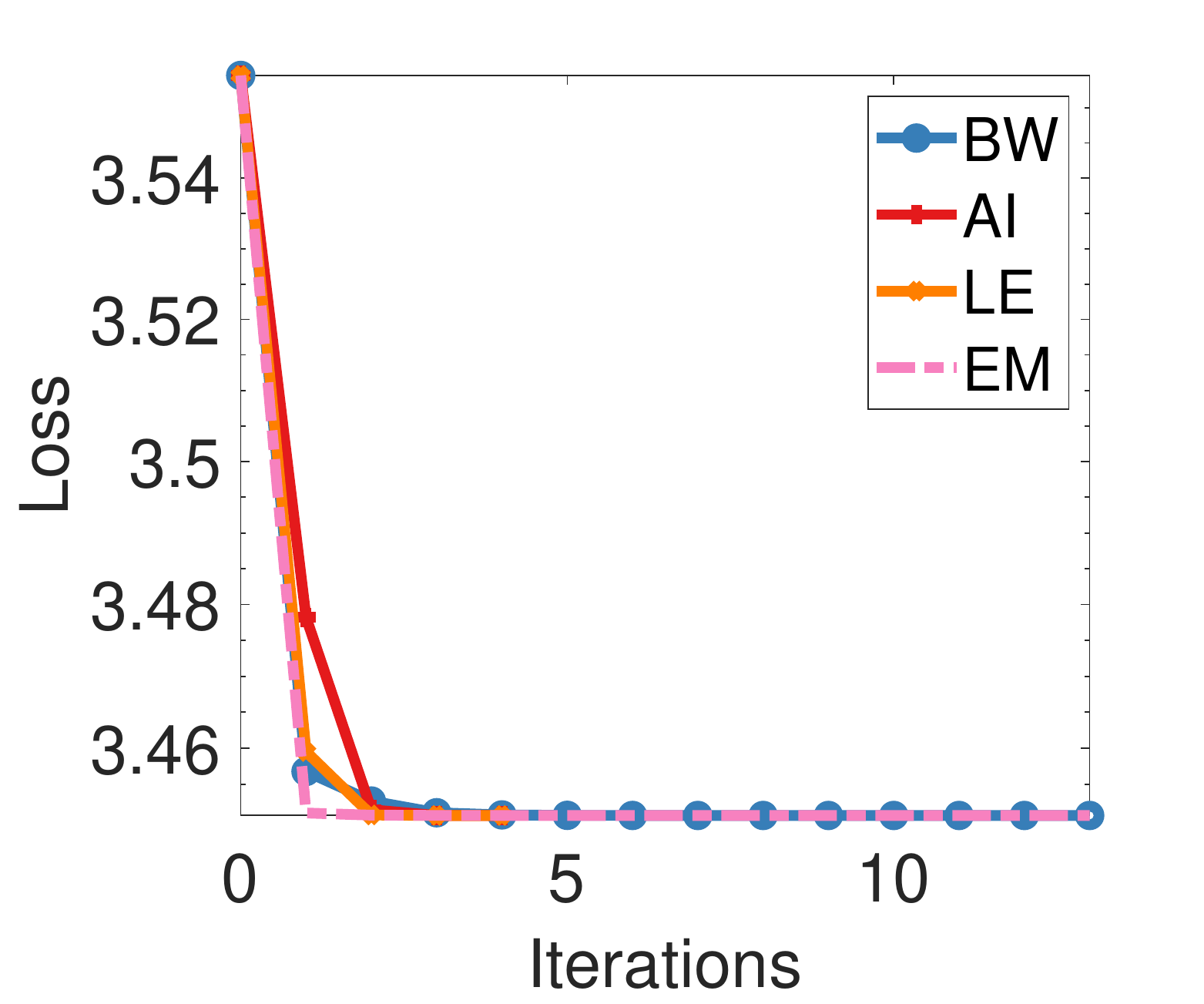}}
    \\
    \subfloat[\texttt{RTR} (\texttt{Egradnorm})]{\includegraphics[width = 0.2\textwidth, height = 0.16\textwidth]{GMM/gmm_Ex1_TR_egradnorm.pdf}}
    \subfloat[\texttt{RTR} (\texttt{Time})]{\includegraphics[width = 0.2\textwidth, height = 0.16\textwidth]{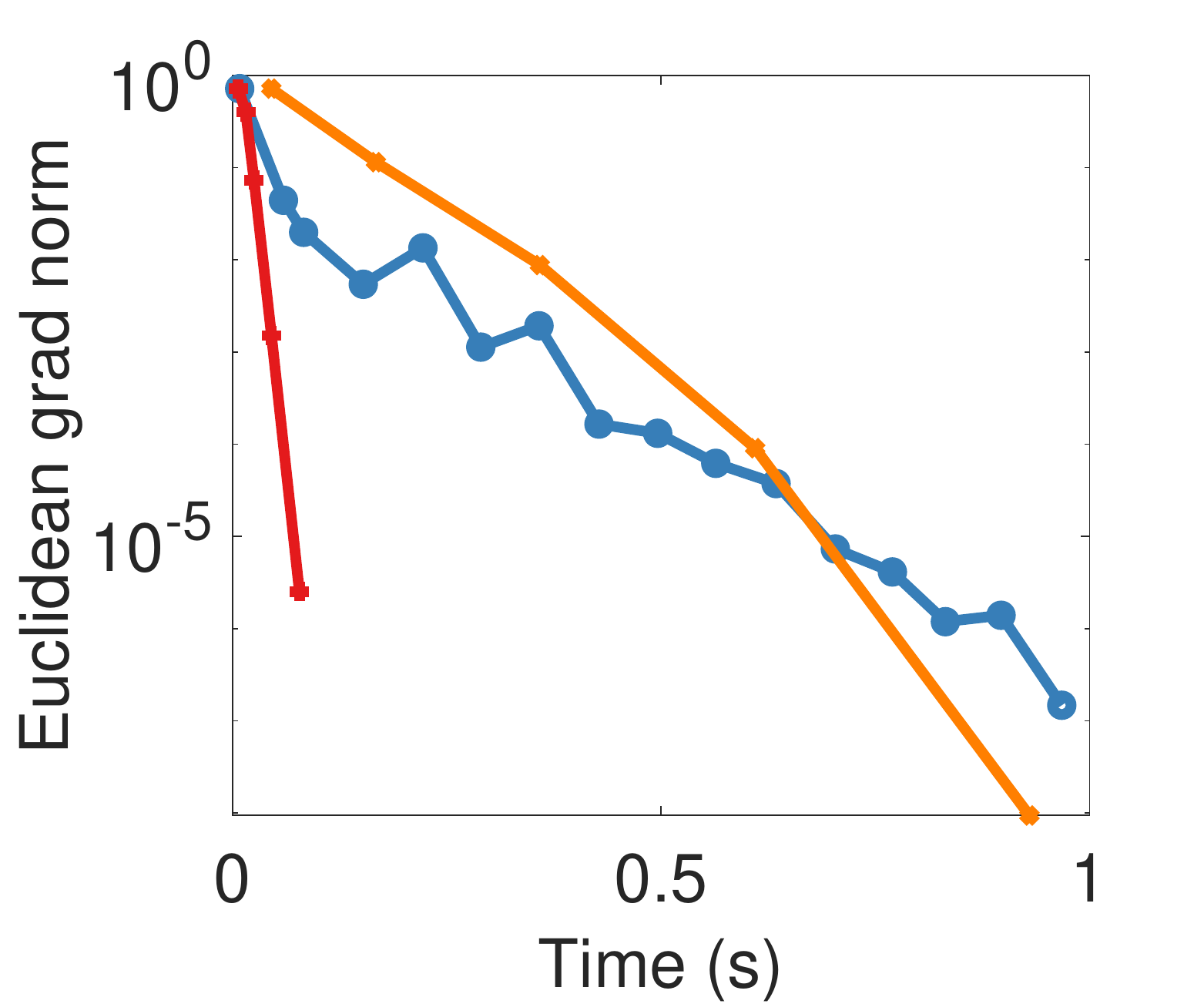}}
    \caption{On Gaussian mixture model (loss, modified Euclidean gradient norm, runtime).} 
    \label{GMM_figure}
\end{figure*}

\begin{figure*}[!th]
\captionsetup{justification=centering}
    \centering
    \subfloat[\texttt{LD}: \texttt{LowCN} (\texttt{Run1})]{\includegraphics[width = 0.2\textwidth, height = 0.16\textwidth]{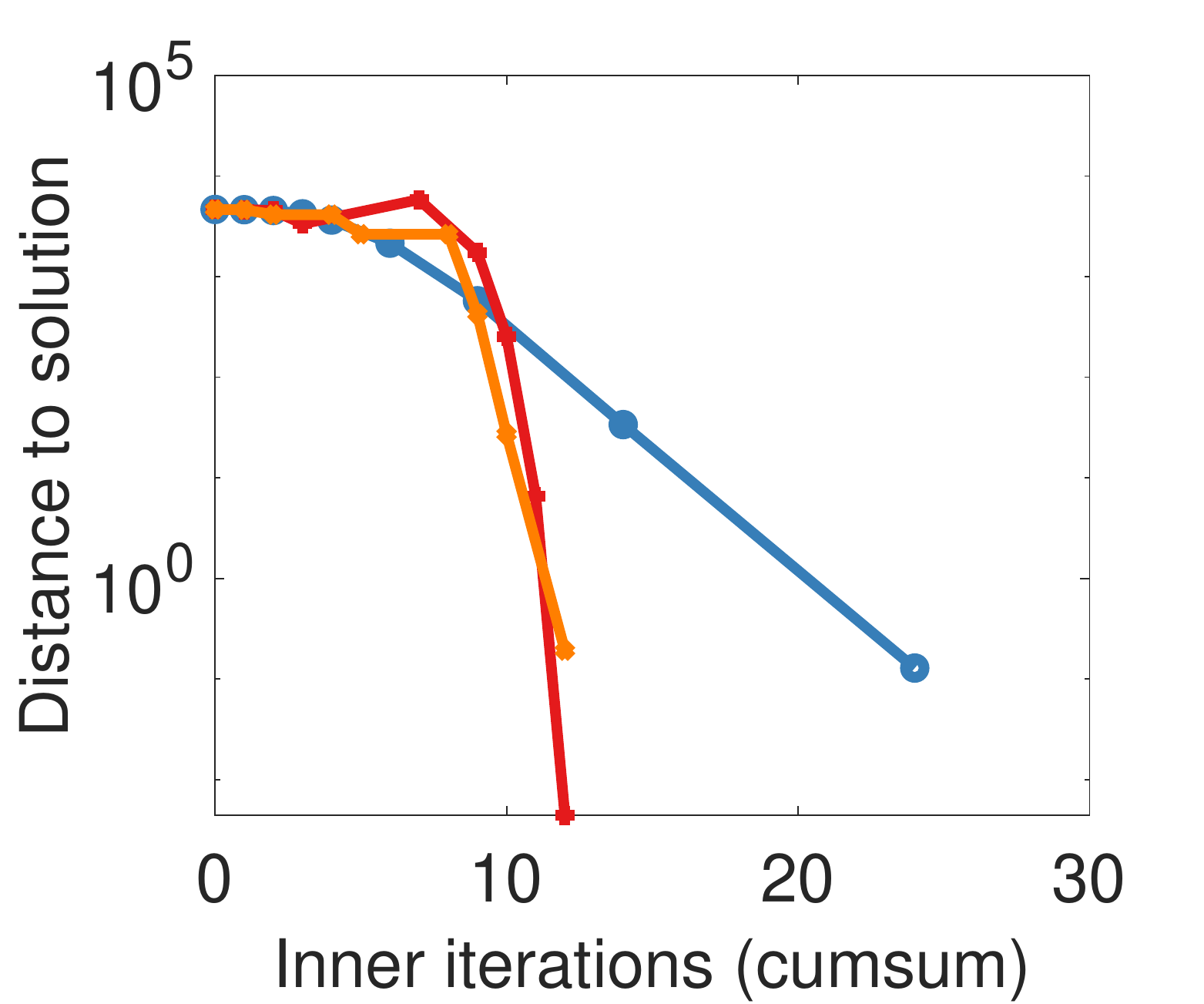}}
    \subfloat[(\texttt{Run2})]{\includegraphics[width = 0.2\textwidth, height = 0.16\textwidth]{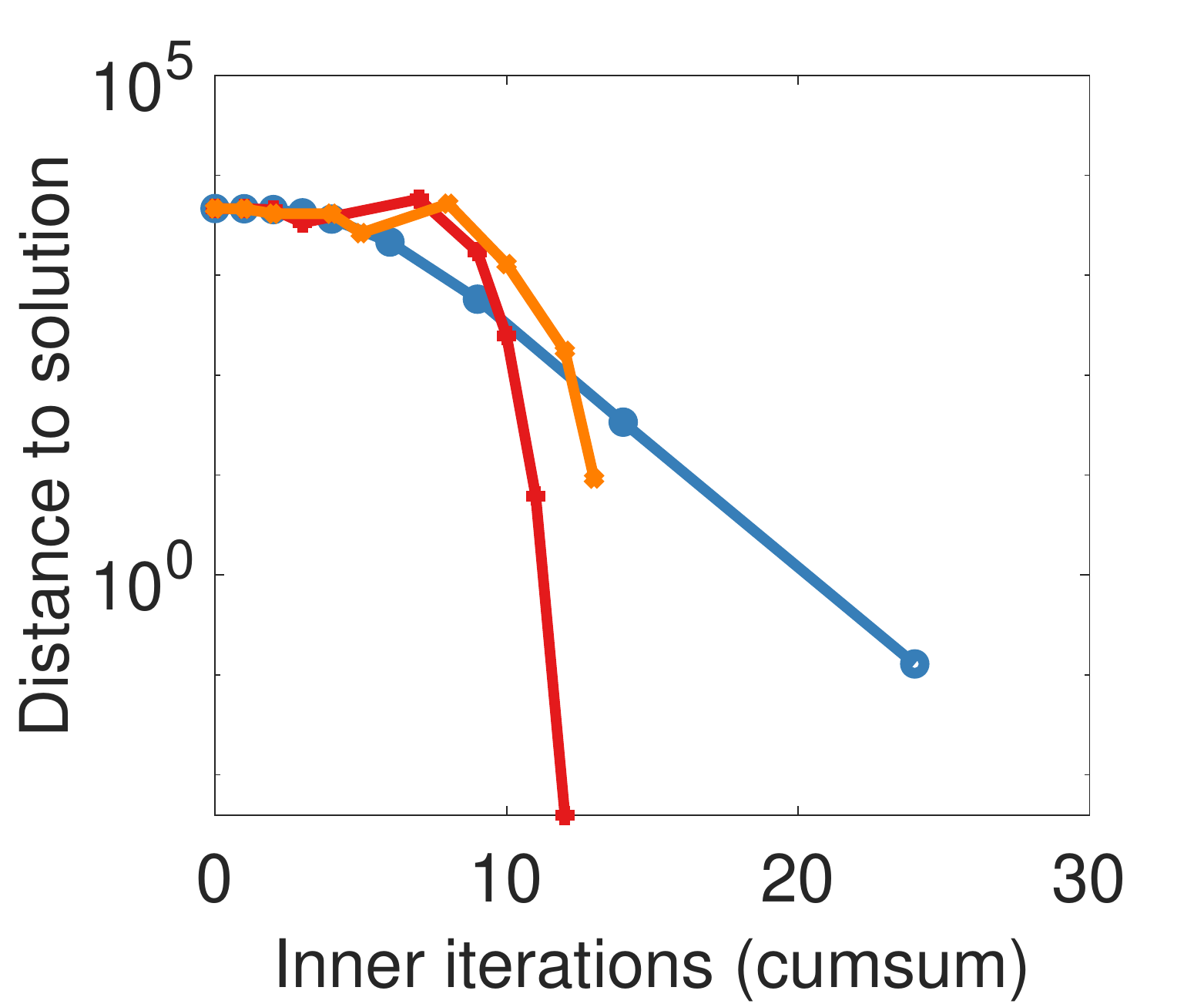}}
    \subfloat[(\texttt{Run3})]{\includegraphics[width = 0.2\textwidth, height = 0.16\textwidth]{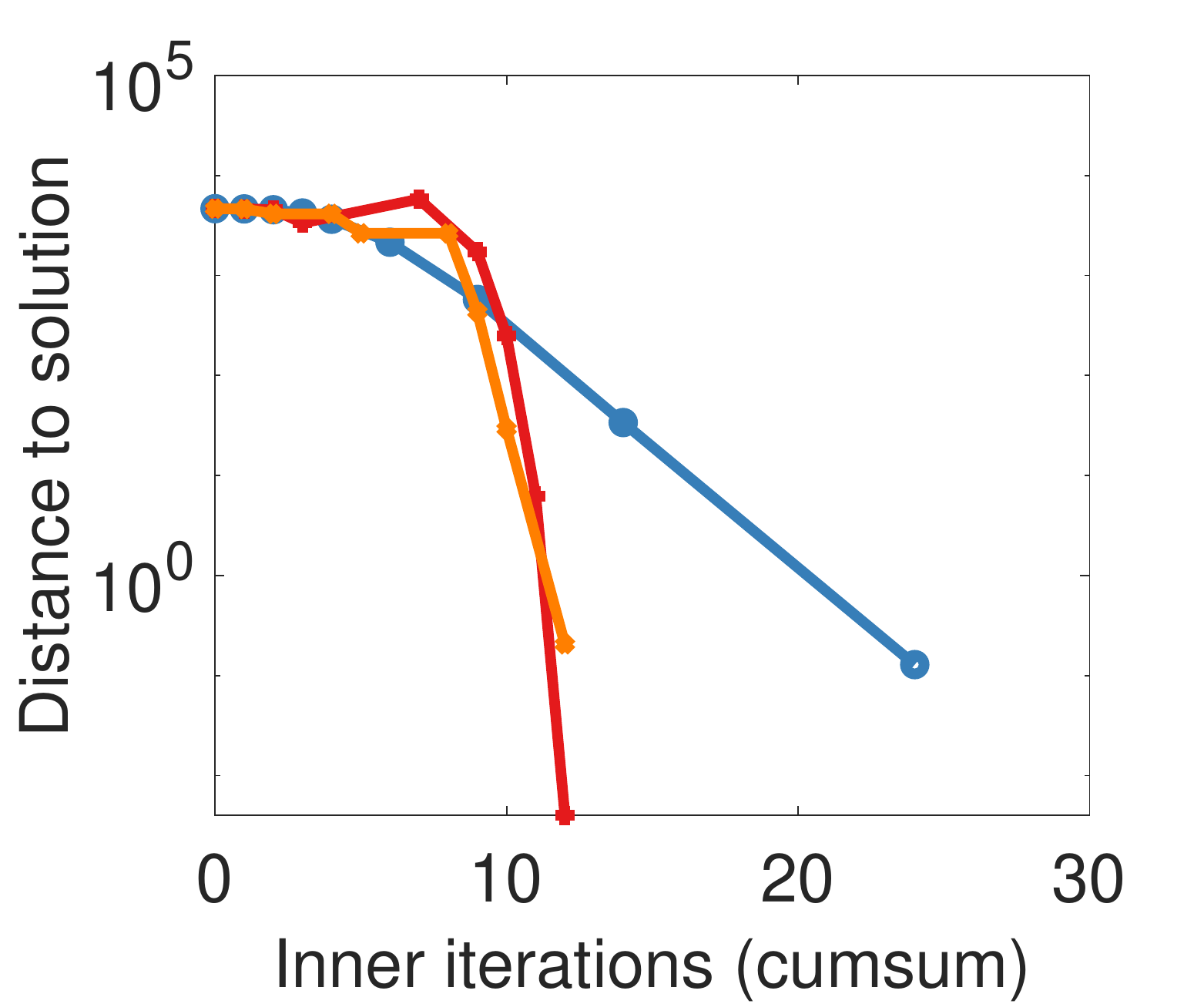}}
    \subfloat[(\texttt{Run4})]{\includegraphics[width = 0.2\textwidth, height = 0.16\textwidth]{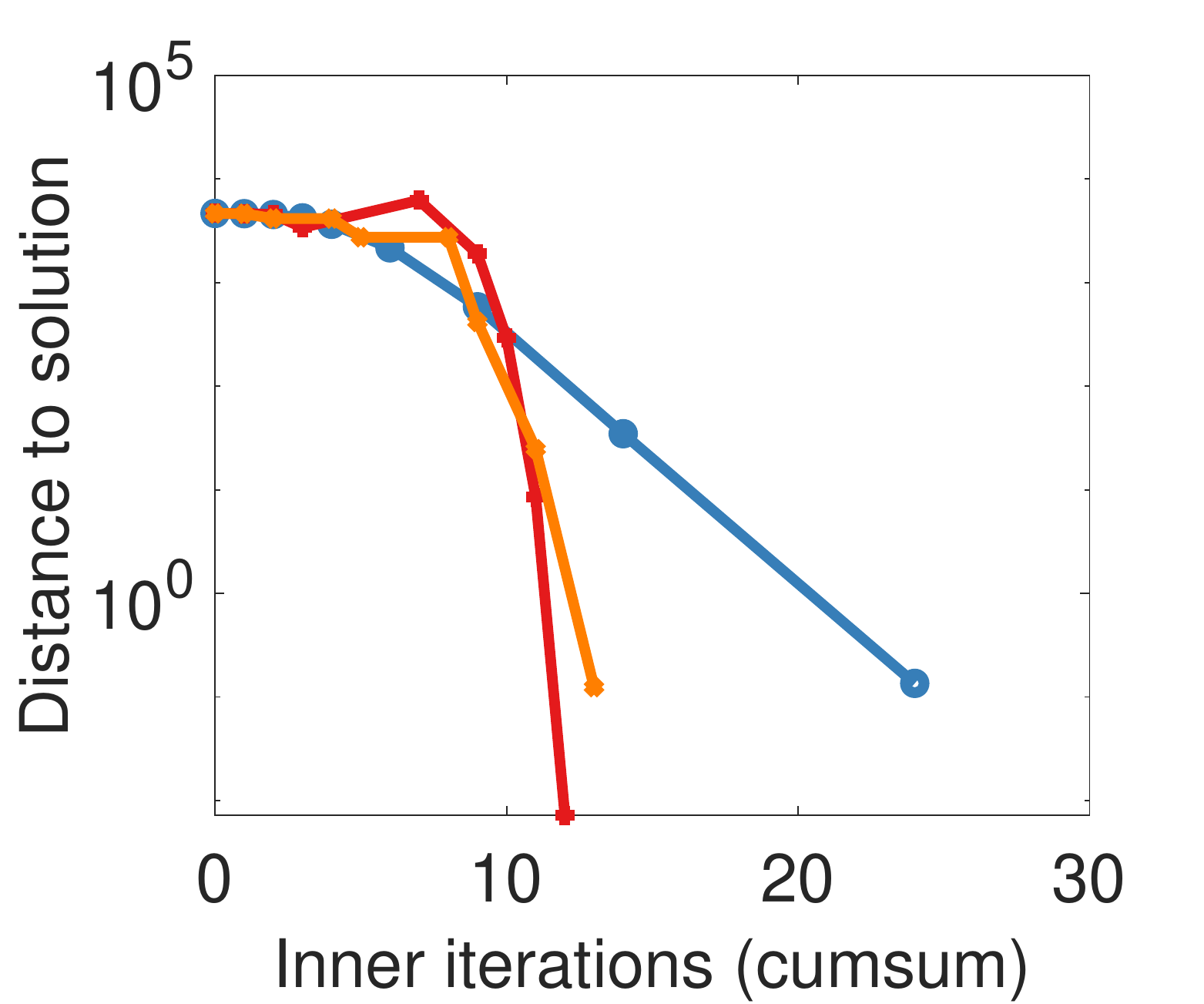}}
    \subfloat[(\texttt{Run5})]{\includegraphics[width = 0.2\textwidth, height = 0.16\textwidth]{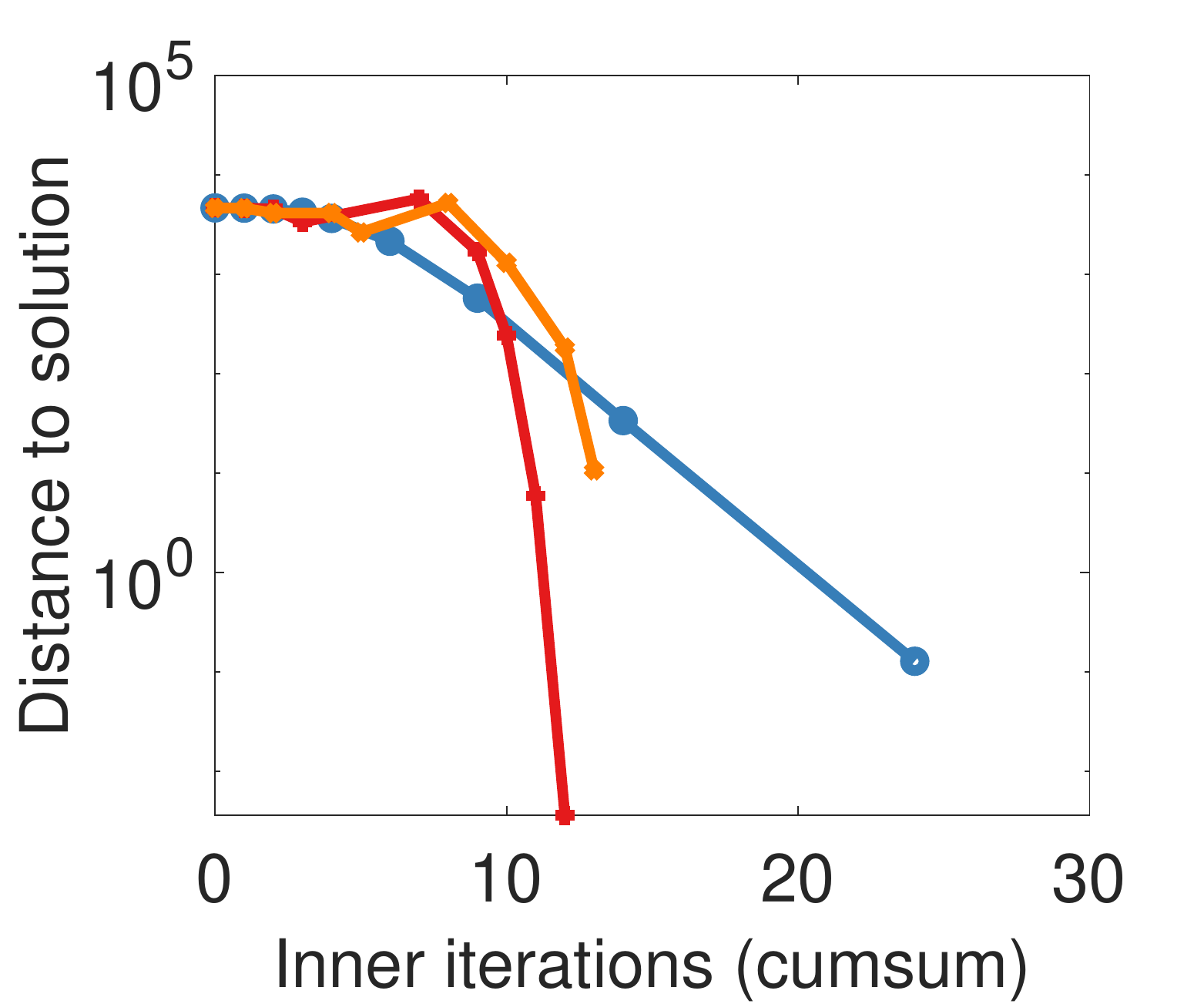}}
    \\
    \subfloat[\texttt{LD}: \texttt{HighCN} (\texttt{Run1})]{\includegraphics[width = 0.2\textwidth, height = 0.16\textwidth]{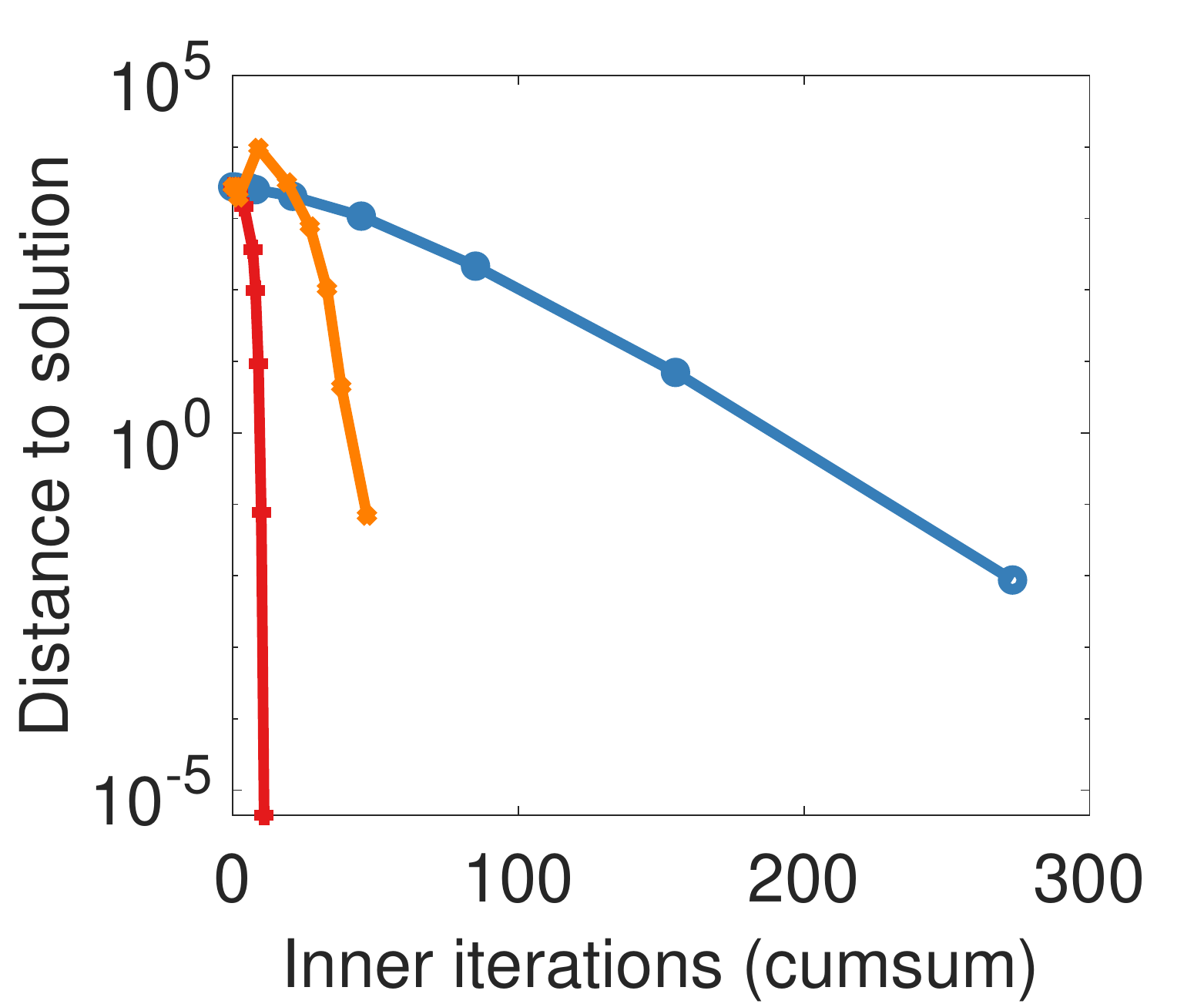}}
    \subfloat[(\texttt{Run2})]{\includegraphics[width = 0.2\textwidth, height = 0.16\textwidth]{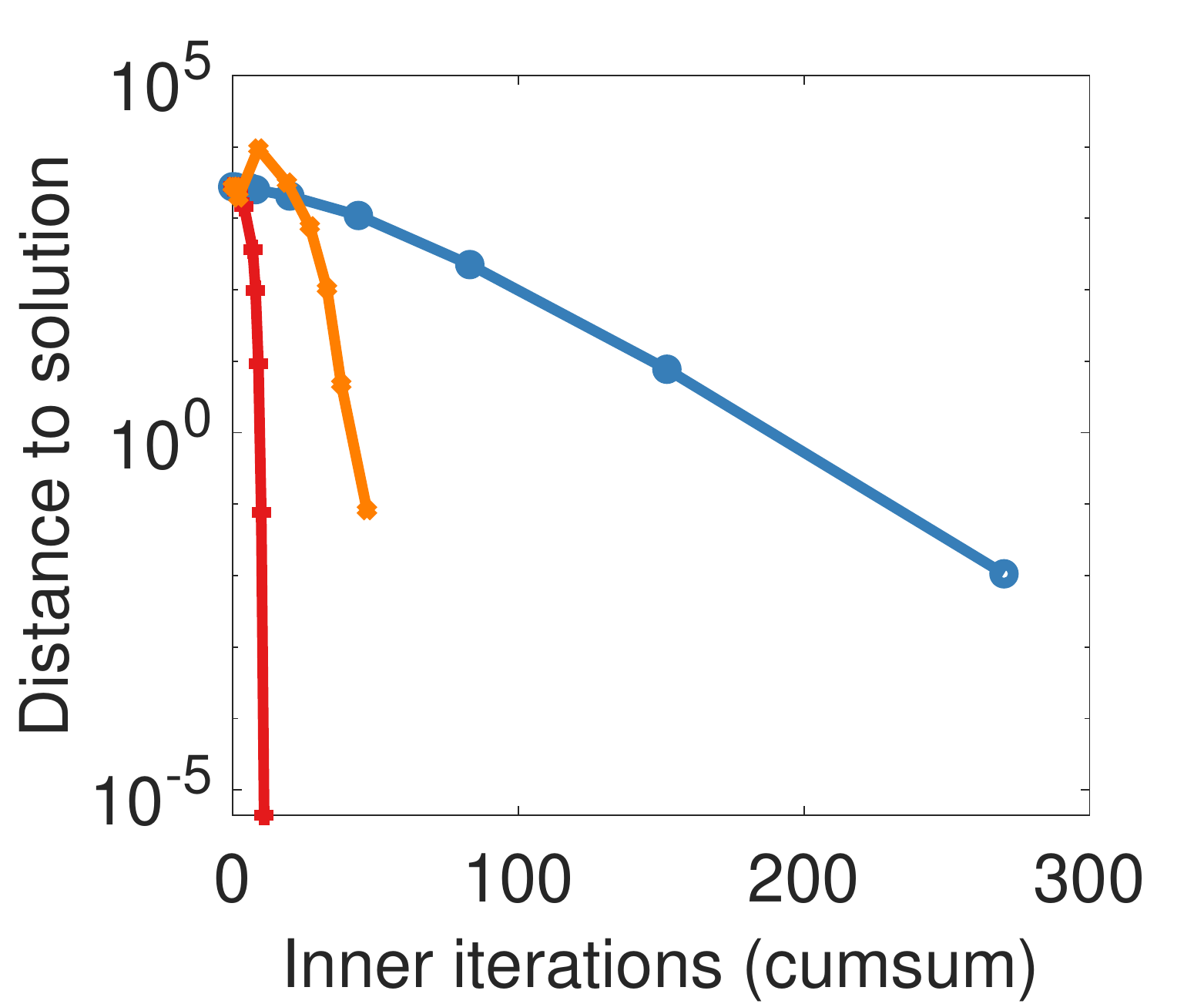}}
    \subfloat[(\texttt{Run3})]{\includegraphics[width = 0.2\textwidth, height = 0.16\textwidth]{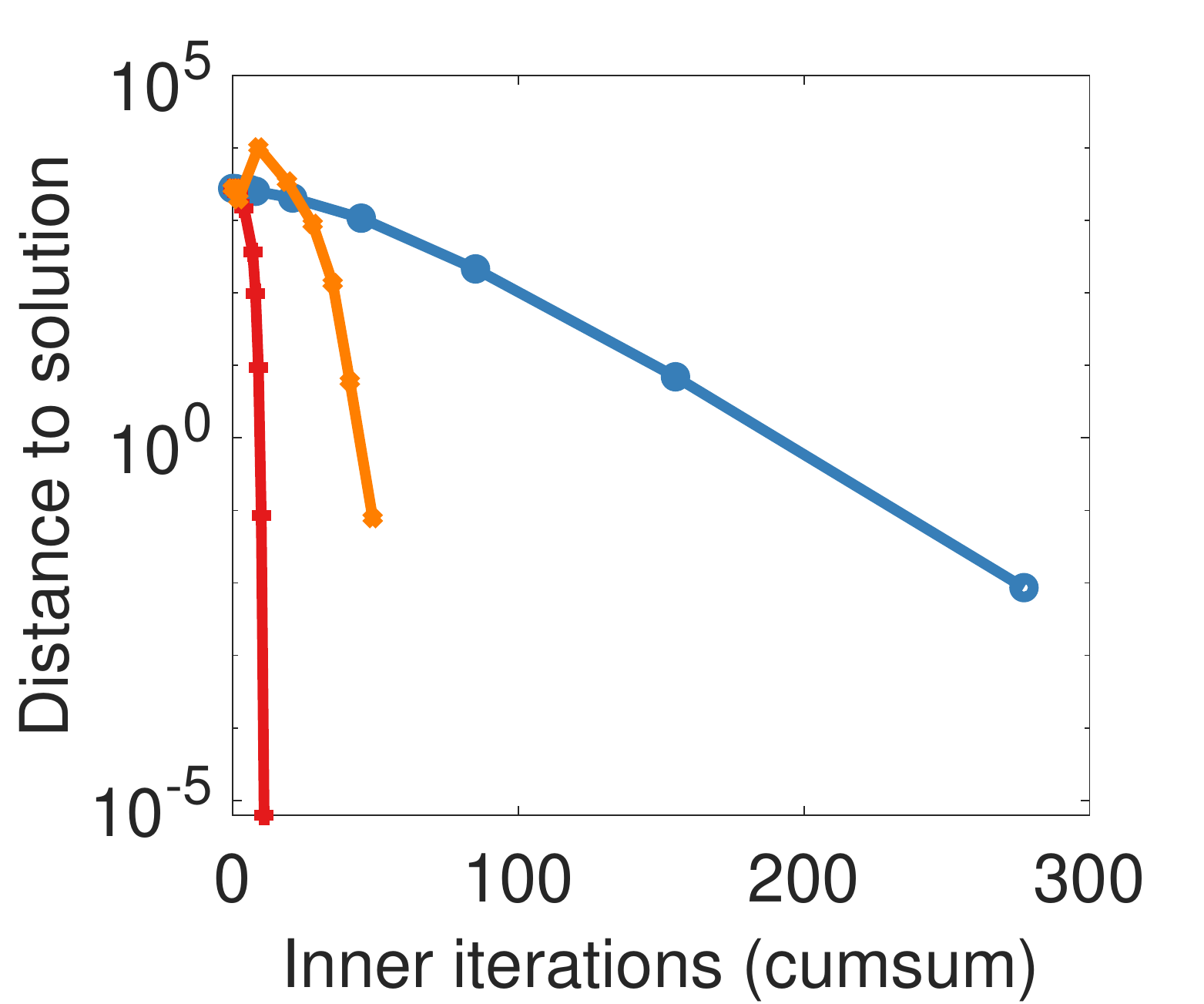}}
    \subfloat[(\texttt{Run4})]{\includegraphics[width = 0.2\textwidth, height = 0.16\textwidth]{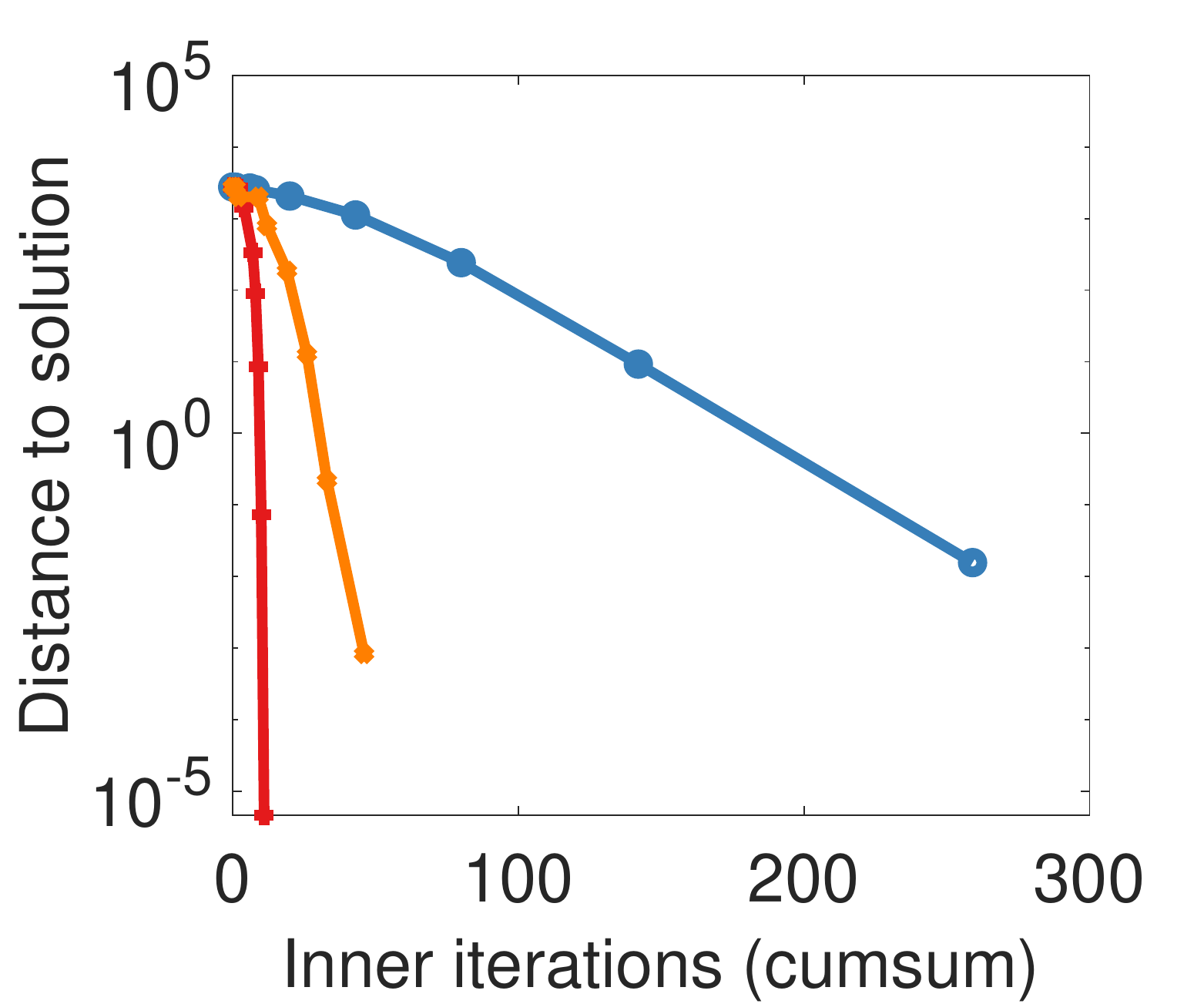}}
    \subfloat[(\texttt{Run5})]{\includegraphics[width = 0.2\textwidth, height = 0.16\textwidth]{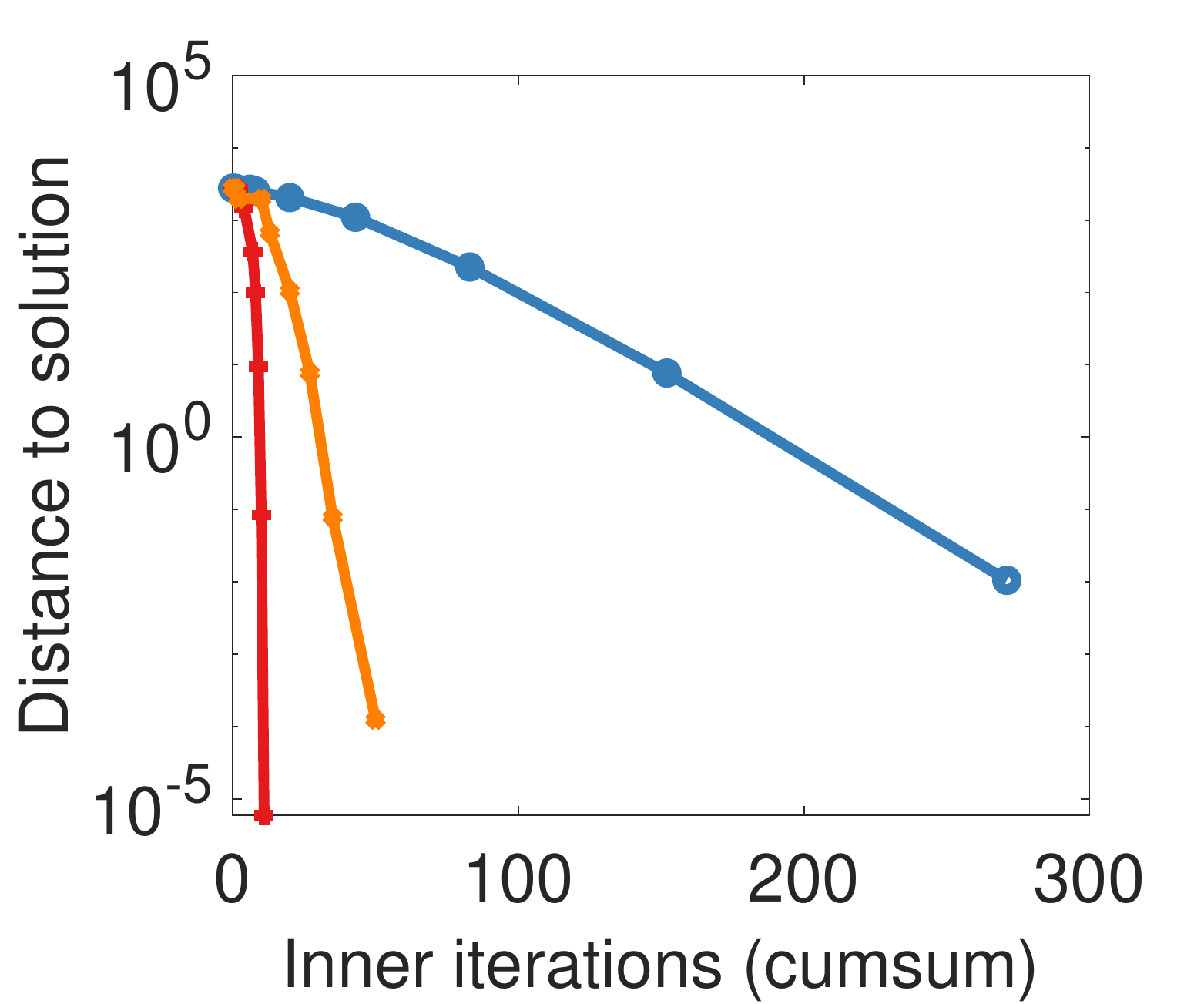}}
    \\
    \subfloat[\texttt{GMM}: (\texttt{Run1})]{\includegraphics[width = 0.2\textwidth, height = 0.16\textwidth]{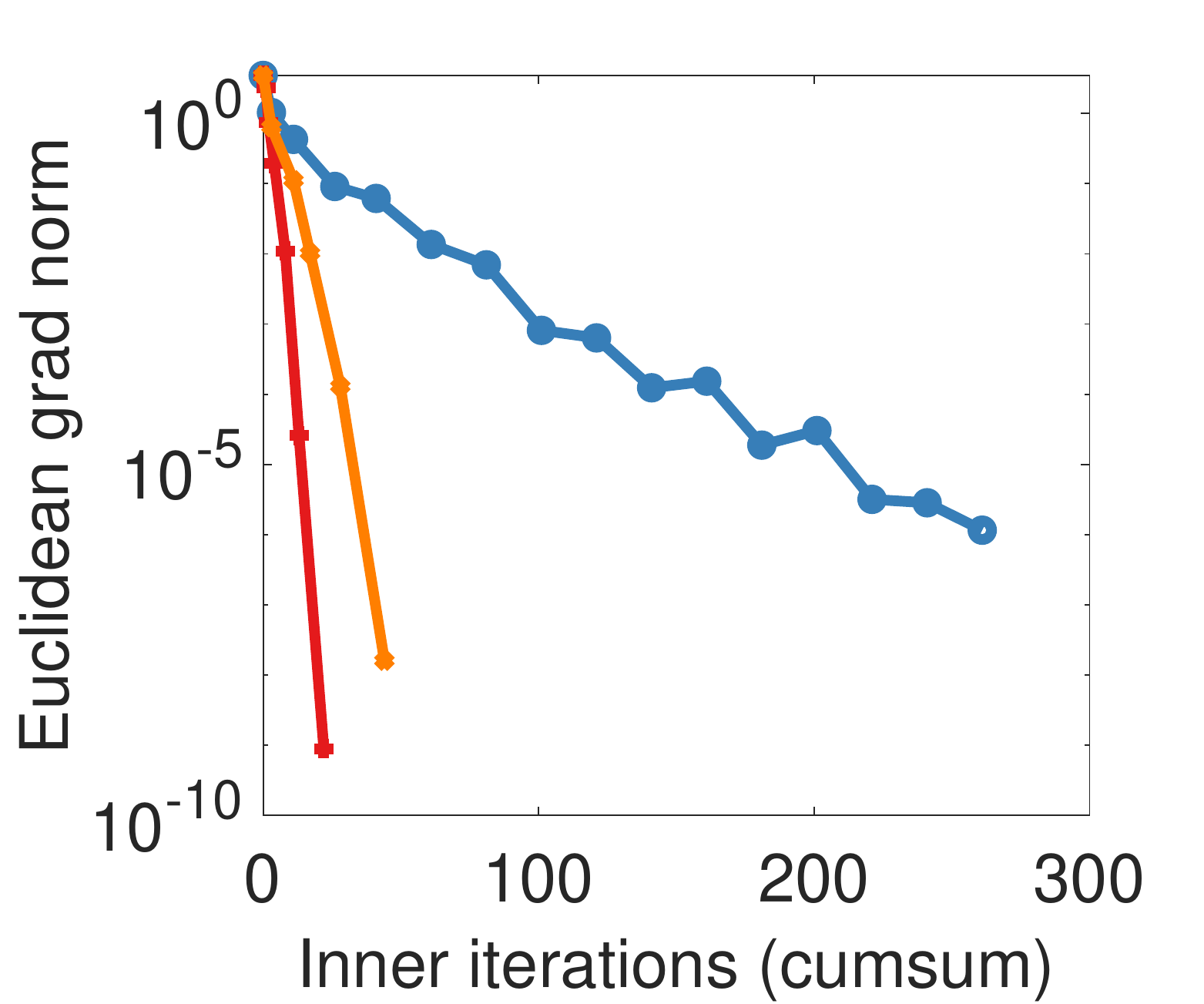}}
    \subfloat[(\texttt{Run2})]{\includegraphics[width = 0.2\textwidth, height = 0.16\textwidth]{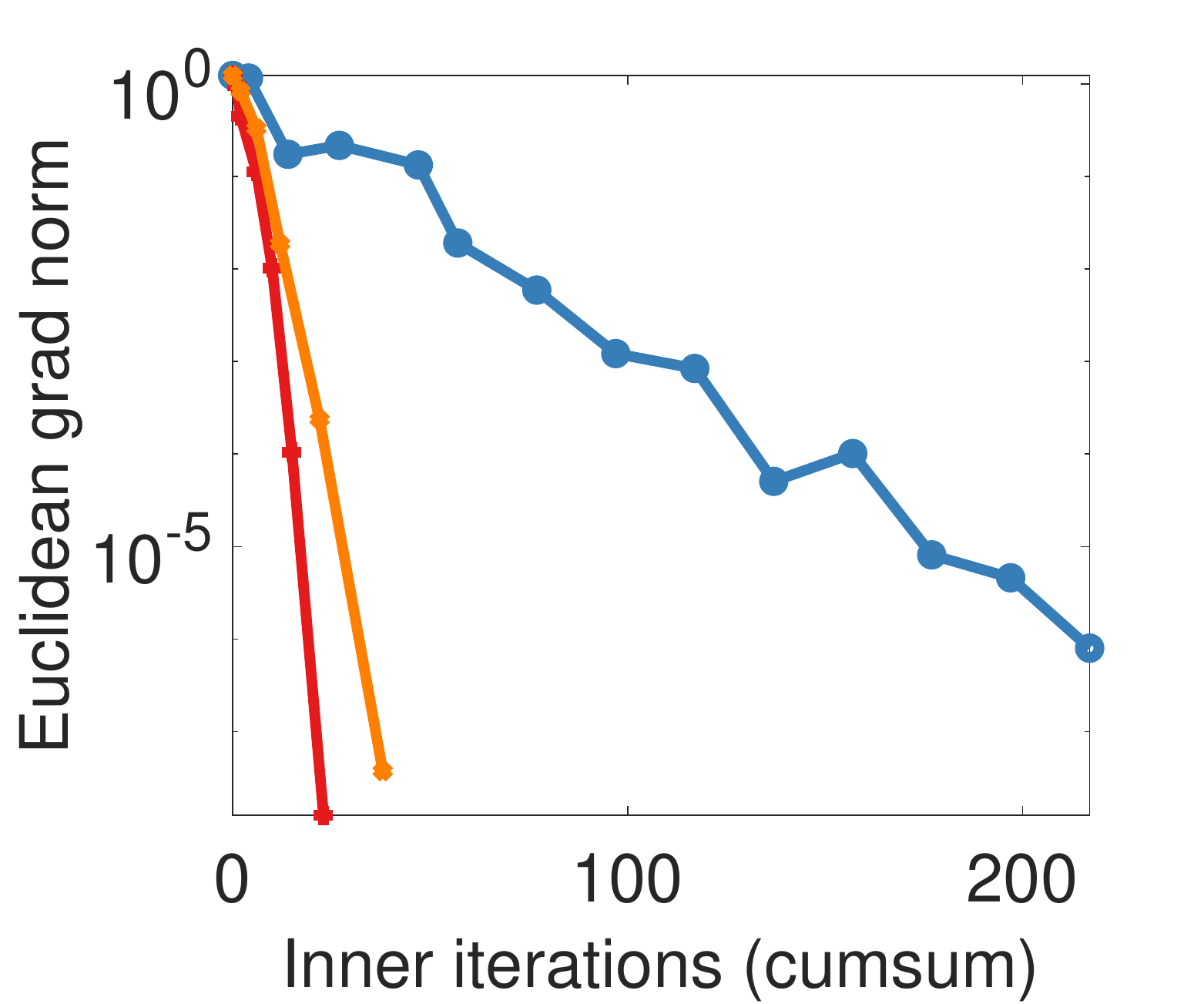}}
    \subfloat[(\texttt{Run3})]{\includegraphics[width = 0.2\textwidth, height = 0.16\textwidth]{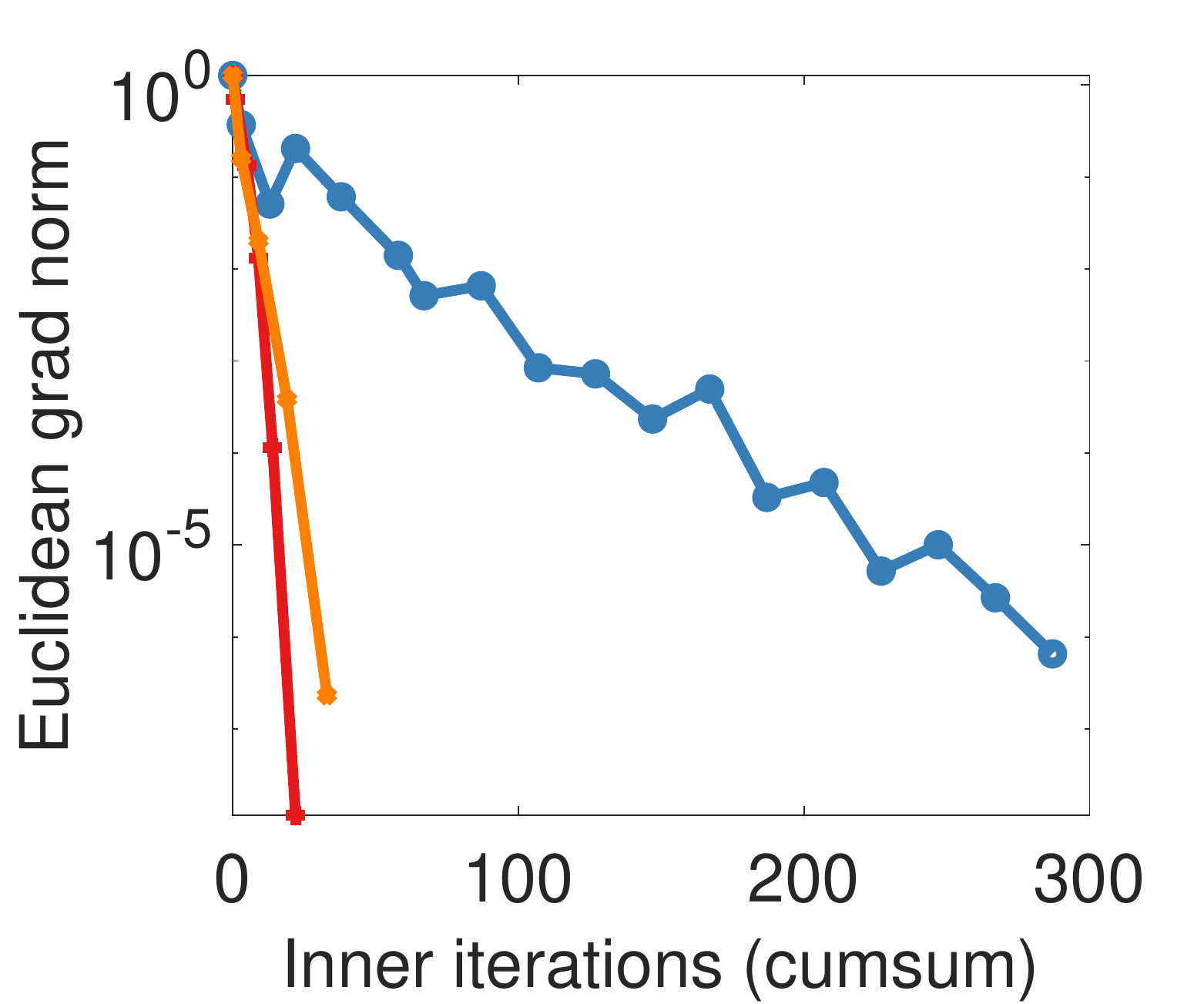}}
    \subfloat[(\texttt{Run4})]{\includegraphics[width = 0.2\textwidth, height = 0.16\textwidth]{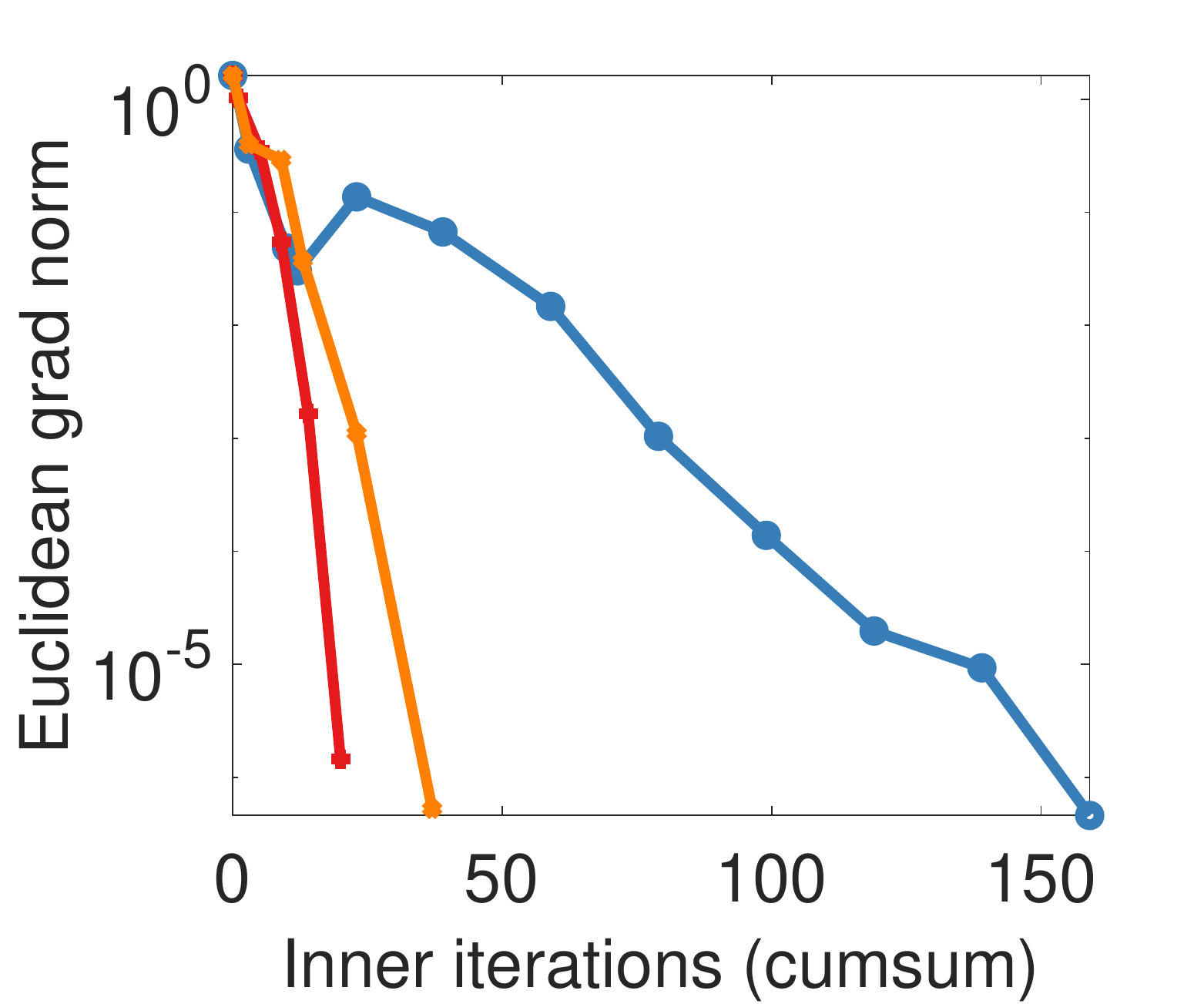}}
    \subfloat[(\texttt{Run5})]{\includegraphics[width = 0.2\textwidth, height = 0.16\textwidth]{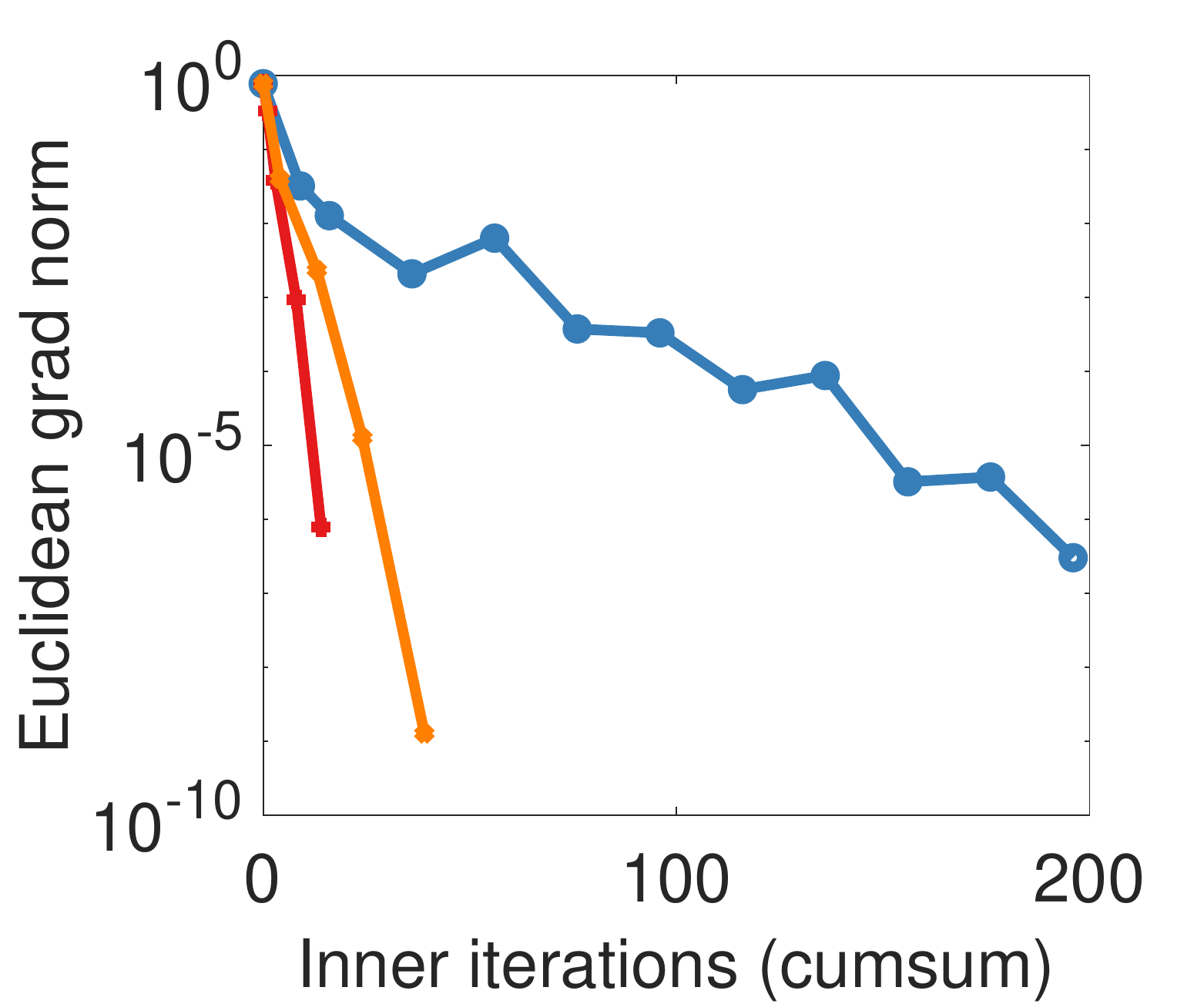}}
    \caption{Sensitivity to randomness on log-det maximization and Gaussian mixture model with Riemannian trust region.} 
    \label{Logdet_gmm_rng_figure}
\end{figure*}

\end{document}